\numberwithin{equation}{section}
\newcommand{\hm}[1]{#1\nobreak\discretionary{}{\hbox{\ensuremath{#1}}}{}}
\renewcommand{\leq}{\leqslant}
\renewcommand{\geq}{\geqslant}
\newcommand{\systema}[2]{
$$\left\{ \begin{array}{ll}#1
\\
#2 \end{array}\right.$$ }
\newtheorem{lemma}{Lemma}
\newtheorem{theorem}{Theorem}[section]
\newtheorem{definition}{Definition}[section]
\newtheorem{proposition}[theorem]{Proposition}
\newtheorem{corollary}{Corollary}[section]
\newtheorem{notation}{Notation}[section]
\newtheorem{note}{Remark}[section]
\newtheorem{example}{Example}[section]
\newtheorem*{definition*}{Definition}
\newtheorem*{lemma*}{Lemma}
\newtheorem*{notation*}{Notation}
\newtheorem*{theorem*}{Theorem}
\newtheorem*{proposition*}{Proposition}
\newtheorem*{note*}{Remark}
\newtheorem*{corollary*}{Corollary}
\newcommand\XLast{}
\newcommand\YLast{}
\newcommand\Vidr[4]{\qbezier(#1,#2)(#1,#2)(#3,#4) \renewcommand\XLast{#3}
\renewcommand\YLast{#4}}
\newcommand\VidrTo[2]{\Vidr{\XLast}{\YLast}{#1}{#2}}
\newcommand\VidrToBold[2]{\linethickness{0.3mm}
\VidrTo{#1}{#2} \linethickness{0.1mm}}
\begin{document}

\renewcommand{\figurename}{Fig.}

\begin{center}
\large{Makar Plakhotnyk}

Postdoctoral researcher at S\~ao Paulo university, Brazil

Mail:\, makar.plakhotnyk@gmail.com

\LARGE{\textbf{Topological conjugation of one dimensional maps}}

\large{Draft of the book}

\end{center}

\vskip 2cm

\begin{abstract}
Topological conjugateness of one dimensional unimodal dynamical
systems, which are generated by interval $[0,\, 1]$ into itself
maps are studied. We study the smoothness and differentiability of
the conjugacy of symmetrical and non-symmetrical tent maps. Also
we prove the extremal property of the length of the graph of the
conjugacy of symmetrical and non-symmetrical tent maps.

\end{abstract}

\newpage

\section{Preface}

During the 10 years, from 2004 till 2013, Volodymyr Fedorenko was
a lector of the course of Dynamical systems at Mechanical and
mathematical faculty of National Taras Shevchenko University of
Kyiv.

All the time, when he considered the topic of the topological
conjugation of one dimensional maps, he considered the continuous
unimodal $[0,\, 1]\rightarrow [0,\, 1]$ maps

$$ f(x) = \left\{\begin{array}{ll}
2x,& x< 1/2;\\
2-2x,& x\geqslant 1/2,
\end{array}\right.$$and $g(x) = 4x(1-x),$
which are conjugated via $ h(x) = \sin^2\left(\frac{\pi
x}{2}\right). $

\begin{figure}[htbp]
\begin{minipage}[h]{0.3\linewidth}
\begin{center}
\begin{picture}(100,125)
\put(100,0){\line(0,1){100}} \put(0,100){\line(1,0){100}}
\put(0,0){\vector(1,0){120}} \put(0,0){\vector(0,1){120}}
\put(0,0){\line(1,2){50}} \put(50,100){\line(1,-2){50}}
\put(40,50){f(x)}
\end{picture}\end{center}
\centerline{a)}
\end{minipage}
\hfill
\begin{minipage}[h]{0.3\linewidth}
\begin{center}
\begin{picture}(100,125)
\put(100,0){\line(0,1){100}} \put(0,100){\line(1,0){100}}
\put(0,0){\vector(1,0){120}} \put(0,0){\vector(0,1){120}}
\qbezier(0,0)(50,200)(100,0) \put(40,50){$g$(x)}
\end{picture}\end{center}
\centerline{b)}
\end{minipage}
\hfill
\begin{minipage}[h]{0.3\linewidth}
\begin{center}
\begin{picture}(100,125)
\put(100,0){\line(0,1){100}} \put(0,100){\line(1,0){100}}
\put(0,0){\vector(1,0){120}} \put(0,0){\vector(0,1){120}}
\put(20,50){$h$(x)} \put(0,0){\circle*{2}} \put(1,0){\circle*{2}}
\put(2,0){\circle*{2}} \put(3,0){\circle*{2}}
\put(4,0){\circle*{2}} \put(5,1){\circle*{2}}
\put(6,1){\circle*{2}} \put(7,1){\circle*{2}}
\put(8,2){\circle*{2}} \put(9,2){\circle*{2}}
\put(10,2){\circle*{2}} \put(11,3){\circle*{2}}
\put(12,4){\circle*{2}} \put(13,4){\circle*{2}}
\put(14,5){\circle*{2}} \put(15,5){\circle*{2}}
\put(16,6){\circle*{2}} \put(17,7){\circle*{2}}
\put(18,8){\circle*{2}} \put(19,9){\circle*{2}}
\put(20,10){\circle*{2}} \put(21,10){\circle*{2}}
\put(22,11){\circle*{2}} \put(23,12){\circle*{2}}
\put(24,14){\circle*{2}} \put(25,15){\circle*{2}}
\put(26,16){\circle*{2}} \put(27,17){\circle*{2}}
\put(28,18){\circle*{2}} \put(29,19){\circle*{2}}
\put(30,21){\circle*{2}} \put(31,22){\circle*{2}}
\put(32,23){\circle*{2}} \put(33,25){\circle*{2}}
\put(34,26){\circle*{2}} \put(35,27){\circle*{2}}
\put(36,29){\circle*{2}} \put(37,30){\circle*{2}}
\put(38,32){\circle*{2}} \put(39,33){\circle*{2}}
\put(40,35){\circle*{2}} \put(41,36){\circle*{2}}
\put(42,38){\circle*{2}} \put(43,39){\circle*{2}}
\put(44,41){\circle*{2}} \put(45,42){\circle*{2}}
\put(46,44){\circle*{2}} \put(47,45){\circle*{2}}
\put(48,47){\circle*{2}} \put(49,48){\circle*{2}}
\put(50,50){\circle*{2}} \put(51,52){\circle*{2}}
\put(52,53){\circle*{2}} \put(53,55){\circle*{2}}
\put(54,56){\circle*{2}} \put(55,58){\circle*{2}}
\put(56,59){\circle*{2}} \put(57,61){\circle*{2}}
\put(58,62){\circle*{2}} \put(59,64){\circle*{2}}
\put(60,65){\circle*{2}} \put(61,67){\circle*{2}}
\put(62,68){\circle*{2}} \put(63,70){\circle*{2}}
\put(64,71){\circle*{2}} \put(65,73){\circle*{2}}
\put(66,74){\circle*{2}} \put(67,75){\circle*{2}}
\put(68,77){\circle*{2}} \put(69,78){\circle*{2}}
\put(70,79){\circle*{2}} \put(71,81){\circle*{2}}
\put(72,82){\circle*{2}} \put(73,83){\circle*{2}}
\put(74,84){\circle*{2}} \put(75,85){\circle*{2}}
\put(76,86){\circle*{2}} \put(77,88){\circle*{2}}
\put(78,89){\circle*{2}} \put(79,90){\circle*{2}}
\put(80,90){\circle*{2}} \put(81,91){\circle*{2}}
\put(82,92){\circle*{2}} \put(83,93){\circle*{2}}
\put(84,94){\circle*{2}} \put(85,95){\circle*{2}}
\put(86,95){\circle*{2}} \put(87,96){\circle*{2}}
\put(88,96){\circle*{2}} \put(89,97){\circle*{2}}
\put(90,98){\circle*{2}} \put(91,98){\circle*{2}}
\put(92,98){\circle*{2}} \put(93,99){\circle*{2}}
\put(94,99){\circle*{2}} \put(95,99){\circle*{2}}
\put(96,100){\circle*{2}} \put(97,100){\circle*{2}}
\put(98,100){\circle*{2}} \put(99,100){\circle*{2}}
\put(100,100){\circle*{2}}
\end{picture} \end{center}
\centerline{c)}
\end{minipage}
\hfill \caption{Graphics} \label{fig:1}
\end{figure}
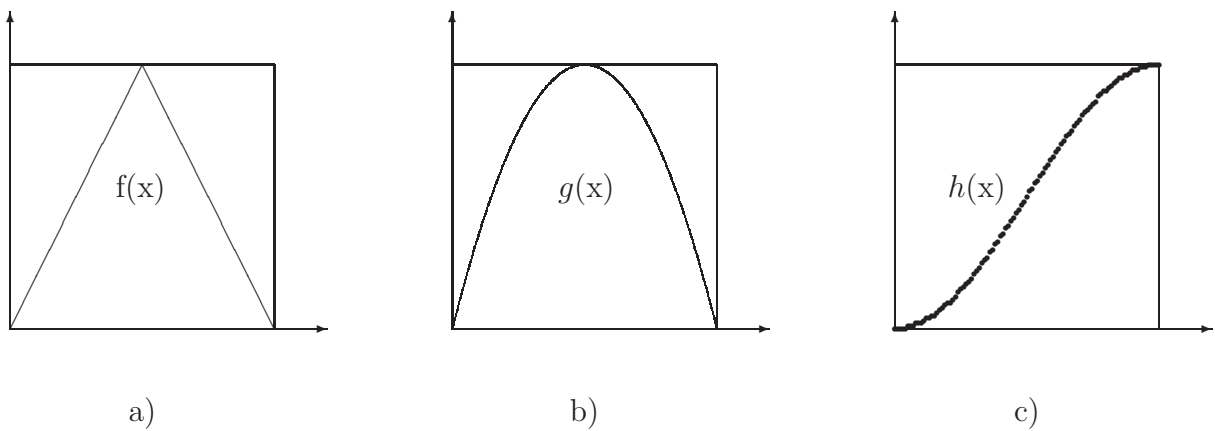

Figure~\ref{fig:1}a) contains the graph of $f$,
Figure~\ref{fig:1}b) contains the graph of $g$, the graph the
conjugacy $h$, which defines the conjugacy of $f$ and $g$, i.e. is
the solution of the functional equation $h(f)=g(h)$, is given at
Figure~\ref{fig:1}c).

During of one of the lectures, V. Fedorenko mentioned, that the
same reasonings, which were used for finding the conjugacy of $f$
and $g$, can be applied for finding the conjugacy $h_v:\, [0,\,
1]\rightarrow [0,\, 1]$ of $f$, and $f_v$, where
$$
f_v(x) = \left\{\begin{array}{ll} \frac{x}{v},& x\leqslant v;\\
 \frac{1-x}{1-v},&
x>v,
\end{array}\right.
$$ and $v\in (0,\, 1)$ is a parameter.
The finding of $h_v$ needs the solution of the system of linear
functional equations. Methods of solving of linear functional
equations are described, for instance, in~\cite{Pelukh-1974}.

This book was inspirit by multiple attempts to find the explicit
formulas of the conjugacy of $f$ and $f_v$. In spite of we was
failed with these attempts, we have obtained a list of results,
concerning with properties of this conjugacy and the semi
conjugacy of $f$ and $f_v$. These results are formulated in
Section~\ref{Sect-Main-Results}.

The conjugation of $f$ and $g$ appeared in the first time in the
middle of the 20-th century in the von Neumann's and S. Ulam's
collaborated works, which deal with generators of random numbers.
In the same time, the map $g$, which is know as a logistic map,
was discovered for the applications of mathematics by P. Verhulst
in the first half of 19-th century in the populational dynamics.
Also the one parametrical family of logistic maps was used in
1970-th by M. Feigenbaum in the discovery of universal constants,
which are known by his name.

The hardness of studying of the properties of the conjugacy $h_v$
of $f$ and $f_v$, follows, for instance, from the following its
property (see~\cite{Skufca}): the derivative $h_v'$ exists and
equals either $0$, or $\infty$. Moreover, $h_v'$ equals $0$ almost
everywhere in the cense of the Lebesgue measure of $[0,\, 1]$. The
last property yields that $h_v$ is non-differentiable on any
subinterval of $[0,\, 1]$.

The graph of $h_v$ for $v=3/4$ is given at Figure~\ref{fig:3}.

\begin{figure}[htbp]
\begin{center}
\begin{picture}(130,140)
\put(128,0){\line(0,1){128}} \put(0,128){\line(1,0){128}}
\put(0,0){\vector(1,0){140}} \put(0,0){\vector(0,1){140}}
\linethickness{0.3mm} \qbezier(0,0)(0,0)(1,17)
\qbezier(1,17)(1,17)(2,23) \qbezier(2,23)(2,23)(3,25)
\qbezier(3,25)(3,25)(4,30) \qbezier(4,30)(4,30)(5,32)
\qbezier(5,32)(5,32)(6,33) \qbezier(6,33)(6,33)(7,35)
\qbezier(7,35)(7,35)(8,41) \qbezier(8,41)(8,41)(9,42)
\qbezier(9,42)(9,42)(10,43) \qbezier(10,43)(10,43)(11,43)
\qbezier(11,43)(11,43)(12,44) \qbezier(12,44)(12,44)(13,46)
\qbezier(13,46)(13,46)(14,46) \qbezier(14,46)(14,46)(15,48)
\qbezier(15,48)(15,48)(16,54) \qbezier(16,54)(16,54)(17,56)
\qbezier(17,56)(17,56)(18,57) \qbezier(18,57)(18,57)(19,57)
\qbezier(19,57)(19,57)(20,57) \qbezier(20,57)(20,57)(21,58)
\qbezier(21,58)(21,58)(22,58) \qbezier(22,58)(22,58)(23,58)
\qbezier(23,58)(23,58)(24,59) \qbezier(24,59)(24,59)(25,60)
\qbezier(25,60)(25,60)(26,61) \qbezier(26,61)(26,61)(27,61)
\qbezier(27,61)(27,61)(28,62) \qbezier(28,62)(28,62)(29,64)
\qbezier(29,64)(29,64)(30,64) \qbezier(30,64)(30,64)(31,66)
\qbezier(31,66)(31,66)(32,72) \qbezier(32,72)(32,72)(33,74)
\qbezier(33,74)(33,74)(34,75) \qbezier(34,75)(34,75)(35,75)
\qbezier(35,75)(35,75)(36,75) \qbezier(36,75)(36,75)(37,76)
\qbezier(37,76)(37,76)(38,76) \qbezier(38,76)(38,76)(39,76)
\qbezier(39,76)(39,76)(40,77) \qbezier(40,77)(40,77)(41,77)
\qbezier(41,77)(41,77)(42,77) \qbezier(42,77)(42,77)(43,77)
\qbezier(43,77)(43,77)(44,77) \qbezier(44,77)(44,77)(45,77)
\qbezier(45,77)(45,77)(46,77) \qbezier(46,77)(46,77)(47,77)
\qbezier(47,77)(47,77)(48,78) \qbezier(48,78)(48,78)(49,80)
\qbezier(49,80)(49,80)(50,81) \qbezier(50,81)(50,81)(51,81)
\qbezier(51,81)(51,81)(52,81) \qbezier(52,81)(52,81)(53,82)
\qbezier(53,82)(53,82)(54,82) \qbezier(54,82)(54,82)(55,82)
\qbezier(55,82)(55,82)(56,83) \qbezier(56,83)(56,83)(57,84)
\qbezier(57,84)(57,84)(58,85) \qbezier(58,85)(58,85)(59,85)
\qbezier(59,85)(59,85)(60,86) \qbezier(60,86)(60,86)(61,88)
\qbezier(61,88)(61,88)(62,88) \qbezier(62,88)(62,88)(63,90)
\qbezier(63,90)(63,90)(64,96) \qbezier(64,96)(64,96)(65,98)
\qbezier(65,98)(65,98)(66,99) \qbezier(66,99)(66,99)(67,99)
\qbezier(67,99)(67,99)(68,99) \qbezier(68,99)(68,99)(69,100)
\qbezier(69,100)(69,100)(70,100) \qbezier(70,100)(70,100)(71,100)
\qbezier(71,100)(71,100)(72,101) \qbezier(72,101)(72,101)(73,101)
\qbezier(73,101)(73,101)(74,101) \qbezier(74,101)(74,101)(75,101)
\qbezier(75,101)(75,101)(76,101) \qbezier(76,101)(76,101)(77,101)
\qbezier(77,101)(77,101)(78,101) \qbezier(78,101)(78,101)(79,101)
\qbezier(79,101)(79,101)(80,102) \qbezier(80,102)(80,102)(81,102)
\qbezier(81,102)(81,102)(82,102) \qbezier(82,102)(82,102)(83,102)
\qbezier(83,102)(83,102)(84,102) \qbezier(84,102)(84,102)(85,102)
\qbezier(85,102)(85,102)(86,102) \qbezier(86,102)(86,102)(87,102)
\qbezier(87,102)(87,102)(88,103) \qbezier(88,103)(88,103)(89,103)
\qbezier(89,103)(89,103)(90,103) \qbezier(90,103)(90,103)(91,103)
\qbezier(91,103)(91,103)(92,103) \qbezier(92,103)(92,103)(93,103)
\qbezier(93,103)(93,103)(94,103) \qbezier(94,103)(94,103)(95,103)
\qbezier(95,103)(95,103)(96,104) \qbezier(96,104)(96,104)(97,106)
\qbezier(97,106)(97,106)(98,107) \qbezier(98,107)(98,107)(99,107)
\qbezier(99,107)(99,107)(100,107)
\qbezier(100,107)(100,107)(101,108)
\qbezier(101,108)(101,108)(102,108)
\qbezier(102,108)(102,108)(103,108)
\qbezier(103,108)(103,108)(104,109)
\qbezier(104,109)(104,109)(105,109)
\qbezier(105,109)(105,109)(106,109)
\qbezier(106,109)(106,109)(107,109)
\qbezier(107,109)(107,109)(108,109)
\qbezier(108,109)(108,109)(109,109)
\qbezier(109,109)(109,109)(110,109)
\qbezier(110,109)(110,109)(111,109)
\qbezier(111,109)(111,109)(112,110)
\qbezier(112,110)(112,110)(113,112)
\qbezier(113,112)(113,112)(114,113)
\qbezier(114,113)(114,113)(115,113)
\qbezier(115,113)(115,113)(116,113)
\qbezier(116,113)(116,113)(117,114)
\qbezier(117,114)(117,114)(118,114)
\qbezier(118,114)(118,114)(119,114)
\qbezier(119,114)(119,114)(120,115)
\qbezier(120,115)(120,115)(121,116)
\qbezier(121,116)(121,116)(122,117)
\qbezier(122,117)(122,117)(123,117)
\qbezier(123,117)(123,117)(124,118)
\qbezier(124,118)(124,118)(125,120)
\qbezier(125,120)(125,120)(126,120)
\qbezier(126,120)(126,120)(127,122)
\qbezier(127,122)(127,122)(128,128) \linethickness{0.1mm}
\end{picture}
\end{center}
\caption{Graph of $h_{3/4}$} \label{fig:3}
\end{figure}
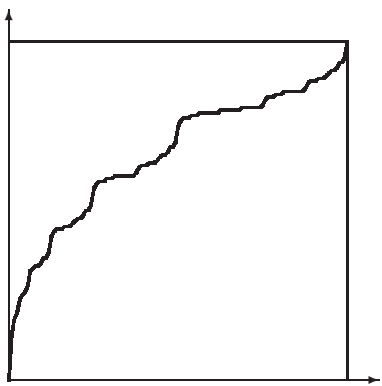

\newpage \section{The main results}\label{Sect-Main-Results}

In Section~\ref{sect:Vstup} we define the main notions such as
dynamical system, fixed and periodical point, trajectory and
orbit, topological conjugation and topological semiconjugation of
maps. We give examples of topologically conjugated $[0,\,
1]\rightarrow [0,\, 1]$ mappings and this leads us to the
following proposition.
\begin{proposition*}[Proposition~\ref{theor:ksklin-tpleqv},
also Lemma~1 in~\cite{UMZh}] Let $f,\, g:\, [0,\, 1]\rightarrow
[0,\, 1]$ be piecewise linear maps, which are topologically
conjugated via increasing piecewise linear homeomorphism $h$. If
$f(0)=0$, then $g(0)=0$ and $f'(0)=g'(0)$.
\end{proposition*}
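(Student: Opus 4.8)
The plan is to exploit the conjugacy equation $h\circ f=g\circ h$ together with the elementary fact that a piecewise linear map fixing $0$ is genuinely \emph{linear} (not merely affine) on a one‑sided neighbourhood of $0$.

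First I would record that an increasing homeomorphism of $[0,1]$ onto itself necessarily sends $0$ to $0$; hence $h(0)=0$. Substituting $x=0$ into $h(f(x))=g(h(x))$ and using $f(0)=0$ gives $h(0)=g(h(0))$, i.e. $0=g(0)$. This settles the first assertion with essentially no work, and also guarantees that $g$ is linear near $0$, which I shall need below.

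For the statement about derivatives I would pick $\delta>0$ so small that $f$ and $h$ are affine on $[0,\delta]$; since $f(0)=h(0)=0$ this forces $f(x)=ax$ and $h(x)=cx$ on $[0,\delta]$, where $a=f'(0)\ge 0$ and $c=h'(0)>0$ (positivity of $c$ because $h$ is increasing). Likewise $g(y)=by$ for $y\in[0,\eta]$ with $b=g'(0)$. Because $a$ and $c$ are finite I may shrink $\delta$ further so that for every $x\in[0,\delta]$ one has simultaneously $ax\le\delta$ (the region where $h$ is linear) and $cx\le\eta$ (the region where $g$ is linear). On this interval the functional equation becomes $c\,(ax)=b\,(cx)$, that is $acx=bcx$ for all small $x>0$; cancelling $cx\ne 0$ yields $a=b$, i.e. $f'(0)=g'(0)$.

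I do not expect a serious obstacle here; the only points that need care are (i) justifying that the images $f([0,\delta])$ and $h([0,\delta])$ can be driven into the respective linearity neighbourhoods, which is exactly where finiteness of the slopes — guaranteed by piecewise linearity — enters, and (ii) the degenerate case $a=0$, in which $f\equiv 0$ near $0$ and the same computation forces $b=0$ as well, so the conclusion persists. Everything else reduces to a one‑line substitution.
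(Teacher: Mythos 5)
Your proposal is correct and follows essentially the same route as the paper: write $f$, $h$ (and hence $g$) as genuinely linear maps $ax$, $cx$, $bx$ on a small one-sided neighbourhood of the fixed point $0$ and read off $a=b$ from the conjugacy equation, the only cosmetic difference being that you use $h\circ f=g\circ h$ directly while the paper computes $g=h\circ f\circ h^{-1}$ with $h^{-1}(x)=x/c$. Your added care about $g(0)=0$, the shrinking of $\delta$, and the degenerate case $a=0$ only tightens the same argument.
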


In Section~\ref{Sect:IstorOgl} we give a historical review of
works, which deal with the study of iterations of one-dimensional
maps, studying of hat-maps, logistic map, E. L\'amerey's diagrams
and topological conjugation of one-dimensional maps
(one-dimensional dynamical systems).

Section~\ref{sect-Grupy} is devoted to the study of topological
conjugation of continuous $[0,\, 1]\rightarrow [0,\, 1]$ maps,
whose semigroup of iterations is a cyclic group call $C_n$. A map
$f$ with such semigroup of iterations satisfy the functional
equation \begin{equation} \label{eq-52} f^n = f,\end{equation} and
for $m,\, 2\leq m<n$ the equality $f^m=f$ does not hold. In
Section~\ref{sect-Grupy-1} we prove the following theorem:
\begin{theorem*}[Theorem~\ref{theor:7}, also Theorem~1
in~\cite{Yulia}] If the continuous maps $f:\, [0,\, 1]\rightarrow
[0,\, 1]$ satisfy~(\ref{eq-52}), then it also satisfy the equation
$$f^3 = f.$$
\end{theorem*}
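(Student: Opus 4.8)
The plan is to study $f$ on its own image rather than on all of $[0,1]$. Set $Y=f([0,1])$. Since $[0,1]$ is compact and connected and $f$ is continuous, $Y$ is a closed subinterval $[a,b]\subseteq[0,1]$ (possibly degenerate). Every iterate of $f$ maps $Y$ into itself: indeed $f(Y)\subseteq f([0,1])=Y$, and inductively $f^{k+1}(Y)\subseteq f(Y)\subseteq Y$. The next observation is that the hypothesis $f^n=f$ forces $f^{n-1}$ to be the identity on $Y$: any $y\in Y$ has the form $y=f(x)$, and then $f^{n-1}(y)=f^{n-1}(f(x))=f^{n}(x)=f(x)=y$. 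Hence $f|_Y\colon Y\to Y$ is a bijection whose two-sided inverse is $f^{n-2}|_Y$ (with the convention $f^0=\mathrm{id}$ when $n=2$), so $f|_Y$ is a homeomorphism of the interval $[a,b]$ satisfying $(f|_Y)^{\,n-1}=\mathrm{id}_Y$. (We may of course assume $n\geq 2$, which is the case relevant to Section~\ref{sect-Grupy}.)

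The heart of the proof is then a classification of finite-order self-homeomorphisms of an interval. A homeomorphism $\varphi$ of $[a,b]$ is strictly monotone. If $\varphi$ is increasing, I claim $\varphi=\mathrm{id}$: it fixes the endpoints $a,b$, and if $\varphi(x_0)>x_0$ for some interior point $x_0$, then applying the increasing map $\varphi$ repeatedly yields $\varphi^{k+1}(x_0)>\varphi^{k}(x_0)$ for every $k$, so the orbit of $x_0$ is strictly increasing and never returns to $x_0$, contradicting $\varphi^{\,n-1}=\mathrm{id}$; the case $\varphi(x_0)<x_0$ is symmetric. If instead $\varphi$ is decreasing, then $\varphi^{2}$ is an increasing homeomorphism with $(\varphi^{2})^{\,n-1}=\mathrm{id}$, so $\varphi^{2}=\mathrm{id}$ by the increasing case. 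In all cases we conclude $(f|_Y)^{2}=\mathrm{id}_Y$.

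Finally, for an arbitrary $x\in[0,1]$ we have $f(x)\in Y$, hence $f^{3}(x)=(f|_Y)^{2}(f(x))=f(x)$, i.e. $f^{3}=f$. The only genuinely delicate point is the monotone classification above — specifically the sublemma that an increasing self-homeomorphism of an interval with a periodic point must be the identity — and this reduces to the ``orbits of a monotone map are monotone'' remark; everything else is routine. I would therefore isolate that monotone-homeomorphism sublemma first and then assemble the three steps (image is an invariant interval, $f|_Y$ is a finite-order homeomorphism, $(f|_Y)^2=\mathrm{id}$) into the proof.
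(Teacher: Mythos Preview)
Your proof is correct and takes a genuinely different route from the paper. The paper argues by contradiction through periodic orbits: if $f^3\neq f$ there is a point $x_0$ with $f^3(x_0)\neq f(x_0)$ but $f^n(x_0)=f(x_0)$, so $f(x_0)$ is periodic of period greater than $2$; Lemma~\ref{lema:27} then shows that a continuous interval map with a periodic point of period exceeding $2$ must have orbits of unbounded cardinality, contradicting the uniform bound $n$ forced by $f^n=f$. Your argument instead restricts to the image $Y=f([0,1])$, observes that $f|_Y$ is a homeomorphism of an interval satisfying $(f|_Y)^{n-1}=\mathrm{id}$, and classifies such finite-order homeomorphisms directly (increasing $\Rightarrow$ identity, decreasing $\Rightarrow$ involution). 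This is essentially an inline proof of Natanson's Theorem~\ref{theor:Nathan} applied to $f|_Y$, which is exactly the result the paper names as the special case being generalized; so you have shown concretely how Theorem~\ref{theor:7} reduces to that classical fact. Your approach is shorter and more self-contained, avoiding the separate orbit-cardinality lemma; the paper's approach, on the other hand, isolates Lemma~\ref{lema:27} because it is reused later (e.g.\ in Lemma~\ref{fix-interval} for the structure theorems~\ref{theor:6} and~\ref{theor:5}), so the investment pays off downstream.
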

This theorem is a generalization of the following Theorem.

\begin{theorem*}[Theorem~\ref{theor:Nathan},
also Theorem~2 from~\cite{Natanson}] If $g:\, [0,\, 1]\rightarrow
[0,\, 1]$ is a continuous function such that $g^p(x) =x$ for all
$x$, then $g^2(x)=x$ for all $x$. In particular, if $p$ is odd,
then $g(x)=x$ for all~$x$.
\end{theorem*}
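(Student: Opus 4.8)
The plan is to prove that a continuous $g\colon[0,1]\to[0,1]$ with $g^p=\mathrm{id}$ (for some $p\ge 1$) already satisfies $g^2=\mathrm{id}$. The first observation is that $g$ must be a homeomorphism: it has a two-sided inverse $g^{p-1}$, and a continuous bijection of $[0,1]$ to itself is automatically a homeomorphism. Hence $g$ is strictly monotone. I would split into the two monotonicity cases.

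**Case $g$ increasing.** Here I claim $g=\mathrm{id}$. Suppose not; then there is a point $x_0$ with, say, $g(x_0)>x_0$ (the case $g(x_0)<x_0$ is symmetric). Since $g$ is increasing, applying $g$ repeatedly preserves the inequality: $g^{k+1}(x_0)=g(g^k(x_0))>g^k(x_0)$ as long as we keep feeding in the strict inequality, so by induction $x_0<g(x_0)<g^2(x_0)<\cdots<g^p(x_0)$. But $g^p(x_0)=x_0$, a contradiction. Therefore $g(x)=x$ for every $x$, and in particular $g^2=\mathrm{id}$. This also settles the "$p$ odd" claim once we know that for odd $p$ the decreasing case cannot occur.

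**Case $g$ decreasing.** Then $g^2$ is increasing, and $(g^2)^p=g^{2p}=(g^p)^2=\mathrm{id}$, so by the increasing case applied to $g^2$ we get $g^2=\mathrm{id}$, which is exactly the desired conclusion. Moreover, a decreasing continuous self-map of $[0,1]$ has a unique fixed point $c$ (by the intermediate value theorem applied to $g(x)-x$), and since $g^2=\mathrm{id}$, $g$ is an involution; if $p$ were odd, then $g=g^p\cdot(g^2)^{-(p-1)/2}$... more cleanly: $g=g^{p}\circ (g^{2})^{(1-p)/2}=\mathrm{id}\circ\mathrm{id}=\mathrm{id}$ when $p$ is odd — but $\mathrm{id}$ is increasing, contradicting that $g$ is decreasing. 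Hence for odd $p$ only the increasing case survives and $g=\mathrm{id}$.

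**The main obstacle**, such as it is, is being careful with the induction in the increasing case: one must track that the strict inequality propagates correctly through the iterates (it does, precisely because $g$ is \emph{strictly} increasing, which we get for free from injectivity). Everything else is a routine application of monotonicity, the intermediate value theorem, and the fact that a continuous bijection of a compact interval is a homeomorphism; no delicate estimates are needed.
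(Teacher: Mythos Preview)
Your proof is correct and is essentially the standard argument: $g^p=\mathrm{id}$ forces $g$ to be a bijection, hence strictly monotone; the increasing case yields $g=\mathrm{id}$ by the usual ``staircase'' argument, and the decreasing case reduces to the increasing one via $g^2$. The manipulation for odd $p$ is slightly garbled but correct in substance: from $g^2=\mathrm{id}$ and $p=2k+1$ one gets $\mathrm{id}=g^p=(g^2)^k\circ g=g$, contradicting that $g$ is decreasing.

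The paper does not supply its own proof of this statement; it simply quotes it from Natanson. However, the paper's surrounding machinery gives an alternative route worth noting: Lemma~\ref{lema:27} shows that a continuous interval map with a periodic orbit of period $>2$ has orbits of unbounded cardinality, which is impossible when $g^p=\mathrm{id}$ (all orbits have at most $p$ points). Hence every point is periodic of period $1$ or $2$, i.e.\ $g^2=\mathrm{id}$. This dynamical argument avoids the monotonicity split entirely and generalizes immediately to the paper's Theorem~\ref{theor:7} ($f^n=f\Rightarrow f^3=f$), where $f$ need not be invertible and your bijection/monotonicity approach would not apply. Your argument, on the other hand, is more elementary and self-contained for the specific statement at hand.
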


We describe in Section~\ref{sect-Grupy-2} the graph of the
continuous $f:\, [0,\, 1]\rightarrow [0,\, 1]$, whose semigroup of
iterations is a finite group.

\begin{theorem*}[Theorem~\ref{theor:6}, also theorem~2
in~\cite{Yulia}] For a continuous maps $f:\, [0,\, 1]\rightarrow
[0,\, 1]$ the following statements are equivalent:

1) $f^{n}(x) =f(x)$ for all $x\in [0,\, 1]$ and every $n\in
\mathbb{N}$;

2) there exist numbers $a,\, b,\, 0\leq a\leq b\leq 1$ and the
continuous maps $g:\, [0,\, a]\rightarrow [a,\, b]$ and $h:\,
[b,\, 1] \rightarrow [a,\, b]$ such that $f$ can be represented as
$$
\begin{array}{cc}
f(x)=\left \{ \begin{array}{ll}
g(x), &0\leq x\leq a,\\
x,  &a\leq x \leq b,\\
h(x), &b\leq x\leq 1.
\end{array}\right.
\end{array}$$
\end{theorem*}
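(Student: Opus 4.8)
The plan is to first collapse condition (1) to a single identity. Taking $n=2$ in (1) gives the idempotency relation $f\circ f = f$, and conversely, if $f^2=f$ then an immediate induction ($f^{n+1}=f\circ f^n=f\circ f=f^2=f$) recovers $f^n=f$ for every $n\in\mathbb{N}$. So it suffices to show that $f^2=f$ holds if and only if $f$ admits the piecewise representation in (2).

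For the implication (2)$\Rightarrow$(1): if $f$ has the stated form, then every value $f(x)$ lies in $[a,b]$ --- on $[0,a]$ because $g$ takes values in $[a,b]$, on $[a,b]$ because there $f$ is the identity, and on $[b,1]$ because $h$ takes values in $[a,b]$. Since $f$ restricted to $[a,b]$ is the identity, we get $f(f(x))=f(x)$ for all $x\in[0,1]$, i.e. $f^2=f$, hence (1).

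For the implication (1)$\Rightarrow$(2): put $I:=f([0,1])$. Being the continuous image of the compact connected set $[0,1]$, the set $I$ is a compact connected subset of $\mathbb{R}$, hence a closed interval $[a,b]$ with $0\le a\le b\le 1$ (the degenerate case $a=b$ allowed). If $x\in[a,b]$, choose $y$ with $x=f(y)$; then $f(x)=f(f(y))=f(y)=x$, so $f$ is the identity on $[a,b]$. Finally, since $f([0,1])=[a,b]$, the restriction $f|_{[0,a]}$ is a continuous map $[0,a]\to[a,b]$ and $f|_{[b,1]}$ is a continuous map $[b,1]\to[a,b]$; taking $g:=f|_{[0,a]}$ and $h:=f|_{[b,1]}$ yields exactly the representation claimed in (2).

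I do not expect a genuine obstacle: the substantive content is just the identification of the image as a pointwise-fixed interval, which makes $f$ a retraction of $[0,1]$ onto $[a,b]$. The only points needing a word of care are the degenerate configurations --- $a=b$ (then $f$ is constant equal to $a$), or $a=0$, or $b=1$ --- where one should check that the boundary pieces $[0,a]$ and $[b,1]$ (possibly singletons) and the corresponding maps $g,h$ remain well defined and continuous; this is routine.
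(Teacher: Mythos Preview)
Your proof is correct and follows essentially the same route as the paper: reduce condition (1) to the idempotency $f^2=f$, take $[a,b]=f([0,1])$ as the continuous image of a compact interval, observe that $f$ fixes every point of $[a,b]$, and read off $g$ and $h$ as restrictions. The paper's argument is terser and omits the explicit discussion of the converse direction and the degenerate cases you flag, but the substance is identical.
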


\begin{figure}[htbp]
\begin{minipage}[h]{\linewidth}
\begin{center}
\begin{picture}(75,75) \qbezier(0,0)(0,35)(0,70)
\qbezier(0,0)(35,0)(70,0) \qbezier(0,0)(35,35)(70,70)
\qbezier(0,70)(35,70)(70,70) \qbezier(70,0)(70,35)(70,70)
\qbezier[20](0,50)(35,50)(70,50)\qbezier[20](0,30)(35,30)(70,30)
\qbezier(70,0)(70,35)(70,70) \linethickness{0.7mm}
\qbezier(30,30)(40,40)(50,50) \linethickness{0.1mm}
\qbezier(30,30)(20,35)(15,45) \qbezier(0,33)(10,50)(15,45)
\qbezier(51,51)(53,45)(60,35) \qbezier(60,35)(62,40)(65,45)
\qbezier(70,35)(67,40)(65,45) \put(3,27){$g(x)$}
\put(50,25){$h(x)$}
\end{picture}
\end{center}
\end{minipage}
\caption{Graph of $f$}\label{fig:24}
\end{figure}
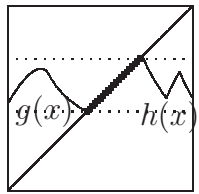

The graph of $f$ from Theorem~\ref{theor:6} is given at
Figure~\ref{fig:24}.

\begin{theorem*}[Theorem~\ref{theor:5},
also Theorem~3 in~\cite{Yulia} and Theorem~10 in~\cite{OdynAwt}]

For a continuous maps $f:\, [0,\, 1]\rightarrow [0,\, 1]$ the
following statements are equivalent:

1) $f^{3}(x) =f(x)$ for all $x\in I$;

2) there exist numbers $a,\, b$ with $0\leq a\leq b\leq 1$ and
maps continuous $g:\, [0,\, a]\rightarrow [a,\, b]$, $h:\, [b,\,
1] \rightarrow [a\, ,b]$ and $\varphi:\, [a,\, b]\rightarrow [a,\,
b]$ with the following properties. the graph of $\varphi$ is
symmetrical in the line $y=x$ and $\varphi([a,b])=[a,b]$. The maps
$f$ can be represented as follows:
$$
\begin{array}{cc}
 f(x)=\left \{ \begin{array}{ll}
    g(x),& 0\leq x\leq a,\\
   \varphi(x),&  a\leq x \leq b,\\
  h(x), & b\leq x\leq 1.
          \end{array}\right.
\end{array}
$$
\end{theorem*}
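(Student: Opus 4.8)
The plan is to reduce the entire statement to a single elementary fact: for a continuous $f:\,[0,\,1]\to[0,\,1]$, the equality $f^{3}=f$ on $[0,\,1]$ holds \emph{if and only if} the restriction of $f$ to its image $\mathrm{Im}\,f=f([0,\,1])$ is an involution. Since $\mathrm{Im}\,f$ is the continuous image of a compact interval, it is itself a closed subinterval $[a,\,b]\subseteq[0,\,1]$ with $0\le a\le b\le 1$; the map $\varphi$ of statement~2) will be $f|_{[a,\,b]}$, and $g$, $h$ will be the restrictions of $f$ to $[0,\,a]$ and $[b,\,1]$.

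I would first settle $2)\Rightarrow 1)$. If $f$ has the indicated form, then $f([0,\,1])=g([0,\,a])\cup\varphi([a,\,b])\cup h([b,\,1])\subseteq[a,\,b]$, so $f(x)\in[a,\,b]$ for every $x$. A continuous $\varphi:\,[a,\,b]\to[a,\,b]$ whose graph is symmetric in the line $y=x$ satisfies $\varphi(\varphi(t))=t$ for all $t$: the reflected point $(\varphi(t),\,t)$ lies on the graph, and since the graph is the graph of a function this forces $\varphi(\varphi(t))=t$. Hence for every $x$ we get $f^{3}(x)=\varphi\bigl(\varphi(f(x))\bigr)=f(x)$.

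For $1)\Rightarrow 2)$ I would put $[a,\,b]:=\mathrm{Im}\,f$. Given $y\in[a,\,b]$, choose $x_{0}$ with $f(x_{0})=y$; then $f^{2}(y)=f^{2}(f(x_{0}))=f^{3}(x_{0})=f(x_{0})=y$, so $f^{2}$ is the identity on $[a,\,b]$. Thus $\varphi:=f|_{[a,\,b]}$ is a continuous involution of $[a,\,b]$, hence a bijection (being its own inverse), so $\varphi([a,\,b])=[a,\,b]$, and its graph is symmetric in $y=x$ because $y=\varphi(x)\iff x=\varphi(y)$. Setting $g:=f|_{[0,\,a]}$ and $h:=f|_{[b,\,1]}$, both are continuous with values in $f([0,\,1])=[a,\,b]$, and $f$ is visibly equal to $g$ on $[0,\,a]$, to $\varphi$ on $[a,\,b]$, and to $h$ on $[b,\,1]$, the pieces matching at $a$ and $b$ since all three are restrictions of $f$. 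The degenerate configurations $a=0$, $b=1$, or $a=b$ require no special treatment.

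There is no deep obstacle here; the steps that demand care are (i) checking that $\mathrm{Im}\,f$ is genuinely a closed interval and that in statement~2) the interval $[a,\,b]$ is necessarily $\mathrm{Im}\,f$ (from $\varphi([a,\,b])=[a,\,b]\subseteq\mathrm{Im}\,f\subseteq[a,\,b]$, forcing equality), and (ii) making precise the equivalence between ``the graph of $\varphi$ is symmetric in $y=x$'' and ``$\varphi$ is an involution'', together with the remark that a continuous involution of a closed interval is automatically a self-homeomorphism. I would also point out that the same scheme yields Theorem~\ref{theor:6} once ``$f^{2}=\mathrm{id}$ on $\mathrm{Im}\,f$'' is replaced by ``$f=\mathrm{id}$ on $\mathrm{Im}\,f$'', which is precisely what $f^{n}=f$ for all $n$ supplies.
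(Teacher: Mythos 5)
Your argument is correct, and in the $1)\Rightarrow 2)$ direction it uses the same decomposition as the paper ($[a,\,b]=f(I)$, with $g,\,\varphi,\,h$ the restrictions of $f$), but it reaches the conclusion by a genuinely shorter route. The paper also reduces everything to the fact that $f^{2}(x)=x$ on $f(I)$, but it obtains this by setting $g=f^{2}$, checking $g^{2}=g$, and invoking Theorem~\ref{theor:6}; it then spends most of the proof on additional structure, using Lemma~\ref{fix-interval} and Lemma~\ref{lema:28} to show that when $f^{2}\neq f$ the fixed-point set of $f$ is a single point, that $f$ decreases on $[a,\,b]$, and that $\{a,\,b\}$ is a periodic orbit of period $2$. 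None of that extra analysis is required by statement 2) as written, and your one-line derivation $f^{2}(f(x_{0}))=f^{3}(x_{0})=f(x_{0})$, combined with the observation that symmetry of the graph in $y=x$ is precisely the involution property $\varphi\circ\varphi=\mathrm{id}$ (which also gives $\varphi([a,\,b])=[a,\,b]$ for free), makes this transparent. What your approach buys is economy and the explicit verification of $2)\Rightarrow 1)$, which the paper only declares evident; what the paper's longer route buys is the finer normal form of $\varphi$ (a decreasing involution with a unique fixed point, with $a,\,b$ forming a $2$-cycle), which is not part of the stated equivalence but feeds the structural picture used elsewhere in that section. The delicate points you flag (that $f(I)$ is a closed interval, that $[a,\,b]$ in 2) is forced to equal $f(I)$, and the symmetry--involution equivalence) are exactly the right ones, and your treatment of them is sound.
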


Section~\ref{sect-Grupy-3} is devoted to the description of
conjugated classes of maps, whose semigroup of iterations is a
finite groups and which have finitely many intervals of monotony.
Let $f,\, g:\, [0,\, 1]\rightarrow [0,\ 1]$ be continuous. Let
$h:\, [0,\, 1]\rightarrow [0,\ 1]$ be a homeomorphism such that
$g=h^{-1}(f(h))$, also $p_1,\, p_2,\, q_1$ and $q_2$ be such
numbers that $f([0,\, 1])=[p_1,\, q_1]$, and $g([0,\, 1])=[p_2,\,
q_2]$. Let $a_1,\ldots,\, a_n$ and $b_1,\ldots,\, b_m$ be
extremums of $f$ and $g$ correspondingly. Points $0$, $1$ and ends
of intervals of fixed points are also considered as extremums.
\begin{definition*} Vectors $(v_1,\ldots,\, v_k)$ and
$(w_1,\ldots,\, w_k)$ are caller co-ordered if for every $i,\, j$
the inequality $v_i\leq v_j$ is equivalent to $w_i\leq w_j$.
\end{definition*}

\begin{notation*}
Denote the following vectors:

\noindent $ v_f = (f(a_1),\, \ldots,\, f(a_n))$,

\noindent $\widetilde{v}_f = (f(a_1),\, f^2(a_1),\ldots,\,
f(a_n),\, f^2(a_n)) $,

\noindent $ w_g = (g(b_1),\, \ldots,\, g(b_m))$, and

\noindent  $\widetilde{w}_g = (g(b_1),\, g^2(b_1),\, \ldots,\,
g(b_m),\, g^2(b_m)). $
\end{notation*}
We prove the following two theorems.
\begin{theorem*}[Theorem~\ref{th:f2=f3}, also Lemma~13
in~\cite{ADM-2007}] Continuous maps $f,\, g:\, [0,\, 1]\rightarrow
[0,\, 1]$ are conjugated via increasing homeomorphism if only if
$m=n$ numbers of end points of $f([0,\, 1])$ and $g([0,\, 1])$
coincide and vectors $\widetilde{v}_f$ and $\widetilde{w}_g$ are
co-ordered.
\end{theorem*}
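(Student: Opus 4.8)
The plan is to prove the two directions separately, with the forward direction being essentially a monotonicity bookkeeping argument and the converse being an explicit construction of the conjugating homeomorphism. First I would set up notation: since $h$ is an increasing homeomorphism of $[0,\,1]$ with $g = h^{-1}\circ f\circ h$, it carries the extremum set $\{a_1,\ldots,a_n\}$ of $f$ bijectively and order-preservingly onto the extremum set $\{b_1,\ldots,b_m\}$ of $g$ (here one uses that "extremum" includes the endpoints $0,1$ and the ends of intervals of fixed points, so the notion is conjugacy-invariant — this is exactly the kind of invariance underlying Proposition~\ref{theor:ksklin-tpleqv}). In particular $m=n$ and, relabelling so that $a_1<\cdots<a_n$ and $b_1<\cdots<b_n$, we get $h(a_i)=b_i$ for all $i$. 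Then from $g(b_i) = h^{-1}(f(h(b_i))) = h^{-1}(f(a_i))$ and $g^2(b_i) = h^{-1}(f^2(a_i))$, and the fact that $h^{-1}$ is an increasing bijection, the inequality $f^k(a_i)\le f^\ell(a_j)$ holds iff $g^k(b_i)\le g^\ell(b_j)$ for $k,\ell\in\{1,2\}$. That is precisely the statement that $\widetilde v_f$ and $\widetilde w_g$ are co-ordered; and $f([0,1])=[p_1,q_1]$, $g([0,1])=[p_2,q_2]$ have the same number of endpoints (either one, if $f$ is constant, or two) because $h$ sends the range of $f$ onto the range of $g$.

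For the converse I would reconstruct $h$ piece by piece on the intervals $[a_i,\,a_{i+1}]$ (together with the leftmost piece $[0,\,a_1]$ — but recall $a_1$ is taken to be $0$ — so really on $[a_1,\,a_n]=[0,\,1]$). On each such interval $f$ is monotone (no extrema in the interior), and so is $g$ on $[b_i,\,b_{i+1}]$, and the co-orderedness of the two "doubled" vectors guarantees that $f$ and $g$ move in the same direction there and that the images $f([a_i,a_{i+1}])$ and $g([b_i,b_{i+1}])$ sit in corresponding positions relative to the grid of extremum-values. The natural idea is to build $h$ as a uniform limit of a sequence $h_0,h_1,h_2,\ldots$ of increasing homeomorphisms, where $h_0$ is the piecewise-linear map sending $a_i\mapsto b_i$, and $h_{k+1}$ is obtained from $h_k$ by the "pull-back" rule $h_{k+1} = g^{-1}\circ h_k\circ f$ restricted appropriately on each lap — this is the standard Milnor–Thurston / conjugacy-by-iteration scheme. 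One checks that each $h_k$ is an increasing homeomorphism, that the $h_k$ agree with $h_0$ on the extrema, and that the correction $h_{k+1}-h_k$ is supported on preimages of laps and shrinks (using that $f$ has slopes bounded away from $1$ in absolute value, or more carefully, that the maximal lap-length of the $k$-th iterated partition tends to $0$). The limit $h$ then satisfies $h = g^{-1}\circ h\circ f$, i.e. $g = h^{-1}\circ f\circ h$, and is increasing because it is a decreasing... no: an increasing limit-compatible family; and it is a homeomorphism because it is a monotone surjection of $[0,1]$ onto itself.

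The main obstacle I expect is \emph{not} the direction-and-position bookkeeping (which is routine once the extremum-set invariance is pinned down), but rather the convergence of the iterative construction in the converse, specifically handling laps on which $f$ is not strictly expanding — e.g.\ intervals of fixed points or laps where $|f'|$ can be close to $1$. On a genuine fixed-point interval the "pull-back" does nothing and $h$ is simply forced to match the already-chosen affine piece there, so those laps must be excised and treated as base cases; the delicate part is proving that on the remaining laps the diameters of the $k$-th refinement go to zero, for which I would invoke the hypothesis that $f$ has finitely many laps on each of which it is a homeomorphism onto its image together with a topological (not metric) argument: the iterated partition cannot have a lap of positive length that is never refined, because such a lap would be a wandering or a periodic interval incompatible with $f$ having no extrema in its interior except at endpoints. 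Alternatively — and this is probably cleaner for a book — one reduces to the already-granted structural theorems (Theorems~\ref{theor:6} and~\ref{theor:5}) only insofar as they identify the fixed-point block, and for the rest one cites the standard fact that two piecewise-monotone interval maps with combinatorially identical kneading data are conjugate. I would present the construction for the generic case in full and remark that the fixed-interval case is glued in verbatim.
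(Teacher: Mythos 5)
Your forward direction is essentially the paper's: monotonicity of $h$ forces the extrema of one map onto the extrema of the other (so $m=n$, with $h(b_i)=a_i$ in the paper's convention rather than $h(a_i)=b_i$), and then reading off $g(b_i)=h^{-1}(f(a_i))$, $g^2(b_i)=h^{-1}(f^2(a_i))$ and using that $h^{-1}$ is increasing gives the co-orderedness; this is exactly Lemmas~\ref{extremumy-v-extremumy2} and~\ref{lema:29}, so that half is fine.

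The converse is where there is a genuine gap. You treat the statement as being about arbitrary continuous (piecewise monotone) maps and propose an infinite pull-back iteration $h_{k+1}=g^{-1}\circ h_k\circ f$ with a convergence argument that you yourself admit you cannot control on laps that are not expanding, falling back on the claim that identical kneading data implies conjugacy — which is false in general (it gives only semi-conjugacy, and needs hypotheses such as no stable periodic orbit, cf.\ Theorem~\ref{theor:20}). In fact the theorem as you read it is false: the two maps of Figure~\ref{fig:19} have the same range endpoints and identical vectors $\widetilde{v}_f=\widetilde{w}_g$, yet are not conjugate, since one has an attracting and the other a repelling fixed point. The statement lives in Section~\ref{sect-Grupy-3}, whose standing hypothesis is that the iterations of $f$ and $g$ form a finite group, i.e.\ $f^3=f$ and $g^3=g$; by Theorems~\ref{theor:6} and~\ref{theor:5} the range $[p_1,\,q_1]$ then consists entirely of points of period $1$ or $2$ and $f$ restricted to it is the identity or an order-reversing involution. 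Under that hypothesis the paper's construction is finite, not a limit: first define $h$ on $[p_2,\,q_2]$ as in Lemma~\ref{monotinni-f-g} — arbitrary increasing on one side of the fixed point, forced by $h=f^{-1}(h(g))$ on the other, interpolating the finitely many prescribed points $(g(b_i),\,f(a_i))$ and $(g^2(b_i),\,f^2(a_i))$, which the co-orderedness makes compatible with monotonicity — and then extend to each lap of $g$ outside $[p_2,\,q_2]$ by a single application of $h=f_i^{-1}(h(g))$, since $g$ maps each such lap monotonically into the range where $h$ is already defined (Lemma~\ref{spriajenist:constructyvno}). So no convergence question ever arises; your scheme both attacks a statement that is false at the level of generality you assume and imports analytic difficulties that the actual hypotheses are there to eliminate.
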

\begin{theorem*}[Theorem~\ref{theor:8}, see also Sect.~4
in~\cite{ADM-2007}] Maps $f$ and $g$ are conjugated via decreasing
homeomorphism if and only if $m=n$, numbers of end points of
$f([0,\, 1])$ and $g([0,\, 1])$ coincide and vectors
$\widetilde{v}_f$ and $w_g^*$ are co-ordered where
$$ w_g^*=(g(b_m),\, g^2(b_m),\, \ldots,\, g(b_1),\, g^2(b_1)).$$
\end{theorem*}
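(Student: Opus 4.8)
The plan is to deduce the statement from its increasing-homeomorphism counterpart, Theorem~\ref{th:f2=f3}, by conjugating with the reflection $\sigma(x)=1-x$. The underlying elementary facts are that $\sigma^{2}=\mathrm{id}$ and that a homeomorphism $h\colon[0,1]\to[0,1]$ is decreasing if and only if $h\circ\sigma$ (equivalently $\sigma\circ h$) is increasing. Hence, for continuous $f,g\colon[0,1]\to[0,1]$, there is a \emph{decreasing} homeomorphism $h$ with $g=h^{-1}\circ f\circ h$ if and only if the map $\widehat g:=\sigma\circ g\circ\sigma$ admits an \emph{increasing} homeomorphism $h_{0}$ with $\widehat g=h_{0}^{-1}\circ f\circ h_{0}$: the substitutions $h_{0}=h\circ\sigma$ and $h=h_{0}\circ\sigma$ pass from one presentation to the other. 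Moreover $\widehat g^{\,3}=\sigma g^{3}\sigma=\sigma g\sigma=\widehat g$ and $\widehat g$ has the same number of intervals of monotonicity as $g$, so $\widehat g$ lies in the same class as $g$ and Theorem~\ref{th:f2=f3} applies to the pair $(f,\widehat g)$. It therefore remains only to rewrite the three conditions of Theorem~\ref{th:f2=f3} for $(f,\widehat g)$ in terms of the original $g$.

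First I would record how $\sigma$ transforms the data of $g$. Since $\sigma$ is an order-reversing homeomorphism of $[0,1]$, it carries the extrema $b_{1}<\dots<b_{m}$ of $g$ (local extrema together with $0$, $1$ and the endpoints of the fixed-point intervals, as in the paper) bijectively onto the extrema $1-b_{m}<\dots<1-b_{1}$ of $\widehat g$; in particular $\widehat g$ has exactly $m$ extrema, so the first condition of Theorem~\ref{th:f2=f3} for $(f,\widehat g)$ is $n=m$. Likewise $\widehat g([0,1])=\sigma\bigl(g([0,1])\bigr)$ is the image of $g([0,1])$ under the bijection $\sigma$, hence an interval with the same number of endpoints as $g([0,1])$; so the endpoint condition for $(f,\widehat g)$ is exactly the one stated for $(f,g)$.

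The remaining and main step is the computation of $\widetilde w_{\widehat g}$. Writing $\beta_{j}=1-b_{m+1-j}$ for the $j$-th extremum of $\widehat g$ in increasing order and using $\widehat g=\sigma\circ g\circ\sigma$ with $\sigma^{2}=\mathrm{id}$, one gets $\widehat g(\beta_{j})=1-g(b_{m+1-j})$ and $\widehat g^{\,2}(\beta_{j})=1-g^{2}(b_{m+1-j})$, hence
\[
\widetilde w_{\widehat g}=\bigl(\,1-g(b_{m}),\ 1-g^{2}(b_{m}),\ \dots,\ 1-g(b_{1}),\ 1-g^{2}(b_{1})\,\bigr),
\]
i.e.\ $\widetilde w_{\widehat g}$ is obtained from $w_{g}^{*}$ by the coordinatewise reflection $t\mapsto 1-t$. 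Since applying $t\mapsto 1-t$ to every coordinate reverses all of a vector's order relations, the hypothesis ``$\widetilde v_{f}$ and $\widetilde w_{\widehat g}$ are co-ordered'' of Theorem~\ref{th:f2=f3} translates back into precisely the order condition between $\widetilde v_{f}$ and $w_{g}^{*}$ recorded in the statement. Feeding the three reformulations back into Theorem~\ref{th:f2=f3}, and finally converting the increasing conjugacy $h_{0}$ into the decreasing conjugacy $h=h_{0}\circ\sigma$ (and conversely), establishes both implications.

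The only genuinely delicate point is this last piece of bookkeeping. One must keep separate (i) the order-reversal of the \emph{list} of extrema caused by $\sigma$, which produces the ``$*$'' reordering of $w_{g}^{*}$, and (ii) the coordinatewise reflection $t\mapsto 1-t$ of the \emph{values}, which governs whether the order relation between $\widetilde v_{f}$ and $w_{g}^{*}$ comes out as co-orderedness or its reverse; untangling the interplay of (i) and (ii), and checking that the ``number of endpoints'' hypothesis is truly reflection-invariant, is where care is needed. Everything else is a routine transcription of Theorem~\ref{th:f2=f3}.
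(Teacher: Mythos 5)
Your strategy coincides with the paper's: the text gives no separate proof of Theorem~\ref{theor:8} beyond the remark preceding it, that a decreasing homeomorphism is the reflection $x\mapsto 1-x$ composed with an increasing one, which is exactly your reduction to Theorem~\ref{th:f2=f3} via $\widehat g=\sigma\circ g\circ\sigma$, $h_0=h\circ\sigma$; and your computation $\widetilde w_{\widehat g}=(1-g(b_m),\,1-g^2(b_m),\ldots,\,1-g(b_1),\,1-g^2(b_1))$ is correct.

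The gap sits exactly at the step you flag as delicate but then assert rather than carry out. Co-orderedness of $\widetilde v_f$ with $\widetilde w_{\widehat g}$, i.e.\ with the coordinatewise reflection of $w_g^*$, is equivalent to $\widetilde v_f$ and $w_g^*$ being \emph{oppositely} ordered (every $v_i\le v_j$ equivalent to $w_i\ge w_j$), not co-ordered: the block reversal already built into the definition of $w_g^*$ is a relabelling of the positions of one vector, and it cannot cancel the reflection $t\mapsto 1-t$, which flips every single comparison. So what your reduction actually establishes is the criterion with ``oppositely ordered'' (equivalently, $\widetilde v_f$ co-ordered with $(1-g(b_m),\,1-g^2(b_m),\ldots,\,1-g(b_1),\,1-g^2(b_1))$), and no amount of bookkeeping can turn this into literal co-orderedness with $w_g^*$: take $f=g$ with $f(x)=1-x$, conjugate to itself by the decreasing homeomorphism $h(x)=1-x$; with extrema $0,1$ one gets $\widetilde v_f=(1,0,0,1)$ and $w_g^*=(0,1,1,0)$, which are oppositely ordered and not co-ordered (counting the fixed point $\tfrac12$ among the extrema only inserts the equal pair $\tfrac12,\tfrac12$ into both vectors and changes nothing). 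Hence you must either prove the statement in the emended form your argument genuinely yields, or supply an additional argument that co-ordering and opposite ordering with $w_g^*$ coincide for this class of maps, which the same example rules out. A minor further inaccuracy: the second hypothesis of Theorem~\ref{th:f2=f3} counts the extrema that are end points of $f([0,1])$ and $g([0,1])$, not the end points of these intervals themselves; that count is still reflection-invariant, so this does not otherwise affect your argument.
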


In Section~\ref{sect-Pobudowa} we consider the topological
conjugation of continuous maps $f,\, g:\, [0,\, 1]\rightarrow
[0,\, 1]$, given by
\begin{equation}
\label{eq:50} f(x) = \left\{\begin{array}{ll}
2x,& 0\leq x< 1/2;\\
2-2x,& 1/2 \leqslant x\leqslant 1,
\end{array}\right.
\end{equation}
and
\begin{equation}
\label{eq:51}
g(x) = \left\{\begin{array}{ll} g_l(x),& x\leqslant v;\\
g_r(x),& x>v,
\end{array}\right.
\end{equation}
where $v\in (0,\, 1)$ is fixed, $g_l(0)=g_r(1)=0$, $g_l(v)
=\lim\limits_{x\rightarrow v-}g_r(x) = 1$ and functions $g_l,\,
g_r$ are monotone and continuous. The problem about the
conjugation of $f$ and $g$ is stated at first at~\cite{Ulam-1964}
via the following theorem
\begin{theorem*}[Theorem~\ref{Theor:9}, also Appendix 1,
\S 3 in~\cite{Ulam-1964}] Let $f$ be a function of the
form~(\ref{eq:50}) and $g$ be a convex function of the
form~(\ref{eq:51}). Consider the integer trajectory of $1$ under
the action of $f$, i.e. the smallest set $M_f$ such that $1\in
M_f$ and $f(x)\in M_f$ is equivalent to $x\in M_f$. The necessary
and sufficient condition of $f$ and $g$ be conjugated is
combinatorial equivalence of $M_f$ and $M_g$ together with that
$\overline{M}_g = [0,\, 1]$.
\end{theorem*}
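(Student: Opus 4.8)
The plan is to translate the analytic statement ``$f$ and $g$ are topologically conjugate'' into the combinatorial statement ``the conjugacy restricts to an order isomorphism $M_f\to M_g$ that commutes with the two maps'', and then to pass back and forth using that $M_f$ --- and, by hypothesis, $M_g$ --- is dense in $[0,\,1]$. Before anything else I would record the structure of $M_f$: since $f(1)=0$, $f(0)=0$, $f^{-1}(0)=\{0,\,1\}$ and $f^{-1}(1)=\{1/2\}$, iterating preimages gives $M_f=\{k/2^{n}:\ n\geq 0,\ 0\leq k\leq 2^{n}\}$, the set of dyadic rationals of $[0,\,1]$; in particular $0,\,1\in M_f$ and $\overline{M}_f=[0,\,1]$. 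The analogous bookkeeping for $g$ (whose two branches are injective and satisfy $g(0)=g(1)=0$) shows $0,\,1\in M_g$ and that $M_g$ is obtained from $1$ by iterating the one--or--two--valued $g$-preimage assignment.

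First I would prove necessity. If $h\circ f=g\circ h$ with $h$ a homeomorphism of $[0,\,1]$, then $h$ is monotone; it cannot be decreasing, since then $h(0)=1$ and evaluating the conjugacy at $x=0$ would give $h(0)=h(f(0))=g(h(0))=g(1)=0$, a contradiction. So $h$ is increasing with $h(0)=0$ and $h(1)=1$. A direct check shows that $h(M_f)$ has both defining properties of $M_g$, and that $h^{-1}(N)$ has both defining properties of $M_f$ for any set $N$ with the corresponding $g$-properties; minimality in both directions then forces $h(M_f)=M_g$. Hence $h|_{M_f}$ is an order isomorphism $M_f\to M_g$ with $h(f(x))=g(h(x))$ for all $x\in M_f$, i.e. a combinatorial equivalence, and $M_g=h(M_f)$ is dense, being the image of a dense set under a homeomorphism, so $\overline{M}_g=[0,\,1]$.

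For sufficiency I would reverse the argument. From a combinatorial equivalence one produces (or is simply handed, depending on the precise definition used in this chapter) an order isomorphism $\varphi:\,M_f\to M_g$ with $\varphi\circ f=g\circ\varphi$ on $M_f$; if that definition is phrased through itineraries, the matching of the forward orbit of $1$ is propagated through the preimage trees of $f$ and $g$, and monotonicity of the branches makes this propagation unambiguous. Using $0,\,1\in M_f\cap M_g$ and density of both sets, I would define $h(x)=\sup\{\varphi(d):\,d\in M_f,\ d\leq x\}$; standard monotone-function arguments give that $h$ extends $\varphi$, that $h$ is strictly increasing (density of $M_f$), that $h$ is continuous because $h([0,\,1])\supseteq M_g$ is dense and so $h$ can have no jump, and that $h$ is onto since $h(0)=0$ and $h(1)=1$. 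Thus $h$ is an increasing homeomorphism. Finally $h\circ f$ and $g\circ h$ are continuous on $[0,\,1]$ and agree on $M_f$ --- for $d\in M_f$ one has $f(d)\in M_f$ and $h(f(d))=\varphi(f(d))=g(\varphi(d))=g(h(d))$ --- hence they agree everywhere, so $h$ conjugates $f$ and $g$.

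The genuinely delicate point is the single place where topology, and not just combinatorics, is used: the hypothesis $\overline{M}_g=[0,\,1]$ is exactly what turns the order isomorphism $\varphi$ into a homeomorphism rather than an order-embedding with gaps --- a combinatorial equivalence on its own is consistent with $M_g$ being order-isomorphic to the dyadics while sitting inside a proper subinterval of $[0,\,1]$, in which case no conjugacy can exist. Everything else (the description of $M_f$, the grand-orbit manipulations, and the extension lemma for order isomorphisms of dense subsets of $[0,\,1]$) is routine; the convexity assumption on $g$ is the classical hypothesis of Ulam's original setting and is not used here beyond the standing requirement that the branches of $g$ be monotone and continuous.
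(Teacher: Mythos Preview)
Your proof is correct and follows essentially the same route as the paper's: define the conjugacy on the dense set $M_f$ of dyadic rationals (via an order isomorphism onto $M_g$), extend to $[0,\,1]$ using density of both $M_f$ and $M_g$, and verify the conjugacy equation on the dense set. The paper carries this out more concretely---it recursively builds $h$ on the nested sets $A_n=f^{-n}(0)$ by matching them with $B_n=g^{-n}(0)$, defines piecewise linear approximations $h_n$, and takes the limit---whereas you package the same construction as the sup-extension of an order isomorphism; the two are equivalent, and your observation that the density hypothesis $\overline{M}_g=[0,\,1]$ is precisely what prevents jumps in the extension matches the paper's use of density of~$\mathcal{B}$.
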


We prove Theorem~\ref{Theor:9} in details. Its proof is
constructive and is based on the following notions and reasonings.
\begin{definition*}[Definition~\ref{def:01}] Let $f$ be
of the form~(\ref{eq:50}), and the function $g$ be of the
form~(\ref{eq:51}). For every $x^*\in [0,\, 1]$ say the value
$y^*$ of the homeomorphism $h$ at $x^*$ to be
\textbf{conditionally found} if the following statement holds: if
$h$ is a topological conjugation of $f$ and $g$, then
$h(x^*)=y^*$.\end{definition*}

We use the word ``conditionally'' to notice that in the time when
we consider this conditionally found value of the conjugation, the
question about the existence of the conjugation is still opened.
\begin{proposition*}[Proposition~\ref{lema:25}] Let $f$
be of the form~(\ref{eq:50}) and $g$ be of the form~(\ref{eq:51}).

If the topological conjugation $h$  of $f$ and $g$ is
conditionally found at some point $x^*$ and equals $y^*$ then for
every $\widetilde{x}$ such that $f(\widetilde{x}) =x^*$, the value
at $\widetilde{x}$ is also found as follows:

1. If $\widetilde{x}\leq 1/2$ then $h(\widetilde{x}) =
g_l^{-1}(y^*)$;

2. If $\widetilde{x}>1/2$ then $h(\widetilde{x}) = g_r^{-1}(y^*)$.
\end{proposition*}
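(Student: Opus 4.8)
The plan is to exploit the defining functional equation $h(f(x)) = g(h(x))$ directly, applying it at the preimage point $\widetilde{x}$. If $h$ is indeed a topological conjugation of $f$ and $g$, then substituting $x = \widetilde{x}$ gives $h(f(\widetilde{x})) = g(h(\widetilde{x}))$, and since $f(\widetilde{x}) = x^*$ and the value at $x^*$ is conditionally found to be $y^*$, the left side equals $y^*$. Thus $g(h(\widetilde{x})) = y^*$, and it remains only to invert $g$ at the point $h(\widetilde{x})$. The subtlety is that $g$ as given in~(\ref{eq:51}) is two-to-one (it is $g_l$ on $[0,v]$ and $g_r$ on $(v,1]$), so $g(h(\widetilde{x})) = y^*$ does not by itself pin down $h(\widetilde{x})$; one must decide whether $h(\widetilde{x})$ lies in $[0,v]$ or in $(v,1]$, and that is exactly where the case split on whether $\widetilde{x} \leq 1/2$ or $\widetilde{x} > 1/2$ enters.

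First I would recall that a topological conjugation $h$ is in particular an increasing homeomorphism of $[0,1]$ fixing the endpoints (this is the normalization used throughout the section, and $h(1/2)$ must be sent to the critical point $v$ of $g$, since $f$ has its unique critical point at $1/2$ and conjugacy carries critical points to critical points). Hence $h$ maps $[0,1/2]$ onto $[0,v]$ and $[1/2,1]$ onto $[v,1]$. Therefore, if $\widetilde{x} \leq 1/2$ then $h(\widetilde{x}) \in [0,v]$, so from $g(h(\widetilde{x})) = y^*$ and $g|_{[0,v]} = g_l$ we get $g_l(h(\widetilde{x})) = y^*$, i.e. $h(\widetilde{x}) = g_l^{-1}(y^*)$; the monotone continuous function $g_l$ with $g_l(0)=0$, $g_l(v)=1$ is a bijection $[0,v]\to[0,1]$, so $g_l^{-1}$ is well defined. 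Symmetrically, if $\widetilde{x} > 1/2$ then $h(\widetilde{x}) \in (v,1]$ and $g_r$ is a bijection of $[v,1]$ onto $[0,1]$ (with $g_r(v)=1$ in the limit, $g_r(1)=0$), so $h(\widetilde{x}) = g_r^{-1}(y^*)$.

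The remaining care is bookkeeping at the boundary: one should note that $x^* = f(\widetilde{x})$ forces $x^* \in [0,1]$ and that the formula $f(\widetilde{x}) = 2\widetilde{x}$ for $\widetilde{x} \le 1/2$ versus $f(\widetilde{x}) = 2 - 2\widetilde{x}$ for $\widetilde{x} > 1/2$ is consistent with $y^* = h(x^*)$ lying in the range of $g$ (which is all of $[0,1]$), so no compatibility obstruction arises. The main obstacle — really the only nontrivial point — is justifying that $h(\widetilde{x})$ lands on the correct side of $v$; everything else is the one-line substitution into $h\circ f = g\circ h$ followed by inverting a monotone function. I would present the two cases in parallel, invoking the fact that $h$ is an increasing homeomorphism with $h([0,1/2]) = [0,v]$ to handle the side-of-$v$ question, and conclude.
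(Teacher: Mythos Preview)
Your proposal is correct and follows essentially the same approach as the paper: both substitute $x=\widetilde{x}$ into the conjugacy equation $h\circ f = g\circ h$ to obtain $g(h(\widetilde{x}))=y^*$, and both resolve the two-to-one ambiguity of $g$ by invoking that $h$ is increasing with $h(1/2)=v$ (the paper cites its Lemmas~\ref{lema:h(0)(1)} and~\ref{lema:30} for this), so that $\widetilde{x}\le 1/2$ forces $h(\widetilde{x})\le v$ and hence $g_l$ applies, while $\widetilde{x}>1/2$ forces $h(\widetilde{x})>v$ and $g_r$ applies.
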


We use the following notations when prove Theorem~\ref{Theor:9}.
Denote by $A_n,\ n\geqslant 1$ the set of all points of $[0,\, 1]$
such that
$$ f^{n}(A_n) = 0.$$
Denote by $B_n,\ n\geqslant 1$ the set of all points of $[0,\, 1]$
such that
$$ g^{n}(B_n) = 0.$$
We obtain the following description of $A_n$.
\begin{proposition*}[Proposition~\ref{lema:An},
also Lemma~4 in~\cite{Fedorenko-2014}] $ \displaystyle{A_n =
\left\{0,\, \frac{1}{2^{n-1}};\ldots ;\frac{2^{n-1}-1}{2^{n-1}},\,
1\right\}.} $
\end{proposition*}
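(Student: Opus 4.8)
The plan is to proceed by induction on $n$, reducing the problem to a single preimage computation for the tent map $f$. The starting observation is that, since $f^{n+1}=f^{n}\circ f$, a point $x$ lies in $A_{n+1}$ if and only if $f^{n}(f(x))=0$, that is, if and only if $f(x)\in A_{n}$; hence $A_{n+1}=f^{-1}(A_{n})$. So everything comes down to (i) describing $f^{-1}(y)$ for an arbitrary $y\in[0,\,1]$, and (ii) checking that applying this operation to a uniform dyadic grid produces the next finer uniform dyadic grid.

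For (i): the restriction of $f$ to $[0,\,1/2]$ is the increasing bijection $x\mapsto 2x$ onto $[0,\,1]$, and the restriction to $[1/2,\,1]$ is the decreasing bijection $x\mapsto 2-2x$ onto $[0,\,1]$; since these two subintervals cover $[0,\,1]$, each $y\in[0,\,1]$ has exactly the two preimages $y/2\in[0,\,1/2]$ and $1-y/2\in[1/2,\,1]$ (which coincide precisely when $y=1$). In particular $A_{1}=f^{-1}(0)=\{0,\,1\}$, and this is the asserted set for $n=1$ because there the denominator $2^{n-1}$ equals $1$; this settles the base case.

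For the inductive step assume $A_{n}=\{\,k/2^{n-1}\ :\ 0\leq k\leq 2^{n-1}\,\}$. Then $A_{n+1}=f^{-1}(A_{n})$ is the union, over $0\leq k\leq 2^{n-1}$, of the pairs $\{\,k/2^{n},\ 1-k/2^{n}\,\}$. The numbers $k/2^{n}$ with $0\leq k\leq 2^{n-1}$ are exactly the integer multiples of $1/2^{n}$ lying in $[0,\,1/2]$, while the numbers $1-k/2^{n}=(2^{n}-k)/2^{n}$ with $0\leq k\leq 2^{n-1}$ are exactly the integer multiples of $1/2^{n}$ lying in $[1/2,\,1]$; hence their union is $\{\,j/2^{n}\ :\ 0\leq j\leq 2^{n}\,\}$, which is the asserted set for $n+1$ since $2^{n}=2^{(n+1)-1}$. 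This closes the induction.

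The argument is essentially bookkeeping, and the only delicate point is the boundary behaviour at $y=1$, where the two preimages collapse to the single point $1/2$; the decomposition of $[0,\,1]$ into the two monotone halves is exactly what makes it transparent that no grid point is missed and none is produced outside $[0,\,1]$. One could instead bypass the induction on preimages by first showing, again by a short induction, that $f^{n}$ is affine on each interval $[k/2^{n},\,(k+1)/2^{n}]$ and maps it onto $[0,\,1]$, alternately increasingly and decreasingly, so that $f^{n}(x)=0$ precisely at $x=0$, at $x=1$, and at the points where two consecutive pieces meet at height $0$; these are exactly the even multiples of $1/2^{n}$, i.e.\ the multiples of $1/2^{n-1}$.
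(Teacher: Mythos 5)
Your proof is correct, and it rests on the same basic facts as the paper's: induction on $n$, the two monotone branches of the tent map, and the observation that every point of $[0,\,1]$ has exactly two $f$-preimages except $1$, whose preimages collapse to $1/2$. The packaging differs slightly. The paper argues in two steps: it first counts, by induction, that $A_n$ has $2^{n-1}+1$ elements (using the same two-preimages-except-at-$1$ fact), and then shows the inclusion of the dyadic grid in $A_{n+1}$ by checking that $f$ maps each point $p/2^{n}$ forward into $A_{n}$; equality then follows by comparing cardinalities. You instead write $A_{n+1}=f^{-1}(A_{n})$ and compute the preimage set explicitly as $\bigcup_k\{k/2^{n},\,1-k/2^{n}\}$, which gives the set equality in one pass and makes the separate counting step unnecessary; this is a mild simplification rather than a different method, and your closing remark about $f^{n}$ being affine and alternately monotone on the intervals $[k/2^{n},\,(k+1)/2^{n}]$ is essentially the description the paper itself establishes later (in the proof of Theorem~\ref{theor:10}) for the images $B_n$.
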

The density of $$ \mathcal{A} = \bigcup\limits_{n=1}^\infty A_n
$$ lets to reduce the proof of Theorem~\ref{Theor:9} to the proof of
of the density of $$\mathcal{B} = \bigcup\limits_{n=1}^\infty
B_n$$ in $[0,\, 1]$. We prove the following theorem
\begin{theorem*}[Theorem~\ref{theor:10}, also Lemma~1
in~\cite{Visnyk}] Let functions $f,\, g:\, [0,\, 1]\rightarrow
[0,\, 1]$ be defined by~(\ref{eq:50}) and~(\ref{eq:51}). If a
homeomorphism $h:\, [0,\, 1]\rightarrow [0,\, 1]$ satisfies the
functional equation $$h(f(x)) = g(h(x)),$$ then it increase and
$h(A_n) = B_n$.
\end{theorem*}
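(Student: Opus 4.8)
The plan is to separate the statement into its two assertions --- that $h$ is increasing, and that $h(A_n)=B_n$ --- and to obtain the second by a short induction once the orientation of $h$ is settled. First I would show $h$ is increasing. Being a continuous bijection of $[0,\,1]$, $h$ is strictly monotone, so $\{h(0),\,h(1)\}=\{0,\,1\}$, and $h$ is increasing precisely when $h(0)=0$. Substituting $x=0$ into $h(f(x))=g(h(x))$ and using $f(0)=0$ gives $h(0)=g(h(0))$, i.e.\ $h(0)$ is a fixed point of $g$. But from the form~(\ref{eq:51}) we have $g(1)=g_r(1)=0\neq 1$, so $1$ is not a fixed point of $g$; hence $h(0)=0$, the map $h$ is increasing, and $h(1)=1$.

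Next I would record the set-theoretic core of the argument. Directly from the definitions of $A_n$ and $B_n$, and from $f^{n+1}=f^{n}\circ f$, $g^{n+1}=g^{n}\circ g$, one gets $A_{n+1}=f^{-1}(A_n)$ and $B_{n+1}=g^{-1}(B_n)$, with preimages understood as sets. I then claim that for every $S\subseteq[0,\,1]$ the conjugacy relation $h\circ f=g\circ h$ together with the bijectivity of $h$ yields $h\bigl(f^{-1}(S)\bigr)=g^{-1}\bigl(h(S)\bigr)$. Indeed, for any $x$ one has the chain $x\in f^{-1}(S)\iff f(x)\in S\iff h(f(x))\in h(S)\iff g(h(x))\in h(S)\iff h(x)\in g^{-1}(h(S))$, where the second equivalence uses injectivity of $h$, the third uses the functional equation, and the rest are definitional; surjectivity of $h$ then upgrades this equivalence of membership to the stated equality of image sets.

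With these two ingredients the induction on $n$ is immediate. For $n=1$ we have $A_1=\{x:f(x)=0\}=\{0,\,1\}$ (a particular case of Proposition~\ref{lema:An}) and $B_1=\{x:g(x)=0\}=\{0,\,1\}$, since $g_l^{-1}(0)=0$ and $g_r^{-1}(0)=1$; hence $h(A_1)=\{h(0),\,h(1)\}=\{0,\,1\}=B_1$. Assuming $h(A_n)=B_n$, the previous paragraph gives $h(A_{n+1})=h\bigl(f^{-1}(A_n)\bigr)=g^{-1}\bigl(h(A_n)\bigr)=g^{-1}(B_n)=B_{n+1}$, which closes the induction. The computations involved are light; I expect the only delicate point to be the orientation step, namely the observation that $1$ is not a fixed point of $g$ (this is exactly what rules out a decreasing conjugacy). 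After that, everything is the formal transport of preimages along $h$ and uses no feature of $f$ and $g$ beyond $f(0)=g(0)=0$ and the shape~(\ref{eq:51}).
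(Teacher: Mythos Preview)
Your proof is correct and takes a genuinely different route from the paper's.  For the orientation of $h$ you argue exactly as the paper does in Lemma~\ref{lema:h(0)(1)}.  For $h(A_n)=B_n$, however, the paper proceeds by a counting argument: it first shows that $|B_n|=2^{n-1}+1$ by analysing the piecewise-monotone structure of the iterate $g^n$ (each branch of $g^{n-1}$ splits into two branches of $g^n$ mapping onto $[0,\,1]$), then observes from the conjugacy that $g^n(h(x))=0$ whenever $x\in A_n$, giving $h(A_n)\subseteq B_n$, and concludes equality by injectivity of $h$ together with $|A_n|=|B_n|$.  Your argument avoids the cardinality computation entirely by using the recursions $A_{n+1}=f^{-1}(A_n)$, $B_{n+1}=g^{-1}(B_n)$ and the transport identity $h(f^{-1}(S))=g^{-1}(h(S))$, which follows cleanly from bijectivity of $h$.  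Your approach is more economical for this particular statement, while the paper's approach has the side benefit of establishing $|B_n|=2^{n-1}+1$, a fact it uses elsewhere (for instance when introducing the ordered elements $\beta_{n,k}$ and the approximations $h_n$).
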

Section~\ref{sect-isnuv} is devoted to the study of topological
conjugation of the maps $f$, given by~(\ref{eq:50}) and the maps
$f_v:\, [0,\, 1]\rightarrow [0,\, 1]$, which is dependent on the
parameter $v\in (0,\, 1)$ and is given by formula
\begin{equation}\label{eq:53}f_v(x) =
\left\{\begin{array}{ll} \frac{x}{v},& x\leqslant
v;\\
 \frac{1-x}{1-v},&
x>v.
\end{array}\right.
\end{equation} In other words, we find the homeomorphism
$h:\, [0,\, 1]\rightarrow [0,\, 1],$ which satisfy the functional
equation
\begin{equation}\label{eq:58} h(f) = f_v(h).
\end{equation}
We prove the following theorem.
\begin{theorem*}[Theorem~\ref{theor:homeom-jed}, see
also Sect. 2.2 in~\cite{Fedorenko-2014}] For every $v\in (0,\, 1)$
the functional equation~(\ref{eq:58}) has a solution in the class
od homeomorphisms $h:\, [0,\, 1]\rightarrow [0,\, 1]$.
Furthermore, this solution is the unique and it increase.
\end{theorem*}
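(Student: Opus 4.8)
\emph{Uniqueness and monotonicity.} Throughout, for the map $f_v$ of~(\ref{eq:53}) we have, in the notation of~(\ref{eq:51}), $g_l(x)=x/v$ and $g_r(x)=(1-x)/(1-v)$, hence $g_l^{-1}(y)=vy$ and $g_r^{-1}(y)=1-(1-v)y$. Let $h$ solve~(\ref{eq:58}). By Theorem~\ref{theor:10}, $h$ increases; as an increasing self-homeomorphism of $[0,1]$ it has $h(0)=0$ and $h(1)=1$, so it is conditionally found at $0$ and at $1$. Since $f(1/2)=1$ with $1/2\le 1/2$, Proposition~\ref{lema:25} gives $h(1/2)=g_l^{-1}(1)=v$. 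As $A_{n+1}=f^{-1}(A_n)$, an induction on $n$ — applying Proposition~\ref{lema:25} at each stage to the $f$-preimages of the already conditionally found points of $A_n$ — forces the value of $h$ at every point of $A_n$, hence on $\mathcal A=\bigcup_{n\ge 1}A_n$. By Proposition~\ref{lema:An}, $\mathcal A$ is the set of dyadic rationals and is dense, so by continuity $h$ is uniquely determined on $[0,1]$. In particular~(\ref{eq:58}) has at most one homeomorphic solution, and any such solution increases.

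\emph{Construction on $\mathcal A$.} Run the same recursion forward to \emph{define} $h_0\colon\mathcal A\to[0,1]$: set $h_0(0)=0$, $h_0(1)=1$, and for $\widetilde x\in A_{n+1}$ with $x^*=f(\widetilde x)\in A_n$ put $h_0(\widetilde x)=v\,h_0(x^*)$ when $\widetilde x\le 1/2$ and $h_0(\widetilde x)=1-(1-v)h_0(x^*)$ when $\widetilde x>1/2$. This is well defined (the branch of $f$ through $\widetilde x$ is determined by $\widetilde x$, and the rule is consistent on the nested sets $A_n\subset A_{n+1}$), and by construction $h_0(A_n)=B_n$ and $h_0(f(x))=f_v(h_0(x))$ for every $x\in\mathcal A$. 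The crucial point is that $h_0$ is \emph{strictly increasing} on $\mathcal A$. I would prove this by induction on $n$: assuming $h_0$ restricts to an order isomorphism $A_n\to B_n$, the points of $A_{n+1}\setminus A_n$ are precisely the new $f$-preimages inserted inside the monotonicity intervals of $f^{n}$, on each of which $f$ acts as an increasing or a decreasing affine map; since the two inverse branches $g_l^{-1},g_r^{-1}$ of $f_v$ are respectively increasing and decreasing exactly as the two inverse branches of $f$ are — and since $f$ and $f_v$ share the same ``full tent'' combinatorics, $1/2\mapsto 1\mapsto 0\mapsto 0$ versus $v\mapsto 1\mapsto 0\mapsto 0$ — the order on $B_{n+1}$ produced by the recursion reproduces the order on $A_{n+1}$.

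\emph{Extension and verification.} Next, $\mathcal B=\bigcup_{n\ge 1}B_n$ is dense in $[0,1]$: the inverse branches of $f_v$ are the affine contractions $y\mapsto vy$ and $y\mapsto 1-(1-v)y$, with ratios $v<1$ and $1-v<1$, so each of the $2^n$ monotonicity intervals of $f_v^{n}$ — whose endpoints are the points $0,1$ together with the $f_v^{k}$-preimages of the turning point $v$ for $0\le k\le n-1$, all of which lie in $\mathcal B$ — has length at most $\max(v,1-v)^{n}\to 0$; hence consecutive points of this subset of $\mathcal B$ are arbitrarily close. Consequently $h_0$ is an order isomorphism between the dense subsets $\mathcal A$ and $\mathcal B$ of $[0,1]$, each containing $0$ and $1$, and so extends uniquely to a strictly increasing homeomorphism $h\colon[0,1]\to[0,1]$. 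Finally $h$ solves~(\ref{eq:58}): the identity $h(f(x))=f_v(h(x))$ holds on the dense set $\mathcal A$ by the construction of $h_0$, and both sides are continuous, so it holds on all of $[0,1]$. Uniqueness and monotonicity of $h$ were settled above.

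\emph{Main obstacle.} The propagation bookkeeping and the standard extension of an order isomorphism between dense subsets of $[0,1]$ to a homeomorphism are routine. The two substantive ingredients are (i) the strict monotonicity of $h_0$ on $\mathcal A$ — i.e.\ that the combinatorial order on the $f_v$-preimages of $0$ matches that on the $f$-preimages of $0$, which has to be verified by hand for this pair of maps; and (ii) the density of $\mathcal B$, easy here because $f_v$ is piecewise linear and expanding but exactly what makes $h$ surjective. I expect (i) to be the main difficulty.
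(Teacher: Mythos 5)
Your proposal is correct and follows essentially the same route as the paper: forced (``conditionally found'') values on the dense set $\mathcal{A}$ of dyadic rationals via Proposition~\ref{lema:25}, density of $\mathcal{B}$ from the contraction factor $\max(v,\,1-v)<1$ (the paper's Lemma~\ref{lema:Bszczilna}), extension of the resulting order isomorphism of dense sets to an increasing homeomorphism, verification of~(\ref{eq:58}) by continuity on a dense set, and uniqueness from the density of $\mathcal{A}$. The monotonicity step you leave as a sketch is exactly the counterpart of the paper's bookkeeping in Theorem~\ref{theor:10} and Lemma~\ref{lema:31}, where $h_n$ is instead defined order-preservingly on $A_n$ and the conjugacy relation at those points is what gets checked.
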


In Section~\ref{Pobudowa-2} we prove the following theorem.
\begin{theorem*}[Theorem~\ref{theor:11}] Let function
$f:\, [0,\, 1]\rightarrow [0,\, 1]$ be given by
formula~(\ref{eq:50}). For every $x_0\in [0,\, 1]$ and every
$\varepsilon>0$ there is a maps $g:\, [0,\, 1]\rightarrow [0,\,
1]$ with the following properties:

1. $g$ is unimodal;

2. $g(x)=f(x)$ for every $x\in [0,\, 1]\setminus
(x_0-\varepsilon,\, x_0+\varepsilon)$;

3. $f$ and $g$ are not topologically conjugated.
\end{theorem*}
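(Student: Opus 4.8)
The plan is to build the perturbation $g$ by modifying $f$ on the small interval $(x_0-\varepsilon,\,x_0+\varepsilon)$ in such a way that the resulting map is still unimodal but fails one of the combinatorial invariants that the preceding conjugacy theorems (Theorem~\ref{th:f2=f3} / Theorem~\ref{theor:10} / Theorem~\ref{Theor:9}) show must be preserved under topological conjugation. The cleanest obstruction to use is the following: if $h$ conjugates $f$ and $g$ and $h$ is increasing (by Theorem~\ref{theor:10} any conjugacy between $f$ and a map of the relevant form must increase), then $h$ carries the grand orbit of the turning point of $f$ onto the grand orbit of the turning point of $g$, and in particular it carries periodic points to periodic points of the same period and respects the cyclic order of any finite invariant set. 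So it suffices to arrange that $g$ has a periodic orbit, or a finite invariant configuration, that $f$ does not have — equivalently that the ``kneading data'' of $g$ differs from that of $f$. Since $f$ is the full tent map, its kneading sequence is the maximal one; any genuine unimodal perturbation that strictly lowers the image of the critical value will have strictly smaller kneading sequence and hence cannot be conjugate to $f$.

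\medskip

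\noindent\textbf{Concretely}, first I would reduce to the case where $x_0$ is not the turning point $1/2$ and $x_0-\varepsilon,\,x_0+\varepsilon\in(0,1)$ with $1/2\notin(x_0-\varepsilon,\,x_0+\varepsilon)$; if $x_0$ is near $0$, near $1$, or near $1/2$, shrink $\varepsilon$ and shift the support of the perturbation slightly, which is harmless because we are allowed to choose $g$ and only need the stated three properties. On that interval $f$ is linear with slope $\pm2$; I replace the straight segment by a slightly ``bumped'' monotone curve — say push it down by a small amount in the middle of the interval while keeping the endpoint values $f(x_0\pm\varepsilon)$ fixed and keeping $g$ strictly monotone on each of the two pieces cut out by $1/2$. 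This guarantees (1) $g$ is unimodal (one increasing branch, one decreasing branch, same turning point $1/2$) and (2) $g=f$ off $(x_0-\varepsilon,\,x_0+\varepsilon)$. The only real content is (3).

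\medskip

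\noindent For (3) I would argue by contradiction: suppose $h$ is a topological conjugacy, so $h(f(x))=g(h(x))$ for all $x$; by the cited Theorem~\ref{theor:10} (applied after noting that $g$ has the form~(\ref{eq:51})) $h$ is an increasing homeomorphism of $[0,1]$ fixing $0$ and $1$, and $h(A_n)=B_n^{(g)}$ for the corresponding preimage-of-$0$ sets. Now I exploit that $f$ has dense periodic points with all periods (it is the full tent map), and in particular it has a fixed point $p=2/3$ in the interior; more usefully, pick a periodic orbit $O$ of $f$ of some period $k$ that is \emph{disjoint} from the closure of the perturbation region's forward-and-backward orbit — since the perturbation was made arbitrarily small, one can instead run the argument the other way: the perturbed map $g$, having a strictly smaller critical value $g(1/2)<1=f(1/2)$, is (by standard kneading theory for unimodal maps, or directly by Theorem~\ref{Theor:9}/combinatorial equivalence of $M_f,M_g$) \emph{not} topologically semiconjugate from $f$ in an order-preserving way onto its whole image, because $g(1/2)\in M_g$ would have to correspond to $f(1/2)=1\in M_f$, forcing $h(1)=g(1/2)<1$, contradicting $h(1)=1$. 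Spelling that last line out: $1$ is the image of the turning point of $f$ and is fixed only in the sense $f(1)=0$, while $1$ is nonetheless forced to satisfy $h(1)=1$ because $h$ is an increasing surjection of $[0,1]$; but conjugacy would give $h(f(1/2))=g(h(1/2))$, i.e. $h(1)=g(h(1/2))\le \max g = g(1/2)<1$, the contradiction.

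\medskip

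\noindent\textbf{The main obstacle} is making the conditionally-found-value bookkeeping in the last step fully rigorous without invoking external kneading theory: one must be sure that ``$\max g<1$'' really does obstruct conjugacy, i.e. that any conjugacy $h$ must send the turning point $1/2$ of $f$ to the turning point $v_g$ of $g$ and the critical value $1$ to the critical value $g(v_g)$. This follows because conjugacy preserves the number and type of laps (monotone pieces) and their order — an increasing homeomorphism carries the unique local-max point of $f$ to the unique local-max point of $g$ and its value to the value — so $h(1/2)=v_g$ and then $h(1)=h(f(1/2))=g(h(1/2))=g(v_g)=\max g<1=h(1)$, absurd. I would present this lap-count/critical-value preservation as a short lemma (it is essentially the content already used in Theorem~\ref{theor:10} that $h(A_1)=B_1$, since $1\in A_1$ and the top of $g$'s graph determines $B_1$), and then (3) drops out immediately, with (1) and (2) checked by inspection of the explicit bump.
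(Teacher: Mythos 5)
There is a genuine gap, and it sits at the junction between your construction and your obstruction. In your reduction you deliberately move the support of the perturbation away from the turning point, so that $1/2\notin(x_0-\varepsilon,\,x_0+\varepsilon)$ (or at least off the perturbed subinterval), and you then replace a piece of the linear branch of $f$ by a monotone ``bump'' with the same endpoint values. For such a $g$ the critical point is still $1/2$ and $g(1/2)=f(1/2)=1$, so $\max g=1$. But your whole argument for property 3 hinges on the inequality $\max g=g(1/2)<1$, from which you derive $h(1)=g(h(1/2))\le\max g<1$. That inequality simply does not hold for the map you built; the peak-lowering obstruction only applies when the perturbation region contains $1/2$ and you push the peak down, which is exactly the case you excluded. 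Worse, the unmodified claim that ``any genuine unimodal perturbation'' of a branch destroys conjugacy is false: if your bump keeps the branch monotone with slopes bounded away from $1$ in modulus, then both laps of $g$ still map onto $[0,\,1]$, preimages of the turning point remain dense, and the criterion of Theorem~\ref{Theor:9} (whose proof never uses convexity) shows $g$ \emph{is} topologically conjugate to $f$. So for $x_0\neq 1/2$ your construction will typically satisfy properties 1 and 2 but violate property 3.

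What is missing is a mechanism that actually kills conjugacy while touching $f$ only inside the prescribed interval, and this is where the paper's proof differs. For $x_0\neq 1/2$ it uses the density of periodic points of $f$: choose a periodic point $x^*\in(x_0-\varepsilon,\,x_0+\varepsilon)$ of some period $n$, and redefine $g$ on a small subinterval $(x^*-\delta,\,x^*+\delta)$ to be linear with slope $f'(x^*)/2^{2n}$ (keeping $g=f$ outside $(x_0-\varepsilon,\,x_0+\varepsilon)$ and linear interpolation in between, so unimodality and property 2 hold). Then $g^{2n}$ is the identity on a whole subinterval around $x^*$, whereas $f^{2n}$ has only isolated fixed points; since a conjugacy between $f$ and $g$ would conjugate $f^{2n}$ with $g^{2n}$ and would carry an interval of fixed points to an interval of fixed points, no conjugacy can exist (this is the same argument as in Lemma~\ref{theor:4}). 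The case $x_0=1/2$ is treated separately by lowering the peak to $1-\varepsilon/2$, and there your critical-value computation (or the paper's remark that $1$ has no $g$-preimage while every point has an $f$-preimage) is correct. If you want to salvage your write-up, keep your case $x_0=1/2$ argument, but replace the ``slight monotone bump'' in the generic case by the flat-piece-through-a-periodic-point construction, and justify property 3 via the interval-of-fixed-points invariant of an iterate rather than via kneading data or the value of $\max g$.
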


Section~\ref{sect-dyffer} is devoted to the differentiability of
the homeomorphism $h:\, [0,\, 1]\rightarrow [0,\, 1]$, which
satisfies the functional equation~(\ref{eq:58}).

This study is motivated by Theorem~\ref{theor:13}.
\begin{theorem*}[Theorem~\ref{theor:13}, also
Proposition 2 at~\cite{Skufca}] The derivative of the
homeomorphism $h:\, [0,\, 1]\rightarrow [0,\, 1]$, which is the
solutions of the functional equation~(\ref{eq:58}), equals 0
almost everywhere and the unique its finite values is
0.\end{theorem*}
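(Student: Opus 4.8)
The plan is to combine the combinatorial structure of the dyadic tree (which governs the conjugacy $h$) with a measure-theoretic argument about how $h$ redistributes length. Recall from Theorem~\ref{theor:10} that $h$ is increasing and carries the set $A_n$ of $n$-th preimages of $0$ under $f$ onto the corresponding set $B_n$ for $f_v$, and that by Proposition~\ref{lema:An} the points of $A_n$ are the $2^{n-1}+1$ equally spaced dyadic points $k/2^{n-1}$. The images $B_n$ are the preimages of $0$ under $f_v^n$, and since $f_v$ has two linear branches of slopes $1/v$ and $1/(1-v)$, the lengths of the $2^{n-1}$ subintervals of $[0,1]$ cut out by $B_n$ are products of the factors $v$ and $1-v$ in all possible ways according to which branch is traversed at each step. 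Thus the monotone step function that agrees with $h$ on $A_n$ has, on the $j$-th dyadic interval of length $2^{-(n-1)}$, a slope equal to $2^{n-1}\cdot v^{a}(1-v)^{n-1-a}$ for the appropriate exponent $a$ determined by the binary address of that interval.

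First I would make precise the claim that $h'$, where it exists, can only be $0$. Fix $v\neq 1/2$ and set $\alpha=\max\{v,1-v\}<1$, $\beta=\min\{v,1-v\}$. For a point $x$ with binary expansion $0.\varepsilon_1\varepsilon_2\ldots$, the slope of $h$ over the depth-$n$ dyadic interval containing $x$ is $2^{n}\alpha^{p_n}\beta^{n-p_n}$ up to bounded multiplicative error, where $p_n$ counts the occurrences of the "$\alpha$-branch" among the first $n$ symbols. If $h'(x)$ exists and is finite it must equal $\lim_n 2^n\alpha^{p_n}\beta^{n-p_n}$; since $2\beta<1<2\alpha$ this limit is $0$ unless $p_n/n\to \log(1/(2\beta))/\log(\alpha/\beta)=:\rho$, a fixed number strictly between $0$ and $1$, and even on that borderline set the product $2^n\alpha^{p_n}\beta^{n-p_n}$ tends to $0$ whenever $p_n$ is not asymptotically linear with the exact constant plus a sublinear correction of the right sign — the upshot being that the only possible finite value of $h'$ is $0$. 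I would present this as a short lemma: the difference quotient of $h$ across any dyadic interval of generation $n$ is $2^n v^{p}(1-v)^{n-p}$, and on nested dyadic intervals this is an eventually-geometrically-decaying-or-exploding sequence that admits no finite nonzero limit.

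Next I would prove that $h'(x)=0$ for Lebesgue-almost-every $x$. Here the natural tool is the strong law of large numbers: under Lebesgue measure on $[0,1]$ the binary digits $\varepsilon_i$ are i.i.d.\ fair coin flips, so the "$\alpha$-branch" is taken with probability $1/2$ at each step, hence $p_n/n\to 1/2$ almost surely. Therefore, for a.e.\ $x$, the generation-$n$ difference quotient behaves like $2^n\alpha^{n/2}\beta^{n/2}=(4\alpha\beta)^{n/2}=(4v(1-v))^{n/2}$, and since $v\neq 1/2$ forces $4v(1-v)<1$, this $\to 0$. A small amount of care is needed to pass from the dyadic-interval difference quotients to the genuine two-sided derivative: because $h$ is monotone, the increment of $h$ over an arbitrary small interval $(x-t,x+t)$ is squeezed between the increments over the two adjacent dyadic intervals of the appropriate generation, so $\limsup_{t\to 0}(h(x+t)-h(x-t))/(2t)$ is controlled by $\limsup_n$(generation-$n$ quotient up to a factor $\leq 4$), which is $0$ a.e. Monotone functions are differentiable a.e.\ by Lebesgue's theorem, so the a.e.\ derivative exists and we have just shown it is $0$; combined with the lemma, $0$ is the only finite value it ever takes.

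The main obstacle is the borderline combinatorial case $v=1/2$ versus $v\neq 1/2$, and within $v\neq 1/2$ the careful handling of the "critical exponent" set where $p_n/n$ could in principle converge to $\rho$: one must check that even there the product does not stabilize at a finite positive number, which requires a second-order (fluctuation) estimate rather than just the first-order exponential rate — an argument in the spirit of the law of the iterated logarithm, or more simply the observation that $2^n v^{p_n}(1-v)^{n-p_n}$ visiting a finite nonzero limit would force $p_n$ to equal $\rho n + O(1)$, which is impossible for an increasing integer sequence unless $\rho$ is rational with small denominator, a case one rules out by a direct number-theoretic check on $\log v/\log(1-v)$. If $v=1/2$ then $f_v=f$, $h$ is the identity, and the statement is vacuous, so that degenerate value is simply excluded at the outset. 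The remaining steps — monotonicity of $h$, the dyadic slope formula, and the passage to the derivative — are routine once Theorem~\ref{theor:10} and Proposition~\ref{lema:An} are in hand.
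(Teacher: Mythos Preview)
Your argument for the almost-everywhere statement via the strong law of large numbers is correct and is a pleasant alternative to the paper's route. The slope of $h_n$ on the dyadic interval containing $x$ is indeed (by Theorem~\ref{lema:32}) a product $\prod_{i=2}^n\alpha_i(x)$ with each $\alpha_i\in\{2v,2(1-v)\}$, and under Lebesgue measure these factors are i.i.d.\ with mean $1$ but geometric mean $\sqrt{4v(1-v)}<1$, so the product tends to $0$ a.e. One correction: in the paper's formula $\alpha_i(x)$ depends on whether the \emph{consecutive} binary digits $x_{i-2},x_{i-1}$ agree, not on a single digit; this does not affect your SLLN argument since the ``equal/unequal'' indicators are still i.i.d.\ fair under Lebesgue measure, but you should state the formula correctly.

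There is, however, a genuine gap in your proof that $0$ is the only possible finite value. You argue that a finite nonzero limit of $2^n v^{p_n}(1-v)^{n-p_n}$ forces $p_n=\rho n+O(1)$, and then claim this is ``impossible for an increasing integer sequence unless $\rho$ is rational with small denominator.'' That claim is false: $p_n=\lfloor\rho n\rfloor$ is a non-decreasing integer sequence with $p_n=\rho n+O(1)$ for every real $\rho$. Your appeal to LIL-type fluctuations or number theory of $\log v/\log(1-v)$ is therefore misdirected. The correct (and much shorter) argument is the one the paper uses: if $h'(x)=t$ exists and is finite then a sandwiching of the increment of $h$ between adjacent dyadic intervals gives $h_n'(x)\to t$; but $h_{n+1}'(x)/h_n'(x)=\alpha_{n+1}(x)\in\{2v,\,2(1-v)\}$, and for $v\neq 1/2$ both of these are bounded away from $1$, so the sequence $h_n'(x)$ cannot converge to a finite nonzero limit. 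This single observation replaces your entire borderline analysis.

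For comparison, the paper does not invoke the SLLN at all for the almost-everywhere statement: once ``finite value $\Rightarrow$ value $=0$'' is established by the ratio argument above, Lebesgue's theorem on the a.e.\ differentiability of monotone functions (Theorem~\ref{theor:14}) immediately gives $h'=0$ a.e. Your SLLN route is self-contained and perhaps more illuminating about \emph{why} the derivative vanishes, but the paper's route is shorter given that Lebesgue's theorem is already in hand.
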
 Theorem~\ref{theor:13} is formulated
in~\cite[Proposition~2]{Skufca} a bit differently: the derivative
of $h:\, [0,\, 1]\rightarrow [0,\, 1]$, which is a solutions
of~(\ref{eq:58}), exists almost everywhere and equals 0
everywhere, where it \textbf{exists}. Nevertheless, it follows
from the proof in~\cite{Skufca}, that authors mean that existing
of the derivative is also its finiteness. Following~\cite[chap.
92, 101]{Fihtengoltz}, we will assume that function is
differentiable at a point if and only if the limit of ratios of
its grows over the grows of the argument exists. We will not
additionally assume that this limit is finite. In the same time,
Theorem~\ref{theor:13} mans only in that the derivative $h'(x)$
can not be finite except $0$. Another part of the Theorem follows
from the Lebesgue theorem about the derivative of monotone
function.
\begin{theorem*}[Theorem~\ref{theor:14}, also Lebesgue
Theorem (see~\cite{Lebeg}, or \S 1.2 in~\cite{Riss})] Every
monotone on the interval function has finite derivative almost
everywhere on this interval.
\end{theorem*}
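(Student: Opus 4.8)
The plan is to give the classical proof due to F.~Riesz, based on the \emph{rising sun lemma}, and then to deduce finiteness of the derivative from Fatou's lemma. Since $-f$ is monotone whenever $f$ is, and $f$ is differentiable exactly where $-f$ is, I may assume $f$ is non-decreasing on $[a,b]$. First I would recall the four Dini derivatives $D^+f$, $D_+f$, $D^-f$, $D_-f$ at an interior point, defined as the $\limsup$ and $\liminf$ of the difference quotient $(f(x+h)-f(x))/h$ as $h\to 0^+$ and $h\to 0^-$. Each takes values in $[0,+\infty]$ because $f$ is non-decreasing. The derivative $f'(x)$ exists as an element of $[0,+\infty]$ precisely when all four coincide, so it suffices to prove that the set on which two of them differ is Lebesgue-null.

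Next I would establish the rising sun lemma: if $g$ is continuous on $[a,b]$ and $E=\{x\in(a,b): g(y)>g(x)\text{ for some }y\in(x,b]\}$, then $E$ is open, hence a countable disjoint union of intervals $(a_k,b_k)$, and $g(a_k)\le g(b_k)$ for every $k$. A monotone $f$ need not be continuous, but it has at most countably many discontinuities, a null set, so I may discard the jump set and apply the lemma to the auxiliary continuous functions $g(x)=f(x)-ux$ and $g(x)=vx-f(x)$ on the complementary closed set. The heart of the argument is the estimate: for rationals $0\le u<v$, the set $E_{u,v}=\{x: D_-f(x)<u<v<D^+f(x)\}$ satisfies $m^*(E_{u,v})=0$. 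One covers $E_{u,v}$ by open intervals on which, using the lemma applied to $f(x)-ux$, the total increment of $f$ is at most $u$ times the length; inside each such interval one covers the relevant part of $E_{u,v}$ by open intervals on which, using the lemma applied to $vx-f(x)$, the increment of $f$ is at least $v$ times the length; comparing the two families gives $m^*(E_{u,v})\le (u/v)\,m^*(E_{u,v})$ up to an $\varepsilon$, which forces $m^*(E_{u,v})=0$. Taking the union over all rational pairs $u<v$ and over the four analogous sign combinations shows that $f'$ exists almost everywhere in $[0,+\infty]$.

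Finally, to upgrade ``$f'$ exists a.e.'' to ``$f'$ is finite a.e.'', I would extend $f$ to $[a,b+1]$ by setting $f(x)=f(b)$ for $x>b$, put $g_n(x)=n\bigl(f(x+1/n)-f(x)\bigr)\ge 0$ on $[a,b]$, and note that $g_n\to f'$ almost everywhere. Since $f$ is non-decreasing, $\int_a^b g_n\,dx = n\int_b^{b+1/n} f\,dx - n\int_a^{a+1/n} f\,dx \le f(b)-f(a)$, so Fatou's lemma gives $\int_a^b f'\,dx \le f(b)-f(a)<\infty$; hence $f'<\infty$ almost everywhere.

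The main obstacle I anticipate is the measure-theoretic estimate $m^*(E_{u,v})=0$: it requires carefully nesting the two families of covering intervals produced by the rising sun lemma, tracking the exceptional endpoints (where only $g(a_k)\le g(b_k)$, rather than equality, is available) and the jump discontinuities of $f$, and controlling the accumulated $\varepsilon$'s. Everything else is routine bookkeeping or a direct appeal to Fatou's lemma.
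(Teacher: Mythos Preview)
Your outline is the standard Riesz rising-sun proof and is essentially correct; the one point that deserves more care is your treatment of the discontinuities of $f$, since the rising-sun lemma as you state it requires continuity on all of $[a,b]$, not merely off a countable set. The clean fix is to replace $f$ by its right-continuous modification (which agrees with $f$ off a countable set and hence has the same a.e.\ differentiability properties) and to use the one-sided version of the lemma valid for functions continuous from one side; alternatively one can bypass the rising-sun lemma entirely and run the same outer-measure comparison via the Vitali covering theorem, which needs no continuity at all.

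As for the paper: it does not prove this theorem. It is invoked purely as a classical background fact (Lebesgue's theorem), with references to Lebesgue's 1928 monograph and to Riesz--Sz.-Nagy, and is used only to conclude that the conjugacy $h$, being monotone, has a finite derivative almost everywhere. So there is nothing in the paper to compare your argument against; your proposal simply supplies the omitted classical proof.
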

Denote elements of $A_n$ by $\alpha_{n,k}$ and assume that
$\alpha_{n,k_1}<\alpha_{n,k_2}$ for all $k_1<k_2$. By
Proposition~\ref{lema:An}, $\alpha_{n,k}=\frac{1}{2^{n-1}}$. Also
denote elements of $B_n$ by $\beta_{n,k}$ and assume that
$\beta_{n,k_1}<\beta_{n,k_2}$ for all$k_1<k_2$. For every $n\geq
1$ denote by $h_n$ the piecewise linear maps such that all its
breaking points belong to $A_n$ and for every $k,\, 0\leq k\leq
2^{n-1}$ the equality $$ h_n(\alpha_{n,k})=\beta_{n,k}
$$ holds.
We find limits of derivatives $h_n'(x)$ for $x\in (0,\,
1)\backslash \mathcal{A}$ at Section~\ref{sect-dyffer-1}. Let
binary decomposition of $x$ be
\begin{equation}\label{eq:59}
x = 0,x_1x_2\ldots x_k\ldots\, .
\end{equation}
For a number $x$ of the form~(\ref{eq:59}) denote by $x_0 = 0$ and
for every $i\geq 2$ denote
\begin{equation}\label{eq:60}
\alpha_i(x) =\left\{
\begin{array}{ll} 2v& \text{if } x_{i-1}=x_{i-2},\\
2(1-v)& \text{if } x_{i-1}\neq x_{i-2}.
\end{array}\right.
\end{equation}
We prove the following theorems
\begin{theorem*}[Theorem~\ref{lema:32}, also Lema~7
in~\cite{Visnyk}] For every $n\geq 2$ and $x\not\in A_n$ of the
form~(\ref{eq:6}) he equality
$$h_n'(x)=\prod\limits_{i=2}^n\alpha_i(x)$$ holds, where
$\alpha_i(x)$ is defined by~(\ref{eq:54}).
\end{theorem*}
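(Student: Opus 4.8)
The plan is to compute $h_n'$ directly from its definition as the piecewise linear interpolant through the points $(\alpha_{n,k},\beta_{n,k})$. On each subinterval $(\alpha_{n,k},\alpha_{n,k+1})$ the map $h_n$ is affine, so its derivative there is the slope
$$
h_n'(x)=\frac{\beta_{n,k+1}-\beta_{n,k}}{\alpha_{n,k+1}-\alpha_{n,k}}
=2^{\,n-1}\,(\beta_{n,k+1}-\beta_{n,k}),
$$
using $\alpha_{n,k+1}-\alpha_{n,k}=2^{-(n-1)}$ from Proposition~\ref{lema:An}. So the task reduces to identifying, for the dyadic interval containing $x=0.x_1x_2\ldots$, which subinterval of the partition $A_n$ it lies in, and then evaluating the increment $\beta_{n,k+1}-\beta_{n,k}$ of the $B_n$-grid.

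First I would set up the combinatorial bookkeeping: by Theorem~\ref{theor:10} we have $h(A_n)=B_n$, and $B_n=g^{-n}(0)$ consists of the preimages of $0$ under $f_v$ iterated $n$ times. I would describe $\beta_{n,k}$ recursively: each point of $B_{n}$ is obtained from a point $y\in B_{n-1}$ by taking one of the two branch preimages $f_v^{-1}$, namely $vy$ on the left branch or $1-(1-v)y$ on the right branch. Translating this to the length of the $k$-th gap, a gap of $B_{n-1}$ of length $\ell$ sitting to the left of the turning point $v$ gets scaled by $v$ under the left inverse branch, and a gap to the right of $v$ gets scaled by $1-v$ under the right inverse branch; simultaneously each gap of $B_{n-1}$ is subdivided when we pass to $B_n$. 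Matching this against the symbolic description of $x$: the bit $x_i$ records which half of the unit interval the point $f^{i-1}(x)$ lies in, and because $f$ is the full tent map, the pattern of "same / different" consecutive bits $x_{i-1}=x_{i-2}$ versus $x_{i-1}\neq x_{i-2}$ is exactly what decides whether the corresponding preimage step of $g$ uses the left branch (factor $2v$ after normalizing the $2^{-(n-1)}$) or the right branch (factor $2(1-v)$). Carrying this through telescopes the slope into the product $\prod_{i=2}^n\alpha_i(x)$ with $\alpha_i$ as in~(\ref{eq:60}).

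The cleanest way to organize the induction is on $n$: assume the formula $h_{n-1}'(x)=\prod_{i=2}^{n-1}\alpha_i(x)$ holds on each subinterval of $A_{n-1}$, and then note that refining to $A_n$ replaces each slope by the same slope times one extra factor, which is $2v$ or $2(1-v)$ according to~(\ref{eq:60}) evaluated at $i=n$. Here I would use Proposition~\ref{lema:25} (the "conditionally found" propagation of values of $h$ through preimages of $f$): it tells us precisely that $h_n(\widetilde x)=g_l^{-1}(h_{n-1}(x^*))$ or $g_r^{-1}(h_{n-1}(x^*))$ depending on whether $\widetilde x\le 1/2$ or $\widetilde x>1/2$, and for $f_v$ the inverse branches $g_l^{-1},g_r^{-1}$ are linear with slopes $v$ and $1-v$. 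Combining the branch slope with the factor $2$ coming from the halving of the $A$-interval gives the claimed $\alpha_n(x)$, and the condition "$\widetilde x\le 1/2$ vs $>1/2$" translates, via the doubling map structure of $f$, into the condition "$x_{n-1}=x_{n-2}$ vs $x_{n-1}\neq x_{n-2}$" on the binary digits.

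The main obstacle I expect is the careful translation between three descriptions of the same partition point: its index $k$ in the ordered list $A_n$, its binary expansion, and its itinerary under $f$. In particular one must track how the orientation of $f$'s right branch (which is decreasing) flips the local order, so that "the left subinterval of a gap" in $A_{n-1}$ does not always map to "the left subinterval" in $A_n$; this sign/orientation bookkeeping is exactly why the rule in~(\ref{eq:60}) depends on the \emph{pair} $(x_{i-1},x_{i-2})$ rather than on a single digit. Once that correspondence is pinned down, the rest is a routine telescoping product and the base case $n=2$, where $A_2=\{0,1/2,1\}$, $B_2=\{0,v,1\}$ on one side and $\{0,1\}$ reversed on the other, gives $h_2'(x)=2v$ or $2(1-v)$ directly, matching $\alpha_2(x)$.
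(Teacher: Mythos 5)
Your proposal is correct and follows essentially the same route as the paper's proof: both reduce $h_n'$ on a dyadic interval to $2^{n-1}(\beta_{n,k+1}-\beta_{n,k})$ and then run an induction in which each refinement of the partition multiplies the slope by $2v$ or $2(1-v)$ according to whether the two relevant consecutive binary digits agree, the orientation-reversing branch of $f$ being precisely why the digit pair, not a single digit, is the right invariant. The only real difference is in how the subdivision ratio of the $B$-gaps is obtained: the paper works forward, locating the new point of $B_{n+1}$ inside each gap of $B_n$ from the graph of $f_v^{n}$ (Lemmas~\ref{lema:vlastfv1}--\ref{lema:vlastfv2}), whereas you pull back along the inverse branches via Proposition~\ref{lema:25}, i.e. $h_n(x)=v\,h_{n-1}(2x)$ on $[0,\,1/2]$ and $h_n(x)=1-(1-v)\,h_{n-1}(2-2x)$ on $[1/2,\,1]$, which attaches the new factor at $i=2$ and shifts (and possibly complements) the remaining digits rather than appending it at $i=n$ -- the digit bookkeeping you defer does work out, and both versions telescope to the same product.
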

\begin{theorem*}[Theorem~\ref{theor:12}, also Lema 14
in~\cite{Visnyk}] 1. If $v<1/2$ then for every $x\in \mathcal{A}$
the limit $\lim\limits_{n\rightarrow \infty}h_n'(x) =0$ holds.

2. If $v>1/2$ then for every $x\in \mathcal{A}$ the limit
$\lim\limits_{n\rightarrow \infty}h_n'(x) =\infty$ holds.

3. For every $v\in (0,\, 1)\setminus \{1/2\}$ limits
$\lim\limits_{n\rightarrow \infty}\min\limits_{x\in (0,\,
1)\setminus \mathcal{A}}h_n'(x) =0$ and $\lim\limits_{n\rightarrow
\infty}\max\limits_{x\in (0,\, 1)\setminus \mathcal{A}}h_n'(x)
=\infty$ hold.
\end{theorem*}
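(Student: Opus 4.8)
The plan is to derive all three parts from the explicit formula $h_n'(x)=\prod_{i=2}^{n}\alpha_i(x)$ of Theorem~\ref{lema:32}, in which each factor $\alpha_i(x)$, given by~(\ref{eq:60}), is either $2v$ or $2(1-v)$. Two elementary remarks do most of the work. First, since $v\neq 1/2$, both $2v$ and $2(1-v)$ differ from $1$; precisely, $2v<1<2(1-v)$ when $v<1/2$ and $2(1-v)<1<2v$ when $v>1/2$, while in every case both are positive, so every partial product $\prod_{i=2}^{n}\alpha_i(x)$ is strictly positive. Second, by Proposition~\ref{lema:An} the set $\mathcal{A}$ is exactly the set of dyadic rationals of $[0,1]$, and a number of $[0,1]$ is dyadic precisely when its binary expansion~(\ref{eq:59}) is eventually constant (eventually all $0$'s, or eventually all $1$'s).

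For parts 1 and 2 I would fix $x\in\mathcal{A}$. Each of its (one or two) binary expansions is eventually constant, so there is an index $i_0$ with $x_{i-1}=x_{i-2}$, hence $\alpha_i(x)=2v$, for every $i\geq i_0$. Substituting into Theorem~\ref{lema:32} gives, for $n\geq i_0$,
$$
h_n'(x)=\Bigl(\,\prod_{i=2}^{i_0-1}\alpha_i(x)\Bigr)(2v)^{n-i_0+1},
$$
the prefactor being a fixed positive constant. Letting $n\to\infty$ we obtain $h_n'(x)\to 0$ when $2v<1$, i.e.\ when $v<1/2$ (part~1), and $h_n'(x)\to\infty$ when $2v>1$, i.e.\ when $v>1/2$ (part~2).

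For part 3 I would, for each $n$, exhibit two points of $(0,1)\setminus\mathcal{A}$ at which $h_n'$ realizes the two extreme products the formula permits. Put $\mu=2\min(v,1-v)$ and $M=2\max(v,1-v)$, so $0<\mu<1<M$. Take $x^-=2/3=0.101010\ldots$\,: here consecutive binary digits always differ and $x_1=1\neq x_0=0$, so $\alpha_i(x^-)=2(1-v)$ for all $i\geq 2$, hence $h_n'(x^-)=\bigl(2(1-v)\bigr)^{n-1}$, and moreover $x^-\notin\mathcal{A}$. Take $x_n^+=\frac{2}{3}\cdot 2^{-(n-1)}$, whose binary expansion consists of $n-1$ zeros followed by the alternating digits of $2/3$: then $x_n^+\in(0,\,2^{-(n-1)})$, so $x_n^+\notin A_n$ and $x_n^+\notin\mathcal{A}$, while $x_{i-1}=x_{i-2}=0$ for $2\leq i\leq n$, so $h_n'(x_n^+)=(2v)^{n-1}$. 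One of the numbers $h_n'(x^-)$, $h_n'(x_n^+)$ equals $\mu^{n-1}$ and the other equals $M^{n-1}$. Since $h_n'$ is positive and takes only finitely many values on $(0,1)\setminus A_n$, and each component interval of $(0,1)\setminus A_n$ contains points outside the countable set $\mathcal{A}$, the minimum and maximum of $h_n'$ over $(0,1)\setminus\mathcal{A}$ are attained and coincide with those over $(0,1)\setminus A_n$; hence
$$
0<\min_{x\in(0,1)\setminus\mathcal{A}}h_n'(x)\leq\mu^{n-1},\qquad
\max_{x\in(0,1)\setminus\mathcal{A}}h_n'(x)\geq M^{n-1},
$$
and $\mu<1<M$ forces the left-hand minimum to tend to $0$ and the right-hand maximum to tend to $\infty$ as $n\to\infty$; this is part~3.

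The one point that needs care is the bookkeeping in parts 1 and 2: once the fixed dyadic $x$ has become a breaking point of $h_n$, the symbol $h_n'(x)$ has to be interpreted as a one-sided derivative, and one must check that the left- and right-hand readings (equivalently, the two binary expansions of $x$) yield the same limit. This follows from the remark above that both expansions of a dyadic number are eventually constant, so both feed a tail of factors all equal to $2v$ into the product of Theorem~\ref{lema:32}; beyond that, the argument is only substitution into that formula together with the geometric estimate $(2v)^{k}\to 0$ or $(2v)^{k}\to\infty$.
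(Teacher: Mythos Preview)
Your proof is correct and follows the same approach as the paper: deduce all three parts from the product formula $h_n'(x)=\prod_{i=2}^{n}\alpha_i(x)$ of Theorem~\ref{lema:32}. The paper itself only states that Theorem~\ref{theor:12} follows from Theorem~\ref{lema:32} and illustrates part~3 with the single point $x_0=0{,}\underbrace{10\ldots10}_{k}$; your argument fills in the details (including the careful remark about one-sided derivatives at dyadic points and the explicit witnesses $x^-=2/3$ and $x_n^+=\tfrac{2}{3}\cdot 2^{-(n-1)}$ for the extremal products), but the underlying idea is identical.
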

The following observation follows from Theorems~\ref{theor:13}
and~\ref{theor:12}. Let $\lambda$ be the Lebesgue measure on the
interval $[0,\, 1]$. Denote by $\mathcal{A}^0$ the set, where the
derivative of $h$ equals to $0$ and denote by $\mathcal{B}$ the
set, where the derivative of $h$ equals to infinity. Then
$\lambda(\mathcal{A}^0) =1$, $\lambda(\mathcal{B}^0) =0$ and $h$
is non-differentiable on $[0,\, 1]\backslash (\mathcal{A}^0\cup
\mathcal{B}^0)$. It is evident, that the derivative of the inverse
function $h^{-1}$ equals $\infty$ on $h(\mathcal{A}^0)$ and this
derivative equals $0$ on $h(\mathcal{B}^0)$. Now, it follows from
Theorems~\ref{theor:13} and~\ref{theor:12} that
$\lambda(h(\mathcal{A}^0)) =0$ and $\lambda(h(\mathcal{B}^0)) =1$.
These properties of $h$ show how complicated it is.

Section~\ref{sect-dyffer-2} is devoted to values of the
homeomorphism $h:\, [0,\, 1]\rightarrow [0,\, 1]$, which
satisfies~(\ref{eq:58}). We give the proof of
Theorem~\ref{theor:13} as a corollary of Theorem~\ref{lema:32}.
Also we prove the following theorem.
\begin{theorem*}[Theorem~\ref{theor:15}, also
Lemmas~15-16 in~\cite{Visnyk} and Theorem~2 in~\cite{Studii}] Let
$x_0\in [0,\, 1]\cap \mathbb{Q}$. Then the derivative $h'(x_{0} )$
exists. More then this, if $v<1/2$ the $h'(x_0)=\infty $ and if
$v>1/2$ then $h'(x_{0} )=0$.
\end{theorem*}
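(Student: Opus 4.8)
\emph{Plan.} The strategy is to reduce the whole question to the behaviour of $h$ near a single periodic point of $f$, and there to solve a self-similar functional equation.

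\textbf{Reduction to a periodic point.} Since $f$ carries a rational with denominator $q$ to a rational whose denominator divides $q$, the forward orbit $x_0,f(x_0),f^{2}(x_0),\dots$ is finite, hence eventually periodic; let $c$ be a point of its limit cycle, of exact period $p$. If $x_0$ is dyadic this cycle is the fixed point $\{0\}$ (the orbit passes through the turning point $1/2$ and then falls onto $0$); in all cases $c$ is a periodic point of $f$ whose orbit avoids $1/2$. Differentiating $h(f(x))=f_v(h(x))$ wherever $f$ and $f_v$ are differentiable gives $h'(f(x))=\dfrac{f_v'(h(x))}{f'(x)}\,h'(x)$, and since $|f'|\equiv 2$ and $|f_v'|\in\{1/v,\,1/(1-v)\}$, the one-sided derivatives of $h$ along the (finite) grand orbit of $x_0$ are, when they exist, related pairwise by fixed positive finite factors. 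Hence it suffices to determine $h'(c)$ and transport it back through finitely many such factors: $h'(x_0)$ exists and equals $0$, $\infty$, or a finite nonzero number exactly when $h'(c)$ does.

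\textbf{The local self-similar equation.} In a neighbourhood of $c$ the iterate $f^{p}$ is affine with slope $\pm 2^{p}$, and in a neighbourhood of $h(c)$ the iterate $f_v^{p}$ is affine with slope $\pm M$, where $M=v^{-a}(1-v)^{-b}$, with $a$ (resp. $b=p-a$) the number of points of the cycle of $c$ lying in $[0,1/2)$ (resp. in $(1/2,1]$); here one uses that $h$ is increasing with $h(1/2)=v$, so the itinerary of $c$ relative to $1/2$ matches that of $h(c)$ relative to $v$, and no break point of $f^{p}$ or $f_v^{p}$ is hit. Writing $w(t)=h(c+t)-h(c)$ and substituting into $h\circ f^{p}=f_v^{p}\circ h$ (replacing $f^{p}$ by $f^{2p}$ if $f^{p}$ reverses orientation at $c$, which only squares the two slopes) yields the self-similar equation $w(2^{p}t)=M\,w(t)$ for all $|t|$ below some $\delta>0$.

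\textbf{Solving it.} Iterating this relation, each small $t$ of a given sign may be written $t=2^{-pj}\tau$ with $\tau\in\pm[2^{-p}\delta,\delta)$ and $j=j(t)\to\infty$ as $t\to 0$, and then $w(t)=M^{-j}w(\tau)$. Because $h$ is continuous, strictly increasing and $w$ vanishes only at $0$, the ratios $w(\tau)/\tau$ over this fundamental annulus stay between two fixed positive constants, so $\dfrac{w(t)}{t}=\Bigl(\dfrac{2^{p}}{M}\Bigr)^{\!j(t)}\dfrac{w(\tau)}{\tau}$. Consequently the two-sided limit $h'(c)=\lim_{t\to 0}w(t)/t$ equals $0$ when $M>2^{p}$ and $+\infty$ when $M<2^{p}$; equivalently $h'(c)=0$ if $(2v)^{a}(2(1-v))^{b}<1$ and $h'(c)=+\infty$ if $(2v)^{a}(2(1-v))^{b}>1$. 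Since $M=2^{p}$ forces $(2v)^{a}(2(1-v))^{b}=1$, which does not occur for $v\neq 1/2$ for the itineraries in question, $h'(c)$ — hence $h'(x_0)$ — exists and is $0$ or $\infty$; one then reads off which case holds from the size of this product (for the fixed point $2/3$ it is $2(1-v)$, while for $\{0\}$ it is $2v$), and the asserted dependence on the sign of $v-1/2$ follows after transporting back along the finitely many preperiodic steps.

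\textbf{Main obstacle.} The delicate point is promoting the self-similar relation to a genuine two-sided limit rather than merely a $\limsup/\liminf$ statement; this is exactly where the monotonicity of $h$, together with the fact that $h=h_n$ on $A_n$ (Theorem~\ref{theor:10}), is used to keep $w$ squeezed on every dyadic scale, and it is also where the borderline case $M=2^{p}$, i.e. $(2v)^{a}(2(1-v))^{b}=1$, has to be isolated, since there $w(t)/t$ is only asymptotically periodic in $\log_2(1/t)$ and a finer argument (or an extra restriction on $v$) is required. The remaining ingredients — the orientation bookkeeping of the signs $\pm 2^{p}$ and $\pm M$ and the choice between iterating $f^{p}$ and $f^{2p}$, the verification that $(2v)^{a}(2(1-v))^{b}$ really lies on the claimed side of $1$ for all the itineraries that actually arise, and the transport of the value of $h'(c)$ to $h'(x_0)$ — are comparatively routine.
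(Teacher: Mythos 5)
Your reduction to a periodic point followed by the local self-similar equation $w(2^{p}t)=M\,w(t)$ is a genuinely different route from the paper's proof, which never linearizes at a cycle: the paper works with the binary expansion of $x_0$ directly, squeezes the difference quotients $k(s_n)$ between $m_2(x_0)\,h'_{p+nt+1}(x_0)$ and $M_1(x_0)\,h'_{p+nt+1}(x_0)$ via the distortion bounds of Lemma~\ref{lema:34} and Corollary~\ref{corol:3}, and then invokes the product formula of Theorem~\ref{lema:32}. Note that your scaling ratio $2^{p}/M=(2v)^{a}(2(1-v))^{b}$ is exactly the paper's per-period product of the factors $\alpha_i(x_0)$, so up to this point the two arguments compute the same quantity by different means, and your local argument at the cycle is clean.

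The genuine gap is the step you declare routine. The position of $(2v)^{a}(2(1-v))^{b}$ relative to $1$ is not determined by the sign of $v-\tfrac12$ once the cycle meets both halves of $[0,1]$, which is the generic situation for a non-dyadic rational. Concretely, $x_0=1/7$ falls onto the cycle $\{2/7,\,4/7,\,6/7\}$, so $a=1$, $b=2$ and the factor is $8v(1-v)^2$; for $v=0.1<\tfrac12$ this equals $0.648<1$, so your own criterion gives $h'(1/7)=0$, not $\infty$. Moreover the borderline you exclude does occur for $v\neq\tfrac12$: $8v(1-v)^2-1=(2v-1)(4v^2-6v+1)$ vanishes at $v=(3-\sqrt5)/4\approx 0.19$, and there $w(t)/t$ merely oscillates between positive finite bounds, so existence of the derivative fails rather than needing only ``a finer argument''. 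Even in the dyadic case your computation is at odds with what you then assert: the factor at the cycle $\{0\}$ is $2v$, hence your criterion gives $h'=0$ for $v<\tfrac12$ (direct evaluation $h(2^{-n})=v^{n}$ from Proposition~\ref{prop:1} confirms $h'(0)=0$ there), which is the opposite of the dichotomy you claim ``follows''. So the machinery you set up correctly expresses $h'(x_0)$ through the itinerary of the eventual cycle of $x_0$, but the deferred verification that the product always lies on the side of $1$ dictated by the sign of $v-\tfrac12$ is not routine — it is false for infinitely many rationals and parameters — and this is precisely where the proposal breaks down (the paper's own proof is likewise terse at this point, asserting the value of $\lim_n h_n'(x_0)$ without computing it).
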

Theorem~\ref{theor:15} was proved in~\cite{Visnyk} for $x_0\in A$
and later generalized at~\cite{Studii} for the case $x\in
\mathbb{Q}$. Theorem~\ref{theor:15} can be considered as
generalization of Theorem~\ref{theor:12} to rational points set.
It follows from Theorem~\ref{theor:13}, that
Theorem~\ref{theor:12} can not be generalized to all all $x_0\in
[0,\, 1]$ such that limit $\mathop{\lim }\limits_{n\to \infty }
h_{n}^{'}(x_0)$ exists. The following statement holds.
\begin{proposition*}[Proposition~\ref{Prop:4}] For every
$v\neq 1/2$ there exists $x_0\in [0,\, 1]$ such that $h'(x_0)=0$
and one of the following statements holds:

1. the limit $\lim\limits_{n\to \infty } h_{n}^{'} (x_{0} )$ does
not exists;

2. the limit $\lim\limits_{n\to \infty } h_{n}^{'} (x_{0} )$
exists, but equals infinity $\infty$.
\end{proposition*}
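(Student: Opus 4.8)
The overall strategy is to exhibit, for each $v\ne 1/2$, a point $x_{0}$ at which the genuine derivative $h'(x_{0})$ equals the ``generic'' value $0$ — by Theorems~\ref{theor:13} and~\ref{theor:14} this is the only finite value $h'$ may take — while the slopes $h_{n}'$ of the linear interpolants refuse to converge to it. It is natural to split on the sign of $v-1/2$, since Theorem~\ref{theor:15} behaves oppositely in the two ranges.

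Suppose first that $v>1/2$; then I would take any dyadic point $x_{0}\in\mathcal A$, say $x_{0}=1/2$ or $x_{0}=0$. Because $x_{0}\in[0,1]\cap\mathbb Q$, Theorem~\ref{theor:15} guarantees that $h'(x_{0})$ exists and, since $v>1/2$, equals $0$; and because $x_{0}\in\mathcal A$, part~2 of Theorem~\ref{theor:12} gives $\lim_{n\to\infty}h_{n}'(x_{0})=\infty$. Thus alternative~2 of the statement holds, which finishes this case. (The divergence is in fact transparent from Theorem~\ref{lema:32}: at $x_{0}=0$ all binary digits are $0$, so $\alpha_{i}(x_{0})=2v$ for every $i$ and $h_{n}'(0)=(2v)^{n-1}\to\infty$ as $2v>1$, and the same holds at any dyadic $x_{0}$, whose expansion ending in zeros gives $\alpha_{i}(x_{0})=2v$ from some index onward. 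One can also observe that for all large $n$ the point $x_{0}$ belongs to $A_{n}$ and is thus a break point of $h_{n}$, where the two one-sided slopes — equal to $2^{n-1}$ times consecutive gaps of $B_{n}$ — differ by the fixed factor $v/(1-v)\ne 1$ forced by the two branches of $g=f_{v}$; on that reading $h_{n}'(x_{0})$ is not even defined along a tail of indices, so alternative~1 holds trivially.)

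The remaining range $v<1/2$ carries the real difficulty. Here Theorem~\ref{theor:15} forces $h'(x_{0})=\infty$ at \emph{every} rational point, so the point we want must be irrational, whereas Theorem~\ref{theor:13} says $h'=0$ on a full-measure set, so such points are abundant; the issue is to choose one for which $\lim_{n}h_{n}'(x_{0})$ also misbehaves. The plan is to prescribe the binary digits of $x_{0}=0,x_{1}x_{2}\ldots$ by hand. Since $h_{n}'(x_{0})=\prod_{i=2}^{n}\alpha_{i}(x_{0})$ with $\alpha_{i}(x_{0})\in\{2v,\,2(1-v)\}$ and $2v<1<2(1-v)$, one interleaves long ``alternating'' blocks $0101\dots$ — each multiplying the running product by a large power of $2(1-v)$ — with much longer ``constant'' blocks $00\dots0$ contributing powers of $2v<1$, choosing the block lengths so that the partial products $h_{n}'(x_{0})$ exceed any prescribed bound for infinitely many $n$ and drop below any prescribed $\varepsilon$ for infinitely many other $n$; then $\lim_{n}h_{n}'(x_{0})$ does not exist. \textbf{The main obstacle} is to secure, for this very $x_{0}$, the existence of the two-sided derivative $h'(x_{0})$, after which Theorem~\ref{theor:13} pins it to $0$: since the slope of $h_{n}$ on the dyadic interval through $x_{0}$ is a weighted average of honest difference quotients of $h$, the oscillation of $h_{n}'(x_{0})$ is in direct tension with the existence of $h'(x_{0})$, so one must make the ``constant'' stretches dominate strongly enough to control the true increments $h(x)-h(x_{0})$ symmetrically about $x_{0}$ — using the estimates of Section~\ref{sect-dyffer-1} and the self-similar functional relation that $h$ satisfies at the preimages of $0$. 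A possible alternative for this range is to use that $f_{v}$ and $f_{1-v}$ have the common kneading sequence $RL^{\infty}$, hence are conjugate by an increasing homeomorphism $\psi$, so that $h_{v}=\psi^{-1}\circ h_{1-v}$ by the uniqueness in Theorem~\ref{theor:homeom-jed}, and to transport through $\psi^{-1}$ the point obtained above for the parameter $1-v>1/2$; the obstacle then migrates to controlling $\psi'$ at that point.
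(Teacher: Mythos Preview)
For $v>1/2$ your argument is correct and matches the paper's: pick $x_{0}\in\mathcal A$, apply Theorem~\ref{theor:15} for $h'(x_{0})=0$ and part~2 of Theorem~\ref{theor:12} for $\lim_{n}h_{n}'(x_{0})=\infty$. The paper's entire proof is the single sentence ``Proposition follows from Theorems~\ref{theor:13} and~\ref{theor:12} if consider $f_{1-v}$ instead of $f_{v}$''; so for $v<1/2$ the paper uses exactly the $f_{1-v}$ reduction you list as your \emph{alternative}.

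Your \emph{primary} plan for $v<1/2$, however---building the binary expansion of an irrational $x_{0}$ so that $h_{n}'(x_{0})$ oscillates---cannot succeed; the obstacle you flag is fatal rather than merely technical. The computation in the proof of Theorem~\ref{theor:13} already shows that $h'(x_{0})=0$ forces $\lim_{n}h_{n}'(x_{0})=0$: for $x_{0}\notin\mathcal A$ the slope of $h_{n}$ on the dyadic interval $[\alpha_{n,k},\alpha_{n,k+1}]\ni x_{0}$ is a convex combination
\[
h_{n}'(x_{0})=\lambda_{n}\,\frac{h(\alpha_{n,k+1})-h(x_{0})}{\alpha_{n,k+1}-x_{0}}+(1-\lambda_{n})\,\frac{h(x_{0})-h(\alpha_{n,k})}{x_{0}-\alpha_{n,k}},\qquad \lambda_{n}\in(0,1),
\]
of two honest difference quotients of $h$ at $x_{0}$, both of which tend to $0$ by the very definition of the derivative. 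Hence at any non-dyadic point the conditions ``$h'(x_{0})=0$'' and ``$h_{n}'(x_{0})\not\to0$'' are mutually exclusive, so no block-interleaving scheme can manufacture the required point. What remains is the $f_{1-v}$ route---exactly the paper's one-line argument---and you are right that it transfers the difficulty to controlling the derivative of the transporting conjugacy $\psi$, a step the paper does not spell out either.
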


We construct at Section~\ref{subs-excell} the formula in terms of
electronic tables, which let to find values at $A_{n+1}$ of the
homeomorphic solution of~\ref{subs-excell}. We reformulate
Proposition~\ref{lema:25} as follows.
\begin{proposition*}[Proposition~\ref{prop:1}] Let $h:\,
[0,\, 1]\rightarrow [0,\, 1]$ be the conjugation of maps $f$ and
$f_v$, which are defined by~(\ref{eq:50}) and~(\ref{eq:53}) i.e.
$h$ is a solution of the functional equation~(\ref{eq:58}). Then
the following implications hold.

1. If $x=0$, then $h(x) =0$.

2. If $x=1$, then $h(x)=1$.

3.If $x\leq \frac{1}{2}$, then $$h(x) = v\cdot h(2x)$$ and the
value $h(2x)$ appears to be defined earlier.

4. If $x>\frac{1}{2}$, then $$h(x) = 1 - (1-v)\cdot h(-2x+2)$$ and
the value $h(-2x+2)$ appears to be defined earlier.
\end{proposition*}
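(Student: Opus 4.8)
The plan is to obtain all four statements as an immediate specialization of Proposition~\ref{lema:25} to the case $g=f_v$. First I would observe that the tent map $f_v$ of~(\ref{eq:53}) is of the form~(\ref{eq:51}): here $g_l(x)=x/v$ for $x\leq v$ and $g_r(x)=(1-x)/(1-v)$ for $x>v$, and indeed $g_l(0)=g_r(1)=0$, $g_l(v)=\lim_{x\to v-}g_r(x)=1$, both branches being continuous and strictly monotone. Hence Proposition~\ref{lema:25} applies verbatim with $g=f_v$. The only genuine computation needed is inverting the two branches: $g_l^{-1}(y)=vy$ and $g_r^{-1}(y)=1-(1-v)y$. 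Note also that, since by Theorem~\ref{theor:homeom-jed} the conjugation $h$ of~(\ref{eq:58}) exists and is an increasing homeomorphism, the value of $h$ is ``conditionally found'' --- in fact unconditionally found --- at every point $x^{*}\in[0,1]$, with value $h(x^{*})$, so the hypothesis of Proposition~\ref{lema:25} is satisfied for all $x^{*}$.

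Next I would read off statements~3 and~4. Fix $x\in[0,1]$ and apply Proposition~\ref{lema:25} with $\widetilde{x}=x$. If $x\leq 1/2$ then $f(x)=2x$ by~(\ref{eq:50}), so with $x^{*}=2x$ and $y^{*}=h(2x)$ part~1 of that proposition yields $h(x)=g_l^{-1}(h(2x))=v\cdot h(2x)$, which is statement~3. If $x>1/2$ then $f(x)=2-2x=-2x+2$, so with $x^{*}=-2x+2$ and $y^{*}=h(-2x+2)$ part~2 yields $h(x)=g_r^{-1}(h(-2x+2))=1-(1-v)\cdot h(-2x+2)$, which is statement~4. The fact that the choice between the branches $g_l^{-1}$ and $g_r^{-1}$ is forced --- because $h$ increasing sends the turning point $1/2$ of $f$ to the turning point $v$ of $f_v$ --- is already incorporated into Proposition~\ref{lema:25}; a self-contained check of this uses Theorem~\ref{theor:10} and the identity $f_v(h(1/2))=h(f(1/2))=h(1)$, which pins $h(1/2)=v$.

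Statements~1 and~2 then come for free as the cases $x=0$ and $x=1$: substituting $x=0$ in statement~3 gives $h(0)=v\cdot h(0)$, whence $(1-v)h(0)=0$ and $h(0)=0$ because $v\in(0,1)$; substituting $x=1$ in statement~4 gives $h(1)=1-(1-v)h(0)=1$. Finally I would spell out the recursive reading behind the phrase ``appears to be defined earlier'': from $f^{n+1}(A_{n+1})=0$ one gets $f(A_{n+1})\subseteq A_n$, and clearly $A_n\subseteq A_{n+1}$, so for $x\in A_{n+1}$ the point $2x$ (if $x\leq 1/2$) or $-2x+2$ (if $x>1/2$) lies in $A_n$; since $A_1=\{0,1\}$ by Proposition~\ref{lema:An} and $h$ there is fixed by statements~1--2, statements~3--4 compute $h$ on $A_{n+1}$ from its already-known values on $A_n$, a well-founded recursion. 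I do not expect a real obstacle here: the whole argument is substitution into Proposition~\ref{lema:25}, and the only step deserving attention is the routine verification that $f_v$ is of the form~(\ref{eq:51}) so that that proposition may be invoked.
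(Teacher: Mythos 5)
Your proposal is correct and follows essentially the same route as the paper: the paper presents Proposition~\ref{prop:1} as a reformulation of Proposition~\ref{lema:25} specialized to $g=f_v$, with the branch inverses $g_l^{-1}(y)=vy$ and $g_r^{-1}(y)=1-(1-v)y$ giving items 3--4, and $h(0)=0$, $h(1)=1$ coming from the earlier lemmas on the increasing conjugacy. Your derivation of items 1--2 by substituting $x=0$ and $x=1$ into items 3--4 is a harmless minor variant of the same argument.
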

Proposition~\ref{prop:1} let us to prove the following theorem.
\begin{theorem*}[Theorem~\ref{theor:16}, also Sect.~3.1
in~\cite{ADM-2016}] The value of conjugation $h$ of maps $f$ and
$f_v$, which are defined by~(\ref{eq:50}) and~(\ref{eq:53}) at the
set $A_{n+1}$ can be found via the following way:

Put $n$ into ``C1''.

Put $v$ into ``D1''.

Put 0 into ``A1'' and the formula A1+1/(2$\widehat{\,}$C\$1) into
``A2''.

Put the following formula into ``B1''.

\noindent IF(A1=0;\ 0;\ IF(A1=1,\, 1; IF(A1<=0,5;\

D\$1*INDIRECT( CONCATENATE("B";\ 2*A1*2$\widehat{\,}$ C\$1+1));

1-(1-D\$1)*INDIRECT(

CONCATENATE("B";\ -2*A1*2$\widehat{\,}\,$C\$1 +1 +2
$\widehat{\,}\,$(1+C\$1)))))).

Copy the values in columns $A$ and $B$ down till the line number
$2^{n}+1$.\end{theorem*}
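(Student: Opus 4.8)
The plan is to verify that the spreadsheet procedure reproduces, row by row, the recursion of Proposition~\ref{prop:1}. Concretely, I would prove two things: that after the fill-down row $j$ of column $A$ contains the dyadic point $\frac{j-1}{2^{n}}$, so that column $A$ lists $A_{n+1}$ in increasing order (this is Proposition~\ref{lema:An} with $n$ replaced by $n+1$), and that row $j$ of column $B$ then contains $h\!\left(\frac{j-1}{2^{n}}\right)$. The first statement is immediate: column $A$ starts at $0$ and the fill formula adds $2^{-n}$ at each step, so row $j$ holds $\frac{j-1}{2^{n}}$ and row $2^{n}+1$ holds $1$. The second statement is proved by induction along the recursion of Proposition~\ref{prop:1}; the substantive work is to check that the formula placed in column $B$ is a faithful transcription of that recursion and that the induction is well founded.

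First I would unwind the cell arithmetic. Write $x=\frac{k}{2^{n}}$ with $0\le k\le 2^{n}$, so $x$ occupies row $k+1$, while the absolute cells carry $C_{1}=n$ and $D_{1}=v$. If $x=0$ or $x=1$ the two outer conditionals return $0$ and $1$, matching items~1--2 of Proposition~\ref{prop:1}. If $0<x\le\frac12$, i.e. $1\le k\le 2^{n-1}$, then $2x=\frac{2k}{2^{n}}$ occupies row $2x\cdot 2^{n}+1=2k+1$, which is exactly the row selected by the \texttt{INDIRECT}/\texttt{CONCATENATE} pair, so the first branch evaluates to $v\cdot h(2x)$, matching item~3. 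If $\frac12<x<1$, i.e. $2^{n-1}<k<2^{n}$, then $2-2x=\frac{2^{n+1}-2k}{2^{n}}$ occupies row $-2x\cdot 2^{n}+1+2^{n+1}=2^{n+1}-2k+1$, so the second branch evaluates to $1-(1-v)\,h(2-2x)$, matching item~4. One also checks that the referenced rows always lie in the filled range $1,\dots,2^{n}+1$: for $1\le k\le 2^{n-1}$ one has $3\le 2k+1\le 2^{n}+1$, and for $2^{n-1}<k<2^{n}$ one has $2\le 2^{n+1}-2k+1\le 2^{n}$, so every reference points at a cell that has already been filled.

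Next I would establish well-foundedness, which is what makes the spreadsheet have a unique consistent evaluation. For $x=\frac{k}{2^{n}}\notin\{0,1\}$, the numerators $2k$ and $2^{n+1}-2k$ of $2x$ and $2-2x$ are both even, so in lowest terms these two points have denominators carrying a strictly smaller power of $2$ than the denominator of $x$; equivalently, the rows they reference are occupied by dyadic numbers of strictly lower level. Hence the dependency relation among the $B$-cells is acyclic, with sinks at the rows of $0$ and $1$, there is no circular reference, and each $B$-cell has a well-defined value. Induction on the dyadic level of the point then shows that each $B$-cell holds precisely the number prescribed by the recursion of Proposition~\ref{prop:1}, and that proposition identifies this number with the value of the conjugacy $h$. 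This completes the argument.

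The step I expect to need the most care, and the one I would spell out in full, is the index bookkeeping in the second paragraph: matching the two address arguments $2\,A_{1}\,2^{C_{1}}+1$ and $-2\,A_{1}\,2^{C_{1}}+1+2^{1+C_{1}}$ to the rows occupied by $2x$ and $2-2x$, and confirming that these rows never fall outside $1,\dots,2^{n}+1$. Once this dictionary between spreadsheet addresses and dyadic points is in place, Theorem~\ref{theor:16} follows directly from Propositions~\ref{prop:1} and~\ref{lema:An}.
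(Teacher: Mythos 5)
Your argument is correct and follows essentially the same route as the paper: translate the cell addresses $2\,A1\cdot 2^{C\$1}+1$ and $-2\,A1\cdot 2^{C\$1}+1+2^{1+C\$1}$ into the rows occupied by $2x$ and $2-2x$, and then read the column-$B$ formula as a verbatim transcription of the recursion in Proposition~\ref{prop:1}, with column $A$ enumerating $A_{n+1}$ by Proposition~\ref{lema:An}. Your explicit acyclicity/well-foundedness check of the cell dependencies is a small additional precision the paper leaves implicit, but it does not change the substance of the proof.
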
 The Figure~\ref{fig:16} is prepared with
the use of the formula from Theorem~\ref{theor:16}. It contains
the values of the conjugation in $A_7$, which correspond to $v=
0.55,\, 0.6,\, 0.7,\, 0.75,\, 0.8,\, 0.85,\, 0.9$ and $0.95$.

\begin{figure}[htbp]
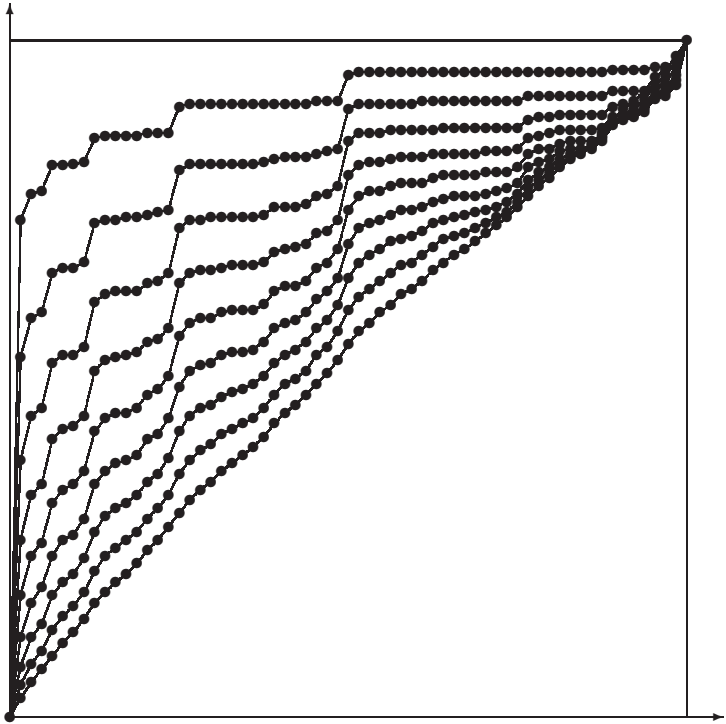

\begin{minipage}[h]{0.9\linewidth}
\begin{center}% [inline block 0: 1 envs, 34513 chars -> data_tex | \begin{picture}(270,270) \put(256,0){\line(0,1){256}} \put(0,256){\line(1,0){256}}...]
\end{center}
\end{minipage}
\caption{Approximation $h_7$ for different values of
$v$}\label{fig:16}
\end{figure}

Smaller values of $v$ correspond to ``graphs'', which are closer
to $y=x$. All points of graphs are obtained via electronic tables.
It is clear from the picture, that maps, which correspond to
$v=0.55,\, 0.6,\, 0.65$ are indeed continuous. Nevertheless, the
continuity of the maps, which corresponds to $v=0.95$ is not so
evident.

We reduce in Section~\ref{sect-funct-rivn} the problem of the
conjugation of $f$ and $f_v$, which are defined by
formulas~(\ref{eq:50}) and~(\ref{eq:53}) to the solving of a
system of functional equations. We prove the following theorem in
Section~\ref{subcst-3-1} we prove the following theorem.
\begin{theorem*}[Theorem~\ref{theor:17}] The system of
functional equations
\begin{equation}\label{eq:67}
\left\{ \begin{array}{llllll} h(2x) = \displaystyle{\frac{1}{v}\,
h(x)} & x\leq 1/2 & (\theequation a)
\\
h(2-2x) = \displaystyle{\frac{1-h(x)}{1-v}} & x>1/2 &
(\theequation b)
\end{array}\right.
\end{equation}
has the unique solution $h:\, [0,\, 1]\rightarrow [0,\, 1]$ and it
is the solution of functional equation~(\ref{eq:58}) i.e. it is a
conjugation of maps $f$ and $f_v$, which are defined by
formulas~(\ref{eq:50}) and~(\ref{eq:53}).
\end{theorem*}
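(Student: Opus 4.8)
The plan is to establish the two halves of the statement separately and then combine them. First I would show that \emph{any} map $h\colon[0,\,1]\to[0,\,1]$ satisfying the system~(\ref{eq:67}) is completely determined by it; then I would show that the conjugation of $f$ and $f_v$ provided by Theorem~\ref{theor:homeom-jed} does satisfy~(\ref{eq:67}). Together these give that~(\ref{eq:67}) has exactly one solution, that this solution is the conjugation, and hence that it solves~(\ref{eq:58}), which is all that is claimed.

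For the uniqueness half, the key observation is that~(\ref{eq:67}) is nothing but the backward recursion of Proposition~\ref{prop:1}: rewriting~(\ref{eq:67}a) for the smaller argument gives $h(t)=v\,h(2t)$ for $t\le 1/2$, and~(\ref{eq:67}b) gives $h(t)=1-(1-v)\,h(2-2t)$ for $t>1/2$; in both cases this has the affine form $h(t)=a(t)+b(t)\,h(f(t))$ with $a(t)\in\{0,\,1\}$ and $|b(t)|\in\{v,\,1-v\}$. Setting $x=0$ in~(\ref{eq:67}a) forces $h(0)=\frac1v h(0)$, hence $h(0)=0$ since $v\neq 1$. Now I would fix $x\in[0,\,1]$ and iterate the recursion along the forward orbit $x,\,f(x),\,f^2(x),\ldots$. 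If $x\in\mathcal{A}$ this orbit reaches $0$ in finitely many steps, so the recursion terminates and expresses $h(x)$ through the known value $h(0)=0$. If $x\notin\mathcal{A}$, one checks first that the orbit never meets $\{0,\,1/2,\,1\}$ (any such hit would put $x$ into some $A_k$), so the dichotomy $f^k(x)<1/2$ versus $f^k(x)>1/2$ is unambiguous at every step, and then for every $n$
\[
h(x)=\sum_{k=0}^{n-1}\Bigl(\prod_{j=0}^{k-1}b(f^j(x))\Bigr)a(f^k(x))+\Bigl(\prod_{j=0}^{n-1}b(f^j(x))\Bigr)h(f^n(x)).
\]
Because $|b(\cdot)|\le q:=\max(v,\,1-v)<1$ and $h$ is bounded (it maps into $[0,\,1]$), the last term is $\le q^n\to 0$, so $h(x)$ equals the absolutely convergent series $\sum_{k\ge 0}\bigl(\prod_{j<k}b(f^j(x))\bigr)a(f^k(x))$. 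Either way $h(x)$ is uniquely prescribed, so~(\ref{eq:67}) has at most one solution among all $[0,\,1]\to[0,\,1]$ maps.

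For the existence half, let $h_0$ be the solution of~(\ref{eq:58}) supplied by Theorem~\ref{theor:homeom-jed}; it is an increasing homeomorphism of $[0,\,1]$, so automatically $h_0(0)=0$ and $h_0(1)=1$. From the conjugacy identity at $x=1/2$ together with $f(1/2)=1$ we get $f_v(h_0(1/2))=h_0(1)=1$, and since $y=v$ is the only root of $f_v(y)=1$ this yields $h_0(1/2)=v$. Consequently, for $x\le 1/2$ monotonicity gives $h_0(x)\le v$, hence $f_v(h_0(x))=h_0(x)/v$, and the identity $h_0(2x)=h_0(f(x))=f_v(h_0(x))$ becomes exactly~(\ref{eq:67}a); for $x>1/2$ we have $h_0(x)>v$, hence $f_v(h_0(x))=(1-h_0(x))/(1-v)$, and $h_0(2-2x)=h_0(f(x))=f_v(h_0(x))$ becomes exactly~(\ref{eq:67}b). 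So $h_0$ solves~(\ref{eq:67}); combined with the previous paragraph it is \emph{the} solution, and being a conjugation it satisfies~(\ref{eq:58}).

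The genuinely delicate point I anticipate is the non-dyadic case of the uniqueness step: one must be certain that the forward $f$-orbit of a non-dyadic point never hits $1/2$ (so that the two alternatives of~(\ref{eq:67}) are never simultaneously in force and the recursion is single-valued) and that the telescoped product of slopes tends to $0$ — and it is precisely the hypotheses $0<v<1$ (which makes both $v<1$ and $1-v<1$) and $h([0,\,1])\subseteq[0,\,1]$ that make this work. The reduction to the recursion, the base value $h(0)=0$, and the check that the known conjugation obeys the system are routine.
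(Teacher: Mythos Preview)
Your proof is correct and the existence half matches the paper exactly (invoke Theorem~\ref{theor:homeom-jed}, check $h_0(1/2)=v$, then split the conjugacy identity according to the branches of $f$ and $f_v$). The uniqueness half, however, follows a genuinely different route. The paper argues uniqueness only among \emph{continuous} solutions: it uses Lemma~\ref{lema:02} to show that any solution of~(\ref{eq:67}) is pinned down on the dyadic set $\mathcal{A}=\bigcup_n A_n$, and then appeals to the density of $\mathcal{A}$ (Proposition~\ref{lema:An}) to conclude. Your contraction argument --- iterating the affine recursion $h(t)=a(t)+b(t)\,h(f(t))$ with $|b(t)|\le\max(v,1-v)<1$ and using only that $h$ is bounded --- dispenses with continuity entirely and yields the stronger conclusion that the system has at most one solution among \emph{all} maps $[0,1]\to[0,1]$. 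This neatly matches the Section~\ref{Sect-Main-Results} formulation you were given (which omits ``continuous''), whereas the paper's own statement in Section~\ref{subcst-3-1} does include that hypothesis. What the paper's approach buys is a cleaner tie-in with the $A_n$--$B_n$ machinery already built in Section~\ref{sect-Pobudowa}; what yours buys is a self-contained, sharper uniqueness statement. Your worry about the orbit hitting $1/2$ is slightly over-cautious (the two clauses of~(\ref{eq:67}) are disjoint, so the recursion is always single-valued), but the observation that non-dyadic orbits avoid $\{0,1/2,1\}$ is correct and harmless.
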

Section~\ref{subcst-3-2} is devoted to the study of the properties
of solutions of~(\ref{eq:67}). Section~\ref{subsusbs-formuly}
contains a general methods of solving of linear functional
equations. These methods are described in details
in~\cite{Pelukh-1974}.

We try to use the general methods of solving linear functional
equations, which are presented in Section~\ref{subsubs-Pidst} and
to write the general solution of~(\ref{eq:67}). The general
solution of the functional equation~(\ref{eq:67}a) is
\begin{equation}\label{eq:68}
h(x) = x^{-\log_2v}\omega_1(\log_2x),
\end{equation} where $\omega_1(x)$ is an arbitrary function with
period 1. If we plug the function $h$ of the form~(\ref{eq:68})
into the functional equation~(\ref{eq:67}b), then obtain
\begin{equation}\label{eq:69}
\begin{array}{c}
(1-v)(1-x)^{-\log_2v}\omega_1(\log_2(1-x)) =\\ \\
 = v(1-x^{-\log_2v}\omega_1(\log_2x)).
\end{array}
\end{equation}
We prove the following property of the equation~(\ref{eq:69}).
\begin{proposition*}[Proposition~\ref{zauv:2}] If
consider the functional equation~(\ref{eq:69}) as given on the
whole real line, the the unknown function $h$ appears to be
constant.
\end{proposition*}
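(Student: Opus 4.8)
The plan is to reduce~(\ref{eq:69}) to a pair of coupled linear relations and then exploit the symmetry $x\mapsto 1-x$. Writing, as in~(\ref{eq:68}), $h(x)=x^{-\log_2 v}\omega_1(\log_2 x)$, so that $h(1-x)=(1-x)^{-\log_2 v}\omega_1(\log_2(1-x))$, equation~(\ref{eq:69}) is exactly the relation
\[
(1-v)\,h(1-x)+v\,h(x)=v .
\]
The phrase ``given on the whole real line'' is understood to mean that this relation is imposed on a set of values of $x$ closed under the involution $x\mapsto 1-x$ (for instance all of $(0,1)$, where every expression above is defined). It is precisely this closure --- absent from the original system~(\ref{eq:67}), where~(\ref{eq:67}b), and hence~(\ref{eq:69}), is prescribed only for $x>1/2$ --- that the argument will use.

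First I would substitute $x\mapsto 1-x$ in the displayed relation, obtaining the companion equation
\[
(1-v)\,h(x)+v\,h(1-x)=v .
\]
For each fixed $x$ these two relations form a linear system in the unknowns $h(x)$ and $h(1-x)$ with coefficient matrix $\left(\begin{smallmatrix} v & 1-v\\ 1-v & v\end{smallmatrix}\right)$ and right-hand side $\left(\begin{smallmatrix} v\\ v\end{smallmatrix}\right)$; its determinant equals $v^{2}-(1-v)^{2}=2v-1$.

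Next, invoking the book's standing hypothesis $v\neq 1/2$, the determinant is nonzero, so the system has a unique solution. Subtracting the two equations gives $(2v-1)\bigl(h(x)-h(1-x)\bigr)=0$, whence $h(x)=h(1-x)$; feeding this back into either equation yields $h(x)=v$. Since $x$ was arbitrary, $h\equiv v$, i.e.\ the unknown function is constant, as claimed. (For $v=1/2$ the two relations coincide and the conclusion fails --- e.g.\ $h(x)=x$ satisfies them --- which is consistent, $v=1/2$ being the degenerate case $f_v=f$.)

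The step I expect to be the real point, rather than a computational obstacle, is the first one: recognizing that the hypothesis ``on the whole real line'' is exactly what legitimizes the substitution $x\mapsto 1-x$, and that adjoining this extra instance of~(\ref{eq:69}) collapses the solution space. It is worth stressing that the resulting constant $h\equiv v$ violates the boundary conditions $h(0)=0$, $h(1)=1$ of a genuine conjugacy (and, unless $v=1$, is not even of the form~(\ref{eq:68}) with a genuinely periodic, nonconstant $\omega_1$); this is the precise reason why writing down the ``general solution'' of the linear equation~(\ref{eq:67}a) and substituting it into~(\ref{eq:67}b) cannot deliver the conjugacy of $f$ and $f_v$, as anticipated in the Preface.
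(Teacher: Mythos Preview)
Your proof is correct and follows essentially the same approach as the paper's. Both arguments substitute $x\mapsto 1-x$ into~(\ref{eq:69}) (this is exactly what the paper's change of variable $t=1-x$ followed by renaming $t$ to $x$ accomplishes) and then solve the resulting pair of linear relations for $h(x)$; the paper eliminates $h(1-x)$ by expressing it from each equation and equating, while you organize the same computation as a $2\times 2$ linear system with determinant $2v-1$, but the content is identical. Your explicit remark that the hypothesis ``on the whole real line'' is precisely what licenses the substitution $x\mapsto 1-x$ anticipates the paper's Remark following the proposition.
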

Proposition~\ref{zauv:2} does not contradict to
Theorem~\ref{theor:homeom-jed} because of the following remark.
\begin{note*}[Remark~\ref{note:13}] If one solve the
equation~(\ref{eq:69}) for the unknown function $h: [0,\,
1]\rightarrow [0,\, 1]$, defined by formula~(\ref{eq:68}) then
reasonings from the proof of the Proposition~\ref{zauv:2} would be
incorrect.
\end{note*}
Also it is obtained un Section~\ref{subsusbs-formuly} that the
solution of the equation~(\ref{eq:67}b) is of the foem
\begin{equation}\label{eq:70}
\begin{array}{l}
h(x) = \frac{1}{2-v} +\left|x-\frac{2}{3}\right|^{-\log_2(1-v)}
\times
\\
\times\ \cdot \left\{
\begin{array}{ll}
\omega^+\left( \log_2\left|x-\frac{2}{3}\right|\right) &
x>\frac{2}{3};\\
\\
\omega^-\left( \log_2\left|x-\frac{2}{3}\right|\right) & x<
\frac{2}{3}.
\end{array}\right.
\end{array}
\end{equation}
for functions $\omega^+$ and $\omega^-$, which satisfy the
following relations:
\begin{equation} \label{eq:71} \left\{
\begin{array}{l}\omega^-(t+1) = -\omega^+(t)\\
\omega^+(t+1) = -\omega^-(t).
\end{array}\right.
\end{equation}

It follows from restrictions ~(\ref{eq:71}) that functions
$\omega^+$ and $\omega^-$ are periodical with period~2. If one
plug~(\ref{eq:70}) into the functional equation~(\ref{eq:67}a),
then obtain
\begin{equation}\label{eq:72}
\begin{array}{l}
\begin{array}{l}
\frac{v}{2-v} +v\left|2x-\frac{2}{3}\right|^{-\log_2(1-v)} \times
\\
\times\ v\cdot \left\{
\begin{array}{ll}
\omega^+\left( \log_2\left|2x-\frac{1}{3}\right|\right) &
x>\frac{2}{3};\\
\\
\omega^-\left( \log_2\left|2x-\frac{1}{3}\right|\right) & x<
\frac{2}{3}
\end{array}\right.
\end{array} = \\
\begin{array}{l}
=\frac{1}{2-v} +\left|x-\frac{2}{3}\right|^{-\log_2(1-v)} \times
\\
\times\ \cdot \left\{
\begin{array}{ll}
\omega^+\left( \log_2\left|x-\frac{2}{3}\right|\right) &
x>\frac{2}{3};\\
\\
\omega^-\left( \log_2\left|x-\frac{2}{3}\right|\right) & x<
\frac{2}{3}
\end{array}\right.
\end{array}
\end{array}
\end{equation} and the unknown functions $\omega^+$ and $\omega^-$
would satisfy the relations~(\ref{eq:71}). Functional
equations~(\ref{eq:69}) and~(\ref{eq:72}) are not linear and
standard methods of solving of functional equations can not be
applied to~(\ref{eq:69}) and~(\ref{eq:72}). The complicatedness of
equations~(\ref{eq:69}) and~(\ref{eq:72}) can be explained by the
properties of the conjugation $h$, which are stated by
Theorems~\ref{theor:13} and~\ref{theor:15}. We use in the
Section~\ref{subsubs-exper} the numerical values of $h:\, [0,\,
1]\rightarrow [0,\, 1]$, which is a solutions of the functional
equation~(\ref{eq:58}). These values are found by the formula from
Theorem~\ref{theor:16}. We use these values for studying the
properties of the function $\omega_1$ from the
formula~(\ref{eq:68}) and the function
$$ \omega_2(x) = \left\{
\begin{array}{ll}
\omega^+\left( \log_2\left|x-\frac{2}{3}\right|\right) &
x>\frac{2}{3};\\
\\
\omega^-\left( \log_2\left|x-\frac{2}{3}\right|\right) & x<
\frac{2}{3},
\end{array}\right.
$$ which in
fact appears in the formula~(\ref{eq:70}). In other words,
$\omega_2(x)$ is obtained from the equality
$$ h(x) = \frac{1}{2-v} +\left|x-\frac{2}{3}\right|^{-\log_2(1-v)}
\cdot \omega_2(x).
$$ Also remind that it still follows
from~(\ref{eq:71}) that $\omega_2$ is periodical with period 2.
The Figure~\ref{fig-19} contains the graph of $\omega_1(x)$ for
$v=3/4$ and $x\in [0,\, 1]$. The interval $[0,\, 1]$ is taken
since $\omega_1$ is periodical with period 1.
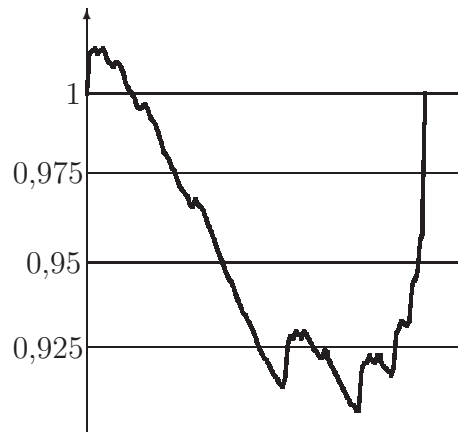
\begin{figure}[htbp]
\begin{minipage}[h]{0.9\linewidth}
\begin{center}
\begin{picture}(140,160)
\put(0,0){\vector(0,1){160}}

\put(0,64){\line(1,0){140}} \put(0,32){\line(1,0){140}}
\put(0,98){\line(1,0){140}} \put(0,128){\line(1,0){140}}

\linethickness{0.4mm}

\qbezier(0,128)(0,128)(1,143) \qbezier(1,143)(1,143)(3,145)
\qbezier(3,145)(3,145)(4,143) \qbezier(4,143)(4,143)(6,145)
\qbezier(6,145)(6,145)(7,143) \qbezier(7,143)(7,143)(8,140)
\qbezier(8,140)(8,140)(10,138) \qbezier(10,138)(10,138)(11,140)
\qbezier(11,140)(11,140)(13,138) \qbezier(13,138)(13,138)(14,135)
\qbezier(14,135)(14,135)(15,131) \qbezier(15,131)(15,131)(17,128)
\qbezier(17,128)(17,128)(18,126) \qbezier(18,126)(18,126)(19,123)
\qbezier(19,123)(19,123)(20,122) \qbezier(20,122)(20,122)(22,124)
\qbezier(22,124)(22,124)(23,122) \qbezier(23,122)(23,122)(24,119)
\qbezier(24,119)(24,119)(26,116) \qbezier(26,116)(26,116)(27,113)
\qbezier(27,113)(27,113)(28,110) \qbezier(28,110)(28,110)(29,106)
\qbezier(29,106)(29,106)(31,103) \qbezier(31,103)(31,103)(32,100)
\qbezier(32,100)(32,100)(33,99) \qbezier(33,99)(33,99)(34,96)
\qbezier(34,96)(34,96)(35,93) \qbezier(35,93)(35,93)(37,90)
\qbezier(37,90)(37,90)(38,89) \qbezier(38,89)(38,89)(39,86)
\qbezier(39,86)(39,86)(40,85) \qbezier(40,85)(40,85)(41,88)
\qbezier(41,88)(41,88)(42,86) \qbezier(42,86)(42,86)(44,84)
\qbezier(44,84)(44,84)(45,81) \qbezier(45,81)(45,81)(46,78)
\qbezier(46,78)(46,78)(47,76) \qbezier(47,76)(47,76)(48,73)
\qbezier(48,73)(48,73)(49,70) \qbezier(49,70)(49,70)(50,67)
\qbezier(50,67)(50,67)(51,65) \qbezier(51,65)(51,65)(52,62)
\qbezier(52,62)(52,62)(53,59) \qbezier(53,59)(53,59)(55,56)
\qbezier(55,56)(55,56)(56,53) \qbezier(56,53)(56,53)(57,50)
\qbezier(57,50)(57,50)(58,48) \qbezier(58,48)(58,48)(59,45)
\qbezier(59,45)(59,45)(60,44) \qbezier(60,44)(60,44)(61,42)
\qbezier(61,42)(61,42)(62,40) \qbezier(62,40)(62,40)(63,38)
\qbezier(63,38)(63,38)(64,35) \qbezier(64,35)(64,35)(65,32)
\qbezier(65,32)(65,32)(66,30) \qbezier(66,30)(66,30)(67,28)
\qbezier(67,28)(67,28)(68,27) \qbezier(68,27)(68,27)(69,25)
\qbezier(69,25)(69,25)(70,23) \qbezier(70,23)(70,23)(71,21)
\qbezier(71,21)(71,21)(72,20) \qbezier(72,20)(72,20)(73,18)
\qbezier(73,18)(73,18)(74,17) \qbezier(74,17)(74,17)(75,20)
\qbezier(75,20)(75,20)(76,33) \qbezier(76,33)(76,33)(77,36)
\qbezier(77,36)(77,36)(78,35) \qbezier(78,35)(78,35)(79,38)
\qbezier(79,38)(79,38)(80,37) \qbezier(80,37)(80,37)(81,36)
\qbezier(81,36)(81,36)(81,35) \qbezier(81,35)(81,35)(82,38)
\qbezier(82,38)(82,38)(83,37) \qbezier(83,37)(83,37)(84,35)
\qbezier(84,35)(84,35)(85,33) \qbezier(85,33)(85,33)(86,31)
\qbezier(86,31)(86,31)(87,30) \qbezier(87,30)(87,30)(88,28)
\qbezier(88,28)(88,28)(89,28) \qbezier(89,28)(89,28)(90,31)
\qbezier(90,31)(90,31)(91,30) \qbezier(91,30)(91,30)(91,28)
\qbezier(91,28)(91,28)(92,26) \qbezier(92,26)(92,26)(93,24)
\qbezier(93,24)(93,24)(94,22) \qbezier(94,22)(94,22)(95,20)
\qbezier(95,20)(95,20)(96,18) \qbezier(96,18)(96,18)(97,16)
\qbezier(97,16)(97,16)(98,14) \qbezier(98,14)(98,14)(99,12)
\qbezier(99,12)(99,12)(100,10) \qbezier(100,10)(100,10)(101,10)
\qbezier(101,10)(101,10)(102,8) \qbezier(102,8)(102,8)(103,8)
\qbezier(103,8)(103,8)(103,11) \qbezier(103,11)(103,11)(104,24)
\qbezier(104,24)(104,24)(105,26) \qbezier(105,26)(105,26)(106,26)
\qbezier(106,26)(106,26)(107,29) \qbezier(107,29)(107,29)(107,28)
\qbezier(107,28)(107,28)(108,27) \qbezier(108,27)(108,27)(109,26)
\qbezier(109,26)(109,26)(110,29) \qbezier(110,29)(110,29)(111,29)
\qbezier(111,29)(111,29)(111,27) \qbezier(111,27)(111,27)(112,25)
\qbezier(112,25)(112,25)(113,24) \qbezier(113,24)(113,24)(114,23)
\qbezier(114,23)(114,23)(115,22) \qbezier(115,22)(115,22)(115,21)
\qbezier(115,21)(115,21)(116,24) \qbezier(116,24)(116,24)(117,37)
\qbezier(117,37)(117,37)(118,39) \qbezier(118,39)(118,39)(119,42)
\qbezier(119,42)(119,42)(120,41) \qbezier(120,41)(120,41)(121,40)
\qbezier(121,40)(121,40)(122,42) \qbezier(122,42)(122,42)(123,55)
\qbezier(123,55)(123,55)(124,58) \qbezier(124,58)(124,58)(124,57)
\qbezier(124,57)(124,57)(125,60) \qbezier(125,60)(125,60)(126,72)
\qbezier(126,72)(126,72)(127,75) \qbezier(127,75)(127,75)(127,87)
\qbezier(127,87)(127,87)(128,128)

\put(-28,28){0,925} \put(-23,60){0,95} \put(-28,94){0,975}
\put(-8,124){1}

\end{picture}
\end{center}
\end{minipage}
\caption{The graph of $\omega_1(x)$ for $v=3/4$}\label{fig-19}
\end{figure}

The figure~\ref{fig-20} contains the graph of $\omega_2(x)$ for
$v=3/4$ and $x\in [0,\, 2]$. The interval $[0,\, 2]$ is take,
since $\omega_2$ is periodical with period~2.

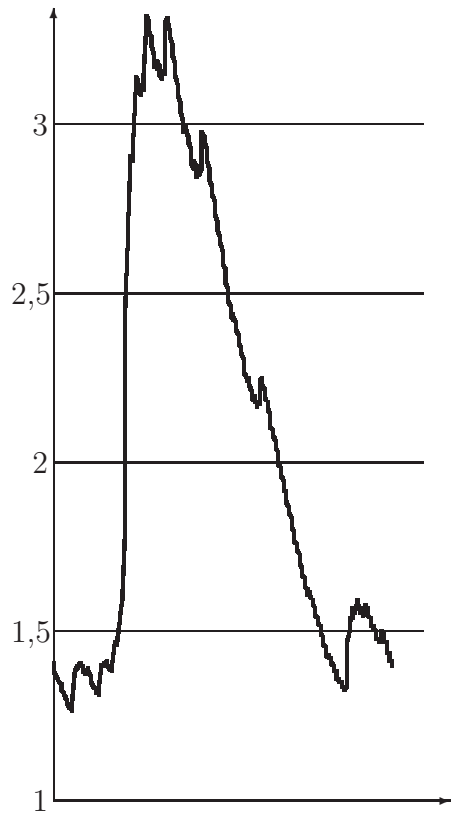
\begin{figure}[htbp]
\begin{minipage}[h]{0.9\linewidth}
\begin{center}
\begin{picture}(150,300)

\put(0,0){\vector(0,1){300}} \put(0,0){\vector(1,0){150}}

\put(0,128){\line(1,0){140}} \put(0,64){\line(1,0){140}}
\put(0,256){\line(1,0){140}} \put(0,192){\line(1,0){140}}

\linethickness{0.4mm}

\qbezier(0,52)(0,52)(0,49) \qbezier(0,49)(0,49)(1,47)
\qbezier(1,47)(1,47)(2,45) \qbezier(2,45)(2,45)(3,44)
\qbezier(3,44)(3,44)(3,42) \qbezier(3,42)(3,42)(4,40)
\qbezier(4,40)(4,40)(5,38) \qbezier(5,38)(5,38)(5,37)
\qbezier(5,37)(5,37)(6,35) \qbezier(6,35)(6,35)(7,34)
\qbezier(7,34)(7,34)(7,37) \qbezier(7,37)(7,37)(8,49)
\qbezier(8,49)(8,49)(9,51) \qbezier(9,51)(9,51)(9,50)
\qbezier(9,50)(9,50)(10,52) \qbezier(10,52)(10,52)(11,51)
\qbezier(11,51)(11,51)(11,49) \qbezier(11,49)(11,49)(12,48)
\qbezier(12,48)(12,48)(13,50) \qbezier(13,50)(13,50)(13,49)
\qbezier(13,49)(13,49)(14,47) \qbezier(14,47)(14,47)(14,45)
\qbezier(14,45)(14,45)(15,43) \qbezier(15,43)(15,43)(16,42)
\qbezier(16,42)(16,42)(16,41) \qbezier(16,41)(16,41)(17,40)
\qbezier(17,40)(17,40)(17,42) \qbezier(17,42)(17,42)(18,51)
\qbezier(18,51)(18,51)(19,52) \qbezier(19,52)(19,52)(19,51)
\qbezier(19,51)(19,51)(20,53) \qbezier(20,53)(20,53)(20,52)
\qbezier(20,52)(20,52)(21,50) \qbezier(21,50)(21,50)(22,49)
\qbezier(22,49)(22,49)(22,50) \qbezier(22,50)(22,50)(23,59)
\qbezier(23,59)(23,59)(23,60) \qbezier(23,60)(23,60)(24,59)
\qbezier(24,59)(24,59)(24,60) \qbezier(24,60)(24,60)(25,68)
\qbezier(25,68)(25,68)(25,69) \qbezier(25,69)(25,69)(26,76)
\qbezier(26,76)(26,76)(27,102) \qbezier(27,102)(27,102)(27,185)
\qbezier(27,185)(27,185)(28,210) \qbezier(28,210)(28,210)(28,215)
\qbezier(28,215)(28,215)(29,239) \qbezier(29,239)(29,239)(29,244)
\qbezier(29,244)(29,244)(30,242) \qbezier(30,242)(30,242)(30,247)
\qbezier(30,247)(30,247)(31,269) \qbezier(31,269)(31,269)(31,274)
\qbezier(31,274)(31,274)(32,272) \qbezier(32,272)(32,272)(32,269)
\qbezier(32,269)(32,269)(33,267) \qbezier(33,267)(33,267)(33,271)
\qbezier(33,271)(33,271)(34,269) \qbezier(34,269)(34,269)(34,273)
\qbezier(34,273)(34,273)(35,293) \qbezier(35,293)(35,293)(35,297)
\qbezier(35,297)(35,297)(36,295) \qbezier(36,295)(36,295)(36,291)
\qbezier(36,291)(36,291)(37,289) \qbezier(37,289)(37,289)(37,286)
\qbezier(37,286)(37,286)(38,282) \qbezier(38,282)(38,282)(38,278)
\qbezier(38,278)(38,278)(39,277) \qbezier(39,277)(39,277)(39,280)
\qbezier(39,280)(39,280)(40,278) \qbezier(40,278)(40,278)(40,275)
\qbezier(40,275)(40,275)(41,273) \qbezier(41,273)(41,273)(41,276)
\qbezier(41,276)(41,276)(42,275) \qbezier(42,275)(42,275)(42,277)
\qbezier(42,277)(42,277)(42,294) \qbezier(42,294)(42,294)(43,296)
\qbezier(43,296)(43,296)(43,295) \qbezier(43,295)(43,295)(44,291)
\qbezier(44,291)(44,291)(44,289) \qbezier(44,289)(44,289)(45,286)
\qbezier(45,286)(45,286)(45,282) \qbezier(45,282)(45,282)(46,279)
\qbezier(46,279)(46,279)(46,277) \qbezier(46,277)(46,277)(46,274)
\qbezier(46,274)(46,274)(47,271) \qbezier(47,271)(47,271)(47,267)
\qbezier(47,267)(47,267)(48,264) \qbezier(48,264)(48,264)(48,261)
\qbezier(48,261)(48,261)(49,257) \qbezier(49,257)(49,257)(49,254)
\qbezier(49,254)(49,254)(49,253) \qbezier(49,253)(49,253)(50,255)
\qbezier(50,255)(50,255)(50,254) \qbezier(50,254)(50,254)(51,251)
\qbezier(51,251)(51,251)(51,249) \qbezier(51,249)(51,249)(52,247)
\qbezier(52,247)(52,247)(52,244) \qbezier(52,244)(52,244)(52,241)
\qbezier(52,241)(52,241)(53,239) \qbezier(53,239)(53,239)(53,242)
\qbezier(53,242)(53,242)(54,240) \qbezier(54,240)(54,240)(54,238)
\qbezier(54,238)(54,238)(54,236) \qbezier(54,236)(54,236)(55,238)
\qbezier(55,238)(55,238)(55,237) \qbezier(55,237)(55,237)(56,239)
\qbezier(56,239)(56,239)(56,251) \qbezier(56,251)(56,251)(56,253)
\qbezier(56,253)(56,253)(57,251) \qbezier(57,251)(57,251)(57,249)
\qbezier(57,249)(57,249)(58,247) \qbezier(58,247)(58,247)(58,244)
\qbezier(58,244)(58,244)(58,242) \qbezier(58,242)(58,242)(59,239)
\qbezier(59,239)(59,239)(59,238) \qbezier(59,238)(59,238)(59,235)
\qbezier(59,235)(59,235)(60,232) \qbezier(60,232)(60,232)(60,229)
\qbezier(60,229)(60,229)(61,227) \qbezier(61,227)(61,227)(61,224)
\qbezier(61,224)(61,224)(61,222) \qbezier(61,222)(61,222)(62,219)
\qbezier(62,219)(62,219)(62,218) \qbezier(62,218)(62,218)(62,216)
\qbezier(62,216)(62,216)(63,213) \qbezier(63,213)(63,213)(63,211)
\qbezier(63,211)(63,211)(64,208) \qbezier(64,208)(64,208)(64,206)
\qbezier(64,206)(64,206)(64,203) \qbezier(64,203)(64,203)(65,201)
\qbezier(65,201)(65,201)(65,198) \qbezier(65,198)(65,198)(65,196)
\qbezier(65,196)(65,196)(66,194) \qbezier(66,194)(66,194)(66,191)
\qbezier(66,191)(66,191)(66,189) \qbezier(66,189)(66,189)(67,187)
\qbezier(67,187)(67,187)(67,185) \qbezier(67,185)(67,185)(67,183)
\qbezier(67,183)(67,183)(68,182) \qbezier(68,182)(68,182)(68,184)
\qbezier(68,184)(68,184)(68,183) \qbezier(68,183)(68,183)(69,181)
\qbezier(69,181)(69,181)(69,180) \qbezier(69,180)(69,180)(69,178)
\qbezier(69,178)(69,178)(70,176) \qbezier(70,176)(70,176)(70,174)
\qbezier(70,174)(70,174)(70,173) \qbezier(70,173)(70,173)(71,171)
\qbezier(71,171)(71,171)(71,169) \qbezier(71,169)(71,169)(72,167)
\qbezier(72,167)(72,167)(72,165) \qbezier(72,165)(72,165)(72,163)
\qbezier(72,163)(72,163)(72,162) \qbezier(72,162)(72,162)(73,160)
\qbezier(73,160)(73,160)(73,159) \qbezier(73,159)(73,159)(73,160)
\qbezier(73,160)(73,160)(74,160) \qbezier(74,160)(74,160)(74,158)
\qbezier(74,158)(74,158)(74,157) \qbezier(74,157)(74,157)(75,155)
\qbezier(75,155)(75,155)(75,154) \qbezier(75,154)(75,154)(75,152)
\qbezier(75,152)(75,152)(76,151) \qbezier(76,151)(76,151)(76,153)
\qbezier(76,153)(76,153)(76,152) \qbezier(76,152)(76,152)(77,150)
\qbezier(77,150)(77,150)(77,149) \qbezier(77,149)(77,149)(77,151)
\qbezier(77,151)(77,151)(78,150) \qbezier(78,150)(78,150)(78,151)
\qbezier(78,151)(78,151)(78,159) \qbezier(78,159)(78,159)(79,160)
\qbezier(79,160)(79,160)(79,159) \qbezier(79,159)(79,159)(79,158)
\qbezier(79,158)(79,158)(79,157) \qbezier(79,157)(79,157)(80,155)
\qbezier(80,155)(80,155)(80,153) \qbezier(80,153)(80,153)(80,152)
\qbezier(80,152)(80,152)(81,151) \qbezier(81,151)(81,151)(81,150)
\qbezier(81,150)(81,150)(81,148) \qbezier(81,148)(81,148)(82,146)
\qbezier(82,146)(82,146)(82,145) \qbezier(82,145)(82,145)(82,143)
\qbezier(82,143)(82,143)(82,141) \qbezier(82,141)(82,141)(83,140)
\qbezier(83,140)(83,140)(83,139) \qbezier(83,139)(83,139)(83,138)
\qbezier(83,138)(83,138)(84,136) \qbezier(84,136)(84,136)(84,135)
\qbezier(84,135)(84,135)(84,133) \qbezier(84,133)(84,133)(85,132)
\qbezier(85,132)(85,132)(85,130) \qbezier(85,130)(85,130)(85,128)
\qbezier(85,128)(85,128)(85,127) \qbezier(85,127)(85,127)(86,126)
\qbezier(86,126)(86,126)(86,124) \qbezier(86,124)(86,124)(86,123)
\qbezier(86,123)(86,123)(87,121) \qbezier(87,121)(87,121)(87,120)
\qbezier(87,120)(87,120)(87,119) \qbezier(87,119)(87,119)(87,117)
\qbezier(87,117)(87,117)(88,117) \qbezier(88,117)(88,117)(88,116)
\qbezier(88,116)(88,116)(88,114) \qbezier(88,114)(88,114)(88,113)
\qbezier(88,113)(88,113)(89,111) \qbezier(89,111)(89,111)(89,110)
\qbezier(89,110)(89,110)(89,109) \qbezier(89,109)(89,109)(90,107)
\qbezier(90,107)(90,107)(90,106) \qbezier(90,106)(90,106)(90,105)
\qbezier(90,105)(90,105)(90,103) \qbezier(90,103)(90,103)(91,102)
\qbezier(91,102)(91,102)(91,101) \qbezier(91,101)(91,101)(91,99)
\qbezier(91,99)(91,99)(91,98) \qbezier(91,98)(91,98)(92,97)
\qbezier(92,97)(92,97)(92,96) \qbezier(92,96)(92,96)(92,95)
\qbezier(92,95)(92,95)(93,93) \qbezier(93,93)(93,93)(93,92)
\qbezier(93,92)(93,92)(93,91) \qbezier(93,91)(93,91)(93,90)
\qbezier(93,90)(93,90)(94,88) \qbezier(94,88)(94,88)(94,87)
\qbezier(94,87)(94,87)(94,86) \qbezier(94,86)(94,86)(94,85)
\qbezier(94,85)(94,85)(95,84) \qbezier(95,84)(95,84)(95,83)
\qbezier(95,83)(95,83)(95,82) \qbezier(95,82)(95,82)(95,81)
\qbezier(95,81)(95,81)(96,80) \qbezier(96,80)(96,80)(96,79)
\qbezier(96,79)(96,79)(96,78) \qbezier(96,78)(96,78)(96,79)
\qbezier(96,79)(96,79)(97,79) \qbezier(97,79)(97,79)(97,78)

\qbezier(97,78)(97,78)(97,77) \qbezier(97,77)(97,77)(98,76)
\qbezier(98,76)(98,76)(98,75) \qbezier(98,75)(98,75)(98,74)

\qbezier(98,74)(98,74)(99,73) \qbezier(99,73)(99,73)(99,72)
\qbezier(99,72)(99,72)(99,71) \qbezier(99,71)(99,71)(99,70)
\qbezier(99,70)(99,70)(100,69) \qbezier(100,69)(100,69)(100,68)

\qbezier(100,68)(100,68)(100,67) \qbezier(100,67)(100,67)(101,66)
\qbezier(101,66)(101,66)(101,65) \qbezier(101,65)(101,65)(101,64)
\qbezier(101,64)(101,64)(101,63) \qbezier(101,63)(101,63)(102,62)
\qbezier(102,62)(102,62)(102,61) \qbezier(102,61)(102,61)(102,60)
\qbezier(102,60)(102,60)(102,59) \qbezier(102,59)(102,59)(103,58)
\qbezier(103,58)(103,58)(103,57)

\qbezier(103,57)(103,57)(103,56) \qbezier(103,56)(103,56)(103,55)
\qbezier(103,55)(103,55)(104,54)

\qbezier(104,54)(104,54)(104,55) \qbezier(104,55)(104,55)(104,54)
\qbezier(104,54)(104,54)(105,54) \qbezier(105,54)(105,54)(105,53)

\qbezier(105,53)(105,53)(105,52) \qbezier(105,52)(105,52)(106,51)

\qbezier(106,51)(106,51)(106,50) \qbezier(106,50)(106,50)(106,49)
\qbezier(106,49)(106,49)(107,49) \qbezier(107,49)(107,49)(107,48)
\qbezier(107,48)(107,48)(107,47) \qbezier(107,47)(107,47)(107,46)

\qbezier(107,46)(107,46)(108,45) \qbezier(108,45)(108,45)(108,46)

\qbezier(108,46)(108,46)(108,45) \qbezier(108,45)(108,45)(109,45)
\qbezier(109,45)(109,45)(109,44) \qbezier(109,44)(109,44)(109,43)

\qbezier(109,43)(109,43)(110,43)

\qbezier(110,43)(110,43)(110,42)

\qbezier(110,42)(110,42)(111,43)

\qbezier(111,43)(111,43)(111,47) \qbezier(111,47)(111,47)(111,60)
\qbezier(111,60)(111,60)(112,64)

\qbezier(112,64)(112,64)(112,68) \qbezier(112,68)(112,68)(112,69)
\qbezier(112,69)(112,69)(112,68) \qbezier(112,68)(112,68)(113,69)
\qbezier(113,69)(113,69)(113,72) \qbezier(113,72)(113,72)(113,73)
\qbezier(113,73)(113,73)(113,72) \qbezier(113,72)(113,72)(114,72)
\qbezier(114,72)(114,72)(114,71) \qbezier(114,71)(114,71)(114,72)
\qbezier(114,72)(114,72)(114,71) \qbezier(114,71)(114,71)(114,72)
\qbezier(114,72)(114,72)(115,75) \qbezier(115,75)(115,75)(115,76)
\qbezier(115,76)(115,76)(115,75)

\qbezier(115,75)(115,75)(115,74) \qbezier(115,74)(115,74)(116,73)

\qbezier(116,73)(116,73)(116,72) \qbezier(116,72)(116,72)(116,71)
\qbezier(116,71)(116,71)(116,72) \qbezier(116,72)(116,72)(117,71)

\qbezier(117,71)(117,71)(117,70) \qbezier(117,70)(117,70)(117,71)
\qbezier(117,71)(117,71)(117,70) \qbezier(117,70)(117,70)(118,71)
\qbezier(118,71)(118,71)(118,74)

\qbezier(118,74)(118,74)(118,73) \qbezier(118,73)(118,73)(119,73)
\qbezier(119,73)(119,73)(119,72) \qbezier(119,72)(119,72)(119,71)

\qbezier(119,71)(119,71)(119,70) \qbezier(119,70)(119,70)(120,69)

\qbezier(120,69)(120,69)(120,68) \qbezier(120,68)(120,68)(120,67)
\qbezier(120,67)(120,67)(120,66) \qbezier(120,66)(120,66)(121,66)
\qbezier(121,66)(121,66)(121,65)

\qbezier(121,65)(121,65)(122,64) \qbezier(122,64)(122,64)(122,63)

\qbezier(122,63)(122,63)(122,62) \qbezier(122,62)(122,62)(122,61)
\qbezier(122,61)(122,61)(123,61)

\qbezier(123,61)(123,61)(123,60)

\qbezier(123,60)(123,60)(124,60)

\qbezier(124,60)(124,60)(124,63) \qbezier(124,63)(124,63)(124,64)
\qbezier(124,64)(124,64)(124,63) \qbezier(124,63)(124,63)(125,63)
\qbezier(125,63)(125,63)(125,62)

\qbezier(125,62)(125,62)(125,61) \qbezier(125,61)(125,61)(125,60)
\qbezier(125,60)(125,60)(126,60) \qbezier(126,60)(126,60)(126,59)
\qbezier(126,59)(126,59)(126,58)

\qbezier(126,58)(126,58)(126,57) \qbezier(126,57)(126,57)(126,56)
\qbezier(126,56)(126,56)(127,56) \qbezier(127,56)(127,56)(127,55)

\qbezier(127,55)(127,55)(127,54) \qbezier(127,54)(127,54)(127,53)
\qbezier(127,53)(127,53)(128,53) \qbezier(128,53)(128,53)(128,52)
\qbezier(128,52)(128,52)(128,51)

\put(-8,252){3}

\put(-8,124){2}

\put(-8,-4){1} \put(-16,60){1,5} \put(-16,188){2,5}

\end{picture}
\end{center}
\end{minipage}
\caption{The graph of $\omega_2(x)$ for $v=3/4$}\label{fig-20}
\end{figure}

Consider examples of ``simple'' maps $\omega$ but such that the
function $h$, which is defined by~(\ref{eq:68}) is invertible and
consider the maps $\widetilde{f}_v$, which is defined by
commutativity of the diagram
\begin{equation}\label{eq:73}
\begin{CD}
[0,\, 1] @>f >> & [0,\, 1]\\
@V_{h} VV& @VV_{h}V\\
[0,\, 1] @>\widetilde{f}_v>>& [0,\, 1].
\end{CD}\end{equation}
We notice, that if for the invertible maps $h$ of the
form~(\ref{eq:68}) the diagram~(\ref{eq:73}) is commutative, then
for $x\hm{\in} [0,\, v]$ the equality $$ \widetilde{f}_v(x) =
\frac{x}{v}$$ holds. The simplest case for $\omega_1$ is that when
it is constant. The maps $\widetilde{f}_v$ for $v=3/4$, which is
defined by~(\ref{eq:73}) for the maps $h$ with constant $\omega_1$
is given at Figure~\ref{fig-32}.
\begin{figure}[htbp]
\begin{minipage}[h]{0.9\linewidth}
\begin{center}
\begin{picture}(100,120)
\put(0,0){\vector(0,1){120}} \put(0,0){\vector(1,0){120}}

\put(0,0){\line(3,4){75}}

\qbezier(75,100)(75,100)(78,96) \qbezier(78,96)(78,96)(81,91)
\qbezier(81,91)(81,91)(83,87) \qbezier(83,87)(83,87)(86,82)
\qbezier(86,82)(86,82)(88,77) \qbezier(88,77)(88,77)(90,72)
\qbezier(90,72)(90,72)(91,67) \qbezier(91,67)(91,67)(93,62)
\qbezier(93,62)(93,62)(94,57) \qbezier(94,57)(94,57)(96,52)
\qbezier(96,52)(96,52)(97,47) \qbezier(97,47)(97,47)(97,42)
\qbezier(97,42)(97,42)(98,37) \qbezier(98,37)(98,37)(99,32)
\qbezier(99,32)(99,32)(99,26) \qbezier(99,26)(99,26)(99,21)
\qbezier(99,21)(99,21)(100,16) \qbezier(100,16)(100,16)(100,11)
\qbezier(100,11)(100,11)(100,6) \qbezier(100,6)(100,6)(100,0)

\qbezier[32](75,100)(87.5,50)(100,0)

\end{picture}
\end{center}
\end{minipage}
\caption{Graph of $\widetilde{f}_v$ for $v =3/4$}\label{fig-32}
\end{figure}
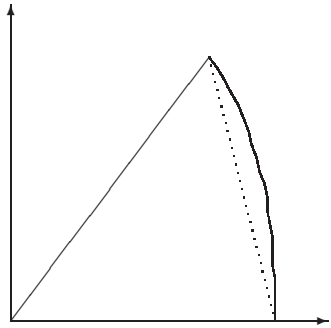

The Figure~\ref{fig-33} contains the graph of $\widetilde{f}_v$
for $v=3/4$, which is defined by~(\ref{eq:73}) for the maps $h$,
defined by~(\ref{eq:68}) if $\omega_1(\log_2x)$ is continuous,
whose graph at $[1/2,\, 1]$ is consisted of two parts of
linearity. We construct $\omega_1(\log_2x)$ in such a way that
$h(3/4)$ be equal to the right pre-image of $v$ under the action
of $f_v$. Notice, that if $x$ runs through $[1/2,\, 1]$, then
$\log_2x$ runs through the interval $[-1,\, 0]$, whose length is
1, which is the interval of periodicity of $\omega_1$.

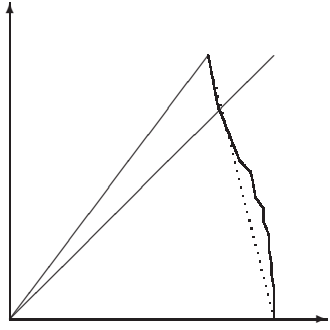
\begin{figure}[htbp]
\begin{minipage}[h]{0.9\linewidth}
\begin{center}
\begin{picture}(100,120)
\put(0,0){\vector(0,1){120}} \put(0,0){\vector(1,0){120}}

\put(0,0){\line(3,4){75}} \qbezier[32](75,100)(87.5,50)(100,0)

\qbezier(75,100)(75,100)(76,95) \qbezier(76,95)(76,95)(77,90)
\qbezier(77,90)(77,90)(78,84) \qbezier(78,84)(78,84)(79,80)
\qbezier(79,80)(79,80)(81,75) \qbezier(81,75)(81,75)(83,70)
\qbezier(83,70)(83,70)(85,65) \qbezier(85,65)(85,65)(87,60)
\qbezier(87,60)(87,60)(91,56) \qbezier(91,56)(91,56)(92,52)
\qbezier(92,52)(92,52)(93,46) \qbezier(93,46)(93,46)(96,42)
\qbezier(96,42)(96,42)(96,37) \qbezier(96,37)(96,37)(98,32)
\qbezier(98,32)(98,32)(98,27) \qbezier(98,27)(98,27)(99,21)
\qbezier(99,21)(99,21)(99,17) \qbezier(99,17)(99,17)(100,11)
\qbezier(100,11)(100,11)(100,6) \qbezier(100,6)(100,6)(100,0)

\put(0,0){\line(1,1){100}}
\end{picture}
\end{center}
\end{minipage}
\caption{Graph of $\widetilde{f}_v$ for $v =3/4$}\label{fig-33}
\end{figure}

These examples may be generalized as follows. Take an arbitrary
$n$ and use the maps $h_n$, which moves points of $A_n$ to $B_n$
to find the values of $\omega$ on the set $A_n\cap
\left[\frac{1}{2},\, 1\right]$ such that equality
$\widetilde{f}_v(A_n) = B_n$ hold. Then consider $\omega$ to be
linear at all other points and periodical with period $1$.
Consider the maps
\begin{equation}\label{eq:74} \widetilde{h}_n =
x^{-\log_2v}\omega_n(\log_2x)\end{equation} as an approximation of
$h$. If the constructed $\widetilde{h}_n$ would be invertible,
then there exists the unique $\widetilde{f}_n$, such that diagram
\begin{equation}\label{eq:75}
\begin{CD}
A @>f >> & A\\
@V_{\widetilde{h}_n} VV& @VV_{\widetilde{h}_n}V\\
B @>\widetilde{f}_n>>& B
\end{CD}\end{equation} would be commutative. This
$\widetilde{f}_n$ can be given via $$ \widetilde{f}_n =
\widetilde{h}_n(f(\widetilde{h}^{-1}_n)).
$$
For example, with the use of these notations the
Figure~\ref{fig-33} contains the graph $\widetilde{f}_2$.
Nevertheless, it is possible, that $\widetilde{h}_n$ would not be
invertible. Then there will not be $\widetilde{f}_n$ such that
diagram~(\ref{eq:73}) would be commutative. Maps $\widetilde{h}_n$
are not monotone for $v\approx 0$. Their graphs for different $v$
are given at Figure~\ref{fig-34}.

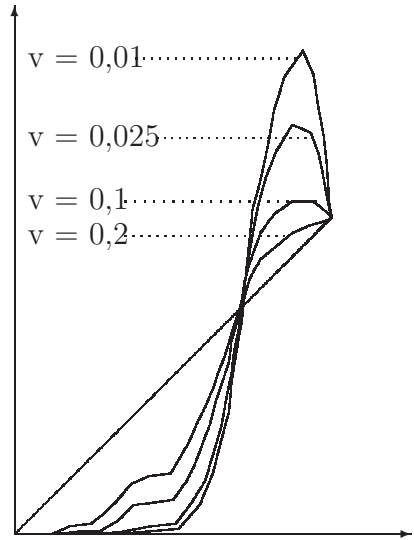
\begin{figure}[htbp]
\begin{minipage}[h]{0.9\linewidth}
\begin{center}
\begin{picture}(150,205)
\put(0,0){\vector(0,1){200}} \put(0,0){\vector(1,0){150}}

\put(5,177){v = 0,01}

\qbezier[20](46,180)(76,180)(106,180)

\put(5,147){v = 0,025}

\qbezier[20](54,150)(82,150)(110,150)

\put(5,123){v = 0,1}

\qbezier[20](42,126)(78,126)(114,126)

\put(5,110){v = 0,2}

\qbezier[20](42,113)(72,113)(102,113)

\qbezier(0,0)(0,0)(7,0) \qbezier(0,0)(0,0)(7,0)
\qbezier(0,0)(0,0)(10,0)    \qbezier(0,0)(0,0)(12,0)
\qbezier(7,0)(7,0)(14,0)    \qbezier(7,0)(7,0)(16,0)
\qbezier(10,0)(10,0)(21,0)  \qbezier(12,0)(12,0)(25,0)
\qbezier(14,0)(14,0)(21,3)  \qbezier(16,0)(16,0)(24,1)
\qbezier(21,0)(21,0)(31,0)  \qbezier(25,0)(25,0)(37,0)
\qbezier(21,3)(21,3)(29,4)  \qbezier(24,1)(24,1)(32,1)
\qbezier(31,0)(31,0)(41,1)  \qbezier(37,0)(37,0)(50,1)
\qbezier(29,4)(29,4)(34,8)  \qbezier(32,1)(32,1)(39,5)
\qbezier(41,1)(41,1)(52,3)  \qbezier(50,1)(50,1)(62,2)
\qbezier(34,8)(34,8)(39,13) \qbezier(39,5)(39,5)(45,11)
\qbezier(52,3)(52,3)(61,4)  \qbezier(62,2)(62,2)(70,11)
\qbezier(39,13)(39,13)(44,19)   \qbezier(45,11)(45,11)(53,12)
\qbezier(61,4)(61,4)(67,12) \qbezier(70,11)(70,11)(75,23)
\qbezier(44,19)(44,19)(51,22)   \qbezier(53,12)(53,12)(60,13)
\qbezier(67,12)(67,12)(72,20)   \qbezier(75,23)(75,23)(78,35)
\qbezier(51,22)(51,22)(59,23)   \qbezier(60,13)(60,13)(65,20)
\qbezier(72,20)(72,20)(75,30)   \qbezier(78,35)(78,35)(81,46)
\qbezier(59,23)(59,23)(62,28)   \qbezier(65,20)(65,20)(68,26)
\qbezier(75,30)(75,30)(78,40)   \qbezier(81,46)(81,46)(82,57)
\qbezier(62,28)(62,28)(66,35)   \qbezier(68,26)(68,26)(71,34)
\qbezier(78,40)(78,40)(80,51)   \qbezier(82,57)(82,57)(84,70)
\qbezier(66,35)(66,35)(69,42)   \qbezier(71,34)(71,34)(74,42)
\qbezier(80,51)(80,51)(82,61)   \qbezier(84,70)(84,70)(86,81)
\qbezier(69,42)(69,42)(72,48)   \qbezier(74,42)(74,42)(76,50)
\qbezier(82,61)(82,61)(83,69)   \qbezier(86,81)(86,81)(87,94)
\qbezier(72,48)(72,48)(75,55)   \qbezier(76,50)(76,50)(78,56)
\qbezier(83,69)(83,69)(85,79)   \qbezier(87,94)(87,94)(88,103)
\qbezier(75,55)(75,55)(77,61)   \qbezier(78,56)(78,56)(81,65)
\qbezier(85,79)(85,79)(86,89)   \qbezier(88,103)(88,103)(90,123)
\qbezier(77,61)(77,61)(80,69)   \qbezier(81,65)(81,65)(82,73)
\qbezier(86,89)(86,89)(88,100)  \qbezier(90,123)(90,123)(93,134)
\qbezier(80,69)(80,69)(82,76)   \qbezier(82,73)(82,73)(84,80)
\qbezier(88,100)(88,100)(89,108) \qbezier(93,134)(93,134)(96,148)
\qbezier(82,76)(82,76)(84,82) \qbezier(84,80)(84,80)(86,89)
\qbezier(89,108)(89,108)(92,124) \qbezier(96,148)(96,148)(98,160)
\qbezier(84,82)(84,82)(87,90) \qbezier(86,89)(86,89)(87,96)
\qbezier(92,124)(92,124)(95,135) \qbezier(98,160)(98,160)(102,173)
\qbezier(87,90)(87,90)(89,97) \qbezier(87,96)(87,96)(89,103)
\qbezier(95,135)(95,135)(99,145)
\qbezier(102,173)(102,173)(109,183) \qbezier(89,97)(89,97)(93,104)
\qbezier(89,103)(89,103)(93,114) \qbezier(99,145)(99,145)(105,155)
\qbezier(109,183)(109,183)(113,174)
\qbezier(93,104)(93,104)(99,109) \qbezier(93,114)(93,114)(98,121)
\qbezier(105,155)(105,155)(112,152)
\qbezier(113,174)(113,174)(115,161)
\qbezier(99,109)(99,109)(105,114)
\qbezier(98,121)(98,121)(105,126)
\qbezier(112,152)(112,152)(115,144)
\qbezier(115,161)(115,161)(117,150)
\qbezier(105,114)(105,114)(112,117)
\qbezier(105,126)(105,126)(113,126)
\qbezier(115,144)(115,144)(117,134)
\qbezier(117,150)(117,150)(118,141)
\qbezier(112,117)(112,117)(119,119)
\qbezier(113,126)(113,126)(119,121)
\qbezier(117,134)(117,134)(119,123)
\qbezier(118,141)(118,141)(119,125)
\qbezier(119,119)(119,119)(120,120)
\qbezier(119,121)(119,121)(120,120)
\qbezier(119,123)(119,123)(120,120)
\qbezier(119,125)(119,125)(120,120)
\qbezier(120,120)(120,120)(0,0) \qbezier(120,120)(120,120)(0,0)
\qbezier(120,120)(120,120)(0,0) \qbezier(120,120)(120,120)(0,0)
\end{picture}
\end{center}
\end{minipage}
\caption{Graphs of $\widetilde{h}_3$ for different $v$}
\label{fig-34}
\end{figure}

Notice as a comment to the Figure~\ref{fig-34} that all these maps
satisfy the functional equation~(\ref{eq:67}a) $$ h(2x) =
\frac{1}{v}\ h(x),
$$ i.e. $h$ repeats its graph on each interval of
the form $\left[\frac{1}{2^{k+1}},\, \frac{1}{2^k}\right]$, but
the graph is $v^k$-times compressed. We prove the following
proposition.
\begin{proposition*}[Proposition~\ref{lema:12}] For
every $n\in \mathbb{N}$ there exists $v_0 \in (0,\, 1)$ such that
for every $v\in (0,\, v_0)$ the maps $\widetilde{h}_n(x)$ is
non-monotone on $\left[ \frac{2^{n-1}-1}{2^{n-1}},\, 1\right]$.
\end{proposition*}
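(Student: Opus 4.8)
\section*{Proof proposal}

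The plan is to show that, on the interval $J_n := \left[\frac{2^{n-1}-1}{2^{n-1}},\, 1\right]$, the approximation $\widetilde h_n$ is an explicit elementary function whose one-sided derivative at the right endpoint $x=1$ becomes negative once $v$ is small, while its values at the two endpoints of $J_n$ are ordered the ``increasing'' way; these two facts together rule out monotonicity. Throughout write $c := -\log_2 v > 0$ and $a := \frac{2^{n-1}-1}{2^{n-1}}$, so that $J_n = [a,1]$ and $a^{-c} = (1/a)^{c} = 2^{\,c\log_2(1/a)}$ grows exponentially in $c$ as $v \to 0^+$. Recall from~(\ref{eq:74}) that $\widetilde h_n(x) = x^{c}\,\omega_n(\log_2 x)$, where $\omega_n$ is the $1$-periodic piecewise-linear function whose breakpoints are the numbers $\log_2\alpha$, $\alpha \in A_n \cap [1/2,1]$, normalised so that $\widetilde h_n(A_n)=B_n$, i.e.\ $\widetilde h_n$ coincides with $h_n$ on $A_n$; in particular $\widetilde h_n(a) = h_n(a)$ and $\widetilde h_n(1) = h_n(1) = 1$.

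First I would pin down $\widetilde h_n$ on $J_n$. Since $A_n = \{k/2^{n-1} : 0 \le k \le 2^{n-1}\}$ contains no point strictly between $a$ and $1$, the function $\omega_n$ has no breakpoint in $(\log_2 a,\, 0)$ and is affine there; using $\omega_n(0) = h_n(1) = 1$ and $\omega_n(\log_2 a) = h_n(a)\,a^{\log_2 v} = h_n(a)\,a^{-c}$, this gives
$$\widetilde h_n(x) = x^{c}\bigl(1 + m\log_2 x\bigr), \qquad m := \frac{h_n(a)\,a^{-c} - 1}{\log_2 a}, \qquad x \in [a,1].$$
Next I would identify $h_n(a)$. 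By the $f_v$-analogues of Proposition~\ref{lema:An} and Theorem~\ref{theor:10}, $h_n$ is the increasing bijection of $A_n$ onto $B_n$, so $h_n(a)$ is the largest element of $B_n$ below $1$. Now $B_n = f_v^{-n}(0)$ is self-similar: on $[0,v]$, $f_v$ is the linear bijection $x\mapsto x/v$ onto $[0,1]$, whence $B_n\cap[0,v] = v\,B_{n-1}$; on $[v,1]$, $f_v$ is the affine orientation-reversing bijection $x\mapsto\frac{1-x}{1-v}$ onto $[0,1]$, whence $B_n\cap[v,1] = \{1-(1-v)y : y\in B_{n-1}\}$. Since $B_1 = \{0,1\}$, induction gives $\min(B_n\setminus\{0\}) = v^{\,n-1}$, and therefore $h_n(a) = 1-(1-v)v^{\,n-2}$. (For $n\le 2$ the interpolation degenerates to $\omega_n\equiv 1$ and $\widetilde h_n(x)=x^{c}$ is globally monotone, so the content is the case $n\ge 3$, for which $h_n(a)\to 1$ as $v\to 0$; I would state this restriction explicitly.)

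The remaining step is the estimate at $x=1$. Differentiating the displayed formula on $(a,1)$ gives $\widetilde h_n'(x) = x^{c-1}\bigl[\,c(1+m\log_2 x) + \frac{m}{\ln 2}\,\bigr]$, hence $\widetilde h_n'(1^-) = c + \frac{m}{\ln 2}$. Writing $\ell := -\log_2 a > 0$ and $m = -\frac{h_n(a)\,a^{-c}-1}{\ell}$ (negative once $v$ is small, since then $h_n(a)\,a^{-c}>1$), we get
$$\widetilde h_n'(1^-) = c - \frac{h_n(a)\,a^{-c} - 1}{\ell\ln 2}.$$
As $v\to 0^+$ one has $c\to\infty$ and $h_n(a)\to 1$, while $a^{-c}$ grows exponentially in $c$, so the subtracted term dominates the linear term $c$ and $\widetilde h_n'(1^-)\to -\infty$. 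Fix $v_0\in(0,1)$ with $\widetilde h_n'(1^-)<0$ for all $v\in(0,v_0)$; since $\widetilde h_n'$ is continuous on $(a,1)$, $\widetilde h_n$ is strictly decreasing on some $[1-\delta,1]$. On the other hand $\widetilde h_n(a) = h_n(a) = 1-(1-v)v^{\,n-2} < 1 = \widetilde h_n(1)$, so if $\widetilde h_n$ were monotone on $[a,1]$ it would be non-decreasing, contradicting the strict decrease near $1$. Hence $\widetilde h_n$ is non-monotone on $J_n$, as required.

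I expect the genuine obstacle to be the middle paragraph: reading off the restriction of $\omega_n$ (hence of $\widetilde h_n$) to $[a,1]$ correctly from the construction, and in particular computing the second largest point of $B_n$ to obtain $h_n(a)$. Once the formula $\widetilde h_n(x) = x^{c}(1+m\log_2 x)$ on $[a,1]$ is available, what is left is an ``exponential beats linear'' one-variable estimate, together with the routine observation that a strictly negative one-sided derivative at $1$, combined with the endpoint ordering $\widetilde h_n(a)<\widetilde h_n(1)$, is incompatible with monotonicity.
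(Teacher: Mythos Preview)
Your argument is correct and is closely related to, but packaged more directly than, the paper's proof. Both arguments start from the same explicit description of $\widetilde h_n$ on the last interval $[a,1]$ (affine $\omega_n$ between $\log_2 a$ and $0$) and the same computation of the penultimate point $h_n(a)=\beta_{n,2^{n-1}-1}=1-(1-v)v^{n-2}$. From there the paper passes to the variable $t=\log_2 x$, writes $\widehat h_n(t)=v^{-t}(a_kt+b_k)$, locates the unique critical point $t_k=\frac{1}{\ln v}-\frac{b_k}{a_k}$ via~(\ref{eq:28}), and shows by a limit comparison that $t_k$ lands in $(\log_2 a,0)$ for small $v$; non-monotonicity then follows from Remark~\ref{note:new1}. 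You instead stay in the $x$-variable and evaluate the one-sided derivative $\widetilde h_n'(1^-)=c-\frac{h_n(a)a^{-c}-1}{\ell\ln 2}$, observing that the subtracted term is exponential in $c=-\log_2 v$ while the first term is linear, so the derivative is eventually negative; combined with $\widetilde h_n(a)<\widetilde h_n(1)$ this forces non-monotonicity. Your route avoids the change of variables and the L'H\^opital-style computation, trading the critical-point location for an endpoint-derivative sign; both detect the same phenomenon (the extremum entering the interval), but yours is a little shorter.

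Your caveat about $n\le 2$ is well placed: for $n=2$ one gets $h_n(a)a^{-c}=v\cdot 2^{c}=1$ identically, so $m=0$, $\widetilde h_2(x)=x^{c}$ is monotone, and the statement has no content there; the paper's formulas (with the factor $v^{n-2}$) implicitly carry the same restriction.
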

Now define in the previous way the solution $h$, which is
determined by~(\ref{eq:70}) and is obtained from ~(\ref{eq:67}b).
Graph of $\widetilde{f}_v$, which is determined by commutative
diagram~(\ref{eq:73}) for the maps $h$ of the form~(\ref{eq:70}),
if $\omega_2$  is constant is given at Figure~\ref{fig-35}.

\begin{figure}[htbp]
\begin{minipage}[h]{0.9\linewidth}
\begin{center}
\begin{picture}(100,120)
\put(0,0){\vector(0,1){120}} \put(0,0){\vector(1,0){120}}

\qbezier(0,0)(0,0)(2,3) \qbezier(2,3)(2,3)(5,9)
\qbezier(5,9)(5,9)(8,14) \qbezier(8,14)(8,14)(10,18)
\qbezier(10,18)(10,18)(12,22) \qbezier(12,22)(12,22)(15,28)
\qbezier(15,28)(15,28)(17,31) \qbezier(17,31)(17,31)(20,36)
\qbezier(20,36)(20,36)(23,41) \qbezier(23,41)(23,41)(25,45)
\qbezier(25,45)(25,45)(28,49) \qbezier(28,49)(28,49)(31,54)
\qbezier(31,54)(31,54)(34,58) \qbezier(34,58)(34,58)(37,62)
\qbezier(37,62)(37,62)(40,66) \qbezier(40,66)(40,66)(42,69)
\qbezier(42,69)(42,69)(45,72) \qbezier(45,72)(45,72)(48,74)
\qbezier(48,74)(48,74)(50,76) \qbezier(50,76)(50,76)(53,78)
\qbezier(53,78)(53,78)(56,79) \qbezier(56,79)(56,79)(59,80)
\qbezier(59,80)(59,80)(62,80) \qbezier(62,80)(62,80)(65,82)
\qbezier(65,82)(65,82)(67,84) \qbezier(67,84)(67,84)(69,86)
\qbezier(69,86)(69,86)(71,89) \qbezier(71,89)(71,89)(72,92)
\qbezier(72,92)(72,92)(74,96) \qbezier(74,96)(74,96)(75,99)
\qbezier(75,99)(75,99)(75,100)

\qbezier[32](0,0)(37.5,50)(75,100)

\qbezier(75,100)(87.5,50)(100,0)

\end{picture}
\end{center}
\end{minipage}
\caption{Graph of $\widetilde{f}_v$ for
$v=\frac{3}{4}$}\label{fig-35}
\end{figure}
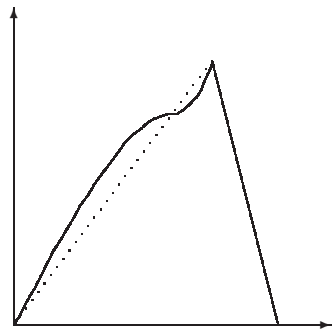

Let $\omega^+$ and $\omega^-$ be continuous, piecewise linear with
the smallest possible braking points such that $h(1/4)=v^2$. Then
the graph of $\widetilde{f}_v$, which is defined by
diagram~(\ref{eq:73}) for the maps $h$ of the form~(\ref{eq:70})
is given at Figure~\ref{fig-36}.

\begin{figure}[htbp]
\begin{minipage}[h]{0.9\linewidth}
\begin{center}
\begin{picture}(100,120)
\put(0,0){\vector(0,1){120}} \put(0,0){\vector(1,0){120}}

\put(56.25,75){\circle*{5}}

\qbezier(75,100)(87.5,50)(100,0)

\qbezier[32](0,0)(37.5,50)(75,100)

\qbezier(0,0)(0,0)(1,3) \qbezier(1,3)(1,3)(3,6)
\qbezier(3,6)(3,6)(5,10) \qbezier(5,10)(5,10)(7,14)
\qbezier(7,14)(7,14)(9,17) \qbezier(9,17)(9,17)(11,21)
\qbezier(11,21)(11,21)(13,24) \qbezier(13,24)(13,24)(15,28)
\qbezier(15,28)(15,28)(17,31) \qbezier(17,31)(17,31)(19,35)
\qbezier(19,35)(19,35)(21,38) \qbezier(21,38)(21,38)(23,42)
\qbezier(23,42)(23,42)(25,45) \qbezier(25,45)(25,45)(27,49)
\qbezier(27,49)(27,49)(30,52) \qbezier(30,52)(30,52)(32,55)
\qbezier(32,55)(32,55)(35,58) \qbezier(35,58)(35,58)(39,61)
\qbezier(39,61)(39,61)(42,64) \qbezier(42,64)(42,64)(45,66)
\qbezier(45,66)(45,66)(49,69) \qbezier(49,69)(49,69)(52,72)
\qbezier(52,72)(52,72)(56,75) \qbezier(56,75)(56,75)(59,78)
\qbezier(59,78)(59,78)(64,80) \qbezier(64,80)(64,80)(68,82)
\qbezier(68,82)(68,82)(71,85) \qbezier(71,85)(71,85)(72,90)
\qbezier(72,90)(72,90)(74,95) \qbezier(74,95)(74,95)(75,100)
\end{picture}
\end{center}
\end{minipage}
\caption{Graph of $\widetilde{f}_v$ for
$v=\frac{3}{4}$}\label{fig-36}
\end{figure}
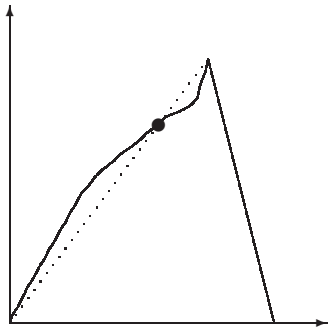

The Figure~\ref{fig-37} the result putting the graph from the
previous example the the just constructed one (the dots are used
for the first graph).
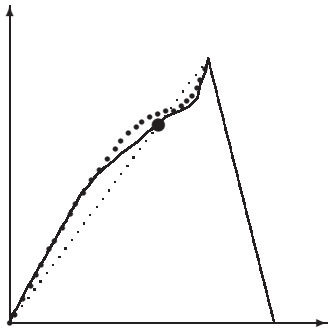
\begin{figure}[htbp]
\begin{minipage}[h]{0.9\linewidth}
\begin{center}
\begin{picture}(100,120)
\put(0,0){\vector(0,1){120}} \put(0,0){\vector(1,0){120}}

\put(56.25,75){\circle*{5}}

\qbezier(75,100)(87.5,50)(100,0)

\qbezier[32](0,0)(37.5,50)(75,100)

\qbezier(0,0)(0,0)(1,3) \qbezier(1,3)(1,3)(3,6)
\qbezier(3,6)(3,6)(5,10) \qbezier(5,10)(5,10)(7,14)
\qbezier(7,14)(7,14)(9,17) \qbezier(9,17)(9,17)(11,21)
\qbezier(11,21)(11,21)(13,24) \qbezier(13,24)(13,24)(15,28)
\qbezier(15,28)(15,28)(17,31) \qbezier(17,31)(17,31)(19,35)
\qbezier(19,35)(19,35)(21,38) \qbezier(21,38)(21,38)(23,42)
\qbezier(23,42)(23,42)(25,45) \qbezier(25,45)(25,45)(27,49)
\qbezier(27,49)(27,49)(30,52) \qbezier(30,52)(30,52)(32,55)
\qbezier(32,55)(32,55)(35,58) \qbezier(35,58)(35,58)(39,61)
\qbezier(39,61)(39,61)(42,64) \qbezier(42,64)(42,64)(45,66)
\qbezier(45,66)(45,66)(49,69) \qbezier(49,69)(49,69)(52,72)
\qbezier(52,72)(52,72)(56,75) \qbezier(56,75)(56,75)(59,78)
\qbezier(59,78)(59,78)(64,80) \qbezier(64,80)(64,80)(68,82)
\qbezier(68,82)(68,82)(71,85) \qbezier(71,85)(71,85)(72,90)
\qbezier(72,90)(72,90)(74,95) \qbezier(74,95)(74,95)(75,100)

\put(0,0){\circle*{2}} \put(2,3){\circle*{2}}
\put(5,9){\circle*{2}} \put(8,14){\circle*{2}}
\put(10,18){\circle*{2}} \put(12,22){\circle*{2}}
\put(15,28){\circle*{2}} \put(17,31){\circle*{2}}
\put(20,36){\circle*{2}} \put(23,41){\circle*{2}}
\put(25,45){\circle*{2}} \put(28,49){\circle*{2}}
\put(31,54){\circle*{2}} \put(34,58){\circle*{2}}
\put(37,62){\circle*{2}} \put(40,66){\circle*{2}}
\put(42,69){\circle*{2}} \put(45,72){\circle*{2}}
\put(48,74){\circle*{2}} \put(50,76){\circle*{2}}
\put(53,78){\circle*{2}} \put(56,79){\circle*{2}}
\put(59,80){\circle*{2}} \put(62,80){\circle*{2}}
\put(65,82){\circle*{2}} \put(67,84){\circle*{2}}
\put(69,86){\circle*{2}} \put(71,89){\circle*{2}}
\put(72,92){\circle*{2}} \put(74,96){\circle*{2}}
\end{picture}
\end{center}
\end{minipage}
\caption{The putting of one graph onto another}\label{fig-37}
\end{figure}

Section~\ref{sect-Javni-fornuly} is devoted to explicit formulas
of the homeomorphic solution $h:\, [0,\, 1]\rightarrow [0,\, 1]$
of the equation~(\ref{eq:58}). In Section~\ref{subs-h-worse} we
prove the following theorem.
\begin{theorem*}[Theorem~\ref{theor:18}, also Sect. 3.2
in~\cite{ADM-2016}] The homeomorphic solution $h:\, [0,\,
1]\rightarrow [0,\, 1]$ of the equation~(\ref{eq:58}) can be
expressed by formula
$$
h(x) = \lim\limits_{n \rightarrow
\infty}\beta\left(\displaystyle{\left[2^nx\right],\, n}\right) =
\lim\limits_{n \rightarrow \infty}
\sum\limits_{t=1}^{[2^nx]}\frac{1}{\zeta_{n,t}},
$$ where
\begin{equation}\label{eq:81}
\begin{array}{c} \zeta_{n,k} = \prod\limits_{t=1}^n\left(
\frac{2}{1-v}\cdot\left\{ \left\{\displaystyle{\left\{
\frac{k}{2^{n-t}}\right\}}/2\right\}+ \left\{\displaystyle{\left\{
\frac{k}{2^{n-t+1}}\right\}}/2\right\}\right\} +\right.\\
+\left.\frac{2}{v}\cdot\left\{\left\{\displaystyle{\left\{
\frac{k}{2^{n-t}}\right\}}/2\right\}+ \left\{\displaystyle{\left\{
\frac{k}{2^{n-t+1}}\right\}}/2\right\}
+\frac{1}{2}\right\}\right).\end{array}\end{equation}
\end{theorem*}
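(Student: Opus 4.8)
The plan is to derive the formula from the recursion of Proposition~\ref{prop:1} and the continuity of $h$. By Theorem~\ref{theor:homeom-jed}, the functional equation~(\ref{eq:58}) has a unique solution $h$, an increasing homeomorphism of $[0,1]$; in particular $h$ is continuous with $h(0)=0$ and $h(1)=1$. Since $[2^nx]/2^n\to x$ for every $x$, continuity gives $h(x)=\lim_{n\to\infty}h\!\left([2^nx]/2^n\right)$, so it suffices to prove the dyadic identity $h(k/2^n)=\sum_{t=1}^{k}1/\zeta_{n,t}$ for all integers $0\le k\le 2^n$. Setting $\delta_{n,k}:=h(k/2^n)-h((k-1)/2^n)$ and using $h(0)=0$, this identity telescopes to the single statement $\delta_{n,k}=1/\zeta_{n,k}$ for $1\le k\le 2^n$.

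I would then compute the increments $\delta_{n,k}$. If $k\le 2^{n-1}$ then $\tfrac{k-1}{2^n},\tfrac{k}{2^n}\in[0,\tfrac12]$, and item 3 of Proposition~\ref{prop:1} gives $h(\tfrac{k}{2^n})=v\,h(\tfrac{k}{2^{n-1}})$, whence $\delta_{n,k}=v\,\delta_{n-1,k}$. If $k\ge 2^{n-1}+1$ then $\tfrac{k-1}{2^n},\tfrac{k}{2^n}\in[\tfrac12,1]$, and item 4 gives $h(\tfrac{k}{2^n})=1-(1-v)\,h(\tfrac{2^n-k}{2^{n-1}})$, whence $\delta_{n,k}=(1-v)\,\delta_{n-1,\,2^n+1-k}$; the boundary value $h(\tfrac12)=v$ is consistent with either branch, so the dichotomy $k\le 2^{n-1}$ / $k\ge 2^{n-1}+1$ is exhaustive and causes no clash. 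With the base value $\delta_{0,1}=h(1)-h(0)=1$, unfolding this recursion shows $\delta_{n,k}$ is a product of exactly $n$ factors, each equal to $v$ or to $1-v$: the $i$-th factor is $v$ precisely when $f^{\,i-1}$ carries the dyadic interval $[\tfrac{k-1}{2^n},\tfrac{k}{2^n}]$ into $[0,\tfrac12]$ and is $1-v$ otherwise. Equivalently these factors are the numbers $\tfrac12\alpha_i$ of~(\ref{eq:60}) read off the binary expansion of $\tfrac{k-1}{2^n}$, in agreement with Theorem~\ref{lema:32}; so $\delta_{n,k}=v^{p}(1-v)^{q}$ with $p+q=n$, the exponents counting the ``left'' and ``right'' steps of the tent-map itinerary of that interval.

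What remains is to recognize the right-hand side of~(\ref{eq:81}) as $1/\delta_{n,k}$, i.e. to show that the $t$-th factor of the product in~(\ref{eq:81}) equals $1/v$ when the $t$-th itinerary step lands in $[0,\tfrac12]$ and equals $1/(1-v)$ otherwise. Since each inner fractional-part argument lies in $[0,1)$, one first checks that such a factor collapses to $1/v$ exactly when the bracketed quantity assembled from $\{k/2^{n-t}\}$ and $\{k/2^{n-t+1}\}$ vanishes, and to $1/(1-v)$ exactly when it equals $\tfrac12$; then, writing $\{k/2^{n-t}\}$ and $\{k/2^{n-t+1}\}$ in terms of the binary digits of $k$, one verifies that only these two values arise and that the value $\tfrac12$ is taken precisely on the right-half steps. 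The passage to the limit $n\to\infty$ is then the continuity argument above; alternatively, one can use the density of $\mathcal B=\bigcup_{n}B_n$ (Theorem~\ref{theor:10}) to obtain the convergence uniformly in $x$. I expect this last step --- the term-by-term matching of the nested-fractional-part expression~(\ref{eq:81}) with the tent-map itinerary of the dyadic intervals --- to be the main obstacle; everything preceding it is a routine unwinding of Proposition~\ref{prop:1}.
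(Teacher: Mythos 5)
Your reduction to the dyadic identity and your increment recursion are sound: items 3 and 4 of Proposition~\ref{prop:1} do give $\delta_{n,k}=v\,\delta_{n-1,k}$ for $k\le 2^{n-1}$ and $\delta_{n,k}=(1-v)\,\delta_{n-1,\,2^n+1-k}$ for $k\ge 2^{n-1}+1$, so $h(k/2^n)-h((k-1)/2^n)$ is a product of $n$ factors from $\{v,\,1-v\}$ governed by the $f$-itinerary of the interval. This is the same quantity the paper computes, only from the other side of the conjugacy: in Section~\ref{subs-h-worse} the paper tracks how $f_v$ transforms the branches of $f_v^{n}$, obtains $\zeta_{n,k}$ as the (absolute) slope of a branch, and uses that each branch maps onto $[0,1]$, so its width is $1/\zeta_{n,k}$ and the partial sums of these widths are the points of $B$, which are the $h$-images of the dyadic points by Theorem~\ref{theor:10}. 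Up to this point the two arguments are equivalent.

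The genuine gap is the step you postpone, and it is exactly where the content of the theorem (and the paper's effort) lies: identifying the $t$-th factor of~(\ref{eq:81}) with $1/v$ or $1/(1-v)$ according to the itinerary. Taken literally, this identification fails. Since $\{k/2^{n-t}\}\in[0,1)$, one has $2\{\{k/2^{n-t}\}/2\}=\{k/2^{n-t}\}$, which is the dyadic tail of $k$ after the $t$-th digit, not the digit itself; already for $n=t=1$, $k=1$ the inner sum is $\{\{1\}/2\}+\{\{1/2\}/2\}=1/4$, so the factor equals $\frac{1}{2(1-v)}+\frac{3}{2v}$, which is neither $1/v$ nor $1/(1-v)$, and then $\sum_{t=1}^{1}1/\zeta_{1,t}\neq v=h(1/2)$. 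To make the matching work one must read the innermost fractional parts as integer parts (the digit extraction is $x_p=2\{[k/2^{n-p}]/2\}$, which is what the derivation preceding~(\ref{eq:80}) actually uses, despite the misprint), and one must align the index: your product is governed by consecutive digits of $k-1$ (the left endpoint of the interval, with $x_0=0$, i.e.\ the branch counted from zero), whereas~(\ref{eq:81}) is written in the digits of $k$; your own text reads the itinerary off $(k-1)/2^n$ but proposes to verify the formula with digits of $k$, and this tension is never resolved. So the decisive verification is not a routine check: as stated it requires this reinterpretation and index correction (or the formula is simply false at dyadic points), and your proposal neither carries it out nor identifies the correction that is needed.
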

\begin{note*}[Remark~\ref{note:2}] In spite of the
formula for $h(x)$ is quit complicated and contains a limit, the
value of $h$ is defined at any point. The existence of the limit
follows from that the formula is obtained from the same
reasonings, which where made in the proof of
Theorem~\ref{theor:homeom-jed}.
\end{note*}
\begin{note*}[Remark~\ref{note:3}] Notice, that the
formula from Theorem~\ref{theor:18} has the following properties:

1. In the case when $x\in A$ the finding of $h(x)$ via this
formula leads to $n$ summand for the approx.imation $[2^nx]\in
A_n$. The case $x\in A$  means that ``approximated'' values of
$h(x)$ stabilize after finite number of steps on the exact value.

2. The stabilization via the obtained formula, i.e. the equality
$$ \sum\limits_{t=1}^{[2^nx]}\frac{1}{\zeta_{n,t}} =
\sum\limits_{t=1}^{[2^{n+1}x]}\frac{1}{\zeta_{n+1,t}}
$$ for
some $n\in \mathbb{N}$ means that $x\in A_n$ and the obtained
approximation is the exact value of $h(x)$.
\end{note*}
The first of mentioned properties from the Remark~\ref{note:3} can
be considered as a disadvantage of the formula. We obtain in
Section~\ref{subs-h-better} another formula for $\beta_{n,k}$,
which is free of this disadvantage. We prove the following
theorem.
\begin{theorem*}[Theorem~\ref{theor:19}, also
Section~3.3 in~\cite{ADM-2016}] The homeomorphic solution $h:\,
[0,\, 1]\rightarrow [0,\, 1]$ of the equation~(\ref{eq:58}) can be
expressed by the following formula
$$ h(x) = \sum\limits_{i=1}^{\infty}\frac{(2^{i-1}
x)((-1)^{[-\{ \log_2 [2^ix]\}]}-1)}{\zeta_{i,\,
[2^{i+[-\log_2(2^{i+1}x)]+1} x]}},
$$ where $\zeta_{n,k}$ is expressed by~(\ref{eq:81}).
\end{theorem*}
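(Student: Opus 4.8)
The plan is to obtain the formula of Theorem~\ref{theor:19} from Theorem~\ref{theor:18} by turning the convergent sequence of dyadic approximations into a single, everywhere convergent series; the only substantive input beyond Theorem~\ref{theor:18} will be the self-similar recursion for the values of the conjugacy recorded in Proposition~\ref{prop:1} (equivalently Proposition~\ref{lema:25}).

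First I would set $S_n(x)=\beta([2^nx],n)=\sum_{t=1}^{[2^nx]}1/\zeta_{n,t}$, so that Theorem~\ref{theor:18} reads $h(x)=\lim_{n\to\infty}S_n(x)$, and put $S_0(x)=0$ (an empty sum, since $[2^0x]=0$ for $x\in(0,1)$). Then $h(x)=\sum_{i\ge 1}\bigl(S_i(x)-S_{i-1}(x)\bigr)$, and the whole task reduces to identifying the $i$-th increment with the $i$-th summand displayed in the statement. Using the binary expansion~(\ref{eq:59}) one has $[2^ix]=2[2^{i-1}x]+x_i$, so that with $j=[2^{i-1}x]$ one gets $S_{i-1}(x)=h(j/2^{i-1})$ and $S_i(x)=h((2j+x_i)/2^{i})$; hence $S_i(x)-S_{i-1}(x)=0$ whenever the new binary digit leaves $x$ in the same level-$(i-1)$ interval, and otherwise it equals exactly one interval length $1/\zeta_{i,k_i}$ of the level-$i$ grid, where $k_i$ is the position of the newly inserted point. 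Iterating the recursion $h(x)=v\,h(2x)$ for $x\le 1/2$ and $h(x)=1-(1-v)\,h(2-2x)$ for $x>1/2$ from Proposition~\ref{prop:1} is what reduces that single length to the product form of $\zeta_{i,k_i}$ in~(\ref{eq:81}).

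The second and more delicate step is to rewrite the combinatorial data of ``whether level $i$ contributes, and at which index'' in the closed form of the statement. By a direct computation with floors and fractional parts one checks that $(-1)^{[-\{\log_2[2^ix]\}]}-1$ carries exactly the ``new point / no new point'' alternative (it vanishes precisely in the degenerate cases, for instance when $[2^ix]$ is a power of $2$), that the index $[2^{i+[-\log_2(2^{i+1}x)]+1}x]$ equals $k_i$, and that the prefactor $2^{i-1}x$ together with the chosen sign provides the correct normalization, the scalings $v$ and $1-v$ of Proposition~\ref{prop:1} being absorbed into $\zeta_{i,k_i}$. Convergence of the resulting series is then immediate: its tail is bounded by the sum of the lengths of the level-$i$ intervals meeting a fixed neighbourhood of $x$, which tends to $0$ because $\mathcal{A}=\bigcup_{n}A_{n}$ is dense (Proposition~\ref{lema:An}) and $h$ is continuous; and since the $i$-th summand uses only the first $O(i)$ binary digits of $x$ and the single product $\zeta_{i,\cdot}$, nothing has to be recomputed as $i$ grows, which is exactly the drawback of the formula of Theorem~\ref{theor:18} noted in Remark~\ref{note:3}.

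The main obstacle will be this second step in combination with the non-uniqueness of the binary expansion of dyadic $x$: one must fix, say, the terminating representation and then verify that under that convention the expressions $[-\{\log_2[2^ix]\}]$ and $[2^{i+[-\log_2(2^{i+1}x)]+1}x]$ still reproduce the conceptual quantities at and immediately after the dyadic points, so that for $x\in\mathcal{A}$ the series collapses to, and agrees with, the exact finite value of $h(x)$. An equivalent route, perhaps cleaner for this bookkeeping, is to prove by induction on $n$ that the partial sum of the series of Theorem~\ref{theor:19} up to the last index $i$ that affects the first $n$ binary digits of $x$ equals $\beta([2^nx],n)$, and then to pass to the limit; either way, once the dictionary between binary expansions and these floor/fractional-part expressions is set up, the telescoping, the appeal to Proposition~\ref{prop:1}, and the density/continuity argument are routine.
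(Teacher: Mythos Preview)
Your proposal is correct and follows essentially the same route as the paper. Both arguments write $h(x)$ as a sum of single interval lengths $1/\zeta_{i,k_i}$ picked up at the successive ``1'' digits of the binary expansion of $x$: the paper does this by tracking the left endpoint $\xi_t$ of the level-$t$ interval containing the target together with the slope $p_t$ of $f_v^{\,t}$ (its lemmas treat a block of zeros followed by a one, so the sum is indexed by the positions of the ones), while you obtain the same increments by telescoping the sequence $S_n=\beta([2^nx],n)$ of Theorem~\ref{theor:18} and using $\beta_{i-1,j}=\beta_{i,2j}$ from~(\ref{eq:3}) to see that $S_i-S_{i-1}$ is either $0$ or $1/\zeta_{i,2[2^{i-1}x]+1}$; the remaining identification of $k_i$ and the indicator with the floor/fractional-part expressions is exactly what the paper handles by the formulas $t=[-\log_2 x]$, $t_2=[-\log_2(x-2^{-t})]$, etc., so your second step matches the paper's closing paragraph.
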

\begin{note*}[Remark~\ref{note:4}] Notice, that if $x\in
\mathcal{A}$ then the formula from Theorem~\ref{theor:19} contains
only finitely many summands.
\end{note*}

We consider in Section~\ref{sect:KuskowoLin} the conjugation of
maps $f:\, [0,\, 1]\rightarrow [0,\, 1]$ of the form~(\ref{eq:50})
and a continuous maps $g: \, [0,\, 1]\rightarrow [0,\, 1]$ of the
form
\begin{equation} \label{eq:90} g(x) = \left\{\begin{array}{ll}
g_l,& \text{if } 0\leq x< v,\\
g_r,& \text{if } v \leqslant x\leqslant 1,
\end{array}\right.
\end{equation} where
$g_l(0)=g_r(1)=0$, $g_r(v)=1$ and functions $g_l$ and $g_r$ are
monotone and piecewise linear. We study the conditions of the
existence and properties of the homeomorphism $h:\, [0,\,
1]\rightarrow [0,\, 1]$ (if it exists), which is the solution of
the functional equation
\begin{equation}\label{eq:91} h(f) = g(h).
\end{equation}
In Section~\ref{sect:KuskowoLin-2} we prove the following theorem.
\begin{theorem*}[Theorem~\ref{theor-dyfer-lin}, also
Theorem~1 in~\cite{UMZh}] Let the maps $f$, which is given
by~(\ref{eq:50}), be topologically conjugated a piecewise linear
$g$ which maps $[0,\, 1]$ onto itself and let $h$ be a
homeomorphism such that equality~(\ref{eq:91}) holds. If $h$ is
continuously differentiable on $(\alpha,\, \beta)$ for some $0\leq
\alpha<\beta\leq 1$, then $h$ is piecewise linear.
\end{theorem*}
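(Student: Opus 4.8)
Throughout write the conjugacy equation as $h\circ f=g\circ h$. A first reduction: $h$, being a homeomorphism of $[0,\,1]$, is monotone, and if it decreases we may replace $h$ by $x\mapsto 1-h(x)$ and $g$ by $y\mapsto 1-g(1-y)$, which is again a piecewise linear self-map of $[0,\,1]$ conjugate to $f$; so we may assume $h$ increases. We may also shrink $(\alpha,\,\beta)$ so that in addition $h'\geqslant\delta>0$ on it: this is possible because $h$ is injective, hence $h'$ (which is continuous on $(\alpha,\,\beta)$) is not identically zero on any subinterval. The naive idea of propagating the $C^{1}$-regularity of $h$ from $(\alpha,\,\beta)$ to all of $[0,\,1]$ along the orbits of $f$ fails, since the backward orbit $\bigcup_{n}f^{-n}((\alpha,\,\beta))$ omits the finite orbits of the periodic points of $f$. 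Instead I will exploit the exact self-similarity of $f$ on a single monotonicity lap.

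Pick $N$ so large that $(\alpha,\,\beta)$ contains a full monotonicity lap $[p,\,q]$ of $f^{N}$ (these have length $2^{-N}$), chosen strictly inside $(0,\,1)$. Then $f^{N}|_{[p,\,q]}$ is an affine bijection of $[p,\,q]$ onto $[0,\,1]$ of slope $\pm 2^{N}$; let $\phi=(f^{N}|_{[p,\,q]})^{-1}\colon[0,\,1]\to[p,\,q]$ be its affine inverse, a contraction with $|\phi'|=2^{-N}$, and let $y^{*}\in(p,\,q)$ be the fixed point of $\phi$, so also $f^{N}(y^{*})=y^{*}$ and $g^{N}(h(y^{*}))=h(y^{*})$. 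Iterating $h=g^{N}\circ h\circ\phi$ (which follows at once from $g^{N}\circ h=h\circ f^{N}$ and $f^{N}\circ\phi=\mathrm{id}$) gives $h=g^{kN}\circ h\circ\phi^{k}$ on $[0,\,1]$ for every $k\geqslant 1$. Now $h$ is a $C^{1}$ diffeomorphism on a neighbourhood of $[p,\,q]$ and $\phi^{k}$ is affine, so $h\circ\phi^{k}$ is a $C^{1}$ diffeomorphism of $[p,\,q]$ onto the closed interval $J_{k}:=h(\phi^{k}([p,\,q]))$, an interior point of which is $h(y^{*})$; hence on $J_{k}$
$$ g^{kN}|_{J_{k}}=h|_{[p,\,q]}\circ(h\circ\phi^{k})^{-1} $$
is of class $C^{1}$. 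But $g^{kN}$ is piecewise linear, and a piecewise linear function that is $C^{1}$ on an interval is affine there; thus $g^{kN}$ is affine on $J_{k}$. Its slope is pinned down by the behaviour at $h(y^{*})$: differentiating $h\circ f^{N}=g^{N}\circ h$ at $y^{*}$ — legitimate because the same argument shows $g^{N}$ is $C^{1}$, hence affine, near $h(y^{*})$ — gives $(g^{N})'(h(y^{*}))=\pm 2^{N}$, whence $(g^{kN})'(h(y^{*}))=\pm 2^{kN}$ and $g^{kN}$ has slope of modulus $2^{kN}$ on $J_{k}$.

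It remains to let $k\to\infty$. For $y\in[p,\,q]$ the identity $h=g^{kN}\circ h\circ\phi^{k}$ reads $h(y)-h(y^{*})=s_{k}\,(h(\phi^{k}(y))-h(y^{*}))$ with $s_{k}:=(g^{kN})'|_{J_{k}}$, $|s_{k}|=2^{kN}$; differentiating and using $h'>0$ on $[p,\,q]$ one sees $s_{k}\,(\phi^{k})'=1$. Since $h$ is differentiable at $y^{*}$ with $h'(y^{*})=:c\geqslant\delta$ and $\phi^{k}(y)\to y^{*}$, we have $h(\phi^{k}(y))-h(y^{*})=c\,(\phi^{k})'(y-y^{*})+o(2^{-kN}|y-y^{*}|)$ as $k\to\infty$; multiplying by $s_{k}$ and using $s_{k}(\phi^{k})'=1$, $|s_{k}|=2^{kN}$, the error term becomes $o(1)$ and
$$ h(y)-h(y^{*})=c\,(y-y^{*})+o(1)\qquad(k\to\infty). $$
The left-hand side does not depend on $k$, so $h(y)=h(y^{*})+c\,(y-y^{*})$ for all $y\in[p,\,q]$: $h$ is affine on the lap. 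Consequently $h\circ\phi$ is affine on $[0,\,1]$, and $h=g^{N}\circ(h\circ\phi)$ is the composition of an affine map with the piecewise linear map $g^{N}$, hence piecewise linear. This proves the theorem.

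The crux — the step I expect to require the most care — is the assertion that $g^{kN}$ must be affine on $J_{k}$: this is exactly what upgrades ``$C^{1}$'' to ``piecewise linear'', and it rests on the two facts that $g^{kN}$ is genuinely piecewise linear and that $h\circ\phi^{k}$ is a genuine $C^{1}$ diffeomorphism (so that $C^{1}$-regularity transfers to $g^{kN}|_{J_{k}}$), together with the identification of the slope $\pm 2^{kN}$ via the derivative of $g^{N}$ at the fixed point $h(y^{*})$. A second, more routine technical point is the preliminary shrinking of $(\alpha,\,\beta)$ to an interval carrying a full lap of some $f^{N}$ on which $h'$ is bounded below by a positive constant; one should also check that $\phi^{k}([p,\,q])$ stays inside $(p,\,q)$, so that $h(y^{*})$ is an interior point of $J_{k}$ and the slope computation is valid.
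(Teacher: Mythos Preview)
Your proof is correct and takes a genuinely different route from the paper's.

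The paper first propagates the $C^{1}$ regularity of $h$ from $(\alpha,\beta)$ to a finite partition $A_{1},\ldots,A_{s}$ of $[0,1]$ via the conjugacy equation (Lemmas~\ref{lema:37} and~\ref{lema:41}), then establishes multiplicative relations $h'(f(x))=k_{i}\,h'(x)$ on each $A_{i}$ (Lemma~\ref{lema:38}). It uses density of periodic orbits lying entirely in $\bigcup A_{i}$ to find a periodic point $x^{*}$ with $h'(x^{*})\neq 0$, so that the product of the $k_{i}$ along the orbit is $1$ (Lemma~\ref{lema:39}); iterating the conjugacy near $x^{*}$ then forces $h'$ to be constant on a neighbourhood (Lemma~\ref{lema:40}), and linearity is finally spread to all of $[0,1]$ by pushing forward under $f$.

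You bypass the global $C^{1}$ propagation entirely by staying on a single monotonicity lap $[p,q]$ of $f^{N}$ and working with the renormalization identity $h=g^{kN}\circ h\circ\phi^{k}$. The decisive observation---that a piecewise linear map which is $C^{1}$ on an interval must be affine there---lets you read off that $g^{kN}$ is affine on $J_{k}$ with slope $\pm 2^{kN}$, and the limit $k\to\infty$ collapses $h$ to an affine map on $[p,q]$ in one stroke. Your globalization step $h=g^{N}\circ(h\circ\phi)$ is also cleaner than the paper's measure-doubling argument. Both arguments ultimately hinge on the same phenomenon (the conjugacy linearises $h$ near a repelling periodic point of $f$), but yours isolates a single explicit periodic point and avoids the bookkeeping of the partition $\{A_{i}\}$ and the constants $k_{i}$. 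The paper's approach, by contrast, makes the role of multipliers along periodic orbits more transparent and would adapt more readily to maps $f$ that are only piecewise smooth rather than piecewise affine.
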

The Proposition~\ref{theor:ksklin-tpleqv} is an example of
restrictions for $g$, which should be satisfied, if it is
conjugated to $f$ via piecewise linear homeomorphism. The
following proposition can be considered as one more restriction
for $g$
\begin{proposition*}[Proposition~\ref{lema:36}] If a
maps $g:\, [0,\, 1]\rightarrow [0,\, 1]$ of the form~(\ref{eq:90})
is topologically conjugated to $f:\, [0,\, 1]\rightarrow [0,\, 1]$
of the form~(\ref{eq:50}), then $g$ only two fixed points: one of
which is $0$ and the other belongs to $(v,\, 1)$.
\end{proposition*}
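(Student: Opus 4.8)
The plan is to use that a topological conjugacy sets up a bijection between the fixed-point sets of $f$ and $g$, together with the observation that such a conjugacy must carry the turning point $1/2$ of $f$ onto the turning point $v$ of $g$. Throughout, let $h\colon[0,1]\to[0,1]$ be a homeomorphism realizing the conjugacy, normalised so that $h\circ f=g\circ h$ (this is equivalent to the definition $g=h^{-1}(f(h))$ after replacing $h$ by its inverse).

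First I would record that $f$ has exactly two fixed points: on $[0,1/2)$ the equation $f(x)=2x=x$ gives $x=0$, and on $[1/2,1]$ the equation $f(x)=2-2x=x$ gives $x=2/3$. Because $h$ is a bijection, $y\in[0,1]$ is a fixed point of $g$ if and only if $h^{-1}(y)$ is a fixed point of $f$; hence $g$ has exactly two fixed points, namely $h(0)$ and $h(2/3)$.

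Next I would pin down the orientation of $h$. A self-homeomorphism of $[0,1]$ is either increasing (then $h(0)=0$, $h(1)=1$) or decreasing (then $h(0)=1$, $h(1)=0$). In the decreasing case $h(0)=1$ would be a fixed point of $g$, i.e. $g(1)=1$; but $g(1)=g_r(1)=0$, a contradiction. So $h$ is increasing, $h(0)=0$ and $h(1)=1$. In particular $g(0)=g(h(0))=h(f(0))=h(0)=0$, which gives the first of the two fixed points of $g$.

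Finally I would locate the second fixed point $p:=h(2/3)$, and the key step is the identity $h(1/2)=v$. The value $1$ is the common maximum of $f$ and of $g$ on $[0,1]$, and since $g_l$ is strictly increasing from $g_l(0)=0$ to $g_l(v)=1$ and $g_r$ is strictly decreasing from $g_r(v)=1$ to $g_r(1)=0$, the level set $g^{-1}(1)=\{v\}$ is a single point, just as $f^{-1}(1)=\{1/2\}$; then from $g(h(1/2))=h(f(1/2))=h(1)=1$ we get $h(1/2)=v$. Since $1/2<2/3<1$ and $h$ is increasing, $v=h(1/2)<h(2/3)<h(1)=1$, i.e. $p\in(v,1)$, as required. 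The only point demanding care is this identification of the turning points; it relies on $g$ being genuinely unimodal, which here is automatic, for if $g_l$ or $g_r$ were constant on a subinterval then $g^{-1}(1)$ would be a nondegenerate interval whereas $f^{-1}(1)$ is a point, contradicting that $h$ is a homeomorphism. Everything else is direct bookkeeping with the relation $h\circ f=g\circ h$.
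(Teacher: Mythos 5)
Your proof is correct, but it takes a different route from the paper's. The paper argues directly on the $g$ side: the fixed point $0$ comes from $g_l(0)=0$, the fixed point in $(v,1)$ comes from the intermediate value theorem applied to $g(x)-x$ on $[v,1]$ (where $g(v)=1>v$ and $g(1)=0$), uniqueness on $(v,1)$ comes from $g$ being decreasing there, and a hypothetical fixed point in $(0,v)$ is pulled back through $h^{-1}$ to a fixed point of $f$ in $(0,1/2)$, which does not exist. You instead count the fixed points of $f$ (exactly two, $0$ and $2/3$), use the conjugacy to transport them bijectively to the fixed points of $g$, fix the orientation of $h$ via $g(1)=0$ so that $h(0)=0$, and then locate the second fixed point $h(2/3)$ by first proving $h(1/2)=v$ and invoking monotonicity of $h$. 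Your approach buys a sharper conclusion (the second fixed point is identified as $h(2/3)$, and the count of fixed points falls out immediately), while the paper's is shorter because it never needs $h(1/2)=v$ or the orientation of $h$ beyond what Lemma~\ref{lema:45} already gives. One small remark: your parenthetical guard (``if $g_l$ or $g_r$ were constant on a subinterval then $g^{-1}(1)$ would be a nondegenerate interval'') is only accurate for a constant stretch at the level $1$ adjacent to $v$; constancy at any other level is harmless for this step. The cleaner observation, which you essentially already have, is that $g^{-1}(1)=g^{-1}(h(1))=h(f^{-1}(1))=\{h(1/2)\}$ is a singleton containing $v$ (since $g_r(v)=1$), which yields $h(1/2)=v$ with no strict-monotonicity discussion at all.
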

Proposition~\ref{lema:36} does not assume that conjugation is
piecewise linear, but this Proposition is used for the results of
the next subsection. In Section~\ref{sect:KuskowoLin-3} we
consider the conjugation of $f:\, [0,\, 1]\rightarrow [0,\, 1]$ of
the form~(\ref{eq:50}) and the maps $g:\, [0,\, 1]\rightarrow
[0,\, 1]$ of the form~(\ref{eq:90}) via piecewise linear
homeomorphism $h:\, [0,\, 1]\rightarrow [0,\, 1]$, which satisfy
the functional equation~(\ref{eq:91}). The main result of the
Section~\ref{sect:KuskowoLin-3} is the following two theorems.
\begin{theorem*}[Theorem~\ref{theor:21} also Theorem~2
in~\cite{UMZh}] For an arbitrary $v\in (0,\, 1)$ and an increasing
piecewise linear maps $g:\, [0,\, v]\to [0,\, 1]$ such that
$g(0)=0$, $g(v)=1$ and $g'(0)=2$ there exists and the unique its
continuation $\widetilde{g}:\, [0,\, 1]\to [0,\, 1]$, which is
topologically conjugated to $f$ of the form~(\ref{eq:50}) via
piecewise linear homeomorphism.
\end{theorem*}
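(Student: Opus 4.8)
The plan is to convert the statement into a uniqueness‑and‑existence question for a single functional equation, exploiting that the prescribed slope $g'(0)=2$ is exactly the slope of $f$ at its fixed point $0$.

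\emph{Reduction.} Let $\widetilde g$ be any continuation of $g$ to $[0,1]$ topologically conjugated to $f$ by a piecewise linear homeomorphism $h$, so $h\circ f=\widetilde g\circ h$. By the analogue of Theorem~\ref{theor:10} for maps of the form~(\ref{eq:90}) (see also Proposition~\ref{lema:36}), $h$ increases and $h(0)=0$, $h(1)=1$; then $\widetilde g(1)=\widetilde g(h(1))=h(f(1))=h(0)=0$, and since $\widetilde g$ must be unimodal (being conjugate to the unimodal $f$) with $\widetilde g|_{[0,v]}=g$ rising from $0$ to $1$, its unique turning point is $v$ and $g_r:=\widetilde g|_{[v,1]}$ decreases from $1$ to $0$; thus $\widetilde g$ has the form~(\ref{eq:90}). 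As $\widetilde g^{-1}(1)=\{v\}$ and $f(1/2)=1$, also $h(1/2)=v$. Restricting $h\circ f=\widetilde g\circ h$ to $[0,1/2]$, where $f(x)=2x$ and $h(x)\in[0,v]$ so that $\widetilde g$ agrees with $g$, gives
\[ h(2x)=g(h(x)),\qquad x\in[0,1/2]. \]
Conversely, any increasing piecewise linear homeomorphism $h$ of $[0,1]$ with $h(0)=0$, $h(1)=1$ satisfying this equation has $h(1/2)=v$ (put $x=1/2$ and use $g(v)=1$), and then $g_r(y):=h\!\left(2-2h^{-1}(y)\right)$ produces a continuation $\widetilde g$ of the form~(\ref{eq:90}) conjugated to $f$ via $h$, with $g_r$ piecewise linear because $h$ and $h^{-1}$ are. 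So the theorem reduces to: the equation $h(2x)=g(h(x))$ on $[0,1/2]$ has exactly one increasing piecewise linear homeomorphic solution of $[0,1]$ fixing the endpoints.

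\emph{Uniqueness.} Writing the equation as $h(x)=g(h(x/2))$ (the recursion of Proposition~\ref{lema:25}) shows $h$ on $[0,1]$ is determined by its restriction to $[0,2^{-n}]$ for every $n$, hence by its germ at $0$. Since $g$ is linear with slope $2$ on its first piece $[0,c_1]$, the equation forces $h(2x)=2h(x)$ whenever $h(x)\le c_1$; comparing the breakpoints of $h$ on $[0,x_1]$ with those on $[0,x_1/2]$, where $x_1:=h^{-1}(c_1)$, this self‑similarity is incompatible with finitely many breakpoints unless $h$ has none in $(0,x_1)$. Hence $h(x)=\lambda x$ on $[0,x_1]$ with $\lambda:=h'(0)$, and the equation then propagates $h$ to all of $[0,1]$ by $h(t)=g(h(t/2))$; so $h$ is completely determined by the single number $\lambda$. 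As $g$ increases, $\lambda\mapsto h_\lambda$ is strictly monotone, so $\lambda\mapsto h_\lambda(1/2)$ is continuous and strictly increasing and at most one $\lambda$ gives $h_\lambda(1/2)=v$. Uniqueness of $h$ follows, hence uniqueness of $\widetilde g$ (two working continuations have conjugacies satisfying the same equation, so they coincide and yield the same $g_r$).

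\emph{Existence and the hard point.} It remains to exhibit the right $\lambda$ and check regularity. For small $\lambda$ the recursion is well defined (the iterates $g^{k}\!\left(\lambda/2^{k+1}\right)$ stay in $[0,v)$) and yields a continuous increasing $h_\lambda$ with $h_\lambda(0)=0$; since $h_\lambda(1/2)$ equals $g^{k}\!\left(\lambda/2^{k+1}\right)$ once this stabilizes in $k$, it is continuous and strictly increasing in $\lambda$, rising from $0$ and equalling $v$ for a unique $\lambda_0$ (the largest $\lambda$ for which the recursion keeps $h_\lambda([0,1/2])$ inside the domain $[0,v]$ of $g$). Then $h:=h_{\lambda_0}$ is increasing, continuous and fixes $0$ and $1$, hence is a homeomorphism of $[0,1]$ solving the functional equation. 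The main obstacle is to verify that $h$ is \emph{piecewise linear with finitely many breakpoints}: since $h$ is genuinely linear on $[0,x_1]$ with $x_1>0$, only finitely many dyadic levels $[2^{-k-1},2^{-k}]$ meet $[x_1,1]$, and the breakpoints of $h$ on a level are the pull‑backs---under $x\mapsto x/2$, respectively under the functional equation---of those on the neighbouring level together with the finitely many contributed by $g$ or $g^{-1}$, so their number is finite. This step is exactly where the hypothesis $g'(0)=2$ enters: it forces $h$ to be linear near $0$, and indeed, by Proposition~\ref{theor:ksklin-tpleqv}, a piecewise linear $\widetilde g$ with $\widetilde g'(0)\neq 2$ cannot be conjugate to $f$ at all, so the hypothesis is not a restriction but the only possibility.
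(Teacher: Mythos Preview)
Your reduction and the key step---showing that a piecewise linear $h$ satisfying $h(2x)=g(h(x))$ must be genuinely linear on an initial segment and is therefore determined by the single number $\lambda=h'(0)$---match the paper's argument (your self-similarity observation is exactly Lemma~\ref{lema:42}). Where you diverge is in how $\lambda$ is pinned down. You parametrise \emph{forward}: build $h_\lambda$ from the germ, note that $h_\lambda(1/2)=g^{k}(\lambda/2^{k+1})$ stabilises in $k$, and invoke monotonicity/continuity of $\lambda\mapsto h_\lambda(1/2)$ to locate the unique $\lambda_0$ with $h_{\lambda_0}(1/2)=v$. The paper instead runs the recursion \emph{backwards} from the boundary data: from $h(1)=1$ and $h(x)=g_l^{-1}(h(2x))$ one gets $h(1/2^n)=g_l^{-n}(1)$ for every $n$, and since $h$ is linear near $0$ the sequence $2^{n}g_l^{-n}(1)$ \emph{stabilises} at the value $k=\lambda$ (Lemma~\ref{lema:43}). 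The two computations are dual, but the paper's is more explicit---it actually produces $\lambda$ as a concrete eventually-constant sequence rather than via an intermediate-value assertion---and it bypasses the check, which your write-up leaves somewhat implicit, that $\lambda\mapsto h_\lambda(1/2)$ really attains the value $v$ before the recursion leaves the domain of $g$. Your finiteness-of-breakpoints count and the recovery of $g_r=h\circ f_r\circ h^{-1}$ at the end coincide with the paper's propagation step.
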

\begin{theorem*}[Theorem~\ref{theor:22} also Theorem~3
in~\cite{UMZh}] For arbitrary $v\in (0,\, 1)$ and decreasing $g:\,
[v,\, 1]\mapsto [0,\, 1]$ such that $g(v)=1$, ${g(1)=0}$ and
$(g^2)'(x_0) = 4$, where $x_0$ is a fixed points of $g$, there
exists and the unique continuation of $\widetilde{g}$ to $[0,\,
1]$, which is topologically conjugated to $f$ of the
form~(\ref{eq:50}) via piecewise linear homeomorphism.
\end{theorem*}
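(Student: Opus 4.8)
The plan is to follow the scheme of the proof of Theorem~\ref{theor:21}, with the roles of the two branches of $f$ — and of its fixed points $0$ and $2/3$ — exchanged. First observe that $g$, continuous and strictly decreasing on $[v,1]$ with $g(v)=1$, $g(1)=0$, has a unique fixed point $x_{0}\in(v,1)$; writing $-a,-b$ for its one-sided slopes at $x_{0}$ one has $(g^{2})'(x_{0})=ab$, so the hypothesis is $ab=4$. Suppose $\widetilde g\colon[0,1]\to[0,1]$ extends $g$ and is topologically conjugate to $f$ via a piecewise linear homeomorphism $h$. From $h(f(0))=\widetilde g(h(0))$ and $f(0)=0$, the number $h(0)$ is a fixed point of $\widetilde g$; were $h$ decreasing we would get $h(0)=1$, hence $\widetilde g(1)=1$, contradicting $\widetilde g(1)=g(1)=0$. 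So $h$ is increasing, $h(0)=0$, $h(1)=1$, $\widetilde g(0)=0$. Since $\widetilde g|_{[v,1]}=g$ is strictly decreasing with maximal value $1$ attained at $v$, the turning point of $\widetilde g$ is $v$; the turning point of $f$ being $1/2$, this gives $h(1/2)=v$ and $\widetilde g$ increasing on $[0,v]$. Splitting the conjugacy equation $h(f(x))=\widetilde g(h(x))$ at $x=1/2$ yields
\[
 h(2x)=\widetilde g(h(x))\quad\bigl(x\leq\tfrac12\bigr),\qquad h(2-2x)=g(h(x))\quad\bigl(x\geq\tfrac12\bigr);
\]
the first reads $\widetilde g|_{[0,v]}=h\circ(x\mapsto2x)\circ h^{-1}$, so $\widetilde g$ is determined by $h$, while the second uses only the given $g$. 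Evaluating the second at $x=2/3$ forces $h(2/3)=x_{0}$, and the argument behind Proposition~\ref{theor:ksklin-tpleqv}, applied to $f^{2}$ and $\widetilde g^{2}$ at $2/3$ and $x_{0}$ (both affine there, with slopes $(f^{2})'(2/3)=4$ and $(\widetilde g^{2})'(x_{0})=(g^{2})'(x_{0})$ because $x_{0},g(x_{0})\in(v,1)$), forces $ab=4$; so the hypothesis is exactly the obstruction that must vanish, and, as we shall see, the condition under which the construction below closes up.

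Next I reduce everything to one functional equation. Put $T(u)=1-u/2$, a contraction of $[0,1]$ with fixed point $2/3$ and $T([0,1])=[\tfrac12,1]$. The substitution $u=2-2x$ transforms $h(2-2x)=g(h(x))$, $x\in[\tfrac12,1]$, into
\[
 h(u)=g\bigl(h(T(u))\bigr),\qquad u\in[0,1].
\]
Conversely, any increasing homeomorphism $h$ of $[0,1]$ with $h(\tfrac12)=v$ satisfying this relation has, automatically, $h(1)=g(h(\tfrac12))=g(v)=1$ and $h(0)=g(h(1))=g(1)=0$; and then $\widetilde g:=h\circ(x\mapsto2x)\circ h^{-1}$ on $[0,v]$ together with $\widetilde g:=g$ on $[v,1]$ is a continuous unimodal map with turning point $v$ (continuity at $v$ because $h(1)=1=g(v)$) that is conjugated to $f$ by $h$. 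So the theorem is equivalent to: the functional equation above has exactly one increasing-homeomorphism solution with $h(\tfrac12)=v$, and it is piecewise linear. To produce it, iterate: $h(u)=g^{n}(h(T^{n}(u)))$ with $T^{n}(u)\to2/3$. Because $ab=4$, $x_{0}$ is a repelling fixed point of $g^{2}$ whose multiplier exactly matches the squared contraction rate of $T$ at $2/3$; hence there is an affine germ $\ell$ at $2/3$ with $\ell(2/3)=x_{0}$ whose one-sided slopes satisfy the two linear relations imposed by the functional equation (these are simultaneously solvable precisely because $ab=4$ and leave a single positive parameter $t$), for which the sequence $g^{n}(\ell(T^{n}(u)))$ is eventually constant; thus $h(u):=\lim_{n}g^{n}(\ell(T^{n}(u)))$ is well defined and depends monotonically on $t$. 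Choosing the unique $t$ for which $h(\tfrac12)=v$ — equivalently, for which $h$ is a homeomorphism onto $[0,1]$ — gives existence and uniqueness of $h$, hence of $\widetilde g$.

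Piecewise linearity follows from a corner count. The only possible corners of $h$ are $2/3$ and the points $T^{-n}(h^{-1}(p_{i}))$ that lie in $[0,1]$, where $p_{1},\dots,p_{k}$ are the breakpoints of $g$ in $[v,1]$ and $n\geq0$; indeed, from $h(u)=g(h(T(u)))$ a corner of $h$ at $u$ comes either from a breakpoint of $g$ at $h(T(u))$ or from a corner of $h$ at $T(u)$. Since $T$ contracts toward $2/3$, its inverse expands away from $2/3$, so the backward $T$-orbit of any point other than $2/3$ leaves $[\tfrac12,\tfrac34]$, then $[\tfrac12,1]$, then $[0,1]$ after finitely many steps; thus $h$ has finitely many corners and is piecewise linear, and then so is $\widetilde g|_{[0,v]}=h\circ(x\mapsto2x)\circ h^{-1}$, which together with the affine branch $g$ makes $\widetilde g$ piecewise linear. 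Uniqueness of $\widetilde g$ is the uniqueness of $h$: any piecewise linear homeomorphism conjugating $f$ to a continuation of $g$ satisfies the functional equation with $h(\tfrac12)=v$ and is affine near $2/3$ with the stated slope relations, hence coincides with the $h$ just built.

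The main difficulty I anticipate is the construction itself: proving that $\lim_{n}g^{n}(\ell(T^{n}(u)))$ defines an increasing homeomorphism of $[0,1]$ — rather than merely a monotone, or a non-surjective, map — for exactly one value of $t$, which requires uniform control of the iterated compositions $g^{n}\circ\ell\circ T^{n}$ and is the step in which $ab=4$ is used decisively. The corner count, the continuity and unimodality of $\widetilde g$, and the degenerate situations ($v=1/2$, where the turning points of $f$ and $\widetilde g$ coincide, and the case where $x_{0}$ is itself a breakpoint of $g$, so that $h$ has a genuine corner at $2/3$) are routine and parallel the treatment of Theorem~\ref{theor:21}.
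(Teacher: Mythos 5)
Your overall strategy is the same as the paper's: show $h$ must be increasing with $h(1/2)=v$ and $h(2/3)=x_0$, observe that only the right-branch equation $h(2-2x)=g(h(x))$ involves the given data, pin down the affine germ of $h$ at $2/3$ (where $(g^2)'(x_0)=4$ is precisely the consistency condition on the one-sided slopes), propagate along the expanding dynamics, and finally recover $\widetilde g$ on $[0,\,v]$ as $h\circ(x\mapsto 2x)\circ h^{-1}$. The genuine difference is how the germ at $2/3$ is fixed. You solve the local slope relations, which leave a free positive parameter $t$, and then shoot for $h_t(1/2)=v$; this is exactly where your acknowledged ``main difficulty'' lives, since it requires continuity and monotonicity in $t$ of the limits $g^{n}\circ\ell_t\circ T^{n}$ together with control keeping all intermediate iterates inside $[v,\,1]$, where $g$ is defined. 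The paper avoids the parameter altogether: from $h(1)=1$ and the right-branch equation, the values $h(f_r^{-n}(1))=g_r^{-n}(1)$ are forced (this is Lemma~\ref{lema:44}); since $f_r^{-1}(1)=1/2$ and $g_r^{-1}(1)=v$, the normalization $h(1/2)=v$ is already among these forced values, and since $f_r^{-n}(1)\to 2/3$ alternating around it, the slope of $h$ just to the right of $2/3$ must equal $k=\frac{y_n-x_0}{x_n-2/3}$ with $x_n=f_r^{-n}(1)$, $y_n=g_r^{-n}(1)$, for all large odd $n$; the hypothesis $(g^2)'(x_0)=4$ (through Proposition~\ref{theor:23}) is what makes this ratio independent of $n$, via $y_{n+2}=\frac{y_n+3x_0}{4}$ and $x_{n+2}=\frac12+\frac{x_n}{4}$. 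With the germ thus determined by $g$ alone, existence and uniqueness of $h$ follow from the same forward propagation you describe, and your shooting argument --- together with the uniform control you flag as unresolved --- becomes unnecessary; so if you want a complete proof, replacing the one-parameter family by these forced backward-orbit values is the quickest repair. Your corner-count for piecewise linearity and your explicit verification that any increasing solution of the right-branch equation with $h(1/2)=v$ yields a continuous unimodal continuation conjugate to $f$ are correct, and usefully make explicit steps the paper leaves implicit.
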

In the proof of Theorem~\ref{theor:22} the following analogue of
Proposition~\ref{theor:ksklin-tpleqv} is used.
\begin{proposition*}[Proposition~\ref{theor:23} also
Lemma~13 in~\cite{UMZh}] Let $x_0$ be a fixed point of piecewise
linear unimodal maps $g$, which is conjugated with $f$ of the
form~(\ref{eq:50}) vis piecewise linear homeomorphism. Then there
exists $\varepsilon>0$ such that for every $x\in
(x_0-\varepsilon,\, x_0+\varepsilon)\backslash \{x_0\}$ the
equality $(g^2)'(x_0)=4$ holds, where $g^2$ as in general means
the second iteration of $g$.
\end{proposition*}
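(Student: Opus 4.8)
The plan is to transport the computation back to $f$ through the conjugating homeomorphism and to exploit the very simple local form of $f$ at its fixed points.

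From the conjugacy relation~(\ref{eq:91}) we have $g=h\circ f\circ h^{-1}$, hence $g^{2}=h\circ f^{2}\circ h^{-1}$; moreover $y$ is fixed by $g$ if and only if $p:=h^{-1}(y)$ is fixed by $f$. The map $f$ of the form~(\ref{eq:50}) has exactly two fixed points, $0$ and $2/3$, and neither equals the break point $x=1/2$. Near $0$ the map $f$ is $x\mapsto 2x$ and near $2/3$ it is $x\mapsto 2-2x$; squaring the relevant branch, $f^{2}$ coincides on a small neighbourhood $U$ of $p$ with the affine map $x\mapsto 4(x-p)+p$. The two features of this that I would use are: $f^{2}|_{U}$ has no break point, and it is orientation preserving (since $(\pm 2)^{2}=4>0$), so it carries each of the two sides of $p$ into the same side of $p$.

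Next I would fix the neighbourhoods. As $h$ is piecewise linear its break points are isolated, so there is $\delta>0$ with $h$ affine on $(p-\delta,p]$ and on $[p,p+\delta)$, with $f^{2}(x)=4(x-p)+p$ on $(p-\delta,p+\delta)$, and with $f^{2}\big((p-\tfrac{\delta}{4},p+\tfrac{\delta}{4})\big)\subseteq (p-\delta,p+\delta)$. By continuity of $h^{-1}$ choose $\varepsilon>0$ so that $h^{-1}\big((x_{0}-\varepsilon,x_{0}+\varepsilon)\big)\subseteq (p-\tfrac{\delta}{4},p+\tfrac{\delta}{4})$. For $x$ in $(x_{0}-\varepsilon,x_{0}+\varepsilon)\setminus\{x_{0}\}$ the chain rule gives
$$(g^{2})'(x)=h'\big(f^{2}(h^{-1}(x))\big)\cdot (f^{2})'\big(h^{-1}(x)\big)\cdot (h^{-1})'(x)=\frac{4\,h'\big(f^{2}(h^{-1}(x))\big)}{h'\big(h^{-1}(x)\big)},$$
where each $h'$ denotes the constant slope of $h$ on the relevant side of $p$. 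Since $h^{-1}(x)\neq p$ and $f^{2}|_{U}$ is orientation preserving and fixes $p$, the point $f^{2}(h^{-1}(x))$ lies on the same side of $p$ as $h^{-1}(x)$; thus numerator and denominator use the same slope of $h$ and cancel, giving $(g^{2})'(x)=4$.

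The only delicate point is the possible break point of $h$ at $p$, which could produce a genuine break point of $g^{2}$ at $x_{0}$ itself — this is exactly why $x_{0}$ must be excluded; if $p$ is not a break point of $h$, the same argument in fact yields $(g^{2})'(x_{0})=4$ as well. What makes the break point harmless away from $x_{0}$ is precisely that $f^{2}$ near a fixed point is an \emph{orientation-preserving} dilation, which keeps the incoming and outgoing branches of $h$ on the same side of $p$ so that their slopes cancel. The remaining items — that $0$ and $2/3$ avoid $x=1/2$, the isolatedness of break points, and the obvious one-sided modifications needed when $x_{0}=0$ is an endpoint of $[0,\,1]$ — are routine.
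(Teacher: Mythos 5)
Your proof is correct, and it takes a genuinely different route from the paper's. You conjugate the whole computation back to $f$: from $h\circ f= g\circ h$ you get $g^{2}=h\circ f^{2}\circ h^{-1}$, note that $p=h^{-1}(x_{0})$ is a fixed point of $f$ (so $p=0$ or $p=2/3$, away from the break $1/2$), that $f^{2}$ is locally the orientation-preserving dilation $t\mapsto 4(t-p)+p$, and then the chain rule gives $(g^{2})'(x)=4\,h'\bigl(f^{2}(h^{-1}(x))\bigr)/h'\bigl(h^{-1}(x)\bigr)$, where the two slopes of $h$ cancel because $f^{2}$ keeps $h^{-1}(x)$ and its image on the same side of $p$ and within the same linear piece of $h$ (your $\delta/4$ bookkeeping makes this airtight, and the exclusion of $x=x_{0}$ is exactly where a break of $h$ at $p$ could spoil things). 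The paper argues in the opposite direction: it writes $g(x)=a_{1}x+b_{1}$ and $g(x)=a_{2}x+b_{2}$ on the two sides of $x_{0}$, reduces the statement to $a_{1}a_{2}=4$ (Remark~\ref{note:17}, since $g^{2}$ near $x_{0}$ is $g_{2}\circ g_{1}$ on one side and $g_{1}\circ g_{2}$ on the other), and then obtains $a_{1}a_{2}=4$ by following the backward orbit $G_r^{n}(\alpha,\beta)\to(2/3,\,x_{0})$ along the graph of $h$, writing the resulting relation as an affine identity in $\alpha$ and comparing the coefficients of $\alpha$. Your argument is shorter and more conceptual, treats both fixed points (including the endpoint $0$, where it reproduces $g'(0)=2$ from Proposition~\ref{theor:ksklin-tpleqv}) uniformly, and isolates the real mechanism — an orientation-preserving local dilation of factor $4$; the paper's computation, by contrast, produces the slopes $a_{1},a_{2},a_{3},a_{4}$ explicitly and sets up the $G_r$/graph-invariance machinery that it reuses immediately afterwards in the proof of Theorem~\ref{theor:22}, which your route does not provide.
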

It is assumed in Theorem~\ref{Theor:9}, that the function $g$ of
the form~(\ref{eq:90}), which is conjugated to $f$ of the
from~(\ref{eq:50}), is convex. Nevertheless, convexity is not used
in the proof of this Theorem. In
Section~\ref{arch-sect-Non-convex} we will use in details the
techniques from the proof of Theorems~\ref{theor:21}
and~\ref{theor:22} for obtaining the example of non-convex $g$,
which is conjugated to $f$. The graph of this $g$ is given at
Figure~\ref{fig:27}.

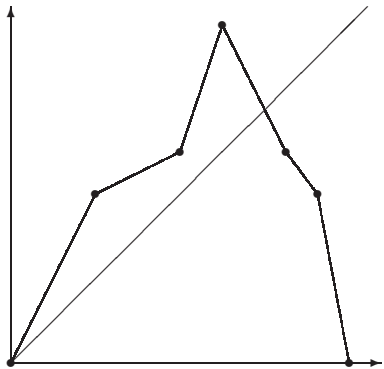
\begin{figure}[htbp]
\begin{center}
\begin{center}
\begin{picture}(140,135)
\put(0,0){\vector(1,0){140}} \put(0,0){\vector(0,1){135}}
\put(0,0){\line(1,1){135}}

\put(0,0){\circle*{3}} \Vidr{0}{0}{32}{64}
\put(32,64){\circle*{3}} \VidrTo{64}{80} \put(64,80){\circle*{3}}
\VidrTo{80}{128} \put(80,128){\circle*{3}} \VidrTo{104}{80}
\put(104,80){\circle*{3}} \VidrTo{116}{64}
\put(116,64){\circle*{3}} \VidrTo{128}{0} \put(128,0){\circle*{3}}

\end{picture}
\end{center}
\end{center} \caption{Graph of $g$}\label{fig:27}
\end{figure}

We consider in Section~\ref{sect:type-linear} the types of
linearity of the piecewise linear $g$ of the form~(\ref{eq:90}),
which is conjugated to $f$ of the from~(\ref{eq:50}) via piecewise
linear $h:\, [0,\ 1] \rightarrow [0,\, 1]$.

\begin{definition*}[Definition~\ref{def:03}]
Let for the piecewise linear $g$ of the form~(\ref{eq:g}) be
topologically conjugated to $f$ of the form~(\ref{eq:89}). Let the
number of pieces of linearity of $g_1$ and $g_2$ be $p$ and $q$
correspondingly. Call the pair $(p,\, q)$ the \textbf{type of
piecewise linearity} of $g$.
\end{definition*}

\begin{definition*}[Definition~\ref{def:04}]
If for a pair $(p,\, q)$ there exists a maps $g$ of the
form~(\ref{eq:g}) with type of piecewise linearity $(p,\, q)$,
then call this type \textbf{admissible}. If follows from
Proposition~\ref{theor:ksklin-tpleqv} and~\ref{theor:23} that pair
$(1,\, q)$ is admissible only if $q=1$ and the pair $(p,\, 1)$ is
admissible only if $p=1$. In both cases the equality $g=f$ holds.
\end{definition*}

We prove the following theorem in Section~\ref{sect:type-linear}.

\begin{theorem*}[Theorem~\ref{theor:24}]
1. For any $p\geq 2$ and $q\geq 2$ the type of linearity $(p,\,
q)$ is admissible.

2. A type of linearity $(p,\, 1)$ and $(1,\, q)$ is admissible
only if it is $(1,\, 1)$. In this case the maps $g$ coincides with
$f$.
\end{theorem*}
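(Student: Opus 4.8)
\textit{Approach.} The ``only if'' direction (Part~2) is immediate and records what was already observed after Definition~\ref{def:04}. Let $g$ be a piecewise linear map of the form~(\ref{eq:90}) conjugate to $f$ via a piecewise linear homeomorphism, which (as throughout this section) we take to be increasing. Since $f(0)=0$, Proposition~\ref{theor:ksklin-tpleqv} gives $g(0)=0$ and $g'(0)=f'(0)=2$. If the type is $(1,q)$ then $g_l$ is the single segment from $(0,0)$ to $(v,1)$, so $1/v=g_l'(0)=2$, whence $v=1/2$ and $g_l=f|_{[0,1/2]}$; by the uniqueness in Theorem~\ref{theor:21} the only extension of this left branch conjugate to $f$ is $f$ itself, so $g=f$ and $q=1$. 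If the type is $(p,1)$ then $g_r$ is the single segment from $(v,1)$ to $(1,0)$, and Proposition~\ref{theor:23} at the interior fixed point $x_0$ gives $1/(1-v)^2=(g^2)'(x_0)=4$, so $v=1/2$, $g_r=f|_{[1/2,1]}$, and by the uniqueness in Theorem~\ref{theor:22}, $g=f$, $p=1$.

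For Part~1 the plan is to produce every map of type $(p,q)$ with $p,q\geq2$ from $f$ by a finite chain of conjugations by increasing piecewise linear homeomorphisms. If $H$ is such a homeomorphism then $H^{-1}fH$ is unimodal, vanishes at $0$ and $1$ and has maximum value $1$, hence is of the form~(\ref{eq:90}), and is conjugate to $f$ via the increasing piecewise linear map $H^{-1}$, so it is admissible. First conjugate $f$ by a homeomorphism $\psi_0$ whose unique corner is at $\psi_0^{-1}(1/2)$, i.e. is carried to the turning point of $f$: a direct count of corners shows this map has type $(2,2)$ (the corner of $\psi_0$ and the turning point of $f$ collide at the new turning point, while the two $f$-preimages of the corner-value of $\psi_0^{-1}$ supply exactly one new corner on each side). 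Then use the two moves described next: from type $(p,q)$ with $p\geq2$ to type $(p,q+1)$, and from type $(p,q)$ with $q\geq2$ to type $(p+1,q)$. Iterating the first move from $(2,2)$ yields $(2,q)$ for all $q\geq2$, and iterating the second then yields $(p,q)$ for all $p,q\geq2$.

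The move $(p,q)\to(p,q+1)$ (with $p\geq2$) is the heart of the argument. Given $g$ of type $(p,q)$ conjugate to $f$ via increasing piecewise linear $h$, pick a corner $a\in(0,v)$ of $g_l$ and let $s\neq1$ be the ratio of the one-sided slopes of $g_l$ there. Take $\psi$ to be the increasing piecewise linear homeomorphism of $[0,1]$ with a single corner, carried to $a$, with one-sided slope ratio $1/s$ at it; then that corner sits at $w=a/(a+s(1-a))<\psi^{-1}(v)$. Put $g^{new}=\psi^{-1}g\psi$. Its corners can only arise from the corner of $\psi$ (at $w$), the pulled-back corners of $g$ (at $\psi^{-1}(\mathrm{Corners}\,g)$, with the turning point of $g$ going to that of $g^{new}$), and the corner of $\psi^{-1}$ carried back through $g^{-1}$ (at the two points of $\psi^{-1}(g^{-1}(a))$, one in each half). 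At $w$ the contributions of the corner of $\psi$ and of the corner of $g_l$ at $a$ combine to total slope ratio $(1/s)\cdot s\cdot1=1$ --- here one uses $g_l(a)\neq a$, valid because $g$, being conjugate to $f$, has $0$ as its only fixed point in $[0,v]$ --- so $g^{new}$ has no corner at $w$, cancelling one left corner. The two points $\psi^{-1}(g_l^{-1}(a))$ and $\psi^{-1}(g_r^{-1}(a))$ each contribute a genuine corner, one in the left half and one in the right (slope ratio $s$ at each). Thus the left branch of $g^{new}$ still has $p$ pieces and the right branch has $q+1$, and $g^{new}$ is conjugate to $f$ via $\psi^{-1}h$. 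Anchoring $\psi$ at a corner of $g_r$ instead (possible when $q\geq2$) gives $(p+1,q)$ by the identical count.

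The delicate point --- and the main obstacle --- is the corner bookkeeping: one must verify that along the whole chain the anchoring corner can be chosen, and $\psi$ arranged, so that the three sources of corners above are pairwise disjoint apart from the one engineered collision at $w$ (so that $g_l^{-1}(a)$, $g_r^{-1}(a)$ are not themselves corners of the respective branches, etc.), whence the type is exactly as claimed and no further cancellations occur; similarly one must confirm in the first step that conjugation by $\psi_0$ really lands on $(2,2)$. Since every map in the chain is completely explicit, these are finite computations of the kind performed in the proofs of Theorems~\ref{theor:21}--\ref{theor:22}, and the side conditions used there (each branch monotone, slope $2$ at $0$, and slope $4$ of the second iterate at the interior fixed point) are automatically inherited by $g^{new}$ because it is conjugate to $f$ through a piecewise linear homeomorphism.
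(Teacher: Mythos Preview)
Your treatment of Part~2 is correct and matches the paper: it is exactly the content recorded in Definition~\ref{def:04}, obtained from Propositions~\ref{theor:ksklin-tpleqv} and~\ref{theor:23} together with the uniqueness in Theorems~\ref{theor:21}--\ref{theor:22}.

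For Part~1 your route is genuinely different from the paper's. The paper does not iterate single-corner conjugations; instead it builds the conjugacy $h$ in one shot. For $2\le p\le q$ (Lemma~\ref{arch-lema-8}) one sets $n=q+1$, partitions $[0,1]$ into the dyadic blocks $\mathcal{P}_0',\mathcal{P}_1,\dots,\mathcal{P}_{n-2}$ with $f(\mathcal{P}_t)=\mathcal{P}_{t+1}$, and prescribes the slopes $k_t$ of $h$ on these blocks directly. Because $f$ shifts the blocks, the slope of $g_l=hfh^{-1}$ on $h(\mathcal{P}_t)$ is simply $2k_{t+1}/k_t$; choosing the $k_t$ so that the first $q-p$ of these ratios coincide and the rest differ yields type $(p,q)$ immediately, with no collision analysis needed. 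The case $p>q$ (Lemma~\ref{arch-lema-7}) is symmetric, using preimages under $f_r$ instead of $f_l$. This direct construction is what your iterative argument is missing: it produces $h$ globally rather than corner by corner.

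The gap in your approach is exactly the one you flag, and it is real rather than cosmetic. In your move $(p,q)\to(p,q+1)$ the only free parameter is the choice of anchoring corner $a$, since the slope ratio of $\psi$ is forced to $1/s$ and hence $w$ is determined. You need $g_l^{-1}(a)$ and $g_r^{-1}(a)$ to avoid the existing corners; but there are $p-1$ choices for $a$ and up to $p+q-2$ bad values, so a pigeonhole argument does not work, and ``finite computation'' for each fixed $(p,q)$ is not a proof of the general statement. One could try to rescue this by a genericity argument on the initial $\psi_0$ (the collision conditions are algebraic and nowhere dense), but that still has to be written down. The paper's direct construction sidesteps the whole issue.
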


In Section~\ref{sect:SemiConj} we consider the problem of semi
conjugation of maps $f$ and $f_v$, which are given by
formulas~(\ref{eq:50}) and~(\ref{eq:53}). Precisely, we consider
continuous, but not necessary invertible solutions $\eta:\, [0,\,
1]\rightarrow [0,\, 1]$ of the functional equation
\begin{equation}\label{eq:03}\eta(f) = f_v(\eta).
\end{equation}
In Section~\ref{sect:SemiConj-1} we prove the following
proposition.
\begin{proposition*}[Proposition~\ref{prop:2}] Each
continuous monotone solution $\eta:\, [0,\, 1]\rightarrow [0,\,
1]$ of the functional equation~(\ref{eq:03}) is a conjugation of
$f$ and $f_v$ of the form~(\ref{eq:50}) and~(\ref{eq:53}).
\end{proposition*}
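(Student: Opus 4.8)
The plan is to prove that any continuous, monotone, non-constant solution $\eta$ of $\eta\circ f=f_v\circ\eta$ is automatically a homeomorphism of $[0,1]$; once this is done, $\eta$ is a topological conjugation of $f$ and $f_v$ by definition. (A constant solution $\eta\equiv c$ forces $c=f_v(c)$, so $c\in\{0,\,1/(2-v)\}$, the two fixed points of $f_v$; these are the trivial degenerate cases, and here ``monotone'' is understood in the strict sense needed to exclude them.)

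First I would determine the image $I:=\eta([0,1])$, which is a closed subinterval of $[0,1]$ by continuity of $\eta$. Since $f$ maps $[0,1]$ onto $[0,1]$, the functional equation gives $I=\eta\bigl(f([0,1])\bigr)=f_v\bigl(\eta([0,1])\bigr)=f_v(I)$, so $I$ is a closed interval carried onto itself by the skew tent map $f_v$. A short case analysis on $f_v$ (increasing from $0$ to $1$ on $[0,v]$, decreasing from $1$ to $0$ on $[v,1]$) shows that the only such intervals are $[0,1]$, $\{0\}$ and $\{1/(2-v)\}$: if the turning point $v$ lies in $I$ then $1=f_v(v)\in I$ and $f_v(I)\supseteq f_v([v,1])=[0,1]$, forcing $I=[0,1]$; if $I\subseteq[0,v]$ then $f_v$ acts there as $x\mapsto x/v$ and $f_v(I)=I$ forces $I=\{0\}$; and if $I\subseteq[v,1]$ then $f_v$ acts there as the decreasing map $x\mapsto(1-x)/(1-v)$ and $f_v(I)=I$ forces $I=\{1/(2-v)\}$. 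Since $\eta$ is non-constant, $I$ is nondegenerate, so $I=[0,1]$ and $\eta$ is surjective. Moreover $f(0)=0$ gives $\eta(0)=f_v(\eta(0))$, so $\eta(0)\in\{0,\,1/(2-v)\}$; being an endpoint value of the monotone surjection $\eta$, it equals $0$ (as $1/(2-v)\neq 1$), whence $\eta(1)=1$ and $\eta$ is increasing.

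It remains to rule out flat pieces. Suppose $\eta$ is constant, equal to $c$, on a nondegenerate interval $J\subseteq[0,1]$. The functional equation gives $\eta\equiv f_v(c)$ on $f(J)$, and inductively $\eta$ is constant on $f^{n}(J)$ for every $n$. But the tent map $f$ is locally eventually onto: an interval not containing the turning point $1/2$ is mapped by $f$ onto an interval of twice its length (each linear branch has slope $\pm 2$ and carries $[0,1/2]$, resp.\ $[1/2,1]$, bijectively onto $[0,1]$, so there is no clipping), hence after finitely many steps some $f^{k}(J)$ contains $1/2$; then $f^{k+1}(J)$ is an interval with right endpoint $1$, and finitely many further iterations yield $f^{n}(J)=[0,1]$. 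Thus $\eta$ would be constant on $[0,1]$, contradicting non-constancy. So $\eta$ has no nondegenerate plateau, hence the monotone continuous $\eta$ is strictly increasing; being also a continuous surjection of $[0,1]$ onto itself, it is a homeomorphism, and since it solves $\eta\circ f=f_v\circ\eta$ it is a topological conjugation of $f$ and $f_v$.

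The step requiring the most care is the local-eventual-ontoness of the tent map used in the last paragraph; if it is not already available in the text it follows in a line from the length-doubling observation above (or from the density of $\mathcal{A}=\bigcup_{n}A_n$ combined with the expansion of $f$). Everything else is routine: the classification of $f_v$-invariant closed intervals, and the fact that a continuous monotone surjection of $[0,1]$ onto itself with no flat piece is a homeomorphism. If one prefers, the fact that $\eta$ is increasing rather than decreasing can alternatively be cited from the uniqueness statement of Theorem~\ref{theor:homeom-jed}.
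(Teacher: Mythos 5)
Your argument is correct, but it is organized differently from the paper's. The paper proves Proposition~\ref{prop:2} purely by plugging the three points $0$, $1$, $1/2$ into the functional equation: Lemma~\ref{lema:h-0} forces $\eta(0)\in\{0,\,\frac{1}{2-v}\}$, Lemma~\ref{lema-2} then forces $\eta(1)\in\{0,1\}$ in the first case, and in the branch $\eta(0)=\frac{1}{2-v}$ the value $\eta(1/2)$ is shown to be incompatible with monotonicity; the conclusion that a monotone solution with $\eta(0)=0$, $\eta(1)=1$ is a homeomorphism is then stated as a corollary without further argument. You instead first classify the closed intervals $I$ with $f_v(I)=I$ (only $[0,1]$, $\{0\}$, $\{\frac{1}{2-v}\}$) to get surjectivity, force $\eta(0)=0$, $\eta(1)=1$ from the fixed-point constraint plus surjectivity, and then kill flat pieces by propagating a plateau through the equation and using that the tent map is locally eventually onto. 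This buys two things the paper leaves implicit: injectivity of a monotone solution (the paper's corollary asserting ``homeomorphism'' silently needs exactly your plateau argument, or an adaptation of Proposition~\ref{theorNeStale}), and an explicit treatment of the constant solutions $0$ and $\frac{1}{2-v}$, which are weakly monotone solutions that are not conjugations, so the strictness convention you flag is genuinely needed (the paper's case analysis has the same implicit assumption). The only small blemish is your closing aside that the increasing character of $\eta$ could be cited from Theorem~\ref{theor:homeom-jed}: that theorem concerns homeomorphic solutions, so invoking it before injectivity is established would be circular; your primary derivation of $\eta(0)=0$, $\eta(1)=1$ does the job and should be kept as the actual argument.
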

Let $h:\, [0,\, 1]\rightarrow [0,\, 1]$ be the homeomorphism,
which satisfies~(\ref{eq:58}) and $\eta:\, [0,\, 1]\rightarrow
[0,\, 1]$ be an arbitrary continuous solution of the
equation~(\ref{eq:03}). Consider the commutative diagram
\begin{equation}
\label{eq:10} \xymatrix{ [0,\, 1] \ar@/_3pc/@{-->}_{\xi}[dd]
\ar^{f}[r]
\ar_{\eta}[d] & [0,\, 1] \ar^{\eta}[d] \ar@/^3pc/@{-->}^{\xi}[dd]\\
[0,\, 1] \ar^{f_v}[r] \ar^{h^{-1}}[d] & [0,\, 1] \ar_{h^{-1}}[d]\\
[0,\, 1] \ar^{f}[r] & [0,\, 1] }\end{equation} and denote $\xi =
h^{-1}(\eta)$. We notice in Section~\ref{sect:SemiConj-2} that it
follows from the commutativity of~(\ref{eq:10}) that the problem
of finding continuous solutions of~(\ref{eq:03}) is equivalent to
the problem of finding continuous solutions of the functional
equation
\begin{equation}\label{eq:31}\xi(f) = f(\xi).
\end{equation}
Some continuous solutions of the equation~(\ref{eq:31}) can be
easily found, which is done in the Remark~\ref{note:14}.
\begin{note*}[Remark~\ref{note:14}] The following
functions $\xi$ satisfy the functional equation~(\ref{eq:31}).

1. $\xi(x)=x$ for all $x\in [0,\, 1]$;

2. $\xi$ is a constant, which is one of fixed points of $f$;

3. $\xi$ is an arbitrary iteration of $f$.
\end{note*}
The following two propositions show that if the solution
of~(\ref{eq:31}) is in some cense ``good'' on a subinterval of
$[0,\, 1]$ then it is piecewise ``good'' on the whole $[0,\, 1]$.

\begin{proposition*}[Proposition~\ref{theorNeStale}] If
the continuous solution $\xi$ of the functional
equation~(\ref{eq:31}) is constant on some interval $M =
[\alpha,\, \beta]$, then it is piecewise constant on $[0,\, 1]$.
\end{proposition*}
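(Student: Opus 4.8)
The plan is to push the constancy of $\xi$ from the interval $M$ to all of $[0,1]$ using the strong expansivity of $f$. The only structural facts needed are that $\xi\circ f^{n}=f^{n}\circ\xi$ for every $n$ (an immediate induction from (\ref{eq:31})) and that a high enough iterate of $f$ maps every nondegenerate interval \emph{onto} $[0,1]$.

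First I would make the surjectivity statement precise. Because the gaps between consecutive points of the set $A_{n}$ from Proposition~\ref{lema:An} equal $2^{-(n-1)}$ and tend to $0$, for $n$ large enough the nondegenerate interval $M=[\alpha,\beta]$ contains two neighbouring points of $A_{n}$, and hence contains the whole closed interval $J$ they bound. On $J$ the iterate $f^{n}$ is piecewise affine with slopes $\pm2^{n}$ and, by Proposition~\ref{lema:An}, vanishes precisely at the two endpoints of $J$; since those endpoints are at distance $2^{-(n-1)}$, the graph of $f^{n}$ over $J$ is a single tent that rises (over a length $2^{-n}$) from $0$ to height $2^{n}\cdot2^{-n}=1$ and back, so $f^{n}(J)=[0,1]$. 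One could equally argue that $\mathrm{length}(f^{k}(M))$ doubles as long as $f^{k}(M)$ does not meet $1/2$ in its interior, so after finitely many steps it must, and a short case check then gives $f^{k}(M)=[0,1]$ for some $k$.

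Then, writing $c$ for the value of $\xi$ on $M$ and fixing such an $n$, for every $x\in J\subseteq M$ we get
$$\xi\bigl(f^{n}(x)\bigr)=f^{n}\bigl(\xi(x)\bigr)=f^{n}(c),$$
and, as $x$ runs through $J$, the argument $f^{n}(x)$ runs through all of $f^{n}(J)=[0,1]$. Hence $\xi$ is the constant $f^{n}(c)$ on $[0,1]$; substituting this back into (\ref{eq:31}) even forces $f^{n}(c)$ to be a fixed point of $f$, so in fact $\xi\equiv0$ or $\xi\equiv2/3$. In either case $\xi$ is constant, a fortiori piecewise constant, which is what was to be shown.

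I do not expect a genuine obstacle: the argument is short. The two points to watch are that the hypothesis really supplies a \emph{nondegenerate} interval $M$ (for a single point the statement would be false, as $\xi(x)=x$ shows) and that $f^{n}$ restricted to the dyadic block $J$ is surjective onto the \emph{full} interval $[0,1]$ rather than onto a proper subinterval — both of which are settled by the slope-$\pm2^{n}$ computation above.
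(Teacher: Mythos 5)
Your proof is correct, and it proves the stronger conclusion that $\xi$ is globally constant (indeed $\xi\equiv 0$ or $\xi\equiv 2/3$), which is what the paper's in-text version of this proposition actually asserts and which implies the "piecewise constant" formulation. The underlying mechanism is the same as the paper's — push the interval of constancy forward through the semiconjugacy using the expansivity of $f$ — but the packaging differs: the paper iterates $M$ one step at a time, splitting at $1/2$ if necessary, noting that the length of the image doubles until the image meets $1/2$, after which the image is of the form $[\delta,\,1]$, then of the form $[0,\,\widetilde{\beta}]$, and finitely many further steps cover $[0,\,1]$, carrying the constancy of $\xi$ along at each stage; you instead use Proposition~\ref{lema:An} to locate a full level-$n$ dyadic block $J\subseteq M$ on which $f^{n}$ is a single tent of height $1$, so that one application of $\xi\circ f^{n}=f^{n}\circ\xi$ gives $\xi\equiv f^{n}(c)$ on all of $[0,\,1]$ at once. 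Your one-shot version is cleaner and avoids the bookkeeping of the stepwise image-doubling (which you also sketch as an alternative, and which is essentially the paper's route); the extra remark that the constant must be a fixed point of $f$, hence $0$ or $2/3$, is a small bonus consistent with Theorem~\ref{pr-theor-2}. Your caveat that $M$ must be nondegenerate is well taken and is implicitly assumed in the paper.
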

\begin{proposition*}[Proposition~\ref{theorNePriama}]
If the graph of the continuous solution $\xi$ of the functional
equation~(\ref{eq:31}) is a line segment on sone set $M =
[\alpha,\, \beta]$ then $\xi$ is piecewise linear on $[0,\, 1]$.
\end{proposition*}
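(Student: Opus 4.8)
The plan is to exploit the fact that $f$ is eventually onto on subintervals. For every $N$ the map $f^N$ sends each dyadic interval $[k/2^N,(k+1)/2^N]$ affinely, with slope $\pm 2^N$, onto the whole of $[0,\,1]$; this is the same combinatorial description of the tent map already used to compute the sets $A_n$. We may assume $\alpha<\beta$, so $M=[\alpha,\,\beta]$ is a nondegenerate interval, and hence for $N$ large enough (any $N$ with $2^{-N}<(\beta-\alpha)/2$ works) it contains some dyadic interval $I=[k/2^N,\,(k+1)/2^N]$. Fix such an $N$ and such an $I\subseteq M$, and let $\phi\colon[0,\,1]\to I$ be the inverse of $f^N|_I$; it is an increasing or decreasing affine bijection.

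The second step is to iterate the functional equation. From $\xi\circ f=f\circ\xi$ one obtains, by a straightforward induction, $\xi\circ f^N=f^N\circ\xi$ on $[0,\,1]$. Substituting $x=\phi(t)$, for which $f^N(\phi(t))=t$, gives
\[
\xi(t)=\xi\bigl(f^N(\phi(t))\bigr)=f^N\bigl(\xi(\phi(t))\bigr),\qquad t\in[0,\,1].
\]
Now $\phi(t)\in I\subseteq M$ for every $t$, and by hypothesis $\xi$ restricted to $M$ is affine, say $\xi(u)=au+b$ on $M$. Hence $\ell(t):=\xi(\phi(t))=a\phi(t)+b$ is an affine function of $t$ on $[0,\,1]$, with range $\ell([0,\,1])=\xi(I)\subseteq[0,\,1]$, and $\xi=f^N\circ\ell$ on all of $[0,\,1]$.

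It remains to read off piecewise linearity. If $\ell$ is constant, then $\xi$ is identically constant, which is piecewise linear. Otherwise $\ell$ is a monotone affine bijection of $[0,\,1]$ onto an interval, while $f^N$ is piecewise linear with breakpoints only among the dyadic points $j/2^N$, $1\le j\le 2^N-1$. Partitioning $[0,\,1]$ into the finitely many intervals $\ell^{-1}\bigl([\,j/2^N,\,(j+1)/2^N\,]\bigr)$ that meet $\ell([0,\,1])$, on each of them $\ell$ stays inside a single interval of linearity of $f^N$, so $\xi=f^N\circ\ell$ is affine there. Thus $\xi$ is piecewise linear on $[0,\,1]$ with at most $2^N$ pieces, which proves the proposition; in the same way Proposition~\ref{theorNeStale} is recovered, since if $\xi|_M$ is constant then $\ell$ is constant and $\xi$ is (globally) constant.

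There is no deep obstacle here: the steps that need care are only (i) the standard claim that $f^N$ maps each rank-$N$ dyadic interval affinely onto $[0,\,1]$, and (ii) the elementary remark that a nondegenerate interval contains such a dyadic interval once $2^{-N}<(\beta-\alpha)/2$, together with the degenerate case $\ell\equiv\mathrm{const}$. It is worth noting that continuity of $\xi$ is barely used: the identity $\xi=f^N\circ\ell$ holds pointwise, so the conclusion already follows from $\xi$ being affine on $M$ and from the functional equation; continuity is needed only for ancillary remarks such as $\xi(M)$ being an interval.
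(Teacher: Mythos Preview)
Your proof is correct, and it takes a genuinely different route from the paper's. The paper argues by forward iteration: starting from the interval $M$ on which $\xi$ is affine, it uses the commutative diagram for $\xi\circ f = f\circ\xi$ to show that $\xi$ is piecewise linear on $f(M)$, then on $f^2(M)$, and so on; since the lengths double (away from $1/2$) these images eventually cover $[0,1]$. Your argument instead passes to a dyadic subinterval $I\subseteq M$ on which some $f^N$ is an affine bijection onto $[0,1]$, inverts it, and obtains the global identity $\xi = f^N\circ\ell$ with $\ell$ affine. This is more direct and yields an explicit bound of at most $2^N$ linear pieces; it also makes immediately visible why $\xi$ must be a reparametrised iterate of $f$, foreshadowing Theorem~\ref{pr-theor-2}. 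The paper's iterative approach, by contrast, avoids the need to locate a dyadic subinterval and works uniformly with whatever $M$ is given.
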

We consider in Section~\ref{sect:SemiConj-3} the continuous
solutions $\xi:\, [0,\, 1]\rightarrow [0,\, 1]$ of the functional
equation~(\ref{eq:31}). The main result of this Section is the
following theorem.
\begin{theorem*}[Theorem~\ref{theor:01}] Each continuous
solution $\xi:\, [0,\, 1]\rightarrow [0,\, 1]$ of the
equation~(\ref{eq:31}) is piecewise linear.
\end{theorem*}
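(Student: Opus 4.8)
The plan is to reduce the statement, via Propositions~\ref{theorNeStale} and~\ref{theorNePriama}, to the single claim that every continuous solution $\xi$ of~(\ref{eq:31}) is affine on \emph{some} nondegenerate subinterval of $[0,\,1]$: Proposition~\ref{theorNePriama} then upgrades this local linearity to piecewise linearity on all of $[0,\,1]$, while the case of a constant $\xi$ (a degenerate line segment) is already covered by Proposition~\ref{theorNeStale}.

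First I would record a dichotomy coming from the functional equation applied to the whole interval. Since $f$ is onto, $f(\xi([0,\,1]))=\xi(f([0,\,1]))=\xi([0,\,1])$, so the closed interval $\xi([0,\,1])$ is $f$-invariant; a short case analysis shows the only $f$-invariant closed subintervals of $[0,\,1]$ are $\{0\}$, $\{2/3\}$ and $[0,\,1]$, the first two consisting of the fixed points $0$ and $2/3$ of $f$. Hence $\xi$ is either constant, equal to a fixed point of $f$ (so affine, and we are done), or surjective. In the surjective case I would next dispose of monotone $\xi$: a monotone surjection commuting with $f$ is an increasing homeomorphism (a decreasing one would force $\xi(0)$, which must be a fixed point of $f$, to equal $1$), hence conjugates $f$ to itself; it therefore fixes $0$ and $2/3$ and, inductively, every point of the dense set $\bigcup_{n}f^{-n}(2/3)$, so $\xi=\mathrm{id}$. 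Thus it remains to treat a non-monotone surjective $\xi$.

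For such a $\xi$ the key tool is the self-similarity built into~(\ref{eq:31}): substituting $y/2^{n}$ into the equation and iterating gives $\xi(y)=f^{n}\bigl(\xi(y/2^{n})\bigr)$, and more generally $\xi\circ f^{n}=f^{n}\circ\xi$, where $f^{n}$ is affine with slope $\pm 2^{n}$ on each dyadic interval $[k/2^{n},\,(k+1)/2^{n}]$. I would fix an interval of monotonicity $[a,\,b]$ of $\xi$ of positive length and, for $n$ large, a dyadic interval $I_{n,k}\subseteq[a,\,b]$ on which $f^{n}$ is an affine bijection onto $[0,\,1]$; writing $\ell$ for this bijection, the relation becomes $\xi=f^{n}\circ(\xi|_{I_{n,k}})\circ\ell^{-1}$ on $[0,\,1]$, with $\ell^{-1}$ affine of slope $\pm 2^{-n}$. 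Letting $I_{n,k}$ shrink to a point $t_{0}$ at which $\xi$ is differentiable with $\xi'(t_{0})\neq 0$ — such points exist because $\xi$ is monotone on $[a,\,b]$, by Lebesgue's Theorem~\ref{theor:14} — the factor $\pm 2^{n}$ from $f^{n}$ cancels the factor $\pm 2^{-n}$ from $\ell^{-1}$, so that near the corresponding point the graph of $\xi$ is a concatenation of line segments of the single slope $\pm\xi'(t_{0})$. In particular $\xi$ is affine on a subinterval, and Proposition~\ref{theorNePriama} completes the proof.

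The main obstacle is this last step. One must make the limiting argument precise: control how $f^{n}$ ``folds'' the image $\xi(I_{n,k})$ (which by surjectivity of $\xi$ has length at least $2^{-n}$, so the fold is genuine once $|\xi'(t_{0})|$ is large), check that the affine pieces so produced are consistent, and — most delicately — rule out that $\xi$ has no interval of monotonicity at all. For the latter I would argue separately that local extrema of $\xi$ propagate forward along $f$-orbits (since away from $x=1/2$ and from the level $1/2$ a local extremum of $\xi$ at $t$ forces one at $f(t)$, because $\xi\circ f=f\circ\xi$) and combine this with surjectivity to bound their number, so that $\xi$ is piecewise monotone to begin with.
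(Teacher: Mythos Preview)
Your reduction via Proposition~\ref{theorNePriama} to ``find one nondegenerate affine piece'' is correct and is also the paper's endgame (Lemma~\ref{lema:03}). The divergence is in how that piece is produced. You assume an interval of monotonicity $[a,\,b]$ and then try to use Lebesgue differentiability there; the paper does \emph{not} assume any monotonicity. Instead it proves a regularity lemma (Lemma~\ref{lema:MainR} and Corollary~\ref{corLemaMain}) directly from the commutation $\xi\circ f=f\circ\xi$ and the binary description of $f$: if two points agree in their first $m_\xi(n)+t$ binary digits, then their $\xi$-values differ by less than $2^{-(n+t)}$. Geometrically, over each dyadic interval the graph of $\xi$ is trapped in a rectangle, and when that rectangle is quartered the graph must lie in two of the four pieces; whenever these are the diagonal pair, the graph is forced through the central knot. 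Two such knots on one dyadic interval pin down a line segment by a bisection/density argument, and Proposition~\ref{theorNePriama} finishes.

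The genuine gap in your outline is the step you yourself flag: ruling out a nowhere-monotone $\xi$. Your propagation idea (``a local extremum at $t$ forces one at $f(t)$'') does not bound the number of extrema --- a continuous map can have a dense set of local extrema, and forward-propagation along the expanding map $f$ only spreads them further. Surjectivity alone does not rescue this. A second, subtler issue: even granting monotonicity on $[a,\,b]$, the limiting argument ``$f^n$ contributes $\pm 2^n$, $\ell^{-1}$ contributes $\pm 2^{-n}$, so slopes cancel'' only shows $\xi$ is a \emph{uniform limit} of piecewise-affine maps with slopes $\pm\xi'(t_0)$ (the linearisation error is $o(2^{-n})$ and $f^n$ is $2^n$-Lipschitz, giving $o(1)$); it does not by itself produce an honest affine subinterval. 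The paper handles the monotone case separately (Theorem~\ref{theor:02}) by a combinatorial argument on the dyadic sets $A_n$ (Lemmas~\ref{pr-lema-7} and~\ref{pr-lema-8}) rather than by differentiability, and treats the general continuous case by the net argument above, which needs no monotonicity hypothesis at all.
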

Section~\ref{pr-Sect-monotone} is devoted to the properties of the
piecewise linear solution $\xi:\, [0,\, 1]\rightarrow [0,\, 1]$ of
the functional equation~(\ref{eq:31}). We prove the following
facts.
\begin{proposition*}[Proposition~\ref{pr-Theor-1}] If a
continuous solution $\xi$ of the functional equation~(\ref{eq:31})
is monotone on some $M = [\alpha,\, \beta]$ then $\xi$ is
piecewise monotone on $[0,\, 1]$.
\end{proposition*}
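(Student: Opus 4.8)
The plan is to propagate the monotonicity of $\xi$ from the interval $M=[\alpha,\beta]$ to all of $[0,1]$ by exploiting the expansiveness of the tent map $f$, in the same spirit as Propositions~\ref{theorNeStale} and~\ref{theorNePriama}. The first observation I would record is that every dyadic interval $\Delta_{n,k}=[k/2^n,(k+1)/2^n]$ is mapped \emph{affinely onto} $[0,1]$ by the $n$-th iterate: $f^n$ restricted to $\Delta_{n,k}$ is linear with slope $\pm 2^n$ and $f^n(\Delta_{n,k})=[0,1]$. Since $M$ has nonempty interior, for all sufficiently large $n$ it contains some $\Delta_{n,k}$; I fix such $n,k$ and set $g=f^n|_{\Delta_{n,k}}\colon\Delta_{n,k}\to[0,1]$, an (increasing or decreasing) affine homeomorphism.

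Next I would iterate the functional equation. From $\xi\circ f=f\circ\xi$ one gets $\xi\circ f^n=f^n\circ\xi$; restricting the argument to $\Delta_{n,k}$ and substituting $x=g^{-1}(y)$ gives, for every $y\in[0,1]$,
\[ \xi(y)=\xi\bigl(f^n(g^{-1}(y))\bigr)=f^n\bigl(\xi(g^{-1}(y))\bigr). \]
Writing $\psi=\xi\circ g^{-1}\colon[0,1]\to[0,1]$, this is the identity $\xi=f^n\circ\psi$ on all of $[0,1]$. Because $g^{-1}$ is affine (hence monotone) and $\xi$ is monotone on $\Delta_{n,k}\subseteq M$, the map $\psi$ is continuous and monotone on $[0,1]$ — this is exactly where the hypothesis is used.

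It then remains to argue that the composition of the monotone map $\psi$ with the piecewise linear map $f^n$ is piecewise monotone. The iterate $f^n$ has the finite turning-point set $C=\{j/2^n:1\le j\le 2^n-1\}$ and is strictly monotone on each of the $2^n$ closed intervals between consecutive points of $C\cup\{0,1\}$. Since $\psi$ is monotone and continuous, each $\psi^{-1}(c)$, $c\in C$, is a point or a closed interval, these sets are pairwise disjoint and linearly ordered in $[0,1]$, and their complement in $[0,1]$ consists of finitely many intervals; on each such complementary interval $J$ the set $\psi(J)$ is a connected set missing $C$, hence contained in a single monotonicity lap of $f^n$, so $\xi=f^n\circ\psi$ is monotone on $J$, while on each $\psi^{-1}(c)$ the function $\xi$ is constant. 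This produces a finite partition of $[0,1]$ into intervals of monotonicity of $\xi$.

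The only delicate point is this last step: one must check with some care that $\psi^{-1}(C)$ breaks $[0,1]$ into finitely many intervals (using finiteness of $C$ and monotonicity of $\psi$) and that on each complementary interval $\psi$ cannot escape one lap of $f^n$ (again by monotonicity). Everything else is routine. I would also note in passing that, once Theorem~\ref{theor:01} is in hand, $\xi$ is piecewise linear and therefore trivially piecewise monotone; the argument above has the merit of being self-contained and of making transparent why it is precisely the monotonicity assumption on $M$ that is needed.
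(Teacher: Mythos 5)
Your argument is correct, and it is a genuinely tidier packaging of the same underlying mechanism than the one the paper uses. The paper's proof propagates (piecewise) monotonicity iteratively: writing the functional equation as $\xi(2x)=f(\xi(x))$ (or $\xi(2-2x)=f(\xi(x))$), it doubles the interval of known piecewise monotonicity at each step, with a case analysis according to whether the interval lies in $[0,\,1/2]$, in $[1/2,\,1]$, or straddles $1/2$, and it simply asserts that composing with the piecewise monotone $f$ preserves piecewise monotonicity. You instead shrink $M$ to a dyadic interval $\Delta_{n,k}$, use the standard fact (stated in the paper, e.g.\ in the proof of Remark~\ref{note:15}) that $f^n|_{\Delta_{n,k}}$ is an affine homeomorphism onto $[0,\,1]$, and obtain in one shot the global identity $\xi=f^n\circ\psi$ with $\psi=\xi\circ(f^n|_{\Delta_{n,k}})^{-1}$ continuous and monotone on all of $[0,\,1]$. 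What your route buys is twofold: it removes the positional case analysis entirely, and it makes explicit the step the paper leaves implicit, namely why a monotone map followed by a map with finitely many laps is piecewise monotone (your analysis of $\psi^{-1}(C)$, with the turning-point set $C$ of $f^n$ finite, the fibres being points or closed intervals on which $\xi$ is constant, and the complementary intervals being mapped by $\psi$ into a single lap). The paper's iterative scheme, on the other hand, generalizes verbatim to the analogous Propositions~\ref{theorNeStale} and~\ref{theorNePriama} where one propagates a property (constancy, linearity) that is not preserved under precomposition with an arbitrary monotone map, so each presentation has its natural scope. Your closing remark is also accurate: given Theorem~\ref{theor:01} the proposition is immediate, but your self-contained argument is the more informative one.
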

\begin{theorem*}[Theorem~\ref{theor:02}]
If the function $\xi:\, [0,\, 1]\rightarrow [0,\, 1]$, which is a
solution of a functional equation~(\ref{eq:31}), is monotone on an
interval $M\subset [0,\, 1]$, then $\xi$ is piecewise linear on
$[0,\, 1]$.
\end{theorem*}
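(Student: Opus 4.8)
The plan is to reduce the statement to Theorem~\ref{theor:01}. Since Theorem~\ref{theor:01} already asserts that every \emph{continuous} solution of~(\ref{eq:31}) is piecewise linear, it is enough to prove that a solution $\xi$ which is monotone on some interval $M=[\alpha,\beta]$ is automatically continuous on all of $[0,\,1]$; then $\xi$ is a continuous solution and Theorem~\ref{theor:01} finishes the proof. (If $\xi$ happens to be constant on $M$ this is already covered by Proposition~\ref{theorNeStale}, so one may assume $\xi|_M$ is genuinely monotone. Equivalently, once $\xi$ is known to be continuous, Proposition~\ref{pr-Theor-1} makes it piecewise monotone and one may work lap by lap using Proposition~\ref{theorNePriama}, but invoking Theorem~\ref{theor:01} is shorter.)

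The basic tool is the magnification property of the tent map: for $n$ large enough the interval $M$ contains a dyadic interval $M_n=[k2^{-n},\,(k+1)2^{-n}]$, on which $f^{\,n}$ is an affine bijection onto $[0,\,1]$. Iterating~(\ref{eq:31}) to $\xi\circ f^{\,n}=f^{\,n}\circ\xi$ and composing with the affine inverse $\ell_n=(f^{\,n}|_{M_n})^{-1}\colon[0,\,1]\to M_n$ gives the self-similarity identity
\[
\xi(x)=f^{\,n}\bigl(\xi(\ell_n(x))\bigr),\qquad x\in[0,\,1].
\]
Because $\xi|_{M_n}$ is monotone and $f^{\,n}$ is piecewise linear with $2^n$ laps, this already shows $\xi$ is piecewise monotone on $[0,\,1]$ with at most $2^n$ laps, and that $\xi$ on $[0,\,1]$ is completely determined by its monotone restriction to the short interval $M_n$.

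The crux is continuity. Suppose $\xi$ had a jump at an interior point $c$ of $M$; monotonicity on $M$ provides the one-sided limits $\xi(c^{\mp})$, and continuity of $f$ then forces one-sided limits of $\xi$ at $f(c)$ as well, the jump there being $|f(\xi(c^{+}))-f(\xi(c^{-}))|$, which away from the fold $x=1/2$ equals $2\,|\xi(c^{+})-\xi(c^{-})|$. Thus along the forward $f$-orbit of $c$ the size of the jump doubles at every step at which the ``jump window'' $\bigl(\xi(\cdot^{-}),\xi(\cdot^{+})\bigr)$ does not meet $1/2$. Since $\xi$ is bounded the jumps cannot grow without bound, so one must exclude the jump windows interacting with $1/2$ infinitely often; a short case analysis of $f$ near $1/2$ and near $1$ (a window straddling $1/2$ is eventually forced to collapse, and once a jump has collapsed the point is a continuity point for all later times) rules this out, and the countably many orbits that land on $1/2$ are dispatched directly. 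Hence $\xi$ is continuous throughout the interior of $M$. Continuity then spreads: on a short subinterval $J\subset M$ with $1/2\notin J$ one has $\xi|_{f(J)}=f\circ(\xi|_J)\circ(f|_J)^{-1}$, a composition of continuous maps, so $\xi$ is continuous on $f(J)$; for intervals straddling $1/2$ the same conclusion follows from the two-branch form $\xi(y)=f(\xi(y/2))=f(\xi(1-y/2))$ of this formula; and since the $f$-iterates of any nondegenerate interval eventually cover $[0,\,1]$, $\xi$ is continuous on all of $[0,\,1]$. Theorem~\ref{theor:01} now applies and gives that $\xi$ is piecewise linear.

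The step I expect to be the real work is establishing continuity — specifically, controlling how a hypothetical jump of $\xi$ interacts with the fold point $x=1/2$ of $f$, where the clean ``jumps double'' estimate breaks down, together with the bookkeeping for the countably many points whose forward orbit is finite. Everything after continuity is routine given Theorem~\ref{theor:01} (or, to stay self-contained, given Proposition~\ref{pr-Theor-1} together with Proposition~\ref{theorNePriama}).
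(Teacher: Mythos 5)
Your reduction stands or falls with the claim that a solution of~(\ref{eq:31}) which is monotone on $M$ is automatically continuous on all of $[0,\,1]$, and the only mechanism you offer for the crucial step --- excluding a jump at an interior point $c$ of $M$ --- is jump doubling along the forward orbit. That mechanism has a genuine hole exactly where you park it, namely when the jump window meets $1/2$. Writing $A_k=f^k(\xi(c^-))$ and $B_k=f^k(\xi(c^+))$, the propagated jump at $f^k(c)$ is $|A_k-B_k|$, and it doubles only while $A_k,\,B_k$ lie on the same lap of $f$. Your claimed dichotomy (``a window straddling $1/2$ is eventually forced to collapse, and collapse finishes the job'') fails on both horns. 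First, a straddling window need neither collapse nor ever leave the straddling regime: if $\xi(c^-)=2/5$ and $\xi(c^+)=4/5$, which form a period-two orbit of $f$, then $\{A_k,\,B_k\}=\{2/5,\,4/5\}$ for every $k$, the window straddles $1/2$ forever and the jump stays equal to $2/5$, so boundedness of $\xi$ produces no contradiction. Second, if the window does collapse ($A_k+B_k=1$, hence $f(A_k)=f(B_k)$), the propagation simply terminates: the one-sided limits at the next orbit point coincide, and nothing contradicts the original jump at $c$ --- continuity ``at later times'' is true but irrelevant. So continuity is not established, the appeal to Theorem~\ref{theor:01} is unsupported, and the extra input needed to exclude such jumps (monotonicity combined with the dyadic rigidity of the equation) never enters your sketch. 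The preliminary material is fine: the self-similarity identity $\xi=f^{\,n}\circ\xi\circ\ell_n$, the resulting piecewise monotonicity, and the spreading of continuity once it holds on $M_n$ all work; the gap is precisely the step you yourself flag as the real work.

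For comparison, the paper never proves continuity at all and does not route through Theorem~\ref{theor:01}. It spreads monotonicity to piecewise monotonicity (Proposition~\ref{pr-Theor-1}), shows $\xi(0)\in\{0,\,2/3\}$ and, in the nonconstant case, that $\xi(A_n)\subseteq A_n$ (Lemmas~\ref{pr-lema-4}--\ref{pr-lema-5}), then by an induction on the spread of the dyadic values finds a dyadic subinterval of $M$ on which $\xi$ is linear, monotone squeezing replacing continuity in the passage from the dense dyadic set to the whole subinterval (Lemmas~\ref{pr-lema-7}--\ref{pr-lema-8}), and finally spreads linearity over $[0,\,1]$ via Proposition~\ref{theorNePriama}. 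If you want to keep your reduction, you would in effect have to prove this dyadic rigidity anyway in order to rule out jumps, at which point the detour through continuity buys nothing.
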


Theorems~\ref{theor:01} and~\ref{theor:02} make natural the
consideration of piecewise linear solutions $\xi:\, [0,\,
1]\rightarrow [0,\, 1]$ of the functional equation~(\ref{eq:31}).
Such its solutions are considered in
Section~\ref{sect:SemiConj-4}. We prove the following theorem
there.
\begin{theorem*}[Theorem~\ref{pr-theor-2}] Let $\xi$ be
piecewise linear solution of the functional
equation~(\ref{eq:08}). Then $\xi$ is one of the following
functions

\noindent 1. $\xi(x)=x_0$ for all $x\in [0,\, 1]$, where $x_0=0$,
or $x_0=2/3$;

\noindent 2. for some $k\in \mathbb{N}$
$$\xi(x)=\displaystyle{\frac{1 - (-1)^{[kx]}}{2}
+(-1)^{[kx]}\{kx\}},$$ where $\{\cdot \}$ denotes fractional part
and $[\cdot ]$ denotes integer part. More then this for any $k\in
\mathbb{N}$ the function $\xi(x)$ of the form above is a solution
of~(\ref{eq:08}).
\end{theorem*}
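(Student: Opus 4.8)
The plan is to prove both implications. For the converse direction, note first that if $\xi\equiv c$ is constant then $\xi(f(x))=c$ while $f(\xi(x))=f(c)$, so $\xi$ solves the equation exactly when $c$ is a fixed point of $f$, i.e. $c\in\{0,2/3\}$. To see that each function $\xi_k$ from part~2 of the statement also solves it, observe that $f$ is itself $\xi_2$, and that the maps $\xi_n$, $n\in\mathbb{N}$, satisfy $\xi_m\circ\xi_n=\xi_{mn}$: both sides are the continuous piecewise-linear function that takes the value $\frac{1-(-1)^{\ell}}{2}$ at every point $\ell/(mn)$, as one checks by comparing breakpoints and endpoint values. Hence $\xi_k\circ f=\xi_k\circ\xi_2=\xi_{2k}=\xi_2\circ\xi_k=f\circ\xi_k$, as required.

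For the forward direction, let $\xi$ be a piecewise linear solution. If $\xi$ is constant it is a fixed point of $f$ by the computation above, giving case~1. So assume $\xi$ is non-constant; then by Proposition~\ref{theorNeStale} $\xi$ is not constant on any subinterval (a continuous piecewise-constant function on $[0,1]$ is constant), and in particular on its first interval of linearity $[0,t_1]$ we have $\xi(x)=\xi(0)+sx$ with $s\neq 0$. Putting $x=0$ into the equation gives $\xi(0)=f(\xi(0))$, so $\xi(0)\in\{0,2/3\}$. If $\xi(0)=2/3$, then on $[0,1/2]$ the equation reads $\xi(2x)=f(\xi(x))$; for $x\to 0^{+}$ the left-hand side equals $2/3+2sx$, while $f(y)=2-2y$ near $2/3$ makes the right-hand side equal $2/3-2sx$, forcing $s=0$ — a contradiction. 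Hence $\xi(0)=0$, and since $\xi\geq 0$ the slope $s$ on $[0,t_1]$ is positive.

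Next I would iterate the relation $\xi(y)=f(\xi(y/2))$, which is the equation on $[0,1/2]$ rewritten using $f(x)=2x$ there: this yields $\xi(y)=f^{n}(\xi(y/2^{n}))$ for every $n$ and every $y\in[0,1]$. For $n$ large enough that $y/2^{n}\leq t_1$ one has $\xi(y/2^{n})=sy/2^{n}$, hence $\xi(y)=f^{n}(sy/2^{n})$. Since $f^{n}=\xi_{2^{n}}$ and $\xi_{2^{n}}$ is affine on the dyadic interval $[\lfloor sy\rfloor/2^{n},(\lfloor sy\rfloor+1)/2^{n}]$ that contains $sy/2^{n}$, this evaluates — independently of the (large) $n$ — to $\{sy\}$ when $\lfloor sy\rfloor$ is even and to $1-\{sy\}$ when $\lfloor sy\rfloor$ is odd; that is, $\xi$ equals the function from part~2 of the statement with parameter $s$. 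It remains to see $s\in\mathbb{N}$: putting $x=1$ gives $0=\xi(f(1))=f(\xi(1))$, so $\xi(1)\in f^{-1}(0)=\{0,1\}$; but the formula just obtained gives $\xi(1)=\{s\}$ or $1-\{s\}$, which lies in $\{0,1\}$ only when $\{s\}=0$. Thus $s=k\in\mathbb{N}$ and $\xi=\xi_k$, which is case~2.

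The genuinely load-bearing step is the iteration $\xi(y)=f^{n}(\xi(y/2^{n}))$ together with the explicit affine description of $f^{n}=\xi_{2^{n}}$ on dyadic intervals: this is what promotes mere local linearity of $\xi$ at the fixed point $0$ into the global closed formula, after which integrality of the slope is forced cheaply by the single constraint at $x=1$. The routine but slightly delicate points are the identity $\xi_m\circ\xi_n=\xi_{mn}$ (hence $f^{n}=\xi_{2^{n}}$), checked by breakpoint bookkeeping, and the handling of the limit in $n$ and of the dyadic endpoints $sy\in\mathbb{Z}$ in the iteration step; both are straightforward. As a consistency check, one may also note that $\xi([0,1])$ is an interval fixed by $f$, hence equals $\{0\}$, $\{2/3\}$ or $[0,1]$, so a non-constant $\xi$ is automatically surjective, although this is not needed in the argument above.
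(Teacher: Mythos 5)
Your proof is correct, and its middle part runs along a genuinely different track than the paper's. The paper (proof of Theorem~\ref{pr-theor-2} via Lemmas~\ref{pr-lema-9}--\ref{pr-lema-12}) takes the tangent $k$ of $\xi$ at $0$, shows $\xi(x)=kx$ on all of $[0,\,1/k]$ by pulling back to a small neighbourhood of $0$ and pushing forward with $f^n$ while staying inside the first linear branch of $f^n$ (Lemma~\ref{pr-lema-10}), and then reconstructs the zigzag segment by segment through the forward-propagation Lemma~\ref{pr-lema-11}, finally forcing $k\in\mathbb{N}$ from $\xi(1)\in\{0,\,1\}$ (Lemma~\ref{pr-lema-9}). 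You use the same pull-back idea, $\xi(y)=f^{n}(\xi(y/2^{n}))$, but instead of stopping at the first branch you evaluate $f^{n}=\xi_{2^{n}}$ in closed form on the dyadic grid, which produces the global formula with a real slope parameter $s$ in one stroke and eliminates the interval-by-interval bookkeeping of Lemmas~\ref{pr-lema-11}--\ref{pr-lema-12}; integrality of $s$ is then extracted exactly as in the paper, from the single constraint $\xi(1)\in\{0,\,1\}$. Two further points where you diverge: you re-derive $\xi(0)=0$ for nonconstant solutions by a local slope computation at $2/3$ (the paper instead invokes its Lemma~\ref{pr-lema-6}), and you give an explicit proof of the converse direction via the semigroup identity $\xi_m\circ\xi_n=\xi_{mn}$ (so $\xi_k\circ f=f\circ\xi_k$), which the paper dismisses as evident. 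What each route buys: yours is more compact and yields the closed formula directly, at the price of establishing the composition identity and the explicit description of $f^{n}$; the paper's propagation argument is more pedestrian but needs nothing beyond the functional equation itself and the local data at $0$ and $1$.
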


In Section~\ref{pr-sect-length} we consider the length of the
graph of the homeomorphic solution $h:\, [0,\, 1]\rightarrow [0,\,
1]$ of the functional equation~(\ref{eq:58}). In
subsection~\ref{pr-sect-length-1} we prove the following Theorem.
\begin{theorem*}[Theorem~\ref{pr-Theor-3}] Let $h:\,
[0,\, 1]\rightarrow [0,\, 1]$ be a conjugation of $f$ and $f_v$,
defined by~(\ref{eq:50}) and~(\ref{eq:53}) correspondingly, i.e.
$h$ is a solution of~(\ref{eq:58}).

Let $h_n$ are piecewise linear approximations of $h$, whose
breaking points belong to $A_n$ and which coincide with $h$ on
$A_n$. Define $l_n$ the length of the graph of $h_n$. Then
$$\lim\limits_{n\rightarrow \infty}l_n =2$$ for all $v\neq 1/2$.
\end{theorem*}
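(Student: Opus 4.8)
The plan is to write $l_n$ as an explicit finite sum and then squeeze it between $2-E_n$ and $2$ with $E_n\to 0$. First recall that by Theorem~\ref{theor:homeom-jed} and Theorem~\ref{theor:10} the conjugacy $h$ exists, is increasing, and satisfies $h(A_n)=B_n$. Writing the elements of $A_n$ and of $B_n$ in increasing order as $\alpha_{n,0}<\dots<\alpha_{n,2^{n-1}}$ and $\beta_{n,0}<\dots<\beta_{n,2^{n-1}}$, we get $h(\alpha_{n,k})=\beta_{n,k}$, hence $h_n(\alpha_{n,k})=\beta_{n,k}$; since $h_n$ is piecewise linear with nodes in $A_n$, its graph is the polygonal line through the points $(\alpha_{n,k},\beta_{n,k})$. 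By Proposition~\ref{lema:An} the $x$-increments all equal $2^{-(n-1)}$, so
$$ l_n=\sum_{k=1}^{2^{n-1}}\sqrt{\,2^{-2(n-1)}+\bigl(\beta_{n,k}-\beta_{n,k-1}\bigr)^{2}\,}, $$
and everything reduces to describing the multiset of gaps $\{\beta_{n,k}-\beta_{n,k-1}\}_{k}$ of $B_n$.

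Second, I would identify this multiset by a self-similarity argument. Since $B_n=f_v^{-n}(0)$, we have $B_{n+1}=f_v^{-1}(B_n)$; the two branches of $f_v$ are the affine bijections $x\mapsto x/v$ from $[0,v]$ onto $[0,1]$ and $x\mapsto(1-x)/(1-v)$ from $[v,1]$ onto $[0,1]$, of slopes $1/v$ and $-1/(1-v)$. Pulling a gap of $B_n$ of length $g$ back through the two branches produces two gaps of $B_{n+1}$, of lengths $vg$ and $(1-v)g$, and as one runs over all gaps of $B_n$ these preimages tile $[0,v]$ and $[v,1]$ respectively (and meet at $v=f_v^{-1}(1)\in B_{n+1}$), with no extra point of $B_{n+1}$ strictly inside any of them. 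Starting from $B_1=\{0,1\}$, whose single gap has length $1$, an induction on $n$ using Pascal's rule shows that the multiset of gap lengths of $B_n$ is exactly the one in which $v^{j}(1-v)^{\,n-1-j}$ appears with multiplicity $\binom{n-1}{j}$, $0\le j\le n-1$; this is consistent since $\sum_{j}\binom{n-1}{j}v^{j}(1-v)^{\,n-1-j}=1$ and $\sum_j\binom{n-1}{j}=2^{n-1}$.

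Third comes the estimate. Setting $m=n-1$,
$$ l_n=\sum_{j=0}^{m}\binom{m}{j}\sqrt{\,4^{-m}+v^{2j}(1-v)^{2(m-j)}\,}. $$
From $\sqrt{a^{2}+b^{2}}\le a+b$ for $a,b\ge 0$ we get $l_n\le\sum_{j}\binom{m}{j}\bigl(2^{-m}+v^{j}(1-v)^{m-j}\bigr)=1+1=2$. From $\sqrt{a^{2}+b^{2}}\ge\max(a,b)=a+b-\min(a,b)$ we get $l_n\ge 2-E_m$ with
$$ E_m=\sum_{j=0}^{m}\binom{m}{j}\min\!\bigl(2^{-m},\,v^{j}(1-v)^{m-j}\bigr)\le\sum_{j=0}^{m}\binom{m}{j}\sqrt{2^{-m}\,v^{j}(1-v)^{m-j}}=2^{-m/2}\bigl(\sqrt{v}+\sqrt{1-v}\,\bigr)^{m}=\Bigl(\tfrac{1+2\sqrt{v(1-v)}}{2}\Bigr)^{m/2}, $$
using $\min(a,b)\le\sqrt{ab}$ and the binomial theorem. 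For $v\neq 1/2$ the identity $v(1-v)=\tfrac14-(v-\tfrac12)^{2}<\tfrac14$ makes the base of the last expression strictly less than $1$, so $E_m\to 0$; together with $2-E_m\le l_n\le 2$ this forces $l_n\to 2$.

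The main obstacle is the second step: making precise the self-similar refinement $B_{n+1}=f_v^{-1}(B_n)$ and deducing the binomial description of the gaps of $B_n$; once that is in hand the length bound is only a routine pair of elementary inequalities. Note also that the hypothesis $v\neq 1/2$ is exactly what makes the contraction factor $\tfrac12\bigl(1+2\sqrt{v(1-v)}\bigr)<1$; for $v=1/2$ one has $h=\mathrm{id}$ and $l_n=\sqrt2$ for every $n$, so the conclusion genuinely requires $v\neq 1/2$.
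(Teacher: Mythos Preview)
Your proof is correct and considerably more direct than the paper's. Your second step—the binomial description of the gap multiset of $B_n$—is exactly Proposition~\ref{prop:3} of the paper (derived there via Theorem~\ref{lema:32}); your self-similarity argument via $B_{n+1}=f_v^{-1}(B_n)$ is a clean alternative derivation. The real divergence is in the limit argument. The paper's primary proof of Theorem~\ref{pr-Theor-3} does not start from the closed formula: it introduces the ratio $t_{n,k}=F_1F_2/\bigl(2^{-(n-1)}+(\beta_{n,k+1}-\beta_{n,k})\bigr)$, constructs an auxiliary monotone function $\xi_v$ bounding how this ratio grows under refinement, argues by contradiction assuming $L<2$, reinterprets the slopes as a two-state Markov process on lattice paths, and finally invokes Stirling's approximation on the central binomial coefficient to show the ``balanced-slope'' set has vanishing measure. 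The paper then states the explicit formula separately (Proposition~\ref{prop:3}) and remarks that a probabilistic proof of the limit was pointed out by G.~Shevchenko, without giving details. Your route bypasses all of this machinery: the two-line squeeze $\sqrt{a^2+b^2}\in[a+b-\min(a,b),\,a+b]$ together with $\min(a,b)\le\sqrt{ab}$ and the binomial theorem yields the geometric rate $\bigl((1+2\sqrt{v(1-v)})/2\bigr)^{m/2}$ directly. This is not only shorter but also gives an explicit convergence rate, which the paper's argument does not.
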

From another hand, we use Theorem~\ref{lema:32} in
Subsection~\ref{pr-sect-length-1} to find  the explicit formulas
of $l_n$ and prove the following proposition.
\begin{proposition*}[Proposition~\ref{prop:3}] The
following equality holds
\begin{equation}
\label{eq:36} l_{n+1}(v) =
\frac{1}{2^{n}}\cdot\sum\limits_{k=0}^nC_n^k \cdot
\sqrt{1+2^{2n}v^{2k}(1-v)^{2(n-k)}} \end{equation} for the length
$l_{n+1}(v)$ of $h_{n+1}$.
\end{proposition*}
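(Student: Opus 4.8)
The plan is to compute $l_{n+1}(v)$ directly as the sum of the lengths of the affine pieces of the graph of $h_{n+1}$, and then to regroup that sum according to the value of the slope on each piece.

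First I would describe the partition. By Proposition~\ref{lema:An} the set $A_{n+1}$ equals $\left\{0,\frac{1}{2^{n}},\ldots,\frac{2^{n}-1}{2^{n}},1\right\}$, hence it divides $[0,\,1]$ into the $2^{n}$ intervals $I_m=\left[\frac{m}{2^{n}},\,\frac{m+1}{2^{n}}\right]$, $0\le m\le 2^{n}-1$, each of horizontal length $2^{-n}$. Since all breaking points of $h_{n+1}$ lie in $A_{n+1}$, the map $h_{n+1}$ is affine on each $I_m$; its derivative is therefore a constant $s_m$ on the interior of $I_m$, and the portion of the graph over $I_m$ is a segment of length $2^{-n}\sqrt{1+s_m^{2}}$. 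Consequently $l_{n+1}(v)=2^{-n}\sum_{m=0}^{2^{n}-1}\sqrt{1+s_m^{2}}$, and everything reduces to identifying, with multiplicities, the slopes $s_m$.

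Next I would invoke Theorem~\ref{lema:32} with $n$ replaced by $n+1$: for an interior point $x\in I_m$ (which does not belong to $A_{n+1}$) one has $s_m=\prod_{i=2}^{n+1}\alpha_i(x)$, a product of $n$ factors, each equal to $2v$ or to $2(1-v)$ according to~(\ref{eq:60}). The point is that $\alpha_i(x)$ depends only on whether the binary digits $x_{i-2}$ and $x_{i-1}$ coincide, so, with the convention $x_0=0$, the product depends only on the pattern of coincidences inside the block $x_0,x_1,\ldots,x_{n}$ — that is, on an element of $\{0,1\}^{n}$. Taking $x\in I_m$ non-dyadic, the digits $x_1,\ldots,x_{n}$ are exactly the binary digits of $m$; conversely, since $x_0$ is fixed, the $n$ coincidences/non-coincidences among $x_0,x_1,\ldots,x_{n}$ reconstruct $x_1,\ldots,x_{n}$ uniquely. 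Hence $m$ runs bijectively over the $2^{n}$ coincidence patterns, and exactly $C_{n}^{k}$ of the indices $m$ give a pattern with $k$ coincidences, i.e. with exactly $k$ factors equal to $2v$; for such $m$ one has $s_m=(2v)^{k}\bigl(2(1-v)\bigr)^{n-k}=2^{n}v^{k}(1-v)^{n-k}$, so $s_m^{2}=2^{2n}v^{2k}(1-v)^{2(n-k)}$.

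Grouping the $2^{n}$ summands by the value of $k$ then yields
$$l_{n+1}(v)=\frac{1}{2^{n}}\sum_{k=0}^{n}C_{n}^{k}\sqrt{1+2^{2n}v^{2k}(1-v)^{2(n-k)}},$$
which is~(\ref{eq:36}). The one genuinely non-routine ingredient is the combinatorial bijection of the previous paragraph — checking that the slope of $h_{n+1}$ on $I_m$ is a product of exactly $n$ factors determined by the first $n$ digit-coincidences of points of $I_m$, and that each coincidence pattern arises from precisely one $I_m$; the remaining steps form an elementary length computation. The base case $n=1$ (where Theorem~\ref{lema:32}, stated for indices $\ge 2$, does not formally apply) should be handled directly: since $h(1/2)=v$ by Proposition~\ref{prop:1}, the map $h_{2}$ has slope $2v$ on $[0,1/2]$ and $2(1-v)$ on $[1/2,1]$, so $l_{2}(v)=\tfrac12\bigl(\sqrt{1+4v^{2}}+\sqrt{1+4(1-v)^{2}}\bigr)$, in agreement with the formula.
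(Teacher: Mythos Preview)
Your proof is correct and follows essentially the same approach as the paper: partition $[0,1]$ into the $2^{n}$ intervals $I_m$, write the length of each affine piece as $2^{-n}\sqrt{1+s_m^{2}}$, invoke Theorem~\ref{lema:32} to express $s_m$ as a product of $n$ factors $2v$ or $2(1-v)$, and count how many intervals yield each slope. The paper phrases the counting step probabilistically (``both values of $\alpha_i$ are equally expected'') while you give the explicit bijection between $m\in\{0,\ldots,2^{n}-1\}$ and coincidence patterns in $\{0,1\}^{n}$; your version is a bit more careful but the argument is the same.
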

Proposition~\ref{prop:3} together with Theorem~\ref{pr-Theor-3}
give the following corollary.
\begin{theorem*}[Theorem~\ref{theor:03}] For every $v\in
(0,\, 1)\backslash \{ 0.5\}$ the limit $ \lim\limits_{n\rightarrow
\infty}l_n(v)=2$ holds, where $l_n(v)$ are defined
by~(\ref{eq:36}).
\end{theorem*}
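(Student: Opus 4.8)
The plan is to estimate the closed-form expression for $l_{n+1}(v)$ from Proposition~\ref{prop:3} directly. Set $t_k=2^n v^k(1-v)^{n-k}$, so that the radicand in~(\ref{eq:36}) is exactly $1+t_k^2$ and
$$l_{n+1}(v)=\frac{1}{2^n}\sum_{k=0}^nC_n^k\sqrt{1+t_k^2}.$$
The first step is the elementary identity
$$\frac{1}{2^n}\sum_{k=0}^nC_n^k(1+t_k)=1+\sum_{k=0}^nC_n^kv^k(1-v)^{n-k}=1+\bigl(v+(1-v)\bigr)^n=2,$$
a consequence of the binomial theorem. Together with the pointwise inequality $\sqrt{1+t^2}\le 1+t$, valid for all $t\ge 0$, this already gives $l_{n+1}(v)\le 2$ for every $n$, and, subtracting,
$$2-l_{n+1}(v)=\frac{1}{2^n}\sum_{k=0}^nC_n^k\,\psi(t_k),\qquad \psi(t):=1+t-\sqrt{1+t^2}\ \ (\ge 0).$$
Thus it suffices to show that this nonnegative sum tends to $0$. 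I will use only that $\psi$ is continuous and nondecreasing on $[0,\infty)$ with $\psi(0)=0$ and $\psi(t)<1$ for all $t$ (checked via $\psi'(t)=1-t/\sqrt{1+t^2}>0$ and $\psi(t)\to 1$ as $t\to\infty$; indeed $\psi(t)\le\min\{t,1\}$).

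Next I would split the index set according to whether $k$ lies in the central window $W_n=\{\,k:|k-n/2|\le n^{2/3}\,\}$. Off the window I bound $\psi(t_k)\le 1$ and control the binomial mass by Chebyshev's inequality for a $\mathrm{Bin}(n,\tfrac12)$ variable (mean $n/2$, variance $n/4$):
$$\frac{1}{2^n}\sum_{k\notin W_n}C_n^k\le\frac{n/4}{(n^{2/3})^2}=\frac{1}{4\,n^{1/3}}\longrightarrow 0 .$$
On the window I claim $\max_{k\in W_n}t_k\to 0$. Indeed, writing $k=\tfrac n2+m$ with $|m|\le n^{2/3}$,
$$\log_2 t_k=n+k\log_2 v+(n-k)\log_2(1-v)=\frac n2\,\log_2\!\bigl(4v(1-v)\bigr)+m\,\log_2\!\frac{v}{1-v},$$
and $\tfrac12\log_2\bigl(4v(1-v)\bigr)$ is a \emph{strictly negative} constant because $4v(1-v)<1$ for $v\ne\tfrac12$ (AM--GM), whereas $\bigl|m\log_2\tfrac{v}{1-v}\bigr|\le n^{2/3}\bigl|\log_2\tfrac{v}{1-v}\bigr|=o(n)$. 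Hence $\log_2 t_k\to-\infty$ uniformly in $k\in W_n$, so $t_k\to 0$ uniformly; by monotonicity and continuity of $\psi$ at $0$, $\max_{k\in W_n}\psi(t_k)=\psi\bigl(\max_{k\in W_n}t_k\bigr)\to 0$, and therefore $\frac{1}{2^n}\sum_{k\in W_n}C_n^k\psi(t_k)\le\max_{k\in W_n}\psi(t_k)\to 0$. Adding the two pieces gives $2-l_{n+1}(v)\to 0$, i.e. $\lim_{n\to\infty}l_n(v)=2$.

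The only place where the hypothesis $v\ne\tfrac12$ is genuinely used is the strict inequality $4v(1-v)<1$, which produces the exponential decay of $t_k$ on the central window; when $v=\tfrac12$ one has $t_k\equiv 1$, so $l_{n+1}(\tfrac12)=\sqrt2$ for all $n$, consistent with the exclusion. The main point requiring care is the calibration of the window width: it must exceed $\sqrt n$ so that Chebyshev kills the tail, yet stay $o(n)$ so that the factor $(v/(1-v))^m$ cannot overcome the geometric decay $(4v(1-v))^{n/2}$; the exponent $\tfrac23$ is one convenient choice, and any exponent strictly between $\tfrac12$ and $1$ works equally well. Everything else is routine.
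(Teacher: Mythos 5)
Your argument is correct: the identity $\frac{1}{2^n}\sum_{k}C_n^k(1+t_k)=2$, the bound $\sqrt{1+t^2}\le 1+t$, the Chebyshev tail estimate for the binomial weights outside the window $|k-n/2|\le n^{2/3}$, and the uniform exponential decay of $t_k=2^nv^k(1-v)^{n-k}$ inside the window (driven precisely by $4v(1-v)<1$ for $v\neq\tfrac12$) fit together without gaps, and the bookkeeping with $\psi(t)=1+t-\sqrt{1+t^2}\le\min\{t,1\}$ is sound. However, this is a genuinely different route from the paper's. The paper obtains Theorem~\ref{theor:03} as an immediate corollary of Proposition~\ref{prop:3} together with Theorem~\ref{pr-Theor-3}, and the real work is in Theorem~\ref{pr-Theor-3}: a proof by contradiction about the graphs of the approximations $h_n$ themselves, in which the assumption $L<2$ forces the ``moderate-slope'' intervals $A_{n,k}$ into a bounded range of digit-sums $s$, the binary digits are interpreted as a two-state Markov process, and Stirling's estimate for the central binomial coefficient shows this exceptional set has Lebesgue measure tending to $0$. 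You instead work directly and purely analytically with the explicit formula~(\ref{eq:36}), needing only the binomial theorem, Chebyshev's inequality and elementary calculus; this is essentially the short probabilistic argument the paper only alludes to (attributed to G.~Shevchenko) after Theorem~\ref{theor:03}, and, read together with Proposition~\ref{prop:3}, it furnishes an independent proof of Theorem~\ref{pr-Theor-3}, reversing the paper's direction of deduction. What each approach buys: the paper's argument is intrinsic to the dynamics (it proves the geometric statement about the conjugacy's graph without ever invoking a closed formula, and avoids any concentration inequality), while yours is shorter, self-contained, avoids Stirling and the contradiction framework, isolates exactly where $v\neq\tfrac12$ enters, and even yields a quantitative rate $2-l_{n+1}(v)=O(n^{-1/3})$ for the chosen window (improvable by taking a narrower window via a Hoeffding-type bound).
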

In Section~\ref{pr-sect-length-2} we prove the
Proposition~\ref{prop:3} as a combinatorial fact with the use of
probability reasonings. This is, in fact, an alternative proof of
Theorem~\ref{pr-Theor-3}.

In Section~\ref{sect-KuskLin} we consider maps $\xi:\,
A_n\rightarrow [0,\, 1]$ for a given $n\geq 1$. We call such $\xi$
\textbf{admissible}, if the equality
\begin{equation}
\label{eq:105} \xi(f(x)) = f(\xi(x)) \end{equation} for all $x\in
A_n$ and $f$ is of the form~(\ref{eq:50}). Notice, that since
$f(A_n)\subseteq A_n$ for all $n$, then~(\ref{eq:105}) is defined
for all $x\in A_n$. In Section~\ref{sect-KuskLin-1} we prove the
following theorem.
\begin{theorem*}[Theorem~\ref{theor:04}] There is one to
one correspondence between admissible self-semi conjugations
$\xi:\, A_n  \rightarrow [0,\, 1]$ and maps $\widetilde{\xi}:\,
\bigcup\limits_{i=1}^n\mathcal{B}_i \rightarrow
\bigcup\limits_{i=1}^n\mathcal{B}_i$ with the following
properties:

(1) For any $m,\, 1\leq m\leq n$, the inclusion
$\widetilde{\xi}(\mathcal{B}_m) \subseteq \mathcal{B}_m$ holds.

(2) For any $m,\, 2\leq m\leq n$ the equality $$
\widetilde{\xi}(j_1,\ldots,\, j_m) = (i_1,\ldots,\, i_m)
$$ yields $$
\widetilde{\xi}(j_1,\ldots,\, j_{m-1}) = (i_1,\ldots,\, i_{m-1}).
$$

(3) If the equality $$ \widetilde{\xi}(j_1,\ldots,\, j_n) =
(i_1,\ldots,\, i_n)
$$ holds for some $i_1,\ldots,\, i_n,\, j_1,\ldots,\, j_n$, then
for any $k,\, 1\leq k\leq n$ the equality $i_k=0$ yields
$j_k=i_0$, where $i_0\in \{0,\, 1\}$ is a fixed number.
\end{theorem*}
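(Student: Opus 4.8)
The plan is to make the asserted correspondence completely explicit and then check that it is a bijection; the whole argument is a dictionary between the functional equation~(\ref{eq:105}) and the combinatorial conditions (1)--(3).

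First I would pass from an admissible $\xi$ to a map $\widetilde\xi$. Putting $x=0$ in~(\ref{eq:105}) gives $f(\xi(0))=\xi(0)$, so $\xi(0)$ is a fixed point of $f$, hence one of the two points $\{0,\,2/3\}$; which of them it is will be recorded by the number $i_0\in\{0,\,1\}$ (note that if $\xi(0)$ is the nonzero fixed point then $\xi$ does not take its values inside $A_n$, which is why the combinatorial model has to be built from abstract branch words rather than from points of $A_n$). For $x\in A_n\setminus\{0\}$ let $m$ be the depth of $x$, i.e.\ the least integer with $f^m(x)=0$; by Proposition~\ref{lema:An} one has $1\le m\le n$, and $x$ is coded by the tuple of $\mathcal B_m$ produced in Section~\ref{sect-KuskLin}, which records the branch of $f^{-1}$ used at each of the $m$ steps of the backward orbit of $x$ to $0$. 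Iterating~(\ref{eq:105}) gives $f^k(\xi(x))=\xi(f^k(x))$ for all $k$, so in particular $f^m(\xi(x))=\xi(0)$; hence $\xi(x)$ is recovered from $\xi(0)$ by a uniquely determined chain of $m$ branch choices, which, after the normalization that makes $\mathcal B_m$ well defined, is again a tuple of $\mathcal B_m$. I then declare $\widetilde\xi$ to send the tuple of $x$ to the tuple of $\xi(x)$.

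Next one has to match the three properties with this construction and reverse it. Property~(1) amounts to checking that the word read off for $\xi(x)$ has the same length as the word of $x$, which follows from the fact that $f$ shifts the depth by exactly one on the non-fixed part of $A_n$ and that $\xi$ intertwines $f$. Property~(2) is the relation $\xi(f(x))=f(\xi(x))$ itself, read in coordinates: passing from $x$ to $f(x)$ deletes the last symbol of the tuple, and the same deletion takes place on the $\xi$-side. Property~(3) is the boundary condition forced by the fact that the forward $f$-orbit of every point of $A_n$ terminates at a fixed point of $f$, together with the intertwining relation; tracing which branches of $f^{-1}$ occur shows that whenever a coordinate $i_k$ equals $0$ the corresponding coordinate $j_k$ of the word of $x$ must equal $i_0$. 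Conversely, given $\widetilde\xi$ with (1)--(3), I would set $\xi(0)$ equal to the fixed point named by $i_0$ and, for $x$ of depth $m$, let $\xi(x)$ be the unique point whose $\mathcal B_m$-tuple relative to $\xi(0)$ is $\widetilde\xi$ applied to the tuple of $x$. Property~(2) guarantees that these values are mutually compatible under $f$, so $\xi$ is a genuine single-valued map satisfying~(\ref{eq:105}); property~(3) guarantees that the one degenerate lifting step is resolved unambiguously. The two assignments are inverse to each other by construction, which yields the claimed one-to-one correspondence.

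The only delicate point, and the place I expect to be the main obstacle, is the bookkeeping around the non-injectivity of $f^{-1}$ at the point $1$, where the two branches of $f^{-1}$ collapse to $1/2$: the branch word of a point is canonical only after the normalization fixed in the definition of the $\mathcal B_i$, and one must verify both that this normalization commutes with applying $f$ (so that property~(2) faithfully transcribes~(\ref{eq:105})) and that the residual freedom at the base point is exactly the datum $i_0$ (so that property~(3) is the correct normalization there). Once these two compatibilities are settled, the remainder is a routine transcription.
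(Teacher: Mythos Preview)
Your overall strategy matches the paper's exactly: encode $x\in A_m$ by its branch word $(j_1,\ldots,j_m)$ via the formula $x=\varphi_{j_m}^{-1}\circ\cdots\circ\varphi_{j_1}^{-1}(0)$, encode $\xi(x)$ similarly over the base point $x_0=\xi(0)$, and let $\widetilde\xi$ be the induced map on words. The paper packages the verification of (1)--(3) into Lemmas~\ref{lema:06}, \ref{lema:05}, \ref{lema:07} and then checks the inverse construction directly, just as you outline.

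Where your sketch goes astray is in the diagnosis of property~(3). You locate the ``delicate point'' at the preimage of $1$, where $\varphi_0^{-1}(1)=\varphi_1^{-1}(1)=1/2$. But the redundancy the paper's condition~(3) is designed to absorb is a different one: the encoding $(j_1,\ldots,j_m)\mapsto x$ is non-injective because of \emph{leading zeros}, since $\varphi_0^{-1}(0)=0$ means any initial string of $0$'s can be dropped without changing $x$. Lemma~\ref{lema:07} shows that admissibility of $\xi$ forces the implication $j_k=0\Rightarrow i_k=i_0$ (the statement of the theorem has the roles of $i_k$ and $j_k$ swapped; the correct direction is the one in Lemma~\ref{lema:07} and the one actually counted in Corollary~\ref{corol:1}). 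This is exactly what makes a leading $j_k=0$ on the $x$-side correspond to a removable $\varphi_{i_0}^{-1}$ on the $\xi(x)$-side, because $\varphi_{i_0}^{-1}(x_0)=x_0$ by the definition of $i_0$. Your explanation (``whenever $i_k=0$ the corresponding $j_k$ must equal $i_0$'') both follows the typo and does not connect to the actual source of ambiguity.

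A second, smaller issue: you speak of a ``normalization that makes $\mathcal B_m$ well defined'' and a ``canonical'' branch word for each point. There is no such normalization in the paper; $\mathcal B_m$ is simply all binary $m$-tuples, the encoding $\mathcal B_m\to A_m$ is genuinely many-to-one, and the role of (3) is precisely to cut $\widetilde\xi$ down so that it descends to a single-valued $\xi$. If you rewrite your argument with this in mind---tracking the leading-zero redundancy rather than the collapse at $1$, and using Lemma~\ref{lema:05} to reduce the general case $j_k=0$ to $j_1=0$---your sketch becomes the paper's proof.
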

\begin{corollary*}[Corollary~\ref{corol:1}] For any
$n\geq 1$ the number of admissible self-semi conjugations $\xi:\
A_n \rightarrow [0,\, 1]$ is
$$\sum\limits_{k=0}^n2^{k+1}\cdot C_n^k$$.
\end{corollary*}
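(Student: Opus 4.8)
The plan is to derive Corollary~\ref{corol:1} directly from Theorem~\ref{theor:04}: once the one-to-one correspondence between admissible self-semi conjugations $\xi:\, A_n\rightarrow [0,\, 1]$ and the maps $\widetilde\xi:\, \bigcup_{i=1}^n\mathcal{B}_i \rightarrow \bigcup_{i=1}^n\mathcal{B}_i$ satisfying properties (1)--(3) is in hand, it suffices to count those maps $\widetilde\xi$. So the whole argument is a combinatorial enumeration organized around the tree structure that properties (1) and (2) impose.

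First I would record the tree picture. The sets $\mathcal{B}_1,\ldots,\mathcal{B}_n$ and the coordinatewise truncation map $(j_1,\ldots,j_m)\mapsto(j_1,\ldots,j_{m-1})$ organize $\bigcup_{i=1}^n\mathcal{B}_i$ as a finite rooted binary tree $T$ of depth $n$ (each node at level $m-1$ has its two children at level $m$, corresponding to appending $j_m\in\{0,1\}$). Property (1) says $\widetilde\xi$ preserves levels; property (2) says $\widetilde\xi$ commutes with truncation, i.e. $\widetilde\xi$ is a tree endomorphism of $T$ that fixes the root and sends each node to a node at the same level whose parent is the image of the parent. Such an endomorphism is exactly determined by choosing, for every internal node $u$ of $T$ (there are $1+2+\cdots+2^{n-1}=2^n-1$ of them, at levels $0$ through $n-1$), how $\widetilde\xi$ acts on the pair of children of $u$ relative to the pair of children of $\widetilde\xi(u)$: either "send both children to the $0$-child" or "send both to the $1$-child" or "send them identically (order-preserving)" or "swap them". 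A priori that is $4$ choices per internal node; I would next argue that property (3) is what cuts this down.

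The key step is to see how property (3) constrains the choices at level $n-1$ (the "leaf-parent" level). Property (3) concerns only the deepest coordinate: if $\widetilde\xi(j_1,\ldots,j_n)=(i_1,\ldots,i_n)$ then $i_n=0\Rightarrow j_n=i_0$ for a globally fixed $i_0\in\{0,1\}$. At a fixed internal node $u$ of level $n-1$, its two children are $(u,0)$ and $(u,1)$; the four local moves send the last coordinate respectively to $(0,0)$, $(1,1)$, $(0,1)$, $(1,0)$. Property (3) forbids any move that produces an image with last coordinate $0$ coming from a child whose last coordinate is not $i_0$ — so exactly one of the "swap"/"identity" pair survives (the one sending $i_0\mapsto 0$), together with the "send both to $1$-child" move, and the "send both to $0$-child" move survives only if the preimages both have last coordinate $i_0$, which they do not (one of them is $1-i_0$). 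Hence at each level-$(n-1)$ node there are exactly $2$ admissible local moves instead of $4$. I would double-check the edge cases (is $i_0$ really the same for all level-$(n-1)$ nodes? yes, the statement says it is a single fixed number — but note the quantifier: $i_0$ is chosen as part of $\widetilde\xi$, so it contributes a factor $2$ overall) and make sure no constraint links different internal nodes, so the choices are independent.

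Finally I would assemble the count. There are $2^{n-1}$ internal nodes at level $n-1$, each contributing a factor $2$; there are $2^{n-1-1}+\cdots+2^0 = 2^{n-1}-1$ internal nodes at levels $0,\ldots,n-2$, each contributing a factor $4$; and there is the binary choice of $i_0$. Hmm — that gives $2\cdot 2^{2^{n-1}-1}\cdot 4^{2^{n-1}-1}$, which is exponential-of-exponential and does not match $\sum_{k=0}^n 2^{k+1}C_n^k$, so I have mis-modeled something: the correspondence in Theorem~\ref{theor:04} must force far more rigidity (presumably $A_n$ has only $2^{n-1}+1$ points, so an admissible $\xi$ is wildly overdetermined and the "free choice at each internal node" picture is wrong — the image of a node constrains its descendants much more tightly, or indeed $\widetilde\xi$ must be essentially determined by its action along a single branch). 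The right move is therefore: reread off from the proof of Theorem~\ref{theor:04} (or re-derive) the extra rigidity — likely that $\widetilde\xi$ restricted to $\mathcal{B}_k$ is determined by a single parameter plus a choice among the $2^{k-1}$ positions where it can "collapse", giving $2^{k}$ or so options at level $k$ and a product that telescopes. Concretely I expect to show that the number of admissible $\widetilde\xi$ agreeing with a prescribed "depth profile" $k$ (the largest level up to which $\widetilde\xi$ is injective, say) is $2^{k+1}C_n^k$, and then sum over $k=0,\ldots,n$. The main obstacle, and where I would spend most of the effort, is precisely pinning down this finer structure of the admissible $\widetilde\xi$ — translating properties (1)--(3) into the statement that the generating data is a subset of size $k$ of an $n$-set (the $C_n^k$) together with $k+1$ binary labels (the $2^{k+1}$) — after which the summation $\sum_{k=0}^n 2^{k+1}C_n^k$ is immediate.
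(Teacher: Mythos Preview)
Your proposal does not reach a proof: you recognize midway that the free-local-choice tree-endomorphism model yields a doubly exponential count and abandon it, but you never pin down the missing structure or arrive at the formula.

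There is also a concrete misreading worth naming. Property (3) in Theorem~\ref{theor:04} does not concern only the deepest coordinate: the statement is that for \emph{every} $k$ with $1\le k\le n$ the implication holds. So the constraint you impose only at the leaf-parents in fact applies at every level of the tree. Even after fixing this, the local-choice model still overcounts, because the choices at different internal nodes are not independent --- by condition~(2), the image of $(j_1,\ldots,j_m)$ is already determined as the truncation of the image of any leaf below it, so once $\widetilde\xi$ is specified on $\mathcal{B}_n$ there is no residual freedom higher up. The tree-endomorphism-with-independent-node-choices picture is simply the wrong parameterization here.

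The paper's argument is entirely different and much shorter. By condition~(2), $\widetilde\xi$ is determined by its restriction to $\mathcal{B}_n$. For a given input $\mathcal{J}=(j_1,\ldots,j_n)$ having exactly $k$ ones, the constraint forces $i_s=i_0$ at every coordinate with $j_s=0$, while the $k$ coordinates with $j_s=1$ may each be chosen freely in $\{0,1\}$; this gives $2^k$ possible images of $\mathcal{J}$. There are $C_n^k$ such inputs, and two choices of the global parameter $i_0$. Summing produces $\sum_{k=0}^n 2^{k+1}C_n^k$. Your speculated interpretation of $k$ as a ``depth profile up to which $\widetilde\xi$ is injective'' is not what drives the count; in the paper $k$ is simply the number of ones in the input word, and the $C_n^k$ counts inputs rather than injectivity types.
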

In Section~\ref{sect-KuskLin-2} we consider continuable maps
$\xi_n:\, A_n \rightarrow [0,\, 1]$. We call the maps $\xi_n:\,
A_n \rightarrow [0,\, 1]$ \textbf{continuable}, if the there is a
self-semiconjugation $h:\, [0,\, 1]\rightarrow [0,\, 1]$ of $f$
(i.e. $h$ is surjective and continuous, but not necessary
invertible), which coincides with $\xi_n$ on $A_n$. In this case
the maps $h$ can be considered as continuous surjective
continuation of $\xi_n$. By Theorems~\ref{theor:01}
and~\ref{pr-theor-2} if $\xi$ is continuable, then either
$\xi(x)=2/3$ for all $x\in A_n$, or $\xi(0)=0$. From the
definition of admissible maps and from the definition of $A_n$
obtain that $\xi(0)=0$ yields $\xi(A_n)\subseteq A_n$. We prove
the following theorem.
\begin{theorem*}[Theorem~\ref{theor-main-dopust}] 1. For
every $x\in A_n\setminus A_{n-1}$ and for every $y\in A_n$ there
exists a continuable $\xi: A_n\rightarrow A_n$ such that
$\xi(x)=y$.

2. Let $\xi_1,\, \xi_2:\, A_n \rightarrow A_n$ be continuable
self-semiconjugations of $f$ of the form~(\ref{eq:50}) and
$\xi_1(x) = \xi_2(x)$ for some $x\in A_n\setminus A_{n-1}$. Then
$\xi_1(x)=\xi_2(x)$ for all $x\in A_n$.
\end{theorem*}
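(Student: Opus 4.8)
The plan is to leverage the classification of continuous self-semiconjugations of $f$ established in Theorems~\ref{theor:01} and~\ref{pr-theor-2}. Write $\theta_k(x)=\frac{1-(-1)^{[kx]}}{2}+(-1)^{[kx]}\{kx\}$ for $k\in\mathbb N$. By those theorems every continuous solution of~(\ref{eq:31}) is one of the constants $0$, $2/3$, or one of the maps $\theta_k$; each $\theta_k$ is continuous and maps $[0,1]$ onto $[0,1]$, while the two constants are not surjective. Hence the continuous \emph{surjective} solutions of~(\ref{eq:31}) are exactly the $\theta_k$, $k\ge1$, so a map $\xi:A_n\to A_n$ is continuable if and only if it is the restriction to $A_n$ of some $\theta_k$: a continuation of such a $\xi$ is a surjective continuous solution of~(\ref{eq:31}), hence some $\theta_k$, and conversely each $\theta_k$ restricts to a map $A_n\to A_n$ by the computation below.

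Next I would evaluate $\theta_k$ on $A_n$. Writing a point of $A_n$ (Proposition~\ref{lema:An}) as $\frac{m}{2^{n-1}}$ with $0\le m\le 2^{n-1}$, and $km=2^nq+s$ with $0\le s<2^n$, a short case check of the definition of $\theta_k$ gives
$$\theta_k\!\left(\tfrac{m}{2^{n-1}}\right)=\frac{W(km)}{2^{n-1}},\qquad W(j):=\min\bigl(j\bmod 2^n,\ 2^n-(j\bmod 2^n)\bigr),$$
so indeed $\theta_k(A_n)\subseteq A_n$. Here $W$ is the triangular wave of period $2^n$; I only use that $W(l)=l$ for $0\le l\le 2^{n-1}$ and that $W(a)=W(b)$ if and only if $a\equiv\pm b\pmod{2^n}$. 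I also use that a point of $A_n\setminus A_{n-1}$ is exactly a point $\frac{m}{2^{n-1}}$ with $m$ odd, hence $m$ is invertible modulo $2^n$.

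For part~1, let $x_0=\frac{m}{2^{n-1}}\in A_n\setminus A_{n-1}$ and $y=\frac{l}{2^{n-1}}\in A_n$, so $0\le l\le 2^{n-1}$. Choose $k\ge1$ with $k\equiv l\,m^{-1}\pmod{2^n}$; then $km\equiv l\pmod{2^n}$, so $W(km)=W(l)=l$ and $\theta_k(x_0)=\frac{l}{2^{n-1}}=y$. Thus $\xi:=\theta_k|_{A_n}$ is a continuable map $A_n\to A_n$ with $\xi(x_0)=y$. (For $l=0$ this amounts to $k=2^n$, i.e. $\xi=f^n|_{A_n}\equiv0$, consistent with the definition of $A_n$.)

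For part~2, write $\xi_i=\theta_{k_i}|_{A_n}$ for $i=1,2$ and assume $\xi_1(x_0)=\xi_2(x_0)$ at some $x_0=\frac{m}{2^{n-1}}\in A_n\setminus A_{n-1}$. Then $W(k_1m)=W(k_2m)$, hence $k_1m\equiv\pm k_2m\pmod{2^n}$, and cancelling the odd factor $m$ gives $k_1\equiv\pm k_2\pmod{2^n}$. For every $m'$ with $0\le m'\le 2^{n-1}$ it follows that $k_1m'\equiv\pm k_2m'\pmod{2^n}$, whence $W(k_1m')=W(k_2m')$; therefore $\theta_{k_1}$ and $\theta_{k_2}$ agree at every point of $A_n$, i.e. $\xi_1=\xi_2$. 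The only genuinely delicate step is the bookkeeping in the first paragraph — checking that the notion of "continuable" from Section~\ref{sect-KuskLin-2} is exactly captured by the $\theta_k$ (surjectivity discarding the constant solutions, and $\theta_k$ preserving $A_n$); once that is in place, both assertions reduce to arithmetic modulo $2^n$.
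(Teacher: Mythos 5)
Your proof is correct, and it rests on the same foundation the paper uses — the classification of continuous solutions of $\xi(f)=f(\xi)$ from Theorems~\ref{theor:01} and~\ref{pr-theor-2}, plus invertibility of odd residues modulo $2^n$ — but you organize the middle of the argument differently. The paper encodes a continuation by its tangent $k$ at $0$ and translates $\xi(\alpha_{n,2s+1})=\alpha_{n,p}$ into the congruences $k(2s+1)\mp p\equiv 0 \pmod{2^n}$ via Lemma~\ref{lema:13}, solves them in Lemma~\ref{corol:5}, and then needs Lemma~\ref{lema:14} to upgrade agreement on $A_n\setminus A_{n-1}$ to agreement on all of $A_n$. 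You instead evaluate the maps $\theta_k$ of Theorem~\ref{pr-theor-2} in closed form on $A_n$, getting $\theta_k(m/2^{n-1})=W(km)/2^{n-1}$ with the period-$2^n$ triangular wave $W$; this single formula replaces Lemmas~\ref{lema:13} and~\ref{corol:5}, and since agreement at one odd point forces $k_1\equiv\pm k_2\pmod{2^n}$ and the formula then gives agreement at \emph{every} point of $A_n$ at once, the analogue of Lemma~\ref{lema:14} is not needed. Your opening bookkeeping (continuable $\iff$ restriction of some $\theta_k$, the constants being discarded by surjectivity) is exactly the identification the paper makes just before the theorem, so that step is sound. One useful byproduct of your parametrization: continuable maps correspond bijectively to the classes $\pm k \bmod 2^n$, equivalently (by items 1--2 of the theorem with $x$ fixed) to the points of $A_n$, of which there are $2^{n-1}+1$; this makes the count in Corollary~\ref{corol:6} easy to re-examine, since the stated value $2^{n-1}$ does not match the number of admissible values of $y$.
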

\begin{corollary*}[Corollary~\ref{corol:6}] For every
$n\geq 1$ there are $2^{n-1}$ continuable self-semi conjugations
of $f$ of the form~(\ref{eq:50}).
\end{corollary*}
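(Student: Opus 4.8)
The plan is to obtain Corollary~\ref{corol:6} as an immediate consequence of Theorem~\ref{theor-main-dopust} together with the observations recorded just before it. First I would fix, for each $n\ge 1$, a point $x_0\in A_n\setminus A_{n-1}$; such a point exists because, by Proposition~\ref{lema:An}, $A_{n-1}$ is a proper subset of $A_n$ (concretely one may take $x_0=1/2^{\,n-1}$ once $n\ge 2$, and $x_0=1$ when $n=1$). I would then introduce the evaluation assignment $\Phi\colon \xi\mapsto \xi(x_0)$ on the set $\mathcal{C}_n$ of all continuable maps $\xi\colon A_n\to [0,\,1]$. To see that $\Phi$ takes values in $A_n$, recall the remark preceding Theorem~\ref{theor-main-dopust}: every continuable $\xi$ is, by Theorems~\ref{theor:01} and~\ref{pr-theor-2}, the restriction to $A_n$ of a surjective piecewise linear solution of $\xi(f)=f(\xi)$; since $0\in A_n$ and no surjective such solution takes the value $2/3$ at $0$, the alternative ``$\xi\equiv 2/3$ on $A_n$'' cannot occur, so $\xi(0)=0$, and then the definitions of admissible maps and of $A_n$ force $\xi(A_n)\subseteq A_n$. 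Hence $\Phi$ is a well-defined map $\mathcal{C}_n\to A_n$.

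Next I would read off injectivity and surjectivity of $\Phi$ directly from the two parts of Theorem~\ref{theor-main-dopust}. Part~2 is precisely the statement that two continuable maps which agree at one point of $A_n\setminus A_{n-1}$ agree on all of $A_n$; applied at the point $x_0$ this gives that $\Phi(\xi_1)=\Phi(\xi_2)$ implies $\xi_1=\xi_2$, i.e.\ $\Phi$ is injective. Part~1 produces, for the point $x_0$ and for each target value attainable in this way, a continuable $\xi$ realising it at $x_0$, so it identifies the image of $\Phi$ with exactly that set of attainable values. Consequently $|\mathcal{C}_n|$ equals the cardinality of this image, and the corollary reduces to the assertion that this image has $2^{\,n-1}$ elements.

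The argument is otherwise routine, and the only point that really needs care is this last tally: determining exactly which elements of $A_n$ occur as $\xi(x_0)$ for a continuable $\xi$, and checking that there are precisely $2^{\,n-1}$ of them. This is the place where the boundary constraint $\xi(0)=0$ and the explicit shape of the surjective solutions $\xi_k$ of $\xi(f)=f(\xi)$ on the dyadic grid $A_n$ have to be combined carefully; it is exactly the content delivered by part~1 of Theorem~\ref{theor-main-dopust}, so no genuinely new idea beyond that theorem enters the proof of the corollary.
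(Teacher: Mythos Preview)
Your approach is essentially the same as the paper's: fix a point $x_0\in A_n\setminus A_{n-1}$, use part~2 of Theorem~\ref{theor-main-dopust} to get injectivity of the evaluation map $\xi\mapsto\xi(x_0)$, and use part~1 to identify its image. The paper's proof is a two-sentence version of exactly this argument, so there is no substantive difference in strategy.
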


\newpage
\section{Introduction}\label{sect:Vstup}

\subsection{The main definitions}

As integer numbers in arithmetics is a mathematical tool of the
description of some objects, the dynamical system in dynamical
systems theory is a tool of the description of those objects which
are changed dependently on time. As an example of dynamical
systems is an experiment in Newtonian mechanics. When a system of
points is not under acting of any forces and velocities and
coordinates of all the points are known, then they (i.e.
coordinates and velocities of all points) can be found at any
other time. Dynamical systems have the following property. The
further extension of events in dynamical system is dependent only
on its state and independent on external factors, for example is
independent on previous states of a system or ways with which the
system has come to its now state. The properties mentioned above
can be easily be formalized (translated into mathematical
terminology), which let give the strict definition. Let $X$ be a
set of possible states of a system (i.e. in Newtonian mechanics it
is a set of arrays of the length 6 with triples of point
coordinates triples of its velocities coordinates). Let $T$ be a
set of possible values of a variable ``time'' (for example
$\mathbb{R}^+$) but in any way $T$ is a semi group i.e.  its
elements can be added one to another. Consider a maps
\begin{equation} f:\, X\times T \rightarrow X,
\end{equation} whose acting is that an state $f(x,\,
t)$ of a system is that its state, where is comes after the time
$t$ if $x$ is its the former state. Maps $f$ is obviously have
some obvious properties: 1) For every $x\in X$ the equality
$f(x,\, 0) =x$ hold i.e. during the time $0$ the state of the the
system does not change. 2) If the system which at the very
beginning (at the time $0$) was at the state $x_0$ and after the
time $t_1$ it appeared itself in the state $x_1 = f(x,\, t_1)$
then after the time $t_2$ it will appear in the state $f(x_1,\,
t_2) = f(f(x_0,\, t_1),\, t_2).$ Nevertheless the sate of the
system after the time $t_1$ does not influence the state in which
it appeared itself after the time $t_1+t_2$ after being in the
state $x_0$  i.e. fact of defining of the state of the system does
not influence its further extension. So, the equality
$$f(x_0,\, t_1+t_2) = f(f(x_0,\, t_1),\, t_2)$$ should hold.

\begin{definition}
\textbf{Dynamical system} is a triple $(X, T, f)$, where $X$ is a
set, $T$ is an additive semigroup (i.e. $0 \in T, \forall s,t \in
T, s + t = t + s \in T$) and $f$ is a maps which acts $f : X
\times T \to X$ such that the following properties hold:

1) $f(x,0) \equiv x,\ \forall x \in X$;

2) $f(f(x,s),t) = f(x,s+t),\ \forall x \in X, \forall s,t \in T$.
\end{definition}

\begin{definition}
Consider a point $x_0\in X$ of a phase space. The set $A\subset X$
is called an \textbf{orbit} of a point $x_0$ if for any point
$a\in A$ there exists a time $t\in T$ such that after this time
the point $x_0$ goes into the point $a$, i.e. the equality
$a=f(x_0,\, t)$ holds.\index{Orbit}.
\end{definition}

\begin{definition}
Consider a point $x_0\in X$ of a phase space. A function $N:\, T
\rightarrow X$ is called a \textbf{trajectory} of the point $x_0$
if $N(t) = f(x,\, t)$\index{Trajectory}.
\end{definition}

Note that in the case if dynamical system is a cascade (i.e. if
the time may be considered as a natural systems set) then
trajectory of a point $x_0$ is s sequence which is defined with
the recurrent equality $x_{k+1} = f(x_k)$ and the orbit of a point
$x_0$ is the value set of its trajectory.

\begin{definition}
A function $t \to f(x,t)$ is called a \textbf{motion} of a point
$x$, i.e. the motion of a point $x$ is called a function which for
any value of time corresponds the position of $x$ at this time.
\end{definition}

The impotent property of trajectories of phase space points which
appears (or not appears) during considering dynamical systems is
returning of the point into itself or into its neighborhood (in
the case if dynamical system is considered on either metric or
topological phase space).

\begin{definition}
Consider a point $x_0\in X$ of a phase space. This point is called
a \textbf{fixed point} of the dynamical system $(X,\, T,\, f)$ if
for arbitrary $t\in T$ the equality $f(x_0,\, t) = x_0$
holds.\index{Fixed point}.
\end{definition}

\begin{definition}
Consider a point $x_0\in X$ of a phase space. This point is called
a \textbf{periodic point} of the dynamical system $(X,\, T,\, f)$
\textbf{with period} $t_0$ if for arbitrary $t\in T$ the equality
$f(x_0,\, t_0) = x_0$ holds and for any $t<t_0$ the equality
$f(x_0,\, t) = x_0$ does not hold.\index{Periodic
point}\index{Point period}
\end{definition}

To make possible the defining the dynamical system it is necessary
to demand that elements of the set $T$ be comparable i.e. that
inequality $t<t_0$ make sense.

\begin{definition}\label{def:iteration}
Consider the maps $f$ of some set $A$ into itself. The $n$-th
\textbf{iteration} of $f$ for arbitrary non negative integer $n$
is called the maps $f^{n}(x)$ which is defined as follows:
$f^0(x)=x$, $f^k(k) = f(f^{k-1}(x))$ for arbitrary $k\in
\mathbb{N}.$\index{Maps iteration}
\end{definition}

\begin{definition}
For a set $A$ let $f:\, A\rightarrow A$ be a map. For every
$x_0\in A$ the sequence $\{x_n\},\, x\geq 0$ such that
$x_{n+1}=f(x_n)$ for every $n\geq 0$ is called the
\textbf{trajectory} of $x_0$ under the action of $f$.
\end{definition}

\begin{definition}
Let a map $f:\, A\rightarrow A$ be defined on a set $A$. For any
$x_0\in A$ the set $\{x_n\},\, x\in \mathbb{Z}$ such that
$x_{n+1}=f(x_n)$ for any $n\in \mathbb{Z}$ is called the
\textbf{integer trajectory} of $x_0$ under $f$.
\end{definition}

For an arbitrary $n\geq 1$ denote the $n$-th iteration of a maps
$f$ by $f^n$, i.e. $$ f^n(x) = \underbrace{f(f(\ldots (f}_{n\text{
times}}(x))\ldots )).
$$ We will use this notation not only for the maps $f$, but
also for those maps which are denoted in any other way.

\subsection{The notion about topological equivalence}

George Birkgoff is an American mathematician who lived in the
first half of 20-th century. He is one of founders of dynamical
systems theory and he has formulated the final problem of
dynamical systems theory in the following way: ``qualitative
determine of all possible trajectories types and stating the
interconnections between them''(see.~\cite{Birk-1999}, p.~194).

The necessity of considering the equivalent maps naturally implies
from the final problem the dynamical systems theory stated by G.
Birkgoff. For possibility of talking about trajectories types it
is necessary to study ourself to point out those systems which are
equal prom the point of view of those questions which are stated
in the dynamical systems theory and these questions after all
technical moments need to determine all trajectory types and to
state the interconnections between them.

For the case of dynamical systems, which are defined on the
interval $I=[a,\, b]$ the topological conjugation may by
considered as the changing of the scope on $I$. Let a map $g$ be
topologically conjugated to $f:\, I\rightarrow I$.

Assume, that we have a spring, whose length equals to the length
of $I$ and this spring is graduated, i.e. the numbers, which
correspond to $I$ are written uniformly on the it. Then ends of
the spring are fixed, and some its parts are stretched and some
are griped without knots, kinks and self intersections, i.e. the
spring is sketched and griped without taking away from the line,
where it was at the very beginning.

If the sketched and griped spring is graduated again, then we
obtain the monotone continuous increasing map $h:\, I\rightarrow
I$. This $h$ defines the topological conjugation of then maps $f$
and $g$. For every ``old'' point $x\in I$ the maps $h$ sets the
``new'' point $h(x)\in I$, which corresponds to the new
graduation.

Consider the maps $f$ as not interval $I$ into itself maps, but
the spring into itself maps. Then different graduations of the
spring give different interval $I\rightarrow I$ maps, and in these
terms, our new graduation defines the maps $g$, which is
conjugated to $f$ via $h$.

The notion of topological conjugation can be introduced not only
to interval maps, but for every dynamical systems. Assume that $h$
is a homeomorphism (i.e. continuous and invertible). Nevertheless,
interval homeomorphism can be as increasing, as decreasing.
Returning to spring as an interpretation of topological
conjugation of the interval maps, we should let to change the ends
of the spring.

The exact definition of the topological equivalence is following.
\begin{definition}\label{def:topol-equiv}
A maps $f$ of a set $A$ into itself is called
\textbf{topologically conjugated} to a maps $g$ of a set $B$ into
itself if there exists a homeomorphism $h$ of the set $A$ into the
set $B$ such that the following diagram is commutative.
\begin{equation}\label{eq:42}\begin{CD}
A @>f>> & A\\
@V_hVV& @VV_hV\\
B @>g>>& B
\end{CD}\end{equation}\end{definition}

Notice, that topological conjugation is an equivalence relation,
whence topologically conjugated maps are also called
\textbf{topologically equivalent}.
\begin{definition}
A maps $f$ of a set $A$ into itself is called
\textbf{topologically semi conjugated} (or \textbf{topologically
semi equivalent}) to a maps $g$ of a set $B$ into itself if there
exists a continuous surjective $h:\, A\rightarrow B$ such
that~(\ref{eq:42}) is commutative.
\end{definition}

The following theorem holds.

\begin{theorem}
For using the notations of the definition~\ref{def:topol-equiv}
the maps $f$ if topologically equivalent to the maps $g$. Let
$a\in A$ be a fixed point of $f$. The $h(a)$ is a fixed point of
$g$.
\end{theorem}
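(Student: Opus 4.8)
The plan is to read the conclusion straight off the commutativity of diagram~(\ref{eq:42}), so the ``proof'' is really just an unwinding of definitions. First I would record what the hypothesis gives: since $f$ is topologically equivalent to $g$ via the homeomorphism $h\colon A\to B$, commutativity of~(\ref{eq:42}) means $h\circ f = g\circ h$, i.e. the functional identity $h(f(x)) = g(h(x))$ holds for every $x\in A$.

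Next I would specialize this identity at the fixed point, taking $x = a$. Using $f(a)=a$ on the left side yields
\[
h(a) = h(f(a)) = g(h(a)),
\]
which is precisely the statement that $h(a)\in B$ is a fixed point of $g$. That completes the argument.

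There is no real obstacle here: the only care needed is keeping the direction of the diagram straight (the identity is $h\circ f = g\circ h$, not $f\circ h = h\circ g$), and noting that neither continuity nor invertibility of $h$ is used for this implication — those hypotheses matter only if one wants the converse, namely that every fixed point of $g$ arises as $h$ of a fixed point of $f$, which follows by running the same computation with $h^{-1}$, $g$, $f$ in place of $h$, $f$, $g$. One could also remark in passing that the identical one-line computation, applied to the iterate $f^{n}$ (for which $h\circ f^{n} = g^{n}\circ h$ by induction), shows more generally that $h$ carries periodic points of $f$ of period $n$ to periodic points of $g$ of period dividing $n$, with equality of minimal periods because $h$ is a bijection.
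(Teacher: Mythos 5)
Your proposal is correct and coincides with the paper's argument: both simply evaluate the commutativity relation $h(f(x))=g(h(x))$ at $x=a$ and use $f(a)=a$ to conclude $g(h(a))=h(a)$. The extra remarks about the converse and periodic points are accurate but not needed for this statement.
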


\begin{proof}
The commutativity of the diagram from the
definition~\ref{def:topol-equiv} yields that $g(h(a)) = h(f(a))$.
Taking into attention that $a$ is a foxed point of $f$ obtain that
$$ g(h(a)) = h(a),
$$ which finishes the proof.
\end{proof}

In the same manner we prove the following theorem.

\begin{theorem}
For using the notations of the definition~\ref{def:topol-equiv}
the maps $f$ is topologically equivalent to the maps $g$. Let
$a\in A$ be a periodic point of the maps $f$ with period $n$. Then
$h(a)$ is a periodic point of the maps $g$ with period $n$.
\end{theorem}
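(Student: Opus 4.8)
The plan is to reduce the periodicity of $h(a)$ under $g$ to the periodicity of $a$ under $f$, by first upgrading the commutativity of the square~(\ref{eq:42}) to the commutativity of all of its iterates. First I would prove by induction on $k\geq 0$ that $g^{k}\circ h = h\circ f^{k}$ as maps $A\to B$. The base case $k=0$ is immediate, and the inductive step uses only the hypothesis $g\circ h = h\circ f$ and the inductive hypothesis: $g^{k+1}\circ h = g\circ(g^{k}\circ h) = g\circ(h\circ f^{k}) = (g\circ h)\circ f^{k} = h\circ f\circ f^{k} = h\circ f^{k+1}$. This conjugation-of-iterates identity is the one genuinely load-bearing step; everything else is bookkeeping.

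Next I would evaluate this identity at the point $a$ with $k=n$. Since $a$ is periodic of period $n$ we have $f^{n}(a)=a$, hence $g^{n}(h(a)) = h(f^{n}(a)) = h(a)$, so $h(a)$ is a periodic point of $g$ whose (minimal) period is some $t$ with $1\leq t\leq n$ dividing $n$. It remains to rule out $t<n$.

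The only subtlety — such as it is — is to use the injectivity of $h$, which holds because $h$ is a homeomorphism. Suppose for contradiction that $g^{m}(h(a)) = h(a)$ for some $m$ with $1\leq m<n$. Then $h(f^{m}(a)) = g^{m}(h(a)) = h(a)$, and injectivity of $h$ forces $f^{m}(a)=a$, contradicting the minimality of the period $n$ of $a$ under $f$. Hence $n$ is the least positive integer with $g^{n}(h(a))=h(a)$, i.e. $h(a)$ is a periodic point of $g$ with period exactly $n$, which completes the proof. (Running the same argument with $h^{-1}$ in place of $h$ shows moreover that $a\mapsto h(a)$ restricts to a bijection between the period-$n$ points of $f$ and those of $g$, though this is not required for the statement.)
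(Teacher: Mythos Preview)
Your proof is correct and follows the same approach the paper intends: the paper merely says this theorem is proved ``in the same manner'' as the preceding fixed-point theorem, and later records separately the iterate identity $g^{n}\circ h = h\circ f^{n}$ via the same diagram-extension you establish by induction. If anything, you are more careful than the paper, since you explicitly use injectivity of $h$ to verify that the period of $h(a)$ is exactly $n$ rather than a proper divisor, a point the paper's terse ``in the same manner'' leaves implicit.
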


\begin{theorem}
For using the notations of the definition~\ref{def:topol-equiv}
let the maps $f$ be topologically equivalent to the maps $h$. For
each point $a\in A$ the equality of sets $$ h(f^{-1}(a)) =
g^{-1}(h(a))
$$ holds.
\end{theorem}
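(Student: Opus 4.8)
The plan is to prove the two set inclusions $h(f^{-1}(a)) \subseteq g^{-1}(h(a))$ and $g^{-1}(h(a)) \subseteq h(f^{-1}(a))$ separately, using only the commutativity of the square~(\ref{eq:42}), i.e. the identity $h\circ f = g\circ h$, together with the fact that $h$ is a bijection. (Here $h$ denotes the conjugating homeomorphism, so the hypothesis is that $f$ and $g$ are topologically equivalent via $h$.)

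For the inclusion $h(f^{-1}(a)) \subseteq g^{-1}(h(a))$ I would take an arbitrary $x\in f^{-1}(a)$, so that $f(x)=a$. Applying $h$ and using commutativity gives $g(h(x)) = h(f(x)) = h(a)$, hence $h(x)\in g^{-1}(h(a))$. Since $x$ was an arbitrary element of $f^{-1}(a)$, this yields the inclusion. This direction uses nothing about $h$ beyond that it is a well-defined map making the diagram commute.

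For the reverse inclusion $g^{-1}(h(a)) \subseteq h(f^{-1}(a))$ I would take $y\in g^{-1}(h(a))$, i.e. $g(y)=h(a)$. Surjectivity of $h$ provides some $x\in A$ with $h(x)=y$, and then commutativity gives $h(f(x)) = g(h(x)) = g(y) = h(a)$. Now injectivity of $h$ forces $f(x)=a$, so $x\in f^{-1}(a)$ and therefore $y=h(x)\in h(f^{-1}(a))$. Combining the two inclusions gives the claimed equality of sets.

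The only step needing any care — the "main obstacle", modest as it is — is that the reverse inclusion genuinely requires $h$ to be both surjective (to pull $y$ back to a point $x$ of $A$) and injective (to deduce $f(x)=a$ from $h(f(x))=h(a)$). If $h$ were only a semiconjugacy (continuous and surjective but not invertible) the argument would still give $h(f^{-1}(a))\subseteq g^{-1}(h(a))$ but not the reverse inclusion; so it is worth noting explicitly in the write-up where the homeomorphism hypothesis is used.
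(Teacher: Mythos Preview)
Your proof is correct and follows essentially the same two-inclusion argument as the paper: both take an arbitrary element on each side and use the commutativity $h\circ f = g\circ h$ together with the invertibility of $h$. Your write-up is in fact a bit more explicit than the paper's in separating the roles of surjectivity and injectivity in the reverse inclusion, and the closing remark on semiconjugacies is a nice addition.
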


\begin{proof}
Consider an arbitrary point $x\in f^{-1}(a)$. The conditions of
theorem yield that the following diagram is commutative.
$$\begin{CD}
x @>f>> & f(a)\\
@V_hVV& @VV_hV\\
h(x) @>g>>& h(a)
\end{CD}$$

From the commutativity of the diagram obtain that $h(x)\in
g^{-1}(h(a))$ and this inclusion gives the following sets
inclusion$$ h(f^{-1}(a)) \subseteq g^{-1}(h(a)).
$$

Now consider an arbitrary point $y\in g^{-1}(h(a))$. Note that as
$h$ is a homeomorphism then there exists a maps $h^{-1}$ and so
that point $h^{-1}(y)$ is determined. With using the point
$h^{-1}(y)$ get that following diagram is commutative.
$$\begin{CD}
h^{-1}(y) @>f>> & f(a)\\
@V_hVV& @VV_hV\\
y @>g>>& h(a).
\end{CD}$$

The commutativity of the diagram yields that $y\in h(f^{-1}(a))$
and this inclusion gives the sets inclusion
$$ h(f^{-1}(a)) \supseteq g^{-1}(h(a)).
$$
The last finishes the proof.
\end{proof}

\begin{theorem}
For using the notations of the definition~\ref{def:topol-equiv}
the maps $f$ be topologically equivalent to $g$. Then for every
$n\geq 2$ the diagram
$$\begin{CD}
A @>f^n>> & A\\
@V_hVV& @VV_hV\\
B @>g^n>>& B
\end{CD}$$ commutes. Here the power signs mean the correspond
iteration of the maps.
\end{theorem}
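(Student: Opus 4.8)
The plan is to prove the claim by a straightforward induction on $n$, taking the commutativity of the diagram~(\ref{eq:42}) as the base case. Recall from Definition~\ref{def:topol-equiv} that the hypothesis ``$f$ is topologically equivalent to $g$'' means precisely that $h$ is a homeomorphism $A\to B$ with $h\circ f = g\circ h$; this is exactly the statement that the requested diagram commutes for $n=1$. So there is nothing to prove at the base of the induction, and in fact the cases $n\geq 2$ are what the theorem asks for.

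For the inductive step I would assume $h\circ f^{k}=g^{k}\circ h$ for some $k\geq 1$ and compute, invoking Definition~\ref{def:iteration} (which yields $f^{k+1}=f\circ f^{k}$ and $g^{k+1}=g\circ g^{k}$) together with the associativity of composition of maps:
\[
h\circ f^{k+1} \;=\; h\circ(f\circ f^{k}) \;=\; (h\circ f)\circ f^{k} \;=\; (g\circ h)\circ f^{k} \;=\; g\circ(h\circ f^{k}) \;=\; g\circ(g^{k}\circ h) \;=\; g^{k+1}\circ h .
\]
Thus the diagram with $f^{k+1}$ on top and $g^{k+1}$ on the bottom commutes, and by induction the corresponding diagram commutes for every $n\geq 1$, hence in particular for every $n\geq 2$. (Equivalently, one may phrase this ``diagrammatically'': vertically stacking $n$ copies of the commuting square~(\ref{eq:42}) gives a commuting ladder whose outer rectangle is the desired square, since pasting commuting squares yields a commuting rectangle.)

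I do not anticipate any real obstacle: the argument is a one-line calculation wrapped in an induction. The only points worth a word of care are (i) to use the exact recursive meaning of the iteration notation $f^{n}$ fixed in Definition~\ref{def:iteration}, so that $f\circ f^{k}$ and $f^{k}\circ f$ both legitimately equal $f^{k+1}$, and (ii) to note that, although the statement is formulated with $h$ a homeomorphism, the invertibility of $h$ is never used here — only the intertwining identity $h\circ f=g\circ h$ enters — so the same conclusion would hold for a topological semiconjugacy as well.
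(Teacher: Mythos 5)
Your proof is correct and is essentially the paper's argument: the paper extends the square~(\ref{eq:42}) into a ladder of $n$ pasted copies and reads the identity $h\circ f^{n}=g^{n}\circ h$ off the outer edges of the resulting rectangle, which is exactly the diagram-pasting remark you make, and your induction is just the explicit algebraic form of that pasting. Your added observations (that invertibility of $h$ is never used, and that the recursive definition of iteration is what justifies $f^{k+1}=f\circ f^{k}$) are sound and go slightly beyond what the paper writes.
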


\begin{proof}
Commutative diagram~(\ref{eq:42}) can be continued to the right as
follows
$$
\begin{CD}
A @>f >> & A @>f >> \ldots @>f >> A\\
@V_{h} VV& @VV_{h}V && @VV_{h}V \\
B @>g>> & B @>g >> \ldots @>g >> B,\\
\end{CD}
$$ and the obtained diagram is also commutative. If necessary
functional equation is obtained is consider the passes from the
left top angle by external sides of the rectangle.
\end{proof}

The topological equivalence of interval maps can be also
considered as their graph transformation.

Let it is known that for a maps $f$ and for a point $x_1\in I$ the
equality $f(x_1)=x_2\in I$ holds. From the graph definition this
means that the point $(x_1,\, x_2)$ belongs to the graph of the
function $f$. The commutative diagram from the
definition~\ref{def:topol-equiv} yields the equality $g(h(x_1)) =
h(x_2)$.
\begin{definition}
Call the \textbf{graduation} of any given interval $AB$ of a line
call an one to one correspondence between its points and points of
some real numbers line interval $[a,\, b]$ such that number $a$
corresponds to point $A$, number $b$ corresponds to point $B$ and
for every triple of points $C_1,\, C_2,\, C_3$ such that $C_2$ is
between $C_1$ and $C_3$ corresponds a triple of real numbers such
that those which corresponds $C_3$ id also between images of $C_1$
and $C_3$.
\end{definition}

\begin{definition}
If a graduation of an interval $AB$ has the property that for
subintervals of $AB$ which have equal lengthes correspond the
pairs of numbers with the same differences then call such
graduation a \textbf{continuous graduation}.
\end{definition}

Consider the graph of the maps $f$ which is defined on the
interval $I$ and consider this graph to be plotted in uniformly
graduated Cartesian coordinate plane. The note that we can
understand the in the following way the statement that a point of
the graph of $f$ with coordinates $(x_1,\, x_2)$ corresponds to
some points of the graph of maps $g$ with coordinates $g(h(x_1)) =
h(x_2)$. The graph of the maps $f$ (i.e. set o plane points) we
will consider as a graph of the maps $g$ but with usage of the
following non uniform graduation. For the points of uniformly
graduated interval $I$ of $x$-axis there is a correspondence
$x\leftrightarrow h(x)$ and for uniformly graduated interval $I$
of the $y$-axis there is correspondence which is defined with the
rule $y\leftrightarrow h(y)$. With the use of such graduation that
set of points of the plane which was the graph of the maps $f$
with uniform graduation will become the graph of the maps $g$
under the described new graduation.

\newpage

\subsection{Examples of topological equivalent maps}\label{subcst-1-5}

\begin{example}\label{exmpl-pobud-equiv}
Consider a maps $f$ of an $I=[0,\, 1]$ which is determined with
the formula
$$ f(x) = \left\{\begin{array}{ll}  2x,& x\leqslant 1/2;\\
 2-2x,&
x>1/2.
\end{array}\right.
$$ and find a new maps $g$ which is topologically equivalent to
it and is defined with the following maps $h$
$$ h(x) = \left\{\begin{array}{ll}  1,5x& x\leqslant 1/2;\\
 0,5x+0,5&
x>1/2.
\end{array}\right.
$$ from the definition of topological equivalence.
In this case during constructing the maps $g$ we will use
transformation of graphs.

The graph of the maps $f$ is given at the picture~\ref{fig-1}a).
\end{example}

\begin{proof}[Core of the example:] Plot the graph of
the maps $f$ and also plot the grid which will describe the
uniform graduation (see pict.~\ref{fig-1}b)).

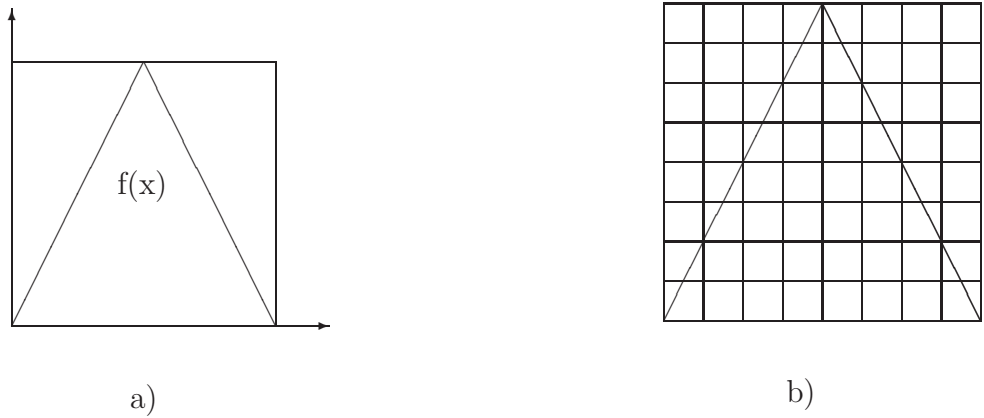
\begin{figure}[htbp]
\begin{minipage}[h]{0.49\linewidth}
\vskip 5mm \begin{center}
\begin{picture}(100,125)
\put(100,0){\line(0,1){100}} \put(0,100){\line(1,0){100}}
\put(0,0){\vector(1,0){120}} \put(0,0){\vector(0,1){120}}
\put(0,0){\line(1,2){50}} \put(50,100){\line(1,-2){50}}
\put(40,50){f(x)}
\end{picture}
\end{center}
\centerline{a)}
\end{minipage}
\hfill
\begin{minipage}[h]{0.49\linewidth}
\begin{center}
\begin{picture}(100,135)
\put(120,0){\line(0,1){120}} \put(0,120){\line(1,0){120}}
\put(0,0){\line(1,0){120}} \put(0,0){\line(0,1){120}}
\put(0,0){\line(1,2){60}} \put(60,120){\line(1,-2){60}}
\put(60,120){\line(1,-2){60}}

\put(15,0){\line(0,1){120}} \put(30,0){\line(0,1){120}}
\put(45,0){\line(0,1){120}} \put(60,0){\line(0,1){120}}
\put(75,0){\line(0,1){120}} \put(90,0){\line(0,1){120}}
\put(105,0){\line(0,1){120}}

\put(0,15){\line(1,0){120}} \put(0,30){\line(1,0){120}}
\put(0,45){\line(1,0){120}} \put(0,60){\line(1,0){120}}
\put(0,75){\line(1,0){120}} \put(0,90){\line(1,0){120}}
\put(0,105){\line(1,0){120}}
\end{picture}
\end{center}
\centerline{ b)}
\end{minipage}
\caption{Graphs of $f$} \label{fig-1}
\end{figure}

Now with out changing the graph (with out changing the set of
points which is the graph) we will change the graduation. Use as
at former plot 7 vertical and 7 horizontal lines bet plot them not
uniformly but as it is shown on the plot.

For example as $h(0,5) = 0,75$ then point 0.75 of $x$-axis
corresponds to the middle of the real line segment $[0,\, 1]$.
After this both left and right part of the interval will be
graduated uniformly each. Plot vertical lines which corresponds to
graduated values $\frac{1}{8},\ldots,\, \frac{7}{8}$. Naturally
that in this case right hand part of the $x$-axis will be divided
into 2 parts with one additional vertical line and left hand part
will be divided to 6 parts with five additional vertical lines.
\begin{figure}[htbp]
\begin{minipage}[h]{0.49\linewidth}
\begin{center}
\begin{picture}(100,135)
\put(120,0){\line(0,1){120}} \put(0,120){\line(1,0){120}}
\put(0,0){\line(1,0){120}} \put(0,0){\line(0,1){120}}
\put(0,0){\line(1,2){60}} \put(60,120){\line(1,-2){60}}
\put(60,120){\line(1,-2){60}}

\put(10,0){\line(0,1){120}} \put(20,0){\line(0,1){120}}
\put(30,0){\line(0,1){120}} \put(40,0){\line(0,1){120}}
\put(50,0){\line(0,1){120}} \put(60,0){\line(0,1){120}}
\put(90,0){\line(0,1){120}}

\put(7,-15){$\frac{1}{8}$} \put(17,-15){$\frac{2}{8}$}
\put(27,-15){$\frac{3}{8}$} \put(37,-15){$\frac{1}{2}$}
\put(47,-15){$\frac{5}{8}$}

\put(57,-15){$\frac{3}{4}$} \put(87,-15){$\frac{7}{8}$}

\put(0,10){\line(1,0){120}} \put(0,20){\line(1,0){120}}
\put(0,30){\line(1,0){120}} \put(0,40){\line(1,0){120}}
\put(0,50){\line(1,0){120}} \put(0,60){\line(1,0){120}}
\put(0,90){\line(1,0){120}}
\end{picture}
\end{center}
$ $\\
\centerline{a)}
\end{minipage}
\hfill
\begin{minipage}[h]{0.49\linewidth}
\begin{center}
\begin{picture}(100,135)
\put(120,0){\line(0,1){120}} \put(0,120){\line(1,0){120}}
\put(0,0){\line(1,0){120}} \put(0,0){\line(0,1){120}}
\put(0,0){\line(1,2){60}}

\qbezier(60,120)(70,60)(80,0) \put(80,0){\line(0,1){120}}

\put(10,0){\line(0,1){120}} \put(20,0){\line(0,1){120}}
\put(30,0){\line(0,1){120}} \put(40,0){\line(0,1){120}}
\put(50,0){\line(0,1){120}} \put(60,0){\line(0,1){120}}
\put(70,0){\line(0,1){120}}

\put(7,-15){$\frac{1}{8}$} \put(17,-15){$\frac{2}{8}$}
\put(27,-15){$\frac{3}{8}$} \put(37,-15){$\frac{1}{2}$}
\put(47,-15){$\frac{5}{8}$} \put(57,-15){$\frac{3}{4}$}
\put(67,-15){$\frac{7}{8}$} \put(77,-15){$1$}

\put(-8,6.5){$\frac{1}{8}$} \put(-18,16.5){$\frac{2}{8}$}
\put(-8,26.5){$\frac{3}{8}$} \put(-18,36.5){$\frac{4}{8}$}
\put(-8,46.5){$\frac{5}{8}$} \put(-18,56.5){$\frac{6}{8}$}
\put(-8,86.5){$\frac{7}{8}$}

\put(0,10){\line(1,0){120}} \put(-10,20){\line(1,0){130}}
\put(0,30){\line(1,0){120}} \put(-10,40){\line(1,0){130}}
\put(0,50){\line(1,0){120}} \put(-10,60){\line(1,0){130}}
\put(0,90){\line(1,0){120}}

\put(30,60){\circle*{4}} \put(70,60){\circle*{4}}
\put(60,120){\circle*{4}} \put(21,62){A} \put(71,62){C}
\put(55,125){B}
\end{picture}
\end{center}
 $ $\\ \centerline{ b)}
\end{minipage}
\caption{Construction of the conjugated map}\label{fig-2}
\end{figure}
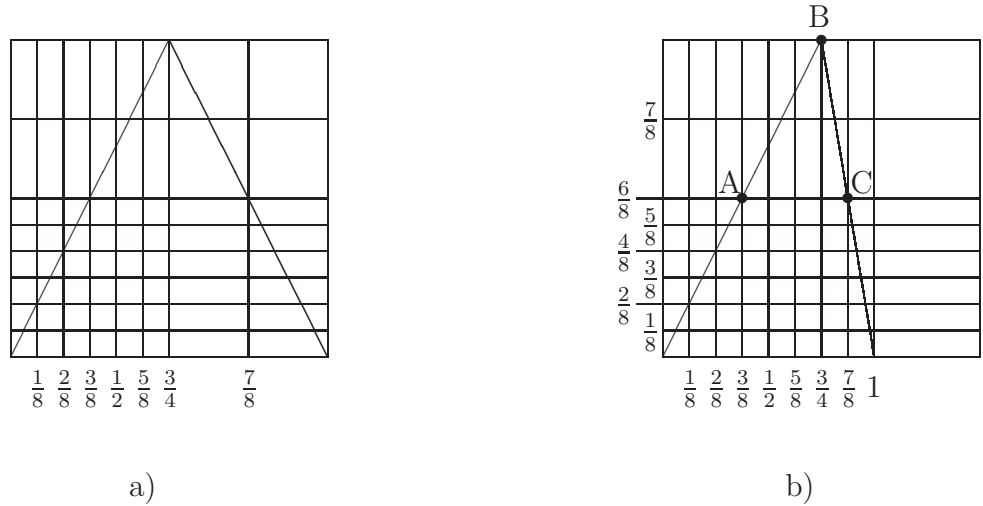

For obtaining the natural form of the graph of the maps $g$
squeeze the obtained picture (i.e. squeeze the picture with
considering it as a geometrical figure which is composed with
vertical lines) to make the graduation uniform. The new form of
the plot is presented of the picture (see fig.~\ref{fig-2}a). Now
repeat the same with the $y$-axis and obtain the graph of the maps
$g$. For doing this just for convenience note points $A,\, B,\, C$
on the picture. Points $A$ and $C$ stay as they are but points $B$
will move vertically down without changing its horizontal position
(see fig.~\ref{fig-2}b).
\begin{figure}[htbp]
\begin{minipage}[h]{0.49\linewidth}
\begin{center}
\begin{picture}(100,135)
\put(120,0){\line(0,1){120}} \put(0,120){\line(1,0){120}}
\put(0,0){\line(1,0){120}} \put(0,0){\line(0,1){120}}
\put(0,0){\line(1,2){30}}

\qbezier(70,60)(75,30)(80,0) \put(80,0){\line(0,1){120}}

\put(30,60){\line(3,2){30}} \put(60,80){\line(1,-2){10}}

\put(10,0){\line(0,1){120}} \put(20,0){\line(0,1){120}}
\put(30,0){\line(0,1){120}} \put(40,0){\line(0,1){120}}
\put(50,0){\line(0,1){120}} \put(60,0){\line(0,1){120}}
\put(70,0){\line(0,1){120}}

\put(7,-15){$\frac{1}{8}$} \put(17,-15){$\frac{2}{8}$}
\put(27,-15){$\frac{3}{8}$} \put(37,-15){$\frac{1}{2}$}
\put(47,-15){$\frac{5}{8}$} \put(57,-15){$\frac{3}{4}$}
\put(67,-15){$\frac{7}{8}$} \put(77,-15){$1$}

\put(-8,6.5){$\frac{1}{8}$} \put(-18,16.5){$\frac{2}{8}$}
\put(-8,26.5){$\frac{3}{8}$} \put(-18,36.5){$\frac{4}{8}$}
\put(-8,46.5){$\frac{5}{8}$} \put(-18,56.5){$\frac{6}{8}$}
\put(-8,66.5){$\frac{7}{8}$} \put(-18,76.5){$1$}

\put(0,10){\line(1,0){120}} \put(-10,20){\line(1,0){130}}
\put(0,30){\line(1,0){120}} \put(-10,40){\line(1,0){130}}
\put(0,50){\line(1,0){120}} \put(-10,60){\line(1,0){130}}
\put(0,70){\line(1,0){120}} \put(-10,80){\line(1,0){130}}

\put(30,60){\circle*{4}} \put(70,60){\circle*{4}}
\put(60,80){\circle*{4}}

\put(21,62){A} \put(71,62){C} \put(51,83){B}

\end{picture}
\end{center}
 $ $\\
\centerline{a)}
\end{minipage}
\hfill
\begin{minipage}[h]{0.49\linewidth}
\begin{center}
\begin{picture}(100,125)
\put(100,0){\line(0,1){100}} \put(0,100){\line(1,0){100}}
\put(0,0){\line(1,0){120}} \put(0,0){\line(0,1){120}}
\put(0,0){\line(1,1){100}}

\put(0,0){\circle*{4}} \put(37.5,75){\circle*{4}}
\put(75,100){\circle*{4}} \put(87.5,75){\circle*{4}}
\put(100,0){\circle*{4}}

\put(0,0){\line(1,2){37.5}} \put(37.5,75){\line(3,2){37.5}}
\put(75,100){\line(1,-2){12.5}}

\qbezier(87.5,75)(93.75,37.5)(100,0)

\end{picture}
\end{center}
 $ $\\
\centerline{ b)}
\end{minipage}
\caption{Construction of the conjugated map} \label{fig-3}
\end{figure}
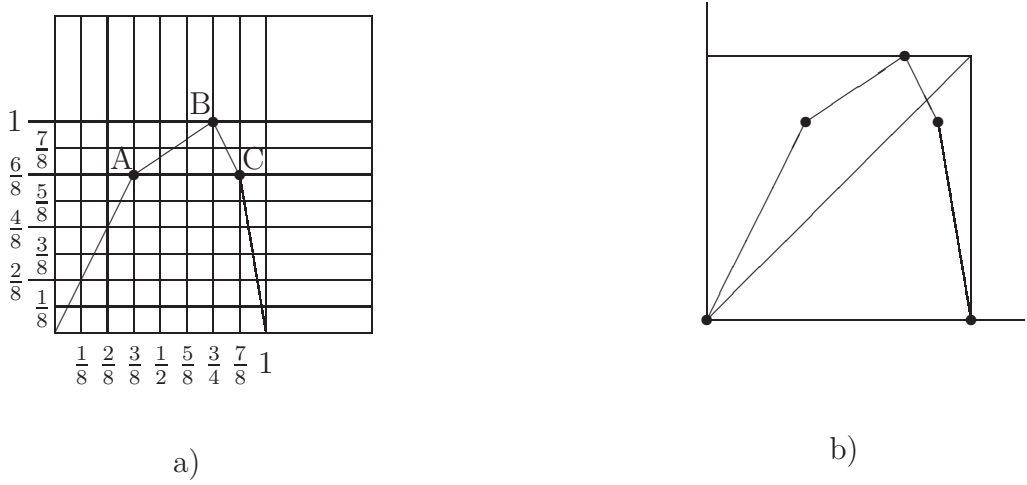

The graph which is obtained in such a way is the graph of maps
$g$.\end{proof}

\begin{example}\label{exmpl-pobud-analit}
Consider the maps $f$ and $h$ such as in the
example~\ref{exmpl-pobud-equiv}. Construct the maps $g$ which is
topologically equivalent to $f$ but use the definition of
topological equivalence and possibility of an analytical
representation of $f$ and $h$ and take into attention the
commutativity of a diagram from the
definition~\ref{def:topol-equiv}.
\end{example}

The analytical representation of this maps looks as
$$
\begin{array}{ll}
h(x) = \left\{\begin{array}{ll}  1,5x& x\leqslant 1/2;\\
 0,5x+0,5&
x>1/2.
\end{array}\right.& h^{-1}(x) = \left\{\begin{array}{ll}
2/3x& x\leqslant 3/4;\\
 2x -1& x>3/4.
\end{array}\right.
\end{array}
$$

Since for $x<3/4$ the condition $h^{-1}(x)<0,5$ hold then for
these values of $x$ the condition $f(h^{-1}(x)) = 2h^{-1}(x) =
4/3x$ also holds. It is obvious that $f(h^{-1}(x))<1/2$ for
$x<3/8$. For these values of $x$ the equality $h(f(h^{-1}(x))) =
3/2f(h^{-1}(x)) = 2x$ holds. whence the graph of the maps $g$
passes through the points $(0,\, 0)$ and $(3/8,\, 3/4)$. For $x\in
[3/8,\, 3/4)$ then $f(h^{-1}(x)) \geqslant 1/2$ and whence
$h(f(h^{-1}(x))) = 0,5f(h^{-1}(x))+0,5 = 0,5\cdot 4/3x + 0,5 =
2/3x+0,5$. This yields that the graph of the maps $g$ passes
through points $(3/8,\, 3/4)$ and $(3/4,\, 1)$. If $x\geqslant
3/4$, then $h^{-1}(x)=2x+0,5\geqslant 1/2$, whence $f(h^{-1}(x)) =
2-2h^{-1}(x) = 2-2(2x-1) = 4-4x$. For $x<7/8$ the inequality
$4-4x>1/2$ holds whence $h(f(h^{-1}(x))) = 0,5(f(h^{-1}(x))) +0,5
= 0,5(4-4x) +0,5 =-2x +2,5$. So the graph of the maps $g$ passes
through points $(3/4,\, 1)$ and $(7/8,\, 3/4)$. If $x\geqslant
7/8$ then the inequality $4-4x<1/2$ holds whence $h(f(h^{-1}(x)))
= 2(f(h^{-1}(x))) = 2(4-4x) = 8-8x$. So the graph of maps $g$ will
pass through points $(7/8,\, 3/4)$ and $(1,\, 0)$. The graph of
constructed maps $g$ is presented of the plot~\ref{fig-3}. The
calculations which are presented above in this example are a bit
huge and there are a lot of possibilities for technical mistakes
if use of this method. More then this, the presenter method does
not give possibility to catch the mistakes if they would really be
made. Nevertheless the presented example is useful because lets to
pay attention to some features of topological equivalence and we
will discuss them in the proposition~\ref{theor:ksklin-tpleqv}
below.

\begin{proposition}\label{theor:ksklin-tpleqv}
Consider continuous piecewise linear maps $f$ which maps the
interval $[0,\, 1]$ into itself such that $f(0)=0$. Consider also
increasing continuous piecewise linear maps $h$ which maps the
interval $[0,\, 1]$ into itself and defines the topological
equivalence of maps $f$ and $g$. Then $g(0)=0$ and $g'(0)=f'(0)$.
\end{proposition}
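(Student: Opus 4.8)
The plan is to work directly from the relation $g=h\circ f\circ h^{-1}$ supplied by commutativity of the diagram in Definition~\ref{def:topol-equiv}, combined with two elementary observations: an increasing homeomorphism of $[0,\,1]$ onto itself fixes the endpoint $0$, and a continuous piecewise linear function is genuinely linear on a one-sided neighbourhood of any point, in particular of $0$ (this is where the hypothesis ``piecewise linear'' is understood to mean finitely many linear pieces).

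First I would establish $g(0)=0$. Since $h$ is an increasing homeomorphism of $[0,\,1]$, it carries the least element to the least element, so $h(0)=0$ and $h^{-1}(0)=0$; alternatively, $0$ being a fixed point of $f$, the fixed-point theorem proved earlier shows that $h(0)$ is a fixed point of $g$. Either way, $g(0)=h\bigl(f(h^{-1}(0))\bigr)=h(f(0))=h(0)=0$.

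For the statement about derivatives I would localise near $0$ using piecewise linearity. Because $f$ has finitely many pieces of linearity and $f(0)=0$, there is $\delta_1>0$ with $f(x)=f'(0)\,x$ for $x\in[0,\,\delta_1]$. Because $h$ is increasing, piecewise linear and $h(0)=0$, there are $\delta_2>0$ and $a>0$ with $h(x)=ax$ on $[0,\,\delta_2]$, hence $h^{-1}(y)=y/a$ on $[0,\,a\delta_2]$. Then I would pick $\varepsilon>0$ small enough that $y\in[0,\,\varepsilon]$ forces the whole chain $y\mapsto h^{-1}(y)=y/a\le\delta_1\mapsto f(h^{-1}(y))=f'(0)\,y/a\le\delta_2\mapsto h\bigl(f(h^{-1}(y))\bigr)$ to stay inside these linear pieces simultaneously; one may take $\varepsilon=a\min\{\delta_1,\,\delta_2,\,\delta_2/f'(0)\}$, with the convention that the last term is $+\infty$ when $f'(0)=0$. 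For such $y$ one computes $g(y)=h\bigl(f'(0)\,y/a\bigr)=a\cdot f'(0)\,y/a=f'(0)\,y$. Thus $g$ coincides with the linear map $y\mapsto f'(0)\,y$ on a right neighbourhood of $0$, so $g'(0)=f'(0)$.

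The only real work is the bookkeeping in choosing $\varepsilon$ so that all three successive images land in the correct linear segments at once, together with the degenerate case $f'(0)=0$, in which $f\equiv 0$ near $0$ forces $g\equiv 0$ near $0$ and hence $g'(0)=0=f'(0)$ as well; there is no deeper obstacle.
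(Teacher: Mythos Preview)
Your proof is correct and follows essentially the same approach as the paper: both localize near $0$, write $f(x)=ax$ and $h(x)=bx$ on small one-sided neighbourhoods, and compute $g=h\circ f\circ h^{-1}$ directly to obtain $g(x)=ax$. Your version is more carefully written---you explicitly verify $g(0)=0$, track the choice of $\varepsilon$ so that all three maps stay in their linear pieces, and separately treat the degenerate case $f'(0)=0$---whereas the paper's proof is a terse two-line computation that leaves these details implicit.
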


The example~\ref{exmpl-pobud-analit} and correspond huge
calculations illustrates the theorem. Nevertheless it is easy to
prove the proposition in general case.
\begin{proof}
Let in the neighborhood of its fixed points $0$ the maps $f$ is of
the form $f(x)=ax$ and analytical form of the maps $h$ in the
neighborhood of this points is of the form $h(x)=bx$.

Then there is some neighborhood of $0$ such that analytical for of
$h^{-1}$ in it looks as $h^{-1}(x)=1/bx$ whence in the
intersection of these neighborhoods one have
$$g(x) = h(f(h^{-1}(x))) = 1/b\cdot a\cdot bx = ax,
$$ which is necessary.
\end{proof}

\begin{lemma}
If a maps $g_1$ is topologically equivalent to a maps $g_2$ and
maps $g_2$ is topologically equivalent to a maps $g_3$ then the
maps $g_1$ is topologically equivalent to the maps $g_3$.
\end{lemma}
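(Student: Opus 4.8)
The plan is to unwind Definition~\ref{def:topol-equiv} and observe that topological equivalence is transitive simply because the composition of homeomorphisms is a homeomorphism and commutative squares stack. First I would fix notation: say $g_1$ acts on a set $A_1$, $g_2$ on a set $A_2$ and $g_3$ on a set $A_3$. By hypothesis there is a homeomorphism $h_1:\, A_1\rightarrow A_2$ with $h_1\circ g_1 = g_2\circ h_1$ and a homeomorphism $h_2:\, A_2\rightarrow A_3$ with $h_2\circ g_2 = g_3\circ h_2$; these are precisely the commutative diagrams of the form~(\ref{eq:42}) asserted by the two hypothesised equivalences.

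Next I would set $h = h_2\circ h_1:\, A_1\rightarrow A_3$ and verify that $h$ is a homeomorphism: it is a bijection, being a composition of bijections; it is continuous, being a composition of continuous maps; and its inverse $h^{-1} = h_1^{-1}\circ h_2^{-1}$ is continuous for the same reason, where one uses that $h_1$ and $h_2$ are homeomorphisms, so that $h_1^{-1}$ and $h_2^{-1}$ exist and are continuous. Then I would check that the outer square commutes by the chain of equalities
\[
h\circ g_1 = h_2\circ(h_1\circ g_1) = h_2\circ(g_2\circ h_1) = (h_2\circ g_2)\circ h_1 = (g_3\circ h_2)\circ h_1 = g_3\circ h,
\]
using the first conjugacy relation at the second step and the second conjugacy relation at the fourth step. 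By Definition~\ref{def:topol-equiv} this says exactly that $g_1$ is topologically equivalent to $g_3$ via the homeomorphism $h$, which is what is to be proved.

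I do not expect any genuine obstacle: the argument is a purely formal manipulation of commutative diagrams. The only point requiring a moment's care is to invoke the full homeomorphism hypothesis (not merely continuity of $h_1$ and $h_2$), so that the inverse of the composite is again continuous; without this one would recover only transitivity of semiconjugacy. In fact the very same computation, with "homeomorphism" weakened to "continuous surjection", also yields transitivity of topological semiconjugacy as a free by-product, which could be noted in a remark.
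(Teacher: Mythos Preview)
Your proposal is correct and follows essentially the same approach as the paper: stack the two commutative squares and take the composite homeomorphism $h=h_2\circ h_1$ (the paper calls the conjugacies $p_1,p_2$ and simply displays the stacked diagram). If anything, your write-up is more careful than the paper's, since you spell out both the verification that $h$ is a homeomorphism and the chain of equalities $h\circ g_1 = g_3\circ h$, whereas the paper just asserts that the lemma is an immediate corollary of the definition via the stacked diagram.
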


\begin{proof}
This lemma is the corollary of the definition of the topological
equivalence which can be shown with the following commutative
diagram $$
\begin{CD}
[0,\, 1] @>g_1 >> & [0,\, 1]\\
@V_{p_1} VV& @VV_{p_1}V\\
[0,\, 1] @>g_2>> & [0,\, 1]\\
@V_{p_2} VV& @VV_{p_2}V\\
[0,\, 1] @>g_3>> & [0,\, 1]
\end{CD} $$ In this case the proposition of the lemma and
definition of the topological equivalence yield one from another.
\end{proof}

\newpage

\section{The Historical review}\label{Sect:IstorOgl}

In the study of the population theory Piere Verhulst proposed
in~\cite{Verhulst-1} the differential equation
\begin{equation}\label{ogl-eq:15} \frac{Mdp}{pdt} = m-np,
\end{equation} which leads to the difference equation
$$M p_{k+1} = p_k(m-np_k).
$$
It was P.Verhulst, who called the \underline{\emph{logistic
curve}} the graph of the solution of the differential
equation~(\ref{ogl-eq:15}). He explained that the name
``logistic'' came from that finding the value of this curve needs
a lot of mathematical calculations and logists in ancient Greece
were those people, who were doing calculations. Also P.~Verhulst
studied the modelling of population at~\cite{Verhulst}.

Due to Verhulst, the following maps $\widetilde{f}_\lambda:\,
\mathbb{R}\rightarrow \mathbb{R}$, which is dependent on a
parameter $\lambda$ is called the \underline{\emph{logistic map}},
where
\begin{equation}\label{ogl-eq:13}
g_\lambda(x) = \lambda x(1-x).
\end{equation}
Notice, that if $\lambda\in [0,\, 4]$, then $g_\lambda([0,\,
1])\subseteq [0,\, 1]$ and is interesting from the point of
Dynamical Systems Theory point of view for $x\in [0,\, 1]$.

According to~\cite[p. 226]{Kautz}, Birkchoff was the first who
robust chaos  in iterated map in his 1932 paper~\cite{Birkchoff}
``Sur quelques courbes ferm\'ees remarquables''. The Oxford
University mathematicians Theodore Chaundy (1889 - 1971) and Eric
Philips were the first to explore the logistic map as a function
of the growth parameter $\lambda$ (see~\cite{Chaudy}). In 1936
they reported that, in the limit of the large time, $X_i$
approaches $0$ for $0< \lambda< 1$, approaches $1-1/\lambda$ for
$1<\lambda<3$, and ``oscillates finitely'' for $3<\lambda<4$. In
1970-s two groups studied periodical oscillations in this regime:
Nicholas Metropolis (born 1915), Myron Stein and Pual Stein at Los
Alamos National Laboratory and Robert May (later Lord May, born
1936) at Princeton University.

For $0 \leq\lambda \leq 1$, all orbits converge to $0$. For $1 <
\lambda \leq 3$, all orbits starting at $x_0 > 0$ converge to $1 -
1/\lambda$. For $ 3< \lambda \leq 1 + \sqrt{6}$, the orbits
converge to a cycle of period $2$.

For $\lambda > 1 + \sqrt{6}$, the system goes through a whole
sequence of period doubling. Let he values $\lambda_n$ denote the
parameter for which the $n$-th period doubling occurs. Then
$\lambda_n$ obey the law $$ \lim\limits_{n\rightarrow
\infty}\frac{\lambda_n-\lambda_{n-1}}{\lambda_{n+1}-\lambda_n} =
\delta = 4,669\ldots\, ,
$$ where $\delta$
is called the Feigenbaum constant. In 1978,
Feigenbaum~\cite{Feigenbaum-78} (also see~\cite{Feigenbaum-79}) as
well as Coullet and Tresser~\cite{Coullet-Tresser-78}
independently outlined an argument showing that such period
doubling cascades should be observable for a large class of
systems, and that the constant $\delta$ is universal. For
instance, it also appears in the two-dimensional H\'enon map $$
\left\{ \begin{array}{ll} x_{n+1} = 1 -\lambda
x_n^2 +y_n\\
y_{n+1} = bx_n & 0<b<1.
\end{array}\right.
$$

Rigorous proofs of these properties were later worked out by
Collet, Eckmann, Koch, Lanford and others in~\cite{Collet}.

Consider the difference logistic equation in the form
\begin{equation}\label{ogl-eq:2a}x_{n+1} = \lambda x_n(1-x_n).
\end{equation}
M. Ranferi Guti\'errez, M.A. Reyes, and H.C. Rosu
(see~\cite{Ranferi} and~\cite[p. 918]{Wolfram}) say, that nowadays
evident solutions of~(\ref{ogl-eq:2a}) are known only for
$\lambda=-2,\, 2$ and $4$. These solutions can be written in the
form
$$ x_n = \frac{1}{2}\left( 1- v_\lambda(r^n
v_\lambda^{-1}(1-2x_0))\right)
$$ and correspond $g_r$ are as follows:
$$
v_{-2}(x) = 2\cos\left(\frac{1}{3}(\pi -\sqrt{3}x)\right);
$$
$$
v_2(x) = e^x;
$$$$
v_4(x)=\cos x.
$$

The logistic equation has since been applied to a wide range of
phenomena including spread of technological change,
innovations~\cite{Rogers} new product diffusion within
markets~\cite{Bass-1969} diffusion of social change~\cite{Coleman}
and diffusion of epidemics~\cite{Ross}.

We have found at~\cite{Kautz}, that it was John Herschel, who
obtained at first the topological conjugation of logistic and tent
map. R. Kautz write the following:  \emph{As it happens, that
$\lambda = 4$} (i.e. logistic equation $x_{n+1} = 4x_n(1-x_n)$)
\emph{is mathematically especially simple in spite of being
chaotic. Its simplicity first became apparent in the work of the
English mathematician John Herschel (1792 - 1871), the son of
astronomer William Herschel (1738-1822). In 1814, Herschel showed
that i-th iterate of the map can be expressed as
$$ X_i = \frac{1}{2}(1-\cos(2^i\theta)),
$$ where $$
\theta = \cos^{-1}(1-2X_0).
$$}

Nevertheless, studying of his work~\cite{Herschel}, which is given
at the bibliography of correspond section of~\cite{Kautz}, we have
made a conclusion, that it is not so. The only thing, which
W.~Herschel does at this work correspondingly to our interest is
the following.

He considers the iterations of different functions and states the
problem of finding the explicit formula $g^n$ for the $n-th$
iteration  function $g(x)=2x^2-1$. He finds the formula
\begin{equation}\label{ogl-eq:19}
g^n(x) = \frac{1}{2}\left( \left(x + \sqrt{x^2-1} \right)^{2^n} +
\left(x - \sqrt{x^2-1} \right)^{2^n} \right).
\end{equation}

After obtaining such a solution, Herschel solves the functional
equation
$$
\varphi^n(x) = g(x)
$$ for $f(x)=2x^2 -1$. He just plug $\frac{1}{n}$ instead of $n$
into~(\ref{ogl-eq:19}) and get the answer $$ \varphi(x) =
\frac{1}{2}\left( \left(x + \sqrt{x^2-1} \right)^{\sqrt[n]{2}} +
\left(x - \sqrt{x^2-1} \right)^{\sqrt[n]{2}} \right).
$$
After this he writes: ``we may here observe, that any one of of
the $n$ values of $\sqrt[n]{2}$ will equally afford a satisfactory
value of $\varphi(x)$''. Also Herschel repeats his
formula~(\ref{ogl-eq:19}) for iterations of $g$ in 1820
at~\cite[p. 169]{Herschel-2}.

George Bool at his ``A Treatice on the Calculus of finite
differences'' see~\cite[p. 170, ex. 11]{Bool} considers the
difference equation $u_{n+1} - 2u_n^1 + 1 =0$ and solves it as
$u_n = \cos 2^nx$.

C. Babbage was the first, who in fact used the idea of topological
conjugateness. He has paid his attention to the functional
equation \begin{equation}\label{ogl-eq:18} \psi^n(x) = x
\end{equation} and mentioned that for any solution $t$ of this
equation and for any arbitrary function $\varphi$, the function
\begin{equation}\label{ogl-eq:20}
F(x) = \varphi^{-1}(t(\varphi(x))) \end{equation} would also be
solution. He wrote this at his own part, called ``Examples of the
Solutions of Functional Equations'' of the book~\cite{Herschel-2}
by J.F.W. Herschel.

Ritt also mentions this Babbage's work at~\cite{Ritt}  and
cites~\cite{Herschel-2}. Also, Ritt writes there, that it was
Babbage, who has made the first attempt to find the general
solution of~(\ref{ogl-eq:18}). Talking about the general solution
of~(\ref{ogl-eq:18}), Ritt uses as he says ``well known''
periodical transformation $$ w(x) = \frac{\alpha +\beta x}{\gamma
+\delta x},
$$ where $$
\delta = -\, \frac{\beta^2 -2\beta\gamma \cos\frac{2k\pi}{n}
+\gamma^2}{2\alpha\left( 1+\cos\frac{2k\pi}{n}\right)},
$$ $k$ being any integer prime to $n$. With giving
this transformation, Ritt mentions Bool's Book ``Calculus of
finite Differences'', whose the first edition was in 1860. Also
Ritt criticizes the Babbage statement, that for every solution $t$
and $F$ of~(\ref{ogl-eq:18}) there exists an invertible function
$\varphi$ such that
$$F(x) = \varphi^{-1}(w(\varphi(x))).$$
Suppose, says Ritt, that $\varphi(a)=b$ for some $a$ and $b$. Then
for any $n\in \mathbb{N}$ the equality $\varphi(F^n(a))=w^n(b)$.
If some $F^p(a)$ and $F^p(a')$ coincide, then so should $F^p(a)$
and $F^p(a')$, than it may give a contradiction with $\varphi$
being a one to one function. Then Ritt goes Further and say
$\varphi$ being defined on some interval $(a,\, a')$ by a function
$\Phi$. Then the function $w(\Phi(F^{-1}))$ defines $\varphi$ on
$[F(a),\, F(a')]$ and, continuing, $w^k(\Phi((F^{-1})^k))$ defines
$\varphi$ on $[F^k(a),\, F^k(a')]$ for all $k\in \mathbb{N}$.

We see, that the mathematicians of 19-th century paid attention to
the functional equation of the form $t^2(x) = x$ and to its
generalization $t^n(x) = x$. The following theorem is proved
in~\cite[Theor. 2]{Natanson}.

\begin{theorem}\label{theor:Nathan}
If $t:\, [0,\, 1]\rightarrow [0,\, 1]$ is a continuous function
such that $t^p(x) =x$ for all $x$, then $t^2(x)=x$ for all $x$. In
particular, if $p$ is odd, then $t(x)=x$ for all $x$.
\end{theorem}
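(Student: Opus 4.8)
The plan is to use the classical fact that a continuous bijection of an interval onto itself is strictly monotone, and then to treat the strictly increasing and strictly decreasing cases separately. First I would note that $t^p=\mathrm{id}$ forces $t$ to be a bijection of $[0,1]$ (it has the two-sided inverse $t^{p-1}$), and a continuous bijection of $[0,1]$ onto itself is strictly monotone by the intermediate value theorem. Hence $t$ is either strictly increasing or strictly decreasing, and the whole proof reduces to understanding these two alternatives.

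The core ingredient is the following claim: \emph{if $g:[0,1]\to[0,1]$ is continuous, strictly increasing, and $g^p=\mathrm{id}$ for some $p\ge 1$, then $g=\mathrm{id}$}. To prove it, suppose some $x_0$ satisfies $g(x_0)\ne x_0$. If $g(x_0)>x_0$, then applying the increasing map $g$ and arguing by induction gives $g^{n}(x_0)>x_0$ for every $n\ge 1$ (from $g^{n}(x_0)>x_0$ one gets $g^{n+1}(x_0)>g(x_0)>x_0$), and the iterates stay in $[0,1]$ automatically. In particular $g^{p}(x_0)>x_0$, contradicting $g^{p}(x_0)=x_0$; the case $g(x_0)<x_0$ is symmetric. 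So $g=\mathrm{id}$.

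Now if $t$ is strictly increasing, the claim gives $t=\mathrm{id}$, so trivially $t^2=\mathrm{id}$. If $t$ is strictly decreasing, then $t^2$ is continuous and strictly increasing, and $(t^2)^p=t^{2p}=(t^p)^2=\mathrm{id}$; applying the claim to $g=t^2$ yields $t^2=\mathrm{id}$. This settles the first assertion in both cases. For the ``in particular'' part, if $p$ is odd and $t$ were strictly decreasing, then $t^p$ would be a composition of an odd number of strictly decreasing maps, hence strictly decreasing — impossible since $t^p=\mathrm{id}$ is increasing. Thus for odd $p$ the map $t$ is strictly increasing, and the claim gives $t=\mathrm{id}$. (Equivalently, once $t^2=\mathrm{id}$ and $t^p=\mathrm{id}$ are known, in the group of homeomorphisms of $[0,1]$ one gets $t^{\gcd(2,p)}=\mathrm{id}$, which is $t=\mathrm{id}$ when $p$ is odd.)

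I do not expect a genuine obstacle here. The only points that need a word of care are the reduction to strict monotonicity (a standard consequence of continuity plus injectivity on an interval) and the observation that the orbit $g^n(x_0)$ never leaves $[0,1]$, so the inequality $g^p(x_0)>x_0$ is legitimate and directly contradicts $g^p=\mathrm{id}$.
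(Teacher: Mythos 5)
Your proof is correct, but it follows a different route from the one the paper relies on. The paper does not reprove Theorem~\ref{theor:Nathan} at all: it quotes it from Natanson in the historical section, and the only in-paper argument covering this circle of ideas is the proof of the generalization, Theorem~\ref{theor:7} (every continuous $f$ with $f^n=f$ satisfies $f^3=f$), which rests on Lemma~\ref{lema:27}: a periodic point of period $m>2$ forces the existence of points with arbitrarily large orbits, contradicting the uniform bound on orbit cardinalities that $f^n=f$ imposes. That argument is designed for the non-invertible setting, where $f$ need not be injective, and Theorem~\ref{theor:Nathan} then follows by composing $t^3=t$ with the inverse iterate. Your argument instead exploits the invertibility that $t^p=\mathrm{id}$ gives for free: bijectivity plus continuity yields strict monotonicity, the increasing case is killed by the elementary orbit-monotonicity observation, the decreasing case is reduced to the increasing one via $t^2$, and parity of $p$ settles the last claim. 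What your approach buys is a short, self-contained and completely elementary proof of exactly this statement (including the odd-$p$ clause, which does not follow from Theorem~\ref{theor:7} alone without the same parity remark); what the paper's approach buys is generality, since the orbit-boundedness lemma applies to maps satisfying $f^n=f$ that are not homeomorphisms, where your monotonicity reduction is unavailable. Both are sound; only minor cosmetic points in yours deserve a word, e.g.\ that strict monotonicity comes from continuity plus injectivity (the intermediate value theorem being the tool), and that surjectivity of $t$ follows from surjectivity of $t^p$.
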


Nowadays it is natural to think about such equations with the use
so called Lamerey diagrams, which were discovered almost in the
same time.

Nevertheless, we can say, that it was Babbage, who has done the
first attempt of graphical interpretation of the solution of the
functional equation~(\ref{ogl-eq:18}) at his~\cite{Babbage}. It is
the following (see Figure~\ref{ogl-fig:12}).

\begin{figure}
\begin{center}
\includegraphics[width=.5\linewidth]{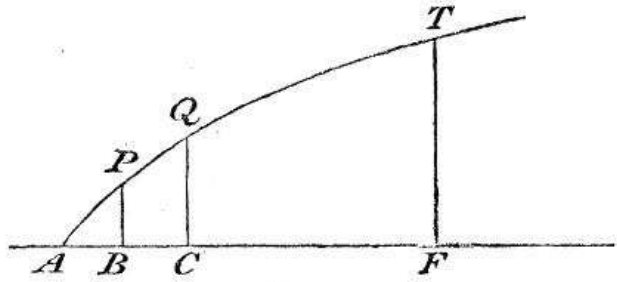}
\end{center}
\caption{A picture from the work of Babbage} \label{ogl-fig:12}
\end{figure}

He wrote: \emph{Required the nature of the curve, such that taking
any point $B$ in the abscissa, and drawing the ordinate $BP$ if we
make $AC$ another abscissa equal $BP$ the preceding ordinate, and
if we continue this $n$ times, then the $n$-th ordinate may be
equal to the first abscissa. If $AB = x$ and the equation of the
curve is $y=\psi(x)$. $PB = y =\psi(x)$ and $AC = PB = \psi(x)$,
and $QC = \psi^2(x)$, and generally the $n$-th ordinate $TF$ is
equal to $\psi^n(x)$, hence $\psi^n(x) =x$, which is the equation
whose solution has been just found}.

We will introduce below the history of discovering of the Lamerey
diagram (see~\cite{Days}).

In 1914 Pincherle considered a problem of the convergence to a
fixed point for trajectories of dynamical systems which are
defined by interval into itself maps (see~\cite{Pincherle}). In
other words he considered an increasing maps $f$ of some interval
into itself and proved that the trajectory of each point i.e. the
sequence, given by the equation
\begin{equation}\label{eq-l1-1}
x_{n+1} = f(x_n),
\end{equation} converges to a solution of $f(x)=x$.

One of the earliest application of this method in the context of
complex dynamics occurs in 1897 in Lameray's work~\cite[pp.
315-318]{Lemerai}. The roots of this method, however, go back much
farther and seem to have their origins in the work of Adrien-Marie
Legendre and \'Evariste Galois. Writing in the Bulletin des
Sciences Mathematiques in 1830, Galois remarked, (see~\cite[p.
413]{Galois}): ``\emph{Legendre was the first to notice that, when
an algebraic equation was written in, the form $\varphi(x)=x$,
where $\varphi$ is a function in $x$ which increases along with
$x$, it is easy to find the root of this equation if for a nearby
a. smaller than the. root, $\varphi(a)>a$, or for nearby a. larger
than the root, $\varphi(a)<a$. \\ To show this one draws the curve
$y=\varphi(x)$ and the line $y=x$. Given an abscissa $= a$,
suppose, to fix ideas, that $\varphi(a)>a$.  I say that it will be
easy to obtain the nearby root which is larger than $a$. In fact
the roots of the equation $\varphi(x)=x$ are nothing but the
values of the intersection points of the line and of the curve,
and it is clear that one approaches the intersection point by
substituting $\varphi(a)$ for $a$.  One will find a closer and
closer value as one assumes $\varphi(a)$, $\varphi(\varphi(a)) =
\varphi^2(a)$, $\varphi^3(a)$ and so on. [1830, p. 413]}''

\begin{figure}
\begin{center}
\includegraphics[width=.95\linewidth]{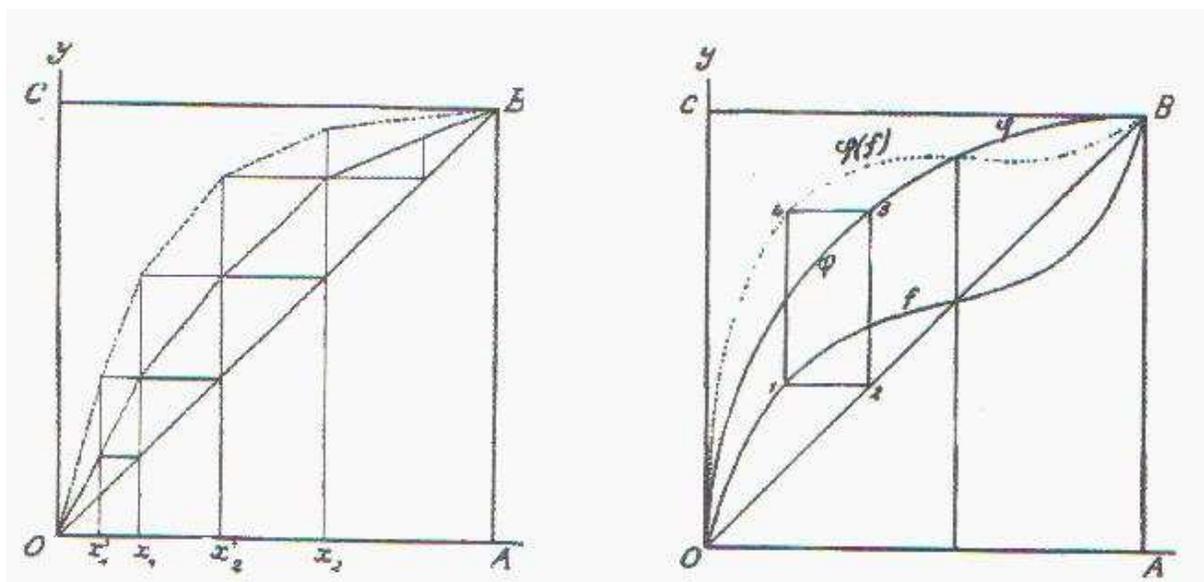}
\end{center}
\caption{A Picture from Pinchrle's work} \label{ogl-fig:11}
\end{figure}

The work to which Galois refers is Legendre, which employs a
method similar to graphical iteration to solve $\varphi(x) = x$
when $\varphi$ is increasing for $x > 0$ (see~\cite[p.
32]{Legendre}). Joseph Fourier used a method very close to what
Pincherle depicts in Figure~\ref{ogl-fig:11} as a tool for the
solution of equations (see~\cite{Fourier}).

The idea of graphical illustration of a topological conjugateness
is given at~\cite{Skufca}. This representation for the maps $f$,
$\widetilde{f}$ and the homeomorphism $\widetilde{h}$ mentioned
above, are given at Figure~\ref{ogl-fig:8}. For graphs are given
at this picture, i.e. the left top quarter contains ``a proper
graph'' of $f$; the right top quarter contains the graph of
$\zeta$, but the $x$-axis goes up and $y$-axis goes right. Due to
this, the composition $y=\zeta(f(x))$ is a maps of points from the
left horizontal segment (of the $x$-axis of the graph of $f$) to
points of the right horizontal segment (of the $y$-axis of the
graph of $\zeta$). In the same way graphs of the bottom part of a
picture are organized.
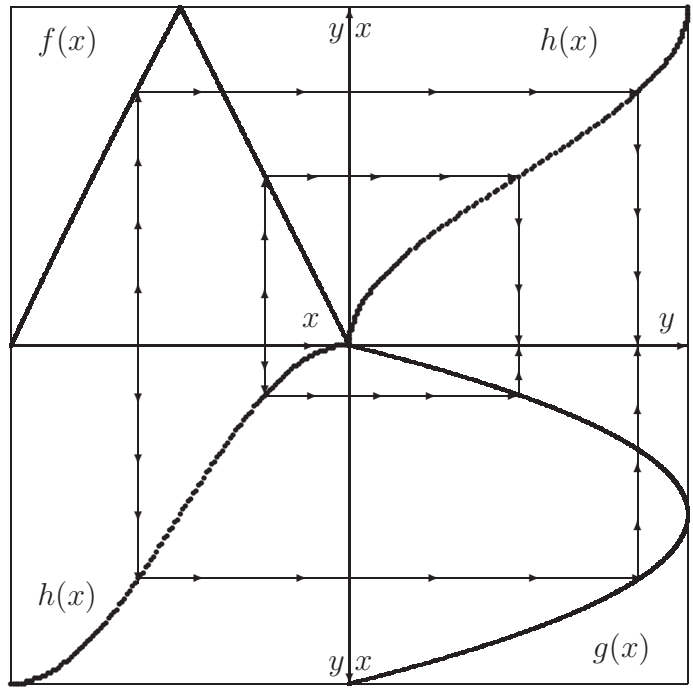
\begin{figure}[htbp]
\begin{center}
\begin{picture}(260,260)
\put(0,0){\line(1,0){256}} \put(0,0){\line(0,1){256}}
\put(0,256){\line(1,0){256}} \put(256,0){\line(0,1){256}}
\put(0,128){\line(1,0){128}} \put(0,128){\vector(1,0){115}}
\put(128,128){\vector(1,0){128}} \put(128,128){\vector(0,1){128}}
\put(128,128){\vector(0,-1){128}} \linethickness{0.4mm}
\qbezier(128,128)(384,64)(128,0) \qbezier(0,128)(32,196)(64,256)
\qbezier(64,256)(96,192)(128,128) \linethickness{0.1mm}
\put(110,135){$x$} \put(120,245){$y$} \put(130,245){$x$}
\put(245,135){$y$} \put(120,5){$y$} \put(130,5){$x$}
\put(10,240){$f(x)$} \put(220,10){$g(x)$} \put(10,30){$h(x)$}
\put(200,240){$h(x)$} \put(128,128){\circle*{2}}
\put(128,129){\circle*{2}} \put(128,130){\circle*{2}}
\put(128,131){\circle*{2}} \put(128,132){\circle*{2}}
\put(128,133){\circle*{2}} \put(129,134){\circle*{2}}
\put(129,135){\circle*{2}} \put(129,136){\circle*{2}}
\put(130,137){\circle*{2}} \put(130,138){\circle*{2}}
\put(130,139){\circle*{2}} \put(131,140){\circle*{2}}
\put(131,141){\circle*{2}} \put(132,142){\circle*{2}}
\put(132,143){\circle*{2}} \put(133,144){\circle*{2}}
\put(133,145){\circle*{2}} \put(134,146){\circle*{2}}
\put(135,147){\circle*{2}} \put(136,148){\circle*{2}}
\put(136,149){\circle*{2}} \put(137,150){\circle*{2}}
\put(138,151){\circle*{2}} \put(139,152){\circle*{2}}
\put(140,153){\circle*{2}} \put(141,154){\circle*{2}}
\put(142,155){\circle*{2}} \put(143,156){\circle*{2}}
\put(144,157){\circle*{2}} \put(145,158){\circle*{2}}
\put(146,159){\circle*{2}} \put(147,160){\circle*{2}}
\put(148,161){\circle*{2}} \put(149,162){\circle*{2}}
\put(150,163){\circle*{2}} \put(151,164){\circle*{2}}
\put(153,165){\circle*{2}} \put(154,166){\circle*{2}}
\put(155,167){\circle*{2}} \put(156,168){\circle*{2}}
\put(158,169){\circle*{2}} \put(159,170){\circle*{2}}
\put(160,171){\circle*{2}} \put(162,172){\circle*{2}}
\put(163,173){\circle*{2}} \put(165,174){\circle*{2}}
\put(166,175){\circle*{2}} \put(168,176){\circle*{2}}
\put(169,177){\circle*{2}} \put(170,178){\circle*{2}}
\put(172,179){\circle*{2}} \put(173,180){\circle*{2}}
\put(175,181){\circle*{2}} \put(176,182){\circle*{2}}
\put(178,183){\circle*{2}} \put(180,184){\circle*{2}}
\put(181,185){\circle*{2}} \put(183,186){\circle*{2}}
\put(184,187){\circle*{2}} \put(186,188){\circle*{2}}
\put(187,189){\circle*{2}} \put(189,190){\circle*{2}}
\put(190,191){\circle*{2}} \put(192,192){\circle*{2}}
\put(194,193){\circle*{2}} \put(195,194){\circle*{2}}
\put(197,195){\circle*{2}} \put(198,196){\circle*{2}}
\put(200,197){\circle*{2}} \put(201,198){\circle*{2}}
\put(203,199){\circle*{2}} \put(204,200){\circle*{2}}
\put(206,201){\circle*{2}} \put(208,202){\circle*{2}}
\put(209,203){\circle*{2}} \put(211,204){\circle*{2}}
\put(212,205){\circle*{2}} \put(214,206){\circle*{2}}
\put(215,207){\circle*{2}} \put(216,208){\circle*{2}}
\put(218,209){\circle*{2}} \put(219,210){\circle*{2}}
\put(221,211){\circle*{2}} \put(222,212){\circle*{2}}
\put(224,213){\circle*{2}} \put(225,214){\circle*{2}}
\put(226,215){\circle*{2}} \put(228,216){\circle*{2}}
\put(229,217){\circle*{2}} \put(230,218){\circle*{2}}
\put(231,219){\circle*{2}} \put(233,220){\circle*{2}}
\put(234,221){\circle*{2}} \put(235,222){\circle*{2}}
\put(236,223){\circle*{2}} \put(237,224){\circle*{2}}
\put(238,225){\circle*{2}} \put(239,226){\circle*{2}}
\put(240,227){\circle*{2}} \put(241,228){\circle*{2}}
\put(242,229){\circle*{2}} \put(243,230){\circle*{2}}
\put(244,231){\circle*{2}} \put(245,232){\circle*{2}}
\put(246,233){\circle*{2}} \put(247,234){\circle*{2}}
\put(248,235){\circle*{2}} \put(248,236){\circle*{2}}
\put(249,237){\circle*{2}} \put(250,238){\circle*{2}}
\put(251,239){\circle*{2}} \put(251,240){\circle*{2}}
\put(252,241){\circle*{2}} \put(252,242){\circle*{2}}
\put(253,243){\circle*{2}} \put(253,244){\circle*{2}}
\put(254,245){\circle*{2}} \put(254,246){\circle*{2}}
\put(254,247){\circle*{2}} \put(255,248){\circle*{2}}
\put(255,249){\circle*{2}} \put(255,250){\circle*{2}}
\put(256,251){\circle*{2}} \put(256,252){\circle*{2}}
\put(256,253){\circle*{2}} \put(256,254){\circle*{2}}
\put(256,255){\circle*{2}} \put(256,256){\circle*{2}}
\put(0,0){\circle*{2}} \put(1,0){\circle*{2}}
\put(2,0){\circle*{2}} \put(3,0){\circle*{2}}
\put(4,0){\circle*{2}} \put(5,0){\circle*{2}}
\put(6,1){\circle*{2}} \put(7,1){\circle*{2}}
\put(8,1){\circle*{2}} \put(9,2){\circle*{2}}
\put(10,2){\circle*{2}} \put(11,2){\circle*{2}}
\put(12,3){\circle*{2}} \put(13,3){\circle*{2}}
\put(14,4){\circle*{2}} \put(15,4){\circle*{2}}
\put(16,5){\circle*{2}} \put(17,5){\circle*{2}}
\put(18,6){\circle*{2}} \put(19,7){\circle*{2}}
\put(20,8){\circle*{2}} \put(21,8){\circle*{2}}
\put(22,9){\circle*{2}} \put(23,10){\circle*{2}}
\put(24,11){\circle*{2}} \put(25,12){\circle*{2}}
\put(26,13){\circle*{2}} \put(27,14){\circle*{2}}
\put(28,15){\circle*{2}} \put(29,16){\circle*{2}}
\put(30,17){\circle*{2}} \put(31,18){\circle*{2}}
\put(32,19){\circle*{2}} \put(33,20){\circle*{2}}
\put(34,21){\circle*{2}} \put(35,22){\circle*{2}}
\put(36,23){\circle*{2}} \put(37,25){\circle*{2}}
\put(38,26){\circle*{2}} \put(39,27){\circle*{2}}
\put(40,28){\circle*{2}} \put(41,30){\circle*{2}}
\put(42,31){\circle*{2}} \put(43,32){\circle*{2}}
\put(44,34){\circle*{2}} \put(45,35){\circle*{2}}
\put(46,37){\circle*{2}} \put(47,38){\circle*{2}}
\put(48,40){\circle*{2}} \put(49,41){\circle*{2}}
\put(50,42){\circle*{2}} \put(51,44){\circle*{2}}
\put(52,45){\circle*{2}} \put(53,47){\circle*{2}}
\put(54,48){\circle*{2}} \put(55,50){\circle*{2}}
\put(56,52){\circle*{2}} \put(57,53){\circle*{2}}
\put(58,55){\circle*{2}} \put(59,56){\circle*{2}}
\put(60,58){\circle*{2}} \put(61,59){\circle*{2}}
\put(62,61){\circle*{2}} \put(63,62){\circle*{2}}
\put(64,64){\circle*{2}} \put(65,66){\circle*{2}}
\put(66,67){\circle*{2}} \put(67,69){\circle*{2}}
\put(68,70){\circle*{2}} \put(69,72){\circle*{2}}
\put(70,73){\circle*{2}} \put(71,75){\circle*{2}}
\put(72,76){\circle*{2}} \put(73,78){\circle*{2}}
\put(74,80){\circle*{2}} \put(75,81){\circle*{2}}
\put(76,83){\circle*{2}} \put(77,84){\circle*{2}}
\put(78,86){\circle*{2}} \put(79,87){\circle*{2}}
\put(80,88){\circle*{2}} \put(81,90){\circle*{2}}
\put(82,91){\circle*{2}} \put(83,93){\circle*{2}}
\put(84,94){\circle*{2}} \put(85,96){\circle*{2}}
\put(86,97){\circle*{2}} \put(87,98){\circle*{2}}
\put(88,100){\circle*{2}} \put(89,101){\circle*{2}}
\put(90,102){\circle*{2}} \put(91,103){\circle*{2}}
\put(92,105){\circle*{2}} \put(93,106){\circle*{2}}
\put(94,107){\circle*{2}} \put(95,108){\circle*{2}}
\put(96,109){\circle*{2}} \put(97,110){\circle*{2}}
\put(98,111){\circle*{2}} \put(99,112){\circle*{2}}
\put(100,113){\circle*{2}} \put(101,114){\circle*{2}}
\put(102,115){\circle*{2}} \put(103,116){\circle*{2}}
\put(104,117){\circle*{2}} \put(105,118){\circle*{2}}
\put(106,119){\circle*{2}} \put(107,120){\circle*{2}}
\put(108,120){\circle*{2}} \put(109,121){\circle*{2}}
\put(110,122){\circle*{2}} \put(111,123){\circle*{2}}
\put(112,123){\circle*{2}} \put(113,124){\circle*{2}}
\put(114,124){\circle*{2}} \put(115,125){\circle*{2}}
\put(116,125){\circle*{2}} \put(117,126){\circle*{2}}
\put(118,126){\circle*{2}} \put(119,126){\circle*{2}}
\put(120,127){\circle*{2}} \put(121,127){\circle*{2}}
\put(122,127){\circle*{2}} \put(123,128){\circle*{2}}
\put(124,128){\circle*{2}} \put(125,128){\circle*{2}}
\put(126,128){\circle*{2}} \put(127,128){\circle*{2}}
\put(128,128){\circle*{2}} \put(48,128){\vector(0,1){24}}
\put(48,128){\vector(0,1){48}} \put(48,128){\vector(0,1){72}}
\put(48,128){\vector(0,1){96}} \put(48,224){\vector(1,0){25}}
\put(48,224){\vector(1,0){64}} \put(48,224){\vector(1,0){152}}
\put(48,224){\vector(1,0){114}} \put(48,224){\vector(1,0){190}}
\put(237,224){\vector(0,-1){96}} \put(237,224){\vector(0,-1){24}}
\put(237,224){\vector(0,-1){48}} \put(237,224){\vector(0,-1){72}}
\put(48,128){\vector(0,-1){22}} \put(48,128){\vector(0,-1){45}}
\put(48,128){\vector(0,-1){67}} \put(48,128){\vector(0,-1){90}}
\put(48,40){\vector(1,0){191}} \put(48,40){\vector(1,0){25}}
\put(48,40){\vector(1,0){64}} \put(48,40){\vector(1,0){114}}
\put(48,40){\vector(1,0){152}} \put(96,128){\vector(0,1){64}}
\put(96,128){\vector(0,1){22}} \put(96,128){\vector(0,1){44}}

 \put(237,40){\vector(0,1){88}}
\put(237,40){\vector(0,1){44}} \put(237,40){\vector(0,1){22}}
\put(237,40){\vector(0,1){66}}

\put(96,192){\vector(1,0){96}} \put(96,192){\vector(1,0){20}}
\put(96,192){\vector(1,0){44}} \put(96,192){\vector(1,0){70}}
\put(96,128){\vector(0,-1){19}} \put(96,128){\vector(0,-1){10}}

\put(192,192){\vector(0,-1){64}} \put(192,192){\vector(0,-1){20}}
\put(192,192){\vector(0,-1){45}}

\put(96,109){\vector(1,0){96}} \put(96,109){\vector(1,0){20}}
\put(96,109){\vector(1,0){45}} \put(96,109){\vector(1,0){65}}

\put(192,109){\vector(0,1){19}} \put(192,109){\vector(0,1){10}}

\end{picture}
\end{center}
\caption{Graphs, which illustrate topological conjugateness}
\label{ogl-fig:8}
\end{figure}

Independently on Verhulst studying of populational dynamics, the
logistic map $g(x)=4x(1-x)$ appeared in the study of random
generators in~\cite{Ulam} -- an abstract dedicated to the Summer
Meeting of the AMS in 1947. There was announced the fact, that for
almost all $x\in \mathbb{R}$ (in the sense of Lebesgue measure)
after finite number of steps, iterations of $x$ belong to $[0,\,
1]$ and are ``randomly'' (uniformly) distributed in $[0,\, 1]$.
Nevertheless, J. von Neumann showed, that the
function~(\ref{ogl-eq:13}) can not be used as random numbers
generator (see~\cite{Neumann}). In fact, he invented in this work
the topological conjugation of $g$ and the hat map $f$
\begin{equation}\label{ogl-eq:f} f(x) = \left\{\begin{array}{ll}
2x,& \text{if }\ 0\leq x< 1/2,\\
2-2x,& \text{if }\ 1/2 \leqslant x\leqslant 1.
\end{array}\right.
\end{equation}
J. von Neumann suggested to consider the correspondence $x_i =
\sin^2\pi\alpha_i$ and pay attention to the sequence
$\{\alpha_i\}$ if $\{x_i\}$ is generated by $g$ as random
generator. He has written, that in this case the equality
$\alpha_{i+1} = 2\alpha_i$ (modulo 1) will hold. After this, von
Neumann concluded, that in some sense the trajectory our number
$x$ will be as random as many random numbers of correspond
$\alpha$ we will take at the very beginning, whence the generator
can not be used ``in a real world'', possibly being good ``in
mathematical world''. Also von Neumann mentioned, that if
$\alpha_i$-th are uniformly distributed, then correspond $x_i$-th
would be distributed with the probability distribution
$(2\pi)^{-1}[x(x-1)]^{1/2}\, dx$. This result is close to the
invariant measures theory, we will mention below.

Stanislaw Ulam introduced at~\cite[Appendix 1]{Ulam-1964} the
homeomorphism $$ h = \frac{2}{\pi}\sin^{-1}(\sqrt{x}),
$$ such that $$
f(x) = h(g_\lambda(h^{-1}(x))
$$ for $\lambda = 4$.

Characterizing invariant measures for explicit nonlinear dynamical
systems is a fundamental problem which connects dynamical theory
with statistics and statistical mechanics. In some cases, it would
be desirable to to characterize ergodic invariant measures for
simple chaotic dynamical systems. However, in the cases of chaotic
dynamical systems, such attempts to obtain explicit invariant
measures have rarely been made.

Stanislaw Ulam also proposed the way of constructing a conjugation
of piecewise linear maps. He proved the following
Theorem~(see~\cite[p. 460 (53)]{Ulam-1964}). Let $f$ be
broken-linear function, of equation~(\ref{ogl-eq:f}). Let $t(x)$
be a convex function on $[0,\, 1]$ which transforms the interval
into itself, and such that $t(0) = t(1) = 0$. For some $p$ in the
interval, we must have $t(p)=1$; by convexity, there is only one
such point. Consider the lower tree of $1$ (i.e. all integer
trajectories of $1$). The necessary and sufficient condition that
$t(x)$ be conjugate to $f$ is is that tree combinatorially the
same as that generated by $1$ under $f(x)$, and closure of this
points be the whole interval, i.e. that the tree is dense in
$[0,\, 1]$.

O. Rechard has used this result to finding the invariant measure
of $f$ (see~\cite{Rechard}). Let $\tau$ be a transformation (not
one to one) of the space $X$ (or, for example, of the interval
$I=[0,\, 1]$) into itself. If the complete inverse image of every
measurable set is itself measurable, then $\tau$ is called a
\underline{\emph{measurable transformation}}, and if, in addition,
$m(\tau^{-1}(A)) =0$, whenever $m(A)=0$, the transformation will
be said to be \underline{\emph{non-singular}}. Let $\mathcal{X}$
be a class of measurable subsets of~$X$.

Assuming $\tau$ to be a measurable transformation, a finite
measure $m^*$ defined for sets $\mathcal{X}$ is
\underline{\emph{invariant}} under $\tau$ if $m^*(\tau^{-1}(A)) =
m^*(A)$ for every measurable set $A$.

If $\tau$ be a measurable, non-singular transformation of the
space $X$ (or, for example, of the interval $I=[0,\, 1]$) into
itself, and $v$ is any real valued integrable function on $X$,
then $\mu_v(A) =\int\limits_{\tau^{-1}(A)}v\, dm$ is a
finite-valued countably additive set function on $\mathcal{X}$
which is absolutely continuous with respect to $m$. Consequently,
by the Radon-Nikodym theorem, there exists an integrable function
$w$ on $X$ such that $\mu_v(A) = \int\limits_Aw\, dm$ for every
measurable set $A$. Denoting by $L(X,\, m)$ the space of functions
integrable over $X$ with respect to $m$, it is easy to see that
the transformation $T$ of $L(X,\, m)$ into itself, defined by $Tv
= w$ is additive and homogeneous and transforms non-negative
functions into non-negative functions. In addition, $$
\int\limits_XTv\, dm  = \int\limits_{\tau^{-1}(X)}v\, dm =
\int\limits_Xv\, dm,
$$ and $
||Tv ||\leq ||v|| $ for $v\in L(X,\, m)$.

Thus, O. Rechard defines the function $$ \phi (x) = h'(x) =
\frac{1}{\pi \sqrt{x}\, \sqrt{1-x}}
$$ for $\tau(x)=g(x)=4x(1-x)$ and shows, that for $0\leq x\leq 1/2$ the equality $$
\int\limits_{0}^{4x(1-x)}\phi(t)\, dt = h(\tau(x)) = f(h(x))
$$ hods. If $0\leq x\leq 1/2$, then $$
f(h(x)) = 2h(x) = \int\limits_{0}^x h'(t)\, dt +
\int\limits_{0}^{1-x}h'(t)\, dt;
$$ if $1/2\leq x\leq 1$, then $$
f(h(x)) = 2(1-h(x)) = \int\limits_{x}^1 h'(t)\, dt +
\int\limits_{0}^{1-x}h'(t)\, dt.
$$ This implies, that the function $h'$ is invariant under the
transformation $T$. Thus, the measure $$ m^{*}(A) =
\frac{1}{\pi}\int\limits_A\frac{dx}{\sqrt{x}\cdot\sqrt{1-x}},
$$ is invariant under $\tau$.

Katsura and Fukuda studied a dynamical system, which is generated
by the maps \begin{equation}\label{ogl-eq:17}x \rightarrow
\frac{4x(1-x)(1-lx)}{(1-lx^2)^2} \end{equation} for $0\leq l<1$.
Evidently, it is a generalization of a logistic map for $\lambda
=4$ (see~\cite{Katsura}, \cite{Umeno}). It was showed that
invariant measure for~(\ref{ogl-eq:17}) can be given by its
density as
$$\rho(x) = \frac{1}{2K(l)\sqrt{(x(1-x)(1-lx)}},
$$ where $K(l)$ is the elliptic integral of the first kind given
by $K(l)=\int_0^1\frac{du}{\sqrt{(1-u^2)(1-lu^2)}}$
(see~\cite{Umeno} and~\cite{Umeno-2}).

Dynamical properties of $g_\lambda$ for different values of
$\lambda$ such as stability of fixed points, the period doubling,
chaos appearing etc. were also studied at~\cite{May}. It was May,
who studied at first the period doubling phenomenon of the
logistic map.

The topological conjugation of piecewise linear maps was also
studied at~\cite{Block}. A continuous map $v$ of a compact
interval to itself is \underline{\emph{linear Markov}}, if it is
piecewise linear, and the set $P$ of all $v^k(x)$, where $k\geq 0$
and $x$ is endpoint of a linear piece, is finite. Denote elements
of $P$ as $P =\{ x_1<\ldots<x_N\}$. Let $P^* =\{ 1,\ldots,\, N\}$
be the indexing set of a certain $v$-invariant subset $P$. Denote
$v^*:\, P^* \rightarrow P^*$ by $v^*(i)=j$ if $v(x_i)=x_j$. Also
denote $^*v:\, P^* \rightarrow P^*$ by $^*v(i) = N+1-v^*(N+1-i)$.
Let $Q = \{y_1<\ldots\, <y_M\}$ and $w^*,\, ^*w\, Q^*\rightarrow
Q^*$ be the corresponding objects for a linear Markov $w$.
According to~\cite[Theorem 2.6]{Block}, linear Markov maps $v$ and
$w$ are topologically conjugate if and only if $v^* =w^*$, or $v^*
= ^*w$.

The review of the results on the iterations of interval maps,
which were known at the middle of 20-th century, is given
at~\cite{Collet}.

One of chapters of~\cite[\S II.6]{Collet} is devoted to iterations
of unimodal interval maps. Due to~\cite[\S II.1]{Collet} the maps
$f:\, [0\, 1]\rightarrow [0,\, 1]$ is called unimodal, if it
satisfies the following properties:

(1) $f$ us continuous;

(2) $f(1/2)=1$;

(3) $f$ increase on $[0,\, 1/2]$ and decrease on $[1/2,\, 1]$.

Notice, that interval maps, which are considered in~\cite{Collet},
act on $[-1,\, 1]$. Due to generality of the style, we reformulate
the definitions from~\cite{Collet} to maps, which act on $[0,\,
1]$. For instance, the (2) in~\cite{Collet} is written as $f(0)=1$
and (3) is considers intervals $[-1,\, 0]$ and $[0,\, 1]$. We will
reformulate below the definitions from~\cite{Collet} in the same
manner.

Let $f:\, [0,\, 1]\rightarrow [0,\, 1]$ be unimodal map such that
$f(1/2)=1$. For any $x\in [0,\, 1]$ construct the sequence
$\underline{I}(x)$ of symbols $L,\, R,\, C$ as follows.

(1) $\underline{I}(x)$ is either infinite sequence of $L$ and $R$,
or is a finite sequence of $L$ and $R$, which is followed by
infinite sequence of $C$. We will denote by $\underline{I}_j(x)$
the $j$-th symbol of the mentioned sequence.

(2) If $f^j(x)\neq 1/2$ for all $j\neq 0$ then
$\underline{I}_j(x)=L$ for $f^j(x)< 1/2$ and
$\underline{I}_j(x)=R$ for $f^j(x)> 1/2$.

(3) If $f^k(x)=1/2$ for some $k$, then denote by $j$ the smallest
such $k$, then set $\underline{I}_j(x)=C$. Also set
$\underline{I}_l(x)=L$, if $0\leq l<j$ and $f^l(x)<1/2$. Moreover,
set $\underline{I}_l(x)=R$, if $0\leq l<j$ and $f^l(x)>1/2$.

The sequence $\underline{I}(x)$ is called the itinerary of the
point $x$. The sequence $\underline{I}(1/2)$ is called the
kneading sequence of the $f$.

The calculus of itineraries is scattered in the literature and
usually presented only in a circumstantial context. The most
systematic account in
Derrida-Gervous-Pomerau~\cite{Der-Ger-Pon-78}
and~\cite{Der-Ger-Pon-79}, but some precursory use can be found in
Metropolis-Stein-Stein~\cite{Mtr-St-St-73}. Some lecture notes
from Lanford~\cite{Lanford} can be useful to study of this
calculus~\cite[\S II.2, p. 81]{Collet}.

It is noticed in~\cite[\S II.6]{Collet} that if for a preserving
orientation homeomorphism $h:\, [0,\, 1]\rightarrow [0,\, 1]$, and
for the map $g:\, [0,\, 1]\rightarrow [0,\, 1]$ the equality
$$ f = h^{-1}(g(h)),
$$ holds,
then $g$ is also unimodal and has maximum point at $x_0=h(1/2)$.
If $x_0=1/2$, then $\underline{I}_f(1/2) =\underline{I}_g(1/2)$,
where, naturally, $\underline{I}_f(1/2)$ is a kneading  sequence
of $f$, and $\underline{I}_g(1/2)$ is a kneading  sequence $g$.

The following question appeared in the first time
at~\cite{Milnor-Thurston-77}: Is it true, that the kneading
sequence of the unimodal function defines it up to topological
conjugation. Also it is shown at~\cite{Milnor-Thurston-77}, that
maps with the same kneading sequences are semi conjugated, i.e.
there exists a surjective $h:\, [0,\, 1]\rightarrow [0,\, 1]$ such
that $h(f) = f(h)$.

Some properties of topologically conjugated $S$-unimodal maps are
given at~\cite[\S II.6]{Collet}. Start at first with the
definition of a $S$-unimodal map.

The unimodal map $f:\, [0,\, 1]\rightarrow [0,\, 1]$ is called
$\mathcal{C}^1$-unimodal, if it is 1 time differentiable and and
$f'(x)\neq 0$ for $x\neq 1/2$.

Let $f$ be $\mathcal{C}^1$-unimodal and let $P$ be a periodical
orbit with period $n$. This point is called stable, if
$|f'(x)|\leq 1$ for every $x\in P$, where $f'$ is the derivative.
If follows from the formula of the derivative of a composite
function, that the value $f'(x)$ if the same for all the points
$x\in P$, thence the definition is independent on $x$. The
importance of stable periodical orbits of dynamical system is
explained by the following observation. If $P$ is a stable
periodical orbit of $f$ with period $n$, then there exists an
neighborhood $U$ of $x\in P$, such that $\lim\limits_{m\rightarrow
\infty}f^{mn}(y)=x$ for all $y\in U$ excepted, possibly, the case,
when $|f'(x)|=1$, which will be discussed later. Whence, if the
trajectory is stable, then a lot of points have similar habitation
under $f^m$ for $m\rightarrow \infty$. The periodical point $P$ is
super stable, if $f'(x)=0$ for $x\in P$.

The following example is given at~\cite[p. 429]{Layek}. Consider
the maps $g(x)=\lambda x(1-x)$ for $x\in [0,\, 1],\, \lambda\geq
0$. Evidently, $g'(x)=\lambda(1-2x)$. The fixed point of $g$ is
fixed if and only if $g'(x)=0$, whence $x=1/2$ should be a fixed
point. It is so only in the case, when $\lambda=2$. We can
similarly study the super stability of periodical points with
period $2$. If the cycle $p \stackrel{g}{\longrightarrow}
q\stackrel{g}{\longrightarrow} p$ is super stable, then $x=1/2$
belongs to this cycle. Plugging $x=1/2$ into $g^2(x)=x$ as an
equation for $\lambda$ obtain that $\lambda=2,\, 1\pm \sqrt{5}$.
But $\lambda=2$ corresponds to the fixed point $1/2$ and $\lambda
= 1-\sqrt{5}<0$. The value $\lambda=1+\sqrt{5}$ corresponds to the
stable cycle $\{ \frac{1}{2},\, \frac{1+\sqrt{5}}{4}\}$.

Consider below the question on how any periodical orbit may be for
a unimodal map. This problem was stated at first by Julia in 1918
at his~\cite{Julia-1918}. He has showed that some unimodal maps,
which are the restrictions of functions, which are analytical on
$[0,\ ,1]$, can have more then one stable periodical orbit. His
theory deals with the maps $w(x)=1-\mu x^2,\, 0<\mu \leq 2$.
Nevertheless, the natural discovery was made by Singer in 1978
in~\cite{Singer-1978}, when he separated the case of the negative
Schwartz derivative as one, when the situation becomes simpler.

Let $f\in\mathcal{C}^3$. The Schwartz derivative of $f$ is the
expression $$ Sf(x) = \frac{f'''(x)}{f'(x)} -
\frac{3}{2}\left(\frac{f''(x)}{f'(x)}\right)^2.
$$

A maps $f:\, [0,\, 1]\rightarrow [0,\, 1]$ is called $S$-unimodal,
if the following conditions hold.

(S1) $f$ is $\mathcal{C}^1$-unimodal;

(S2) $f\in \mathcal{C}^3$;

(S3) $Sf(x)<0$ for all $x\in [0,\, 1]$. Also we admit
$Sf(x)=-\infty$ for $x=1/2$.

(S4) $f$ maps $J(f) = [f(1),\, 1]$ onto itself, i.e. $f([f(1/2),\,
1]) = [f(1/2),\, 1]$

(S5) $f''(1/2)<0$.

\begin{theorem}[Theorem~II.4.1 in~\cite{Collet}]
If $f$ satisfy the conditions (S1), (S2) and (S3), then every
stable periodical point attracts at least one of the points $0,\,
1/2,\, 1$, i.e. ends of the interval and the critical point.
\end{theorem}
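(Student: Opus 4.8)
The plan is to run the classical Singer-type argument based on the minimum principle for maps with negative Schwarzian derivative. First I would pass to a single iterate: let $P=\{p_1,\dots,p_n\}$ be the stable periodic orbit and set $g=f^n$, so $p_1$ is a fixed point of $g$ with $|g'(p_1)|=\prod_{i}|f'(p_i)|\leq 1$. If $1/2\in P$ there is nothing to prove, so I may assume $P\cap\{1/2\}=\varnothing$; then no $f^{j}(p_1)$ equals $1/2$, the quantity $Sg(p_1)$ is well defined and $0<|g'(p_1)|\leq 1$. Since $Sf<0$ wherever it is defined, the iterated chain rule for the Schwarzian, $S(f^n)(x)=\sum_{j=0}^{n-1}Sf(f^j(x))\bigl((f^j)'(x)\bigr)^2$, shows $S(g)(x)<0$ at every $x$ whose forward orbit avoids $1/2$. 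I then introduce the immediate basin $W$ of $p_1$ under $g$ (the connected component, containing $p_1$, of $\{x:\ g^k(x)\to p_1\}$): it is an interval, it is forward invariant, $g(W)\subseteq W$ and hence $g(\overline W)\subseteq\overline W$, and if $0\in W$ or $1\in W$ then $f^m(0)$ resp.\ $f^m(1)$ converges to $P$ and we are done. So I assume $\overline W=[a,b]\subseteq[0,1]$ with the points attracted to $p_1$ not among those endpoints.

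Next comes the basic dichotomy: either $\operatorname{int}W$ contains a critical point $c$ of $g$, or $g$ is strictly monotone on $\overline W$. In the first case $(f^n)'(c)=0$ forces $f^{j}(c)=1/2$ for some $0\leq j<n$ (because $f$ is $\mathcal C^1$-unimodal, $f'$ vanishes only at $1/2$); since $c\in W$, $f^{nk}(c)\to p_1$, hence $f^{\,nk-j}(1/2)=f^{nk}(c)\to p_1$, and composing with the finitely many maps $f^{0},\dots,f^{n-1}$ gives $\operatorname{dist}(f^m(1/2),P)\to 0$. So $1/2$ is attracted to $P$ and the theorem holds. It remains to exclude the monotone alternative.

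So suppose $g$ is monotone on $\overline W$; I may take it increasing, for if it is decreasing I replace $g$ by $g^{2}=f^{2n}$, whose critical points are still among the $f^{j}$-preimages of $1/2$ with $j<2n$, so the previous paragraph still yields ``$1/2$ attracted to $P$''. Now, since $g$ is increasing with an interior attracting fixed point $p_1$, its immediate basin has $g'(a)\geq 1$ and $g'(b)\geq 1$ (an endpoint with derivative $<1$ would itself be attracting, and $W$ could not reach it; the case where an endpoint is $0$ or $1$ is handled directly). On the other hand $g'$ takes a value strictly less than $1$ somewhere in $(a,b)$: this is clear at $p_1$ if $|g'(p_1)|<1$; if $g'(p_1)=1$ (the value $-1$ is absorbed by passing to $g^2$), then $Sg(p_1)<0$ with $g'(p_1)=1$ gives $g'''(p_1)<\tfrac32 g''(p_1)^2$, and a short local analysis at $p_1$ (when $g''(p_1)\neq 0$, $p_1$ is semi-stable with a one-sided interval of attraction; when $g''(p_1)=0$, then $g'''(p_1)<0$ and $p_1$ is attracting from both sides) shows $g'(x)<1$ for $x\neq p_1$ near $p_1$. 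Since $g'$ is continuous and positive on $[a,b]$, is at least $1$ at both endpoints, and is $<1$ at an interior point, it attains its minimum over $[a,b]$ at an interior point $x_0$ with $0<g'(x_0)<1$ — a positive local minimum of $g'$. But $Sg<0$ on $(a,b)$ (no orbit point there hits $1/2$, else we were in the first case), and at a positive local minimum of $g'$ we have $g''(x_0)=0$, $g'''(x_0)\geq 0$, whence $Sg(x_0)=g'''(x_0)/g'(x_0)\geq 0$ — a contradiction. Hence the monotone alternative never occurs, the first alternative always applies, and $1/2$ is attracted to $P$.

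The Schwarzian chain rule, the minimum-principle computation, and the bookkeeping ``$f^{nk-j}(1/2)\to p_1\Rightarrow$ $1/2$ attracted to $P$'' are routine. The genuine care is needed (a) in the monotone-decreasing reduction, so that passing to $g^2$ does not lose the conclusion about $1/2$; (b) in the neutral case $|g'(p_1)|=1$, where one must check that negative Schwarzian still produces an interval of attraction of $p_1$ and a nearby point with $g'<1$; and (c) in justifying that the endpoints of the immediate basin of an increasing interval map are fixed with derivative $\geq 1$, including when an endpoint is an endpoint of $[0,1]$. I expect (b) to be the part that is most delicate to write cleanly.
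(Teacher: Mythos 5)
The paper itself gives no proof of this statement: it appears in the historical survey section and is simply quoted from Collet--Eckmann (Theorem II.4.1), so there is no in-paper argument to compare yours against. What you wrote is the classical Singer minimum-principle proof, which is essentially the argument in the cited source, and it is correct: the reduction to $g=f^n$ (and to $g^2$ in the orientation-reversing case, noting that critical points of $f^{2n}$ are still preimages of $1/2$), the dichotomy on whether the immediate basin contains a critical point, the fixedness of the basin endpoints with $g'\geq 1$ there, and the contradiction $Sg(x_0)=g'''(x_0)/g'(x_0)\geq 0$ at an interior positive minimum of $g'$ all go through. The only imprecision is in your neutral case with $g'(p_1)=1$ and $g''(p_1)\neq 0$: there $g'(x)<1$ holds only on the attracting side of $p_1$, not for all $x\neq p_1$ nearby; since $p_1$ is then an endpoint of its (one-sided) immediate basin and that basin lies precisely on the attracting side, the interior point with $g'<1$ still exists and the minimum-principle contradiction is unaffected, but the statement should be phrased one-sidedly.
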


\begin{corollary}[Corollary II.4.2 in~\cite{Collet}]
If a maps $f$ satisfies the conditions (S1) ... (S4), then it has
at most one stable fixed orbit on $[0,\, f(1)]$. If $1/2$ is not
attracted to the stable periodical orbit, then $f$ has no stable
periodical orbits on $[f(1),\, 1]$.
\end{corollary}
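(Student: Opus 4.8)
The plan is to read off Corollary~II.4.2 from Theorem~II.4.1 of~\cite{Collet}, which does the analytic work: under (S1)--(S3), every stable periodic orbit attracts at least one of the three marked points $0$, $1/2$, $1$. The extra hypothesis (S4), namely $f(J(f))=J(f)$ with $J(f)=[f(1),\,1]$, enters only to locate orbits: it makes $J(f)$ forward invariant, and since $f(1)\in J(f)$ while $1=f(1/2)$, both endpoints of $J(f)$ lie on the forward orbit of the fold point $1/2$. I will also use the elementary fact that the forward orbit of a single point converges to at most one periodic orbit, so distinct stable periodic orbits attract distinct marked points; in particular $f$ has at most three stable periodic orbits in all.

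For the first assertion, let a periodic orbit lie in $[0,f(1)]$. In the nondegenerate case $f(1)\le 1/2$ this interval is contained in the increasing branch $[0,1/2]$, on which $f$ is strictly monotone; since a continuous increasing interval map has no periodic points other than fixed points (a standard fact, compare Theorem~\ref{theor:Nathan}), the orbit is a single fixed point, which by Theorem~II.4.1 attracts $0$, $1/2$, or $1$. But the forward orbits of $1/2$ and of $1$ eventually enter the invariant set $J(f)$ (since $f(1)\in J(f)$), so their $\omega$-limit sets lie in $J(f)$, and $J(f)$ meets $[0,f(1)]$ only at the single point $f(1)$. Hence a fixed point genuinely inside $[0,f(1)]$ can attract only $0$, and as at most one orbit attracts $0$, there is at most one stable periodic orbit in $[0,f(1)]$ — a lone attracting fixed point, equal to $0$ itself when $f(0)=0$.

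For the second assertion, assume $1/2$ is attracted by no stable periodic orbit and suppose, toward a contradiction, that $O\subseteq[f(1),\,1]$ is one. By (S4) the restriction $f|_{J(f)}$ is again $S$-unimodal in the sense of Theorem~II.4.1: its unique fold point $1/2$ is interior to $J(f)$, $f(1/2)=1$ is the right endpoint of $J(f)$, $f$ is strictly increasing then strictly decreasing on $J(f)$, and (S2), (S3), (S5) pass to the subinterval (the fold need not be the midpoint, but the proof of Theorem~II.4.1 does not use that normalisation). Applying Theorem~II.4.1 to $f|_{J(f)}$, the orbit $O$ must attract an endpoint of $J(f)$ or the fold point $1/2$, i.e.\ one of $f(1)$, $1$, $1/2$. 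Since $1=f(1/2)$ and $f(1)=f(f(1/2))$ lie on the forward orbit of $1/2$, attracting either endpoint already makes $O$ attract $1/2$; so $O$ attracts $1/2$ in every case, contradicting the hypothesis. Hence $[f(1),\,1]$ contains no stable periodic orbit.

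The step I expect to be the main obstacle is licensing Theorem~II.4.1 for the restricted map $f|_{J(f)}$: one must check that the translation of (S1)--(S3) really holds on $J(f)$ — in particular that the fold point is interior, which is exactly where the mild nondegeneracy $f(1)<1/2$ is used — and one must dispose of the borderline possibilities ($f(1)=1/2$, or $f(1)$ a fixed point of $f$) so that the conclusion still reads ``$O$ attracts $1/2$''. A lesser point, in the first assertion, is the bookkeeping for a periodic orbit that meets the common endpoint $f(1)$ of $[0,f(1)]$ and $[f(1),\,1]$ without lying in the open interval $(0,f(1))$; this merely forces one to fix precisely what ``on $[0,f(1)]$'' is taken to mean, but in every reasonable reading the argument above applies.
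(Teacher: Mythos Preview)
The paper does not prove this corollary; it is quoted from Collet--Eckmann's monograph as part of the historical survey in Section~\ref{Sect:IstorOgl}, alongside Theorem~II.4.1 and Corollary~II.4.3, none of which are argued here. There is therefore no in-paper proof to compare your proposal against.

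Your derivation from Theorem~II.4.1 is the natural one and is essentially sound. Using (S4) to force $f(1)\le 1/2$, noting that the forward orbits of $1/2$ and of $1$ enter and remain in the invariant interval $J(f)$, and---for the second assertion---restricting to $J(f)$ so that the three distinguished points of Theorem~II.4.1 become $f(1)$, $1/2$, $1$, all lying on the forward orbit of $1/2$: these are exactly the moves one expects. Your caveat about licensing Theorem~II.4.1 for $f|_{J(f)}$ is well placed; since that theorem rests on the minimum principle for maps with negative Schwarzian and the three marked points enter only as the endpoints and the critical point of the domain, the transfer to $J(f)$ is automatic once $1/2$ is interior to $J(f)$, which your nondegeneracy $f(1)<1/2$ guarantees. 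One small quibble: the elementary fact you need in the first part---that a strictly increasing continuous interval map has no periodic points other than fixed points---is not quite Theorem~\ref{theor:Nathan}, which concerns globally $p$-periodic maps; but the statement you actually use is immediate from monotonicity.
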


\begin{corollary}[Corollary II.4.3 in~\cite{Collet}]
There exist $S$-unimodal functions without stable periodical
orbits.
\end{corollary}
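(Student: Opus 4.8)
The plan is to exhibit the logistic map $g(x)=4x(1-x)$ as the required example. First I would verify that $g$ is $S$-unimodal by checking (S1)--(S5) directly, which is immediate since $g$ is a quadratic polynomial: $g$ is $\mathcal{C}^\infty$, $g(1/2)=1$, $g'(x)=4(1-2x)$ is positive on $[0,1/2)$, negative on $(1/2,1]$ and vanishes only at $x=1/2$; $g''(1/2)=-8<0$; and since $g'''\equiv0$ the Schwarzian is $Sg(x)=-\frac{3}{2}\bigl(g''(x)/g'(x)\bigr)^2=-6/(1-2x)^2<0$ for $x\neq1/2$ (with $Sg(1/2)=-\infty$). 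Finally $J(g)=[g(1),1]=[0,1]$ and $g([0,1])=[0,g(1/2)]=[0,1]$, so (S4) holds as well.

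Next I would show that $g$ has no stable periodic orbit, using Theorem~II.4.1 (stated above): every stable periodic point of an $S$-unimodal map attracts at least one of the points $0,\,1/2,\,1$. For $g$ one computes $g(1/2)=1$, $g(1)=0$, $g(0)=0$, so the forward orbits of $0$, $1/2$ and $1$ all reach the fixed point $0$ after at most two iterations; hence each of these three points is attracted only to the periodic orbit $\{0\}$. Thus if $g$ had a stable periodic orbit $P$, Theorem~II.4.1 would force $P$ to attract one of $0,1/2,1$, hence $0\in P$, i.e. $P=\{0\}$; but $\{0\}$ is \emph{not} stable since $|g'(0)|=4>1$. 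This contradiction shows $g$ has no stable periodic orbit, and together with the first paragraph it proves the corollary.

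A more self-contained route avoids Theorem~II.4.1 and instead uses that $g$ is topologically conjugate to the tent map $f$ of~(\ref{ogl-eq:f}) via the real-analytic homeomorphism $h(x)=\sin^2(\pi x/2)$ recalled in the Preface. Every periodic point of $g$ is $h(x_0)$ for a periodic point $x_0$ of $f$ of the same period $n$. If $x_0\neq0$, its whole $f$-orbit avoids $\{0,1/2,1\}$ (because $1\mapsto0$, $1/2\mapsto1$, and $0$ is fixed), so on that orbit $h$ is differentiable with $h'\neq0$; differentiating $h\circ f=g\circ h$ and telescoping the chain rule around the cycle gives $(g^n)'(h(x_0))=(f^n)'(x_0)$, whose modulus is $2^n>1$ since $f$ has slopes $\pm2$. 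The remaining periodic orbit $\{0\}$ has $g$-multiplier $g'(0)=4$. Hence no periodic orbit of $g$ has multiplier of modulus $\le1$, which again gives the conclusion.

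The delicate point in either route is not the (routine) verification that $g$ is $S$-unimodal, but the passage from ``the tent map $f$ has no stable periodic orbit'' to the same assertion for $g$: stability is defined through the multiplier and is \emph{not} a topological conjugacy invariant in general, so one must either invoke the structural Theorem~II.4.1 for $S$-unimodal maps, or exploit the smoothness of the specific conjugacy $h$ and handle with care the two endpoints $0,1$ where $h'$ vanishes --- the point $1$ causing no trouble because it is preperiodic rather than periodic, and the orbit $\{0\}$ being dispatched by a direct computation of $g'(0)$.
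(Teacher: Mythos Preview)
Your proof is correct, and both routes you outline are valid; the logistic map $g(x)=4x(1-x)$ is indeed the canonical example, your verification of (S1)--(S5) is accurate (including the Schwarzian computation $Sg(x)=-6/(1-2x)^2$), and the chain-rule transfer of multipliers along the smooth conjugacy $h(x)=\sin^2(\pi x/2)$ is handled properly, including the delicate point that periodic orbits of $f$ other than $\{0\}$ avoid the zeros $0,1$ of $h'$.

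The paper, however, does not prove this corollary at all: it is merely quoted from Collet--Eckmann as part of the historical survey in Section~\ref{Sect:IstorOgl}, with no argument supplied. So there is nothing to compare your approach against within this paper. Your first route via Theorem~II.4.1 is closest in spirit to how the corollary sits in Collet--Eckmann (it is stated there immediately after that theorem and its Corollary~II.4.2, as an application), while your second route is more self-contained and exploits material the present paper discusses at length --- the explicit conjugacy between the tent and logistic maps --- so it fits the surrounding text particularly well.
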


As we have already mentioned, the question about the quantity of
stable orbits was appeared at first at~\cite{Julia-1918}. As far
as it is known, the role of the negative Schwartz derivative with
the question of the quantity of fixed points was studied the first
by Singer at~\cite{Singer-1978}. The connection of the Schwartz
derivative and analytical functions is described
at~\cite{Hille-1976}. The role of the cross ratio in this question
is found by Guckenheimer at~\cite{Guckenheimer-1979}.

Let maps $f,\, g:\, [0,\, 1]\rightarrow [0,\, 1]$ be
non-topological conjugated, but such that $\underline{I}_f(1)
=\underline{I}_g(1)$.

\begin{figure}[htbp]
\begin{minipage}[h]{0.49\linewidth}
\begin{center}
\begin{picture}(100,120)
\put(100,0){\line(0,1){100}} \put(0,100){\line(1,0){100}}
\put(0,0){\vector(0,1){120}} \put(0,0){\vector(1,0){120}}

\Vidr{0}{0}{100}{100}

\Vidr{0}{50}{50}{100} \VidrTo{95}{85} \VidrTo{100}{50}

\put(87,87){\circle*{4}}

\qbezier[30](87,87)(87,60)(87,33)

\put(83,25){$x_f$}
\end{picture}
\end{center}
\centerline{a. Graph of $f$}
\end{minipage}
\hfill
\begin{minipage}[h]{0.49\linewidth}
\begin{center}
\begin{picture}(100,120)
\put(100,0){\line(0,1){100}} \put(0,100){\line(1,0){100}}
\put(0,0){\vector(0,1){120}} \put(0,0){\vector(1,0){120}}

\Vidr{0}{0}{100}{100}

\Vidr{0}{50}{50}{100} \VidrTo{70}{55} \VidrTo{100}{50}

\put(65.5,65.5){\circle*{4}}

\qbezier[20](65.5,65.5)(65.5,49.5)(65.5,33.5)

\put(61.5,25.5){$x_g$}

\end{picture}
\end{center}
\centerline{b. Graph of $g$}
\end{minipage}
\caption{Maps with the same itineraries of 1}\label{fig:19}
\end{figure}
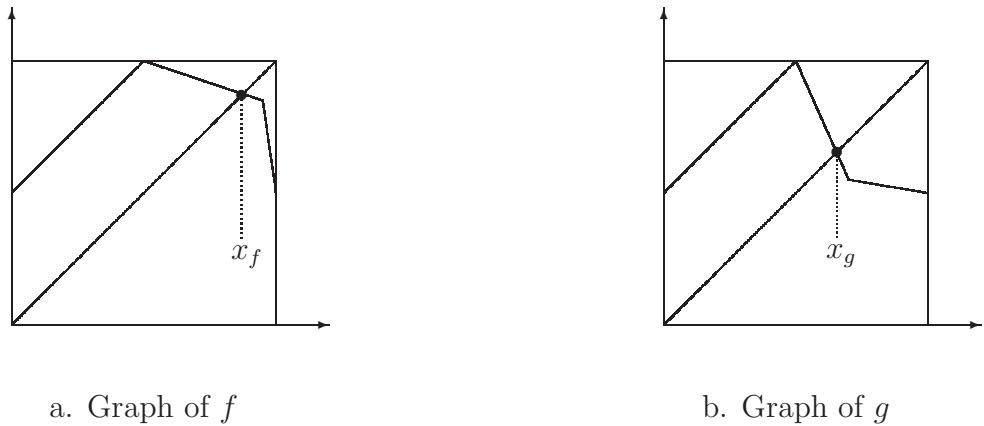

Maps $f$ and $g$ on the Figure~\ref{fig:19} have the mentioned
properties. These maps are piecewise linear, have two braking
points each and $f(0)=f(1)=g(0)=g(1)=1/2$, $f(1/2)=g(1/2)=1$.

Evidently, $f$ and $g$ are not topologically conjugated, because
the fixed point $x_f$ of $f$ is attracting, but the fixed point
$x_g$ of $g$ is repelling. In the same time, $\underline{I}_f(1) =
\underline{I}_g(0) = RC$.

In the same time, in the following  theorems, the topological
conjugation follows from the equality of itineraries with some
additional conditions.

\begin{theorem}[Theorem II.6.1 in~\cite{Collet}]\label{theor:20}
Suppose that $f$ and $g$ are $S$-unimodal and $f$ has no stable
periodic point. If $\underline{I}_f(0) = \underline{I}_g(0)$ and
$\underline{I}_f(1) = \underline{I}_g(1)$ then $f$ and $g$ are
topologically conjugate.
\end{theorem}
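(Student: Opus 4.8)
The plan is to realize the conjugacy through symbolic dynamics, using the itinerary map $x\mapsto\underline{I}(x)$ as a bridge between $f$ and $g$. First I would equip the space $\Sigma$ of $L,R,C$-sequences with the signed (parity–lexicographic) order, for which $C$ lies between $L$ and $R$ and the orientation reverses after each $R$, and record the two basic facts: the itinerary map $\iota_f:[0,1]\to\Sigma$ is non-decreasing for this order (immediate from $f$ increasing on $[0,1/2]$ and decreasing on $[1/2,1]$), and it intertwines $f$ with the shift, $\iota_f\circ f=\sigma\circ\iota_f$; the same holds for $g$. Since $f(1/2)=g(1/2)=1$, the hypothesis $\underline{I}_f(1)=\underline{I}_g(1)$ says precisely that $f$ and $g$ have the same kneading sequence $\nu$, and together with $\underline{I}_f(0)=\underline{I}_g(0)$ it pins down the admissible set $K=\{\underline{s}\in\Sigma:\ \underline{I}_f(0)\preceq\sigma^n\underline{s}\preceq\nu\ \text{for all }n\ge0\}$ (with the usual convention when a $C$ occurs); every realized itinerary lies in $K$ since all forward iterates of a point remain in $[0,1]$.

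The analytic heart of the argument is to show that $\iota_f$ is essentially a bijection onto $K$: injective off the grand orbit $\mathcal{O}$ of the turning point, with at most the standard two-point identifications on $\mathcal{O}$, and with image all of $K$. Injectivity up to those identifications is the absence of \emph{homtervals} — non-degenerate intervals on which every $f^n$ is monotone — and this is where $S$-unimodality enters: negative Schwarzian derivative excludes wandering intervals, while the hypothesis that $f$ has no stable periodic orbit excludes intervals attracted to a periodic orbit, so no homterval exists and $\iota_f$ has no plateau; surjectivity onto $K$ (no gaps) is the dual statement. The delicate point is that the same must hold for $g$: I would first deduce from $\underline{I}_g(1)=\nu=\underline{I}_f(1)$ that $g$ has no stable periodic orbit either — for an $S$-unimodal map a stable periodic orbit forces the kneading sequence into a specific renormalizable, periodic-type shape, which $\nu$ cannot have because $f$ realizes $\nu$ without such an orbit. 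With that, $g$ also has no homterval and $\iota_g$ is essentially a bijection onto the same $K$.

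Given this, set $h=\iota_g^{-1}\circ\iota_f:[0,1]\to[0,1]$, where $\iota_g^{-1}$ is the monotone inverse, well defined up to the harmless endpoint identifications which it respects. Then $h$ is non-decreasing as a composition of monotone maps; $h(0)=0$ and $h(1)=1$ since $\iota_f$ and $\iota_g$ agree at $0$ and $1$; $h$ is surjective because $\operatorname{im}h=\iota_g^{-1}(\operatorname{im}\iota_f)=\iota_g^{-1}(K)=[0,1]$, hence continuous (a monotone surjection of $[0,1]$ onto itself has no jumps); and $h$ has no plateau since $\iota_f$ has none and $\iota_g^{-1}$ is injective, so $h$ is a homeomorphism. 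Finally $h$ conjugates: from $\iota_f\circ f=\sigma\circ\iota_f$ and $\iota_g\circ g=\sigma\circ\iota_g$ one gets $\iota_g(h(f(x)))=\sigma(\iota_f(x))=\sigma(\iota_g(h(x)))=\iota_g(g(h(x)))$, so $h(f(x))$ and $g(h(x))$ share the same $g$-itinerary; as $\iota_g$ is injective on the dense complement of $\mathcal{O}$ and both sides vary continuously, $h\circ f=g\circ h$ on all of $[0,1]$.

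I expect the main obstacle to be the second paragraph: establishing the no-homterval and full-realization properties of $S$-unimodal maps without stable periodic orbits (the substantive use of the Schwarzian hypothesis, via the no-wandering-interval theorem and Singer-type arguments) and, within it, transferring the absence of stable periodic orbits from $f$ to $g$ using only the coincidence of kneading data. The remaining work — assembling $h$ and verifying it is an order-preserving homeomorphism intertwining $f$ and $g$ — is the routine bookkeeping outlined above.
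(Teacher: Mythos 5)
You should first be aware that the paper contains no proof of this statement: it is reproduced verbatim in the historical survey as Theorem~II.6.1 of Collet and Eckmann~\cite{Collet}, so there is nothing in the paper to compare your argument with. Your outline does follow the standard kneading-theory route of that reference — order the symbol space, use the itinerary maps $\iota_f,\iota_g$, show they are essentially order-preserving bijections onto the same admissible set, and set $h=\iota_g^{-1}\circ\iota_f$ — and the assembly in your third paragraph (monotonicity, surjectivity hence continuity, shift-equivariance hence conjugacy) is fine.

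The genuine gap is your second paragraph, and it is not just ``deferred work''. The decisive claims there are asserted rather than proved, and the one argument you do offer, for transferring ``no stable periodic orbit'' from $f$ to $g$, does not work as stated. You argue that a stable periodic orbit of $g$ would force the common kneading sequence $\nu$ into a ``renormalizable, periodic-type shape'' which $\nu$ cannot have because $f$ has no stable orbit. But the shape of $\nu$ alone cannot decide this: eventually periodic kneading sequences are entirely compatible with the absence of stable orbits (Misiurewicz-type maps; e.g.\ the full tent/quadratic map has kneading $RL^{\infty}$, its critical orbit landing on a \emph{repelling} fixed point), so from ``$f$ realizes $\nu$ without a stable orbit'' you cannot conclude that $\nu$ excludes stable orbits for $g$. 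The correct argument must use the negative Schwarzian of $g$ itself: by Singer's theorem a stable orbit of $g$ attracts the critical point or an endpoint, whose itineraries are exactly the data $\underline{I}(1)$ and $\underline{I}(0)$ that the hypotheses equate, and one then needs a genuine dynamical argument (one-sided monotone convergence within a lap, or a count of points per itinerary) to contradict the corresponding structure of $f$; this is precisely the content that makes Theorem~II.6.1 more than a combinatorial statement, and without it the single-valued monotone inverse $\iota_g^{-1}$ — i.e.\ injectivity of $\iota_g$ off the critical grand orbit — is unsupported. Relatedly, ``negative Schwarzian excludes wandering intervals'' is a much later and heavier theorem than what is needed or available here; the relevant ingredient (and the one the cited book and this paper both quote) is Guckenheimer's result that any interval of constant itinerary lies in the basin of a stable periodic orbit, which combined with the no-stable-orbit property yields the no-homterval statement. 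As written, then, your proposal is a correct plan whose central lemmas are missing, and the justification offered for the hardest of them is insufficient.
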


The following theorem can be considered as a variant or
Theorem~\ref{theor:20}.

\begin{theorem}[Theorem II.6.1.A in~\cite{Collet}]
Suppose that $f$ and $g$ are $S$-unimodal and $f$ has no stable
periodic point. If $\underline{I}_f(1) = \underline{I}_g(1)$ then
$f|_{[f(1),\, 1]}$ and $g|_{[g(1),\, 1]}$ are topologically
conjugate through a homeomorphism $\widehat{h}:\, [f(1),\, 1]$
onto $[g(1),\, 1]$.
\end{theorem}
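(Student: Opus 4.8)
The plan is to deduce the theorem from Theorem~\ref{theor:20} by restricting both maps to their invariant cores, re-normalising those cores to $[0,\, 1]$ by affine changes of variable, and observing that the single hypothesis $\underline{I}_f(1)=\underline{I}_g(1)$ already supplies \emph{both} itinerary conditions needed by Theorem~\ref{theor:20}.

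First, by condition (S4) the core $J(f)=[f(1),\, 1]$ satisfies $f(J(f))=J(f)$, and likewise for $g$; moreover the turning point $1/2$ lies in $J(f)$, since otherwise $f|_{J(f)}$ would be a monotone self-homeomorphism of an interval and would therefore possess a stable periodic orbit, contrary to the hypothesis on $f$. Hence $f|_{J(f)}$ is a unimodal map of $J(f)$ onto itself with turning point $1/2$, right endpoint $1=f(1/2)$ and left endpoint $f(1)$, and the same holds for $g|_{J(g)}$. Let $\varphi_f:\, J(f)\to[0,\, 1]$ and $\varphi_g:\, J(g)\to[0,\, 1]$ be the increasing affine bijections, and put $\widetilde f=\varphi_f\circ f\circ\varphi_f^{-1}$ and $\widetilde g=\varphi_g\circ g\circ\varphi_g^{-1}$. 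An affine change of variable preserves $\mathcal{C}^3$-smoothness and the sign of the Schwarzian derivative, so $\widetilde f$ and $\widetilde g$ are again $S$-unimodal, and $\widetilde f$ has no stable periodic orbit because $f$ has none.

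Now compare the itineraries of the endpoints. Since itineraries are intrinsic to the dynamics, $\underline{I}_{\widetilde f}(1)=\underline{I}_f(1)$ and $\underline{I}_{\widetilde g}(1)=\underline{I}_g(1)$, so the hypothesis gives $\underline{I}_{\widetilde f}(1)=\underline{I}_{\widetilde g}(1)$. The left endpoint of $J(f)$ is $f(1)=\varphi_f^{-1}(0)$, and the $f$-orbit of $f(1)$ is the $f$-orbit of $1$ advanced by one step, so $\underline{I}_{\widetilde f}(0)=\underline{I}_f(f(1))=\sigma\underline{I}_f(1)$, where $\sigma$ denotes the shift; in the same way $\underline{I}_{\widetilde g}(0)=\sigma\underline{I}_g(1)$, whence $\underline{I}_{\widetilde f}(0)=\underline{I}_{\widetilde g}(0)$ as well. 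Thus $\widetilde f$ and $\widetilde g$ satisfy both hypotheses of Theorem~\ref{theor:20}, which yields a homeomorphism $\widetilde h:\, [0,\, 1]\to[0,\, 1]$ with $\widetilde h\circ\widetilde f=\widetilde g\circ\widetilde h$. Then $\widehat h=\varphi_g^{-1}\circ\widetilde h\circ\varphi_f$ is a homeomorphism of $[f(1),\, 1]$ onto $[g(1),\, 1]$ conjugating $f|_{J(f)}$ to $g|_{J(g)}$, as required.

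The point that needs care — and the main obstacle — is that the notion of $S$-unimodality used here pins the turning point at $x=1/2$, whereas $\widetilde f$ and $\widetilde g$ have their turning points at $\varphi_f(1/2)$ and $\varphi_g(1/2)$, and one cannot move the turning point back to $1/2$ by a further affine map without disturbing the endpoints. This is only a normalisation issue: the definition of the itinerary, the shift relation $\underline{I}(f(x))=\sigma\underline{I}(x)$, Singer's theory and Theorem~\ref{theor:20} itself all remain valid verbatim for $S$-unimodal maps whose critical point is an arbitrary interior point, with $1/2$ replaced throughout by that point. Alternatively one bypasses the normalisation and argues directly from kneading theory: an $S$-unimodal map without a stable periodic orbit has no homtervals, so its itinerary map $\iota_f$ is injective on $J(f)$ and carries $J(f)$ order-preservingly onto the set of admissible sequences, a set determined by the kneading data $\underline{I}_f(1)$; the hypothesis $\underline{I}_f(1)=\underline{I}_g(1)$ then makes that admissible set the same for $g$, and $\widehat h=\iota_g^{-1}\circ\iota_f$ is an increasing bijection $J(f)\to J(g)$, hence a homeomorphism, and the shift relation makes it a conjugacy. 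In this second route the technical heart is the absence of homtervals and the kneading characterisation of the image of the itinerary map.
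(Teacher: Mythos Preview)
The paper does not give its own proof of this statement: it is quoted in the historical survey of Section~\ref{Sect:IstorOgl} as Theorem~II.6.1.A of \cite{Collet} and left without argument, so there is nothing to compare your proposal against directly.

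That said, your reduction to Theorem~\ref{theor:20} is exactly the intended relationship between the two results and is sound. The essential observation is the one you make: the left endpoint of the core $J(f)$ is $f(1)$, so its itinerary is $\sigma\underline{I}_f(1)$, and the single hypothesis $\underline{I}_f(1)=\underline{I}_g(1)$ therefore automatically forces the second itinerary condition $\underline{I}_{\widetilde f}(0)=\underline{I}_{\widetilde g}(0)$ required by Theorem~\ref{theor:20}. Your handling of the normalisation issue is honest; in \cite{Collet} the maps act on $[-1,\,1]$ with critical point $0$, and the whole theory (itineraries, kneading, Singer's theorem, Theorem~\ref{theor:20}) is developed for a fixed but arbitrary interior critical point, so the restriction and affine rescaling cause no genuine difficulty. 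The alternative route you sketch through injectivity of the itinerary map and absence of homtervals is in fact closer to how \cite{Collet} proves Theorem~\ref{theor:20} itself, so either way you are invoking the same underlying machinery.
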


\begin{theorem}[Theorem II.6.3 in~\cite{Collet}]
Let $f$ and $g$ be $S$-unimodal and assume that
$\underline{I}_f(1) = \underline{I}_g(1)$.

1. If $\underline{I}_f(1)$ is finite, then $f|_{J_f}$ and
$g|_{J_g}$ are topologically conjugate.

2. If $\underline{I}_f(1)$ is infinite and periodic of period $n$,
i.e. $\underline{I}_f(1) = D^{\infty}$ with $|D|=n$, then there
are two possibilities:

(a) If $|D|$ is odd then $f|_{J_f}$ and $g|_{J_g}$ are
topologically conjugate if and only if their stable periodic
orbits have the same period (which is $n$ or $2n$).

(b) if $|D|$ is even, then $f|_{J_f}$ and $g|_{J_g}$ are
topologically conjugate if their stable periodic orbits (which
have period $n$) are both stable from one side or stable from both
sides.

3. If $\underline{I}_f(1)$ is finite but not periodic, then
$f|_{J_f}$ and $g|_{J_g}$ are topologically conjugate.
\end{theorem}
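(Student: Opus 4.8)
The plan is to produce a semi-conjugacy from the combinatorial data and then repair it wherever it fails to be injective. Since $\underline{I}_f(1) = \underline{I}_g(1)$, the kneading theory recalled above (the semi-conjugation of maps with equal kneading sequences, cf.~\cite{Milnor-Thurston-77}) supplies a continuous non-decreasing surjection $h:\, J_f \rightarrow J_g$ with $h(f(x)) = g(h(x))$ for all $x \in J_f$. The first step is to locate the set on which $h$ is not strictly monotone: $h$ is constant precisely on the maximal closed intervals $M \subset J_f$ on which every iterate $f^k|_M$ is monotone (the ``homtervals''), because on any other interval the forward orbit of the turning point $1/2$ eventually separates two points and forces $h$ to be strictly increasing near them. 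The second step is to show such an $M$ must be attracted to a stable periodic orbit: an $M$ whose forward iterates never overlap would be a wandering interval, and an $S$-unimodal map has no wandering intervals, so the $\omega$-limit set of $M$ is a stable periodic orbit $O_f$ and $M$ lies in its basin; conversely every interval in such a basin is a homterval.

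It follows that if $f$ — equivalently, by the combinatorial invariance of the kneading sequence, $g$ — has no stable periodic orbit, then $h$ is already a homeomorphism and hence a conjugacy; this is the $J_f$-version of the no-stable-orbit case (Theorem~\ref{theor:20} and its variant above). Case~3, in which $\underline{I}_f(1)$ is not periodic, falls under this heading, since a non-eventually-periodic infinite kneading sequence cannot belong to an $S$-unimodal map carrying a stable periodic orbit (for such a map the itinerary of the critical value is finite or eventually periodic). The substance of the theorem is therefore the case where a stable periodic orbit $O_f$, of period $p$, is present. Then $h$ collapses each component of the full basin of $O_f$, and likewise $g$ collapses the basin of its orbit $O_g$. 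To obtain a conjugacy one keeps $h$ on $J_f$ minus the basin of $O_f$, where it is already injective, and \emph{replaces} it on the basin: on the immediate basin of $O_f$ (a union of $p$ intervals cyclically permuted by $f$) one picks monotone homeomorphisms onto the corresponding immediate-basin intervals of $O_g$, compatibly with the cyclic action and with the combinatorial position of each interval, and then propagates the choice to every other basin component by pulling back along $f$ using the functional equation. This yields a homeomorphism $\widetilde h:\, J_f \rightarrow J_g$ with $\widetilde h(f(x)) = g(\widetilde h(x))$ provided the basins of $O_f$ and $O_g$ are combinatorially the same — equal period $p$ and equal ``sidedness'' of the attractor (super-attracting, two-sided hyperbolic, or one-sided semi-stable) — because the two local pictures, periodic point interior to an immediate-basin component versus periodic point an endpoint of it, cannot be exchanged by any homeomorphism intertwining the dynamics.

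It remains to read off from $\underline{I}_f(1)$ exactly how much of this combinatorial type is pinned down and how much is the free parameter appearing in the hypotheses. If $\underline{I}_f(1)$ is finite (Case~1), the turning point is periodic, so $O_f$ is the super-attracting orbit through $1/2$; its period $n$ is visible in the sequence, both $O_f$ and $O_g$ are super-attracting of period $n$ with matching basin combinatorics, and $\widetilde h$ is a conjugacy with no further assumption. If $\underline{I}_f(1) = D^{\infty}$ with $|D| = n$ (Case~2), the attracting orbit avoids $1/2$; when $n$ is odd one shows its period is forced to be $n$ or $2n$ and the orbit is two-sided, and a period-$n$ attractor has an immediate basin with fewer components than a period-$2n$ one, so $f|_{J_f}$ and $g|_{J_g}$ are conjugate if and only if the periods coincide — this is (a); when $n$ is even the period is exactly $n$ and the only remaining freedom is one-sided versus two-sided stability, so conjugacy follows once that type agrees — this is (b). The step I expect to be the real obstacle is the no-wandering-interval input (itself a deep consequence of the negative-Schwarzian hypothesis (S3)) combined with the bookkeeping needed to check that the basin surgery is consistent with the functional equation on overlaps — that the homeomorphisms chosen on immediate-basin components, once pulled back through $f$, glue to a globally continuous $\widetilde h$ and agree with the retained part of $h$ along the frontier of the basin; verifying the odd/even dichotomy for the period of $O_f$ in Case~2, which rests on Singer's theorem, is the other delicate point.
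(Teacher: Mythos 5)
A preliminary remark on the comparison itself: the paper does not prove this statement. It is quoted, in the historical survey, from~\cite{Collet} (Theorem~II.6.3 there), so there is no proof of the paper's to measure you against; the natural benchmark is the route in~\cite{Collet} and the adjacent literature the paper cites. Your plan is essentially that standard route: take the monotone semiconjugacy supplied by equal kneading data (\cite{Milnor-Thurston-77}), identify its intervals of constancy with intervals contained in the basin of a stable periodic orbit (this is exactly the negative-Schwarzian input the paper attributes to~\cite{Guckenheimer-1979}), and then repair the collapsed part by a surgery on the immediate basin propagated through the functional equation. So the architecture is sound and matches the source.

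As a proof, however, there are concrete gaps. First, in Case~3 you exclude stable periodic orbits by arguing that a kneading sequence which is not eventually periodic cannot coexist with one. But Case~3 (read correctly as ``infinite but not periodic''; the paper's ``finite but not periodic'' is a typo you silently repaired) also contains sequences that are eventually periodic without being periodic, and for those your argument says nothing. What is needed is the sharper lemma, proved in~\cite{Collet} on the way to II.6.3, that for an $S$-unimodal map a stable periodic orbit forces $\underline{I}_f(1)$ to be finite or genuinely periodic (via Singer's theorem the critical orbit enters the immediate basin and the itinerary of the critical value follows the periodic pattern from the first symbol); without it, Case~3 is open for eventually-periodic-but-not-periodic kneading. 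Second, the combinatorial heart of Case~2 --- that $|D|$ odd forces the stable orbit to have period $n$ or $2n$, that $|D|$ even forces period exactly $n$ with only the one-sided/two-sided dichotomy left free, and that period plus sidedness is precisely the data which makes the basin surgery close up into a homeomorphism compatible with the retained semiconjugacy along the frontier of the basin --- is asserted rather than argued; you flag it yourself, but these assertions are the actual content of parts (a) and (b), so as written the text is a plan rather than a proof. Finally, the gluing step deserves a sentence: one must check that the original semiconjugacy already matches corresponding endpoints of basin components (it does, by monotonicity and surjectivity), otherwise the ``keep $h$ outside, replace it inside'' construction need not be continuous.
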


Guckenheimer has proved at~\cite{Guckenheimer-1979}, that any
$C^1$ unimodal map of the interval is semiconjugate to a quadratic
map and that the semi-conjugacy is strictly monotone in the
backward orbit of the turning point. The prove of this result uses
the assumption that their Schwarzian derivative of the map is
negative.

That a quadratic map is described by a very simple mathematical
formula is not very useful for the understanding of its dynamics
because this property is not preserved under iteration: the $n$-th
iterate of the map is a polynomial of degree $2n$. Singer made the
following fundamental observation at~\cite{Singer-1978}: if a map
has negative Schwarzian derivative then all of its iterates also
have this property. Independently Allwright~\cite{Allwright-1978}
observed something similar. Furthermore, quadratic maps turn out
to have negative Schwarzian derivative.

In the same paper, Singer proved that such maps have a finite
number of attracting periodic orbits, if they have a finite number
of turning points. This, because each of these orbits must attract
at least one critical point or one boundary point. Later
Guckenheimer showed~\cite{Guckenheimer-1979}, for unimodal maps
with negative Schwarzian derivative, that any interval whose
points have the same itinerary must be contained in the basin of
the unique attracting periodic orbit. In particular, if the map
has no attracting periodic orbit, the backward orbit of its
turning point is dense.

Milnor and Thurston proved~\cite{Milnor-Thurston-77}, that a
continuous, piecewise monotone map with positive topological
entropy is semiconjugate to a continuous, piecewise linear map
with constant slope and with the same entropy. This result is the
following theorem.

\begin{theorem}
Assume that $f:\, [0,\, 1]\rightarrow [0,\, 1]$ is a continuous,
piecewise (strictly) monotone map with positive topological
entropy $h_t(f)$ and let $s = exp(h_t(f))$. Then there exists a
continuous, piecewise linear map $T:\, [0,\, 1] \rightarrow [0,\,
1]$ with slope $\pm s$, and a continuous, monotone increasing map
$w:\, [0,\, 1] \rightarrow [0,\, 1]$ which is a semi-conjugacy
between $f$ and $T$, i.e. $$ w(f) = T(w).$$
\end{theorem}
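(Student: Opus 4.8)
The strategy is the one of Milnor and Thurston: build the semiconjugacy $w$ out of the combinatorics of the \emph{laps} (maximal intervals of monotonicity) of the iterates $f^n$. First I would set up the lap-number machinery. Write $\ell(g)$ for the number of laps of a continuous piecewise strictly monotone $g:[0,1]\to[0,1]$ and put $\ell_n=\ell(f^n)$. Since a turning point of $f^{m+n}=f^m\circ f^n$ is either a turning point of $f^n$ or an $f^n$-preimage of a turning point of $f^m$, the sequence is submultiplicative, $\ell_{m+n}\le \ell_m\ell_n$, so by Fekete's lemma $s:=\lim_n \ell_n^{1/n}=\inf_n\ell_n^{1/n}$ exists and $\ell_n\ge s^n$. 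By the Misiurewicz--Szlenk (and Rothschild) theorem $\log s=h_{t}(f)$, so the hypothesis $h_{t}(f)>0$ gives $s>1$, and this is the constant slope we must realize.

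Next I would introduce, for $x\in[0,1]$ and $n\ge 0$, the function $L_n(x)$ counting the laps of $f^n$ contained in $[0,x]$; it is a nondecreasing step function with $L_n(0)\in\{0,1\}$ and $L_n(1)=\ell_n$. The key is a combinatorial identity obtained by splitting $[0,x]$ at the turning points of $f$: on each resulting piece $f$ is a homeomorphism onto an interval, and laps of $f^{n}$ over that interval pull back bijectively; writing $f^{n+1}=f^n\circ f$ this gives, on the lap of $f$ containing $x$, a relation of the shape $L_{n+1}(x)=\pm\bigl(L_n(f(x))-L_n(f(\text{endpoint}))\bigr)+(\text{const})$, the sign recording whether $f$ is increasing or decreasing there. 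Normalizing $w_n:=L_n/\ell_n\in[0,1]$, this becomes $w_{n+1}(x)=\tfrac{\ell_n}{\ell_{n+1}}\bigl(\pm w_n(f(x))+\text{const}\bigr)$, so that in a limit in which $\ell_n/\ell_{n+1}\to 1/s$ one obtains $w(f(x))=\pm s\,w(x)+\text{const}$ on each lap of $f$: this is exactly an affine map of slope $\pm s$.

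Then I would construct $w$ itself. The $w_n$ are monotone, so Helly's selection theorem extracts a pointwise-convergent subsequence with monotone nondecreasing limit $w:[0,1]\to[0,1]$, $w(0)=0$, $w(1)=1$; one must extract the subsequence so that simultaneously $\ell_{n_k}/\ell_{n_k+1}$ converges (to $1/s$ after, if necessary, a resummation/Abel-averaging of the $L_n$ along the lines of Milnor--Thurston, needed precisely because $\ell_n/s^n$ may oscillate). Passing to the limit in the identity of the previous paragraph yields a well-defined map $T$ on $w([0,1])$ with $w\circ f=T\circ w$ and slope $\pm s$ on the images of the laps of $f$; extending $T$ by affine interpolation of slope $\pm s$ across the (countably many) gaps of $w([0,1])$ and checking the interpolation is consistent on $[0,1]$ gives the desired piecewise linear $T$ with $|T'|\equiv s$.

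\textbf{Where the difficulty lies.} The routine part is the lap bookkeeping and the Helly argument; the substantive obstacles are two. (i) Getting the slope to be \emph{exactly} $s$ (not merely $\le s$): this is where the precise value $\lim\ell_n^{1/n}=s$, together with the resummation that tames the oscillation of $\ell_n/s^n$, must be combined with careful orientation bookkeeping for the laps of $f$. (ii) Showing $w$ is \emph{continuous and surjective}: a jump of $w$ is an interval collapsed to a point, equivalently an interval on which all $f^n$ vary only on a vanishing fraction of the complexity; ruling out the bad configurations, and checking that the plateaus and gaps created by the normalization are compatible with the functional equation $w\circ f=T\circ w$, is the heart of the Milnor--Thurston argument and uses positivity of $h_{t}(f)$ in an essential way. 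Once $w$ is known to be a continuous surjection, the equation $w\circ f=T\circ w$ forces $T([0,1])\subseteq[0,1]$ and finishes the proof.
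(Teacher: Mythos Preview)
The paper does not actually prove this theorem: it appears in the historical review section (Section~\ref{Sect:IstorOgl}) as a cited result, attributed to Milnor and Thurston~\cite{Milnor-Thurston-77} with a remark that essentially the same result was obtained earlier by Parry~\cite{Parry-1966}. No proof is given or even sketched in the paper, so there is nothing to compare your proposal against.

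That said, your outline is a faithful summary of the Milnor--Thurston argument and identifies the genuine difficulties correctly. The lap-counting functions $L_n$, the submultiplicativity of $\ell_n$, the identification $\log s = h_t(f)$ via Misiurewicz--Szlenk, and the Helly compactness step are all standard and correctly stated. You are also right that the two real obstacles are (i) controlling the ratio $\ell_n/\ell_{n+1}$ so that the limiting slope is exactly $s$ --- Milnor and Thurston handle this by working with the generating function $\sum \ell_n t^n$ and its radius of convergence rather than taking raw subsequential limits --- and (ii) establishing continuity and surjectivity of $w$, which is where the kneading theory does the work. Your sketch is honest about where the argument is incomplete; filling in (i) and (ii) rigorously is essentially reproducing the Milnor--Thurston paper.
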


Essentially this result was already proved by Parry
at~\cite{Parry-1966}.

The proof of this theorem gives also a very important relationship
between the lap numbers $l(f^n)$ and the kneading invariants. The
definition of lap numbers is the following.

\begin{definition}
Let $f: [0,\, 1] \rightarrow [0,\, 1]$ be a continuous piecewise
monotone map. The lap number $l(f)$ of $f$, is the number of
maximal intervals on which f is monotone. In other words, $l(f) -
1$ is the number of turning points of $f$.
\end{definition}

\newpage
\section{Maps, whose
semigroup of iterations is a finite group}\label{sect-Grupy}

Consider the pair of topologically conjugated maps
$$ y(x) = \left\{\begin{array}{ll}
2x,& x< 1/2;\\
2-2x,& x\geqslant 1/2,
\end{array}\right.
$$ and
$$ \widetilde{y}(x) = 4x(1-x).
$$
We have paid crucial attention to these two maps in
Section~\ref{Sect:IstorOgl}. Maps $y$ and $\widetilde{y}$ are
representors of the following families of maps. For every $v\in
(0,\, 1)$ consider the piecewise linear maps $y_v(x)$, whose graph
consists on two line segments, which connect points with
coordinates $(0,\, 0)$, $(1/2,\, v)$ and $(1,\, 0)$. Also for
every $\alpha\in [0,\, 4]$ consider the maps
$\widetilde{y}_\alpha(x) = \alpha\, x(1-x)$. Consider both $y_v$
and $\widetilde{y}_\alpha$ as maps of $[0,\, 1]$ into itself.

The maps $y_v$ for $v=1/2$ is such that $y_v^2 = y_v$. This
example leads to the problem of the description of all $f: [0,\,
1] \rightarrow [0,\, 1]$ such that
\begin{equation}\label{eq:39}f^n = f,\end{equation}
where $n$ is fixed.

We will assume in this section that $n$ from the
equality~(\ref{eq:39}) is the smallest possible. With the use of
the algebraic notion of representation we may note, that the maps
$f$, which satisfies~(\ref{eq:39}), defines the exact
representation of the cyclic group with $n$ elements. In general,
iterations of the map form the cyclic semigroup with respect to
compositions. This semigroup will be a group $C_n$ if and only if
the map satisfies~(\ref{eq:39}).

\subsection{Groups which are exactly represented with iterations of continuous
interval maps}\label{sect-Grupy-1}

Notice, that it follows from the equality~(\ref{eq:39}) that
cardinalities of orbits of $f$ are uniformly bounded, or, more
precisely, for every $x_0\in I$ its orbit has not mote then $n$
elements.
\begin{lemma}\label{lema:27} \label{int-per-bilsh2} If
$f \in C^0 (I,I)$ has a periodical point of period $m, \ m
> 2$ then orbits are not uniformly bounded.
\end{lemma}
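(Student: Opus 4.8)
The plan is to argue by contradiction: assume the orbits of $f$ are uniformly bounded, say every orbit $\{x,f(x),f^2(x),\ldots\}$ has at most $N$ elements, and derive that $f$ cannot have a periodic point of period $m>2$. The first step is a pigeonhole observation on the image $f^N(I)$. For any $y=f^N(x)$, among the $N+1$ points $x,f(x),\ldots,f^N(x)$ of the orbit of $x$ two must coincide, $f^i(x)=f^j(x)$ with $0\le i<j\le N$; hence $f^i(x)$ is periodic with period $j-i\le N$, and therefore so is $y=f^{N-i}(f^i(x))$, being a point of that periodic orbit. Thus every point of $f^N(I)$ is periodic of period at most $N$, and consequently $f^{N!}$ restricted to $f^N(I)$ is the identity.

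The second step exploits that $f^N(I)$ is a \emph{genuine closed interval}: since $f^N$ is continuous and $I$ is compact and connected, $f^N(I)=[\alpha,\beta]$ for some $\alpha\le\beta$. This set is forward invariant, $f([\alpha,\beta])=f^{N+1}(I)\subseteq f^N(I)=[\alpha,\beta]$, so $g:=f|_{[\alpha,\beta]}$ is a continuous self-map of the interval $[\alpha,\beta]$ satisfying $g^{N!}(x)=x$ for all $x\in[\alpha,\beta]$. After an affine rescaling of $[\alpha,\beta]$ onto $[0,\,1]$, Theorem~\ref{theor:Nathan} applies with $p=N!$ and yields $g^2(x)=x$ for all $x$; hence every periodic point of $f$ lying in $[\alpha,\beta]$ has period $1$ or $2$.

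It then remains to observe that a periodic point $z$ of $f$ of period $m$ necessarily lies in $f^N(I)=[\alpha,\beta]$, because $z=f^{Nm}(z)=f^N\bigl(f^{N(m-1)}(z)\bigr)$. Combining with the previous step gives $m\le 2$, contradicting $m>2$; therefore the orbits of $f$ are not uniformly bounded, which is the claim. I expect the main (only genuinely nontrivial) point to be the clean reduction to Theorem~\ref{theor:Nathan}: recognizing that uniform boundedness of orbits forces $f^{N!}$ to be the identity on the \emph{interval} $f^N(I)$ — the interval-valued-image remark is what makes Natanson's theorem directly usable rather than leaving one with an awkward invariant set. Everything else (the pigeonhole count, forward invariance, the rescaling) is routine.
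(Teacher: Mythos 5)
Your argument is correct: the pigeonhole step does show that every point of $f^N(I)$ is periodic of period at most $N$, the image $f^N(I)=[\alpha,\beta]$ is a forward-invariant interval, and after rescaling Theorem~\ref{theor:Nathan} forces $g^2=\mathrm{id}$ there, which is incompatible with a period-$m$ point ($m>2$), since every periodic point of $f$ lies in $f^N(I)$ and its period under the restriction is the same as under $f$. However, this is a genuinely different route from the paper's. The paper proves the lemma directly and constructively: from the period-$m$ orbit ($m>2$) it extracts two adjacent orbit points $\beta_0<\beta_1$ with $f(\beta_1)\leq\beta_0<\beta_1<f(\beta_0)$ or $f[\beta_0,\beta_1]\supset[\beta_0,\beta_1]$, and then builds a backward chain $y_1,y_2,\ldots$ inside $(\beta_0,\beta_1)$ with $f(y_{i+1})=y_i$ and $f(y_1)=\beta_1$, so that $\mathrm{card}(\mathrm{orb}(y_i))=i+m$ grows without bound; no external result is used. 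Your proof is shorter but leans on Natanson's theorem, which the paper only cites from the literature, and it is by contradiction rather than exhibiting points with arbitrarily large orbits. There is also an architectural point worth noting: this lemma is the engine behind Theorem~\ref{theor:7}, which the paper advertises precisely as a generalization of Theorem~\ref{theor:Nathan}; your derivation is not circular (Natanson's theorem is proved in the cited reference, independently of this lemma), but it makes the paper's ``generalization'' rest on the very theorem being generalized, whereas the paper's elementary construction keeps the chain self-contained. Minor cosmetic points: you should note that the periodic orbit itself lies in $f^N(I)$, so $[\alpha,\beta]$ is automatically nondegenerate before rescaling, and that forward invariance is what lets you identify periods under $g$ with periods under $f$ — both of which you essentially have.
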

\begin{proof}Let $B$ be a periodical point of period
$m, \ m
> 2$ and let it consists of $\beta_1 < \beta_2 < ... < \beta_m$.
Denote by $\beta_0 = \max \{\beta \not\in B \ | \ f (\beta)
> \beta\}$ (the set $\{\beta \in B \ | \ f (\beta) > \beta\}$ is
non-empty). Since $\beta_0 \neq \beta_m$, then the set $\{\beta
\in B \ | \ \beta > \beta_0\}$ is also non-empty. Let $\beta_1 =
\min \{\beta \in B \ | \ \beta > \beta_0\}$. Since $m > 2$ then
either $f (\beta_0) \neq \beta_1$ or $f (\beta_1) \neq \beta_0$.
Assume that $f (\beta_0) \neq \beta_1$. From the definitions of
$\beta_0$ and $\beta_1$ obtain that $B \cap (\beta_0, \beta_1) =
\emptyset$. Whence either $f (\beta_1) \leq \beta_0 < \beta_1 < f
(\beta_0)$, or $f [\beta_0, \beta_1] \supset [\beta_0, \beta_1]$.
From this property obtain that there exists a sequence $(y_i, \ i
= 1, \ldots )$ such that1) $\beta_0 < y_1 < y_3 < ... < y_{2k+1} <
y_{2k+3} < ... < y_{2k+2} < y_{2k} < ... < y_4 < y_2 < \beta_1,$

2) $f (y_{i+1}) = y_i, \ \ i = 1, 2, \ldots$

3) $f (y_1) = \beta_1.$

Whence for every $i, \ card (orb (y_i) )= i + m$, whence
cardinalities of orbits are not uniformly bounded.\end{proof}

\begin{theorem}\label{theor:7}If a
continuous maps $f$ of interval into itself
satisfies~(\ref{eq:39}), then it satisfies the equality
\begin{equation}\label{eq:40}f^3 =
f.\end{equation}\end{theorem}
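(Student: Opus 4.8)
The plan is to reduce the statement to the action of $f$ on its own image and then apply Natanson's Theorem~\ref{theor:Nathan}. First I would put $Y := f(I)$, where $I=[0,\,1]$ is the interval in question. Since $f$ is continuous and $I$ is a compact interval, $Y$ is a compact subinterval of $I$, and $f(Y)=f(f(I))\subseteq f(I)=Y$, so $f$ restricts to a continuous self-map $f|_Y\colon Y\to Y$. The crucial elementary remark is that the hypothesis $f^{\,n}=f$ forces $f^{\,n-1}$ to be the identity on $Y$: any $y\in Y$ is of the form $y=f(x)$, and then $f^{\,n-1}(y)=f^{\,n-1}(f(x))=f^{\,n}(x)=f(x)=y$. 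In particular $(f|_Y)^{\,n-1}=\mathrm{id}_Y$, so $f|_Y$ is a continuous bijection of $Y$ of order dividing $n-1$.

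Next I would dispose of the degenerate case: if $Y$ is a single point then $f$ is constant, and $f^3=f$ is immediate. Otherwise $Y$ is a nondegenerate compact interval, and after an affine change of coordinates we may regard $g:=f|_Y$ as a continuous self-map of $[0,\,1]$ satisfying $g^{\,n-1}=\mathrm{id}$. If $n-1=1$ this already says $g=\mathrm{id}_Y$; if $n-1\geq 2$, then Theorem~\ref{theor:Nathan} applies with $p=n-1$ and gives $g^2=\mathrm{id}_Y$. Either way $(f|_Y)^2=\mathrm{id}_Y$, that is, $f^2$ acts as the identity on $Y=f(I)$.

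To finish, take an arbitrary $x\in I$. Then $f(x)\in Y$, so $f^3(x)=f^2\bigl(f(x)\bigr)=(f|_Y)^2\bigl(f(x)\bigr)=f(x)$. Since $x$ was arbitrary, $f^3=f$, which is exactly the equality~(\ref{eq:40}).

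I expect this argument to be essentially routine once Theorem~\ref{theor:Nathan} is granted; the only mildly delicate points are the passage to the image $Y$ and the verification that $f^{\,n-1}$ restricts to the identity there, plus isolating the constant case. Note that Lemma~\ref{lema:27} is not actually needed for this route — it supplies the complementary information that $f$ has no periodic point of period exceeding $2$, which one could instead combine with a direct case analysis of the cycle structure of $f$, but the reduction to Natanson's theorem is shorter. Note also that the minimality of $n$ assumed in this section is irrelevant: the proof works verbatim for every $n\geq 2$.
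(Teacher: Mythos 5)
Your proof is correct, but it takes a genuinely different route from the paper's. You pass to the image $Y=f(I)$, check that $f(Y)\subseteq Y$ and that $(f|_Y)^{n-1}=\mathrm{id}_Y$ (since every $y=f(x)$ satisfies $f^{n-1}(y)=f^{n}(x)=f(x)=y$), dispose of the constant case, and then invoke Theorem~\ref{theor:Nathan} with $p=n-1$ to get $(f|_Y)^2=\mathrm{id}_Y$, hence $f^3(x)=f^2(f(x))=f(x)$ for all $x$. The paper instead argues by contradiction entirely inside the interval: if $f^3(x_0)\neq f(x_0)$ while $f^n(x_0)=f(x_0)$, then $f(x_0)$ is a periodic point of period greater than $2$, and Lemma~\ref{lema:27} then produces orbits of arbitrarily large cardinality, contradicting the uniform bound $\mathrm{card}(\mathrm{orb}(x))\leq n$ forced by~(\ref{eq:39}). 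What each approach buys: the paper's argument is self-contained (Lemma~\ref{lema:27} is proved in the text, while Theorem~\ref{theor:Nathan} is only cited from the literature), and it is consistent with the paper's presentation of Theorem~\ref{theor:7} as a generalization of Natanson's result rather than a consequence of it; your reduction is shorter and shows that, once the cited Natanson theorem is granted, the ``generalization'' is really an immediate corollary --- indeed the two statements are essentially equivalent, since conversely $g^p=\mathrm{id}$ gives $g^{p+1}=g$, hence $g^3=g$ and, composing with $g^{-1}$, $g^2=\mathrm{id}$. Your closing remarks are also accurate: Lemma~\ref{lema:27} is not needed on your route, and the minimality of $n$ plays no role in either proof.
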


\begin{proof}If for the maps $f$ the
equality~(\ref{eq:40}) does not hold, but the
equality~(\ref{eq:39}) holds, then there exists $x_0\in [0,\, 1]$
such that $f^3(x_0)\neq f(x_0)$, but $f^n(x_0)=f(x_0)$. It means
that the point $f(x_0)$ is periodical with period 2 for the maps
$f$. It follows from Lemma~\ref{int-per-bilsh2} then cardinalities
of orbits for the maps $f$ are not uniformly bounded. This
contradicts to equality~(\ref{eq:39}) which means that all these
cardinalities are not grater than~$n$.\end{proof}

\subsection{Graph of the maps with finite group of
iterations}\label{sect-Grupy-2}

We will describe graphs of the maps of the interval into itself,
whose semigroup of iterations is a finite group. Let $f$
satisfies~(\ref{eq:39}). Consider cases when $f^2 = f$ and $f^2
\neq f$. Describe the graph of $f$ in each of these cases. Let
$f^2 =f$. Since $f$ is continuous, then the image of $I$ under $f$
is some continuous interval call $[a,\, b]$, i.e. $f(I) = [a,\,
b]$. For every $x_0\in I$ the condition $f^2 = f$ yields that
$f(x_0)$ is a fixed point of $f$, because
\begin{equation}\label{eq:41}f(f(x_0)) = f(x_0).\end{equation}
From the other hand, the condition $f(I)=[a,\, b]$ yields that for
every $x_0\in [a,\, b]$ there exists $x^*$ such that $f(x^*)=x_0$.
Not it follows from~(\ref{eq:41}) that $[a,\, b]$ is a fixed
points set of $f$. There reasonings prove the following theorem.

\begin{theorem}\label{theor:6}For a maps
$f\in C^{0}(I, I)$ the following properties are equivalent:

1) $f^{n}(x) =f(x)$ for all $x\in I$ and every $n$;

2) there exist real numbers $a$ and $b$ and maps $g\in
C^{0}([0,a],[a,b])$, $h\in C^{0}([b,1],[a,b])$ such that $0\leq
a\leq b\leq 1$ and the maps $f$ can be represented as
follows\begin{equation}\label{theor:2-5}
\begin{array}{cc}
 f(x)=\left \{ \begin{array}{c}
    g(x), ~~0\leq x\leq a,\\
   ~~x,  ~~~~a\leq x \leq b,\\
  h(x), ~~b\leq x\leq 1.
          \end{array}\right.
  \end{array}
\end{equation}\end{theorem}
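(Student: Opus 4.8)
The plan is to prove the two implications separately, exploiting the observation that property~(1), applied with $n=2$, already forces $f^2=f$; this is precisely the situation analysed in the paragraph immediately preceding Theorem~\ref{theor:6}, so most of the work for the direction (1)$\Rightarrow$(2) is done there and only needs to be assembled. Concretely, since $f$ is continuous and $I=[0,\,1]$ is compact and connected, the image $f(I)$ is a closed subinterval, say $f(I)=[a,\,b]$ with $0\leq a\leq b\leq 1$ (the degenerate case $a=b$, in which $f$ is a constant map, is permitted). For an arbitrary $x_0\in I$ the equality $f^2(x_0)=f(x_0)$ says that $f(x_0)$ is a fixed point of $f$; as $x_0$ runs through $I$, the value $f(x_0)$ runs through all of $[a,\,b]$, so every point of $[a,\,b]$ is fixed, i.e. $f|_{[a,\,b]}=\mathrm{id}$, and in particular $f(a)=a$, $f(b)=b$. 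Putting $g=f|_{[0,\,a]}$ and $h=f|_{[b,\,1]}$, these are continuous as restrictions of a continuous map, and their ranges lie inside $f(I)=[a,\,b]$, so $g\in C^{0}([0,a],[a,b])$ and $h\in C^{0}([b,1],[a,b])$. Since $[0,\,a]\cup[a,\,b]\cup[b,\,1]=[0,\,1]$ and the three prescriptions agree at the overlap points $a$ and $b$, this is exactly the representation~(\ref{theor:2-5}).

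For the converse (2)$\Rightarrow$(1) I would show that $[a,\,b]$ is a set of fixed points which absorbs every point after a single step. Indeed, for any $x\in[0,\,1]$ one has $f(x)\in[a,\,b]$: if $x\in[0,\,a]$ then $f(x)=g(x)\in[a,\,b]$; if $x\in[b,\,1]$ then $f(x)=h(x)\in[a,\,b]$; and if $x\in[a,\,b]$ then $f(x)=x\in[a,\,b]$. Because $f$ restricted to $[a,\,b]$ is the identity, every iterate $f^{k}$ restricted to $[a,\,b]$ is also the identity (trivial induction on $k$). Hence for $n\geq 2$ and any $x$ we get $f^{n}(x)=f^{n-1}(f(x))=f(x)$, since $f(x)\in[a,\,b]$; the case $n=1$ is vacuous. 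Thus $f^{n}=f$ for every $n$.

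I do not expect a genuine obstacle here: both directions are short once one records that property~(1) is equivalent to the single functional equation $f^2=f$ (one direction being the trivial specialisation $n=2$, the other being the induction just described). The only place deserving a word of care is the handling of the boundary and degenerate cases $a=0$, $b=1$, and $a=b$, where one of the intervals $[0,\,a]$, $[b,\,1]$ shrinks to a point or $[a,\,b]$ does; the formula~(\ref{theor:2-5}) remains literally correct in each of these cases, so no separate argument is needed.
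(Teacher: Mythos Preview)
Your proof is correct and follows essentially the same approach as the paper: the paper derives the representation~(\ref{theor:2-5}) from $f^2=f$ by setting $[a,b]=f(I)$ and observing that every point of the image is fixed, which is exactly your argument for (1)$\Rightarrow$(2). The paper does not spell out the direction (2)$\Rightarrow$(1) or the degenerate cases, so your write-up is in fact more complete, but the underlying ideas are identical.
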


\begin{figure}[htbp]
\begin{minipage}[h]{\linewidth}
\begin{center}
\begin{picture}(75,75) \qbezier(0,0)(0,35)(0,70)
\qbezier(0,0)(35,0)(70,0) \qbezier(0,0)(35,35)(70,70)
\qbezier(0,70)(35,70)(70,70) \qbezier(70,0)(70,35)(70,70)
\qbezier[20](0,50)(35,50)(70,50)\qbezier[20](0,30)(35,30)(70,30)
\qbezier(70,0)(70,35)(70,70) \linethickness{0.7mm}
\qbezier(30,30)(40,40)(50,50) \linethickness{0.1mm}
\qbezier(30,30)(20,35)(15,45) \qbezier(0,33)(10,50)(15,45)
\qbezier(51,51)(53,45)(60,35) \qbezier(60,35)(62,40)(65,45)
\qbezier(70,35)(67,40)(65,45) \put(3,27){$g(x)$}
\put(50,25){$h(x)$}
\end{picture}
\end{center}

\end{minipage}
\caption{Graph of $f$}\label{fig:25}
\end{figure}
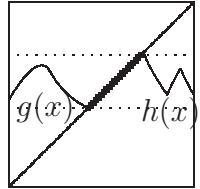

The graph of $f$ from Theorem~\ref{theor:6} is given at
Figure~\ref{fig:25}.

The case when the maps $f$ satisfies the equality $f^3=f$, but not
$f^2=f$ can be described by the following equivalent conditions.

\begin{theorem}\label{theor:5}
For a maps $f\in C^{0}(I, I)$ the following conditions are
equivalent:

1) $f^{3}(x) =f(x)$ for all $x\in I$;

2) There exist reals $a$ and $b$ maps $g\in C^{0}([0,a],[a,b])$,
$h\in C^{0}([b,1],[a,b])$ and a maps $\varphi\in
C^{0}([a,b],[a,b])$, such that the graph of $\varphi$ is
symmetrical in the line $y=x$, and $\varphi([a,b])=[a,b]$ and the
maps $f$ may be represented as follows.
\begin{equation}\label{theot:3-6}
\begin{array}{cc}
 f(x)=\left \{ \begin{array}{c}
    g(x), ~~0\leq x\leq a,\\
   ~~\varphi(x),  ~~~~a\leq x \leq b,\\
  h(x), ~~b\leq x\leq 1.
          \end{array}\right.
\end{array}
\end{equation}\end{theorem}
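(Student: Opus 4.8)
The plan is to prove both implications directly, along the lines of the proof of Theorem~\ref{theor:6}: the image $f(I)$ is a compact subinterval of $I$ on which the hypothesis $f^{3}=f$ collapses to $f^{2}=\mathrm{id}$, and the rôle played by ``being the identity'' in the $f^{2}=f$ case of Theorem~\ref{theor:6} is played here by ``being an involution'', which is exactly what symmetry of the graph in the line $y=x$ encodes.

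For $2)\Rightarrow 1)$ I would argue as follows. Since $g$, $h$ and $\varphi$ all take values in $[a,b]$, we get $f(I)\subseteq [a,b]$, hence $f|_{[a,b]}=\varphi$ and, for every $x\in I$, $f^{2}(x)=\varphi(f(x))$ and $f^{3}(x)=\varphi\bigl(\varphi(f(x))\bigr)$. Because the graph of $\varphi$ is symmetric in the line $y=x$, the relation $v=\varphi(u)$ is equivalent to $u=\varphi(v)$ for $u,v\in[a,b]$; applying this with $u=f(x)$ yields $\varphi(\varphi(f(x)))=f(x)$, i.e.\ $f^{3}(x)=f(x)$. (When $a=b$ this is trivial, $f$ being constant.)

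For $1)\Rightarrow 2)$ I would set $[a,b]:=f(I)$; this is a compact interval with $0\le a\le b\le 1$ since $f$ is continuous and $f(I)\subseteq I$. Every $y\in[a,b]$ has the form $y=f(x)$, and then $f^{3}(x)=f(x)$ rewrites as $f^{2}(y)=y$; hence $f^{2}=\mathrm{id}$ on $[a,b]$. Put $\varphi:=f|_{[a,b]}$, which maps $[a,b]$ into $f(I)=[a,b]$. From $f^{2}=\mathrm{id}$ on $[a,b]$ one reads off that $\varphi$ is injective, hence strictly monotone, hence a homeomorphism onto its image, and surjective onto $[a,b]$ (for $y\in[a,b]$ one has $y=\varphi(\varphi(y))$), so $\varphi([a,b])=[a,b]$ and $\varphi^{-1}=\varphi$; consequently the graph of $\varphi$, coinciding with the graph of its inverse, is symmetric in the line $y=x$. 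Finally take $g:=f|_{[0,a]}$ and $h:=f|_{[b,1]}$; these are continuous and their images lie in $f(I)=[a,b]$, so $g\in C^{0}([0,a],[a,b])$ and $h\in C^{0}([b,1],[a,b])$, and by construction $f$ is given by~(\ref{theot:3-6}).

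I do not expect a genuine obstacle here: the only points needing a little care are that $f(I)$ really is an interval of the stated form (continuity together with $f(I)\subseteq[0,1]$), the degenerate case $a=b$, and the standard fact that a continuous interval self-map which is its own inverse is a monotone homeomorphism. The substantive content is simply the observation that $f^{3}=f$ forces $f^{2}=\mathrm{id}$ on the image $f(I)$, and that symmetry of a function's graph in the diagonal is precisely the involution property $\varphi\circ\varphi=\mathrm{id}$.
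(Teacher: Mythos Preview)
Your proof is correct and rests on the same central observation as the paper's: setting $[a,b]=f(I)$, the identity $f^{3}=f$ forces $f^{2}=\mathrm{id}$ on $[a,b]$, and an involution on an interval is exactly a map whose graph is symmetric in $y=x$. You obtain $f^{2}=\mathrm{id}$ on $[a,b]$ by the one-line argument $y=f(x)\Rightarrow f^{2}(y)=f^{3}(x)=f(x)=y$; the paper instead applies Theorem~\ref{theor:6} to $g=f^{2}$ (using $g^{2}=g$) to reach the same conclusion.

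The paper then goes further than the stated theorem requires: it invokes Lemmas~\ref{fix-interval} and~\ref{lema:28} to show that, unless $\varphi$ is the identity, $f$ has a unique fixed point in $[a,b]$ and $\varphi$ is strictly decreasing with $\{a,b\}$ a $2$-cycle. None of this is needed for condition~2) as written, so your streamlined argument suffices; the paper's extra work yields the sharper structural fact that $\varphi$ is either the identity or a decreasing homeomorphism of $[a,b]$.
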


We will need two technical lemmas for the proof of this theorem.
\begin{lemma}\label{fix-interval}If cardinalities of
orbits of $f\in C(I,I)$ are uniformly bounded, then $Fix(f)$ is a
closed interval.\end{lemma}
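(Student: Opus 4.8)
The claim is: if the cardinalities of orbits of a continuous $f: I \to I$ are uniformly bounded, then $\mathrm{Fix}(f)$ is a closed interval. The set $\mathrm{Fix}(f)$ is automatically closed, being the preimage $\{x : f(x) - x = 0\}$ of a closed set under the continuous map $x \mapsto f(x)-x$; so the whole content is that $\mathrm{Fix}(f)$ is \emph{connected}, i.e. an interval (possibly empty or a single point). The plan is to argue by contradiction: suppose $\mathrm{Fix}(f)$ is not an interval. Then there exist $p < q$ with $f(p)=p$, $f(q)=q$, but some point $c \in (p,q)$ with $f(c) \neq c$. Without loss of generality assume $f(c) > c$ (the case $f(c) < c$ is symmetric). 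My first step would be to use the intermediate value theorem on $g(x) = f(x) - x$ on the subinterval $[c,q]$: since $g(c) > 0$ and $g(q) = 0$, and more usefully, since we want to trap the dynamics, I would look at the behaviour of $f$ near the ``last'' fixed point to the left of $c$ and the ``first'' fixed point to the right of $c$. Precisely, set $a = \sup\{x \le c : f(x) = x\}$ and $b = \inf\{x \ge c : f(x) = x\}$; these exist and are fixed points by closedness, with $p \le a < c < b \le q$, and $f(x) \neq x$ for all $x \in (a,b)$. By continuity and the sign of $f(c)-c$, we get $f(x) > x$ throughout $(a,b)$.

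The second step is to exhibit an orbit of unbounded cardinality inside $[a,b]$, contradicting the hypothesis. Fix any point $y_1 \in (a,b)$ close to $a$. Because $f > \mathrm{id}$ on $(a,b)$ and $f(a) = a$, for points near $a$ the image $f(y)$ is still in $(a,b)$ but strictly larger; iterating, the orbit of $y_1$ increases, so it stays in $[a,b]$ as long as it does not jump past $b$ — but if it ever reaches or exceeds $b$ we would want to choose $y_1$ even closer to $a$ to delay that. The cleaner route, mirroring the argument in Lemma~\ref{int-per-bilsh2}, is to construct a backward orbit: I would show $f$ maps some interval $[a, a+\delta]$ onto an interval containing $[a,a+\delta]$ (since $f(a)=a$ and $f(a+\delta) > a+\delta$), hence there is a fixed-point-free piece on which $f$ is, in the relevant sense, expanding away from $a$, and pick a decreasing sequence $y_1 > y_2 > y_3 > \cdots$ in $(a,b)$ with $f(y_{i+1}) = y_i$ and $y_i \to a$. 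Then each $y_i$ has orbit of size at least $i$ (the points $y_i, y_{i-1}, \ldots, y_1, f(y_1), f^2(y_1), \ldots$ are all distinct because the forward orbit from $y_1$ consists of points $> y_1$ while $y_2, \ldots, y_i < y_1$), so orbit cardinalities are unbounded — contradiction.

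The main obstacle, and the step needing the most care, is the construction of the backward-iterating sequence $(y_i)$ and the verification that it actually escapes toward $a$ without the construction stalling. The subtlety is that $f$ need not be monotone on $(a,b)$, so "$f$ maps $[a,a+\delta]$ over itself" must be extracted carefully from continuity plus the two facts $f(a)=a$ and $f>\mathrm{id}$ near $a$; one picks $\delta$ small enough that $f$ has no fixed point in $(a, a+\delta]$ and uses the intermediate value theorem on $f$ restricted to $[a, a+\delta]$ to find preimages. Once that preimage-chain is in place, I would also need to confirm the distinctness of all the listed orbit points — this is where the monotonic ordering $a < \cdots < y_{i+1} < y_i < \cdots < y_1$ together with $f(y_1) > y_1$ does the bookkeeping. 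I expect the forward-orbit part (showing $\mathrm{card}(\mathrm{orb}(y_i)) \ge i + \mathrm{const}$) to be routine once the backward chain exists, exactly as in the proof of Lemma~\ref{int-per-bilsh2}, so I would cite that lemma's technique rather than repeat it in full.
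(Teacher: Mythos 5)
Your argument is correct and is essentially the paper's own: in the fixed-point-free gap adjacent to a fixed point where $f>\mathrm{id}$ (resp.\ $f<\mathrm{id}$), one builds a strictly monotone chain of preimages $y_{i+1}$ with $f(y_{i+1})=y_i$, so that $\mathrm{card}(\mathrm{orb}(y_i))\geq i$, contradicting the uniform bound on orbit cardinalities. The only difference is cosmetic: the paper runs this construction on $g=f^2$, whose fixed-point set is $Per(f)$ by Lemma~\ref{int-per-bilsh2}, whereas you apply it directly to $f$, which addresses the stated claim about $\mathrm{Fix}(f)$ even more directly.
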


\begin{proof}Consider the maps $g=f^2$. It follows
from Lemma~\ref{lema:27} that $Per(f)=Fix(g)$. If the set of fixed
points of $g$ contains only one point, then lemma is trivial.
Otherwise denote by $\beta$ and arbitrary points of $Fix(g)$ and
$I_{\beta}$ the maximal interval of fixed points of $g$, which
contains $\beta$. Assume that $Fix(g)\neq I_\beta$. Let
$\beta_1\in Fix(g)\setminus I_\beta.$ Denote $I_\beta = [a,b]$ and
assume that $\beta_1<a.$ Since $Fix(g)$ is closed, then without
lose of generality assume that the interval $(\beta_1,\, a)$ does
not contain any fixed points of $g$. whence either $g(x)>x$ for
all $x\in (\beta_1,a)$ or $g(x)<x$ for all $x\in (\beta_1,a)$.
Assume that $g(x)>x$ (see Fig~\ref{fig:28}).
\begin{figure}[htbp]
\begin{minipage}[h]{\linewidth}
\begin{center}
\begin{picture}(75,75)

\qbezier(0,0)(0,35)(0,70) \qbezier(0,0)(35,0)(70,0)
\qbezier(0,0)(35,35)(70,70) \qbezier(0,70)(35,70)(70,70)
\qbezier(70,0)(70,35)(70,70) \linethickness{0.7mm}
\qbezier(45,45)(55,55)(60,60) \linethickness{0.1mm}
\qbezier(15,15)(25,80)(45,45) \qbezier[20](45,45)(45,20)(45,0)
\qbezier[20](60,60)(60,30)(60,0) \qbezier[7](15,15)(15,8)(15,0)
\put(18,2){$\beta_1$} \put(50,5){$I_\beta$}

\end{picture}
\end{center}
\end{minipage}
\caption{Graph of $f$}\label{fig:28}
\end{figure}
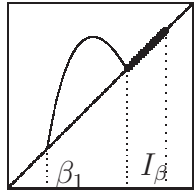

Let $y_1\in (\beta_1,a)$. Since $g(\beta_1,y_1)\supset
[\beta_1,y_1]$ then there exists a sequence $( y_1,\
i\geq 1\,)$ such that\\ 1) $\beta_1<\cdots <y_{i+1}<y_i<\cdots <y_2<y_1<a$\\
2) $g(y_{i+1})=y_i,\ i\geq 1$. Whence, we see that
$card(orb(y_i))=i$ for every $i\geq 1$. This means that in the
case $I_\beta \neq Fix(g)$ the cardinalities of orbits of $g$ are
not uniformly bounded. This prove lemma.\end{proof}

\begin{lemma}\label{lema:28}If $\{x_1,\, x_2\}$ is a
orbit of period 2 of the continuous maps $f$ of hte interval $I$,
then there is a fixed points $x_0$ of $f$ between $x_1$ and
$x_2$.\end{lemma}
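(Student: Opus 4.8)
The plan is to reduce the statement to a single application of the intermediate value theorem via an auxiliary function. Without loss of generality assume $x_1 < x_2$. Since $\{x_1,\, x_2\}$ is an orbit of period $2$ of $f$, the map $f$ exchanges the two points: $f(x_1) = x_2$ and $f(x_2) = x_1$ (neither of them can be a fixed point, or else the period would be $1$). Introduce the function $g(x) = f(x) - x$, which is continuous on $I$ because $f$ is continuous.

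Next I would evaluate $g$ at the two endpoints of the subinterval $[x_1,\, x_2]$. One gets $g(x_1) = f(x_1) - x_1 = x_2 - x_1 > 0$ and $g(x_2) = f(x_2) - x_2 = x_1 - x_2 < 0$, so $g$ takes a strictly positive value and a strictly negative value on $[x_1,\, x_2]$. By the intermediate value theorem applied to the continuous function $g$ on $[x_1,\, x_2]$, there exists $x_0 \in (x_1,\, x_2)$ with $g(x_0) = 0$, i.e. $f(x_0) = x_0$. This $x_0$ lies strictly between $x_1$ and $x_2$ and is a fixed point of $f$, which is the assertion of the lemma.

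There is essentially no serious obstacle in this proof; the only step deserving a word of care is the observation that $f$ genuinely swaps $x_1$ and $x_2$, so that $f(x_1) = x_2$ and $f(x_2) = x_1$ — this is what forces $g(x_1)$ and $g(x_2)$ to have opposite signs, and everything else is the routine sign-change argument. (If one prefers, the same reasoning shows more: between any two distinct points $a < b$ with $f(a) > a$ and $f(b) < b$ there is a fixed point, and the period-$2$ hypothesis is just a convenient way to produce such a pair.)
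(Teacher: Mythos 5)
Your proof is correct and is essentially the same argument as the paper's: the paper also assumes $x_1<x_2$, considers the auxiliary function $f(x)-x$, notes it is positive at $x_1$ and negative at $x_2$, and invokes the intermediate value theorem. Your write-up merely makes the sign computation and the swapping $f(x_1)=x_2$, $f(x_2)=x_1$ explicit.
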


\begin{proof}Without loss of generality assume that
$x_1<x_2$. Consider the function $h(x)=f(x)-x$. Then $h(x_1)>0$
and $h(x_2)<0$. Now lemma follows from the known theorem about
middle points of a continuous function.\end{proof}

\begin{proof}[Proof of Theorem~\ref{theor:5}] Noticen
that each maps, which satisfies the conditions of
Theorem~\ref{theor:6} also satisfies the both conditions of
Theorem~\ref{theor:5}.

Evidently, the condition 2. yields the condition 1. Prove the
converse implication. In the same way as in the proof of
Theorem~\ref{theor:6}, denote $f(I) = [a,\, b]$. Consider the maps
$g=f^2$. It follows from the equality $f^3 = f$ that $g^2 =g$. If
follows from Theorem~\ref{theor:6}, applied to the maps $g$, that
for every $x\in [a,\, b]$ the equality $g(x)=x$ holds, which means
that $f^2(x)=x$. Show that the set of fixed points of $f$ consists
of one point. Otherwise by Lemma~\ref{fix-interval} the fixed
points of $f$ is some interval $[a_1,\, b_1]$. From the
construction of  $a$ and $b$ obtain that $[a_1,\, b_1]\subseteq
[a,\, b]$. If $a_1=a$, and $b_1=b$ then $f$ satisfies part 2. of
Theorem~\ref{theor:6}. Assume that $a_1 > a$. Then it follows from
continuity of $f$ that there exists $\varepsilon$ such that
$f(x)<b_1$ for all $x\in (a_1-\varepsilon,\, a_1)$. Consider as
arbitrary point $x_0\in (a_1-\varepsilon,\, a_1)$. It follows from
Lemma~\ref{fix-interval} and the construction of points $a,\, b,\,
a_1$ and $b_1$ that $x_0$ is a periodical point of period 2 of the
maps $f$, whence $x_1 =f(x_0)\in (a,\, a_1)$ and $f(x_1)=x_0$. If
follows from Lemma~\ref{lema:28} that there is a fixed point $x_0$
of $f$ between points $x_0$ and $x_1$. This contradicts to
Lemma~\ref{fix-interval} because this means that $Fix(f)$ is not
an interval. The case $b_1<b$ should be considered analogically,
whence the set of fixed points of $f$ is consisted of the unique
number. Denote it by $c$. Whence, if the function $f$ does not
satisfy $f^2 =f$ then it's set of fixed points consists of the
unique point $c\in [a,\, b]$. If the function $f$ is no monotone
on $[a,\, b]$ then it would contradict to that every point of this
interval is either fixed, or periodical. Show that $f$ decrease on
$[a,\, b]$. Fix an arbitrary point $\widetilde{x}\in (a,\, c)$. If
$f(\widetilde{x})<c$ then by Lemma~\ref{lema:28} there exists a
fixed point of $f$ between $\widetilde{x}$ and $f(\widetilde{x})$,
which contradicts to the uniqueness of the fixed point. whence,
$f(\widetilde{x})>c$. The monotonicity of $f$ on $[a,\, b]$
together with $f(\widetilde{x})>c$ and $f(c)=c$ means that $f$
decrease on $[a,\, b]$. Since $f$ decrease on $[a,\, b]$ and
$[a,\, b]$ is a fixed points set of $f^2$ then $\{a,\, b\}$ is a
periodical orbit of the period 2.\end{proof}

\subsection{Topological conjugation of maps with finite group of
iterations}\label{sect-Grupy-3}

Let $f,\, g,\, h$ be continuous maps $[0,\, 1]\rightarrow [0,\,
1]$ and iterations of $f$ and $g$ for a finite group. Let $h$ be
invertible and $p_1,\, p_2,\, q_1$ and $q_2$ be such points that
$f([0,\, 1])=[p_1,\, q_1]$ and $g([0,\, 1])=[p_2,\, q_2]$. Let
$a_1,\ldots ,a_n,\, b_1,\ldots b_m$ be extremums of $f$ and $g$
correspondingly. Consider points $0$ and $1$ as extremums. Notice,
that end-points of intervals of fixed points of $f^2$ and $g^2$
are necessarily extremums. Assume that $g=h^{-1}(f(h))$.
\begin{definition}Call vectors $(v_1,\ldots,\, v_k)$
and $(w_1,\ldots,\, w_k)$ \textbf{equivalently ordered}, if for
every $i,\, j$ the equality $v_i\leq v_j$ is equivalent to
$w_i\leq w_j$.\end{definition}

\begin{notation}Denote the following
vectors.\noindent $ v_f = (f(a_1),\, \ldots,\, f(a_n))$,

\noindent $\widetilde{v}_f = (f(a_1),\, f^2(a_1)\ldots,\,
f(a_n),\, f^2(a_n) $,

\noindent $ w_g = (g(b_1),\, \ldots,\, g(b_m))$,

\noindent  $\widetilde{w}_g = (g(b_1),\, g^2(b_1),\, \ldots,\,
g(b_m),\, g^2(b_m)). $ \end{notation}

\subsubsection{Idempotent maps with increasing
conjugation}

Assume that $f$ and $g$ are idempotent maps and $h$ is an
increasing conjugation.
\begin{lemma}The equalities $h(p_2)=p_1$ and
$h(q_2)=q_1$ hold.\end{lemma}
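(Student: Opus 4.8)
The plan is to deduce the two equalities directly from the conjugacy identity, using only that $h$ is an \emph{increasing} homeomorphism. First I would record the elementary fact that an increasing homeomorphism $h$ of $[0,\,1]$ onto itself satisfies $h(0)=0$ and $h(1)=1$, and that for any $0\leq c\leq d\leq 1$ one has $h([c,\,d])=[h(c),\,h(d)]$. Both are immediate: continuity plus the intermediate value theorem give that $h$ maps $[c,\,d]$ onto an interval, while monotonicity forces that interval to have endpoints $h(c)$ and $h(d)$; applying this with $[c,\,d]=[0,\,1]$ gives $h([0,\,1])=[0,\,1]$ and the fixed endpoints.

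Next I would substitute into the conjugation relation. From $g=h^{-1}(f(h))$ we have $h(g(x))=f(h(x))$ for every $x\in[0,\,1]$, hence
$h\bigl(g([0,\,1])\bigr)=f\bigl(h([0,\,1])\bigr)=f([0,\,1])=[p_1,\,q_1]$.
Since by hypothesis $g([0,\,1])=[p_2,\,q_2]$, this reads $h([p_2,\,q_2])=[p_1,\,q_1]$. Applying the interval formula from the first step, $h([p_2,\,q_2])=[h(p_2),\,h(q_2)]$, and comparing the two descriptions of the same closed interval forces $h(p_2)=p_1$ and $h(q_2)=q_1$, which is exactly the assertion of the lemma.

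I do not expect any genuine obstacle here: the only point requiring a moment's care is the justification that an increasing self-homeomorphism of $[0,\,1]$ carries a closed subinterval onto the closed interval spanned by the images of its endpoints (and in particular fixes $0$ and $1$); everything else is a one-line substitution. It is worth noting, finally, that idempotency of $f$ and $g$ is not actually needed for this particular statement --- the argument works for any two continuous self-maps of $[0,\,1]$ conjugated by an increasing homeomorphism --- although it is of course part of the standing hypotheses of this subsection, and the conclusion $h(p_2)=p_1$, $h(q_2)=q_1$ will be used together with idempotency in the subsequent analysis of the pieces $g$ and $h$.
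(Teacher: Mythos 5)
Your argument is correct, and it takes a genuinely different route from the paper's. You work directly with images: from $h(g(x))=f(h(x))$ you get $h\bigl(g([0,\,1])\bigr)=f\bigl(h([0,\,1])\bigr)=f([0,\,1])=[p_1,\,q_1]$, and since an increasing homeomorphism sends $[p_2,\,q_2]$ onto $[h(p_2),\,h(q_2)]$, the endpoints must match. The paper instead argues through fixed points: since $g$ is idempotent, $[p_2,\,q_2]=g([0,\,1])$ is exactly $\mathrm{Fix}(g)$, and for $x_0\in \mathrm{Fix}(g)$ the assumption $h(x_0)>q_1$ (resp.\ $h(x_0)<p_1$) forces $f(h(x_0))<h(x_0)$ and hence, by monotonicity of $h^{-1}$, $g(x_0)<x_0$, a contradiction; this gives $h([p_2,\,q_2])\subseteq[p_1,\,q_1]$, and the symmetric argument applied to $f=h(g(h^{-1}))$ gives the reverse inclusion. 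Your version is shorter, avoids the contradiction, and — as you correctly observe — needs no idempotency at all, so it proves the statement for any continuous self-maps conjugated by an increasing homeomorphism; the paper's fixed-point style of argument has the advantage that it transfers verbatim to the later subsection on generators of $C_2$, where the analogous lemma is proved by replacing $\mathrm{Fix}(g)$ with $\mathrm{Fix}(g^2)$, whereas your image argument would of course also settle that case directly. The only hypothesis you should make explicit is that $h$ is a homeomorphism of $[0,\,1]$ \emph{onto} itself (so $h([0,\,1])=[0,\,1]$), which is indeed the standing assumption of the section.
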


\begin{proof}Consider $x_0\in Fix(g)$. Assume that
$h(x_0)>q_1$. Then $f(h(x_0))<h(x_0)$, because $f(h(x_0))\in
f([0,\, 1])=[p_1,\, q_1]$ and $h(x_0)>q_1$. Since $h$ increase,
then $h^{-1}$ also increase whence
$h^{-1}(f(h(x_0)))<h^{-1}(h(x_0))$. But the last equality means
that $g(x_0)<x_0$, which contradicts to that $x_0$ is a fixed
point of $g$. The case $h(x_0)<p_1$ should be considered
analogically. We have that $h([p_2,\, q_2])$ belongs to $[p_1,\,
q_1]$. Since it follows from $g=h^{-1}(f(h))$ that
$f=h(g(h^{-1}))$ then the same reasonings give that
$h^{-1}([p_1,\, q_1])\subseteq [p_2,\, q_2]$. The last finishes
the proof.\end{proof}

\begin{lemma}\label{extremumy-v-extremumy}$m=n$ and
for arbitrary $i,\, 1\leq i\leq n$ the equality $h(b_i)=a_i$ holds
and vectors $v_f$ and $w_g$ are equivalently ordered.\end{lemma}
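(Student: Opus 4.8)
The plan is to show that the increasing homeomorphism $h$ carries the extremum set of $g$ bijectively and monotonically onto the extremum set of $f$, and then to read off the ordering of $v_f$ from that of $w_g$ through the conjugacy relation. First I would recall the three sorts of points declared ``extremums'' here: the genuine local extrema, the endpoints $0$ and $1$, and the endpoints of the interval of fixed points, which for an idempotent map is exactly the interval $[a,\, b]$ of Theorem~\ref{theor:6}. Since $h$ is an increasing homeomorphism of $[0,\, 1]$ onto itself, $h(0)=0$ and $h(1)=1$, so the endpoints match. For the fixed-point interval, the theorems proved earlier give that $h$ sends fixed points of $g$ to fixed points of $f$ and $h^{-1}$ sends fixed points of $f$ to fixed points of $g$; hence $h$ maps $\mathrm{Fix}(g)$ bijectively onto $\mathrm{Fix}(f)$, and, both being closed intervals with $h$ increasing, $h$ sends the left (resp.\ right) endpoint of $\mathrm{Fix}(g)$ to the left (resp.\ right) endpoint of $\mathrm{Fix}(f)$.

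For the local extrema I would use the relation $g = h^{-1}\circ f\circ h$ locally: if $x_0$ is a local maximum (resp.\ minimum) of $g$, then on a neighbourhood of $x_0$ we have $g = h^{-1}\circ f\circ h$ with both $h$ and $h^{-1}$ strictly increasing, so $f$ has a local maximum (resp.\ minimum) at $h(x_0)$; the same argument applied to $f = h\circ g\circ h^{-1}$ gives the converse. Combining the three categories, $h$ is an order-preserving bijection from the extremum set $\{b_1<\dots<b_m\}$ of $g$ onto the extremum set $\{a_1<\dots<a_n\}$ of $f$; comparing cardinalities gives $m=n$, and comparing the two increasing enumerations gives $h(b_i)=a_i$ for every $i$. (The previous lemma, $h(p_2)=p_1$ and $h(q_2)=q_1$, is used implicitly to keep the relevant images inside the correct range intervals, but for the endpoints and the fixed-point endpoints this is automatic.)

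Finally, for the ordering claim I would invoke the commutativity of the defining diagram in the form $h(g(x))=f(h(x))$ for all $x$. Taking $x=b_i$ and using $h(b_i)=a_i$ gives $f(a_i)=h(g(b_i))$ for each $i$. Since $h$ is strictly increasing, $f(a_i)\leq f(a_j)$ is equivalent to $h(g(b_i))\leq h(g(b_j))$, which is equivalent to $g(b_i)\leq g(b_j)$; thus $v_f=(f(a_1),\dots,f(a_n))$ and $w_g=(g(b_1),\dots,g(b_m))$ are equivalently ordered. The one genuinely delicate point is the bijection of the extremum sets: one must check that the conjugacy neither creates nor destroys a local extremum and that the bookkeeping of $0$, $1$ and the two fixed-point endpoints produces no double counting or gap. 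Both facts follow cleanly from the explicit normal form of idempotent maps in Theorem~\ref{theor:6} together with $h$ being an increasing homeomorphism.
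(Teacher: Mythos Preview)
Your proof is correct and follows essentially the same approach as the paper. The paper argues directly that $g=h^{-1}\circ f\circ h$ is monotone on each interval $[h^{-1}(a_i),\,h^{-1}(a_{i+1})]$ with the same monotonicity type as $f$ on $[a_i,\,a_{i+1}]$, then sets $b_i=h^{-1}(a_i)$ and reads off the ordering of $v_f$ and $w_g$ from $g(b_r)=h^{-1}(f(h(b_r)))$ exactly as you do; your version is slightly more explicit in separating the three kinds of declared extrema (endpoints, fixed-interval endpoints, genuine local extrema), but the substance is identical.
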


\begin{proof}This lemma follows from that composition
of monotone functions is a monotone function. Consider an
arbitrary $i\in [1,\, n-1]$ and consider the interval of
monotonicity of $f$, call $[a_i,\, a_{i+1}]$. Let $f$ increase on
this interval. Show that in this case $g$ would increase on
$[h^{-1}(a_i),\, h^{-1}(a_{i+1})]$. For arbitrary $x_1,\, x_2\in
[a_1,\, a_{i+1}],\, x_1<x_2$ we have $h(x_1)<h(x_2)$; since $f$
increase on $[a_i,\, a_{i+1}]$, then $f(h(x_1))<f(h(x_2))$, whence
$h^{-1}(f(h(x_1)))<h^{-1}(f(h(x_2)))$, which proves the
monotonicity of $g$ on $[h^{-1}(a_i),\, h^{-1}(a_{i+1})]$. If the
maps $f$ decrease on $[a_i,\, a_{i+1}]$ then the prove of
degreasing of $g$ on $[h^{-1}(a_i),\, h^{-1}(a_{i+1})]$ is
analogical to the previous. Consider points $b_i=h^{-1}(a_i),\,
i=1,\ldots ,n$. It follows from the previous, that these points
are extremums of $g$ and for every $i=1,\ldots ,n$ the character
of extremum (minimum, of maximum) of $b_i$ coincides with one of
$a_i$. Let for some $r,\, s=1,\ldots, n$ the equality $g(b_r)\leq
g(b_s)$ holds, i.e. $h^{-1}(f(h(b_r)))\leq h^{-1}(f(h(b_s)))$.
Take $h$ from the both sides of the inequality and it would follow
from the increasing of $h$ that $f(h(b_r))\leq f(h(b_s))$. Since
$b_r=h^{-1}(a_r)$ and $b_s=h^{-1}(a_s)$ thn if follows from the
last equality that $f(a_r)\leq f(a_s)$ which is was necessary to
prove.\end{proof}

\begin{lemma}\label{spriajenist:constructyvno}Let
$m=n$, vectors $v_f$ and $w_g$ be equivalently ordered and numbers
of extremums, which are end-points of intervals $[p_1,\, q_1]$ and
$[p_2,\, q_2]$ coincide. Then $f$ and $g$ are
conjugated.\end{lemma}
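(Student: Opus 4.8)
The plan is to reverse the direction of Lemma~\ref{extremumy-v-extremumy} and \emph{construct} an increasing conjugating homeomorphism explicitly, piece by piece, using the canonical form of idempotent maps from Theorem~\ref{theor:6}. First I would fix notation: since in this subsubsection $f^2=f$ and $g^2=g$, Theorem~\ref{theor:6} gives $[p_1,\,q_1]=\mathrm{Fix}(f)$ with $f$ the identity on $[p_1,\,q_1]$, $f=f_L$ on $[0,\,p_1]$ and $f=f_R$ on $[q_1,\,1]$ for continuous $f_L\colon[0,\,p_1]\to[p_1,\,q_1]$, $f_R\colon[q_1,\,1]\to[p_1,\,q_1]$; similarly $[p_2,\,q_2]=\mathrm{Fix}(g)$ with pieces $g_L,g_R$. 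No extremum of $f$ lies strictly inside $[p_1,\,q_1]$, because there $f$ is the identity and there are no further fixed-interval endpoints; hence $p_1,\,q_1$ are \emph{consecutive} extremums, say $p_1=a_s$, $q_1=a_{s+1}$, and between two consecutive extremums $f$ is strictly monotone. The hypothesis that the extremums which are endpoints of the image intervals carry the same indices then forces $p_2=b_s$, $q_2=b_{s+1}$. Recall that the numbers $f(a_i)$ and $g(b_i)$ are the coordinates of $v_f$ and $w_g$, so equivalent orderedness says $f(a_i)\le f(a_j)\iff g(b_i)\le g(b_j)$ for all $i,j$; in particular $f$ increases on $[a_i,\,a_{i+1}]$ exactly when $g$ increases on $[b_i,\,b_{i+1}]$.

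The step I expect to be the main obstacle is pinning down the conjugacy on the fixed interval: an \emph{arbitrary} increasing homeomorphism $[p_2,\,q_2]\to[p_1,\,q_1]$ will not extend to a conjugation of $f$ and $g$ on $[0,\,1]$, because the ranges of corresponding monotone pieces of $f_L$ and of $g_L$ must be arranged to coincide. Equivalent orderedness is exactly what makes this possible. The finite sets $\{g(b_i)\}_i\subset[p_2,\,q_2]$ and $\{f(a_i)\}_i\subset[p_1,\,q_1]$ have the same relative order, and since $g(b_s)=p_2$, $g(b_{s+1})=q_2$ while $f(a_s)=p_1$, $f(a_{s+1})=q_1$, there is an increasing homeomorphism $\Phi_0\colon[p_2,\,q_2]\to[p_1,\,q_1]$ with $\Phi_0(g(b_i))=f(a_i)$ for every $i$ (take $\Phi_0$ piecewise linear with nodes at the $g(b_i)$). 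As $f$ and $g$ are the identity on their fixed intervals, $\Phi_0$ already conjugates $f|_{[p_1,q_1]}$ with $g|_{[p_2,q_2]}$.

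Finally I would extend $\Phi_0$ to $[0,\,p_2]$ and to $[q_2,\,1]$ one monotone piece at a time. On $[b_i,\,b_{i+1}]$ with $i<s$, the map $\Phi_0\circ g_L$ is a monotone homeomorphism onto $\bigl[\min(f(a_i),f(a_{i+1})),\,\max(f(a_i),f(a_{i+1}))\bigr]$, which is precisely the range of the monotone homeomorphism $f_L|_{[a_i,a_{i+1}]}$, and by equivalent orderedness the two increase or decrease together; so I set $\Phi|_{[b_i,b_{i+1}]}:=\bigl(f_L|_{[a_i,a_{i+1}]}\bigr)^{-1}\circ\Phi_0\circ g_L|_{[b_i,b_{i+1}]}$, an \emph{increasing} homeomorphism of $[b_i,\,b_{i+1}]$ onto $[a_i,\,a_{i+1}]$ matching endpoints to endpoints. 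These pieces glue to an increasing homeomorphism $[0,\,p_2]\to[0,\,p_1]$ with $f_L\circ\Phi=\Phi_0\circ g_L$; the symmetric construction on $[q_2,\,1]$ (with $f_R,g_R$) gives an increasing homeomorphism $[q_2,\,1]\to[q_1,\,1]$ with $f_R\circ\Phi=\Phi_0\circ g_R$. The three pieces agree at $p_2$ and $q_2$ (all send them to $p_1$, $q_1$), so $\Phi\colon[0,\,1]\to[0,\,1]$ is an increasing homeomorphism; and $f\circ\Phi=\Phi\circ g$ follows by checking the three cases $x\in[0,\,p_2]$, $x\in[p_2,\,q_2]$, $x\in[q_2,\,1]$, each immediate from the relations just established together with $f=\mathrm{id}$, $g=\mathrm{id}$ on the fixed intervals. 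Thus $g=\Phi^{-1}\circ f\circ\Phi$ and $f$, $g$ are topologically conjugate.
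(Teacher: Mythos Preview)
Your proof is correct and follows essentially the same construction as the paper: first define the conjugacy on the fixed interval $[p_2,\,q_2]$ as an increasing homeomorphism onto $[p_1,\,q_1]$ sending each $g(b_i)$ to $f(a_i)$ (the paper also suggests taking it piecewise linear), and then extend to each monotone piece $[b_i,\,b_{i+1}]$ outside the fixed interval by the formula $\Phi=(f|_{[a_i,a_{i+1}]})^{-1}\circ\Phi_0\circ g$, which is exactly the paper's $h=f_i^{-1}(h(g))$. Your write-up is in fact more careful than the paper's in justifying that $p_1,q_1$ are \emph{consecutive} extremums, that the monotonicity types of $f|_{[a_i,a_{i+1}]}$ and $g|_{[b_i,b_{i+1}]}$ agree, and that the pieces glue continuously at the nodes.
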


\begin{proof}Since for every $i\in [1,\, n]$ the
equality $a_i=h(b_i)$ holds, then plugging $b_i$ into
$g=h^{-1}(f(h))$ obtain $g(b_i)=h^{-1}(f(a_i))$, i.e. the graph of
$h$ pathes through the point $(g(b_i),\, f(a_i))$. Notice, that
since the condition $f(a_i)\leq f(a_j)$ is equivalent to
$g(b_i)\leq g(b_j)$ then the obtained restriction for $h$ does not
contradict to its monotonicity. More then this, since $h(p_2)=p_1$
and $h(q_2)=q_1$ then the restriction on $h$ is the restriction
only on the interval $[p_2,\, q_2]$. Take $h$ to be arbitrary
increasing on $[p_2,\, q_2]$ and passing through the mentioned
points. For example, take $h$ to be piecewise linear. Consider an
arbitrary $i=1,\ldots ,n$ such that $[a_i,\, a_{i+1}]$ is not
$[p_1,\, q_1]$. Consider an arbitrary $x_0\in [b_i,\, b_{i+1}]$.
Then the condition $g(x_0)=h^{-1}(f(h(x_0)))$ is equivalent to
$h(g(x_0))=f(h(x_0))$. Since $h$ is already defined on $[p_2,\,
q_2]$ then $h(g(x_0))$ is already defined. Since $x_0\in [b_i,\,
b_{i+1}]$ then $h(x_0)\in [a_i,\, a_{i+1}]$. Since $f$ increase on
$[a_i,\, a_{i+1}]$, then it has an inverse $f_i$, which is defined
on $[\min(f(a_i),\, f(a_{i+1})),\, \max(f(a_i),\, f(a_{i+1}))]$.
Whence, the equality $h(x_0)=f_i^{-1}(h(g(x_0)))$ is necessary for
the equality $h(g(x_0))=f(h(x_0))$.

For every $i=1,\ldots ,n$ define the maps $h$ on $[b_i,\,
b_{i+1}]$ by the formula $h=f_i^{-1}(h(g))$. It follows from the
construction of $h$, that if defined the conjugation of $f$
and~$g$.\end{proof}

\subsubsection{Generators of $C_2$ with increasing
conjugation}

Let $f$ and $g$ be maps, whose iterations for the group $C_2$ each
and $h$ be increasing homeomorphism.
\begin{lemma}If the equality $g = h^{-1}(f(h))$
holds, then the maps $h$ moves end-points of $[p_2,\, q_2]$ to
end-points of $[p_1,\, q_1]$.\end{lemma}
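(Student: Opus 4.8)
The plan is to read off the conclusion directly from the conjugation relation together with the monotonicity of $h$, using no dynamical input beyond the definitions. Recall that, by the standing hypotheses of the subsection, $[p_1,q_1]=f([0,1])$ and $[p_2,q_2]=g([0,1])$ are compact intervals, $h\colon[0,1]\to[0,1]$ is an increasing homeomorphism, and the relation $g=h^{-1}(f(h))$ rewrites as $h\circ g=f\circ h$. First I would note that $h$, being a homeomorphism of $[0,1]$ onto itself, satisfies $h([0,1])=[0,1]$.

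The heart of the proof is the chain
$$h\bigl([p_2,q_2]\bigr)=h\bigl(g([0,1])\bigr)=(h\circ g)([0,1])=(f\circ h)([0,1])=f\bigl(h([0,1])\bigr)=f([0,1])=[p_1,q_1].$$
Hence $h$ restricts to an increasing homeomorphism of the interval $[p_2,q_2]$ onto the interval $[p_1,q_1]$; since an increasing homeomorphism between two compact intervals carries the left end to the left end and the right end to the right end, we get $h(p_2)=p_1$ and $h(q_2)=q_1$, which is the assertion.

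Because the statement is placed in the subsection on generators of $C_2$, I would also sketch the derivation in the style used for the idempotent case. From $f^3=f$ one gets that $f^2$ is idempotent ($(f^2)^2=f^4=f^2$), and the same for $g^2$, while $h$ still conjugates $f^2$ with $g^2$. Theorem~\ref{theor:5} gives the normal form of $f$, from which $f^2([0,1])=[p_1,q_1]$ (the middle piece $\varphi$ already maps $[p_1,q_1]$ onto itself), and similarly $g^2([0,1])=[p_2,q_2]$; applying the lemma of the previous subsection to the idempotent triple $f^2,g^2,h$ then yields the claim. Equivalently one may argue through periodic points: $Per(f)=Fix(f^2)=[p_1,q_1]$ and $Per(g)=[p_2,q_2]$, and a topological conjugacy carries the periodic set onto the periodic set while preserving order, so again the endpoints are matched.

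There is no genuine obstacle here. The only points that deserve an explicit line are the identification $h([0,1])=[0,1]$ and the remark that an increasing homeomorphism of one interval onto another respects the endpoints — it is precisely this that excludes the a priori possibility $h(p_2)=q_1$; without the monotonicity hypothesis one could only conclude $\{h(p_2),h(q_2)\}=\{p_1,q_1\}$.
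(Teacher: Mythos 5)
Your proof is correct, but its main argument is genuinely different from the one in the paper. You compute the image directly: from $g=h^{-1}(f(h))$ you get $h\circ g=f\circ h$, hence $h([p_2,\,q_2])=h(g([0,\,1]))=f(h([0,\,1]))=f([0,\,1])=[p_1,\,q_1]$, using only that $h$ is a surjective self-homeomorphism of $[0,\,1]$, and then monotonicity matches left end to left end. This uses no dynamical input at all — neither the group structure of the iterations, nor $f^3=f$, nor fixed points — so it proves the statement in the wider generality of arbitrary continuous $f,\,g$ conjugated by an increasing homeomorphism, and it gets the set equality $h([p_2,\,q_2])=[p_1,\,q_1]$ in one step rather than as two opposite inclusions. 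The paper instead argues by contradiction through the second iterates: it takes $x_0\in Fix(g^2)$, supposes $h(x_0)>q_1$ or $h(x_0)<p_1$, uses $f^2([0,\,1])\subseteq[p_1,\,q_1]$ and the monotonicity of $h^{-1}$ to contradict $g^2(x_0)=x_0$, and then repeats the argument for $f=h(g(h^{-1}))$ to get the reverse inclusion; this implicitly relies on $[p_2,\,q_2]\subseteq Fix(g^2)$, i.e. on the relation $g^3=g$ coming from the $C_2$ hypothesis. Your two supplementary sketches (reducing to the idempotent lemma applied to $f^2,\,g^2$, or arguing through $Per(f)=Fix(f^2)=[p_1,\,q_1]$) are essentially the paper's route, and they are also sound since $f^2([0,\,1])=f([0,\,1])$ follows at once from $f^3=f$; the only caveat is that these variants do need the dynamical hypotheses, whereas your primary argument does not, which is precisely what it buys you.
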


\begin{proof}If follows from $g = h^{-1}(f(h))$ that
$g^2 = h^{-1}(f^2(h))$.

Assume that $x_0\in Fix(g^2)$ and $h(x_0)>q_1$. Then
$f^2(h(x_0))<h(x_0)$, because $f^2([0,\, 1])=[p_1,\, q_1]$.
Applying $h^{-1}$ to both sides of the inequality obtain
$g^2(x_0)=h^{-1}(f^2(h(x_0)))< h^{-1}(h(x_0))=x_0$, which
contradicts to $x_0\in Fix(g^2)$. The analogical consideration of
the case $h(x_0)<p_1$ yields that $h([p_2,\, q_2])\subset [p_1,\,
q_1]$. It follows from the equality $f=h(g(h^{-1}))$ that $f^2 =
h(g^2(h^{-1}))$, whence $h^{-1}([p_1,\, q_1])\subset [p_2,\,
q_2]$. Obtain from this that $h([p_2,\, q_2])=([p_1,\, q_1])$,
which means that $h(p_2)=p_1$ and $h(q_2)=q_1$.\end{proof}

\begin{lemma}\label{monotinni-f-g}If $[p_1,\,
q_1]=[p_2,\, q_2]=[0,\, 1]$, then maps $f$ and $g$ are conjugated
via the increasing homeomorphism.\end{lemma}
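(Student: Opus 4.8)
The plan is first to use Theorem~\ref{theor:5} to normalize $f$ and $g$, and then to build the conjugating homeomorphism by prescribing it freely on one side of the fixed point and propagating it across the other side through the conjugacy relation.

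Since $f$ and $g$ each generate $C_2$, we have $f^3=f\neq f^2$ and $g^3=g\neq g^2$. Applying Theorem~\ref{theor:5} to $f$ with $f([0,1])=[p_1,q_1]=[0,1]$ forces the numbers $a,b$ of that theorem to be $0$ and $1$, so $f$ equals the map $\varphi$ there: $f$ is a continuous self-map of $[0,1]$ whose graph is symmetric in the line $y=x$, with $\varphi([0,1])=[0,1]$. Symmetry of the graph of a function in $y=x$ makes $f$ an involution ($f\circ f=\mathrm{id}$), hence a bijection; being continuous it is strictly monotone, and it must be decreasing (an increasing involution of $[0,1]$ is the identity, which would give $f^2=f$, contradicting that $f$ generates $C_2$), so $f$ is a decreasing homeomorphism with $f(0)=1$, $f(1)=0$. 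Let $c$ be its unique fixed point; since a decreasing surjective $f$ cannot be constant near an endpoint, $c\in(0,1)$. The same applies to $g$, with unique fixed point $d\in(0,1)$. In particular $f$ restricts to decreasing homeomorphisms $[0,c]\to[c,1]$ and $[c,1]\to[0,c]$, and $g$ to decreasing homeomorphisms $[0,d]\to[d,1]$ and $[d,1]\to[0,d]$.

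Next I would define $h$ on $[0,d]$ to be any increasing homeomorphism onto $[0,c]$ with $h(0)=0$ and $h(d)=c$ (e.g.\ $h(x)=(c/d)x$), and extend it to $[d,1]$ by the formula $h(x)=f\bigl(h(g(x))\bigr)$. This is legitimate because for $x\in[d,1]$ we have $g(x)\in[0,d]$, where $h$ is already defined, and $f$ carries $h([0,d])=[0,c]$ onto $[c,1]$. At $x=d$ the formula gives $h(d)=f(h(d))$, whose only solution is $h(d)=c$, so the two pieces agree; on $[d,1]$ the map $h$ is the composition of $g$ (decreasing), then $h$ on $[0,d]$ (increasing), then $f$ (decreasing), hence increasing, with $h(d)=c$ and $h(1)=f(h(0))=f(0)=1$. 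Therefore $h\colon[0,1]\to[0,1]$ is an increasing homeomorphism.

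Finally one checks the conjugacy equation $h\circ g=f\circ h$ on all of $[0,1]$: on $[d,1]$ it holds by construction (apply $f$ to both sides of the defining formula and use $f^2=\mathrm{id}$), and on $[0,d]$, given $x\in[0,d]$ we have $g(x)\in[d,1]$, so the defining formula applies to $g(x)$ and yields $h(g(x))=f\bigl(h(g(g(x)))\bigr)=f(h(x))$ by $g^2=\mathrm{id}$. Hence $g=h^{-1}(f(h))$ with $h$ increasing, as required. I do not anticipate a real obstacle here: the substance is already in Theorem~\ref{theor:5}, which hands us the involutive decreasing-homeomorphism normal form; the main thing to be careful about is the bookkeeping of which monotone branch of $f$ and $g$ maps which half-interval to which, together with the two degenerate checks ($c,d\in(0,1)$ and the exclusion of $f^2=f$, $g^2=g$).
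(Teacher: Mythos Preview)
Your proof is correct and follows essentially the same strategy as the paper: define $h$ arbitrarily as an increasing homeomorphism on one side of the fixed point, then extend to the other side using the conjugacy relation and the involution property $f^2=g^2=\mathrm{id}$. The paper phrases this slightly differently---it introduces auxiliary idempotent maps $f_1,g_1$ (equal to $f,g$ on one half-interval and to the identity on the other), builds $h$ as a conjugacy of $f_1,g_1$, and then checks that the same $h$ conjugates $f,g$ by inverting the monotone branches---but unwinding that construction gives exactly your formula $h(x)=f(h(g(x)))$ on the second half, so the underlying argument is the same; your presentation is simply more direct, invoking Theorem~\ref{theor:5} up front for the decreasing-involution normal form rather than leaving it implicit.
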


\begin{proof}Let $x_0^f$ and $x_0^g$ be fixed points
of $f$ and $g$ correspondingly. Define the new maps $f_1$ and
$g_1$ as follows. $f_1(x)=f(x)$ for $x\leq x_0^f$ and $f_1(x)=x$
for $x>x_0^f$; also define $g_1(x)=g(x)$ for $x\leq x_0^g$ and
$g_1(x)=x$ for $x>x_0^g$.

Construct the increasing maps $h$, which defines the conjugation
of $f_1$ and $g_1$. The the graph of $h$ passes through the point
$(x_0^g,\, x_0^f)$ and be defined arbitrary on $[x_0^g,\, 1]$.
Then for arbitrary $x\in [x_0^g,\, 1]$ we have $h(x)\in [x_0^f,\,
1]$, whence $f_1(h(x))=h(x)$, which means that $h^{-1}(f_1(h(x)))=
h^{-1}(h(x)) =x$ and $g_1=h^{-1}(f_1(h))$ on $[x_0^g,\, 1]$. Since
$g_1$ is monotone on $[0,\, x_0^g]$, then for the conjugateness of
$f_1$ and $g_1$ it is enough to take $h=f_1^{-1}(h(g_1))$ on
$[0,\, x_0^g]$, where $h$ at the right hand side of defined
earlier, because $g_1^{-1}([0,\, x_0^g])=[x_0^g,\, 1]$.

Show that the map $h$, which is constructed in this way, defines
the conjugation of maps $f$ and $g$. The equality
$g(x)=h^{-1}(f(h(x)))$ for all $x\in [0,\, x_0^g]$ follows from
the construction. Since there are compositions of monotone
functions from the left and from the right of the equality
$g_1(x)=h^{-1}(f_1(h(x)))$, then write the the equality of there
inverses and obtain $g_1^{-1}(x)=h^{-1}(f_1^{-1}(h(x)))$ for $x\in
[0,\, x_0^g]$. Since $f^2=id$ and $g^2=id$ then $g_1^{-1}(x)=g(x)$
and $f_1^{-1}(h(x))=f(h(x))$ for $x\in [x_0^g,\, 1]$, which means
that $g = h^{-1}(f(h))$. The last proves the Lemma.\end{proof}

\begin{lemma}\label{extremumy-v-extremumy2}Let the
maps $g=h^{-1}(f(h))$ be conjugated to $f$ via increasing $h$.
Then for arbitrary $i\in [1,\, n]$ the equality $h(b_i)=a_i$ holds
and vectors $v_f$ and $w_g$ are equivalently ordered.\end{lemma}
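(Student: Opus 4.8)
The plan is to argue exactly as in the proof of Lemma~\ref{extremumy-v-extremumy} for idempotent maps, replacing the few places where idempotency was used by references to the iterates $f^2$ and $g^2$, which are idempotent here because $f^3=f$ and $g^3=g$ (so $f^4=f^2$, $(f^2)^2=f^2$, and likewise for $g$). Throughout I would use two elementary facts: a composition of monotone functions is monotone, and an increasing homeomorphism of $[0,\,1]$ fixes $0$ and $1$. I would also start from the previous lemma of this subsection, which gives $h(p_2)=p_1$, $h(q_2)=q_1$, so that $h$ maps $[p_2,\,q_2]$ onto $[p_1,\,q_1]$.

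First I would show that $h$ sets up a bijection between the monotonicity intervals of $f$ and those of $g$. If $f$ is monotone on $[a_i,\,a_{i+1}]$, then for $x_1<x_2$ in $[h^{-1}(a_i),\,h^{-1}(a_{i+1})]$ one has $h(x_1)<h(x_2)$, hence $f(h(x_1))$ and $f(h(x_2))$ are ordered with a sign depending only on the monotonicity type of $f$ on $[a_i,\,a_{i+1}]$, and applying the increasing map $h^{-1}$ preserves this sign; so $g=h^{-1}\circ f\circ h$ is monotone of the same type on $[h^{-1}(a_i),\,h^{-1}(a_{i+1})]$. Running the same argument with $f$ and $g$ interchanged (using $f=h\circ g\circ h^{-1}$) shows that $g$ changes monotonicity type at the points $h^{-1}(a_i)$ and nowhere else. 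Combining this with the facts that $h$ fixes $0$ and $1$, and that $g^2=h^{-1}\circ f^2\circ h$ forces $\mathrm{Fix}(g^2)=h^{-1}(\mathrm{Fix}(f^2))$ so that $h^{-1}$ carries the endpoints of the fixed-point intervals of $f^2$, in increasing order, to those of $g^2$, I conclude that the complete ordered list of extremums of $g$ is $h^{-1}(a_1)<\dots<h^{-1}(a_n)$. Hence $m=n$ and $h(b_i)=a_i$ for every $i$.

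For the second assertion I would substitute $x=b_i$ into $g=h^{-1}(f(h))$, obtaining $g(b_i)=h^{-1}(f(h(b_i)))=h^{-1}(f(a_i))$. Since $h^{-1}$ is strictly increasing, $g(b_r)\leq g(b_s)$ is equivalent to $f(a_r)\leq f(a_s)$ for all $r,s$, which is precisely the statement that $v_f$ and $w_g$ are equivalently ordered. This is also exactly the input needed for the constructive converse later (the analogue of Lemma~\ref{spriajenist:constructyvno}).

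The hard part will be the bookkeeping of extremums in the middle step: verifying that $i\mapsto b_i=h^{-1}(a_i)$ is genuinely an index-preserving bijection of the extremum sets, i.e. that passing to $g$ neither creates new monotonicity-change points nor new fixed-point-interval endpoints, and that the ordering of $h^{-1}(a_1),\dots,h^{-1}(a_n)$ is inherited from that of $a_1,\dots,a_n$ because $h^{-1}$ is increasing. Once that is set up, the monotone-composition computations are routine and parallel the idempotent case almost verbatim.
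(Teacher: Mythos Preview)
Your proposal is correct and follows essentially the same route as the paper: the paper's proof is literally just ``analogical to the proof of Lemma~\ref{extremumy-v-extremumy}'', and what you have written out is precisely that analogical argument based on compositions of monotone functions. Your extra care about the endpoints of $\mathrm{Fix}(f^2)$ and $\mathrm{Fix}(g^2)$ (via $g^2=h^{-1}\circ f^2\circ h$) is a detail the paper's one-line proof leaves implicit, but note that the original proof of Lemma~\ref{extremumy-v-extremumy} never actually used idempotency, so no real replacement is needed.
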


\begin{proof} Proof of this lemma is analogical to the
proof of Lemma~\ref{extremumy-v-extremumy}.
\end{proof}
\begin{lemma}\label{lema:29}Let $f$ and $g$ be
conjugated via $h$ and ${g=h^{-1}(f(h))}$. Then vectors
$\widetilde{v}_f$ and $\widetilde{w}_g$ are equivalently
ordered.\end{lemma}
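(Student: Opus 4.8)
The plan is to reduce the statement to the single observation that every entry of $\widetilde{w}_g$ is obtained from the corresponding entry of $\widetilde{v}_f$ by applying one fixed strictly increasing bijection of $[0,1]$, namely $h^{-1}$. Once this is established, equivalent orderedness is immediate, because a strictly increasing map both preserves and reflects the relation $\leq$.

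First I would record the two ingredients already available. By Lemma~\ref{extremumy-v-extremumy2} (applied in the present setting of generators of $C_2$ conjugated via the increasing homeomorphism $h$) we have $m=n$ and $h(b_i)=a_i$ for every $i$, so the vectors $\widetilde{v}_f$ and $\widetilde{w}_g$ indeed have the same length $2n$ and their entries are indexed in parallel: position $2k-1$ of $\widetilde{v}_f$ is $f(a_k)$ and position $2k$ is $f^2(a_k)$, while position $2k-1$ of $\widetilde{w}_g$ is $g(b_k)$ and position $2k$ is $g^2(b_k)$. Next, from $g=h^{-1}(f(h))$ one gets $h(g(x))=f(h(x))$ for all $x$, and iterating (this is exactly the commutativity of the $n$-th power diagram established earlier in the excerpt) gives $h(g^p(x))=f^p(h(x))$, hence $g^p(x)=h^{-1}(f^p(h(x)))$ for $p=1,2$.

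Then I would combine these: for every $k$ and $p\in\{1,2\}$,
$$
g^p(b_k)=h^{-1}\bigl(f^p(h(b_k))\bigr)=h^{-1}\bigl(f^p(a_k)\bigr).
$$
Writing $\varphi:=h^{-1}$, which is a strictly increasing continuous bijection of $[0,1]$ since $h$ is an increasing homeomorphism, this says that the $j$-th entry of $\widetilde{w}_g$ equals $\varphi$ of the $j$-th entry of $\widetilde{v}_f$, for every $j\in\{1,\dots,2n\}$. Consequently, for any two indices $i,j$, the inequality $(\widetilde{v}_f)_i\leq(\widetilde{v}_f)_j$ holds if and only if $\varphi\bigl((\widetilde{v}_f)_i\bigr)\leq\varphi\bigl((\widetilde{v}_f)_j\bigr)$, i.e.\ if and only if $(\widetilde{w}_g)_i\leq(\widetilde{w}_g)_j$. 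This is precisely the definition of $\widetilde{v}_f$ and $\widetilde{w}_g$ being equivalently ordered, which finishes the proof.

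There is essentially no serious obstacle here; the only points that require care are bookkeeping ones: making sure the entries of the two $2n$-vectors are matched up correctly (which is guaranteed by $h(b_i)=a_i$ from Lemma~\ref{extremumy-v-extremumy2}), and invoking the iterate-conjugacy identity $g^p=h^{-1}(f^p(h))$ rather than re-deriving it. I would also remark that the argument never uses the $C_2$ hypothesis beyond what is needed to apply Lemma~\ref{extremumy-v-extremumy2}, so the same reasoning will reappear verbatim when proving the corresponding statement (Theorem~\ref{th:f2=f3}) in the finitely-many-monotonicity-intervals setting.
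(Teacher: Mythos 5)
Your proof is correct and follows essentially the same route as the paper: the paper likewise shows $h(g(b_i))=f(a_i)$ and then $h(g^2(b_i))=f^2(a_i)$ by plugging $x=g(b_i)$ into $g=h^{-1}(f(h))$, and concludes by the monotonicity of $h$, which is just the inverse-side phrasing of your observation that every entry of $\widetilde{w}_g$ is $h^{-1}$ of the corresponding entry of $\widetilde{v}_f$.
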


\begin{proof}Plug the value $x=g(b_i)$ into the
equality $g(x)=h^{-1}(f(h(x)))$ and obtain
$g^2(b_i)=h^{-1}(f(h(g(b_i))))$. Since according to
Lemma~\ref{spriajenist:constructyvno} the graph of $h$ pathes
through the points $(g(b_i),\, f(a_i))$ for all $i,\, 1\leq i\leq
n$, then the obtained equality is equivalent to
$g^2(b_i)=h^{-1}(f(f(a_i)))$. Plugging the left and right part of
the obtained equality into $h$, obtain $h(g^2(b_i))=f^2(a_i)$,
which means that $h$ passes through points $(g^2(b_i),\,
f^2(a_i))$ for all $i\in [1,\, n]$. Whence, the lemma follows from
the monotonicity of $h$, and that it passes through $(g(b_i),\,
f(a_i))$ for all $i\in [1,\, n]$.\end{proof}

\begin{theorem}\label{th:f2=f3}The maps $f$
and $g$ are conjugated via the increasing homeomorphism if and
only if when $m=n$ numbers or extremums, which are end-points of
intervals $f([0,\, 1])$ and $g([0,\, 1])$ coincide and vectors
$\widetilde{v}_f$ and $\widetilde{w}_g$ are equivalently ordered.
\end{theorem}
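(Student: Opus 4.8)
The plan is to reduce to two cases via Theorem~\ref{theor:7} and then assemble the lemmas of this section. Since the semigroups of iterations of $f$ and $g$ are finite groups, Theorem~\ref{theor:7} gives $f^3=f$ and $g^3=g$, so each of $f,g$ is either idempotent or a generator of $C_2$; and whenever $g=h^{-1}(f(h))$ we have $g^k=h^{-1}(f^k(h))$, so $f^2=f\iff g^2=g$. For the necessity direction, assume $g=h^{-1}(f(h))$ with $h$ increasing. The endpoint lemmas of the two subsubsections give $h(p_2)=p_1$ and $h(q_2)=q_1$, hence $f([0,1])$ and $g([0,1])$ have the same number of endpoints; then Lemma~\ref{extremumy-v-extremumy} (idempotent case), respectively Lemma~\ref{extremumy-v-extremumy2} together with Lemma~\ref{lema:29} ($C_2$ case), give $m=n$ and that $\widetilde v_f$ and $\widetilde w_g$ are equivalently ordered. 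In the idempotent case $\widetilde v_f$ is just $v_f$ with every entry repeated, so equivalent ordering of $\widetilde v_f,\widetilde w_g$ follows from that of $v_f,w_g$.

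For the sufficiency direction, assume $m=n$, the endpoint counts coincide, and $\widetilde v_f,\widetilde w_g$ are equivalently ordered. First I would record that, since $(f^2)^2=f^2$ and $f^2([0,1])\subseteq[p_1,q_1]=\mathrm{Fix}(f^2)$, the points $p_1,q_1$ are endpoints of an interval of fixed points of $f^2$, hence lie among the $a_i$; likewise $p_2,q_2$ lie among the $b_j$. Comparing the entries $f(a_i),f^2(a_i)$ at the index with $a_i=p_1$ (and using the endpoint-count condition in the degenerate case $p_1=q_1$) shows $f$ is idempotent iff $g$ is, so the dichotomy is respected by the hypotheses. If both are idempotent, equivalent ordering of $\widetilde v_f,\widetilde w_g$ is equivalent to that of $v_f,w_g$, and Lemma~\ref{spriajenist:constructyvno} produces the conjugacy, settling this case.

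The substantive case is when $f$ and $g$ both generate $C_2$. By Theorem~\ref{theor:5}, on $[p_1,q_1]$ the map $f$ is a decreasing involution $\varphi_f$ with a unique fixed point $c_f$, and $g|_{[p_2,q_2]}=\varphi_g$ with fixed point $c_g$. I would first construct $h$ on the core $[p_2,q_2]$ by the method in the proof of Lemma~\ref{monotinni-f-g}: take $h$ increasing on $[c_g,q_2]$ with $h(c_g)=c_f$ and $h(q_2)=q_1$, and extend to $[p_2,c_g]$ by $h=\varphi_f\circ h\circ\varphi_g$, so that $h\varphi_g=\varphi_f h$ on $[p_2,q_2]$ and $h(p_2)=p_1$. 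The free part on $[c_g,q_2]$ is then chosen so that $h$ passes through each point $(g(b_i),f(a_i))$ with $g(b_i)\ge c_g$; this is possible exactly because equivalent ordering of the doubled vectors forces $g(b_i)\le g(b_j)\iff f(a_i)\le f(a_j)$ and, using $x\le\varphi(x)\iff x\le c$ for a decreasing involution with fixed point $c$, also $g(b_i)\ge c_g\iff f(a_i)\ge c_f$; the relation $h\varphi_g=\varphi_f h$ then makes $h$ automatically pass through $(g^2(b_i),f^2(a_i))$ for the remaining indices. Finally I would extend $h$ off the core exactly as in the proof of Lemma~\ref{spriajenist:constructyvno}: impose $h(b_i)=a_i$, and on each $[b_i,b_{i+1}]$ with $[a_i,a_{i+1}]\ne[p_1,q_1]$ set $h=f_i^{-1}\circ(h\circ g)$, where $f_i$ is the monotone branch of $f$ on $[a_i,a_{i+1}]$ and $g(x)\in[p_2,q_2]$ so the inner value is already defined; then one checks the pieces glue into an increasing homeomorphism of $[0,1]$ with $g=h^{-1}(f(h))$, including at the switch points $p_1,q_1$.

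The main obstacle I expect is precisely these last two steps in the $C_2$ case: showing that the conjugacy between the two decreasing involutions can be threaded through all of the interpolation data $(g(b_i),f(a_i))$ at once — this is where equivalent ordering of the \emph{doubled} vectors, rather than merely of $v_f$ and $w_g$, is genuinely used — and that the branch-by-branch extension off the core is well defined, continuous, increasing and surjective. The bookkeeping of which extremums lie below, inside, and above the core $[p_i,q_i]$, and of their matching between $f$ and $g$, is the delicate part; everything else is an adaptation of constructions already carried out for the idempotent subsubsection and for Lemma~\ref{monotinni-f-g}.
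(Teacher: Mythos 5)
Your proposal is correct and follows essentially the same route as the paper: necessity is obtained from Lemma~\ref{lema:29} (with the endpoint and extremum lemmas), and sufficiency by building the conjugacy on the core $[p_2,\,q_2]$ through the points $(g(b_i),\,f(a_i))$, $(g^2(b_i),\,f^2(a_i))$ as in Lemma~\ref{monotinni-f-g}, then extending to $[0,\,1]\setminus[p_2,\,q_2]$ as in Lemma~\ref{spriajenist:constructyvno}. You merely spell out details (the idempotent versus $C_2$ dichotomy and the compatibility of the interpolation data with the ordering of the doubled vectors) that the paper's terse proof leaves implicit.
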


\begin{proof}The necessity is proved in
Lemma~\ref{lema:29}.

Prove the conjugateness of $f$ and $g$.

Since vectors $\widetilde{v}_f$ and $\widetilde{w}_g$ are
equivalently ordered, then there exists the maps $h_1:\, [p_2,\,
q_2]\longrightarrow [p_1,\, q_1]$, which defined the conjugation
of $f$ and $g$ on the set of their periodical points and passes
through points $(g(b_i),\, f(a_i))$ and $(g^2(b_i),\, f^2(a_i))$
for all $i,\, 1,\ldots,\, n$. This maps should be constructed in
the same way as in the proof of Lema~\ref{monotinni-f-g}.

The maps $h$ should be constructed on the set $[0,\, 1]\setminus
[p_2,\, q_2]$ in the same manner as in the proof of
Lemma~\ref{spriajenist:constructyvno}.
\end{proof}

\subsubsection{Decreasing conjugation}

Describe the classes of conjugacy via decreasing homeomorphism of
continuous interval maps, whose iterations form a group. An
arbitrary decreasing maps $h$ can be represented as
$h(x)=1-h_1(x)$, where $h_1(x)=1-h(x)$ is increasing map. Whence
the conjugation vis increasing $h$ is a composition of
conjugations via $1-x$ and via increasing homeomorphism. The
action of $1-x$ on the graph of $f$ can be interpreted as sequent
symmetrical reflecting it in the line $x=1/2$ and symmetrical
reflecting in the line $y=1/2$.

\begin{theorem}\label{theor:8}
Maps $f$ and $g$ are conjugated via decreasing homeomorphism if
and only if $m=n$, numbers of extremums, which correspond to end
points of $f([0,\, 1])$ and $g([0,\, 1])$ coincide and vectors
$\widetilde{v}_f$ and $w_g^*$ are equally ordered, where
$$ w_g^*=(g(b_m),\, g^2(b_m),\, \ldots,\, g(b_1),\, g^2(b_1)).$$
\end{theorem}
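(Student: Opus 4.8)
The plan is to reduce Theorem~\ref{theor:8} to Theorem~\ref{th:f2=f3} (the increasing case) by factoring the orientation‑reversing conjugation through the canonical involution $r(x)=1-x$. First I would record the elementary fact that $f$ and $g$ are conjugated via a decreasing homeomorphism if and only if $f$ is conjugated via an \emph{increasing} homeomorphism to $\overline g:=r\circ g\circ r$. Indeed, if $h$ is a decreasing homeomorphism of $[0,\,1]$ then $h_1:=h\circ r$ is an increasing one (decreasing composed with decreasing), $h=h_1\circ r$, $h^{-1}=r\circ h_1^{-1}$, and $g=h^{-1}\circ f\circ h$ rearranges to $\overline g=rgr=h_1^{-1}\circ f\circ h_1$; conversely, if $\overline g=h_1^{-1}fh_1$ for an increasing $h_1$, then $h:=h_1\circ r$ is decreasing and $h^{-1}fh=r\overline g r=g$. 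Since $\overline g^{\,n}=r\circ g^n\circ r$, the semigroup of iterations of $\overline g$ is again a finite group, so Theorem~\ref{th:f2=f3} is applicable to the pair $f,\overline g$.

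Next I would express the combinatorial data of $\overline g$ in terms of that of $g$. As $r$ is a decreasing homeomorphism, the extremums of $\overline g$ (including $0$, $1$ and the ends of the intervals of fixed points of $\overline g^{\,2}$) are precisely the points $1-b_i$; in increasing order they read $1-b_m<1-b_{m-1}<\ldots<1-b_1$, so $\overline g$ has $m$ extremums, and $\overline g([0,\,1])=r(g([0,\,1]))$ has the same number of boundary points as $g([0,\,1])$. Moreover $\overline g^{\,k}(1-b_i)=r\big(g^k(r(1-b_i))\big)=1-g^k(b_i)$ for $k=1,2$, so listing the extremums of $\overline g$ in increasing order one gets
$$\widetilde{w}_{\overline g}=\big(1-g(b_m),\,1-g^2(b_m),\,\ldots,\,1-g(b_1),\,1-g^2(b_1)\big)=\mathbf{1}-w_g^*,$$
where $\mathbf{1}$ is the all‑ones vector and $w_g^*$ is exactly the vector displayed in the statement (it is $\widetilde w_g$ with its length‑$2$ blocks written in reverse order).

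Finally I would invoke Theorem~\ref{th:f2=f3} applied to $f$ and $\overline g$: these are conjugated via an increasing homeomorphism if and only if $m=n$, the numbers of end points of $f([0,\,1])$ and $\overline g([0,\,1])$ (hence of $g([0,\,1])$) coincide, and $\widetilde v_f$ is co‑ordered with $\widetilde w_{\overline g}=\mathbf{1}-w_g^*$. Combining this with the equivalence of the first paragraph gives the asserted criterion in terms of $\widetilde v_f$ and $w_g^*$. I expect the only genuinely delicate point to be the bookkeeping of the two geometric operations hidden inside conjugation by $r$ — reflection of the graph in the line $x=1/2$, which reverses the order of the extremums and turns $\widetilde w_g$ into the block‑reversed $w_g^*$, and reflection in $y=1/2$, which replaces each value by its complement and produces the $\mathbf{1}-(\cdot)$ — together with the observation that $r$ interchanges the minimum/maximum character of each extremum, which creates no inconsistency precisely because Theorem~\ref{th:f2=f3} is applied with $\overline g$ appearing on both sides of the commutative square. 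Everything else is supplied directly by Theorem~\ref{th:f2=f3} and the definitions.
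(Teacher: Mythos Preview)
Your reduction to Theorem~\ref{th:f2=f3} via the involution $r(x)=1-x$ is exactly the route the paper sketches in the paragraph immediately preceding Theorem~\ref{theor:8}, and your bookkeeping of the extremums of $\overline g=rgr$ is correct: the extremums are $1-b_m<\cdots<1-b_1$ and $\widetilde w_{\overline g}=\mathbf{1}-w_g^*$.

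The gap is in your last step. What you have actually derived is that $f$ and $g$ are conjugate via a decreasing homeomorphism if and only if $\widetilde v_f$ is co-ordered with $\mathbf{1}-w_g^*$. But being co-ordered with $\mathbf{1}-w_g^*$ means having the \emph{reverse} order to $w_g^*$ (since $1-w_i\le 1-w_j\iff w_i\ge w_j$), not being co-ordered with $w_g^*$ as the statement asserts. A concrete check: take $g=f$ idempotent and centrally symmetric, say $f(0)=f(1)=1/2$ and $f$ the identity on $[1/3,2/3]$; then $f$ is self-conjugate via $r$, yet $\widetilde v_f=(1/2,1/2,1/3,1/3,2/3,2/3,1/2,1/2)$ and $w_f^*=(1/2,1/2,2/3,2/3,1/3,1/3,1/2,1/2)$ are visibly \emph{not} co-ordered, while $\widetilde v_f=\mathbf{1}-w_f^*$ exactly. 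So the condition you obtain does not coincide with the one printed in the statement; you cannot simply say it ``gives the asserted criterion.'' Either the intended reading is ``$\widetilde v_f$ and $w_g^*$ are reverse-co-ordered'' (equivalently, $\widetilde v_f$ co-ordered with $\mathbf{1}-w_g^*$), in which case you should say so explicitly, or an additional argument is required that you have not supplied.
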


\newpage

\section{Constructing of the conjugation}\label{sect-Pobudowa}

\subsection{Values of conjugation on the dense set}\label{sect-Pobudowa-1}

Consider continuous maps $f,\, g:\, [0,\, 1]\rightarrow [0,\, 1]$,
which are defined as follows.

\begin{equation} \label{eq:f} f(x) =
\left\{\begin{array}{ll}
2x,& 0\leq x< 1/2;\\
2-2x,& 1/2 \leqslant x\leqslant 1,
\end{array}\right.
\end{equation}and\begin{equation}\label{eq:f-v}
g(x) = \left\{\begin{array}{ll} g_l(x),& x\leqslant v;\\
g_r(x),& x>v,
\end{array}\right.
\end{equation}

\noindent where $v\in (0,\, 1)$ is fixed and functions $g_l,\,
g_r$ are continuous monotone such that $g_l(0)=g_r(1)=0$, $g_l(v)
=\lim\limits_{x\rightarrow v-}g_r(x) = 1$. The problem about
conjugateness of $f$ and $g$ were stated at first
in~\cite{Ulam-1964} in the following theorem.

\begin{theorem}\cite[Appendix 1, \S
3]{Ulam-1964}\label{Theor:9} Let $f$ be of the form~(\ref{eq:f})
and $g$ be a convex function of the form~(\ref{eq:f-v}). Consider
the whole pre image of $1$ under $f$, i.e. the such the smallest
set $M_f$ that $1\in M_f$ and $f(x)\in M_f$ yields $x\in M_f$.
Maps $f$ and $g$ are topologically conjugated if and only if sets
$M_f$ is combinatorially equivalent, with $M_g$ and additionally
$\overline{M}_g = [0,\, 1]$.
\end{theorem}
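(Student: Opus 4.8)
The plan is to prove both directions of the equivalence by working with the "conditionally found" values of a putative conjugacy, exactly as the surrounding machinery (Definition~\ref{def:01}, Proposition~\ref{lema:25}, Proposition~\ref{lema:An}) is set up to support. First I would prove necessity. Suppose $h$ is a topological conjugacy of $f$ and $g$, so $h(f)=g(h)$. Since $1$ is the unique maximum of $f$ with $f(1)=0$ and $0$ is a fixed point of $f$, and conjugacies preserve fixed points, dynamical behaviour and the structure of the integer pre-image tree, $h$ must carry $M_f$ onto $M_g$ bijectively and in a way that respects the tree/combinatorial structure (which branch of a pre-image goes where). This gives the combinatorial equivalence of $M_f$ and $M_g$. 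The density condition $\overline{M}_g=[0,1]$ then follows because $M_f=\mathcal{A}$ (up to the obvious identification; by Proposition~\ref{lema:An} the dyadic rationals), $\mathcal{A}$ is dense in $[0,1]$, $h$ is a homeomorphism, and homeomorphisms preserve density: $\overline{M}_g=\overline{h(M_f)}=h(\overline{M_f})=h([0,1])=[0,1]$.

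The substantial direction is sufficiency. Assume $M_f$ and $M_g$ are combinatorially equivalent and $\overline{M}_g=[0,1]$; we must construct a conjugacy $h$. The plan is constructive: define $h$ first on $M_f$ by transporting the combinatorial isomorphism, i.e. $h(1):=1$ (the apex of $M_g$), and then propagate downward via Proposition~\ref{lema:25} — if $h$ is conditionally found at $x^\ast$ with value $y^\ast$, then at a pre-image $\widetilde x$ of $x^\ast$ we set $h(\widetilde x)=g_l^{-1}(y^\ast)$ if $\widetilde x\le 1/2$ and $h(\widetilde x)=g_r^{-1}(y^\ast)$ if $\widetilde x>1/2$. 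The combinatorial equivalence is exactly what guarantees these choices are consistent (each point of $M_f$ has a well-defined partner, and the left/right branch labels match). One checks by induction on the level $n$ (using Proposition~\ref{lema:An}, $A_n=\{0,1/2^{n-1},\ldots,1\}$, and the analogous layering of $M_g$) that $h$ restricted to $A_n$ is an order-preserving bijection onto the $n$-th layer of $M_g$, and that it intertwines $f$ and $g$ on $M_f$. Because $M_f=\mathcal{A}$ is dense and $h|_{\mathcal A}$ is monotone, $h$ extends uniquely to a continuous monotone map $[0,1]\to[0,1]$; the hypothesis $\overline{M}_g=[0,1]$ is what makes this extension surjective, hence (being monotone, continuous, surjective on a compact interval) a homeomorphism. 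Finally, the functional equation $h(f)=g(h)$ holds on the dense set $\mathcal A$ by construction, and by continuity of $f$, $g$, $h$ it holds everywhere, so $h$ is the desired conjugacy.

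I would organize this as: (i) a lemma that a conjugacy forces $h(M_f)=M_g$ compatibly with the tree structure and with density; (ii) the inductive construction of $h$ on $\mathcal A$ from the combinatorial data via Proposition~\ref{lema:25}, with the monotonicity and intertwining claims proved layer by layer; (iii) the density/continuity extension argument, invoking $\overline{M}_g=[0,1]$ for surjectivity; (iv) verification of $h(f)=g(h)$ on all of $[0,1]$ by continuity. Convexity of $g$, though assumed in the statement, is not actually needed and I would remark that the proof uses only monotonicity of $g_l,g_r$ — consistent with the note in the excerpt that convexity is not used.

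The main obstacle I anticipate is the bookkeeping in step (ii): making precise what "combinatorial equivalence of $M_f$ and $M_g$" means as an isomorphism of labelled binary trees, and verifying that the recursive definition via $g_l^{-1}$, $g_r^{-1}$ is (a) well-defined — i.e. does not depend on the path by which a point of $M_f$ is reached — and (b) order-preserving on each $A_n$. The order-preservation is the delicate part: one must show that feeding the two monotone branches $g_l^{-1}$ (increasing) and $g_r^{-1}$ (decreasing) the conditionally found values produces, on $A_{n+1}$, a sequence ordered the same way as $0<1/2^n<2/2^n<\cdots<1$, and this is precisely where the combinatorial equivalence hypothesis must be used rather than assumed away. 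Everything after that — density, extension, passing the functional equation to the closure — is routine point-set topology.
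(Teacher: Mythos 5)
Your proposal is correct and follows essentially the same route as the paper: the value $h$ is propagated over the dyadic points $A_n$ via Proposition~\ref{lema:25} (equivalently the layers $B_n$ of $M_g$), monotonicity is checked level by level, $h$ is extended from the dense set $\mathcal{A}$ using the hypothesis $\overline{M}_g=[0,\,1]$, and the equation $h(f)=g(h)$ is passed to all of $[0,\,1]$ by continuity, with necessity handled by the same ``conjugacy preserves the pre-image tree and density'' observation. The only small correction: the density of $M_g$ is what makes the monotone extension \emph{continuous} (no jumps), not merely surjective --- exactly the role it plays in the paper's lemma on the existence of $\lim_n h_n$.
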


\begin{proof}[Proof from~\cite{Ulam-1964}] The
necessity is obvious, because the whole pre image of $1$ is the
the set of binary-rational numbers, i.e. those rational numbers,
whose denominator is a power of $2$.

We will prove the existence of conjugation constructively. Tale
$h(1/2)=v$. Then take $h(1/4)$ to be the smallest of numbers
$g^{-1}(v)$, and $h(3/4)$ be the greatest of $g^{-1}(v)$. Take
$h(1/8)$ the smallest of $g^{-1}(h(3/4))$ and so on. Continuing
this way, obtain the function which is defined on binary rational
numbers of $(0,\, 1)$. Define $h$ on the whole $[0,\, 1]$ by the
continuity. This definition by continuity is possible, because
$M_g$ is dense. The maps $h$ would obviously be monotone and its
continuity yields that there exists $h^{-1}$. The equality $h(f) =
g(h)$ follows from the construction.
\end{proof}

This section is devoted to the proof in details of
Theorem~\ref{Theor:9}.

Assume that $f$ and $g$ are topologically conjugated and there
exists a homeomorphism $h$ such that the diagram
\begin{equation}\label{eq:main-comut-Diagr}  \begin{CD}
[0,\, 1] @>f >> & [0,\, 1]\\
@V_{h} VV& @VV_{h}V\\
[0,\, 1] @>g>> & [0,\, 1].
\end{CD}
\end{equation} is commutative.
The commutativity of this diagram is equivalent to that $h$ is a
solution of the functional equation
\begin{equation}\label{eq:45} h(f(x)) = g(h(x)).
\end{equation}

We will obtain the existence of this homeomorphism later, but now
we will find some its properties in the assumption that the
homeomorphism exists. Precisely, we will find the values of $h$ on
the dense set of $[0,\, 1]$.

\begin{lemma}\label{lema:h(0)(1)}If the
homeomorphism $h$ defines the topological conjugation of $f$ and
$g$, then it increase, i.e. $h(0)=0$ and $h(1)=1$.
\end{lemma}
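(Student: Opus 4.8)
The plan is to pin down the values $h(0)$ and $h(1)$ directly from the functional equation $h(f(x)) = g(h(x))$ (equation~(\ref{eq:45})) evaluated at the single point $x=0$, and then read off monotonicity. First I would recall that $h$ is a homeomorphism of the compact interval $[0,\,1]$, hence strictly monotone (a non-monotone continuous self-map of an interval fails injectivity by the intermediate value theorem); in particular $h$ carries the boundary $\{0,\,1\}$ onto itself, so $\{h(0),\,h(1)\} = \{0,\,1\}$. This leaves exactly two possibilities: either $h(0)=0$ and $h(1)=1$, in which case $h$ is increasing, or $h(0)=1$ and $h(1)=0$, in which case $h$ is decreasing; the assertion of the lemma is precisely that the second possibility is excluded.

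Next I would exploit that $0$ is a fixed point of $f$: by~(\ref{eq:f}), $f(0) = 2\cdot 0 = 0$, so substituting $x=0$ into~(\ref{eq:45}) yields $h(0) = h(f(0)) = g(h(0))$, i.e.\ $h(0)$ must be a fixed point of $g$. Now I would observe that $1$ is \emph{not} a fixed point of $g$: from the hypotheses on $g$ in~(\ref{eq:f-v}) one has $g(1) = g_r(1) = 0 \neq 1$. Hence $h(0)\neq 1$, so $h(0)=0$, and therefore $h(1)=1$ since $h$ is a bijection of $[0,\,1]$. A monotone bijection of $[0,\,1]$ with $h(0)=0$ and $h(1)=1$ is increasing, which is the claim.

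I do not expect any real obstacle here. The only step deserving an explicit word is the standard fact that a homeomorphism of a closed interval is strictly monotone and thus maps endpoints to endpoints. If one wishes to bypass even that, the same contradiction is reached by a direct case split: assuming $h$ decreasing forces $h(0)=1$, whence $1 = h(0) = h(f(0)) = g(h(0)) = g(1) = g_r(1) = 0$, which is absurd; so $h$ increases with $h(0)=0$ and $h(1)=1$. The substitution $x=0$ into the conjugacy relation, together with $f(0)=0$ and $g(1)\neq 1$, is the entire content of the argument.
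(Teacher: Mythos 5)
Your argument is correct and coincides with the paper's own proof: both use that a homeomorphism of $[0,\,1]$ maps endpoints to endpoints, then plug $x=0$ into $h(f(x))=g(h(x))$ to see that $h(0)$ is a fixed point of $g$, and exclude $h(0)=1$ because $g(1)=g_r(1)=0\neq 1$. No gaps.
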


\begin{proof}Since $h$ is a homeomorphism then it is
either increase or decrease. Since $h$ maps the interval $[0,\,
1]$ onto itself then either $h(0)=0$ and $h(1)=1$ or $h(0)=1$ and
$h(1)=0$. Plug the value $x=0$ into the equality~(\ref{eq:45}) and
obtain $$ h(f(0)) = g(h(0)).
$$ Notice, that this plugging may illustrated more clearly by the
commutative diagram
$$ \xymatrix{
0 \ar^{f}[rr] \ar_{h}[d] && f(0) \ar^{h}[d]\\
h(0) \ar^{g}[rr] && g(h(0))\lefteqn{=h(f(0)),} }
$$
which is obtained from~(\ref{eq:main-comut-Diagr}) by plugging
$x=0$ into left top angle.

In any way, since $f(0)=0$ then equality $h(0)=g(h(0))$ holds,
i.e. $x=h(0)$ is a fixed point of $g$.

Nevertheless, the point $x=1$ is not fixed for $g$, but $x=0$ is.
Whence, $h(0)=0$, which is necessary.
\end{proof}

\begin{definition}\label{def:01}For
every point $x^*\in [0,\, 1]$ we would say that the value $y^*$ of
the homeomorphism $h$ at $x^*$ is \textbf{conditionally found}, id
the following statement holds. If $h$ defines a conjugation of $f$
and $g$, then $h(x^*)=y^*$. We use the word ``conditionally'' for
paying the attention to that the question about existence of $h$
is still open.
\end{definition}

For example, in terms of Definition~\ref{def:01},
Lemma~\ref{lema:h(0)(1)} states that the maps $h$ is conditionally
found at points $0$ and $1$. Show that the conditional value of
$h$ at $x^*=1/2$ exists and $h(1/2)=v$.

\begin{lemma}\label{lema:30}If $h$ is the
conjugation of $f$ and $g$, then $h(1/2)=v$.
\end{lemma}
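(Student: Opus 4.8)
The plan is to evaluate the functional equation~(\ref{eq:45}) at the single point $x^{*}=1/2$, the turning point of $f$, and to exploit that both $f$ and $g$ attain the value $1$ at exactly one point of $[0,\,1]$.

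First I would note that $f(1/2)=1$ straight from~(\ref{eq:f}), and that by Lemma~\ref{lema:h(0)(1)} the homeomorphism $h$ increases, so in particular $h(1)=1$. Substituting $x=1/2$ into $h(f(x))=g(h(x))$ then yields
\[
1 = h(1) = h\bigl(f(1/2)\bigr) = g\bigl(h(1/2)\bigr),
\]
that is, $h(1/2)$ is a preimage of $1$ under $g$. As in the proof of Lemma~\ref{lema:h(0)(1)}, this substitution can be depicted by the commutative square having $1/2$ in its top-left corner.

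Next I would verify that $g^{-1}(\{1\})=\{v\}$. Indeed $g(v)=g_{l}(v)=1$ by hypothesis; on $[0,\,v]$ the map $g_{l}$ is continuous and monotone with $g_{l}(0)=0<1=g_{l}(v)$, hence strictly increasing there, so $g_{l}(y)=1$ forces $y=v$; on $(v,\,1]$ the map $g_{r}$ is continuous and monotone with $g_{r}(1)=0$ and one-sided limit $1$ at $v$, hence decreasing with $g_{r}(x)<1$. Therefore the equation $g(y)=1$ has $y=v$ as its only solution, and combining this with the displayed equality gives $h(1/2)=v$. In the terminology of Definition~\ref{def:01} this says that $h$ is conditionally found at $1/2$ with value $v$.

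I do not expect a genuine obstacle: the only point needing a touch of care is the uniqueness of the maximizer of $g$, i.e.\ ruling out that $h(1/2)$ lies on the decreasing branch $g_{r}$ and is nonetheless sent to $1$; this is excluded by the monotonicity of $g_{r}$ together with $g_{r}(1)=0$ and the fact that the value $1$ is already attained at $v$ through $g_{l}$.
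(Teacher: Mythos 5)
Your proof is correct and follows essentially the same route as the paper: substitute $x=1/2$ into $h(f(x))=g(h(x))$, use $h(1)=1$ from Lemma~\ref{lema:h(0)(1)} to get $g(h(1/2))=1$, and conclude $h(1/2)=v$ since $v$ is the only point where $g$ attains the value $1$. The paper merely states this via the commutative diagram without spelling out the uniqueness of the preimage of $1$, which you verify explicitly from the monotonicity of $g_l$ and $g_r$.
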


\begin{proof}Lemma follows from the commutativity of
the diagram
$$
\begin{CD}
1/2 @>f >> & 1\\
@V_{h} VV& @VV_{h}V\\
h(1/2) @>g>>& 1,
\end{CD}$$ which
is obtained from~(\ref{eq:main-comut-Diagr}) by plugging $x=1/2$
into left top angle.
\end{proof}

For an arbitrary point $x^*$ where the value of $h$ is
conditionally found, consider the pre image $\widetilde{x}$ under
$f$, i.e. $f(\widetilde{x})=x^*$ and consider the diagram $$
\begin{CD}
\widetilde{x} @>f >> & x^*\\
@V_{h} VV& @VV_{h}V\\
h(\widetilde{x}) @>g>>& h(x^*).
\end{CD}$$
This commutative diagram let us to find the value
$h(\widetilde{x})$ under the assumption that $\widetilde{x}$,
$x^*$ and $h(x^*)$ are known.

If $x^*\neq 1$ then there are two choices for the pre image
$\widetilde{x}$ of $x^*$. Is we search $\widetilde{x}<1/2$, then
formulas~(\ref{eq:f}) give that $\widetilde{x} = x^*/2$. Searching
$\widetilde{x}>1/2$, obtain $\widetilde{x} = \frac{2-x^*}{2}$.

Let the new pre image $\widetilde{x}$ of $x^*$ is found as
$\widetilde{x} = x^*/2$, i.e. $\widetilde{x}<1/2$. Since by
Lemma~\ref{lema:h(0)(1)} the homeomorphism $h$ increase, then if
follows from Lemma~\ref{lema:30} that $h(\widetilde{x})<v$. Since
$h(\widetilde{x})<v$, then $g(h(\widetilde{x})) =
g_l(h(\widetilde{x}))$. From another hand, it follows from the
commutativity of diagram that $g_l(h(\widetilde{x}))= h(x^*)$,
whence $h(\widetilde{x}) = g_l^{-1}(h(x^*))$.

From the analogical reasonings give that if $\widetilde{x}$ s
found from the condition $\widetilde{x}>1/2$, then $h(x^*) =
g_r(h(\widetilde{x}))$, i.e. $h(\widetilde{x}) =
g_r^{-1}(h(x^*))$.

This construction proves the following lemma.

\begin{proposition}\label{lema:25}
If a homeomorphism $h$ is a conjugation of $f$ and $g$ and
$h(x^*)=y^*$ for some $x^*$ and $y^*$ then for any point
$\widetilde{x}$ such that $f(\widetilde{x}) =x^*$ the following
implications hold.

1. If $\widetilde{x}\leq 1/2$, then $h(\widetilde{x}) =
g_l^{-1}(y^*)$;

2. If $\widetilde{x}>1/2$, then $h(\widetilde{x}) =
g_r^{-1}(y^*)$.
\end{proposition}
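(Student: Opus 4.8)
The plan is to read off the conclusion directly from the commutative diagram, exactly as sketched in the paragraph preceding the statement, with the branch selection made rigorous via the two preceding lemmas. Since $h$ conjugates $f$ and $g$, it satisfies the functional equation $h(f(x))=g(h(x))$ for all $x$. First I would substitute $x=\widetilde{x}$ into this equation and use $f(\widetilde{x})=x^*$ and $h(x^*)=y^*$ to obtain $y^*=g(h(\widetilde{x}))$. It then remains to identify which of the two defining branches of $g$ from~(\ref{eq:f-v}) is active at the argument $h(\widetilde{x})$ and to invert the corresponding monotone piece.

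Next I would locate $h(\widetilde{x})$ relative to $v$. By Lemma~\ref{lema:h(0)(1)} the homeomorphism $h$ is increasing, hence strictly increasing on $[0,\,1]$, and by Lemma~\ref{lema:30} we have $h(1/2)=v$. In case 1, $\widetilde{x}\leq 1/2$ gives $h(\widetilde{x})\leq v$, so $g(h(\widetilde{x}))=g_l(h(\widetilde{x}))$; since $g_l$ is continuous and monotone on $[0,\,v]$ with $g_l(0)=0$ and $g_l(v)=1$, it is a bijection of $[0,\,v]$ onto $[0,\,1]$ and therefore invertible, yielding $h(\widetilde{x})=g_l^{-1}(y^*)$. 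In case 2, $\widetilde{x}>1/2$ gives $h(\widetilde{x})>v$, so $g(h(\widetilde{x}))=g_r(h(\widetilde{x}))$, and the analogous monotonicity of $g_r$ on $(v,\,1]$ gives $h(\widetilde{x})=g_r^{-1}(y^*)$.

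There is no genuine obstacle here; the only point requiring a little care is the boundary case $\widetilde{x}=1/2$ (equivalently $x^*=1$, whose unique $f$-preimage is $1/2$): there $h(\widetilde{x})=v$ and the two formulas must agree, which they do because $g_l(v)=\lim_{x\to v-}g_r(x)=1$ makes $g_l^{-1}$ and $g_r^{-1}$ coincide at $y^*=1$. I would remark on this so that statements 1 and 2 are consistent on their overlap. The well-definedness of $g_l^{-1}$ and $g_r^{-1}$ rests only on the standing monotonicity and continuity hypotheses on $g_l$ and $g_r$ from~(\ref{eq:f-v}), so the argument is complete once these invertibility remarks are recorded.
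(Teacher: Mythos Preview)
Your proof is correct and follows essentially the same approach as the paper: substitute $\widetilde{x}$ into the conjugacy equation $h(f(x))=g(h(x))$, then use Lemma~\ref{lema:h(0)(1)} ($h$ increasing) together with Lemma~\ref{lema:30} ($h(1/2)=v$) to determine whether $h(\widetilde{x})$ lies in $[0,v]$ or $(v,1]$, and invert the corresponding monotone branch $g_l$ or $g_r$. Your explicit treatment of the boundary case $\widetilde{x}=1/2$ is a welcome addition that the paper omits.
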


Show that, starting from $x^*=1/2$, one may use
Lemma~\ref{lema:30} to obtain the set, which is dense in $[0,\,
1]$ such that $h$ would be conditionally found at each point of
this set.

\begin{lemma}\label{note:f}If the the binary
decomposition of the number $x\in [0,\, 1]$ is
$$ x = 0,\alpha_1\alpha_2\ldots\, $$ then the binary decomposition of
$f(x)$ is
$$ f(x)= \left\{
\begin{array}{ll}
0,\alpha_2\alpha_3\ldots \alpha_n\ldots, & \text{if }\alpha_1 =
0,\\
0,\overline{\alpha}_2\overline{\alpha}_3\ldots
\overline{\alpha}_n\ldots, & \text{if }\alpha_1 = 1,
\end{array}\right.
$$ where $\overline{\alpha}_i = 1 - \alpha_i$.
\end{lemma}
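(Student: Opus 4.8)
The plan is to evaluate $f$ directly on the binary series $x = \sum_{i\geq 1}\alpha_i 2^{-i}$ and to split according to the leading digit $\alpha_1$. The point that makes this work is that $\alpha_1 = 0$ forces $x \leq 1/2$ and $\alpha_1 = 1$ forces $x \geq 1/2$, so the digit split matches the branch split in the definition~(\ref{eq:f}) of $f$, up to the single overlap point $x = 1/2$, which I will treat separately.

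First I would handle $\alpha_1 = 0$. Here $x = \sum_{i\geq 2}\alpha_i 2^{-i} \leq \sum_{i\geq 2} 2^{-i} = 1/2$, hence $f(x) = 2x$. Doubling the series and reindexing, $f(x) = \sum_{i\geq 2}\alpha_i 2^{-(i-1)} = \sum_{j\geq 1}\alpha_{j+1} 2^{-j}$, which is precisely the expansion $0.\alpha_2\alpha_3\ldots$. Next, for $\alpha_1 = 1$ we have $x \geq 1/2$ and $f(x) = 2 - 2x = 1 - \sum_{j\geq 1}\alpha_{j+1}2^{-j}$; invoking the identity $\sum_{j\geq 1} 2^{-j} = 1$ I rewrite this as $\sum_{j\geq 1}(1-\alpha_{j+1})2^{-j} = \sum_{j\geq 1}\overline{\alpha}_{j+1}2^{-j} = 0.\overline{\alpha}_2\overline{\alpha}_3\ldots$, as claimed.

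The only subtlety, and the one place I would spend some care, is the non-uniqueness of binary expansions at dyadic rationals, since both the hypothesis and the conclusion refer to ``the'' binary decomposition of a point. The case split of the proof ($\alpha_1 = 0$ versus $\alpha_1 = 1$) can disagree with the branch split in~(\ref{eq:f}) only at $x = 1/2$, whose two expansions $0.1000\ldots$ and $0.0111\ldots$ both produce $0.111\ldots = 1 = f(1/2)$ when fed through the two formulas; thus the statement holds for either choice. More generally, the computation above is internally consistent across the two expansions of any dyadic rational precisely because of the identity $0.1000\ldots = 0.0111\ldots$ (used, in the guise $\sum_{j\geq 1}2^{-j}=1$, in the second case). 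With this remark the lemma is finished; the arithmetic is otherwise routine and presents no real obstacle.
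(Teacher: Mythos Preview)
Your proof is correct and follows the same approach as the paper, which simply states that the lemma follows from the defining formulas~(\ref{eq:f}) for $f$. You have written out the computation that the paper leaves implicit, and your additional care with the dyadic-rational ambiguity at $x=1/2$ is a nice touch that the paper does not mention.
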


\begin{proof}Lemma follows from
formulas~(\ref{eq:f}) for the function $f$.
\end{proof}

\begin{example}\label{ex:04}Find the pre images of
$1/2$ under $f$.
\end{example}

\begin{proof}[Deal of the example] The binary
decomposition of $1/2$ is $0.1$. It follows from
Lemma~\ref{note:f}, that its pre images are $x_1=0.01=1/4$ and
$x_2=0.10(1) = 0.11=1/2+1/4=3/4$.

Pre images of $x_1$ and $x_2$ under $f$ also can be obtained by
Lemma~\ref{note:f}. Pre images of $x_1$ under $f$ have binary
decompositions $x_{11}=0.001=1/8$ and
$x_{12}\hm{=}0.110(1)=0.111=1/2+1/4+1/8 = 7/8$.

Pre images of $x_2$ under $f$ have the binary decompositions
$x_{21}=0.011=1/4+1/8 = 3/8$ and $x_{22}=0.100(1)= 0.101 = 1/2 +
1/8  = 5/8$.

Whence the set of pre images of $1/2$ is the set $\{ 0,\, 1/2,\,
1\}$. The set of its pre images is $\{k/4,\, 0\leq k\leq 4\}$ and
its pre images set is $\{k/8,\, 0\leq k\leq 8\}$.
\end{proof}

The obtained property of pre images of $1/2$ can be generalized as
follows.

\begin{notation}Denote with $A_n,\ n\geqslant 1$ the
set of all those points of the interval $[0,\, 1]$ such that
$$ f^{n}(A_n) = 0.
$$\end{notation}

\begin{proposition}\label{lema:An}$$ A_n
= \left\{0,\, \frac{1}{2^{n-1}};\ldots
;\frac{2^{n-1}-1}{2^{n-1}},\, 1\right\}.
$$
\end{proposition}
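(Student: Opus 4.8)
The plan is to argue by induction on $n$, using the elementary observation that $A_{n+1}$ is exactly the full $f$-preimage of $A_n$. Indeed $f^{n+1}(x)=0$ if and only if $f^{n}(f(x))=0$, i.e. if and only if $f(x)\in A_n$, so $A_{n+1}=\{x\in[0,\,1]:\ f(x)\in A_n\}$. For the base case $n=1$, formulas~(\ref{eq:f}) give that $f(x)=0$ exactly when $x=0$ (first branch) or $x=1$ (second branch), so $A_1=\{0,\,1\}$, which is the asserted set for $n=1$, as there $2^{n-1}=1$.

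For the inductive step I would first record the description of the $f$-preimage of a single point, which is immediate from~(\ref{eq:f}) (or from Lemma~\ref{note:f}): for $y\in(0,\,1)$ the equation $f(x)=y$ has exactly the two solutions $x=y/2$, lying in $[0,\,1/2)$, and $x=1-y/2$, lying in $(1/2,\,1]$; for $y=0$ these two formulas give the distinct solutions $0$ and $1$; and for $y=1$ both formulas give the same point $x=1/2$, which is therefore the unique preimage. This is exactly the branching already exhibited in Example~\ref{ex:04}.

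Now assume $A_n=\{k/2^{n-1}:\ 0\leq k\leq 2^{n-1}\}$. Applying the above to each element and writing everything over the denominator $2^n$: the point $1\in A_n$ contributes the single preimage $2^{n-1}/2^n$; the point $0\in A_n$ contributes $0/2^n$ and $2^n/2^n$; and each interior point $k/2^{n-1}$ with $0<k<2^{n-1}$ contributes $k/2^n$ and $(2^n-k)/2^n$. As $k$ runs through $0,\ldots,2^{n-1}$, the ``left'' preimages $k/2^n$ sweep out $\{0/2^n,\,1/2^n,\ldots,(2^{n-1}-1)/2^n\}$ together with the value $2^{n-1}/2^n$ coming from the preimage of $1$, while the ``right'' preimages $(2^n-k)/2^n$ sweep out $\{2^n/2^n,\,(2^n-1)/2^n,\ldots,(2^{n-1}+1)/2^n\}$. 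Their union is precisely $\{j/2^n:\ 0\leq j\leq 2^n\}$, so $A_{n+1}$ has the claimed form with index $n+1$ (since $2^{(n+1)-1}=2^n$). This closes the induction.

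The argument is essentially routine; the only place needing a little care is the bookkeeping at the two ends of the grid, where the generic ``two preimages'' picture must be handled separately — at $y=1$ the two preimages collapse to the single midpoint $1/2$, and at $y=0$ they are the endpoints $0$ and $1$ — and one must check that these contributions supply exactly the new endpoints and midpoint, with no omission and no repetition.
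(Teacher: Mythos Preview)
Your proof is correct and follows essentially the same inductive route as the paper: both identify $A_{n+1}$ as the $f$-preimage of $A_n$ and analyze the two-to-one branching of $f$ over the dyadic grid. The only organizational difference is that the paper first establishes the cardinality $|A_n|=2^{n-1}+1$ by a separate induction and then checks one inclusion, whereas you compute the full preimage set directly in one pass; your bookkeeping at the endpoints and the midpoint is exactly the care the paper's cardinality count handles implicitly.
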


\begin{proof}Show by induction at first that the
every $n\geq 1$ the set $A_n$ consists of $2^{n-1}+1$ elements.
The base of induction for $n=1$ is obvious. Since for every $n>1$
each of points of $A_n$ except $x_0=1$ has two pre images, then
the cardinality of $A_{n+1}$ is $2\cdot(2^{n-1}+1-1)+1 = 2^n+1$.

Show that $2^{n-1}+1$ of $A_n$ are those elements, which are
mentioned in the Statement. Use the inductive reasonings again.
The base for $n=1$ is clear. Consider
$$ A = \left\{0,\, \frac{1}{2^{n}};\ldots
;\frac{2^{n}-1}{2^{n}},\, 1\right\}
$$ and prove that $A = A_{n+1}$, assuming that the statement for
the previous $n$ is correct.

Consider an arbitrary $x_0$ of the form $x_0=\frac{p}{2^n}$, where
$p$ is as integer between 0 and $2^n$. Find $f(x_0)$. If $x_0\leq
1/2$, then $f(x_0)=2x_0 = \frac{p}{2^{n-1}}\in A_{n-1}$. If
$x_0>1/2$, then $f(x_0)\hm{=}2-2x_0 = \frac{2^n-p}{2^{n-1}} \in
A_{n-1}$.

The last finishes the proof.
\end{proof}

Show how to find the conditional values of $h$ at $A_n$ under the
assumption that values at $A_{n-1}$ are known.

Consider an arbitrary point $\widetilde{x}\in A_n$. It follows
from the definition of the sets $A_1,\ldots,\, A_n$ that
$\widetilde{x} =f(x_0)\in A_{n-1}$, whence the value $y_0 =
h(f(x_0))$ is found earlier. Now the value $h(\widetilde{x})$ can
be found by Proposition~\ref{lema:25}, dependently on whether
$\widetilde{x}<1/2$, or $\widetilde{x}>1/2$.

we can apply for $g$ the those reasonings concerning $f$, which
were used in the construction of the sets $A_n$.

\begin{notation}Denote by $B_n,\ n\geqslant 1$ the
set of all points $x\in [0,\, 1]$ such that
$$ g^{n}(x) = 0.
$$\end{notation}

The evident lemma holds.
\begin{lemma}\label{lema:B1}Independently on $v,\,
g_l$ and $g_r$ the equality $$ B_1 = \{ 0,\, 1\}
$$ holds.
\end{lemma}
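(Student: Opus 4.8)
The plan is to identify $B_1$ directly as the zero set of $g$ and to read off that zero set from the defining formula~(\ref{eq:f-v}). By definition $B_1=\{x\in[0,\,1]:g(x)=0\}$, so the whole task is to show that $g$ vanishes only at the endpoints $0$ and $1$. I would split the analysis according to the two branches of $g$: on $[0,\,v]$ one has $g=g_l$, and on $(v,\,1]$ one has $g=g_r$, with the standing assumptions that $g_l,\,g_r$ are continuous and monotone, $g_l(0)=g_r(1)=0$, and $g_l(v)=\lim_{x\to v+}g_r(x)=1$.

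For the left branch: $g_l$ is continuous on $[0,\,v]$ with $g_l(0)=0<1=g_l(v)$, so, being monotone and taking its smaller value at the left endpoint, it is strictly increasing, hence injective on $[0,\,v]$; consequently $g_l(x)=0$ holds only for $x=0$. Symmetrically, $g_r$ is continuous on $(v,\,1]$ with $\lim_{x\to v+}g_r(x)=1>0=g_r(1)$, so it is strictly decreasing, hence injective, and therefore $g_r(x)=0$ only for $x=1$. I would also note separately that $g(v)=g_l(v)=1\neq 0$, so $v\notin B_1$; this disposes of the single point where the two branches meet.

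Combining the branches gives $g(x)=0$ if and only if $x\in\{0,\,1\}$, i.e. $B_1=\{0,\,1\}$. Since nothing in this argument used the particular value of $v$ or the particular shape of $g_l$ and $g_r$ beyond continuity, monotonicity and the boundary values, the conclusion is independent of $v,\,g_l,\,g_r$, as claimed. I do not expect any genuine obstacle here: the only minor point requiring attention is upgrading ``monotone'' to ``strictly monotone'' for $g_l$ and $g_r$ using the boundary conditions, and treating $x=v$ by hand; everything else is immediate from the definition of $g$.
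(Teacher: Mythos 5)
Your overall route is the right one (and the only reasonable one): the paper itself offers no argument, introducing the lemma with the words ``the evident lemma holds,'' so computing $B_1=\{x\in[0,1]:g(x)=0\}$ branch by branch from~(\ref{eq:f-v}) and checking the point $x=v$ separately is exactly what is intended.

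There is, however, one step in your write-up that does not work as stated: the claim that $g_l$, being monotone and taking its smaller value at the left endpoint, ``is strictly increasing, hence injective.'' Weak monotonicity together with $g_l(0)=0<1=g_l(v)$ does not force strictness; for example a continuous non-decreasing $g_l$ that is identically $0$ on $[0,\,a]$ for some $a>0$ and then rises to $1$ satisfies all the listed boundary conditions, and for such a $g$ one would have $[0,\,a]\subset B_1$, so the lemma would actually be false. The same remark applies to your treatment of $g_r$. The correct repair is not to derive strictness from the boundary values but to take it as part of the standing hypotheses on $g_l$ and $g_r$: throughout this section the paper inverts these branches (the values $g_l^{-1}(y^*)$ and $g_r^{-1}(y^*)$ in Proposition~\ref{lema:25}, and the whole construction of the conditional values of $h$), so ``monotone'' must be read as strictly monotone, i.e.\ $g_l:[0,\,v]\to[0,\,1]$ and $g_r:(v,\,1]\to[0,\,1)$ are injective. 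With that reading your argument is complete: $g_l(x)=0$ only at $x=0$, $g_r(x)=0$ only at $x=1$, and $g(v)=1\neq 0$, giving $B_1=\{0,\,1\}$ independently of $v$, $g_l$, $g_r$.
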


\begin{theorem}\label{theor:10}If a
homeomorphism $h:\, [0,\, 1]\rightarrow [0,\, 1]$
satisfies~(\ref{eq:45}), then it increase and $h(A_n) = B_n$.
\end{theorem}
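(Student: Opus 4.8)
The plan is to prove the two assertions separately. The monotonicity follows at once from Lemma~\ref{lema:h(0)(1)}: a homeomorphism of $[0,\, 1]$ satisfying~(\ref{eq:45}) is exactly a topological conjugation of $f$ and $g$ (it makes the diagram~(\ref{eq:main-comut-Diagr}) commute), so Lemma~\ref{lema:h(0)(1)} yields $h(0)=0$ and $h(1)=1$, i.e. $h$ increases. For the equality $h(A_n)=B_n$ I would argue by induction on $n$, the engine being the pair of recursive identities
$$
A_{n+1}=\{x\in[0,\,1]:\ f(x)\in A_n\},\qquad B_{n+1}=\{y\in[0,\,1]:\ g(y)\in B_n\},
$$
valid because $f^{n+1}=f^n\circ f$ and $g^{n+1}=g^n\circ g$; in words, $A_{n+1}$ and $B_{n+1}$ are the full $f$- and $g$-preimages of $A_n$ and $B_n$.

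The base case $n=1$ is immediate: Proposition~\ref{lema:An} (with $n=1$) gives $A_1=\{0,\,1\}$, Lemma~\ref{lema:B1} gives $B_1=\{0,\,1\}$, and $h(0)=0$, $h(1)=1$, so $h(A_1)=B_1$. For the inductive step, assume $h(A_n)=B_n$ and fix $x\in[0,\,1]$. Since $h$ is a bijection of $[0,\,1]$, we have $f(x)\in A_n$ if and only if $h(f(x))\in h(A_n)=B_n$; by~(\ref{eq:45}), $h(f(x))=g(h(x))$, so this is equivalent to $g(h(x))\in B_n$, i.e. to $h(x)\in B_{n+1}$. Combining with the preimage identities, $x\in A_{n+1}$ if and only if $h(x)\in B_{n+1}$. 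This gives $h(A_{n+1})\subseteq B_{n+1}$, and surjectivity of $h$ gives the reverse inclusion (any $y\in B_{n+1}$ equals $h(x)$ for some $x$, which must then lie in $A_{n+1}$). Hence $h(A_{n+1})=B_{n+1}$, completing the induction.

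I do not expect a genuine obstacle here: the argument is essentially formal, the only points needing care being the unwinding of the definitions of $f^n$, $g^n$ to get the preimage identities, and the use of the bijectivity of $h$ to transport membership through the functional equation. One could instead run the induction concretely via Proposition~\ref{lema:25}: each $\widetilde x\in A_{n+1}$ satisfies $f(\widetilde x)\in A_n$, and Proposition~\ref{lema:25} identifies $h(\widetilde x)$ as $g_l^{-1}\bigl(h(f(\widetilde x))\bigr)$ or $g_r^{-1}\bigl(h(f(\widetilde x))\bigr)$ according as $\widetilde x\le 1/2$ or $\widetilde x>1/2$, which by the induction hypothesis is a $g$-preimage of a point of $B_n$, hence lies in $B_{n+1}$; a cardinality count (each point of $A_n\setminus\{1\}$ has two $f$-preimages, and each point of $B_n\setminus\{1\}$ has two $g$-preimages since $g$ is unimodal) then forces equality. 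In any case the substantive work — the combinatorial structure of $A_n$ and the fixed-point bookkeeping forcing $h$ to increase — has already been carried out in Proposition~\ref{lema:An} and Lemma~\ref{lema:h(0)(1)}.
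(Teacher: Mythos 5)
Your argument is correct, but it takes a different route from the paper's. The paper first proves that $B_n$ has exactly $2^{n-1}+1$ elements, by an induction showing that the graph of the $n$-th iterate of $g$ consists of $2^n$ monotone pieces, each mapped onto the whole of $[0,\,1]$; it then obtains only the inclusion $h(A_n)\subseteq B_n$, by iterating the functional equation~(\ref{eq:45}) ($f^n(x)=0$ and $h(0)=0$ give $g^n(h(x))=0$), and concludes equality from injectivity of $h$ together with the coincidence of the finite cardinalities of $A_n$ and $B_n$ (Proposition~\ref{lema:An}). Your induction on $n$ via the preimage identities $A_{n+1}=f^{-1}(A_n)$, $B_{n+1}=g^{-1}(B_n)$, transporting membership both ways through the bijection $h$, gets both inclusions simultaneously and dispenses with the lap-structure lemma and the counting argument altogether; in that sense it is cleaner and more self-contained. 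What the paper's version buys in exchange is the cardinality statement $\#B_n=2^{n-1}+1$ and the description of the monotonicity intervals of $g^n$, which are reused later (for instance when the points $\beta_{n,k}$ and the approximations $h_n$ are introduced). Your alternative sketch via Proposition~\ref{lema:25} plus a preimage count is closer in spirit to the paper but still not identical, since the paper counts monotone laps of $g^n$ rather than preimages of points of $B_n$; either way the monotonicity claim is, as you say, just Lemma~\ref{lema:h(0)(1)}, which is also how the paper implicitly handles it.
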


\begin{proof}Show that the cardinality of $B_n$
equals $2^{n-1}+1$.

Check with the mathematical induction that for any $n>1$ the graph
of $f_v^n$ is piecewise linear and is consisted of $2^n$ intervals
being linear on each of them such that the image of each of these
$2^n$ intervals is the whole interval $[0,\, 1]$. If $n=1$ the the
statement follows from the form the graph of  $f_v$. Let for
$B_{n-1}$ the theorem is correct. For each of the monotone
intervals $P$ of the maps $f_v^{n-1}$ one have $f_v^{n-1}(P) =
[0,\, 1]$. In this case the graph of the maps $f_v^n(P)$ in
consisted of two monotone intervals and the image of each of them
under the acting of $f_v^n$ will be the whole interval $[0,\, 1]$.
Since the interval $P$ is arbitrary obtain the statement the
necessary cardinality of $B_n$.

From the construction of $A_n$ and~(\ref{eq:45}) obtain that for
every $x\in A_n$ the equality $f_v^n(h(x)) =0$ holds, i.e.
$h(A_n)\subseteq B_n$. Now Theorem follows from that cardinalities
of $A_n$ and $B_n$ coincide.
\end{proof}

For every $n\geq 1$ denote by $\alpha_{n,k},\, \beta_{n,k},\,
0\leq k\leq 2^{n-1} $ the increasingly ordered elements of $A_n$
and $B_n$ correspondingly, i.e. for any $k_1<k_2$ inequalities
$\alpha_{n,k_1}<\alpha_{n,k_2}$ and $\beta_{n,k_1}<\beta_{n,k_2}$
hold.

Denote $\mathcal{A}=\bigcup _{n\ge 1} A_{n} $ and
$\mathcal{B}=\bigcup _{n\ge 1} B_{n} $. For every $n\geq 1$ denote
$h_n$ the piecewise linear maps, all whose braking points belong
to $A_n$ and such that for every $k,\, 0\leq k\leq 2^{n-1}$ the
equality $$ h_n(\alpha_{n,k})=\beta_{n,k}
$$ holds.

\begin{lemma}\label{lema:31}For every $n\geq 1$ and
$k,\, 0\leq k\leq 2^{n-1}$ the equality
\begin{equation}\label{eq:49}
h_n(f(\alpha_{n,k})) = g(h_n(\alpha_{n,k}))
\end{equation} holds.
\end{lemma}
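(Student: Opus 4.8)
The plan is to verify the functional equation $h_n(f(x))=g(h_n(x))$ directly at each point $x=\alpha_{n,k}$ of $A_n$, using the explicit form of $A_n$ from Proposition~\ref{lema:An} together with a matching description of $B_n$. Since $\alpha_{n,k}=k/2^{n-1}$ and $f$ is given by~(\ref{eq:f}), one has $f(\alpha_{n,k})=\alpha_{n-1,k}=\alpha_{n,2k}$ when $\alpha_{n,k}\le 1/2$ and $f(\alpha_{n,k})=\alpha_{n-1,2^{n-1}-k}=\alpha_{n,2^{n}-2k}$ when $\alpha_{n,k}\ge 1/2$; in particular $f(A_n)\subseteq A_{n-1}\subseteq A_n$, so both sides of~(\ref{eq:49}) make sense and $h_n$ evaluated at $f(\alpha_{n,k})$ is again one of the numbers $\beta_{n,\cdot}$. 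The case $n=1$ is immediate, since then $h_1$ is the identity and $A_1=B_1=\{0,1\}$, so assume $n\ge 2$.

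First I would record the structure of $B_n$ for $n\ge 2$. From the definition of $B_n$ and the fact that $g$ agrees with the increasing homeomorphism $g_l$ on $[0,v]$ and with the decreasing homeomorphism $g_r$ on $[v,1]$, a point $x\le v$ lies in $B_n$ iff $g_l(x)\in B_{n-1}$, and a point $x\ge v$ lies in $B_n$ iff $g_r(x)\in B_{n-1}$. Hence $B_n\cap[0,v]=g_l^{-1}(B_{n-1})$ and $B_n\cap[v,1]=g_r^{-1}(B_{n-1})$; since $g_l^{-1}(1)=v=g_r^{-1}(1)$ and $g_l^{-1}(y)<v<g_r^{-1}(y)$ for $y\neq 1$, these two sets meet only in $\{v\}$. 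Using $|B_{n-1}|=2^{n-2}+1$ (from the proof of Theorem~\ref{theor:10}) and the monotonicity of $g_l^{-1},g_r^{-1}$, this gives $\beta_{n,j}=g_l^{-1}(\beta_{n-1,j})$ for $0\le j\le 2^{n-2}$ and $\beta_{n,2^{n-2}+i}=g_r^{-1}(\beta_{n-1,2^{n-2}-i})$ for $0\le i\le 2^{n-2}$; in particular $\beta_{n,2^{n-2}}=g_l^{-1}(\beta_{n-1,2^{n-2}})=g_l^{-1}(1)=v$, i.e. $h_n(1/2)=v$. Next, by induction on $n$ (base case $n=2$, where $B_1=\{0,1\}$, $B_2=\{0,v,1\}$) I would prove $\beta_{n,2j}=\beta_{n-1,j}$ for $0\le j\le 2^{n-2}$ — equivalently that $h_n$ restricted to $A_{n-1}$ coincides with $h_{n-1}$: splitting the index $j$ at $2^{n-3}$ and applying the two halving relations above together with the inductive hypothesis $\beta_{n-1,2m}=\beta_{n-2,m}$ yields the claim on the $g_l$-half and on the $g_r$-half.

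With these facts in hand I would finish by a short case analysis on $k$. For $k=0$ and $k=2^{n-1}$ both sides of~(\ref{eq:49}) equal $0$, since $f(0)=f(1)=0$, $h_n(0)=0$, $h_n(1)=1$ and $g(0)=g_l(0)=0=g_r(1)=g(1)$. If $\alpha_{n,k}<1/2$ then $h_n(f(\alpha_{n,k}))=h_n(\alpha_{n,2k})=\beta_{n,2k}=\beta_{n-1,k}$, while $h_n(\alpha_{n,k})=\beta_{n,k}=g_l^{-1}(\beta_{n-1,k})<v$, so $g(h_n(\alpha_{n,k}))=g_l(\beta_{n,k})=\beta_{n-1,k}$; the case $\alpha_{n,k}=1/2$ is the instance $h_n(1/2)=v$ with $g(v)=g_l(v)=1=h_n(1)=h_n(f(1/2))$. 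If $\alpha_{n,k}>1/2$, write $k=2^{n-2}+i$ with $0<i<2^{n-2}$; then $h_n(f(\alpha_{n,k}))=\beta_{n,2^{n}-2k}=\beta_{n-1,2^{n-1}-k}=\beta_{n-1,2^{n-2}-i}$, whereas $h_n(\alpha_{n,k})=\beta_{n,2^{n-2}+i}=g_r^{-1}(\beta_{n-1,2^{n-2}-i})>v$, so $g(h_n(\alpha_{n,k}))=g_r(\beta_{n,k})=\beta_{n-1,2^{n-2}-i}$, and the two sides agree. The main obstacle is the index bookkeeping, namely showing that $B_{n-1}$ sits inside $B_n$ precisely as the even-indexed elements; everything else is a routine computation that mirrors the construction behind Proposition~\ref{lema:25}.
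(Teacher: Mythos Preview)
Your argument is correct and follows essentially the same route as the paper's proof: both split into the cases $\alpha_{n,k}\le 1/2$ and $\alpha_{n,k}\ge 1/2$, identify $f(\alpha_{n,k})$ as $\alpha_{n-1,k}$ or $\alpha_{n-1,2^{n-1}-k}$, establish the parallel facts $\beta_{n,2^{n-2}}=v$ and $g(\beta_{n,k})=\beta_{n-1,k}$ or $\beta_{n-1,2^{n-1}-k}$, and then read off the chain of equalities. The main difference is one of rigour: the paper simply writes $h_n(\alpha_{n-1,k})=\beta_{n-1,k}$, silently using that $B_{n-1}$ sits inside $B_n$ as the even-indexed points, whereas you isolate this as the fact $\beta_{n,2j}=\beta_{n-1,j}$ and prove it by induction from the decomposition $B_n\cap[0,v]=g_l^{-1}(B_{n-1})$, $B_n\cap[v,1]=g_r^{-1}(B_{n-1})$. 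This is exactly the piece of bookkeeping you flagged as ``the main obstacle'', and you handle it cleanly; the paper treats it as obvious by the word ``analogically''.
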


\begin{proof}Notice, that $\alpha_{n,2^{n-2}} =
\frac{1}{2}$. Indeed, consider the sets $A_{n-1}$, $A_n^- =
A_{n,k}\cap [0,\, 1/2)$ and $A_n^+ = A_{n,k}\cap (1/2,\, 1]$. The
set $A_n^-$ is obtained from $A_{n-1}$ by taking smaller pre
images under $f$ (i.e. pre images under the maps $x\mapsto 2x$)
and $A_n^+$ is obtained from $A_{n-1}$ by taking greater pre
images under $f$ (i.e. pre images under $x\mapsto 2-2x$). Whence
$\# A_n^- = \#A_n^+$ and $1/2$ is the middle point of $A_n$.

Notice that if $\alpha_{n,k}\leq 1/2$, then $f(\alpha_{n,k}) =
\alpha_{n-1,k}$. This follows from the monotone increasing of $f$
on $[0,\, 1/2]$ and that $f(A_n^-) = A_{n-1}$.

Notice, that if $\alpha_{n,k}\geq 1/2$, then $f(\alpha_{n,k}) =
\alpha_{n-1,2^{n-2}-k}$. This follows from monotone decreasing of
$f$ on $[0,\, 1/2]$ and that $f(A_n^+) = A_{n-1}$.

Analogically obtain that $\beta_{n,2^{n-2}}=v$. More then this, if
$\beta_{n,k}\leq v$, then $g(\beta_{n,k}) = \beta_{n-1,k}$ and if
$\beta_{n,k}\geq v$, then $g(\beta_{n,k}) =
\beta_{n-1,2^{n-2}-k}$.

Consider two cases: when $\alpha_{n,k}\leq 1/2$ and when
$\alpha_{n,k}> 1/2$.

Assume that $\alpha_{n,k}\leq 1/2$. Then~(\ref{eq:49}) follows
from the following chain of equalities:
$$ h_n(f(\alpha_{n,k})) =
h_n(\alpha_{n-1,k}) = \beta_{n-1,k} = g(\beta_{n,k}) =
g(h(\alpha_{n,k})).
$$

Assume that $\alpha_{n,k}\geq 1/2$. Then~(\ref{eq:49}) follows
from the following chain of equalities:
$$ h_n(f(\alpha_{n,k})) =
h_n(\alpha_{n-1,2^{n-2}-k}) = \beta_{n-1,2^{n-2}-k} =
g(\beta_{n,k}) = g(h(\alpha_{n,k})).
$$\end{proof}

For arbitrary $x_0\in [0,\, 1]$ denote $h^+(x_0) =
\varlimsup\limits_{n\rightarrow \infty}h_n(x_0)$ and $h^-(x_0)=
\varliminf\limits_{n\rightarrow \infty}h_n(x_0)$.

\begin{lemma}If for some $x_0\in (0,\, 1)$ the
inequality holds $h^+(x_0)\neq h^-(x_0)$, then maps $f$ and $g$
are not conjugated. In this case $\overline{\mathcal{B}} \neq
[0,\, 1]$.
\end{lemma}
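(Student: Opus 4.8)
The plan is to prove the statement in two steps: first that $h^+(x_0)\ne h^-(x_0)$ rules out a conjugation of $f$ and $g$, and then, working directly with the piecewise linear approximations $h_n$, that the same inequality forces a gap in $\mathcal B$, so that $\overline{\mathcal B}\ne[0,1]$.

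For the first step I would assume $f$ and $g$ are conjugated via a homeomorphism $h$ and derive $h^+(x_0)=h^-(x_0)$ for every $x_0\in(0,1)$, contradicting the hypothesis. By Theorem~\ref{theor:10} such an $h$ is increasing and $h(A_n)=B_n$; since $A_n$ and $B_n$ are finite of equal cardinality, monotonicity of $h$ gives $h(\alpha_{n,k})=\beta_{n,k}$ for all $k$. If $x_0\in\mathcal A$ the sequence $h_n(x_0)$ is eventually constant and equal to $h(x_0)$, so there is nothing to prove. If $x_0\notin\mathcal A$, then for each $n$ there are consecutive points $\alpha_{n,k_n}<x_0<\alpha_{n,k_n+1}$ of $A_n$, and since $h_n$ is increasing and linear on $[\alpha_{n,k_n},\alpha_{n,k_n+1}]$, the value $h_n(x_0)$ lies between $h_n(\alpha_{n,k_n})=\beta_{n,k_n}=h(\alpha_{n,k_n})$ and $h_n(\alpha_{n,k_n+1})=\beta_{n,k_n+1}=h(\alpha_{n,k_n+1})$. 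By Proposition~\ref{lema:An} the mesh $\alpha_{n,k_n+1}-\alpha_{n,k_n}=2^{1-n}$ tends to $0$, so $\alpha_{n,k_n}\to x_0$ and $\alpha_{n,k_n+1}\to x_0$; continuity of $h$ makes both bracketing sequences converge to $h(x_0)$, and the squeeze theorem yields $h_n(x_0)\to h(x_0)$. Hence $h^+(x_0)=h^-(x_0)$, the desired contradiction.

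For the second step I would show directly that $h^+(x_0)>h^-(x_0)$ produces a non-empty open interval disjoint from $\mathcal B$. The key input is that the approximations are coherent: $A_n\subseteq A_{n+1}$, $B_n\subseteq B_{n+1}$ (because $f^n(x)=0$ implies $f^{n+1}(x)=f(0)=0$, and likewise for $g$), and $h_{n+1}$ agrees with $h_n$ on $A_n$ — this coherence is what the structural identities in the proof of Lemma~\ref{lema:31} (the action of $f$ on the halves of $A_n$ and of $g$ on the halves of $B_n$) supply. Granting it, the cells $I_n:=[\alpha_{n,k_n},\alpha_{n,k_n+1}]$ containing $x_0$ are nested and $h_m(I_n)=[\beta_{n,k_n},\beta_{n,k_n+1}]$ for every $m\ge n$, so $h_m(x_0)\in[\beta_{n,k_n},\beta_{n,k_n+1}]$ for $m\ge n$. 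Therefore $\beta_{n,k_n}\le h^-(x_0)\le h^+(x_0)\le\beta_{n,k_n+1}$, and since $\beta_{n,k_n}$ is non-decreasing and $\beta_{n,k_n+1}$ non-increasing in $n$, the limits $\beta^-:=\lim_n\beta_{n,k_n}$ and $\beta^+:=\lim_n\beta_{n,k_n+1}$ exist and satisfy $\beta^-\le h^-(x_0)\le h^+(x_0)\le\beta^+$. If $h^+(x_0)>h^-(x_0)$, then $\beta^+>\beta^-$, and no $\beta\in B_m$ can lie in $(\beta^-,\beta^+)$: for every $n\ge m$ one would have $\beta_{n,k_n}\le\beta^-<\beta<\beta^+\le\beta_{n,k_n+1}$ with $\beta\in B_n$, contradicting that $\beta_{n,k_n}$ and $\beta_{n,k_n+1}$ are consecutive in $B_n$. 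Thus $(\beta^-,\beta^+)\cap\mathcal B=\emptyset$ and $\overline{\mathcal B}\ne[0,1]$.

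I expect the main obstacle to be the coherence of the family $\{h_n\}$ on $\bigcup_n A_n$, i.e.\ the assertion that the left endpoint $\beta_{n,k_n}$ of $h_m(I_n)$ does not depend on $m\ge n$; this is what makes the nested intervals $[\beta_{n,k_n},\beta_{n,k_n+1}]$ and the numbers $\beta^\pm$ meaningful, and it relies on the precise way $A_{n-1}$ embeds into $A_n$ and $B_{n-1}$ into $B_n$ established around Lemma~\ref{lema:31}. Once that is in hand, both parts reduce to the elementary squeeze argument of the first step and the observation that a point lying strictly inside a cell of $B_n$ for every $n$ cannot be approximated by points of $\mathcal B$.
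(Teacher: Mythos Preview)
Your proposal is correct and follows essentially the same approach as the paper: both argue by contradiction via a squeeze on $h_n(x_0)$ between the images of the adjacent dyadic endpoints, and both deduce the gap in $\mathcal{B}$ from the fact that for every $n$ the consecutive points $\beta_{n,k_n}$ and $\beta_{n,k_n+1}$ bracket $[h^-(x_0),h^+(x_0)]$. The paper is terser---it asserts $h_k(x_k^-)\le h^-(x_0)<h^+(x_0)\le h_k(x_k^+)$ and then simply states $\mathcal{B}\cap(h^-(x_0),h^+(x_0))=\emptyset$---whereas you spell out the coherence $h_m|_{A_n}=h_n|_{A_n}$ that underlies those inequalities and pass through the auxiliary limits $\beta^\pm$; this extra care is justified and does not change the argument.
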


\begin{proof}Assume that $h$ is a conjugation of $f$
and $g$, which satisfies the functional equation~(\ref{eq:45}).

Let the binary decomposition of $x_0$ be
$$ x_0 = 0,x_1x_2\ldots\, x_n\ldots\, .
$$
For every $k\in \mathbb{N}$ denote by $x_k^-$ the number, whose
binary decomposition is
$$ x_k^- = x_0 = 0,x_1x_2\ldots\, x_k
$$ and denote $$x_k^+ =x_k^-+\frac{1}{2^k}.$$

Since $\{ x_k^-,\, x_k^+\}\subset A_k$, then $h(x_k^-)=h_k(x_k^-)$
and $h(x_k^+)=h_k(x_k^+)$.

Since by Lemma~\ref{lema:h(0)(1)} the homeomorphism $h$ increase
then $h^+(x_0)>h^-(x_0).$

By Theorem~\ref{Theor:9} for every $k$ homeomorphism $h_k$
increase and $$ h(x_k^-) = h_k(x_k^-) \leq h^-(x_0) < h^+(x_0)
\leq h_k(x_k^+) = h(x_k^+),
$$ whence $h(x_k^+)-h(x_k^-)\geq h^+(x_0) - h^-(x_0)$. But the
last inequality contradicts to continuality of $h$ at $x_0$.

The fact that $\mathcal{B}$ is not dense in $[0,\, 1]$ follows
from that $\mathcal{B}\cap (h^-(x_0),\, h^+(x_0)) = \emptyset$.
\end{proof}

\begin{lemma}If the set $\mathcal{B}$ is dense in
$[0,\, 1]$, then there exists a homeomorphism $h:\, [0,\,
1]\rightarrow [0,\, 1]$, which satisfies the functional
equation~(\ref{eq:45}).
\end{lemma}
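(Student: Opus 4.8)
The plan is to construct the desired homeomorphism $h$ as a pointwise limit of the piecewise linear approximations $h_n$, and then verify it is a well-defined, continuous, strictly increasing bijection of $[0,\, 1]$ onto itself satisfying~(\ref{eq:45}). First I would observe that the previous lemma already gives us the dichotomy: if for some $x_0\in (0,\, 1)$ we had $h^+(x_0)\neq h^-(x_0)$, then $\mathcal{B}$ would fail to be dense. So, under the density hypothesis, the limit $h(x_0) = \lim_{n\to\infty}h_n(x_0)$ exists for every $x_0\in (0,\, 1)$, and we set $h(0)=0$, $h(1)=1$. (The existence of the limit on $\mathcal{A}$ itself is immediate, since $h_n$ is eventually constant there by Proposition~\ref{lema:An}: once $x_0\in A_m$, $h_n(x_0)=\beta_{m,k}$ for all $n\geq m$.) Each $h_n$ is monotone increasing by Theorem~\ref{theor:10}, hence the limit $h$ is monotone non-decreasing.

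Next I would prove that $h$ is \emph{strictly} increasing and continuous. Monotonicity of $h$ plus the fact that its range contains the dense set $\mathcal{B}=\bigcup B_n$ forces $h$ to be continuous: a monotone function whose image is dense in $[0,\, 1]$ and which takes the values $0$ and $1$ can have no jump, since a jump at a point would leave an open interval disjoint from the image, contradicting density of $\mathcal{B}\subseteq h([0,\,1])$. Here one uses that $h(A_n)=B_n$ (Theorem~\ref{theor:10}) so that every $\beta_{n,k}$ is actually attained as $h(\alpha_{n,k})$. For strict monotonicity: if $h$ were constant on some nondegenerate interval $[c,\, d]$, pick $\alpha_{n,k}, \alpha_{n,k+1}\in A_n$ with $c\le \alpha_{n,k}<\alpha_{n,k+1}\le d$ for $n$ large (possible since $\mathcal{A}$ is dense by Proposition~\ref{lema:An}); then $\beta_{n,k}=h(\alpha_{n,k})=h(\alpha_{n,k+1})=\beta_{n,k+1}$, contradicting that elements of $B_n$ are distinct. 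So $h$ is a strictly increasing continuous surjection of $[0,\, 1]$ onto itself, i.e. a homeomorphism.

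It then remains to check the functional equation $h(f(x))=g(h(x))$ for all $x\in[0,\,1]$. I would first verify it on the dense set $\mathcal{A}$: by Lemma~\ref{lema:31}, for $\alpha_{n,k}\in A_n$ we have $h_n(f(\alpha_{n,k}))=g(h_n(\alpha_{n,k}))$, and since $f(A_n)\subseteq A_{n-1}\subseteq \mathcal{A}$ and $h_m$ stabilizes on $\mathcal{A}$ (equal to $h$ there) for $m$ large, passing to the limit gives $h(f(\alpha_{n,k}))=g(h(\alpha_{n,k}))$. Then, since $\mathcal{A}$ is dense in $[0,\, 1]$, and all of $h$, $f$, $g$ are continuous, the identity $h\circ f = g\circ h$ extends from $\mathcal{A}$ to all of $[0,\, 1]$ by continuity. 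This completes the proof that $h$ is a conjugation.

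The main obstacle I anticipate is the careful handling of continuity of the limit function: one must be sure that monotonicity of $h$ together with $h(\mathcal{A})=\mathcal{B}$ dense genuinely rules out all jump discontinuities (including one-sided jumps at endpoints), and dually that $h$ omits no value, so that $h$ is truly onto $[0,\, 1]$. A subtlety worth spelling out is why $h(x_0)=\lim h_n(x_0)$ is consistent with the stabilized values on $\mathcal{A}$ — i.e. that for $x_0\in A_m$ the nested structure $A_m\subseteq A_{m+1}\subseteq\cdots$ (Proposition~\ref{lema:An}) makes $h_n(x_0)$ eventually constant and equal to the ``conditionally found'' value from Proposition~\ref{lema:25}. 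Everything else — monotonicity, passing limits through the continuous maps $f,g$, and the functional equation on the dense set — is routine once the topology of $h$ is pinned down.
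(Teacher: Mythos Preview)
Your proposal is correct and follows essentially the same route as the paper: define $h=\lim_n h_n$, use density of $\mathcal{B}$ to get continuity, use monotonicity of the $h_n$ for monotonicity of $h$, verify~(\ref{eq:45}) on $\mathcal{A}$ via Lemma~\ref{lema:31}, and extend by continuity. The paper's own proof is considerably terser and leaves the continuity and strict-monotonicity details implicit, so your elaboration of those points (jumps ruled out by density of $\mathcal{B}$ in the image, strictness from distinctness of the $\beta_{n,k}$) is a welcome expansion rather than a different argument.
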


\begin{proof}For every $x\in [0,\, 1]$ define $h(x)$
as
$$ h(x) = \lim\limits_{n\rightarrow \infty}h_n(x).
$$
The existence of the limit and the continuity of $h$ follows from
the density of $\mathcal{B}$ in $[0,\, 1]$.

The monotonicity of $h$ follows from that for every $n$
homeomorphism $h_n$ increase.

By Lemma~\ref{lema:31} equality~(\ref{eq:45}) holds for all $x\in
\mathcal{A}$. The equality~(\ref{eq:45}) for $x\in [0,\,
1]\backslash \mathcal{A}$ follows from continuity of the function
$h(f(x))-g(h(x))$, which is a composition of continuous ones.
\end{proof}

\subsection{Existing of the conjugation}\label{sect-isnuv}

In this section we continue the study of the problem on the
topological conjugacy of maps, which were considered in
Section~\ref{sect-Pobudowa} above.

\begin{equation}\label{eq:46} f(x) = \left\{\begin{array}{ll}
2x,& 0\leq x< 1/2;\\
2-2x,& 1/2 \leqslant x\leqslant 1.
\end{array}\right.
\end{equation}

Instead of the maps $g$ consider more precise one $f_v: [0,\,
1]\rightarrow [0,\, 1]$, which depends on $v\in (0,\, 1)$ and if
defined by the following formulas
\begin{equation} \label{eq:47}f_v(x) =
\left\{\begin{array}{ll} \frac{x}{v},& x\leqslant v;\\
 \frac{1-x}{1-v},&
x>v.
\end{array}\right.
\end{equation}

\begin{figure}[htbp]
\begin{minipage}[h]{0.49\linewidth}
\begin{center}\begin{picture}(100,125)
\put(100,0){\line(0,1){100}} \put(0,100){\line(1,0){100}}
\put(0,0){\vector(1,0){120}} \put(0,0){\vector(0,1){120}}
\put(0,0){\line(1,2){50}} \put(50,100){\line(1,-2){50}}
\put(40,50){f(x)}
\end{picture}\end{center}
\centerline{a)}
\end{minipage}
\hfill
\begin{minipage}[h]{0.49\linewidth}
\begin{center}\begin{picture}(100,125)
\put(100,0){\line(0,1){100}} \put(0,100){\line(1,0){100}}
\put(0,0){\vector(1,0){120}} \put(0,0){\vector(0,1){120}}
\put(0,0){\line(3,4){75}} \put(75,100){\line(1,-4){25}}
\put(50,50){$f_v(x),$} \put(45,30){$v=3/4$}
\end{picture}\end{center}
 \centerline{ b)}
\end{minipage}
\caption{}\label{fig:4}
\end{figure}

Denote by $\mathcal{A}$, $\mathcal{B}$, $A_n$ and $B_n$ the same
sets and denote by $h_n$ the same maps, which were considered in
Section~\ref{sect-Pobudowa-1}, but now use the function $f_v$
instead of $g$.

Remind that the sets $A_n$ and $B_n$ are such that $f^n(A_n) =
f_v^n(B_n) = 0$ and by Theorem~\ref{theor:10} for a homeomorphism
$h$ the equality

\begin{equation}\label{eq:48}h(f) = f_v(h)
\end{equation}
implies that $h(A_n) = B_n$.

\subsubsection{ Find the values of conjugation at the
dense set}

Conditional values of $h$ at points of $A_2 =\{ 0,\, 1/2;\,1 \}$
follow from Lemmas~\ref{lema:h(0)(1)} and~\ref{lema:30},
precisely: $h(0)=0$, $h(1/2)=v$, $h(1)=1$. Correspond three points
of the graph of $h$ for $v=3/4$ are given at Figure~\ref{fig:5}a).

\begin{example}\label{ex:05}Find the conditional
values of $h$ at $A_3$.
\end{example}

\begin{proof}[Deal of the example] As conditional
values of $h$ at $A_2$ are found in Lemmas~\ref{lema:h(0)(1)}
and~\ref{lema:30}, it is sufficient to find $h$ at $A_3\setminus
A_2 = \{ 1/4,\, 3/4\}$.

For the point $1/4$ by i. 1 of Proposition~\ref{lema:25} one have
that $h(1/4)=vh(1/2).$ Since by Lemma~\ref{lema:30} $h(1/2)=v$,
then $h(1/4)=v^2$.

For the points $3/4$ by i. 2 of Proposition~\ref{lema:25} have
that $h(3/4)=1-(1-v)h(1/2) = 1-v(1-v)$.

Values of $h$ on $A_3$, if this homeomorphism exists, are given at
Figure~\ref{fig:5}b).
\end{proof}

\begin{figure}[htbp]
\begin{minipage}[h]{0.23\linewidth}
\begin{center}
\begin{picture}(80,80) \put(64,0){\line(0,1){64}}
\put(0,64){\line(1,0){64}} \put(0,0){\vector(1,0){80}}
\put(0,0){\vector(0,1){80}} \put(0,0){\circle*{2}}
\put(32,51){\circle*{2}} \put(64,64){\circle*{2}}
\end{picture}
\centerline{a)}\end{center}
\end{minipage}
\hfill
\begin{minipage}[h]{0.23\linewidth}
\begin{center}
\begin{picture}(80,80)
\put(64,0){\line(0,1){64}} \put(0,64){\line(1,0){64}}
\put(0,0){\vector(1,0){80}} \put(0,0){\vector(0,1){80}}
\put(0,0){\circle*{2}} \put(16,41){\circle*{2}}
\put(32,51){\circle*{2}} \put(48,54){\circle*{2}}
\put(64,64){\circle*{2}}
\end{picture}
\centerline{b)}\end{center}
\end{minipage}
\hfill
\begin{minipage}[h]{0.23\linewidth}
\begin{center}
\begin{picture}(80,80)
\put(64,0){\line(0,1){64}} \put(0,64){\line(1,0){64}}
\put(0,0){\vector(1,0){80}} \put(0,0){\vector(0,1){80}}
\put(0,0){\circle*{2}} \put(8,33){\circle*{2}}
\put(16,41){\circle*{2}} \put(24,43){\circle*{2}}
\put(32,51){\circle*{2}} \put(40,53){\circle*{2}}
\put(48,54){\circle*{2}} \put(56,56){\circle*{2}}
\put(64,64){\circle*{2}}
\end{picture}
\centerline{c)}\end{center}
\end{minipage}
\hfill
\begin{minipage}[h]{0.23\linewidth}
\begin{center}
\begin{picture}(80,80)
\put(64,0){\line(0,1){64}} \put(0,64){\line(1,0){64}}
\put(0,0){\vector(1,0){80}} \put(0,0){\vector(0,1){80}}
\put(0,0){\circle*{2}} \put(4,26){\circle*{2}}
\put(8,33){\circle*{2}} \put(12,34){\circle*{2}}
\put(16,41){\circle*{2}} \put(20,43){\circle*{2}}
\put(24,43){\circle*{2}} \put(28,45){\circle*{2}}
\put(32,51){\circle*{2}} \put(36,53){\circle*{2}}
\put(40,53){\circle*{2}} \put(44,53){\circle*{2}}
\put(48,54){\circle*{2}} \put(52,55){\circle*{2}}
\put(56,56){\circle*{2}} \put(60,57){\circle*{2}}
\put(64,64){\circle*{2}}
\end{picture}
\centerline{d)}\end{center}
\end{minipage}
\hfill \caption{} \label{fig:5}
\end{figure}

\begin{example}\label{ex:06}Find conditionally
values of $h$ on $A_4$.
\end{example}

\begin{proof}[Deal of the example] In the same manner
as in Example~\ref{ex:05}, find all the conditional values of $h$
only in the set $A_4\setminus A_3 = \{ 1/8,\, 3/8,\, 5/8,\, 7/8\}$
and use the data from Example~\ref{ex:05} and
Lemmas~\ref{lema:h(0)(1)} and~\ref{lema:30}.

For points $1/8$ and $3/8$, which are less than $1/2$, we have by
i. 1 of Proposition~\ref{lema:25} that $h(1/8)=vh(1/4)$. Since the
conditional value $h(1/4)$ is already found in
Example~\ref{ex:05}, then $h(1/8)=v^3$. Analogically,
$h(3/8)=vh(3/4)=v-v^2(1-v)$.

For points $5/8$ and $7/8$, which are grater than $1/2$, we have
by i. 1 of Proposition~\ref{lema:25} that
$h(5/8)=1-(1-v)h(f(5/8))\hm{=}1-(1-v)h(3/4)=1-(1-v)(1-v(1-v))$.
Analogically $h(7/8)=1-(1-v)h(1/4)1-v^2(1-v)$.

The conditional values of $h$ on $A_3$ are given at
Figure~\ref{fig:5}c).
\end{proof}

In the same manner as  it was done at Example~\ref{ex:06}, we may
find the conditional values of $h$ on $A_5$ and on $A_n$ for
$n>5$. Conditional values of $h$ on $A_5$ are given at
Figure~\ref{fig:5}d).

\subsubsection{Density of pre images}\label{Sect:DynVlsl}

By Theorem~\ref{Theor:9} the conjugation of $f$ and $f_v$ follows
from the density of $\mathcal{B}$ in $[0,\, 1]$, whence we will
concentrate on the proving of this fact. We will need the
following technical lemmas.

\begin{lemma}\label{lema:vlastfv1}Assume
for some numbers $a,\, b \in [0,\, 1]$ the increasing maps $g$
such that $g(a)=0$ and $g(b)=1$ is given and its graph on $[a,\,
b]$ is a line segment. Then the graph of $s=f_v(g)$ is a braking
line, consisted of two line segments such that $s(a) \hm{=}
s(b)=0$ and $s(t)=1$, where $$ t = a +v(b-a).
$$
\end{lemma}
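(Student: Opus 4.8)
The plan is to compute the composition $s = f_v(g)$ directly, using that $g$ is an increasing affine map on $[a,b]$ with $g(a)=0$, $g(b)=1$, so that $g(x) = \frac{x-a}{b-a}$ for $x\in[a,b]$. The key observation is that $f_v$ is the tent-type map of the form~(\ref{eq:47}) with break point at $v$: it increases linearly from $(0,0)$ to $(v,1)$ and decreases linearly from $(v,1)$ to $(1,0)$. Since $g$ maps $[a,b]$ bijectively and increasingly onto $[0,1]$, the pullback under $g$ of the point $v\in[0,1]$ is the unique point $t\in[a,b]$ with $g(t)=v$, i.e. $\frac{t-a}{b-a}=v$, which gives $t = a + v(b-a)$ exactly as claimed. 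First I would record that $g$ is a homeomorphism of $[a,b]$ onto $[0,1]$, so that $s = f_v\circ g$ inherits from $f_v$ the shape of a two-segment broken line with the break occurring precisely where $g$ hits $v$.

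Next I would verify the three stated values by substitution. At the endpoints: $s(a) = f_v(g(a)) = f_v(0) = 0$ and $s(b) = f_v(g(b)) = f_v(1) = 0$, using the boundary values of $f_v$ from~(\ref{eq:47}). At the interior point $t$: $s(t) = f_v(g(t)) = f_v(v) = 1$. Then I would check piecewise linearity: on $[a,t]$ we have $g(x)\in[0,v]$, so $f_v(g(x)) = g(x)/v = \frac{x-a}{v(b-a)}$, which is affine in $x$ and increasing; on $[t,b]$ we have $g(x)\in[v,1]$, so $f_v(g(x)) = \frac{1-g(x)}{1-v} = \frac{b-x}{(1-v)(b-a)}$, which is affine in $x$ and decreasing. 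Hence the graph of $s$ over $[a,b]$ consists of exactly the two line segments joining $(a,0)$ to $(t,1)$ and $(t,1)$ to $(b,0)$, establishing that $s$ is a broken line of the asserted form.

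This lemma is essentially a direct computation, so there is no serious obstacle; the only point requiring a moment's care is confirming that $g$ takes the value $v$ at a \emph{unique} point of $[a,b]$ (so that the break point $t$ is well defined), which follows immediately from $g$ being strictly increasing and continuous with $g(a)=0<v<1=g(b)$. I would state this uniqueness explicitly before writing down the formula for $t$, and then the rest of the argument is just the two affine substitutions above.
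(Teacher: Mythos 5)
Your proof is correct and follows essentially the same route as the paper: a direct computation using the affine formula for $g$ on $[a,\,b]$ and the two-branch form of $f_v$, locating the break point $t$ from $g(t)=v$ (equivalently, the paper solves $s(t)=1$ using the slope $k/v$ of the left segment, which gives the same $t=a+v(b-a)$). Your version is somewhat more detailed in checking the endpoint values and both affine branches, but there is no substantive difference in method.
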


\begin{proof}The tangent of $g$ is $k=\frac{1}{b-a}.$
The tangent of $s$ of the left segment of monotonicity is
$\frac{k}{v}$.

In this case the value of $t$ can be found from $s(t)=1$ as
follows. $t = a + \frac{v}{k} \hm{=}a +v(b -a).$
\end{proof}

\begin{lemma}\label{lema:vlastfv2}Assume
for some numbers $a,\, b \in [0,\, 1]$ the increasing maps $g$
such that $g(a)=1$ and $g(b)=0$ is given and its graph on $[a,\,
b]$ is a line segment. Then the graph of $s=f_v(g)$ is a braking
line, consisted of two line segments such that $s(a) \hm{=}
s(b)=0$ and $s(t)=1$, where $$ t = b-v(b-a).
$$
\end{lemma}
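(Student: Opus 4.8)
The plan is to mirror the proof of Lemma~\ref{lema:vlastfv1}, working with an explicit linear formula for $g$ on $[a,\, b]$. Since the graph of $g$ on $[a,\, b]$ is the line segment joining $(a,\, 1)$ to $(b,\, 0)$, we have $g(x) = \frac{b-x}{b-a}$ for $x\in [a,\, b]$, so $g$ is continuous and strictly decreasing from $1$ to $0$ on this interval. Hence $f_v(g(x))$ is the composition of the continuous map $f_v$ from~\eqref{eq:47} with this segment, and the only place where the two branches of $f_v$ meet occurs where $g(x)=v$, i.e. where $\frac{b-x}{b-a}=v$; solving gives exactly $x = b - v(b-a) = t$.

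First I would split $[a,\, b]$ at $t$. On $[a,\, t]$ we have $g(x)\geq v$ (as $g$ decreases with $g(a)=1\geq v$ and $g(t)=v$), hence $s(x) = f_v(g(x)) = \frac{1-g(x)}{1-v} = \frac{1}{1-v}\left(1 - \frac{b-x}{b-a}\right)$, which is affine in $x$ with $s(a)=0$ and $s(t)=1$. On $[t,\, b]$ we have $g(x)\leq v$, hence $s(x) = f_v(g(x)) = \frac{g(x)}{v} = \frac{1}{v}\cdot\frac{b-x}{b-a}$, again affine in $x$ with $s(t)=1$ and $s(b)=0$. Therefore $s=f_v(g)$ is a broken line of two segments with $s(a)=s(b)=0$ and $s(t)=1$ at $t = b-v(b-a)$, as claimed.

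Alternatively one can avoid the case analysis by noting the identity $f_v(1-y) = f_{1-v}(y)$ together with $g = 1 - \widetilde{g}$, where $\widetilde{g}$ is the increasing segment from $(a,\, 0)$ to $(b,\, 1)$, and then invoking Lemma~\ref{lema:vlastfv1} with parameter $1-v$ in place of $v$; this immediately gives the peak location $a + (1-v)(b-a) = b - v(b-a)$. There is no real obstacle here — the only subtlety is the orientation of $g$ (the statement should read \emph{decreasing} rather than increasing), which is precisely what makes the peak fall at $t = b - v(b-a)$ instead of at $a + v(b-a)$ as in the previous lemma.
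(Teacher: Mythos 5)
Your main argument is correct and essentially the paper's own: the paper simply records that the tangent of $g$ is $k=\frac{-1}{b-a}$, that $s$ has tangent $\frac{k}{v}$ on its right segment of monotonicity, and solves $s(t)=1$ to get $t=b+\frac{v}{k}=b-v(b-a)$; your explicit split of $[a,\,b]$ at the point where $g(x)=v$ and the branch-by-branch affine formulas are just a more detailed writing of the same computation. Your alternative via the identity $f_v(1-y)=f_{1-v}(y)$ and $g=1-\widetilde{g}$ is a genuinely different (and tidy) route: it reduces the lemma to Lemma~\ref{lema:vlastfv1} with $v$ replaced by $1-v$, avoiding any case analysis, at the small cost of verifying the symmetry identity itself; the paper does not exploit this symmetry and treats the two lemmas by parallel direct computations. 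Your remark that the hypothesis should read \emph{decreasing} rather than increasing is also correct: $g(a)=1$, $g(b)=0$ with the graph a line segment forces $g$ to decrease, and this is exactly what shifts the peak to $t=b-v(b-a)$ rather than $a+v(b-a)$.
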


\begin{proof}The tangent of $g$ is
$k=\frac{-1}{\beta-\alpha}.$ The tangent of $s$ on the right
segment of monotonicity is $\frac{k}{v}$.

In this case the value $t$ can be found from $s(t)=1$ as follows.
$ t = \beta + \frac{v}{k} = \beta - v(\beta - \alpha).$
\end{proof}

The following corollary follows from Lemmas~\ref{lema:vlastfv1}
and~\ref{lema:vlastfv2}.

\begin{corollary}\label{corol:2}Let for some $a,\, b
\in [0,\, 1]$ the monotonic linear $g:\, [a,\, b]\rightarrow [0,\,
1]$ is given. Then the graph of $s=f_v(g)$ is breaking line, which
is consisted of two segments and for the extremum $t$ the
following equality of sets
$$ \left\{
\frac{t-a}{b-a},\, \frac{b-t}{b-a}\right\} = \{ v,\, 1-v\}
$$ holds.
\end{corollary}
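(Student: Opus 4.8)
The statement to prove is Corollary~\ref{corol:2}: for a monotone linear map $g:\,[a,\,b]\rightarrow [0,\,1]$, the composition $s=f_v(g)$ is a broken line of two segments, and its extremum point $t$ satisfies $\left\{\frac{t-a}{b-a},\,\frac{b-t}{b-a}\right\}=\{v,\,1-v\}$. The plan is simply to split into the two possible monotonicity types of the linear map $g$ and invoke the two lemmas that immediately precede the corollary.

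First I would observe that since $g$ is linear and monotone on $[a,\,b]$ with range $[0,\,1]$, exactly one of two cases occurs: either $g$ is increasing, so $g(a)=0$ and $g(b)=1$, or $g$ is decreasing, so $g(a)=1$ and $g(b)=0$. (The paper's Lemma~\ref{lema:vlastfv2} is stated for an ``increasing'' $g$ with $g(a)=1$, $g(b)=0$, which is a typo for ``decreasing''; I would silently read it as the decreasing case, matching its proof which uses a negative slope $k=\frac{-1}{\beta-\alpha}$.) In each case $s=f_v(g)$ is a broken line of two segments: this is exactly the conclusion of Lemma~\ref{lema:vlastfv1} in the increasing case and of Lemma~\ref{lema:vlastfv2} in the decreasing case, so the ``two-segment'' part needs no further work.

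Next I would read off the location of the extremum $t$ from the two lemmas. In the increasing case Lemma~\ref{lema:vlastfv1} gives $t=a+v(b-a)$, hence $\frac{t-a}{b-a}=v$ and $\frac{b-t}{b-a}=1-v$, so the unordered pair is $\{v,\,1-v\}$. In the decreasing case Lemma~\ref{lema:vlastfv2} gives $t=b-v(b-a)$, hence $\frac{b-t}{b-a}=v$ and $\frac{t-a}{b-a}=1-v$, so again the unordered pair is $\{v,\,1-v\}$. Since the statement concerns the set $\left\{\frac{t-a}{b-a},\,\frac{b-t}{b-a}\right\}$ and not the ordered pair, both cases yield the same conclusion, which finishes the proof.

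There is essentially no obstacle here — the corollary is a direct packaging of the two preceding lemmas, and the only care needed is (i) noting that a monotone linear surjection onto $[0,\,1]$ falls into exactly the two cases handled by those lemmas, and (ii) handling the apparent mislabeling of Lemma~\ref{lema:vlastfv2}. If one wanted a fully self-contained argument one could instead compute directly: write $g(x)=\pm\frac{x-a}{b-a}$ (plus a constant in the decreasing case), note $f_v$ is piecewise linear with breakpoint at $v$ and $f_v(v)=1$, solve $g(t)=v$ for $t$, and check the slopes on either side of $t$ are nonzero of opposite sign so that $s$ genuinely has two linear pieces; but invoking Lemmas~\ref{lema:vlastfv1}–\ref{lema:vlastfv2} is shorter and is clearly the intended route.
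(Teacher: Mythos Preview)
Your proposal is correct and matches the paper's approach exactly: the paper does not give a separate proof but simply states that the corollary follows from Lemmas~\ref{lema:vlastfv1} and~\ref{lema:vlastfv2}, which is precisely the case split you carry out. Your computation of the two ratios in each case is the intended (and only) content here.
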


\begin{lemma}\label{lema:Bszczilna}The set $B$ is
dance in~$[0,\, 1]$.
\end{lemma}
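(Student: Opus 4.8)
The plan is to estimate the mesh of the finite set $B_n$ and show it tends to $0$; since $\mathcal B=\bigcup_{n\ge 1}B_n$, every point of $[0,1]$ then lies within $\mathrm{mesh}(B_n)$ of a point of $\mathcal B$, which is exactly density. Concretely, write $B_n=\{\beta_{n,0}<\beta_{n,1}<\dots<\beta_{n,2^{n-1}}\}$, so $\beta_{n,0}=0$, $\beta_{n,2^{n-1}}=1$, and $|B_n|=2^{n-1}+1$ (as established in the proof of Theorem~\ref{theor:10}); put $\mathrm{mesh}(B_n)=\max_k(\beta_{n,k+1}-\beta_{n,k})$.

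First I would record the structure of $f_v^{n-1}$ that already appears in the proof of Theorem~\ref{theor:10}: $f_v^{n-1}$ is piecewise linear with $2^{n-1}$ maximal intervals of monotonicity, each mapped linearly by $f_v^{n-1}$ onto $[0,1]$. Hence every breakpoint of $f_v^{n-1}$ carries the value $0$ or $1$; the breakpoints with value $0$ are exactly the zeros $B_{n-1}$ of $f_v^{n-1}$ (note $f_v(0)=f_v(1)=0$, so $0,1\in B_{n-1}$), and those with value $1$ lie one between each pair of consecutive points of $B_{n-1}$. Since a point is a zero of $f_v^{n}=f_v\circ f_v^{n-1}$ iff $f_v^{n-1}$ sends it into $\{0,1\}$, the full set of breakpoints of $f_v^{n-1}$ is precisely $B_n$. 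In other words, the closed intervals $G=[\beta_{n,k},\beta_{n,k+1}]$ between consecutive points of $B_n$ are exactly the $2^{n-1}$ monotone linear pieces of $f_v^{n-1}$, and on each of them $f_v^{n-1}$ is linear and onto $[0,1]$.

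The main step is then a single application of Corollary~\ref{corol:2}. Fix a gap $G=[\beta_{n,k},\beta_{n,k+1}]$ of $B_n$ and apply Corollary~\ref{corol:2} to $g:=f_v^{n-1}|_G\colon G\to[0,1]$: the graph of $f_v^{n}|_G=f_v(g)$ is a broken line of two segments; since $g$ carries the two endpoints of $G$ onto $\{0,1\}$ (in some order) and $f_v(0)=f_v(1)=0$, this broken line vanishes at both endpoints of $G$ and equals $1$ only at a single interior point $t_G$, and by the set equality in Corollary~\ref{corol:2} the point $t_G$ divides $G$ in the ratio $v:(1-v)$ or $(1-v):v$. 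As $f_v^{n}|_G$ meets $\{0,1\}$ only at $\beta_{n,k}$, $t_G$, $\beta_{n,k+1}$, we get $B_{n+1}\cap G=\{\beta_{n,k},t_G,\beta_{n,k+1}\}$. Thus $B_{n+1}$ refines $B_n$, with each gap of length $\ell$ splitting into two gaps of lengths $v\ell$ and $(1-v)\ell$.

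Finally, induction on $n$ starting from $B_1=\{0,1\}$ (Lemma~\ref{lema:B1}), a single gap of length $1$, gives that every gap of $B_n$ has length $c_1c_2\cdots c_{n-1}$ with each $c_i\in\{v,1-v\}$; more precisely the gap lengths are $v^{j}(1-v)^{\,n-1-j}$ with multiplicity $\binom{n-1}{j}$, consistent with $\sum_{j=0}^{n-1}\binom{n-1}{j}v^{j}(1-v)^{\,n-1-j}=1$. Hence $\mathrm{mesh}(B_n)\le M^{\,n-1}$ with $M=\max\{v,1-v\}<1$, so $\mathrm{mesh}(B_n)\to0$ and $\mathcal B$ is dense in $[0,1]$, which is Lemma~\ref{lema:Bszczilna}; by Theorem~\ref{Theor:9} this also yields the conjugacy of $f$ and $f_v$. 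I expect the only point needing care to be the bookkeeping in the main step — whether, on a given piece of $f_v^{n-1}$, one invokes Lemma~\ref{lema:vlastfv1} or Lemma~\ref{lema:vlastfv2} — but since both give the same ratio $\{v,1-v\}$ (as packaged in Corollary~\ref{corol:2}), the orientation never affects the length estimate.
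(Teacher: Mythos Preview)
Your proof is correct and follows essentially the same route as the paper: both arguments identify the gaps of $B_n$ with the monotone pieces of $f_v^{n-1}$, invoke Corollary~\ref{corol:2} to see that each gap of $B_n$ is split by $B_{n+1}$ in the ratio $\{v,1-v\}$, and conclude that the mesh contracts by the factor $\max\{v,1-v\}<1$. Your write-up is more detailed (the explicit identification of the breakpoints of $f_v^{n-1}$ with $B_n$, and the multiplicity count $\binom{n-1}{j}$ for the gap lengths), but these are elaborations of the same argument rather than a different approach.
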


\begin{proof}For every $n\geqslant 1$ consider the
maximum $d_n$ of distances between points of the set $B_n$.

to prove the density $B$ is the same as to prove that
$$ \lim\limits_{n\rightarrow \infty}d_n = 0. $$ Consider
two arbitrary neighbor points $\beta_{i,n},\, \beta_{i+1;n}$ of
$B_n$.

Notice that $\beta_{n,k}$ are extremums of $f_v^{n-1}$.

It follows from Corollary~\ref{corol:2} that
$$ \left\{
\frac{\beta_{n+1,2k+1}-\beta_{n+1,2k}}{\beta_{n,k+1}-\beta_{n,k}},\,
\frac{\beta_{n+1,2k+2}-\beta_{n+1,2k+1}}{\beta_{n,k+1}-\beta_{n,k}}\right\}\in
\{v,\, 1-v\}.
$$

Whence the following bound
$$d_{n+1}\leqslant \max\{ v,\,
1-v\}\cdot d_n$$ holds for $d_n$ and $d_{n+1}$. It means the
density of $\mathcal{B}$ in $[0,\, 1]$ because $0< \max\{ v,\,
1-v\} <1$.
\end{proof}

The proof of Lemma~\ref{lema:Bszczilna} contains the proof of the
following Lemma.

\begin{lemma}\label{lema:34}Let $n>1$
and $0=\beta_0,\ldots \beta_{2^{n-1}}=1$ be points of $B_n$. Then
for every $i$ such that $\beta_i\in B_n\setminus B_{n-1}$ the
following statements hold:

1. $i\neq 0$, $i\neq 2^{n-1}$.

2. $\beta_{i-1}\in B_{n-1}$, $\beta_{i+1}\in B_{n-1}$.

3. $\left\{ \frac{\beta_i-\beta_{i-1}}{\beta_{i+1}-\beta_{i-1}},\,
\frac{\beta_{i+1}-\beta_{i}}{\beta_{i+1}-\beta_{i-1}} \right\} =
\{ v,\, 1-v \}$.
\end{lemma}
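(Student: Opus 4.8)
The plan is to read off all three assertions from the inductive description of the sets $B_m$ that already underlies the proofs of Theorem~\ref{theor:10} and Lemma~\ref{lema:Bszczilna}. Recall from there that for every $m\geq 1$ the iterate $f_v^{m-1}$ is piecewise linear, made of $2^{m-1}$ linear pieces each of which is mapped onto the whole of $[0,\, 1]$, and that $B_m$ is precisely the set of extrema of $f_v^{m-1}$ (the $2^{m-1}-1$ interior breaking points together with the two endpoints $0$ and $1$; the count matches $\#B_m=2^{m-1}+1$). First I would record the elementary inclusion $B_{n-1}\subseteq B_n$: if $f_v^{n-1}(x)=0$ then $f_v^{n}(x)=f_v(0)=0$. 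In particular $0$ and $1$ lie in $B_{n-1}$, so $\beta_0=0$ and $\beta_{2^{n-1}}=1$ belong to $B_{n-1}$; this will give assertion~1 once we know every point of $B_n\setminus B_{n-1}$ lies strictly between two consecutive points of $B_{n-1}$.

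Next I would localise the new points. Since $f_v^{-1}(0)=\{0,\, 1\}$, a point $x\in B_n\setminus B_{n-1}$ satisfies $f_v^{n-1}(x)\in f_v^{-1}(0)$ and $f_v^{n-1}(x)\neq 0$, hence $f_v^{n-1}(x)=1$; conversely any $x$ with $f_v^{n-1}(x)=1$ lies in $B_n\setminus B_{n-1}$. Thus $B_n\setminus B_{n-1}=(f_v^{n-1})^{-1}(1)$, the set of those extrema of $f_v^{n-1}$ at which the value $1$ is attained. Now let $0=\beta_{n-1,0}<\beta_{n-1,1}<\cdots<\beta_{n-1,N}=1$ with $N=2^{n-2}$ be the points of $B_{n-1}$, i.e. the extrema of $f_v^{n-2}$, so that on each interval $I_k=[\beta_{n-1,k},\,\beta_{n-1,k+1}]$ the map $f_v^{n-2}$ is a monotone linear bijection onto $[0,\, 1]$. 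Writing $f_v^{n-1}=f_v\circ f_v^{n-2}$ and applying Corollary~\ref{corol:2} with $g=f_v^{n-2}|_{I_k}$, $a=\beta_{n-1,k}$, $b=\beta_{n-1,k+1}$, we get that $f_v^{n-1}$ restricted to $I_k$ is a broken line with a single interior extremum $t_k$ at which $f_v^{n-1}(t_k)=1$, with $f_v^{n-1}(\beta_{n-1,k})=f_v^{n-1}(\beta_{n-1,k+1})=0$, and with
$$\left\{\frac{t_k-\beta_{n-1,k}}{\beta_{n-1,k+1}-\beta_{n-1,k}},\ \frac{\beta_{n-1,k+1}-t_k}{\beta_{n-1,k+1}-\beta_{n-1,k}}\right\}=\{v,\, 1-v\}.$$
Hence each $I_k$ contributes exactly one point of $B_n\setminus B_{n-1}$, namely $t_k$, lying in the open interval $(\beta_{n-1,k},\,\beta_{n-1,k+1})$, and $I_k$ contains no point of $B_n$ other than $\beta_{n-1,k},\, t_k,\,\beta_{n-1,k+1}$. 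As a consistency check this gives $\#B_n=\#B_{n-1}+2^{n-2}=2^{n-1}+1$, in agreement with Theorem~\ref{theor:10}.

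Finally I would assemble the conclusion. Given $\beta_i\in B_n\setminus B_{n-1}$, it equals $t_k$ for the unique $k$ with $\beta_i\in(\beta_{n-1,k},\,\beta_{n-1,k+1})$; since $\beta_0=0$ and $\beta_{2^{n-1}}=1$ lie in $B_{n-1}$ while $\beta_i$ does not, $i\neq 0$ and $i\neq 2^{n-1}$, which is assertion~1. Because $I_k$ contains no point of $B_n$ except its endpoints and $\beta_i=t_k$, the immediate neighbours of $\beta_i$ in the ordered set $B_n$ are $\beta_{i-1}=\beta_{n-1,k}$ and $\beta_{i+1}=\beta_{n-1,k+1}$, both in $B_{n-1}$; this is assertion~2. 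Assertion~3 is then exactly the displayed identity, rewritten with $\beta_{i-1}=\beta_{n-1,k}$, $\beta_{i+1}=\beta_{n-1,k+1}$, $\beta_i=t_k$. The only genuinely non-formal point, and the step I would be most careful about, is the one inherited from the proofs of Theorem~\ref{theor:10} and Lemma~\ref{lema:Bszczilna}: that the extrema of $f_v^{n-2}$ are precisely the points of $B_{n-1}$ and that $f_v^{n-2}$ is a linear bijection of each resulting subinterval onto $[0,\, 1]$ (with $f_v^{0}$ read as the identity when $n=2$). Since this is established there by induction on $n$, I would simply invoke it; everything else is bookkeeping around Corollary~\ref{corol:2}.
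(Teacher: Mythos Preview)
Your proposal is correct and follows essentially the same route as the paper: the paper does not give a separate argument for this lemma but states that it is contained in the proof of Lemma~\ref{lema:Bszczilna}, which is precisely the inductive picture you spell out --- the points of $B_{n-1}$ are the extrema of $f_v^{n-2}$, and applying Corollary~\ref{corol:2} on each subinterval $[\beta_{n-1,k},\,\beta_{n-1,k+1}]$ produces exactly one new point of $B_n$ splitting that interval in the ratio $\{v,\,1-v\}$. Your write-up simply makes explicit the bookkeeping (the characterisation $B_n\setminus B_{n-1}=(f_v^{n-1})^{-1}(1)$ and the identification of neighbours) that the paper leaves to the reader.
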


\begin{corollary}\label{corol:3}For
$n\in \mathbb{N}$ and $t\geq 2$ consider two neighbor points
$\beta_i,\, \beta_{i+1}$ of $B_{n+t}$, which does not belong to
$B_n$. Let $a,\, b\in B_n$ such neighbor points of $B_n$ that
$\beta_i,\, \beta_{i+1}\in (a,\, b)$. Then $$ (\min(v,\, 1-v))^t
(b-a) \leq \beta_{i+1} - \beta_i \leq (\max(v,\, 1-v))^t (b-a).
$$
\end{corollary}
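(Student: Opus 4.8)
Corollary~\ref{corol:3} follows directly from Lemma~\ref{lema:34}, so the plan is to iterate the estimate in part~3 of that lemma $t$ times. First I would fix the two neighboring points $a,\, b\in B_n$ with $\beta_i,\, \beta_{i+1}\in (a,\, b)$, and observe that since $\beta_i,\, \beta_{i+1}\notin B_n$ and they are consecutive in $B_{n+t}$, the whole interval $(a,\, b)$ contains, at the levels $B_{n+1},\ldots,\, B_{n+t}$, a nested family of intervals that successively ``refine'' $(a,\, b)$ down to the gap $(\beta_i,\, \beta_{i+1})$. The key structural fact I would extract from Lemma~\ref{lema:34} is this: if $(\alpha,\, \gamma)$ is a gap between consecutive points of $B_m$, then at level $B_{m+1}$ either $(\alpha,\, \gamma)$ is still a gap (no new point of $B_{m+1}\setminus B_m$ lands strictly inside it — but this cannot happen indefinitely since $\mathcal{B}$ is dense by Lemma~\ref{lema:Bszczilna}), or exactly one new point $\beta^*\in B_{m+1}\setminus B_m$ lies in $(\alpha,\, \gamma)$ and then, by part~3 applied to $\beta^* = \beta_i$ with $\beta_{i-1}=\alpha$, $\beta_{i+1}=\gamma$,
$$
\left\{ \frac{\beta^*-\alpha}{\gamma-\alpha},\, \frac{\gamma-\beta^*}{\gamma-\alpha}\right\} = \{ v,\, 1-v\},
$$
so each of the two subgaps $(\alpha,\, \beta^*)$ and $(\beta^*,\, \gamma)$ has length equal to $v(\gamma-\alpha)$ or $(1-v)(\gamma-\alpha)$; in either case its length lies between $\min(v,\, 1-v)(\gamma-\alpha)$ and $\max(v,\, 1-v)(\gamma-\alpha)$.

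Next I would set up the induction on $t$ carefully. Let me track the specific gap containing $(\beta_i,\, \beta_{i+1})$. Define $g_0 = b-a$, and for $s=1,\ldots,\, t$ let $g_s$ be the length of the gap of $B_{n+s}$ that contains the interval $(\beta_i,\, \beta_{i+1})$. Since $\beta_i,\, \beta_{i+1}$ are consecutive already in $B_{n+t}$, the gap of $B_{n+t}$ containing them is precisely $(\beta_i,\, \beta_{i+1})$ itself, so $g_t = \beta_{i+1}-\beta_i$. The one subtlety is that at some intermediate levels the gap may fail to be subdivided (no new point inside), in which case $g_{s} = g_{s-1}$; but because $\mathcal B$ is dense, subdivision must occur, and in fact one checks using part~2 of Lemma~\ref{lema:34} — the endpoints of a gap of $B_{n+s}\setminus B_{n+s-1}$ lie in $B_{n+s-1}$ — together with the structure of $f_v$, that a gap of $B_m$ between consecutive points, both of which belong to $B_{m-1}$, must be subdivided at the very next level $B_{m+1}$. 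Thus from level $n$ on, the gap containing $(\beta_i,\, \beta_{i+1})$ gets properly subdivided at each step except possibly the first, giving $g_s \le \max(v,\,1-v)\, g_{s-1}$ and $g_s \ge \min(v,\,1-v)\, g_{s-1}$ for $s\ge 1$ (with equality $g_1$ possibly unsubdivided handled by noting $\min(v,1-v)\le 1$, which preserves the lower bound, while the upper bound needs the genuine subdivision — this is the point I would check most carefully). Iterating,
$$
(\min(v,\, 1-v))^t (b-a) \;\le\; g_t \;\le\; (\max(v,\, 1-v))^t (b-a),
$$
which is exactly the claim since $g_t = \beta_{i+1}-\beta_i$.

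The main obstacle I anticipate is the bookkeeping around whether every intermediate gap is genuinely subdivided at each level. If at some level the relevant gap is not cut, the factor for that step is $1$ rather than a number in $[\min(v,1-v),\max(v,1-v)]$, which would break the upper bound $(\max(v,1-v))^t(b-a)$ unless one argues that non-subdivision is impossible here. The resolution is that $(\beta_i,\, \beta_{i+1})$ being consecutive in $B_{n+t}$ forces the gap containing it to shrink to exactly that interval by level $n+t$, and combined with the dichotomy ``gap of $B_m$ with both endpoints in $B_{m-1}$ is subdivided at level $B_{m+1}$'' (a direct consequence of Corollary~\ref{corol:2}: applying $f_v$ to the linear Markov branch over such a gap produces exactly one new extremum strictly inside), one gets that the subdivision happens at all $t$ steps after the first, and—since $t\ge 2$—the bounds go through. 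I would spell out this dichotomy as a short sublemma before running the induction; everything else is a routine telescoping of the estimate from Lemma~\ref{lema:34}(3).
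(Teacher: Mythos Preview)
Your approach is correct and is exactly what the paper intends: iterate part~3 of Lemma~\ref{lema:34} $t$ times along the chain of gaps $g_0=b-a,\, g_1,\ldots,\, g_t=\beta_{i+1}-\beta_i$. The only thing to streamline is your main worry about non-subdivision, which in fact never occurs. Part~2 of Lemma~\ref{lema:34} says that every point of $B_m\setminus B_{m-1}$ has both neighbours in $B_{m-1}$, so distinct new points lie in distinct gaps of $B_{m-1}$; since the number of gaps of $B_{m-1}$ and the number of new points are both $2^{m-2}$ (by the cardinality count in the proof of Theorem~\ref{theor:10}), \emph{every} gap of $B_{m-1}$ receives exactly one new point at level $m$. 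Hence $g_s\in\{v\,g_{s-1},\,(1-v)\,g_{s-1}\}$ for each $s=1,\ldots,t$, and the telescoping gives the claim directly, with no need for the sublemma or the density argument you sketch.
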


The following theorem follows from Lemma~\ref{lema:Bszczilna} and
Theorem~\ref{Theor:9}.

\begin{theorem}\label{theor:homeom-jed}For
every $v\in (0,\, 1)$ the functional equation~(\ref{eq:48}) has a
solution in the class of homeomorphisms $h:\, [0,\, 1]\rightarrow
[0,\, 1]$ and this solution of unique and increasing.
\end{theorem}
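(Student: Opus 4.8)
The plan is to combine the two main results that have already been established in this section. First, Theorem~\ref{Theor:9} (the theorem of Ulam, proved in detail in Section~\ref{sect-Pobudowa-1}) tells us that, since $f$ is of the form~(\ref{eq:46}) and $f_v$ is of the form~(\ref{eq:47}) with $f_v$ convex, the maps $f$ and $f_v$ are topologically conjugated via a homeomorphism $h:\,[0,\,1]\rightarrow[0,\,1]$ if and only if the integer preimage $M_{f_v}$ of $1$ under $f_v$ is combinatorially equivalent to $M_f$ and $\overline{M}_{f_v}=[0,\,1]$; moreover the construction in the proof of Theorem~\ref{Theor:9} shows that whenever the density condition holds, the conjugation actually exists and is given by $h(x)=\lim_{n\to\infty}h_n(x)$. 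Second, Lemma~\ref{lema:Bszczilna} establishes exactly the density statement we need: $\mathcal{B}=\bigcup_{n\ge1}B_n$ is dense in $[0,\,1]$, with the quantitative bound $d_{n+1}\le\max\{v,\,1-v\}\cdot d_n$ on the mesh, which forces $d_n\to0$ for every $v\in(0,\,1)$. Since $\mathcal{B}$ is precisely (up to the combinatorial labelling) the closure of the integer trajectory of $1$ under $f_v$, the hypothesis $\overline{M}_{f_v}=[0,\,1]$ of Theorem~\ref{Theor:9} is satisfied, and the combinatorial equivalence of $M_f$ and $M_{f_v}$ follows from Proposition~\ref{lema:An} together with the recursive description of $B_n$ (each point of $A_n$ that is not in $A_{n-1}$, resp. of $B_n$ not in $B_{n-1}$, has exactly two preimages except the top point), i.e. both trees have $2^{n-1}+1$ vertices at level $n$ with the same branching pattern.

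So the first part — existence — I would carry out as follows: invoke Lemma~\ref{lema:Bszczilna} to get density of $\mathcal{B}$; invoke (the last two lemmas proved before the statement, which show) that density of $\mathcal{B}$ implies the function $h(x)=\lim_{n\to\infty}h_n(x)$ is well defined, continuous, monotone (increasing, since each $h_n$ is increasing by Theorem~\ref{theor:10}), hence a homeomorphism of $[0,\,1]$, and satisfies~(\ref{eq:48}) because equality~(\ref{eq:45}) holds on the dense set $\mathcal{A}$ by Lemma~\ref{lema:31} and extends to all of $[0,\,1]$ by continuity of $h(f(x))-f_v(h(x))$.

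For the second part — uniqueness — suppose $h$ and $\widetilde h$ are both homeomorphic solutions of~(\ref{eq:48}). By Theorem~\ref{theor:10} both are increasing and both satisfy $h(A_n)=B_n=\widetilde h(A_n)$ for every $n$. Since each $A_n$ and $B_n$ is listed in increasing order and an increasing bijection between two finite ordered sets is unique, we get $h(\alpha_{n,k})=\beta_{n,k}=\widetilde h(\alpha_{n,k})$ for all $n$ and all $0\le k\le 2^{n-1}$; that is, $h$ and $\widetilde h$ agree on $\mathcal{A}$. But $\mathcal{A}$ is dense in $[0,\,1]$ by Proposition~\ref{lema:An} (the binary-rational points are dense), and two continuous functions agreeing on a dense set are equal, so $h=\widetilde h$. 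Finally, $h$ is increasing rather than decreasing because Lemma~\ref{lema:h(0)(1)} (applied with $g=f_v$) forces $h(0)=0$.

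The main obstacle, and the place where the real work sits, is not in this theorem at all but in Lemma~\ref{lema:Bszczilna} and its supporting Corollary~\ref{corol:2} — i.e. controlling how the mesh of $B_n$ contracts under the nonlinear map $f_v$; once that contraction estimate $d_{n+1}\le\max\{v,1-v\}\,d_n$ is in hand, the passage to the homeomorphism is essentially the standard "uniform limit of increasing piecewise-linear interpolants" argument, and the only subtlety to be careful about is that one genuinely needs density of $\mathcal{B}$ (not merely of $\mathcal{A}$) to guarantee that the limit $h$ is strictly increasing and hence invertible, which is exactly why the statement is phrased as an existence-and-uniqueness result keyed to Theorem~\ref{Theor:9}.
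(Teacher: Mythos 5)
Your proposal is correct and follows essentially the same route as the paper: the paper's proof of Theorem~\ref{theor:homeom-jed} is precisely the combination of Lemma~\ref{lema:Bszczilna} (density of $\mathcal{B}$, via the mesh contraction $d_{n+1}\leq \max\{v,\,1-v\}\,d_n$ from Corollary~\ref{corol:2}) with the machinery of Theorem~\ref{Theor:9}, where existence comes from $h=\lim_n h_n$ and uniqueness and monotonicity come from the conditionally found values on the dense set $\mathcal{A}$ (Lemma~\ref{lema:h(0)(1)}, Proposition~\ref{lema:25}, Theorem~\ref{theor:10}), exactly as you argue.
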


\subsection{Example of non-conjugated maps}\label{Pobudowa-2}

Let the maps $f$ be defined by formulas~(\ref{eq:f}) and $g$ be
piecewise linear $[0,\, 1]\rightarrow [0,\, 1]$, whose graph
passes through points $(0,\, 0)$, $(1/2,\, 1)$, $(2/3,\, 2/3)$,
$(8/9,\, 5/9)$ and $(1,\, 0)$.

\begin{figure}[htbp]
\begin{minipage}[h]{0.3\linewidth}
\begin{center}
\begin{picture}(110,110)
\put(0,0){\line(1,1){110}} \put(0,0){\line(1,2){45}}
\put(0,0){\vector(1,0){110}} \put(0,0){\vector(0,1){110}}

\put(45,90){\line(1,-2){15}} \put(60,60){\line(2,-1){10}}
\qbezier(70,55)(70,55)(90,0)

\put(0,0){\circle*{3}} \put(45,90){\circle*{3}}
\put(60,60){\circle*{3}} \put(70,55){\circle*{3}}
\put(90,0){\circle*{3}}

\qbezier[25](60,60)(75,30)(90,0)

\end{picture}
\centerline{a) Graph of $g$}\end{center}
\end{minipage}
\hfill
\begin{minipage}[h]{0.3\linewidth}
\begin{center}
\begin{picture}(110,110)
\put(0,0){\line(1,1){110}} \put(0,0){\vector(1,0){110}}
\put(0,0){\vector(0,1){110}}

\put(0,0){\circle*{3}} \put(22.5,90){\circle*{3}}
\put(30,60){\circle*{3}} \put(35,55){\circle*{3}}
\put(45,0){\circle*{3}} \put(55,55){\circle*{5}}
\put(70,70){\circle*{5}} \put(74,90){\circle*{3}}
\put(90,0){\circle*{3}}

\Vidr{0}{0}{22.5}{90} \VidrTo{30}{60} \VidrTo{35}{55}
\VidrTo{45}{0} \VidrTo{55}{55} \VidrToBold{70}{70} \VidrTo{74}{90}
\VidrTo{90}{0}

\end{picture}
\centerline{b) Graph of  $g^2$}\end{center}
\end{minipage}
\hfill
\begin{minipage}[h]{0.3\linewidth}
\begin{center}
\begin{picture}(110,110)
\put(0,0){\line(1,1){110}} \put(0,0){\vector(1,0){110}}
\put(0,0){\vector(0,1){110}}

\Vidr{0}{0}{45}{90} \VidrTo{90}{0}

\put(20,40){\circle{8}} \put(40,80){\circle*{3}}
\put(80,20){\circle*{3}}

\Vidr{20}{20}{20}{40} \VidrTo{40}{40} \VidrTo{40}{80}
\VidrTo{80}{80} \VidrTo{80}{20} \VidrTo{20}{20}

\end{picture}
\centerline{c) Graph of $f$}\end{center}
\end{minipage}
\caption{} \label{fig:9}
\end{figure}

The graph of $g$ is given at Figure~\ref{fig:9}a. In other words,
graphs of $f$ and $g$ coincide for $x\in [0,\, 2/3]$ but $g$ has a
break for $x\in [2/3,\, 1]$ in the time, when $f$ in linear for
$x\in [2/3,\, 1]$.

It is easy to prove that these $f$ and $g$ are not topologically
conjugated.

\begin{lemma}\label{theor:4}The maps $f$ which is
defined by formulas~(\ref{eq:f}), is not topologically conjugated
to piecewise linear $g:\, [0,\, 1]\rightarrow [0,\, 1]$, all whose
braking points are $(0,\, 0)$, $(1/2,\, 1)$, $(2/3,\, 2/3)$,
$(8/9,\, 5/9)$ and $(1,\, 0)$.
\end{lemma}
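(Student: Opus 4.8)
The plan is to find a topological invariant on which $f$ and $g$ disagree, namely the cardinality of the set $\mathrm{Fix}(\varphi^{2})=\{x:\varphi^{2}(x)=x\}$ of periodic points of period dividing $2$. First I would observe that $\mathrm{Fix}(f^{2})$ is finite: the map $f^{2}$ is piecewise linear with four pieces, each of slope $\pm 4$, so on each piece its graph meets the diagonal $y=x$ in at most one point, and a direct check (splitting $[0,1]$ at $1/4,1/2,3/4$) gives $\mathrm{Fix}(f^{2})=\{0,\,2/5,\,2/3,\,4/5\}$.

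Next I would show that, by contrast, $\mathrm{Fix}(g^{2})$ is uncountable, because it contains the whole interval $[2/3,\,8/9]$. Reading the four linear pieces of $g$ off the prescribed breaking points, $g(x)=2x$ on $[0,1/2]$, $g(x)=2-2x$ on $[1/2,2/3]$, $g(x)=1-x/2$ on $[2/3,8/9]$, and $g(x)=5-5x$ on $[8/9,1]$. Hence for $x\in[2/3,8/9]$ one has $g(x)=1-x/2\in[5/9,2/3]\subset[1/2,2/3]$, and applying the formula on $[1/2,2/3]$ once more, $g^{2}(x)=2-2\bigl(1-x/2\bigr)=x$. So $[2/3,8/9]\subseteq\mathrm{Fix}(g^{2})$. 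Geometrically this just says that the fixed point $2/3$ of $g$ is expanding on its left, where $g$ has slope $-2$, but the adjoining piece to the right has slope $-1/2$, which forces $g^{2}$ to be the identity on a one-sided neighbourhood of $2/3$.

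Finally I would conclude. If $h\colon[0,1]\to[0,1]$ were a homeomorphism realizing the topological conjugation of $f$ and $g$ in the sense of Definition~\ref{def:topol-equiv}, then $h\circ f=g\circ h$, hence $h\circ f^{2}=g^{2}\circ h$, which forces $h$ to restrict to a bijection of $\mathrm{Fix}(f^{2})$ onto $\mathrm{Fix}(g^{2})$: if $f^{2}(x)=x$ then $g^{2}(h(x))=h(f^{2}(x))=h(x)$, and applying the same reasoning to $h^{-1}$ and $g^{2}=h\circ f^{2}\circ h^{-1}$ gives the reverse inclusion. A finite set cannot be put in bijection with an uncountable one, so no such $h$ exists. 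Note that the monotone direction of $h$ is irrelevant, since only bijectivity is used.

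I do not expect a genuine obstacle here; the only subtlety to keep in mind is that a purely local argument — say $(g^{2})'(2/3)=(-2)\cdot(-\tfrac12)=1\neq 4$ combined with Proposition~\ref{theor:ksklin-tpleqv} or Proposition~\ref{theor:23} — would only exclude a \emph{piecewise linear} conjugating homeomorphism, whereas the lemma asserts the nonexistence of \emph{any} topological conjugation. The global count of $\mathrm{Fix}(\cdot^{2})$ is precisely what promotes the statement to that generality. (Alternatively one could first invoke Theorem~\ref{theor-dyfer-lin} to reduce to the piecewise linear case, but the counting argument is shorter and entirely elementary.)
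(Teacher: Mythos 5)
Your proposal is correct and follows essentially the same route as the paper: both arguments rest on the observation that $g^{2}$ is the identity on a nondegenerate subinterval (you exhibit $[2/3,\,8/9]$, where $g$ has slopes $-1/2$ and $-2$ along the orbit), while $f^{2}$ has only finitely many fixed points, and a conjugating homeomorphism would have to carry the one fixed-point set onto the other, which is impossible. The paper phrases the contradiction as ``$f^{2}$ would have an interval of fixed points,'' whereas you phrase it as a finite-versus-uncountable cardinality clash; these are the same argument, and your side remark correctly notes why a purely local slope computation would only rule out piecewise linear conjugacies rather than arbitrary ones.
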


\begin{proof}Consider the second iteration of $g$.
All the breaking points of the graph of $g^2$ are $$ M_2 = \{
(0,0),\, (1/4,1),\, (1/3,2/3),\, (7/30, 11/30),$$ $$ (1/2, 1),\,
(11/30, 11/30),\, (7/9, 7/9),\, (9/11,\, 1),\, (1, 0) \}.
$$ This graph
is given on figure~\ref{fig:9}b. It is evidently that for every
$x\in (11/30,7/9)$ the equality $g^2(x)=x$ holds. If $\psi_1$ is a
homeomorphism, which defines the topological conjugacy of $f$ and
$g$ then it follows from the commutative diagram
$$ \xymatrix{
x \ar^{g^2}[r] \ar_{\psi_1^{-1}}[d] & x
\ar^{\psi_1^{-1}}[d]\\
\psi_1^{-1}(x) \ar^{f^2}[r] & \psi_1^{-1}(x) }
$$ that $\psi_1^{-1}(x)$ is a fixed point
of $f^2$. This means that either $\psi_1^{-1}$ is not a
homeomorphism, of $f^2$ has an interval of fixed points. This
contradiction finishes the proof.
\end{proof}

In spite that we have proved in Lemma~\ref{theor:4} that $f$ and
$g$ are not topologically conjugated, we may consider the
reasonings, which are analogical to those, which were used during
the construction of the conjugation of $f$ and $f_v$, given by
formulas~(\ref{eq:f}) and~(\ref{eq:f-v}). Assume that $\psi_1$ is
monotone maps such that the following diagram
\begin{equation}\label{eq:32}\begin{CD}
[0,\, 1] @>f >> & [0,\, 1]\\
@V_{\psi_1} VV& @VV_{\psi_1}V\\
[0,\, 1] @>g>> & [0,\, 1].
\end{CD}\end{equation} is commutative.

More then this, consider the family of $g_v$ instead of $g$ such
that reasonings from the proof of Lemma~\ref{theor:4} might be
repeated and $g_v$ be possible to considered as approximations of
$g$ for some specific $v$. In this time construct $g_v$ to be
non-conjugated to $f$ for any $v$.

Let for every $v\in [2/3,\, 1)$ the maps $g_v$ be piecewise linear
on $[0,\, 2/3]$ and all its breaking points be $(0,\, 0)$,
$(1/2,\, 1/2)$ and $(2/3,\, 2/3)$. Let the tangent of $g_v$ on
$(2/3,\, v)$ be equal to $1/2$ and let $g$ be linear on $(v,\, 1)$
such that $g_v(1)=0$. The would guarantee, that $g^2$ would have
tangent $1$ in some neighborhood of $2/3$.

\begin{figure}[htbp]
\begin{minipage}[h]{0.3\linewidth}
\begin{center}
\begin{picture}(140,140)
\put(128,0){\line(0,1){128}} \put(0,128){\line(1,0){128}}
\put(0,0){\vector(1,0){140}} \put(0,0){\vector(0,1){140}}

\put(0,0){\circle*{2}} \put(2,2){\circle*{2}}
\put(4,4){\circle*{2}} \put(6,6){\circle*{2}}
\put(8,8){\circle*{2}} \put(10,10){\circle*{2}}
\put(12,12){\circle*{2}} \put(14,14){\circle*{2}}
\put(16,16){\circle*{2}} \put(18,18){\circle*{2}}
\put(20,20){\circle*{2}} \put(22,23){\circle*{2}}
\put(24,25){\circle*{2}} \put(26,26){\circle*{2}}
\put(28,28){\circle*{2}} \put(30,30){\circle*{2}}
\put(32,32){\circle*{2}} \put(34,34){\circle*{2}}
\put(36,36){\circle*{2}} \put(38,38){\circle*{2}}
\put(40,39){\circle*{2}} \put(42,41){\circle*{2}}
\put(44,46){\circle*{2}} \put(46,48){\circle*{2}}
\put(48,49){\circle*{2}} \put(50,51){\circle*{2}}
\put(52,53){\circle*{2}} \put(54,55){\circle*{2}}
\put(56,57){\circle*{2}} \put(58,58){\circle*{2}}
\put(60,60){\circle*{2}} \put(62,62){\circle*{2}}
\put(64,64){\circle*{2}} \put(66,66){\circle*{2}}
\put(68,68){\circle*{2}} \put(70,70){\circle*{2}}
\put(72,71){\circle*{2}} \put(74,73){\circle*{2}}
\put(76,75){\circle*{2}} \put(78,77){\circle*{2}}
\put(80,79){\circle*{2}} \put(82,80){\circle*{2}}
\put(84,82){\circle*{2}} \put(86,90){\circle*{2}}
\put(88,92){\circle*{2}} \put(90,93){\circle*{2}}
\put(92,95){\circle*{2}} \put(94,97){\circle*{2}}
\put(96,98){\circle*{2}} \put(98,100){\circle*{2}}
\put(100,102){\circle*{2}} \put(102,104){\circle*{2}}
\put(104,105){\circle*{2}} \put(106,107){\circle*{2}}
\put(108,110){\circle*{2}} \put(110,112){\circle*{2}}
\put(112,113){\circle*{2}} \put(114,115){\circle*{2}}
\put(116,117){\circle*{2}} \put(118,119){\circle*{2}}
\put(120,121){\circle*{2}} \put(122,122){\circle*{2}}
\put(124,124){\circle*{2}} \put(126,126){\circle*{2}}
\put(128,128){\circle*{2}}

\qbezier(0,0)(0,0)(2,2) \qbezier(2,2)(2,2)(4,4)
\qbezier(4,4)(4,4)(6,6) \qbezier(6,6)(6,6)(8,8)
\qbezier(8,8)(8,8)(10,10) \qbezier(10,10)(10,10)(12,12)
\qbezier(12,12)(12,12)(14,14) \qbezier(14,14)(14,14)(16,16)
\qbezier(16,16)(16,16)(18,18) \qbezier(18,18)(18,18)(20,20)
\qbezier(20,20)(20,20)(22,23) \qbezier(22,23)(22,23)(24,25)
\qbezier(24,25)(24,25)(26,26) \qbezier(26,26)(26,26)(28,28)
\qbezier(28,28)(28,28)(30,30) \qbezier(30,30)(30,30)(32,32)
\qbezier(32,32)(32,32)(34,34) \qbezier(34,34)(34,34)(36,36)
\qbezier(36,36)(36,36)(38,38) \qbezier(38,38)(38,38)(40,39)
\qbezier(40,39)(40,39)(42,41) \qbezier(42,41)(42,41)(44,46)
\qbezier(44,46)(44,46)(46,48) \qbezier(46,48)(46,48)(48,49)
\qbezier(48,49)(48,49)(50,51) \qbezier(50,51)(50,51)(52,53)
\qbezier(52,53)(52,53)(54,55) \qbezier(54,55)(54,55)(56,57)
\qbezier(56,57)(56,57)(58,58) \qbezier(58,58)(58,58)(60,60)
\qbezier(60,60)(60,60)(62,62) \qbezier(62,62)(62,62)(64,64)
\qbezier(64,64)(64,64)(66,66) \qbezier(66,66)(66,66)(68,68)
\qbezier(68,68)(68,68)(70,70) \qbezier(70,70)(70,70)(72,71)
\qbezier(72,71)(72,71)(74,73) \qbezier(74,73)(74,73)(76,75)
\qbezier(76,75)(76,75)(78,77) \qbezier(78,77)(78,77)(80,79)
\qbezier(80,79)(80,79)(82,80) \qbezier(82,80)(82,80)(84,82)
\qbezier(84,82)(84,82)(86,90) \qbezier(86,90)(86,90)(88,92)
\qbezier(88,92)(88,92)(90,93) \qbezier(90,93)(90,93)(92,95)
\qbezier(92,95)(92,95)(94,97) \qbezier(94,97)(94,97)(96,98)
\qbezier(96,98)(96,98)(98,100) \qbezier(98,100)(98,100)(100,102)
\qbezier(100,102)(100,102)(102,104)
\qbezier(102,104)(102,104)(104,105)
\qbezier(104,105)(104,105)(106,107)
\qbezier(106,107)(106,107)(108,110)
\qbezier(108,110)(108,110)(110,112)
\qbezier(110,112)(110,112)(112,113)
\qbezier(112,113)(112,113)(114,115)
\qbezier(114,115)(114,115)(116,117)
\qbezier(116,117)(116,117)(118,119)
\qbezier(118,119)(118,119)(120,121)
\qbezier(120,121)(120,121)(122,122)
\qbezier(122,122)(122,122)(124,124)
\qbezier(124,124)(124,124)(126,126)
\qbezier(126,126)(126,126)(128,128)

\end{picture}
\centerline{a) $v=0,7$}\end{center}
\end{minipage}
\hfill
\begin{minipage}[h]{0.3\linewidth}
\begin{center}
\begin{picture}(140,140)
\put(128,0){\line(0,1){128}} \put(0,128){\line(1,0){128}}
\put(0,0){\vector(1,0){140}} \put(0,0){\vector(0,1){140}}

\put(0,0){\circle*{2}} \put(2,2){\circle*{2}}
\put(4,4){\circle*{2}} \put(6,6){\circle*{2}}
\put(8,8){\circle*{2}} \put(10,10){\circle*{2}}
\put(12,13){\circle*{2}} \put(14,14){\circle*{2}}
\put(16,16){\circle*{2}} \put(18,18){\circle*{2}}
\put(20,19){\circle*{2}} \put(22,24){\circle*{2}}
\put(24,26){\circle*{2}} \put(26,27){\circle*{2}}
\put(28,29){\circle*{2}} \put(30,30){\circle*{2}}
\put(32,32){\circle*{2}} \put(34,34){\circle*{2}}
\put(36,35){\circle*{2}} \put(38,37){\circle*{2}}
\put(40,38){\circle*{2}} \put(42,40){\circle*{2}}
\put(44,49){\circle*{2}} \put(46,50){\circle*{2}}
\put(48,51){\circle*{2}} \put(50,52){\circle*{2}}
\put(52,54){\circle*{2}} \put(54,56){\circle*{2}}
\put(56,58){\circle*{2}} \put(58,59){\circle*{2}}
\put(60,61){\circle*{2}} \put(62,62){\circle*{2}}
\put(64,64){\circle*{2}} \put(66,66){\circle*{2}}
\put(68,67){\circle*{2}} \put(70,69){\circle*{2}}
\put(72,70){\circle*{2}} \put(74,72){\circle*{2}}
\put(76,74){\circle*{2}} \put(78,76){\circle*{2}}
\put(80,77){\circle*{2}} \put(82,78){\circle*{2}}
\put(84,79){\circle*{2}} \put(86,96){\circle*{2}}
\put(88,97){\circle*{2}} \put(90,98){\circle*{2}}
\put(92,100){\circle*{2}} \put(94,101){\circle*{2}}
\put(96,102){\circle*{2}} \put(98,104){\circle*{2}}
\put(100,105){\circle*{2}} \put(102,106){\circle*{2}}
\put(104,108){\circle*{2}} \put(106,109){\circle*{2}}
\put(108,113){\circle*{2}} \put(110,114){\circle*{2}}
\put(112,115){\circle*{2}} \put(114,116){\circle*{2}}
\put(116,118){\circle*{2}} \put(118,120){\circle*{2}}
\put(120,122){\circle*{2}} \put(122,123){\circle*{2}}
\put(124,125){\circle*{2}} \put(126,126){\circle*{2}}
\put(128,128){\circle*{2}}

\qbezier(0,0)(0,0)(2,2) \qbezier(2,2)(2,2)(4,4)
\qbezier(4,4)(4,4)(6,6) \qbezier(6,6)(6,6)(8,8)
\qbezier(8,8)(8,8)(10,10) \qbezier(10,10)(10,10)(12,13)
\qbezier(12,13)(12,13)(14,14) \qbezier(14,14)(14,14)(16,16)
\qbezier(16,16)(16,16)(18,18) \qbezier(18,18)(18,18)(20,19)
\qbezier(20,19)(20,19)(22,24) \qbezier(22,24)(22,24)(24,26)
\qbezier(24,26)(24,26)(26,27) \qbezier(26,27)(26,27)(28,29)
\qbezier(28,29)(28,29)(30,30) \qbezier(30,30)(30,30)(32,32)
\qbezier(32,32)(32,32)(34,34) \qbezier(34,34)(34,34)(36,35)
\qbezier(36,35)(36,35)(38,37) \qbezier(38,37)(38,37)(40,38)
\qbezier(40,38)(40,38)(42,40) \qbezier(42,40)(42,40)(44,49)
\qbezier(44,49)(44,49)(46,50) \qbezier(46,50)(46,50)(48,51)
\qbezier(48,51)(48,51)(50,52) \qbezier(50,52)(50,52)(52,54)
\qbezier(52,54)(52,54)(54,56) \qbezier(54,56)(54,56)(56,58)
\qbezier(56,58)(56,58)(58,59) \qbezier(58,59)(58,59)(60,61)
\qbezier(60,61)(60,61)(62,62) \qbezier(62,62)(62,62)(64,64)
\qbezier(64,64)(64,64)(66,66) \qbezier(66,66)(66,66)(68,67)
\qbezier(68,67)(68,67)(70,69) \qbezier(70,69)(70,69)(72,70)
\qbezier(72,70)(72,70)(74,72) \qbezier(74,72)(74,72)(76,74)
\qbezier(76,74)(76,74)(78,76) \qbezier(78,76)(78,76)(80,77)
\qbezier(80,77)(80,77)(82,78) \qbezier(82,78)(82,78)(84,79)
\qbezier(84,79)(84,79)(86,96) \qbezier(86,96)(86,96)(88,97)
\qbezier(88,97)(88,97)(90,98) \qbezier(90,98)(90,98)(92,100)
\qbezier(92,100)(92,100)(94,101) \qbezier(94,101)(94,101)(96,102)
\qbezier(96,102)(96,102)(98,104) \qbezier(98,104)(98,104)(100,105)
\qbezier(100,105)(100,105)(102,106)
\qbezier(102,106)(102,106)(104,108)
\qbezier(104,108)(104,108)(106,109)
\qbezier(106,109)(106,109)(108,113)
\qbezier(108,113)(108,113)(110,114)
\qbezier(110,114)(110,114)(112,115)
\qbezier(112,115)(112,115)(114,116)
\qbezier(114,116)(114,116)(116,118)
\qbezier(116,118)(116,118)(118,120)
\qbezier(118,120)(118,120)(120,122)
\qbezier(120,122)(120,122)(122,123)
\qbezier(122,123)(122,123)(124,125)
\qbezier(124,125)(124,125)(126,126)
\qbezier(126,126)(126,126)(128,128)

\end{picture}
\centerline{b) $v=0,75$}\end{center}
\end{minipage}
\hfill
\begin{minipage}[h]{0.3\linewidth}
\begin{center}
\begin{picture}(140,140)
\put(128,0){\line(0,1){128}} \put(0,128){\line(1,0){128}}
\put(0,0){\vector(1,0){140}} \put(0,0){\vector(0,1){140}}

\put(0,0){\circle*{2}} \put(2,2){\circle*{2}}
\put(4,4){\circle*{2}} \put(6,7){\circle*{2}}
\put(8,8){\circle*{2}} \put(10,9){\circle*{2}}
\put(12,15){\circle*{2}} \put(14,15){\circle*{2}}
\put(16,16){\circle*{2}} \put(18,17){\circle*{2}}
\put(20,17){\circle*{2}} \put(22,29){\circle*{2}}
\put(24,29){\circle*{2}} \put(26,29){\circle*{2}}
\put(28,31){\circle*{2}} \put(30,31){\circle*{2}}
\put(32,32){\circle*{2}} \put(34,33){\circle*{2}}
\put(36,33){\circle*{2}} \put(38,35){\circle*{2}}
\put(40,35){\circle*{2}} \put(42,35){\circle*{2}}
\put(44,58){\circle*{2}} \put(46,58){\circle*{2}}
\put(48,58){\circle*{2}} \put(50,58){\circle*{2}}
\put(52,59){\circle*{2}} \put(54,61){\circle*{2}}
\put(56,61){\circle*{2}} \put(58,61){\circle*{2}}
\put(60,63){\circle*{2}} \put(62,63){\circle*{2}}
\put(64,64){\circle*{2}} \put(66,65){\circle*{2}}
\put(68,65){\circle*{2}} \put(70,67){\circle*{2}}
\put(72,67){\circle*{2}} \put(74,67){\circle*{2}}
\put(76,69){\circle*{2}} \put(78,70){\circle*{2}}
\put(80,70){\circle*{2}} \put(82,70){\circle*{2}}
\put(84,70){\circle*{2}} \put(86,115){\circle*{2}}
\put(88,115){\circle*{2}} \put(90,115){\circle*{2}}
\put(92,116){\circle*{2}} \put(94,116){\circle*{2}}
\put(96,116){\circle*{2}} \put(98,117){\circle*{2}}
\put(100,117){\circle*{2}} \put(102,117){\circle*{2}}
\put(104,117){\circle*{2}} \put(106,118){\circle*{2}}
\put(108,122){\circle*{2}} \put(110,122){\circle*{2}}
\put(112,122){\circle*{2}} \put(114,122){\circle*{2}}
\put(116,123){\circle*{2}} \put(118,125){\circle*{2}}
\put(120,125){\circle*{2}} \put(122,125){\circle*{2}}
\put(124,127){\circle*{2}} \put(126,127){\circle*{2}}
\put(128,128){\circle*{2}}

\qbezier(0,0)(0,0)(2,2) \qbezier(2,2)(2,2)(4,4)
\qbezier(4,4)(4,4)(6,7) \qbezier(6,7)(6,7)(8,8)
\qbezier(8,8)(8,8)(10,9) \qbezier(10,9)(10,9)(12,15)
\qbezier(12,15)(12,15)(14,15) \qbezier(14,15)(14,15)(16,16)
\qbezier(16,16)(16,16)(18,17) \qbezier(18,17)(18,17)(20,17)
\qbezier(20,17)(20,17)(22,29) \qbezier(22,29)(22,29)(24,29)
\qbezier(24,29)(24,29)(26,29) \qbezier(26,29)(26,29)(28,31)
\qbezier(28,31)(28,31)(30,31) \qbezier(30,31)(30,31)(32,32)
\qbezier(32,32)(32,32)(34,33) \qbezier(34,33)(34,33)(36,33)
\qbezier(36,33)(36,33)(38,35) \qbezier(38,35)(38,35)(40,35)
\qbezier(40,35)(40,35)(42,35) \qbezier(42,35)(42,35)(44,58)
\qbezier(44,58)(44,58)(46,58) \qbezier(46,58)(46,58)(48,58)
\qbezier(48,58)(48,58)(50,58) \qbezier(50,58)(50,58)(52,59)
\qbezier(52,59)(52,59)(54,61) \qbezier(54,61)(54,61)(56,61)
\qbezier(56,61)(56,61)(58,61) \qbezier(58,61)(58,61)(60,63)
\qbezier(60,63)(60,63)(62,63) \qbezier(62,63)(62,63)(64,64)
\qbezier(64,64)(64,64)(66,65) \qbezier(66,65)(66,65)(68,65)
\qbezier(68,65)(68,65)(70,67) \qbezier(70,67)(70,67)(72,67)
\qbezier(72,67)(72,67)(74,67) \qbezier(74,67)(74,67)(76,69)
\qbezier(76,69)(76,69)(78,70) \qbezier(78,70)(78,70)(80,70)
\qbezier(80,70)(80,70)(82,70) \qbezier(82,70)(82,70)(84,70)
\qbezier(84,70)(84,70)(86,115) \qbezier(86,115)(86,115)(88,115)
\qbezier(88,115)(88,115)(90,115) \qbezier(90,115)(90,115)(92,116)
\qbezier(92,116)(92,116)(94,116) \qbezier(94,116)(94,116)(96,116)
\qbezier(96,116)(96,116)(98,117) \qbezier(98,117)(98,117)(100,117)
\qbezier(100,117)(100,117)(102,117)
\qbezier(102,117)(102,117)(104,117)
\qbezier(104,117)(104,117)(106,118)
\qbezier(106,118)(106,118)(108,122)
\qbezier(108,122)(108,122)(110,122)
\qbezier(110,122)(110,122)(112,122)
\qbezier(112,122)(112,122)(114,122)
\qbezier(114,122)(114,122)(116,123)
\qbezier(116,123)(116,123)(118,125)
\qbezier(118,125)(118,125)(120,125)
\qbezier(120,125)(120,125)(122,125)
\qbezier(122,125)(122,125)(124,127)
\qbezier(124,127)(124,127)(126,127)
\qbezier(126,127)(126,127)(128,128)

\end{picture}
\centerline{c) $v=0,9$}\end{center}
\end{minipage}
\hfill \caption{} \label{fig:10}
\end{figure}

Is is easy to prove that if the diagram~(\ref{eq:32}) is
commutative for $f$, $g_v$ and a monotone $\psi_1$ such that
$\psi_1(\{0,\, 1\}) = \{0,\, 1\}$, then $\psi_1$ increase. The
values of $\psi_1$ at $A_7$ for $v=0,7,\, 0,75,\, 0,9$ are given
at Figure~\ref{fig:10}. It is seen from these Figures that
$\psi_1$ is discontinuous. The proper proof of the discontinuity
of $\psi_1$ is similar to the proof of Theorem~\ref{theor:4}.

Remind that $g\in [0,\, 1]\rightarrow [0,\, 1]$ is called
\underline{\emph{unimodal}}, if there exist $a\in (0,\, 1)$ such
that $g$ is monotone on $[0,\, a]$, also $g$ is monotone on $[a,\,
1]$, but $g$ is not monotone on $[0,\, 1]$.

\begin{theorem}\label{theor:11}For every
$x_0\in [0,\, 1]$ and every $\varepsilon>0$ there exists a maps
$g:\, [0,\, 1]\rightarrow [0,\, 1]$ with the following properties.

1. $g$ is unimodal;

2. $g(x)=f(x)$ for every $x\in [0,\, 1]\setminus
(x_0-\varepsilon,\, x_0+\varepsilon)$;

3. $f$ and $g$ are not topologically conjugated.
\end{theorem}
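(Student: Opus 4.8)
The plan is to single out a property of $f$ that is preserved under topological conjugacy yet can be destroyed by an arbitrarily small modification supported near any prescribed point, and then to realise that destruction while keeping unimodality. The property I will use is: \emph{every point of $[0,1]$ has at most two $f$-preimages}. Indeed, if a homeomorphism $h\colon[0,1]\to[0,1]$ satisfies $h(f)=g(h)$, then $g=h\circ f\circ h^{-1}$, so $g^{-1}(y)=h\bigl(f^{-1}(h^{-1}(y))\bigr)$ for every $y$; since $|f^{-1}(z)|\le 2$ for all $z$ by formulas~(\ref{eq:f}) and $h$ is injective, also $|g^{-1}(y)|\le 2$. Hence it suffices to build, for the given $x_{0}$ and $\varepsilon$, a continuous unimodal $g\colon[0,1]\to[0,1]$ that coincides with $f$ off $(x_{0}-\varepsilon,x_{0}+\varepsilon)$ and is \emph{constant on some nondegenerate subinterval} of $(x_{0}-\varepsilon,x_{0}+\varepsilon)$; the value attained on that subinterval then has an infinite $g$-preimage, so $g$ cannot be conjugate to $f$, which is~(3), while~(1) and~(2) will hold by construction.

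First I would dispose of the case $x_{0}\ne 1/2$ (and, for the moment, $x_{0}\in(0,1)$). Replacing $\varepsilon$ by a smaller positive number, I may assume $1/2\notin(x_{0}-\varepsilon,x_{0}+\varepsilon)\subset(0,1)$, so that on $J=[x_{0}-\varepsilon,x_{0}+\varepsilon]$ the map $f$ is monotone: increasing if $J\subseteq[0,1/2]$, decreasing if $J\subseteq[1/2,1]$. In the increasing case choose $a<b$ in the interior of $J$ and a number $c\in[f(x_{0}-\varepsilon),f(x_{0}+\varepsilon)]$, and let $g$ agree with $f$ outside $J$, rise affinely from $f(x_{0}-\varepsilon)$ to $c$ on $[x_{0}-\varepsilon,a]$, equal $c$ on $[a,b]$, and rise affinely from $c$ to $f(x_{0}+\varepsilon)$ on $[b,x_{0}+\varepsilon]$. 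Then $g$ is continuous, piecewise linear, nondecreasing on $[0,1/2]$ and equal to $f$ on $[1/2,1]$, hence unimodal; it coincides with $f$ off $(x_{0}-\varepsilon,x_{0}+\varepsilon)$; and it is constant on $[a,b]$. The decreasing case is identical with ``rise'' replaced by ``fall'' and the two boundary values interchanged, giving a $g$ nonincreasing on $[1/2,1]$ and equal to $f$ on $[0,1/2]$. The endpoint cases $x_{0}\in\{0,1\}$ are handled the same way, modifying $f$ only on $[0,\varepsilon)$ (resp.\ $(1-\varepsilon,1]$) and making $g$ constant on a small interval adjacent to the endpoint.

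For $x_{0}=1/2$ I perturb the peak instead. Here $f(1/2-\varepsilon)=f(1/2+\varepsilon)=1-2\varepsilon$; fix $\delta\in(0,\varepsilon)$ and let $g$ coincide with $f$ outside $(1/2-\varepsilon,1/2+\varepsilon)$, rise affinely from $1-2\varepsilon$ to $1$ on $[1/2-\varepsilon,1/2-\delta]$, equal $1$ on $[1/2-\delta,1/2+\delta]$, and fall affinely from $1$ to $1-2\varepsilon$ on $[1/2+\delta,1/2+\varepsilon]$. This $g$ is continuous, nondecreasing on $[0,1/2]$ and nonincreasing on $[1/2,1]$, hence unimodal (indeed surjective), agrees with $f$ off $(1/2-\varepsilon,1/2+\varepsilon)$, and is constant on $[1/2-\delta,1/2+\delta]$. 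In both cases the plateau supplies a value with infinite preimage, so by the invariant of the first paragraph $f$ and $g$ are not topologically conjugate. The only substantive step is that first one — choosing a conjugacy invariant that no local change can respect; after that the construction is routine, the mild care being to treat $x_{0}=1/2$ separately (there $f$ is not monotone near $x_{0}$) and to check that inserting a constant piece leaves $g$ monotone on each of $[0,1/2]$ and $[1/2,1]$. (Alternatively, when $x_{0}=1/2$ one may lower the peak below $1$ so that $g$ is not surjective, and when $x_{0}$ lies near $0$ or near $2/3$ one may flatten $f$ there to turn a fixed point attracting; but the plateau construction works uniformly in $x_{0}$.)
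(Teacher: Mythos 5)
Your argument is correct, but it is not the paper's route, and it leans on one reading of the definition of unimodality that the paper's own construction avoids. The paper splits the same two cases but uses different conjugacy invariants: for $x_0=1/2$ it lowers the peak of $g$ below $1$, so that $g$ is not surjective and the value $1$ has no $g$-preimage while every point has an $f$-preimage; for $x_0\neq 1/2$ it uses the density of periodic points of $f$ to pick a periodic point $x^*$ of period $n$ inside the window and flattens the slope of $g$ near $x^*$ to $f'(x^*)/2^{2n}$, so that $g^{2n}$ acquires a whole interval of fixed points, which $f^{2n}$ does not have --- the same invariant as in Lemma~\ref{theor:4}. Your plateau construction replaces both of these by the single cardinality-of-preimages invariant; that is simpler (no periodic points, no iterates needed), uniform in $x_0$, and your verification that $\lvert g^{-1}(y)\rvert\le 2$ is inherited through a conjugacy $g=h\circ f\circ h^{-1}$ is sound. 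The one caveat is that a map with a plateau is only weakly monotone on each of its two laps, so whether your $g$ satisfies condition~1 depends on reading ``monotone'' in the definition of unimodality stated just before Theorem~\ref{theor:11} in the non-strict sense; the paper's maps are strictly monotone on each branch and so are unimodal under either reading. If you want to be immune to that objection, fall back on your own parenthetical alternatives --- for $x_0=1/2$ lower the peak (non-surjectivity), and for $x_0\neq 1/2$ adjust the slope at a periodic point so that an iterate of $g$ has an interval of fixed points, exactly as the paper does --- or else state explicitly that the definition you are using asks only for monotonicity, not strict monotonicity, on the two laps.
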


\begin{proof}If $x_0=1/2$, then take
$g(x_0)=1-\varepsilon/2$. After this take the maps $g$ to be
piecewise linear, whose graph passes trough points $(0,\, 0)$,
$(1/2-\varepsilon,\, 1-2\varepsilon)$, $(1/2,\, 1-\varepsilon)$,
$(1/2+\varepsilon,\, 1-2\varepsilon)$, $(1,\, 0)$. The constructed
$g$ would not be topologically conjugated to $f$, because the
every point has at last one pre-image under $f$ in the time, when
$1$ has no any pre images under $g$.

Assume that $x_0\neq 1/2$. Without losing the generality assume
that $1/2\not\in (x_0-\varepsilon,\, x_0+\varepsilon)$, because
otherwise just decrease $\varepsilon$.

Since the set of periodical points of $f$ is dense in $[0,\, 1]$,
there exists a periodical point $x^*\in (x_0-\varepsilon,\,
x_0+\varepsilon)$. Let $n$ be its period.

This means that $x^*$ would be a periodical point of $f^n$ and,
correspondingly, either $(f^n)'(x^*) = 2^n$, or $(f^n)'(x^*) =
-2^n$. The point $x^*$ also would be a fixed point of $f^{2n}$ and
$(f^{2n})'(x^*) = 2^{2n}$.

for the number $$\delta = \frac{\min\{ x^* - (x_0-\varepsilon),\,
(x_0+\varepsilon)-x^*\}}{2}$$ construct the maps $g$ as follows.

1. Tangent of $g$ on $(x^*-\delta,\, x^*+\delta)\cap [0,\, 1]$
equals $\frac{f'(x^*)}{2^{2n}}$;

2. $g(x)=f(x)$ for every $x\in [0,\, 1]\setminus
(x_0-\varepsilon,\, x_0+\varepsilon)$;

3. The maps $g$ is linear on each of two intervals of the set
$$\left((x_0-\varepsilon,\, x_0+\varepsilon)\cup [0,\,
1]\right)\setminus (x^*-\delta,\, x^*+\delta).$$

The neighborhood of  of the periodical point $x^*$ of period 3 is
given at Figure~\ref{fig:9}c.

The fact that $g$ would not be topologically conjugated to $f$ may
be proved in the same manner as in the proof of
Theorem~\ref{theor:4}.

If maps $f$ and $g$ are topologically conjugated, then so are (via
the same homeomorphism) $f^{2n}$ and $g^{2n}$. In the same time,
by construction $(g^{2n})(x)=x$ for all $x\in (x^*-\delta,\,
x^*+\delta)$ and $f^{2n}$ has no an interval of fixed points. This
contradiction finishes the proof.
\end{proof}

\newpage

\section{On the differentiability of conjugation}\label{sect-dyffer}

We continue in this section to consider the problem about the
topological conjugation of the maps $f,\ f_v:\, [0,\,
1]\rightarrow [0,\, 1]$, where $$ f(x) = \left\{\begin{array}{ll}
2x,& x< 1/2;\\
2-2x,& x\geqslant 1/2
\end{array}\right.
$$
and
$$
f_v(x) = \left\{\begin{array}{ll} \frac{x}{v},& x\leqslant v;\\
 \frac{1-x}{1-v},&
x>v.
\end{array}\right.
$$ for $v\in (0,\, 1)$ being a parameter.
By Theorem~\ref{theor:homeom-jed}, there exists and it is unique
the homeomorphism $h:\, [0,\, 1]\rightarrow [0,\, 1]$ such that
the following would commute.

\begin{equation}\label{eq:43} \begin{CD}
[0,\, 1] @>f >> & [0,\, 1]\\
@V_{h} VV& @VV_{h}V\\
[0,\, 1] @>f_v>>& [0,\, 1],
\end{CD}\end{equation}

This Section is devoted to the differentiability of the
homeomorphism $h$. This problem was inspirit by the following
result.

\begin{theorem}\label{theor:13}The
derivative of the homeomorphism $h$ such that the
diagram~(\ref{eq:43}) is commutative, exists almost everywhere and
equals to 0 everywhere it is finite.
\end{theorem}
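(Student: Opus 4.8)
The plan is to deduce the statement from Theorem~\ref{lema:32} together with Lebesgue's theorem (Theorem~\ref{theor:14}), under the standing assumption $v\neq 1/2$ (for $v=1/2$ one has $f_v=f$, $h=\mathrm{id}$, and the statement is vacuous). The ``exists almost everywhere'' part is immediate: $h$ is monotone, so by Theorem~\ref{theor:14} it has a finite derivative at almost every point of $[0,\,1]$. It therefore remains to prove that if $x_0\in(0,\,1)$ is a point at which $h'(x_0)$ exists and is finite, then $h'(x_0)=0$.

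First I would fix such an $x_0$, write $L=h'(x_0)\in[0,\,\infty)$, and transfer everything to the piecewise linear approximants $h_n$. By Theorem~\ref{theor:10} the homeomorphism $h$ maps $A_n$ onto $B_n$ preserving order, so $h(\alpha_{n,k})=\beta_{n,k}$ for every $k$, i.e. $h$ and $h_n$ agree on $A_n$. Assume first $x_0\notin\mathcal{A}$, so $x_0$ has a unique binary expansion $x_0=0,x_1x_2\ldots$. For $n\geq 1$ put $p_n=0,x_1\ldots x_n$ and $q_n=p_n+2^{-n}$; these are consecutive elements of $A_{n+1}$, with $p_n<x_0<q_n$ and $q_n-p_n=2^{-n}\to 0$. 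Since $h_{n+1}$ is affine on $[p_n,\,q_n]$ and coincides with $h$ at its endpoints, Theorem~\ref{lema:32} gives
$$\prod_{i=2}^{n+1}\alpha_i(x_0)=h_{n+1}'(x_0)=\frac{h_{n+1}(q_n)-h_{n+1}(p_n)}{q_n-p_n}=\frac{h(q_n)-h(p_n)}{q_n-p_n}.$$
On the other hand, differentiability of $h$ at $x_0$ together with $p_n<x_0<q_n$ yields $h(q_n)-h(p_n)=L(q_n-p_n)+o(q_n-p_n)$, so the right-hand side tends to $L$. Hence the infinite product $\prod_{i=2}^{\infty}\alpha_i(x_0)$ converges in $[0,\,\infty)$ and equals $L$.

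The key point I would then isolate is that such a product can never be a positive real. Each factor $\alpha_i(x_0)$ equals $2v$ or $2(1-v)$, and for $v\neq 1/2$ both $\log(2v)$ and $\log(2(1-v))$ are nonzero; set $\delta=\min\{|\log(2v)|,\,|\log(2(1-v))|\}>0$. If $L>0$ then $\sum_{i\geq 2}\log\alpha_i(x_0)=\log L$ is a convergent series, hence its partial sums form a Cauchy sequence; but two consecutive partial sums differ by $|\log\alpha_i(x_0)|\geq\delta$, a contradiction. Therefore $L=0$, which settles the case $x_0\notin\mathcal{A}$. For the remaining (countably many) dyadic points $x_0=k/2^m\in\mathcal{A}$ I would run the same computation one-sidedly: the cell of $A_{n+1}$ lying immediately to the right (resp. left) of $x_0$ corresponds to a binary prefix whose digits are eventually constant, so by Theorem~\ref{lema:32} its slope is a product all of whose factors from some index on equal $2v$; this slope tends to $0$ when $v<1/2$ and to $\infty$ when $v>1/2$. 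Since a finite $h'(x_0)=L$ forces these cell difference quotients to converge to $L$, we obtain $L=0$ for $v<1/2$, while for $v>1/2$ the derivative at $x_0$ is not finite and there is nothing to prove.

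The main obstacle, as I see it, is not a single hard estimate but the bookkeeping: one must be careful (i) that $h$ genuinely coincides with $h_n$ on every $A_n$, so that Theorem~\ref{lema:32}, stated for the approximants, can be applied to honest difference quotients of $h$; and (ii) that the dyadic points are handled, where the two binary expansions and the one-sided nature of the bracketing cells make the argument slightly delicate. The analytic heart, by contrast, is the short observation that an infinite product of factors drawn from $\{2v,\,2(1-v)\}$ with $v\neq 1/2$ can only tend to $0$ or to $\infty$ and never converge to a positive number.
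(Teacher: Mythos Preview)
Your proof is correct and follows essentially the same route as the paper: show that a finite derivative $h'(x_0)=L$ forces the approximant slopes $h_n'(x_0)$ to converge to $L$, then use the product formula of Theorem~\ref{lema:32} to rule out $L>0$. Your treatment is in fact tighter than the paper's, which at the final step simply cites parts~1 and~2 of Theorem~\ref{theor:12}; those parts literally concern only points of $\mathcal{A}$, so your Cauchy argument on $\sum\log\alpha_i$---that consecutive partial sums differ by at least $\min\{|\log 2v|,\,|\log 2(1-v)|\}>0$---supplies exactly the step the paper leaves implicit for generic $x_0\notin\mathcal{A}$. Your separate handling of the dyadic points is likewise sound and matches the one-sided analysis the paper carries out later in Lemma~\ref{lema:33}.
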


This result is given at~\cite[Proposition~2]{Skufca}. It is
formulated there as follows: the derivative of $h$, which makes
the diagram~(\ref{eq:43}) commutative, exists almost everywhere
and equals 0 everywhere, where it \textbf{exists}. Nevertheless,
it is seen from the proof in~\cite{Skufca}, that under the
assumption about the existence of the derivative, authors mean
also the finiteness of the derivative. They consider an arbitrary
point $x\in (0, 1)$ and construct the sequence $k_{n}$ such that
$x\in I_{n} =\left[\frac{k_{n} }{2^{n} },\, \frac{k_n +1}{2^n }
\right]$ and numbers $p_{n} =h\left(\frac{k_{n} +1}{2^{n} }
\right)-h\left(\frac{k_n}{2^n} \right)$. They claim that if the
derivative $h'(x)$ exists and does not equal to 0, then
$\frac{p_{n+1} }{p_{n} } \to \frac{1}{2} $. But such reasonings
are correct only in the case when the derivative $h'(x)$ is also
finite.

Following~\cite[sect. 92, 101]{Fihtengoltz}, we will assume that
the derivative of a function of real argument is the limit of the
ratio of the its increment over the correspond increment of the
argument. The derivative (finite or unfinite) is said to exists if
and only if the mentioned limit exists and the derivative equals
to the value of the limit.

The interest of Theorem~\ref{theor:13} is because of Lebesgue
theorem on the differentiability of the monotone function.

\begin{theorem}\label{theor:14}Every
monotone function on the interval has finite derivative almost
everywhere.
\end{theorem}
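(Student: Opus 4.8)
The plan is to recover the classical Lebesgue theorem on differentiation of monotone functions from the Vitali covering lemma, by way of the four Dini derivates. First I would reduce to the case of a nondecreasing function $f$: the nonincreasing case follows by passing to $-f$, and since a countable union of Lebesgue-null sets is null it is enough to prove the assertion on an arbitrary compact subinterval $[a,b]$ of the given interval and then let $[a,b]$ exhaust it. Extend $f$ to the right of $b$ by the constant $f(b)$. For an interior point $x$ put
$$ D^+f(x)=\varlimsup_{h\to 0+}\frac{f(x+h)-f(x)}{h},\qquad D_+f(x)=\varliminf_{h\to 0+}\frac{f(x+h)-f(x)}{h}, $$
and let $D^-f(x)$, $D_-f(x)$ be defined in the same way with $h\to 0+$ replaced by $h\to 0-$. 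Since $f$ is nondecreasing all four lie in $[0,+\infty]$, with the trivial bounds $D_+f\leq D^+f$ and $D_-f\leq D^-f$. A finite derivative $f'(x)$ exists exactly when the four derivates coincide and are finite, so the theorem reduces to showing (i) the four derivates agree almost everywhere, and (ii) their common value is finite almost everywhere.

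For step (i) the crux is to prove that for every pair of rationals $0\leq u<v$ the set $E_{u,v}=\{x\in(a,b): D_-f(x)<u<v<D^+f(x)\}$ is Lebesgue-null; granting this and taking the union over all such pairs yields $D^+f\leq D_-f$ almost everywhere, the reflection $x\mapsto -x$ gives $D^-f\leq D_+f$ almost everywhere, and together with the two trivial inequalities all four derivates coincide almost everywhere, so $f'$ exists in $[0,+\infty]$ almost everywhere. To estimate $m^*(E_{u,v})$, fix $\varepsilon>0$ and an open set $G\supseteq E_{u,v}$ with $m(G)<m^*(E_{u,v})+\varepsilon$. Because $D_-f<u$ on $E_{u,v}$, the closed intervals $[a',b']\subset G$ with $b'\in E_{u,v}$ and $f(b')-f(a')<u(b'-a')$ form a Vitali cover of $E_{u,v}$; the Vitali covering lemma extracts a countable pairwise disjoint subfamily $\{[a_i,b_i]\}_i$ covering $E_{u,v}$ up to a null set, with
$$ \sum_i\big(f(b_i)-f(a_i)\big)<u\sum_i(b_i-a_i)\leq u\,m(G). $$
Working inside each open interval $(a_i,b_i)$ and using $D^+f>v$ on $E_{u,v}$, the closed intervals $[c',d']\subset(a_i,b_i)$ with $c'\in E_{u,v}$ and $f(d')-f(c')>v(d'-c')$ form a Vitali cover of $E_{u,v}\cap(a_i,b_i)$; extracting disjoint subfamilies in each $(a_i,b_i)$ and combining over $i$ produces a pairwise disjoint family $\{[c_j,d_j]\}_j$ covering $E_{u,v}$ up to a null set, with $\sum_j(d_j-c_j)\geq m^*(E_{u,v})$. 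Since the $[c_j,d_j]$ are disjoint, each sits inside one of the disjoint intervals $[a_i,b_i]$, and $f$ is nondecreasing, the increments telescope:
$$ v\,m^*(E_{u,v})\leq v\sum_j(d_j-c_j)<\sum_j\big(f(d_j)-f(c_j)\big)\leq\sum_i\big(f(b_i)-f(a_i)\big)<u\,m(G)<u\big(m^*(E_{u,v})+\varepsilon\big). $$
Letting $\varepsilon\to 0$ gives $v\,m^*(E_{u,v})\leq u\,m^*(E_{u,v})$, and since $u<v$ with $m^*(E_{u,v})<\infty$ this forces $m^*(E_{u,v})=0$.

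For step (ii) it remains to check that the almost-everywhere-defined derivative $f'$ is finite almost everywhere, which I would obtain from Fatou's lemma. Set $f_n(x)=n\big(f(x+1/n)-f(x)\big)$ for $x\in[a,b]$, using the constant extension of $f$ beyond $b$; each $f_n$ is nonnegative and measurable, and $f_n(x)\to f'(x)$ wherever $f'(x)$ exists, hence almost everywhere. By Fatou's lemma together with the substitution $\int_a^b f(x+1/n)\,dm=\int_{a+1/n}^{b+1/n}f\,dm$,
$$ \int_a^b f'\,dm\;\leq\;\varliminf_{n\to\infty}\int_a^b f_n\,dm\;=\;\varliminf_{n\to\infty}\,n\Big(\int_b^{b+1/n}f\,dm-\int_a^{a+1/n}f\,dm\Big)\;\leq\;f(b)-f(a)\;<\;\infty, $$
the last inequality because $f\equiv f(b)$ on $[b,b+1/n]$ while $f\geq f(a)$ on $[a,a+1/n]$ by monotonicity. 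Hence $f'\in L^1[a,b]$, so in particular $f'$ is finite almost everywhere on $[a,b]$; letting $[a,b]$ exhaust the given interval completes the argument.

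The step I expect to be the main obstacle is the double Vitali covering argument in (i): one must set up the two covering families with the point of $E_{u,v}$ at the correct endpoint so that the condition ``$f$ increases by less (resp.\ more) than a fixed multiple of the length of the interval'' actually yields a Vitali cover, and then extract the second family \emph{inside} the first disjoint family so that monotonicity makes the $f$-increments telescope against each other; it is also essential to work with outer measure throughout, since $E_{u,v}$ is not known a priori to be Lebesgue measurable. By comparison, step (ii) — Fatou's lemma plus the elementary estimate on $\int f$ — is routine.
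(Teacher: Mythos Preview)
The paper does not prove this theorem at all: it is stated as the classical Lebesgue differentiation theorem, with a citation to Lebesgue and to Riesz--Nagy, and is used only as a black box to conclude that the conjugacy $h$, being monotone, has a finite derivative almost everywhere. Your proposal is a correct and complete outline of the standard proof via Dini derivates and the double Vitali covering argument, followed by Fatou's lemma for finiteness; there is nothing in the paper to compare it against beyond the bibliographic reference.
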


Whence, from one hand by Theorem~\ref{theor:14} the derivative of
$h$ exists almost everywhere, but from another hand by
Theorem~\ref{theor:13} this derivative equals $0$ everywhere,
where it is finite.

Remind that we have considered in Section~\ref{sect-Pobudowa} the
sets $A_n$, $B_n$, $\mathcal{A}$ and~$\mathcal{B}$, which are
defined as follows.

$A_n,\ n\geqslant 1$ is the set of all points $x\in [0,\, 1]$ such
that
$$ f^{n}(x) = 0.$$

$B_n,\ n\geqslant 1$ is the set of all $x\in [0,\, 1]$ such that
$$ f_v^{n}(x) = 0. $$

For every $n\geq 1$ we have denoted by $\alpha_{n,k},\,
\beta_{n,k},\, 0\leq k\leq 2^{n-1} $ the increasingly ordered
elements of $A_n$ and $B_n$ correspondingly, i.e. for any
$k_1<k_2$ inequalities $\alpha_{n,k_1}<\alpha_{n,k_2}$ and
$\beta_{n,k_1}<\beta_{n,k_2}$ hold.

We have denoted $\mathcal{A}=\bigcup _{n\ge 1} A_{n} $ and
$\mathcal{B}=\bigcup _{n\ge 1} B_{n} $. For every $n\geq 1$ denote
$h_n$ the piecewise linear maps, all whose braking points belong
to $A_n$ and such that for every $k,\, 0\leq k\leq 2^{n-1}$ the
equality $$ h_n(\alpha_{n,k})=\beta_{n,k}
$$ holds.

By Theorem~\ref{theor:10} for every $n$ the conjugation $h$
coincides with $h_n$ on $A_n$.

\subsection{Limits of derivatives of approximations of the
conjugation}\label{sect-dyffer-1}

Consider as arbitrary point $x\in (0,\, 1)\backslash \mathcal{A}$
and find the limit of the sequence $\lim\limits_{n\rightarrow
\infty}h_n'(x)$. The condition $x\not\in \mathcal{A}$ guarantee
that for every $n\geq 1$ the limit $h_n'(x)$ exists.

Let the binary decomposition of $x$ be as follows.

\begin{equation}\label{eq:6} x = 0,x_1x_2\ldots x_k\ldots\,
.
\end{equation}

For the number $x$ of the form~(\ref{eq:6}), denote $x_0 = 0$ and
for every $i\geq 2$ denote

\begin{equation}\label{eq:54}
\alpha_i(x) =\left\{
\begin{array}{ll} 2v& \text{if } x_{i-1}=x_{i-2},\\
2(1-v)& \text{if } x_{i-1}\neq x_{i-2}.
\end{array}\right.
\end{equation}

\begin{theorem}\label{lema:32}For every
$n\geq 2$ and a number $x\not\in A_n$ of the form~(\ref{eq:6}),
the equality
$$h_n'(x)=\prod\limits_{i=2}^n\alpha_i(x)$$ holds, where
$\alpha_i(x)$ are defined by~(\ref{eq:54}).
\end{theorem}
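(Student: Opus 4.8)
The plan is induction on $n$, using the self-similar structure of the piecewise linear approximations $h_n$. The starting observation is geometric: since $x\notin A_n$, the point $x$ lies strictly between two consecutive elements $\alpha_{n,k}<\alpha_{n,k+1}$ of $A_n$, and $h_n$ is affine on $[\alpha_{n,k},\alpha_{n,k+1}]$; hence $h_n'(x)$ exists and equals $2^{n-1}(\beta_{n,k+1}-\beta_{n,k})$, because $\alpha_{n,k+1}-\alpha_{n,k}=2^{-(n-1)}$ by Proposition~\ref{lema:An}. Thus the theorem is really a statement about the gaps of $B_n$, and it will follow from a recursion relating $h_n$ to $h_{n-1}$.

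First I would record the self-similarity relations
$$h_n(x)=v\,h_{n-1}(2x)\ \ (0\le x\le 1/2),\qquad h_n(x)=1-(1-v)\,h_{n-1}(2-2x)\ \ (1/2\le x\le 1).$$
On the set $A_n$ these are exactly parts 3 and 4 of Proposition~\ref{prop:1} combined with Theorem~\ref{theor:10} ($h=h_n$ on $A_n$ and $h=h_{n-1}$ on $A_{n-1}$); concretely they amount to $\beta_{n,k}=v\,\beta_{n-1,k}$ for $k\le 2^{n-2}$ and $\beta_{n,k}=1-(1-v)\,\beta_{n-1,\,2^{n-1}-k}$ for $k\ge 2^{n-2}$, which are read off from the facts established inside the proof of Lemma~\ref{lema:31} ($\beta_{n,2^{n-2}}=v$, and $f_v$ carries $\beta_{n,k}$ to $\beta_{n-1,k}$ resp. $\beta_{n-1,2^{n-1}-k}$ while acting as $t\mapsto t/v$ on $[0,v]$ resp. $t\mapsto(1-t)/(1-v)$ on $[v,1]$). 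To promote these from equalities on $A_n$ to equalities of functions on $[0,1/2]$ and $[1/2,1]$, observe that both sides of each relation are piecewise linear with the \emph{same} breaking points, the dyadic points $k/2^{n-1}$; agreement at those points therefore forces agreement on the whole half-interval.

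Now differentiate. Fix $x\notin A_n$; then $f(x)\notin A_{n-1}$ (indeed $f(x)\in A_{n-1}\iff f^{n}(x)=0\iff x\in A_n$), so $h_{n-1}'$ exists at $f(x)$. If $x<1/2$ then $f(x)=2x$ and the first relation gives $h_n'(x)=2v\,h_{n-1}'(f(x))$; if $x>1/2$ then $f(x)=2-2x$ and the second gives $h_n'(x)=2(1-v)\,h_{n-1}'(f(x))$. In both cases the scalar factor equals $\alpha_2(x)$, since $x<1/2\iff x_1=0=x_0$ and $x>1/2\iff x_1=1\neq x_0$. It remains to verify $\alpha_2(x)\prod_{i=2}^{n-1}\alpha_i(f(x))=\prod_{i=2}^{n}\alpha_i(x)$, which reduces to the digit identity $\alpha_i(f(x))=\alpha_{i+1}(x)$ for every $i\ge 2$. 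By Lemma~\ref{note:f} the binary digits of $f(x)=0,v_1v_2\ldots$ satisfy $v_k=x_{k+1}\oplus x_1$ (so in particular $v_0=0$, matching the convention $x_0=0$); hence $v_{i-1}=v_{i-2}\iff x_i=x_{i-1}$, which is precisely the alternative defining $\alpha_i(f(x))$ versus $\alpha_{i+1}(x)$. Combining with the inductive hypothesis $h_{n-1}'(f(x))=\prod_{i=2}^{n-1}\alpha_i(f(x))$ closes the induction; the base case $n=2$ is the direct check that $h_2$ has slope $2v$ on $(0,1/2)$ and $2(1-v)$ on $(1/2,1)$.

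The chain-rule computations are routine; the part needing care is upgrading the pointwise recursion on $A_n$ to the functional relations on the two half-intervals (the breaking-point bookkeeping above), together with the minor point that $\prod_{i=2}^n\alpha_i(x)$ is well defined for dyadic $x\notin A_n$: such an $x$ has a terminating binary expansion of length $\ge n$, so its two expansions agree in positions $1,\dots,n-1$, and $\alpha_2(x),\dots,\alpha_n(x)$ depend only on those digits. Beyond this, everything is careful indexing of $A_n$ and $B_n$ about their midpoints $1/2$ and $v$.
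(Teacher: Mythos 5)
Your argument is correct, and it reaches the formula by a genuinely different route than the paper. The paper's induction runs forward, from $h_n$ to $h_{n+1}$: it analyses the graph of $f_v^{n}$ on each interval $[\beta_{n,2k},\,\beta_{n,2k+2}]$ via Lemmas~\ref{lema:vlastfv1} and~\ref{lema:vlastfv2}, locates the new points $\beta_{n+1,4k+1}$ and $\beta_{n+1,4k+3}$ (equations~(\ref{eq:4}) and~(\ref{eq:5})), reads off the four slope relations $2v\gamma_1$, $2(1-v)\gamma_1$, $2(1-v)\gamma_2$, $2v\gamma_2$ on $A_{n+1,4k},\ldots,A_{n+1,4k+3}$, and matches them with the last two binary digits of $x$ — in effect each step appends a trailing digit. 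You instead strip the leading digit: the finite-level self-similarity identities $h_n(x)=v\,h_{n-1}(2x)$ on $[0,\,1/2]$ and $h_n(x)=1-(1-v)\,h_{n-1}(2-2x)$ on $[1/2,\,1]$ (which you justify correctly by checking them on $A_n$, where they amount to $\beta_{n,k}=v\beta_{n-1,k}$ and $\beta_{n,k}=1-(1-v)\beta_{n-1,\,2^{n-1}-k}$, and noting both sides are piecewise linear with breaks only in $A_n$), together with the chain rule and the shift identity $\alpha_i(f(x))=\alpha_{i+1}(x)$, reduce $h_n'(x)$ to $\alpha_2(x)\,h_{n-1}'(f(x))$, and the product telescopes. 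What each approach buys: yours is shorter, exploits the same renormalization structure as Proposition~\ref{prop:1} and the functional equation~(\ref{eq:67}), and avoids the explicit bookkeeping with $\beta_{n+1,4k+j}$; the paper's computation, by contrast, makes explicit the local $v:(1-v)$ subdivision of $B_{n+1}$ inside $B_n$, which is exactly the information (in the spirit of Lemma~\ref{lema:34} and Corollary~\ref{corol:3}) that the paper reuses in later differentiability arguments. Your side remarks — that $f(x)\notin A_{n-1}$ precisely when $x\notin A_n$, and that for a dyadic $x\notin A_n$ the two binary expansions agree in the digits that $\alpha_2,\ldots,\alpha_n$ actually use — close the only points where the reduction could have been questioned.
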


\begin{proof}Consider the maps $f_v^n$. Its set of zeros
is $\beta_{n,k},\, 0\leq k\leq 2^{n-1}$. The set of zeros of
$f_v^{n-1}$ is $\beta_{n,2k},\, 0\leq k\leq 2^{n-2}$ and the set
of solution of the equation $f_v^{n-1}(x)=1$ is $\beta_{n,2k+1},\,
0\leq k\leq 2^{n-2}-1$.

Consider the maps $f_v^{n-1}$ for $x\in [\beta_{n, 2k},\,
\beta_{n,2k+2}]$. The graph of this maps passes through the points
$M(\beta_{n, 2k},\, 0)$, $Q(\beta_{n, 2k+1},\, 1)$ and
$N(\beta_{n, 2k+2},\, 0)$ as shown on Figure~\ref{fig:11}a.

For $x\in (\beta_{n,2k},\, \beta_{n,\, 2k+1})$ and the maps
$f_v^{n-1}$ it follows from Lemma~\ref{lema:vlastfv1} that the
graph of $f_v(f_v^{n-1})$ passes through points $M(\beta_{n,
2k},\, 0)$, $S(t_1,\, 1)$ and $K(\beta_{n, 2k+1},\, 0)$, where
$t_1 = \beta_{n, 2k} + v(\beta_{n, 2k+1}-\beta_{n, 2k})$, as it is
shown on the Figure~\ref{fig:11}b. Nevertheless, since
$f_v^{n}(t_1)=1$, then $f_v^{n+1}(t_1)=0$, i.e. $t_1\in B_{n+1}$,
whence
\begin{equation}\label{eq:4} \beta_{n+1,4k+1} = \beta_{n, 2k} +
v(\beta_{n, 2k+1}-\beta_{n, 2k}).
\end{equation}
In terms of notations of the Figure~\ref{fig:11}b the
equality~(\ref{eq:4}) means that
\begin{equation}\label{eq:8} \frac{PS}{PQ} =v.\end{equation}

\begin{figure}[htbp]
\begin{minipage}[h]{0.45\linewidth}
\begin{center}
\begin{picture}(140,120)
\put(0,100){\line(1,0){140}} \put(0,0){\line(1,0){140}}
\put(20,0){\line(1,2){50}} \put(70,100){\line(1,-2){50}}

\put(20,0){\line(0,1){100}} \put(120,0){\line(0,1){100}}

\put(20,0){\circle*{4}} \put(7,5){$M$} \put(120,0){\circle*{4}}
\put(123,5){$N$}

\put(70,100){\circle*{4}} \put(67,105){$Q$}
\put(20,100){\circle*{4}} \put(17,105){$P$}
\put(120,100){\circle*{4}} \put(117,105){$R$}
\end{picture}
\centerline{a. A part of the graph of $f_v^{n-1}$}\end{center}
\end{minipage}
\hfill
\begin{minipage}[h]{0.45\linewidth}
\begin{center}
\begin{picture}(140,120)
\put(0,100){\line(1,0){140}} \put(0,0){\line(1,0){140}}

\linethickness{0.3mm} \Vidr{20}{0}{70}{100} \VidrTo{120}{0}
\linethickness{0.1mm}

\Vidr{20}{0}{57.5}{100} \VidrTo{70}{0} \VidrTo{82.5}{100}
\VidrTo{120}{0}

\put(70,100){\circle*{4}} \put(67,105){$Q$}
\put(20,100){\circle*{4}} \put(17,105){$P$}
\put(120,100){\circle*{4}} \put(117,105){$R$}

\put(57.5,100){\circle*{4}} \put(54.5,105){$S$}
\put(82.5,100){\circle*{4}} \put(79.5,105){$T$}

\put(20,0){\circle*{4}} \put(7,5){$M$} \put(120,0){\circle*{4}}
\put(123,5){$N$}

\put(70,0){\circle*{4}} \put(75,5){$K$}
\end{picture}
\centerline{b. A part of the graph of $f_v^{n}$}\end{center}
\end{minipage}
\hfill \caption{Parts of the graphs of iterations of $f_v$}
\label{fig:11}
\end{figure}
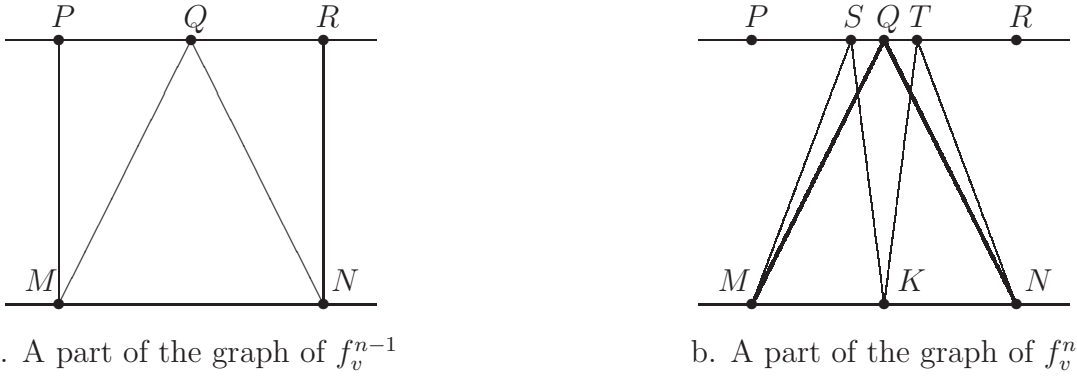

For $x\in (\beta_{n,2k+1},\, \beta_{n,2k+2})$ and the maps
$f_v^{n-1}$ is follows from Lemma~\ref{lema:vlastfv2} that graph
of $f_v(f_v^{n-1})$ passes through points $K(\beta_{n, 2k+1},\,
0)$, $T(t_2,\, 1)$ and $N(\beta_{n, 2k+2},\, 0)$, where $t_2 =
\beta_{n, 2k+1} + (1-v)(\beta_{n, 2k+2}-\beta_{n, 2k+1})$. Since
$f_v^{n}(t_2)=1$, then $f_v^{n+1}(t_2)=0$, i.e. $t_2\in B_{n+1}$,
whence
\begin{equation}\label{eq:5} \beta_{n+1,4k+3} = \beta_{n, 2k+1} + (1-v)(\beta_{n, 2k+2}-\beta_{n,
2k+1}).
\end{equation} In terms of notations
of the Figure~\ref{fig:11}b the equality~(\ref{eq:4}) means that
\begin{equation}\label{eq:9}\frac{QT}{RQ}
=1-v.\end{equation}

Denote by $\gamma_1$ and $\gamma_2$ correspondingly the the
coefficients of $h_n$ on intervals
$A_{n,2k}=(\frac{2k}{2^{n-1}},\, \frac{2k+1}{2^{n-1}})$ and
$A_{n,2k+1}=(\frac{2k+1}{2^{n-1}},\, \frac{2k+2}{2^{n-1}})$. It
means that $\gamma_1 = 2^{n-1}(\beta_{n,2k+1}-\beta_{n,2k})$ and
$\gamma_2 = 2^{n-1}(\beta_{n,2k+2}-\beta_{n,2k+1})$.

Let $x\in A_{n+1,4k}$. From~(\ref{eq:4}) obtain
$$h_{n+1}'(x) = 2^n(\beta_{n+1,4k+1} -\beta_{n+1,4k}) =
2^nv(\beta_{n,2k+1}-\beta_{n,2k}) = 2v\gamma_1.$$

Let $x\in A_{n+1,4k+1}$. From~(\ref{eq:4}) obtain
$$h_{n+1}'(x) = 2^n(\beta_{n+1,4k+2} -\beta_{n+1,4k+1}) = $$ $$=
2^n(\beta_{n,2k+1}-(\beta_{n, 2k} + v(\beta_{n, 2k+1}-\beta_{n,
2k}))) = 2(1-v)\gamma_1.$$

Let $x\in A_{n+1,4k+2}$. From~(\ref{eq:5}) obtain
$$h_{n+1}'(x) = 2^n(\beta_{n+1,4k+3} -\beta_{n+1,4k+2}) =
2^n(\beta_{n, 2k+2}-\beta_{n, 2k+1}) = 2(1-v)\gamma_2$$

Let $x\in A_{n+1,4k+3}$. From~(\ref{eq:5}) obtain
$$h_{n+1}'(x) = 2^n(\beta_{n+1,4k+4} -\beta_{n+1,4k+3}) =
2v\gamma_2$$

The maps $h_{n-1}$ divides $[0,\, 1]$ to $2^{n-2}$ equal
intervals. Each of these intervals is defines by $n-1$ the first
numbers of the binary decomposition of its points. More then this,
if the binary composition of $x$ is of the form~(\ref{eq:6}), then
the binary decomposition of a natural $k$ is $x_1\, \ldots\,
x_{n-2}$ (where, as in general, the first zeros should be
ignored).

The inclusion $x\in A_{n+1,4k}$ is equivalent to that~(\ref{eq:6})
is of the form $x = 0.x_1\, \ldots\, x_{n-2}\, 0\, 0\, \ldots\, $.
The inclusion $x\in A_{n+1,4k+1}$ is equivalent to
that~(\ref{eq:6}) is of the form $x = 0.x_1\, \ldots\, x_{n-2}\,
0\, 1\, \ldots\, $.

The inclusion $x\in A_{n+1,4k+2}$ is equivalent to
that~(\ref{eq:6}) is of the form $x = 0.x_1\, \ldots\, x_{n-2}\,
1\, 0\, \ldots\, $.

The inclusion $x\in A_{n+1,4k+3}$ is equivalent to
that~(\ref{eq:6}) is of the form $x = 0.x_1\, \ldots\, x_{n-2}\,
1\, 1\, \ldots\, $.

Now evident inductive reasonings finish the proof.
\end{proof}

The following theorem~\ref{lema:32} follows from the proved one.

\begin{figure}[htbp]
\begin{minipage}[h]{0.49\linewidth}
\begin{center}
\begin{picture}(100,120)
\put(100,0){\line(0,1){100}} \put(0,100){\line(1,0){100}}
\put(0,0){\vector(0,1){120}} \put(0,0){\vector(1,0){120}}
\put(50,75){\circle*{4}} \put(0,0){\circle*{6}}
\put(100,100){\circle*{6}} \put(25,56.25){\circle*{4}}
\put(75,81.25){\circle*{4}} \put(12.5,42.1875){\circle*{4}}
\put(37.5,60.9375){\circle*{4}} \put(62.5,79.6875){\circle*{4}}
\put(87.5,82.8125){\circle*{4}} \qbezier(0,0)(0,0)(12.5,42.1875)
\qbezier(12.5,42.1875)(12.5,42.1875)(25,56.25)
\qbezier(25,56.25)(25,56.25)(37.5,60.9375)
\qbezier(37.5,60.9375)(37.5,60.9375)(50,75)
\qbezier(50,75)(50,75)(62.5,79.6875)
\qbezier(62.5,79.6875)(62.5,79.6875)(75,81.25)
\qbezier(75,81.25)(75,81.25)(87.5,82.8125)
\qbezier(87.5,82.8125)(87.5,82.8125)(100,100)
\end{picture}
\end{center}
\centerline{a. The graph of $h_4$ for $v=3/4$}
\end{minipage}
\hfill
\begin{minipage}[h]{0.49\linewidth}
\begin{center}
\begin{picture}(100,120)
\put(100,0){\line(0,1){100}} \put(0,100){\line(1,0){100}}
\put(0,0){\vector(0,1){120}} \put(0,0){\vector(1,0){120}}
\put(0,0){\circle*{6}} \put(100,100){\circle*{6}}
\put(12.5,1.5625){\circle*{4}} \put(25,6.25){\circle*{4}}
\put(37.5,20.3125){\circle*{4}} \put(50,25){\circle*{4}}
\put(62.5,39.0625){\circle*{4}} \put(75,81.25){\circle*{4}}
\put(87.5,95.3125){\circle*{4}} \qbezier(0,0)(0,0)(12.5,1.5625)
\qbezier(12.5,1.5625)(12.5,1.5625)(25,6.25)
\qbezier(25,6.25)(25,6.25)(37.5,20.3125)
\qbezier(37.5,20.3125)(37.5,20.3125)(50,25)
\qbezier(50,25)(50,25)(62.5,39.0625)
\qbezier(62.5,39.0625)(62.5,39.0625)(75,81.25)
\qbezier(75,81.25)(75,81.25)(87.5,95.3125)
\qbezier(87.5,95.3125)(87.5,95.3125)(100,100)

\end{picture}
\end{center}
\centerline{b. The graph of $h_4$ for $v=1/4$}
\end{minipage}
\caption{Graphs of $h_4$}\label{fig:15}
\end{figure}

\begin{theorem}\label{theor:12}1. If
$v<1/2$ then for every $x\in \mathcal{A}$ the limit
$\lim\limits_{n\rightarrow \infty}h_n'(x) =0$ holds.

2. If $v>1/2$, then for every $x\in \mathcal{A}$ the limit
$\lim\limits_{n\rightarrow \infty}h_n'(x) =\infty$ holds.

3. For every $v\in (0,\, 1)\setminus \{1/2\}$ the following limits
$\lim\limits_{n\rightarrow \infty}\min\limits_{x\in (0,\,
1)\setminus \mathcal{A}}h_n'(x) =0$ and $\lim\limits_{n\rightarrow
\infty}\max\limits_{x\in (0,\, 1)\setminus \mathcal{A}}h_n'(x)
=\infty$ hold.
\end{theorem}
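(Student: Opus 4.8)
The plan is to read everything off the explicit slope formula of Theorem~\ref{lema:32}: for $x\notin A_n$ with binary expansion~(\ref{eq:6}) one has $h_n'(x)=\prod_{i=2}^n\alpha_i(x)$, where by~(\ref{eq:54}) each factor $\alpha_i(x)$ equals $2v$ when the $(i-2)$-th and $(i-1)$-th binary digits of $x$ agree and $2(1-v)$ when they differ (recall $x_0=0$). The single arithmetic fact driving all three parts is that $2v\cdot 2(1-v)=4v(1-v)<1$ for $v\neq 1/2$, so that exactly one of the numbers $2v$, $2(1-v)$ is $<1$ and the other is $>1$; neither equals $1$.

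For parts 1 and 2, fix $x\in\mathcal{A}$, i.e.\ a dyadic rational. Its binary expansion is eventually constant (terminating in $0$'s, or in $1$'s in the alternative representation), so consecutive digits agree from some index $N$ onward and hence $\alpha_i(x)=2v$ for all $i\geq N$. For $n$ large $x$ is a breaking point of $h_n$, so we interpret $h_n'(x)$ as a one-sided derivative, namely the slope of $h_n$ on the adjacent linear piece; applying Theorem~\ref{lema:32} to an interior point of that piece (whose first $n-1$ digits form a prefix of an eventually constant expansion of $x$) gives $h_n'(x)=C_x\cdot(2v)^{n-N}$ for a positive constant $C_x$ independent of $n$, and the same with a possibly different constant for the other one-sided derivative. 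Letting $n\to\infty$ yields $0$ when $v<1/2$ (since $2v<1$) and $\infty$ when $v>1/2$ (since $2v>1$), which is parts 1 and 2; the endpoints $x=0,1$ are the same computation with at most one anomalous factor.

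For part 3, note that $h_n'$ is constant on each of the $2^{n-1}$ linear pieces of $h_n$, which are indexed by the prefixes $(x_1,\dots,x_{n-1})\in\{0,1\}^{n-1}$, and on the piece with prefix $(x_1,\dots,x_{n-1})$ the slope is the product $\prod_{i=2}^n\alpha_i$ determined by $(x_0,x_1,\dots,x_{n-1})$. Since each factor lies in $\{2v,2(1-v)\}$, the minimum of these slopes is $(\min(2v,2(1-v)))^{n-1}$, attained by the all-zeros prefix (giving all factors $2v$) when $v<1/2$ and by the alternating prefix $1,0,1,0,\dots$ (giving all factors $2(1-v)$) when $v>1/2$; likewise the maximum is $(\max(2v,2(1-v)))^{n-1}$. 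Each extremal slope is realised at points of $(0,1)\setminus\mathcal{A}$: the alternating prefix is a prefix of $2/3$, whose binary expansion is $0{.}101010\ldots$ and which lies outside $\mathcal{A}$ because $f(2/3)=2/3$; the length-$(n-1)$ all-zeros prefix is a prefix of, e.g., any number $\sum_{k\geq n}\varepsilon_k2^{-k}$ with $(\varepsilon_k)_{k\geq n}$ not eventually constant, which is non-dyadic and hence outside $\mathcal{A}$. Therefore $\min_{x\in(0,1)\setminus\mathcal{A}}h_n'(x)=(\min(2v,2(1-v)))^{n-1}\to 0$ and $\max_{x\in(0,1)\setminus\mathcal{A}}h_n'(x)=(\max(2v,2(1-v)))^{n-1}\to\infty$, using $\min(2v,2(1-v))<1<\max(2v,2(1-v))$.

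The substantive input, Theorem~\ref{lema:32}, is already available, so what remains is bookkeeping. The only real point of care is the treatment of $h_n'$ at points of $\mathcal{A}$ in parts 1--2, where $x$ is a breaking point of $h_n$: this is handled by passing to one-sided derivatives and observing that the relevant digit tail (all $0$'s on the right, all $1$'s on the left) is still eventually constant, so both one-sided limits coincide. Everything else is a direct estimation of products of the two fixed numbers $2v$ and $2(1-v)$.
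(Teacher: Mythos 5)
Your proof is correct and takes essentially the same route as the paper, which simply presents Theorem~\ref{theor:12} as an immediate consequence of the slope formula of Theorem~\ref{lema:32} (illustrating part 3 by contrasting the alternating expansion $0.1010\ldots$ of $2/3$ with an eventually constant one). Your explicit handling of the one-sided slopes at dyadic breakpoints in parts 1--2 and of the attainment of the extremal products $(\min(2v,2(1-v)))^{n-1}$ and $(\max(2v,2(1-v)))^{n-1}$ at non-dyadic points merely fills in bookkeeping the paper leaves implicit.
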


The i. 3 of Theorem is illustrated at Figure~\ref{fig:15}, where
are given graphs of $h_4$ for $v=3/4$ and $v=3/4$. For an
arbitrary $k\in \mathbb{N}$ consider the number, whose the binary
decomposition is
$$
x_0 = 0,\underbrace{10\, 10\, 10}_{k\, \text{ times}}.
$$ Then by Theorem~\ref{lema:32} we have that $$
h_{2k+1}^{'}(x_0) = (2(1-v))^{2k}.
$$ In the same time $$
\lim\limits_{n\rightarrow \infty}h_n(x_0) =
\lim\limits_{n\rightarrow \infty}(2v)^n.
$$

\subsection{Values of the derivative of conjugation}\label{sect-dyffer-2}

We will prove Theorem~\ref{theor:13} in this section, i.e. we will
prove the Proposition~2 from~\cite{Skufca}, which states that the
derivative of $h$ can be equal either $0$, or infinity. Also we
will find the values of the derivative of conjugation at rational
points.

Remind that non triviality of Theorem~\ref{theor:13} follows from
the Lebesgue Theorem (Theorem~\ref{theor:14}) about
differentiability of the monotonic function.

\begin{proof}[Proof of Theorem~\ref{theor:13}] Let
for the point $x\in (0,\, 1)$ the derivative $h'(x)=t$ exists.

For every $n\geq 1$ denote by $k$ as interval such that $x\in
[\alpha_{n,k},\, \alpha_{n,k+1})$.

The condition $h'(x)=t$ is equivalent to that
\begin{equation}\label{eq:55} h(x) -h(\alpha_{n,k}) =
t(x-\alpha_{n,k}) + m^{(1)}_n
\end{equation} and
\begin{equation}\label{eq:56} h(\alpha_{n,k+1}) -h(x) = t(\alpha_{n,k+1}-x) +
m^{(2)}_n,
\end{equation} where $m^{(1)}_n$ and $m^{(2)}_n$ are sequences,
which tend to 0 for $n\rightarrow \infty$.

By adding the equalities~(\ref{eq:55}) and~(\ref{eq:56}) obtain
that
\begin{equation}\label{eq:57} h(\alpha_{n,k+1})-h(\alpha_{n,k}) =
t(\alpha_{n,k+1} - \alpha_{n,k}) + m^{(3)}_n,
\end{equation} where $m^{(1)}_n = m^{(1)}_n + m^{(2)}_n$.

Nevertheless, equality~(\ref{eq:57}) can be rewritten as
$$ h_n'(x) = t+m^{(3)}_n,
$$ which means that $\lim\limits_{n\rightarrow \infty}h_n'(x_0)=t$.

Now theorem follows from parts 1 and 2 of Theorem~\ref{theor:12}.
\end{proof}

\begin{lemma}\label{lema:33}If $v<1/2$ then for
every $x_0 \in \mathcal{A}$ there exists the derivative $h'(x_0
)=\infty $ and if $v>1/2$ then for every $x_{0} \in \mathcal{A}$
there exists a derivative $h'(x_0)=0$.
\end{lemma}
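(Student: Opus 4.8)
The plan is to obtain Lemma~\ref{lema:33} as the restriction to $\mathcal{A}$ of Theorem~\ref{theor:15} (indeed $\mathcal{A}\subseteq[0,\,1]\cap\mathbb{Q}$), by reducing the derivative of $h$ at a dyadic point to the one‑sided slopes of the approximations $h_n$ and then invoking Theorem~\ref{theor:12}. Throughout I would use that, by Theorem~\ref{theor:10}, the conjugation $h$ coincides with $h_n$ on $A_n$, so that $h(\alpha_{n,k})=\beta_{n,k}$ for every admissible $k$, and I would keep in reserve the self‑similar relations of Proposition~\ref{prop:1}, namely $h(x)=v\,h(2x)$ for $x\le 1/2$ and $h(x)=1-(1-v)h(2-2x)$ for $x>1/2$, together with the geometric gap control of Corollary~\ref{corol:3}.

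First I would fix $x_0\in\mathcal{A}$, say $x_0\in A_N$, and treat the right‑hand derivative, the left‑hand case being symmetric (at $x_0=0$ only the right‑hand, at $x_0=1$ only the left‑hand derivative is required). For every $n\ge N$ the point $x_0$ is a node $\alpha_{n,k}$ of the partition $A_n$, with right neighbour $\alpha_{n,k+1}=x_0+2^{-(n-1)}$. Since $h$ agrees with $h_n$ on $A_n$, the difference quotient of $h$ over $[\alpha_{n,k},\,\alpha_{n,k+1}]$ is precisely the right slope of $h_n$ at $x_0$:
\begin{equation}
\frac{h(\alpha_{n,k+1})-h(\alpha_{n,k})}{\alpha_{n,k+1}-\alpha_{n,k}}=2^{n-1}\bigl(\beta_{n,k+1}-\beta_{n,k}\bigr).
\end{equation}
Letting $n\to\infty$ makes $\alpha_{n,k+1}\downarrow x_0$, and by Theorem~\ref{theor:12} the right‑hand side converges, for every $x_0\in\mathcal{A}$, to $\infty$ when $v<1/2$ and to $0$ when $v>1/2$. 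This already pins down the limit of the difference quotients along the dyadic sequence $\alpha_{n,k+1}\downarrow x_0$.

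The main work, and the step I expect to be the real obstacle, is to upgrade this limit along the special dyadic sequence to the genuine one‑sided derivative, i.e.\ to control $\frac{h(x)-h(x_0)}{x-x_0}$ for all $x\downarrow x_0$ and not only for $x$ on the grid. Here I would run a monotonicity squeeze: for arbitrary $x>x_0$ I choose $n$ with $x$ lying between two consecutive nodes $\alpha_{n,k+1}\le x<\alpha_{n,k+2}$, use that $h$ increases (Theorem~\ref{theor:10}) to trap $h(x)$ between $\beta_{n,k+1}$ and $\beta_{n,k+2}$, and then bound the node gaps on both axes — uniformly $2^{-(n-1)}$ on the $x$‑axis, and geometrically on the $y$‑axis via the estimate $(\min(v,1-v))^{t}(b-a)\le\beta_{i+1}-\beta_i\le(\max(v,1-v))^{t}(b-a)$ of Corollary~\ref{corol:3}. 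The relations of Proposition~\ref{prop:1} let me transport the local picture at $x_0$ to the base points $0$ and $1$, so that the upper and lower envelopes of the difference quotient are forced to the same limit as the grid slopes. The delicate point is checking that the envelopes do not separate in the limit, which is exactly where the uniform $x$‑spacing against the geometric $y$‑spacing must be balanced.

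Finally I would assemble the pieces. At an interior $x_0\in\mathcal{A}$ the right‑ and left‑hand derivatives are computed by the same squeeze and, by Theorem~\ref{theor:12}, share one common value; hence $h'(x_0)$ exists and equals that value. At $x_0=0$ and $x_0=1$ the single relevant one‑sided derivative is obtained directly from the recursive relations of Proposition~\ref{prop:1}, giving the identical conclusion. This produces precisely the stated dichotomy: for every $x_0\in\mathcal{A}$ the derivative $h'(x_0)$ exists, with $h'(x_0)=\infty$ when $v<1/2$ and $h'(x_0)=0$ when $v>1/2$.
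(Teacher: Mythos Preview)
Your approach is essentially the paper's: fix $x_0\in\mathcal{A}$, identify the difference quotient of $h$ over the adjacent dyadic interval with the one-sided slope $h_n'(x_0\pm 0)$, squeeze the general quotient between such slopes using the monotonicity of $h$, and finish with Theorem~\ref{theor:12}. Two remarks. First, drop the framing via Theorem~\ref{theor:15}: in the paper that theorem \emph{uses} Lemma~\ref{lema:33} to dispose of the dyadic case, so invoking it here is circular; fortunately your actual argument never needs it. Second, the ``real obstacle'' you anticipate is much lighter than you suggest --- neither Corollary~\ref{corol:3} nor Proposition~\ref{prop:1} is needed. For $x$ with $\alpha_n<x\le\alpha_{n+1}$ (the two successive left neighbours of $x_0$ in $A_n,A_{n+1}$), monotonicity alone gives
\[
\tfrac{1}{2}\,h_{n+1}'(x_0-0)\;\le\;\frac{h(x_0)-h(x)}{x_0-x}\;\le\;2\,h_n'(x_0-0),
\]
since the $x$-gaps halve exactly; both bounds tend to the same value ($0$ or $\infty$) by Theorem~\ref{theor:12}, so the envelopes cannot separate. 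The right-hand side is symmetric.
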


\begin{proof}For $x_0\neq 0$ consider that the left
derivative $h'(x_{0} -0)$ exists and is equal to that, which is
stated in the Lemma (the condition of $x_0$ to be positive comes
from that the left derivative for $h'(x_{0} -0)$ is undefined for
$x_0=0$).

Since the sets $\{ A_{n},\, n>0\} $ are embedded, then there
exists $n_0>0$ such that $x_{0} \in A_{n} $ for all $n>n_0$.

For every $n>n_{0} $ denote by $\alpha _{n} $ the biggest element
of $A_{n} $, which is less than $x_{0} $. Since $A_{n}$ consists
of all rational numbers, whose denominators in the proper form are
divisors of $2^{n-1} $, then $\alpha _{n} =x_{0} -2^{-n+1} $. It
follows from the construction of $h_{n} $ that for every $n>n_{0}
$ the equality $h_{n}^{'} (x_{0} -0)=\frac{h(x_{0} )-h(\alpha _{n}
)}{x_{0} -\alpha _{n} } $ holds, because $h_{n} (x_{0} )=h(x_{0}
)$ and $h_{n} (\alpha _{n} )=h(\alpha _{n} )$. It follows from
continuity of $h$ that for every $x\in (\alpha _{n} ,\alpha _{n+1}
]$ the double following inequality $\frac{h(x_{0} )-h(\alpha
_{n+1} )}{x_{0} -\alpha _{n} } \leq \frac{h(x_{0} )-h(x)}{x_{0}
-x} \leq \frac{h(x_{0} )-h(\alpha _{n} )}{x_{0} -\alpha _{n+1} } $
holds. This inequality can be rewritten as $2h_{n+1}^{'} (x_{0}
)\leq \frac{h(x_{0} )-h(x)}{x_{0} -x} \leq \frac{h_{n}^{'} (x_{0}
)}{2} $.

Consider now an arbitrary increasing sequence $x_{k} $, which
tends to $x_{0}$ and prove the equality $\mathop{\lim
}\limits_{k\to \infty } \frac{h(x_{0} )-h(x_{k} )}{x_{0} -x_{k} }
=\mathop{\lim }\limits_{n\to \infty } h_{n}^{'} (x_{0} -0)$. For
every $k$ there exists an $n_k$ such that $x_{k} \in (\alpha
_{n_{k} } ,\alpha _{n_{k} +1} ]$. Then $2h_{n_{k} +1}^{'} (x_{0}
)\leq \frac{h(x_{0} )-h(x_{k} )}{x_{0} -x_{k} } \leq
\frac{2h_{n_{k} }^{'} (x_{0} )}{2} $. The last double inequality
proves Lemma independently on whether $v>0,5$ or $v<0,5$ and,
correspondingly, independently on the value of the limit
$\mathop{\lim }\limits_{n\to \infty } h_{n}^{'} (x_{0} )$.

The proof for the right derivative $h'(x_{0} +0)$ is analogical.
\end{proof}

\begin{theorem}\label{theor:15}Let $x_0\in
[0,\, 1]\cap \mathbb{Q}$. Then the derivative $h'(x_{0} )$ exists.
More then this, if $v<1/2$ then $h'(x_{0} )=\infty $ and if
$v>1/2$ then $h'(x_{0} )=0$.
\end{theorem}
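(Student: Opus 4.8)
The plan is to split the proof according to whether $x_0$ is dyadic, i.e.\ whether $x_0\in\mathcal{A}$ or $x_0\in\mathbb{Q}\setminus\mathcal{A}$. The dyadic case is already contained in Lemma~\ref{lema:33}, which asserts both the existence of $h'(x_0)$ and the required values ($h'(x_0)=\infty$ for $v<1/2$ and $h'(x_0)=0$ for $v>1/2$) at every point of $\mathcal{A}$. Hence the genuine content of the theorem is the extension of Lemma~\ref{lema:33} from the dyadic set $\mathcal{A}$ to an arbitrary rational point, and this is what I would concentrate on.

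First I would record the arithmetic input: a rational number that is not dyadic has a non-terminating, eventually periodic binary expansion, say $x_0=0,x_1\ldots x_m\overline{x_{m+1}\ldots x_{m+p}}$ with a period block of length $p\ge 1$ containing both a $0$ and a $1$. Two complementary descriptions of $h$ near such a point are then available. On the combinatorial side, the factors $\alpha_i(x_0)$ of~(\ref{eq:54}) depend only on the pair $(x_{i-1},x_{i-2})$, so the sequence $\alpha_i(x_0)$ is eventually periodic with period $p$; by Theorem~\ref{lema:32} the approximation slope $h_n'(x_0)=\prod_{i=2}^{n}\alpha_i(x_0)$ is therefore, up to a bounded prefactor coming from the pre-period, a power of the single product of the $\alpha_i$ over one period block, and its asymptotics are governed by comparing that per-period factor with $1$, exactly in the spirit of the trichotomy of Theorem~\ref{theor:12}. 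On the dynamical side, $x_0$ is a pre-periodic point of $f$: since $f$ sends $p/q$ to a rational whose denominator divides $q$, the forward orbit of $x_0$ is finite, so $x_0$ eventually lands on a periodic orbit. Differentiating the conjugacy relation $h(f)=f_v(h)$ along this orbit, and reading off the local factors $v$ and $1-v$ from Proposition~\ref{prop:1}, reduces $h'(x_0)$ to the derivative of $h$ at a point of that periodic orbit, where the regime is decided by comparing the multiplier accumulated by $f$ with the one accumulated by $f_v$ from the factors $1/v$ and $1/(1-v)$ along the orbit. I would use whichever description is more convenient and verify that it produces the values recorded in Lemma~\ref{lema:33}.

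The main obstacle is the passage from these approximation slopes to the genuine two-sided derivative $h'(x_0)$, including the assertion that the derivative \emph{exists}. Here I would imitate the squeeze used in the proof of Lemma~\ref{lema:33}: for $x$ lying between two consecutive points $\alpha_n,\alpha_{n+1}$ of $\mathcal{A}$ on one side of $x_0$, monotonicity of $h$ traps the difference quotient $\frac{h(x_0)-h(x)}{x_0-x}$ between fixed multiples of the consecutive approximation slopes, after which the behaviour of the latter forces the behaviour of the former. The new difficulty, compared with Lemma~\ref{lema:33}, is that $x_0$ is no longer a point of $\mathcal{A}$, so the neighbouring points of $A_n$ are not at the clean distance $2^{-n+1}$ on each side; this uneven spacing must be controlled, for which Corollary~\ref{corol:3} (bounding consecutive gaps by powers of $\min(v,1-v)$ and $\max(v,1-v)$) is the natural tool. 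The final point to secure is that the left and right one-sided derivatives obtained this way coincide, which is precisely what yields the existence of $h'(x_0)$ together with the single common value claimed in the statement.
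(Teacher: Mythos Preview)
Your plan matches the paper's proof closely: both reduce to $x_0\in\mathbb Q\setminus\mathcal A$ via Lemma~\ref{lema:33}, both exploit the eventually periodic binary expansion, and both squeeze the genuine difference quotient between constant multiples of the approximation slope $h_n'(x_0)$, using Corollary~\ref{corol:3} to control the uneven $B_n$-spacing. The paper implements the squeeze with an explicit three-scale nesting (neighbours $\alpha_n,\widetilde\alpha_n,\widehat\alpha_n$ drawn from $A_{p+nt+1}$, $A_{p+(n+1)t+1}$, $A_{p+(n+2)t+1}$) in order to manufacture constants $m_2(x_0),M_1(x_0)$ depending only on the period block and not on $n$; your sketch leaves this construction implicit, but the architecture is identical.

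There is, however, a step you flag but do not carry out, and which the paper also dismisses as an ``evident remark'': pinning down the value of $\lim_n h_n'(x_0)$. By Theorem~\ref{lema:32} this limit is governed by the per-period factor $(2v)^a(2(1-v))^b$, where $a$ and $b$ count equal versus unequal consecutive digit pairs in one period. For the second sentence of the theorem to hold you would need this factor to exceed $1$ precisely when $v<1/2$, independently of $a,b$. That fails already for simple rationals: for $x_0=1/5=0.\overline{0011}$ one has $a=b=2$ and the factor is $(4v(1-v))^2<1$ for \emph{every} $v\neq 1/2$, so the squeeze yields $h'(1/5)=0$ on both sides of $v=1/2$; for $x_0=1/7=0.\overline{001}$ the factor $8v(1-v)^2$ drops below $1$ both for small $v$ and for $v>1/2$. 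The squeeze does show that $h'(x_0)$ exists and lies in $\{0,\infty\}$ for every rational $x_0$, but the clean dichotomy in terms of the sign of $v-1/2$ alone does not follow from the argument as written and, on these examples, appears not to hold. Before treating this step as routine you should either sharpen the statement or identify what further input is supposed to rescue it.
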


\begin{proof}Because of Lemma~\ref{lema:33}, we can
restrict our consideration on the case $x_0\in \mathbb{Q}
\backslash \mathcal{A}$. Consider the binary decomposition of
$x_0$. Since $x_0\in \mathbb{Q} \backslash \mathcal{A}$, then
\begin{equation}\label{eq-1}x_0 = 0,x_1x_2\ldots\,
x_p(x_{p+1}\ldots x_{p+t}), \end{equation} where $x_{p+1}\ldots
x_{p_t}$ is a periodical part of $x_0$ and not all digits of this
periodical part equal to 1, because in this case $x_0\in
\mathcal{A}$.

For an arbitrary sequence$\{ s_n\}$, which converge  to $x_0$,
consider numbers
$$k(s_n)=\frac{h(x_0)-h(s_n)}{x_0-s_n}.$$

Consider an arbitrary $n\in \mathbb{N}$. For every $s$ denote by
$\alpha_{n_s}$ the closest from the left element $x_0$ of $A_s$
and denote by $\alpha_{n_s}^+$ the element from $A_s$., which is
the closest from the right to $x_0$. Choose the maximal $s_0$ such
that $s_n\in [\alpha_{n_{s_0}},\, \alpha_{n_{s_0}}^+]$. It follows
from the maximality of $s_0$ and $x_0\not\in \mathcal{A}$ that
numbers $x_0,\, s_n$ belong to different ``halves'' of the
interval $[\alpha_{n_{s_0}},\, \alpha_{n_{s_0}}^+]$, i.e. it
follows from the inclusion $x_0\in \left(\alpha_{n_{s_0}},\,
\displaystyle{\frac{\alpha_{n_{s_0}}+\alpha_{n_{s_0}}^+}{2}}\right)$
that $s_n\in
\left[\displaystyle{\frac{\alpha_{n_{s_0}}+\alpha_{n_{s_0}}^+}{2}},\,
\alpha_{n_{s_0}}^+\right]$. We will write $\alpha_n$ and
$\alpha_{n_s}^+$ instead of $\alpha_{n_s}$ and $\alpha_{n_s}^+$.

\begin{figure}[htbp]
\begin{minipage}[h]{0.48\linewidth}
\begin{center}
\begin{picture}(100,100)

\put(0,0){\line(0,1){96}} \put(0,0){\line(1,0){96}}

\put(0,24){\line(1,0){96}} \put(0,48){\line(1,0){96}}
\put(0,72){\line(1,0){96}} \put(0,96){\line(1,0){96}}

\put(24,0){\line(0,1){96}} \put(48,0){\line(0,1){96}}
\put(72,0){\line(0,1){96}} \put(96,0){\line(0,1){96}}

\put(0,0){\circle*{5}} \put(96,96){\circle*{5}}
\put(64,64){\circle*{5}} \put(-13,0){$A$}  \put(100,100){$A^+$}
\put(50,54){$X_0$}

\put(48,48){\circle{5}} \put(72,72){\circle{5}}

\end{picture}
\end{center}
\caption{}\label{fig:12}
\end{minipage}
\hfill
\begin{minipage}[h]{0.48\linewidth}
\begin{center}
\begin{picture}(100,100)

\put(0,0){\line(0,1){96}} \put(0,0){\line(1,0){96}}

\put(0,24){\line(1,0){96}} \put(0,48){\line(1,0){96}}
\put(0,96){\line(1,0){96}}

\put(24,0){\line(0,1){96}} \put(48,0){\line(0,1){96}}
\put(96,0){\line(0,1){96}}

\put(0,0){\circle*{5}} \put(96,96){\circle*{5}}
\put(64,64){\circle*{5}} \put(-13,0){$A$}  \put(100,100){$A^+$}
\put(68,68){$X_0$}

\put(48,48){\circle{5}}

\put(6,0){\line(0,1){48}} \put(12,0){\line(0,1){48}}
\put(18,0){\line(0,1){48}} \put(30,0){\line(0,1){48}}
\put(36,0){\line(0,1){48}} \put(42,0){\line(0,1){48}}

\put(0,6){\line(1,0){48}} \put(0,12){\line(1,0){48}}
\put(0,18){\line(1,0){48}} \put(0,30){\line(1,0){48}}
\put(0,36){\line(1,0){48}} \put(0,42){\line(1,0){48}}

\qbezier(48,0)(56,32)(64,64)

\qbezier(0,48)(12.5,50)(64,64)

\end{picture}
\end{center}
\caption{}\label{fig:13}
\end{minipage}
\end{figure}

The Figure~\ref{fig:12} contains points $A(\alpha_n,\,
h(\alpha_n))$, $B(\alpha_n^+,\, h(\alpha_n^+))$ and $X(x_0,\,
h(x_0))$ for the case $t=2$, $x_{k+1}=1$, $x_{k+2}=0$. Since
$t=2$, then the segment $x\in (\alpha_n,\, \alpha_n^+)$ is divided
by vertical lines to $2^t=4$ parts. Since $x_{k+1}=1$, then $x_0$
is more right than the middle of the interval $(\alpha_n,\,
\alpha_n^+)$ and since $x_{k+2}=0$, then $x_0$ is more left then
the middle of the interval $(\frac{\alpha_n+\alpha_n^+}{2},\,
\alpha_n^+)$. The intermediate vertical and horizontal lines at
Figure~\ref{fig:12} correspond to values from the sets $A_{s+t}$
and $B_{s+t}$.

Denote by $\widetilde{\alpha}_n$ and $\widetilde{\alpha}_n^+$ the
closest left to $x_0$ and the closest right to $x_0$ points of the
set $A_{p+(n+1)t+1}$. On the Figure~\ref{fig:12} points
$(\widetilde{\alpha}_n,\, h(\widetilde{\alpha}_n))$ and
$(\widetilde{\alpha}_n^+,\, h(\widetilde{\alpha}_n^+))$ are
denoted by circles.

Assume that for some $n$ the inclusion $s_n\in [\alpha_n,\,
\widetilde{\alpha}_n]$ holds. Find the bounds for $k(s_n)$. We
will use this bounds to prove the theorem for the left derivative
$h'_-(x_0)$. The prove the the right derivative $h'_+(x_0)$ is
analogical.

The figure~\ref{fig:13} contains lines, whose tangents bound
$k(s_n)$, because the points with coordinates $(s_n,\, h(s_n))$ is
somewhere in the shades rectangle in the case when $s_n\in
[\alpha_n,\, \widetilde{\alpha}_n]$.

Consider the intersection of the set $A_{s+2t}$ with the points of
the interval $(\widetilde{\alpha}_n,\, \widetilde{\alpha}_n^+)$
and consider points of $B_{s+2t}$, which correspond to this
intersection. Put these points to the Figure~\ref{fig:14} in the
manner, which was used for the Figure~\ref{fig:12}.

\begin{figure}[htbp]
\begin{center}
\begin{picture}(380,380)

\put(0,0){\line(0,1){376}} \put(0,0){\line(1,0){376}}

\put(0,96){\line(1,0){376}} \put(0,192){\line(1,0){376}}
\put(0,288){\line(1,0){376}} \put(0,376){\line(1,0){376}}

\put(192,216){\line(1,0){96}} \put(192,240){\line(1,0){96}}
\put(192,264){\line(1,0){48}}

\put(216,192){\line(0,1){96}} \put(240,192){\line(0,1){96}}
\put(264,192){\line(0,1){48}}

\put(96,0){\line(0,1){376}} \put(192,0){\line(0,1){376}}
\put(288,0){\line(0,1){376}} \put(376,0){\line(0,1){376}}

\put(256,256){\circle*{5}} \put(0,192){\circle*{5}}

\put(192,0){\circle*{5}} \put(4,196){R} \put(267,245){S}
\put(253,263){$X_0$} \put(230,267){D} \put(180,4){C}

\qbezier(0,192)(132,216)(264,240)
\qbezier(192,0)(216,132)(240,264)

\put(264,240){\circle*{5}} \put(240,264){\circle*{5}}

\put(0,0){\circle*{5}} \put(3,3){A}

\put(376,376){\circle*{5}} \put(360,360){$A^+$}

\put(192,192){\circle{5}} \put(288,288){\circle{5}}

\end{picture}
\end{center}
\caption{} \label{fig:14}
\end{figure}

Denote by $\widehat{\alpha}_n$ and $\widehat{\alpha}_n^+$ the
points of $A_{s+2t}$, which are the closest to $x_0$ from the left
and from the right correspondingly. Points $C,\, D,\, R,\, S$ on
Figure~\ref{fig:14} are of coordinates $C(\widetilde{\alpha}_n,\,
h(\alpha_n))$, $D(\widehat{\alpha}_n,\, h(\widehat{\alpha}_n^+))$,
$R(\alpha_n,\, h(\widetilde{\alpha}_n))$ and
$S(\widehat{\alpha}_n^+,\, h(\widehat{\alpha}_n))$. Evidently then
$k(s_n)$ is bounded by tangents of lines $CD$ and $RS$ which are
\begin{equation}\label{eq-2}
k_{CD} = \frac{h(\alpha_n^+)-h(\alpha_n)}{\widehat{\alpha}_n
-\widetilde{\alpha}_n}
\end{equation}
and
\begin{equation}\label{eq-3}
k_{RS} = \frac{h(\widehat{\alpha}_n)
-h(\widetilde{\alpha}_n)}{\alpha_n^+-\alpha_n}. \end{equation}
Find the lower bound for $k_{RS}$ and the upper bound for
$k_{CD}$.

Denote by $d$ the number whose the binary decomposition is
$$ d = x_{p+1}\ldots\, x_{p+t},
$$ where $x_{p+1},\, \ldots\, x_{p+t}$ are taken from~(\ref{eq-1}).
Then it follows from the construction of numbers $\alpha_n$,
$\alpha_n^+$, $\widetilde{\alpha}_{n}$,
$\widetilde{\alpha}_{n}^+$, $\widehat{\alpha}_{n}$ and
$\widehat{\alpha}_{n}^+$ that the following equalities hold.
$$\frac{\widetilde{\alpha}_n-\alpha_n}{\alpha_n^+-\alpha_n}
= \frac{d}{2^t},
$$

$$\frac{\widehat{\alpha}_n- \widetilde{\alpha}_n}
{\widetilde{\alpha}_n^+ -\widetilde{\alpha}_n} = \frac{d}{2^t}.
$$

It follows from these two equalities
that\begin{equation}\label{eq-4} \widehat{\alpha}_n-
\widetilde{\alpha}_n = \frac{d^2(\alpha_n^+-\alpha_n)}{2^{2t}},
\end{equation}

\begin{equation}
\label{eq-5} \begin{array}{l}\widehat{\alpha}_n^+ - \alpha_n =
(\widetilde{\alpha}_n-\alpha_n) + (\widehat{\alpha}_n -
\widetilde{\alpha}_n) + (\widehat{\alpha}_n^+ -
\widehat{\alpha}_n) =
\\
\\
\phantom{\widehat{\alpha}_n^+ -
\alpha_n}=\displaystyle{\frac{d(\alpha_n^+-\alpha_n)}{2^t} +
\frac{d^2(\alpha_n^+-\alpha_n)}{2^{2t}} +
\frac{\alpha_n^+-\alpha_n}{2^{2t}}}.
\end{array}
\end{equation}

If follows from Lemma~\ref{lema:34} that for every $i\in
\mathbb{N}$ the distances between the neighbor points of $B_i$ are
not equal. By Corollary~\ref{corol:3} the following bounds hold.
\begin{equation}\label{eq-6} d(\min\{v,\, 1-v\})^{2t}
\leq \frac{h(\hat{\alpha}_n) -
h(\widetilde{\alpha}_n)}{h(\alpha_n^+)-h(\alpha_n)} \leq
d(\max\{v,\, 1-v\})^{2t};
\end{equation}

\begin{equation}\label{eq-7}
\begin{array}{l}
\phantom{\leq \ }d(\min\{v,\, 1-v\})^{t} + (d+1)(\min\{v,\,
1-v\})^{2t} \leq \displaystyle{\frac{h(\hat{\alpha}_n^+) -
h(\alpha_n)}{h(\alpha_n^+)-h(\alpha_n)}} \leq
\\
\\
\leq d(\max\{v,\, 1-v\})^{t} + (d+1)(\max\{v,\, 1-v\})^{2t}.
\end{array}
\end{equation}

By formulas~(\ref{eq-2}), (\ref{eq-4}) and~(\ref{eq-7}) we have
\begin{equation}\label{eq-9} m_1(d)h'_{p+nt+1}(x_0) \leq k_{CD}
\leq M_1(d)h'_{p+nt+1}(x_0),
\end{equation} where

$$ m_1(d) = \frac{2^t(d(\min\{v,\, 1-v\})^{t} +
(d+1)(\min\{v,\, 1-v\})^{2t})}{d^2}
$$
and$$ M_1(d) = \frac{2^t(d(\max\{v,\, 1-v\})^{t} +
(d+1)(\max\{v,\, 1-v\})^{2t})}{d^2}.
$$

By formulas~(\ref{eq-3}), (\ref{eq-5}) and~(\ref{eq-6}) we have
\begin{equation}\label{eq-10}m_2(d)h'_{p+nt+1}(x_0) \leq k_{RS}
\leq M_2(d)h'_{p+nt+1}(x_0), \end{equation} where $$ m_2(d) =
\frac{d(\min\{v,\, 1-v\})^{2t}}{\displaystyle{\frac{d}{2^t} +
\frac{d^2}{2^{2t}} + \frac{1}{2^{2t}}}} = \frac{2^{2t}d(\min\{v,\,
1-v\})^{2t}}{2^td + d^2 + 1}.
$$
and $$M_2(d) = \frac{2^{2t}d(\max\{v,\, 1-v\})^{2t}}{2^td + d^2 +
1}
$$

Notice, that constants $m_1$, $m_2$, $M_1$ and $M_2$ are dependent
on $x_0$, which is independent on $n$. These constants are
independent on $d$.

Since the periodical part of $x_0$ can be written in different
ways (i.e. the first digit of the periodical part of $x_0$ can be
chosen differently), we have that this periodical part can be
given of the form
\begin{equation}\label{eq-8} \sigma^i(x_{p+1}\ldots\, x_{p+t}),
\end{equation} where $\sigma$ is a cyclic permutation of $t$
elements and $i$ is some its iteration. Define by $d_i$ the
natural number, whose the binary decomposition is the
sequence~(\ref{eq-8}). Clearly, that $d_i$ is a periodical
sequence of period $t$. Denote by $m_1(x_0) = \min\limits_{1\leq
i\leq t}m_1(\sigma^i(d_i))$, $m_2(x_0) = \min\limits_{1\leq i\leq
t}m_2(\sigma^i(d_i))$, $M_1(x_0) = \max\limits_{1\leq i\leq
t}M_1(\sigma^i(d_i))$ and $M_2(x_0) = \max\limits_{1\leq i\leq
t}M_2(\sigma^i(d_i)$. Then the following restrictions, which are
analogical to~(\ref{eq-9}) and~(\ref{eq-10}) would hold.
$$
m_1(x_0)h'_{p+nt+1}(x_0) \leq k_{CD} \leq
M_1(x_0)h'_{p+nt+1}(x_0), $$
$$
m_2(x_0)h'_{p+nt+1}(x_0) \leq k_{RS} \leq
M_2(x_0)h'_{p+nt+1}(x_0).
$$
Since $$ k_{RS} \leq k(s_n)\leq k_{CD},
$$ we have that
\begin{equation}\label{eq-11}
m_2(x_0)h'_{p+nt+1}(x_0) \leq k(s_n) \leq
M_1(x_0)h'_{p+nt+1}(x_0).
\end{equation}

Now Theorem follows from Theorem~\ref{lema:32} and the evident
remark that the limit $\lim\limits_{n\to \infty } h_{n}^{'} (x_{0}
)$ exists for all $x_0\in \mathbb{Q}$.
\end{proof}

Theorem~\ref{theor:15} can be considered as a generalization of
Theorem~\ref{theor:12} to the set of rational numbers. It follows
from Theorem~\ref{theor:13} that Theorem~\ref{theor:12} can not be
generalized to the set of all real numbers $x_0\in [0,\, 1]$ such
that the limit $\mathop{\lim }\limits_{n\to \infty }
h_{n}^{'}(x_0)$ exists.

The following proposition holds.

\begin{proposition}\label{Prop:4}For every
$v\neq 1/2$ there exists $x_0\in [0,\, 1]$ such that $h'(x_0)=0$
and one of the following conditions hold.

1. The limit $\lim\limits_{n\to \infty } h_{n}^{'} (x_{0} )$ does
not exist;

2. The limit $\lim\limits_{n\to \infty } h_{n}^{'} (x_{0} )$
exists, but equals $\infty$.
\end{proposition}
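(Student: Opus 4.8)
The plan is to exhibit an explicit point $x_0$ whose binary expansion is designed so that the approximating slopes $h_n'(x_0)$ oscillate between factors of $2v$ and $2(1-v)$ in long enough blocks that the product has no limit (or tends to $\infty$), while simultaneously the \emph{true} derivative $h'(x_0)$ exists and equals $0$. The natural construction: take a sequence of ever-longer blocks of constant digits separated by single ``transition'' digits, i.e.
$$
x_0 = 0,\underbrace{0\cdots 0}_{a_1}\,\underbrace{1\cdots 1}_{b_1}\,\underbrace{0\cdots 0}_{a_2}\,\underbrace{1\cdots 1}_{b_2}\cdots,
$$
with the block lengths $a_k,b_k\to\infty$ rapidly. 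By Theorem~\ref{lema:32}, $h_n'(x_0)=\prod_{i=2}^n\alpha_i(x_0)$, and $\alpha_i(x_0)=2v$ precisely when $x_{i-1}=x_{i-2}$ (inside a block) and $2(1-v)$ at a block transition. So inside the $k$-th block of length (say) $b_k$ one multiplies by roughly $(2v)^{b_k}$, and each transition contributes one factor $2(1-v)$.

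First I would set up the bookkeeping: let $P_n=\prod_{i=2}^n\alpha_i(x_0)$ and track $\log P_n$. Choosing $v\neq 1/2$ we have $\log(2v)$ and $\log(2(1-v))$ of opposite signs when $v>1/2$ versus... actually both $2v$ and $2(1-v)$ can exceed or fall below $1$; the key point is $\log(2v)\neq\log(2(1-v))$. By taking the block lengths growing fast enough (e.g. $a_k=b_k=2^k$, or faster), the partial sums $\log P_n$ can be made to swing: at the end of a long $0$-block the accumulated sum is dominated by $\sum(\text{length})\log(2v)$ shifted by the count of transitions times $\log(2(1-v))$, and by alternating one sees the sequence $P_n$ either fails to converge or, with the right sign choice, tends to $+\infty$ along a subsequence while not being eventually $\infty$ — giving case 1 or case 2 of the Proposition. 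A clean way to force case 2 specifically (limit exists and equals $\infty$) is to make the blocks where $\alpha_i=2v$ dominate overwhelmingly if $2v>1$, i.e. $v>1/2$; for $v<1/2$ one uses $2(1-v)>1$ and lets the transition-adjacent structure dominate, or symmetrically rephrases via $1-x_0$. I would handle the two cases $v>1/2$ and $v<1/2$ separately, using that $\max(2v,2(1-v))>1$ always.

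The crux — and the main obstacle — is proving $h'(x_0)=0$ despite $h_n'(x_0)\not\to 0$. Here I would \emph{not} use the $h_n$ at all, but argue directly from monotonicity and the structure of $\mathcal{B}$. For a point $x_0\notin\mathcal{A}$ with $x_0=0,x_1x_2\ldots$, the nested intervals $I_n=[\alpha_n,\alpha_n^+]$ of $A_n$ containing $x_0$ have length $2^{-(n-1)}$, and by Corollary~\ref{corol:3} (applied between widely separated levels) the increment $h(\alpha_n^+)-h(\alpha_n)$ over $I_n$ is squeezed between $(\min(v,1-v))^{t}(b-a)$ and $(\max(v,1-v))^{t}(b-a)$-type bounds relative to a coarser interval. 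The point is that over the \emph{long constant blocks} the conjugacy contracts the image geometrically by a factor $\le(\max(v,1-v))$ per level while the domain contracts by exactly $2^{-1}$ per level; one must show that because the blocks grow, the ratio $[h(x_0)-h(s)]/(x_0-s)$ can be sandwiched (à la the proof of Theorem~\ref{theor:15}, formulas~(\ref{eq-9})--(\ref{eq-11})) between constants times $h'_{m}(x_0)$ for a \emph{well-chosen} subsequence of indices $m=m(s)$ — not the generic one — and that along that subsequence $h'_m(x_0)\to 0$. Concretely, when $s$ lies in the same long $0$-block tail as $x_0$, the relevant comparison level $m$ sits near the \emph{start} of the current block, where $P_m$ has already been depressed by all previous blocks; growing block lengths make $P_{m(s)}\to 0$ even though $P_n$ (taken at block-ends) does not. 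So the heart of the argument is a careful choice of block lengths making these two subsequences of $P_n$ diverge from each other, combined with the sandwich estimate showing the \emph{derivative} only sees the good subsequence.

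I expect the write-up to need: (i) the explicit recursive choice of $a_k,b_k$ (probably $a_{k+1},b_{k+1}$ much larger than $a_1+b_1+\cdots+a_k+b_k$); (ii) a lemma that for $s\to x_0^-$ (and $s\to x_0^+$), the difference quotient is bounded above by $C(x_0)\,h'_{m(s)}(x_0)$ where $m(s)\to\infty$ runs through indices lying at block-beginnings — this is the analogue of~(\ref{eq-11}) but with the block structure replacing the periodic structure; (iii) verifying $h'_{m(s)}(x_0)\to 0$ from Theorem~\ref{lema:32} and the growth condition; and (iv) separately computing $\varlimsup_n h_n'(x_0)$ at block-ends to confirm it is $+\infty$ (or nonexistent), which is immediate from the same theorem once $\max(2v,2(1-v))>1$. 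Step (ii) is the technical bottleneck; everything else is arithmetic with the binary expansion.
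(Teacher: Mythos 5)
Your plan founders exactly at the step you call the technical bottleneck, and the failure is not technical but structural. For any $x_0\notin\mathcal{A}$ (your block-built point is irrational, so this applies), let $[\alpha_n,\beta_n]$ be the interval of $A_n$ containing $x_0$, so $\beta_n-\alpha_n=2^{-(n-1)}$; since $h_n$ interpolates $h$ on $A_n$ (Theorem~\ref{theor:10}), we have
\begin{equation*}
h_n'(x_0)=\frac{h(\beta_n)-h(\alpha_n)}{\beta_n-\alpha_n}
=\frac{\beta_n-x_0}{\beta_n-\alpha_n}\cdot\frac{h(\beta_n)-h(x_0)}{\beta_n-x_0}
+\frac{x_0-\alpha_n}{\beta_n-\alpha_n}\cdot\frac{h(x_0)-h(\alpha_n)}{x_0-\alpha_n},
\end{equation*}
a convex combination of two difference quotients of $h$ at $x_0$ evaluated at points within $2^{-(n-1)}$ of $x_0$. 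Hence if $h'(x_0)=0$ then both quotients tend to $0$ and $h_n'(x_0)\to 0$; conversely, if $h_n'(x_0)\to\infty$ along your block-end subsequence, then along that subsequence at least one one-sided difference quotient at scale $2^{-(n-1)}$ is $\geq h_n'(x_0)$, so the derivative at $x_0$ cannot equal $0$ (indeed cannot exist finitely). The two subsequences you hope to separate in step (ii) --- difference quotients controlled by block beginnings versus interpolant slopes at block ends --- are not separable: at every scale the interpolant slope is sandwiched between the two one-sided difference quotients at that same scale, and no choice of block lengths $a_k,b_k$ evades this. This is precisely the implication (finite derivative $\Rightarrow$ $h_n'$ converges to it) that the paper itself establishes and uses in the proof of Theorem~\ref{theor:13}; your steps (i), (iii), (iv) are correct arithmetic with Theorem~\ref{lema:32}, but step (ii) is impossible, not merely delicate.

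Your route is also entirely different from the paper's: the paper gives a one-line reduction, applying Theorems~\ref{theor:13} and~\ref{theor:12} with $f_{1-v}$ in place of $f_v$, i.e.\ it exploits the symmetry $v\leftrightarrow 1-v$ and the behaviour at points of $\mathcal{A}$, where Theorem~\ref{theor:12} already makes $\lim_n h_n'$ equal to $0$ or $\infty$; no bespoke irrational point is constructed. Be aware, though, that the convex-combination identity above shows that no point can satisfy $h'(x_0)=0$ together with conditions 1 or 2 for the interpolants $h_n$ of $h$; the statement is only tenable after adjusting the roles of $0$ and $\infty$ (compare the swapped cases in Lemma~\ref{lema:33} and Theorem~\ref{theor:15} against Theorem~\ref{theor:12}), for instance as the assertion that existence of $\lim_n h_n'(x_0)$ does not force existence of $h'(x_0)$. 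If you want an argument in the spirit of the paper, aim at that reformulation via the $v\leftrightarrow 1-v$ substitution at points of $\mathcal{A}$, rather than trying to force $h'(x_0)=0$ at a point where $h_n'(x_0)$ misbehaves.
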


\begin{proof}Proposition follows from
Theorems~\ref{theor:13} and~\ref{theor:12} if consider $f_{1-v}$
instead of $f_v$.
\end{proof}

The following observation follows from Theorems~\ref{theor:13}
and~\ref{theor:12}. Let $\lambda$ be the Lebesgue measure on the
interval $[0,\, 1]$. Denote by $\mathcal{A}^0$ the set, where the
derivative of $h$ equals to $0$ and denote by $\mathcal{B}$ the
set, where the derivative of $h$ equals to infinity. Then
$\lambda(\mathcal{A}^0) =1$, $\lambda(\mathcal{B}^0) =0$ and $h$
is non-differentiable on $[0,\, 1]\backslash (\mathcal{A}^0\cup
\mathcal{B}^0)$. It is evident, that the derivative of the inverse
function $h^{-1}$ equals $\infty$ on $h(\mathcal{A}^0)$ and this
derivative equals $0$ on $h(\mathcal{B}^0)$. Now, it follows from
Theorems~\ref{theor:13} and~\ref{theor:12} that
$\lambda(h(\mathcal{A}^0)) =0$ and $\lambda(h(\mathcal{B}^0)) =1$.
These properties of $h$ show how complicated it is.

\newpage

\section{Constructing of the conjugation via electronic
tables}\label{subs-excell}

This section is devoted to the explicit formulas for the
conjugation $h$ of maps

\begin{equation}\label{eq:65} f(x) =
\left\{\begin{array}{ll}
2x,& x< 1/2;\\
2-2x,& x\geqslant 1/2
\end{array}\right.
\end{equation} and
\begin{equation} \label{eq:66}
f_v(x) = \left\{\begin{array}{ll} \frac{x}{v},& x\leqslant v;\\
 \frac{1-x}{1-v},&
x>v.
\end{array}\right.
\end{equation}
In other words we will find the explicit formulas for the
homeomorphism $h: [0,\, 1]\rightarrow [0,\, 1]$, which is the
unique solution of the functional equation
\begin{equation} \label{eq:78} h(f) = f_v(h).
\end{equation}

The existence and uniqueness of this $h$ is proved in
Theorem~\ref{theor:homeom-jed}

In fact, Proposition~\ref{lema:25} contains the way of
constructing of the conjugation $h:\, [0,\, 1]\rightarrow [0,\,
1]$ of maps $f$ and $f_v$ at points of the set
$$ A_n = \left\{ 0,\, \frac{1}{2^{n-1}},\ldots\,
\frac{2^{n-1}-1}{2^{n-1}},\, 1\right\}
$$ as a limit of piecewise linear maps $h_n:\, [0,\, 1]\rightarrow [0,\,
1]$, whose breaking points belong to the set $A_n$ and such that
$h(A_n) = B_n$, where $$ B_n = f_v^{-n}(0).
$$

Under the electronic table we mean the table, whose lines are
numbered by arabic numbers (1,\, 2,\, ...), and columns are
numbered by letters (``A'',\, ``B'',\, ``C''\, ...), and the
following changes of this table are allowed.

1. Put some number into some cell;

2. Put the formula into some cell. The formula may contain symbols
of arithmetical operations the most known mathematical functions
and names of another cells. Also formula may contain some specific
functions, which are specially deals with electronic tables (we
will mention these functions below);

3. To copy the formula from the cell into any fixed number of
cells in vertical and (or) horizontal direction. In these case the
general agreement on sell copying holds (we will explain this
agreement just below).

Now we will explain a bit these rules. Since lines and columns of
the table are numbered as they are, then for instance, the left
top cell is ``A1'' and it is above ``A2''. The cells to the right
from ``A1'' is ``B1'' and so on.

If a cells contains a formula, then this cell has two
``parameters'': the formula itself and the value of the formula.
The simplest example could be the formula ``A1+1'', which is put
into any cell except ``A1'' (for instance ``A2''). The value of
the cell would be a number, which is 1 more then ``A1''. It is
important (and it is the one of the main deals of the use of
electronic tables), that if the cell ``A1'' (in our case) will be
changed, then the value of our cell will be changed immediately
and automatically. This rule is transitive. It means, that (for
instance in our case) that changing of ``A1'' leads to changing
not only the value of ``A2'', but also all the cell, which contain
formulas, which use ``A2'', because ``A2'' would be changed
because of the change of ``A1''.

the agreement about copying the formulas is the following. If we
copy the formula $k$ cells down, then all the references to cells
(i.e. names of cells) will be changer by increasing the number of
lines of cells by $k$. The similar rule is if the cell is copying
up of horizontally. Clearly, in the case of copying horizontally,
the thing which is changes is numbers of columns of references to
cells in the formula. These rules can be not applied to the
reference, which contain the symbol ``\$'' before the name of a
column and (or) the number of line. For instance, the formula
``A1+1'' will be transformed to ``A2+1'' when copying one cell
down and it will be transformed to ``B1+1'' when copying one cell
wight. In the same time, the formula ``A\$1+1'' becomes ``B\$1+1''
when copying right and does not transformed when copying
vertically. The formula ``\$A\$1+1'' does not transforms under
copying at all.

Let is return to finding the values of the homeomorphism $h$. For
an arbitrary natural $n>1$ we will construct the table of values
of $h$ at all rational points of the form $\frac{k}{2^n}$, where
$0\leq k\leq 2^n$.

For the convenience of the further use, we will reformulate
Proposition~\ref{lema:25} as follows.

\begin{proposition}\label{prop:1}Let $h:\,
[0,\, 1]\rightarrow [0,\, 1]$ be the topological conjugation of
the maps $f$ and $f_v$, which are defined by
formulas~(\ref{eq:65}) and~(\ref{eq:66}), i.e. $h$ is the solution
of the functional equation~(\ref{eq:78}). Then the following
implications hold.

1. If $x=0$, then $h(x) =0$.

2. If $x=1$, then $h(x)=1$.

3. If $x\leq \frac{1}{2}$, then $$h(x) = v\cdot h(2x)$$ and the
value $h(2x)$ appears to be found earlier.

4. If $x>\frac{1}{2}$, then $$h(x) = 1 - (1-v)\cdot h(-2x+2)$$ and
the value $h(-2x+2)$ appears to be found earlier.
\end{proposition}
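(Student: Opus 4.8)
Proposition~\ref{prop:1} is essentially a translation of Proposition~\ref{lema:25} into the explicit setting where $g = f_v$ is given by formula~(\ref{eq:66}), so the plan is to derive items 1–4 as direct consequences of the results already established.

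First I would handle items 1 and 2. By Lemma~\ref{lema:h(0)(1)} (whose hypotheses are satisfied since $h$ is a homeomorphism conjugating $f$ and $f_v$), the conjugacy $h$ is increasing with $h(0)=0$ and $h(1)=1$; this is exactly items 1 and 2. Here I should double-check that Lemma~\ref{lema:h(0)(1)} applies: its proof used only that $f(0)=0$ and that $0$ is the unique fixed point of $g$ among $\{0,1\}$, and $f_v(0)=0$, $f_v(1)=0\neq 1$, so indeed $h(0)=0$.

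Next, for items 3 and 4, I would invoke Proposition~\ref{lema:25} with $g = f_v$, so that $g_l(x) = x/v$ and $g_r(x) = (1-x)/(1-v)$, giving $g_l^{-1}(y) = vy$ and $g_r^{-1}(y) = 1-(1-v)y$. Take any $x^*\in[0,1]$ at which $h$ is conditionally found, say $h(x^*)=y^*$. Given $x\leq 1/2$, set $x^* := f(x) = 2x$; then $f(x)=x^*$ with $x\leq 1/2$, so part~1 of Proposition~\ref{lema:25} yields $h(x) = g_l^{-1}(h(x^*)) = v\cdot h(2x)$, which is item~3. Similarly, for $x>1/2$, set $x^* := f(x) = 2-2x = -2x+2$; then $f(x)=x^*$ with $x>1/2$, so part~2 of Proposition~\ref{lema:25} gives $h(x) = g_r^{-1}(h(x^*)) = 1-(1-v)h(-2x+2)$, which is item~4. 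The phrase ``the value $h(2x)$ (resp. $h(-2x+2)$) appears to be found earlier'' is justified by noting that $2x$ and $-2x+2$ lie in $A_{n-1}$ whenever $x\in A_n$ (Proposition~\ref{lema:An} and the inductive description of the sets $A_n$ in its proof), so the recursive scheme is well-founded starting from $A_2 = \{0,1/2,1\}$ where $h$ is pinned down by Lemmas~\ref{lema:h(0)(1)} and~\ref{lema:30}.

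There is no real obstacle here; the only point requiring a little care is the bookkeeping of which branch of $f$ one inverts — one must make sure that when $x\le 1/2$ the pre-image relation $f(x)=2x$ is the one being used (and symmetrically for $x>1/2$), and that the corresponding branch $g_l$ or $g_r$ of $f_v$ is inverted, since $h$ increasing forces $h(x)\le v$ exactly when $x\le 1/2$ by Lemma~\ref{lema:30}. Once that correspondence is fixed, items 3 and 4 are immediate restatements of Proposition~\ref{lema:25}, so the proof is short.
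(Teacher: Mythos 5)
Your proof is correct and matches the paper's intent: the paper states Proposition~\ref{prop:1} as a direct reformulation of Proposition~\ref{lema:25} specialized to $g=f_v$, with items 1--2 coming from Lemma~\ref{lema:h(0)(1)} and items 3--4 from inverting the branches $g_l^{-1}(y)=vy$ and $g_r^{-1}(y)=1-(1-v)y$, exactly as you do. Your extra remark that $f(x)\in A_{n-1}$ for $x\in A_n$ correctly justifies the ``found earlier'' clauses, so nothing is missing.
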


Put $n$ into ``C1''. We will assume that ``C1'' contains an
integer greater then 1 and will not check the correctness of the
data from ``C1''. Also put $v$ into ``D1'' with the same remark,
i.e. we will not check that ``D1'' contains a number and that this
number is between 0 and 1. The deal of the remark of the
correctness of data is that we will not check the correctness of
numbers from ``C1'' and ``D1'' in another formula. Further
formulas may appear (and will appear) incorrect in the case if one
put ``bad'' data into ``C1'' and ``D1''.

Construct at first the formulas for obtaining the points of the
set $A_{n+1}$ in the array ``A1:\, A$2^n$'', i.e. obtain the
increasing rational numbers from $[0,\, 1]$ with denominator $2^n$
in ``A1:\, A$2^n$''.

Put 0 into ``A1'' and the formula ``A1+1/(2$\widehat{\,}$C\$1)''
into ``A2''. Here the symbol~~$\widehat{\,}$~, naturally means
that powering. After copying the cell ``A2'' obtain the necessary
set in the first columns of the table.

Make some additional remarks for constructing formulas for ``Bi''.
The cell ``Ai'' contains the value $\frac{i-1}{2^n}$. This means
that the equality $$ i = Ai\cdot 2^n+1$$ holds.

If $Ai<\frac{1}{2}$, then the line of the table, which contains
the value $h(2x)$ is of the number $2i-1$, or, in terms of $Ai$,
its number is $$ 2i-1 = 2\cdot Ai\cdot 2^{n}+1.
$$

Whence, if $A_i<\frac{1}{2}$, then the value $Bi$ should be found
by formula
\begin{center}v$\cdot$ INDIRECT( CONCATENATE("B";\ 2$\cdot$
A1$\cdot$ $2^{n}$+1)),\end{center} where the function CONCATENATE
constructs the text line with the array of its arguments and the
function INDIRECT returns the value of the cell, whose name if its
the unique argument in the format $Ax$, where $A$ is a Latin
letter (a number of a column) and $x$ is a natural number (the
number of a line).

If $Ai\geq \frac{1}{2}$, then the line of the table, which
contains the value $h(-2x+2)$ has the number $2i-1$ or, in terms
of $Ai$, its number
$$ (2-2(Ai))\cdot 2^n +1.
$$

Whence the values of $h$ on $A_{n+1}$ can be found with the
formula, which is introduced in the following theorem.

\begin{theorem}\label{theor:16}The value
of the conjugation $h$ of maps $f$ and $f_v$, which are determined
by formulas~(\ref{eq:65}) and~(\ref{eq:66}), can be determined in
the set $A_{n+1}$ with the following way.

Put $n$ into ``C1''.

Put $v$ into ``D1''.

Put 0 into ``A1'' and the formula ``A1+1/(2$\widehat{\,}$C\$1)''
into ``A2''.

Put the following formula into ``B1''.

\noindent IF(A1=0;\ 0;\ IF(A1=1,\, 1; IF(A1<=0,5;\

D\$1*INDIRECT( CONCATENATE("B";\ 2*A1*2$\widehat{\,}$ C\$1+1));

1-(1-D\$1)*INDIRECT(

CONCATENATE("B";\ -2*A1*2$\widehat{\,}\,$C\$1 +1 +2
$\widehat{\,}\,$(1+C\$1)))))).

Copy the formulas in columns $A$ and $B$ down will till the line
number $2^{n}+1$.
\end{theorem}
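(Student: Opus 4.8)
The plan is to observe that the theorem only claims that the spreadsheet recipe faithfully implements the recursion of Proposition~\ref{prop:1}, so the proof reduces to three checks: that column $A$ ends up holding the points of $A_{n+1}$, that the formula in column $B$ reproduces on those points the four clauses of Proposition~\ref{prop:1}, and that the resulting system of cell references is well founded (no cell depending on itself, directly or indirectly) and never points outside the tabulated range.

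First I would treat column $A$. The copy rule gives $A1=0$ and $A(i{+}1)=Ai+2^{-n}$, where $n$ is the content of C\$1; hence by induction cell $Ai$ holds $\frac{i-1}{2^{n}}$ for $1\leq i\leq 2^{n}+1$, and by Proposition~\ref{lema:An} (applied with $n+1$ in place of $n$) these are exactly the elements of $A_{n+1}$ listed increasingly. Next, writing $x$ for the value of $Ai$, the nested IF placed in $B1$ (after copying into $Bi$) returns $0$ when $x=0$, returns $1$ when $x=1$, returns $D\$1\cdot(\text{value of }Bj)$ with $j=2x\cdot 2^{n}+1$ when $0<x\leq 1/2$, and returns $1-(1-D\$1)\cdot(\text{value of }Bj')$ with $j'=-2x\cdot 2^{n}+1+2^{\,n+1}$ when $x>1/2$. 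Substituting $x=\frac{i-1}{2^{n}}$ yields $j=2i-1$ and $j'=2^{\,n+1}-2i+3$; from $i-1\leq 2^{n-1}$ in the first case and $i-1>2^{n-1}$ in the second one checks $1\leq j\leq 2^{n}+1$ and $1\leq j'\leq 2^{n}+1$, and moreover $Aj=2x$ while $Aj'=2-2x$. So on $A_{n+1}$ the formula evaluates precisely the right-hand sides of cases 1--4 of Proposition~\ref{prop:1}.

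The one point that is not pure bookkeeping is well-foundedness of this mutual recursion. For $x\in A_{n+1}$ let $\nu(x)$ be the exponent $m$ with $x=p/2^{m}$ in lowest terms, and put $\nu(0)=\nu(1)=0$. If $0<x<1$ then $\nu(x)\geq 1$ and, since $p$ and $2^{m}-p$ are odd, both $2x$ and $2-2x$ have $\nu$ equal to $\nu(x)-1$; thus along any chain of cell references the value of $\nu$ strictly drops until it reaches $0$, where the cell is a base case with value $0$ or $1$. Hence the dependency digraph of $B1,\dots,B(2^{n}+1)$ is acyclic, the spreadsheet assigns each cell a definite number, and one proves by induction on $\nu(x)$ that the content of $Bi$ equals $h\!\left(\frac{i-1}{2^{n}}\right)$: the base case $\nu=0$ is $h(0)=0$ and $h(1)=1$, and the inductive step is exactly one of the recursive clauses of Proposition~\ref{prop:1} together with the induction hypothesis applied to $Bj$ or $Bj'$ (whose argument has strictly smaller $\nu$). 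Since the conjugation $h$ exists and is unique by Theorem~\ref{theor:homeom-jed}, this identifies the tabulated numbers with the values of $h$ on $A_{n+1}$. I expect the only real difficulty to be care with the index arithmetic inside the CONCATENATE strings and the verification that references stay within $1,\dots,2^{n}+1$; the conceptual content is already supplied by Propositions~\ref{lema:An} and~\ref{prop:1}.
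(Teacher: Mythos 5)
Your proposal is correct and follows essentially the same route as the paper: the paper likewise shows that column $A$ lists $A_{n+1}$ as $\frac{i-1}{2^n}$, uses the index relation $i = Ai\cdot 2^n+1$ to translate the four clauses of Proposition~\ref{prop:1} into the cell references $2i-1$ (for $Ai\leq 1/2$) and $(2-2\,Ai)\cdot 2^n+1$ (for $Ai>1/2$), and concludes that the formula tabulates $h$ on $A_{n+1}$. Your explicit well-foundedness argument via the dyadic valuation $\nu$ (and the range checks on $j$, $j'$) merely makes rigorous what the paper leaves implicit in the phrase ``the value appears to be found earlier,'' so it is a welcome refinement rather than a different proof.
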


The Figure~\ref{fig:6} contains the values of conjugation at $A_7$
for $v= 0.55,\, 0.6,\, 0.7,\, 0.75,\, 0.8,\, 0.85,\, 0.9$ and
$0.95$.

\begin{figure}[htbp]
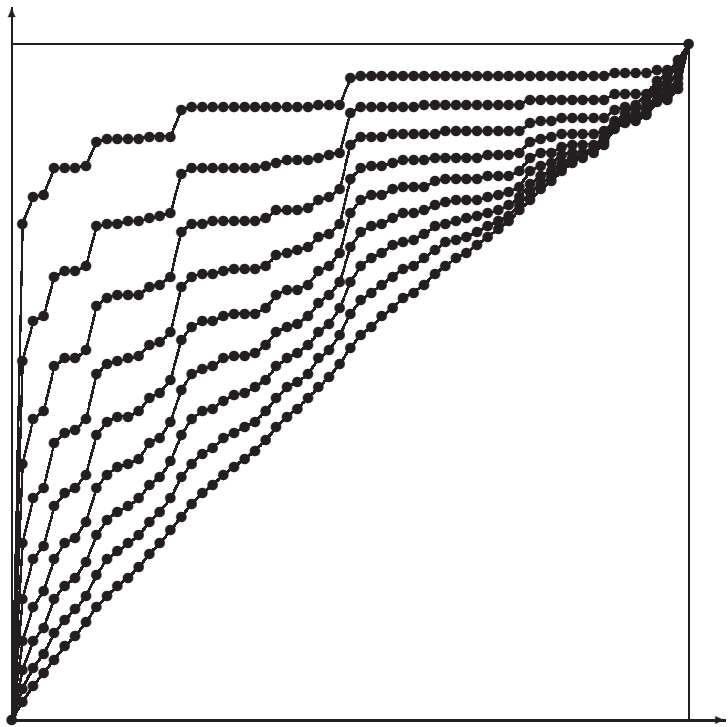

\begin{minipage}[h]{0.9\linewidth}
\begin{center}% [inline block 1: 1 envs, 34513 chars -> data_tex | \begin{picture}(270,270) \put(256,0){\line(0,1){256}} \put(0,256){\line(1,0){256}}...]
\end{center}
\end{minipage}
\caption{Conjugation at $A_7$ for different $v$}\label{fig:6}
\end{figure}

Smaller $v$ correspond to graphs, which are closer to $y=x$. All
these graphs are obtained the formula above.

Remind that these graphs are approximations of the conjugation,
i.e. the monotone and continuous maps.

It seams from the picture that maps, which correspond to
$v=0.55,\, 0.6,\, 0.65$ are such as necessary in the time, when it
is ``hard to believe'' in the continuality of the maps, which
corresponds to $v=0.95$, because it seams to be ``evidently
discontinuous''. Nevertheless, it is also obvious that $h$ is
continuously dependent on $v$. In any was we have proved in
Theorem~\ref{theor:homeom-jed} that $h$ is continuous for every
$v\in (0,\, 1)$.

\newpage

\section{Functional equations which describe the topological
conjugation}\label{sect-funct-rivn}

We will try in this section to apply the methods of solving of
linear functional equations to finding the explicit formulas for
the conjugation of

\begin{equation} f(x) = \left\{\begin{array}{ll}
\label{eq:61}
2x,& x< 1/2;\\
2-2x,& x\geqslant 1/2
\end{array}\right.
\end{equation}and\begin{equation}
\label{eq:62}
f_v(x) = \left\{\begin{array}{ll} \frac{x}{v},& x\leqslant v;\\
 \frac{1-x}{1-v},&
x>v.
\end{array}\right.
\end{equation}

Let $h:\, [0,\, 1]\rightarrow [0,\, 1]$ be a homeomorphism such
that the diagram
\begin{equation} \label{eq:63}
\begin{CD}
[0,\, 1] @>f >> & [0,\, 1]\\
@V_{h} VV& @VV_{h}V\\
[0,\, 1] @>f_v>> & [0,\, 1]
\end{CD}
\end{equation} is commutative.

We obtain in this section the system of linear functional
equations, whose solution of the necessary homeomorphism $h$.

\begin{lemma}\label{lema:08}If a
homeomorphism $h:\, [0,\, 1]\rightarrow [0,\, 1]$ satisfies the
commutative diagram~(\ref{eq:63}), then it satisfies the
functional equation
\begin{equation}\label{eq:syst:line}
\left\{
\begin{array}{llllll}  h(2x) = \displaystyle{\frac{1}{v}\, h(x)} &
x\leq 1/2 & (\theequation a)
\\
h(2-2x) = \displaystyle{\frac{1-h(x)}{1-v}} & x>1/2 &
(\theequation b)\\
\end{array}\right.
\end{equation}\end{lemma}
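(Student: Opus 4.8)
The plan is to show that the commutativity of diagram~(\ref{eq:63}), which is equivalent to the functional equation $h(f(x)) = f_v(h(x))$, can be rewritten in the two-branch form~(\ref{eq:syst:line}) by carefully substituting the piecewise formulas for $f$ and $f_v$ and using the known fact (Lemma~\ref{lema:h(0)(1)} / Theorem~\ref{theor:homeom-jed}) that $h$ is increasing with $h(0)=0$, $h(1)=1$, $h(1/2)=v$. The key observation is that for $x \le 1/2$ we have $f(x) = 2x$, so the equation $h(f(x)) = f_v(h(x))$ becomes $h(2x) = f_v(h(x))$; and since $h$ is increasing, $x \le 1/2$ gives $h(x) \le h(1/2) = v$, so $f_v(h(x)) = h(x)/v$ by the first branch of~(\ref{eq:62}). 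This immediately yields~(\ref{eq:syst:line}a). Symmetrically, for $x > 1/2$ we have $f(x) = 2 - 2x$, so the equation reads $h(2-2x) = f_v(h(x))$, and since $x > 1/2$ gives $h(x) > v$, the second branch of~(\ref{eq:62}) applies: $f_v(h(x)) = (1-h(x))/(1-v)$, giving~(\ref{eq:syst:line}b).

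First I would state precisely that the commutativity of~(\ref{eq:63}) is equivalent to $h(f(x)) = f_v(h(x))$ holding for all $x \in [0,1]$ (this is just the definition of a commutative square with both vertical arrows equal to $h$). Then I would invoke the fact, already available from Theorem~\ref{theor:homeom-jed} (or directly from Lemma~\ref{lema:h(0)(1)} and Lemma~\ref{lema:30}), that any homeomorphic solution $h$ is increasing and satisfies $h(1/2) = v$. Next I would split into the two cases $x \le 1/2$ and $x > 1/2$, substitute the appropriate branch of $f$ into the left side and the appropriate branch of $f_v$ into the right side, and read off the two equations. I would handle the boundary point $x = 1/2$ with care: there $f(1/2) = 1$ and one checks $h(1) = 1 = (1-h(1/2))/(1-v) = (1-v)/(1-v)$, consistent with branch (b), so the case division is harmless (or one can note that~(\ref{eq:syst:line}a) at $x=1/2$ reads $h(1) = h(1/2)/v = v/v = 1$, also consistent).

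The main obstacle, such as it is, is purely bookkeeping: making sure the monotonicity argument that converts ``$x$ on the left of $1/2$'' into ``$h(x)$ on the left of $v$'' is invoked correctly, so that the correct branch of $f_v$ is selected, and making sure the endpoint $1/2$ does not produce an inconsistency. Since $h$ is a homeomorphism of $[0,1]$ fixing $0$ and $1$ it is strictly increasing, hence $h\big((0,1/2)\big) = (0,v)$ and $h\big((1/2,1)\big) = (v,1)$, which is exactly what is needed; there is no real difficulty here, only the need to write the substitutions cleanly. I would therefore present the argument as a short direct verification: assume~(\ref{eq:63}) commutes, deduce $h$ is increasing with $h(1/2)=v$, then compute $h(f(x))$ and $f_v(h(x))$ branch by branch and observe they coincide precisely when~(\ref{eq:syst:line}) holds.
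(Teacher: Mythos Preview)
Your proposal is correct and follows essentially the same approach as the paper's proof: invoke the increasing property of $h$ and the value $h(1/2)=v$ (from Theorem~\ref{theor:homeom-jed} / Lemmas~\ref{lema:h(0)(1)} and~\ref{lema:30}), then split the equation $h(f(x))=f_v(h(x))$ into the two branches according to whether $x\le 1/2$ or $x>1/2$, using monotonicity to select the correct branch of $f_v$. Your treatment is slightly more explicit than the paper's about the endpoint $x=1/2$, but the argument is the same.
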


\begin{proof}By Theorem~\ref{theor:homeom-jed} the
homeomorphism $h$ increase, i.e. $h(0)= 0$ and $h(1) = 1$.

Plug $x=1/2$ into the commutative diagram~(\ref{eq:63}) and obtain
$$
\begin{CD}
1/2 @>f >> & 1\\
@V_{h} VV& @VV_{h}V\\
h(1/2) @>f_v>>& 1,
\end{CD}$$
whence $h(1/2) = v$. Since $h$ is monotone for $x\in [0,\, 1/2]$
then it follows from the commutative diagram~(\ref{eq:63}) that
$$
\begin{CD}
[0,\, 1/2] @>f >> & [0,\, 1]\\
@V_{h} VV& @VV_{h}V\\
[0,\, v] @>f_v>>& [0,\, 1].
\end{CD}$$ Because of the form of equations of~(\ref{eq:61})
and~(\ref{eq:62}) the commutativity of the diagram is equivalent
to the functional equation~(\ref{eq:syst:line}a).

Functional equation~(\ref{eq:syst:line}b) is obtained in the same
manner if plug $x\in [1/2,\, 1]$ into the commutative
diagram~(\ref{eq:63}) and notice that $h(1/2) = v$.
\end{proof}

Section~\ref{sect-funct-rivn} is devoted to the solution of the
system of linear functional equations~(\ref{eq:syst:line}).

\subsection{The uniqueness of the solution of the system of
functional equations}\label{subcst-3-1}

We will prove in this section that the continuous solution of the
functional equation~(\ref{eq:syst:line}) is unique. It would
follow from Lemma~\ref{lema:08} and Theorem~\ref{theor:homeom-jed}
that this solution will be the conjugation of $f$ and $f_v$.

\begin{lemma}If the maps $h,\, [0,\, 1]\rightarrow
[0,\, 1]$ satisfies~(\ref{eq:syst:line}), then $h(0) =0$.
\end{lemma}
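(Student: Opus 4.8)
The plan is to read off the value $h(0)$ directly from the first equation of the system~(\ref{eq:syst:line}), without invoking continuity or any of the deeper structure of $h$. The key observation is that the branch~(\ref{eq:syst:line}a), namely $h(2x)=\tfrac1v\,h(x)$, is asserted for \emph{all} $x\le 1/2$, so in particular it is legitimate to evaluate it at the endpoint $x=0$.

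First I would substitute $x=0$ into~(\ref{eq:syst:line}a). Since $2\cdot 0=0$, this collapses to the scalar identity
\[
h(0)=\frac{1}{v}\,h(0),
\]
that is, $h(0)\bigl(1-\tfrac1v\bigr)=0$. Because $v\in(0,1)$, the factor $1-\tfrac1v=\tfrac{v-1}{v}$ is strictly negative and hence nonzero, so dividing through by it forces $h(0)=0$.

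There is essentially no obstacle to overcome: the only point requiring care is to confirm that $x=0$ lies in the region where~(\ref{eq:syst:line}a) applies, which it does since the side condition there is the closed inequality $x\le 1/2$. (As a remark, the symmetric computation gives $h(1)=1$: plugging $x=1$ into~(\ref{eq:syst:line}b) yields $h(0)=\tfrac{1-h(1)}{1-v}$, and combining this with $h(0)=0$ gives $h(1)=1$; this is not needed for the present statement but will be used later.) This lemma, together with Lemma~\ref{lema:08} and Theorem~\ref{theor:homeom-jed}, is what will drive the uniqueness argument for the continuous solution of~(\ref{eq:syst:line}) in the remainder of Section~\ref{subcst-3-1}.
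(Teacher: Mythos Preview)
Your proof is correct and follows exactly the same approach as the paper: substitute $x=0$ into~(\ref{eq:syst:line}a) to obtain $h(0)=\tfrac{1}{v}h(0)$, and conclude $h(0)=0$ since $v\neq 1$. The paper's proof is a single line stating this substitution, while you spell out the algebra and the check that $x=0$ falls under the $x\le 1/2$ branch, but the content is identical.
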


\begin{proof}Plug the value $x=0$
into~(\ref{eq:syst:line}a) and obtain $h(0) =0$.\end{proof}

\begin{lemma}\label{lema:01}If $h,\, [0,\,
1]\rightarrow [0,\, 1]$, which satisfies~(\ref{eq:syst:line}),
then $h\left(\frac{1}{2}\right) =v$.
\end{lemma}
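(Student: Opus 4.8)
The statement to prove is Lemma~\ref{lema:01}: if $h\colon[0,\,1]\to[0,\,1]$ satisfies the system~(\ref{eq:syst:line}), then $h(1/2)=v$. This is the companion to the preceding lemma $h(0)=0$, and the plan is to extract $h(1/2)$ directly from the two equations of the system by feeding in a cleverly chosen value of $x$, rather than appealing to the commutative diagram (which was the route taken in Lemma~\ref{lema:30} and in the proof of Lemma~\ref{lema:08}); here we want the argument to rest only on~(\ref{eq:syst:line}) itself.

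First I would plug $x=1/2$ into the first equation~(\ref{eq:syst:line}a), which is valid since $x\leq 1/2$. This gives $h(1)=\frac{1}{v}\,h(1/2)$. So the whole lemma reduces to showing $h(1)=1$: once that is in hand, $h(1/2)=v\,h(1)=v$. Thus the real content is establishing $h(1)=1$ from the system. To get this, I would plug a boundary-type value into~(\ref{eq:syst:line}b): taking $x\to 1^+$ is not allowed, but I can instead use the equation at a point whose image under $x\mapsto 2-2x$ is $0$, or exploit that~(\ref{eq:syst:line}b) at any $x>1/2$ ties $h$ at a point of $[0,\,1)$ to $h(x)$. A clean choice: in~(\ref{eq:syst:line}b) the left side is $h(2-2x)$; as $x$ ranges over $(1/2,\,1]$, $2-2x$ ranges over $[0,\,1)$, and at $x=1$ (which is in the range $x>1/2$) we get $h(0)=\frac{1-h(1)}{1-v}$. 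Since $h(0)=0$ by the previous lemma, this yields $1-h(1)=0$, i.e. $h(1)=1$.

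So the key steps, in order, are: (1) invoke the already-proved $h(0)=0$; (2) substitute $x=1$ into equation~(\ref{eq:syst:line}b) to obtain $0=h(0)=\frac{1-h(1)}{1-v}$, hence $h(1)=1$ (using $v\neq 1$); (3) substitute $x=1/2$ into equation~(\ref{eq:syst:line}a) to obtain $h(1)=\frac{1}{v}h(1/2)$, hence $h(1/2)=v\,h(1)=v$ (using $v\neq 0$). The only subtlety is making sure the substitutions fall inside the declared domains of the two branches ($x\leq 1/2$ for~(\ref{eq:syst:line}a) at $x=1/2$; $x>1/2$ for~(\ref{eq:syst:line}b) at $x=1$), and that $v\in(0,\,1)$ so the divisions by $v$ and $1-v$ are legitimate — both of which hold by hypothesis. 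There is no real obstacle here; the lemma is an immediate two-line consequence of the system once $h(0)=0$ is available, and I would simply present the chain of substitutions. If one prefers not to use $h(0)=0$, an alternative is to note that a continuous solution maps $[0,\,1]$ onto $[0,\,1]$ and is forced to be monotone, so $\{h(0),h(1)\}=\{0,1\}$, and then~(\ref{eq:syst:line}a) at $x=1/2$ rules out $h(1)=0$; but the shorter route via the preceding lemma is the one I would write.
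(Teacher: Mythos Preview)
Your proof is correct. The paper takes a slightly different route: it substitutes $x=1/2$ into \emph{both} equations of~(\ref{eq:syst:line}) simultaneously, obtaining two expressions for $h(1)$, namely $\frac{1}{v}h(1/2)$ from~(\ref{eq:syst:line}a) and $\frac{1-h(1/2)}{1-v}$ from~(\ref{eq:syst:line}b), and then equates them to solve $h(1/2)=v$ directly, without invoking $h(0)=0$ or computing $h(1)$ first. Your approach instead uses the preceding lemma $h(0)=0$ together with the substitution $x=1$ in~(\ref{eq:syst:line}b) to get $h(1)=1$, and only then uses~(\ref{eq:syst:line}a) at $x=1/2$. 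Your version has the minor advantage of respecting the strict inequality $x>1/2$ in the stated domain of~(\ref{eq:syst:line}b) (the paper tacitly treats the boundary point $x=1/2$ as admissible there, which is harmless by continuity but not literally what is written); the paper's version is a touch more self-contained in that it does not rely on the previous lemma.
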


\begin{proof}Plug $x=1/2$ into~(\ref{eq:syst:line}a)
and~(\ref{eq:syst:line}b), obtaining
$$ h(1) = \displaystyle{\frac{1}{v}\,
h(1/2)} = \displaystyle{\frac{1-h(1/2)}{1-v}},$$ whence $$h(1/2) =
v.
$$
\end{proof}

\begin{notation}We will say that the value of $h$ at
a point $x$ is \textbf{unambiguously defined} if there exists
$y\in [0,\, 1]$ such that for every continuous solution
$\widetilde{h}$ of the system of functional
equations~(\ref{eq:syst:line}) the equality $\widetilde{h}(x)=y$
holds.

For instance, the maps $h$ is unambiguously defined at points
$0,\, 1$ and $\frac{1}{2}$.
\end{notation}

\begin{lemma}\label{lema:02}If the maps $h$ is
unambiguously defined at a point $\widetilde{x}$ then it is
unambiguously defined at the integer trajectory of
$\widetilde{x}$.
\end{lemma}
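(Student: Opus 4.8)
The plan is to show that the property of being unambiguously defined propagates one step forward under $f$ and one step backward along each $f$-preimage, and then to close this up by an induction over the integer trajectory. Throughout I would work directly with the system of functional equations~(\ref{eq:syst:line}), whose continuous solutions are exactly the maps under consideration (Lemma~\ref{lema:08}, Theorem~\ref{theor:homeom-jed}); since every such solution satisfies the relations~(\ref{eq:syst:line}a) and~(\ref{eq:syst:line}b), any value forced by those relations is the same for all solutions.

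First I would record the two elementary propagation rules. Fix a point $\widetilde{x}$ at which every continuous solution $h$ of~(\ref{eq:syst:line}) takes the common value $y=h(\widetilde{x})$. For the forward step, split into $\widetilde{x}\leq 1/2$ and $\widetilde{x}>1/2$: if $\widetilde{x}\leq 1/2$ then $f(\widetilde{x})=2\widetilde{x}$ and~(\ref{eq:syst:line}a) at $x=\widetilde{x}$ gives $h(f(\widetilde{x}))=\tfrac{1}{v}\,y$; if $\widetilde{x}>1/2$ then $f(\widetilde{x})=2-2\widetilde{x}$ and~(\ref{eq:syst:line}b) at $x=\widetilde{x}$ gives $h(f(\widetilde{x}))=\tfrac{1-y}{1-v}$. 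Since $v\in(0,1)$ these right-hand sides are well defined and solution-independent, so $h$ is unambiguously defined at $f(\widetilde{x})$. For the backward step, list the (at most two) preimages of $\widetilde{x}$ under $f$: the point $x^{*}=\widetilde{x}/2$, which satisfies $x^{*}\leq 1/2$, so~(\ref{eq:syst:line}a) at $x=x^{*}$ yields $h(x^{*})=v\,y$; and the point $x^{*}=1-\widetilde{x}/2$, which satisfies $x^{*}>1/2$ whenever $\widetilde{x}<1$ (and equals $1/2$ when $\widetilde{x}=1$, already covered by the previous case), so~(\ref{eq:syst:line}b) at $x=x^{*}$ yields $h(x^{*})=1-(1-v)\,y$. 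Again these are solution-independent, so $h$ is unambiguously defined at every preimage of $\widetilde{x}$.

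Finally I would assemble the integer trajectory $T$ of $\widetilde{x}$ — the smallest set with $\widetilde{x}\in T$ and $x\in T\iff f(x)\in T$ — as $T=\bigcup_{n\geq 0}T_{n}$, where $T_{0}=\{\widetilde{x}\}$ and $T_{n+1}=T_{n}\cup f(T_{n})\cup f^{-1}(T_{n})$. By the forward and backward rules and induction on $n$, $h$ is unambiguously defined on each $T_{n}$, hence on all of $T$, which is the claim. The only delicate points are bookkeeping: verifying that the applicable branch of~(\ref{eq:syst:line}) is the right one at each point that arises (precisely the case split $x\leq 1/2$ versus $x>1/2$) and accounting for the boundary values $0,1/2,1$ and the non-injectivity of $f$; none of these is a genuine obstacle. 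This lemma will be the engine that, starting from $h(0)=0$, $h(1/2)=v$, $h(1)=1$ established in the preceding lemmas, propagates a unique value of $h$ over the dense set $\mathcal{A}$ and hence, by continuity, over all of $[0,1]$.
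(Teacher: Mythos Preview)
Your proof is correct and follows essentially the same approach as the paper: both establish the one-step forward propagation (case split on $\widetilde{x}\le 1/2$ versus $\widetilde{x}>1/2$, using~(\ref{eq:syst:line}a) and~(\ref{eq:syst:line}b) respectively) and the one-step backward propagation to each preimage, exactly as you do. Your version is a bit more explicit in closing up the argument with the induction over the sets $T_n$, whereas the paper leaves this closure implicit after showing the single-step rules.
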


\begin{proof}Let $\widetilde{x}$ be an arbitrary
point, where $h$ is unambiguously defined.

Show, that in this case the maps $h$ is unambiguously defined at
the point $f(\widetilde{x})$. Indeed, if $\widetilde{x}\leq
\frac{1}{2}$ then plugging of $x=\widetilde{x}$,
into~(\ref{eq:syst:line}a) we can find $h(2\widetilde{x}) =
\frac{1}{v}h(\widetilde{x})$ and this would mean that
$h(f\widetilde{x})$ is unambiguously defined.

If $\widetilde{x}> \frac{1}{2}$, then plugging of
$x=\widetilde{x}$ into~(\ref{eq:syst:line}b) we would find
$h(-2\widetilde{x} + 2) = \frac{1-h(\widetilde{x})}{1-v}$, which
means that $h(f\widetilde{x})$ is unambiguously defined.

Let $\widetilde{\widetilde{x}}$ be some pre image of
$\widetilde{x}$. In this case the fact that $h$ is unambiguously
defined at $\widetilde{x}$ can be proved in the same manner either
by plugging of $x = 2\widetilde{\widetilde{x}}$ into the equation
(\ref{eq:syst:line}a), or by plugging $x =
\frac{2-\widetilde{\widetilde{x}}}{2}$ into~(\ref{eq:syst:line}b).
\end{proof}

These results can be generalized in the following theorem.

\begin{theorem}\label{theor:17}The
system of functional equations~(\ref{eq:syst:line}) has a unique
continuous solution. This solution is the conjugation of $f$ and
$f_v$.
\end{theorem}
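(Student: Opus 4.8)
The plan is to establish Theorem~\ref{theor:17} by combining the preliminary lemmas already proved in this subsection (uniqueness part) with Lemma~\ref{lema:08} and Theorem~\ref{theor:homeom-jed} (existence and conjugacy part). First I would observe that existence of \emph{a} continuous solution is essentially free: by Theorem~\ref{theor:homeom-jed} the functional equation~(\ref{eq:58}) has a homeomorphic solution $h$, and by Lemma~\ref{lema:08} any such homeomorphism satisfies the system~(\ref{eq:syst:line}). So the only real content left is uniqueness, i.e. that two continuous solutions of~(\ref{eq:syst:line}) must coincide, and then the identification of that unique solution with the conjugation (which is immediate once uniqueness is in hand, since the conjugation is one such solution).

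The core of the uniqueness argument is to show that a continuous solution of~(\ref{eq:syst:line}) is unambiguously defined, in the sense of the Notation preceding Lemma~\ref{lema:02}, at \emph{every} point of $[0,\,1]$. The key steps: (1) By the lemmas above, $h$ is unambiguously defined at $0$, $1$ and $1/2$. (2) By Lemma~\ref{lema:02}, unambiguous definedness propagates along integer trajectories under $f$; starting from $1/2$ and taking pre-images repeatedly, this yields that $h$ is unambiguously defined at every point of $\mathcal{A}=\bigcup_n A_n$, because by Proposition~\ref{lema:An} the set $A_n$ is exactly the set of $n$-fold $f$-pre-images of $0$ together with $1$, and every element of $A_{n+1}$ is an $f$-pre-image of an element of $A_n$. (Here I would spell out the induction: $A_1=\{0,1\}$, and each point of $A_{n+1}$ is sent by $f$ into $A_n$, so repeated application of the propagation lemma covers all of $\mathcal{A}$.) (3) The set $\mathcal{A}$ is dense in $[0,\,1]$ — this follows from Proposition~\ref{lema:An} directly, since $A_n$ contains all dyadic rationals with denominator $2^{n-1}$. (4) Any continuous solution of~(\ref{eq:syst:line}) is determined on the dense set $\mathcal{A}$ by the above, hence by continuity determined everywhere; therefore two continuous solutions agreeing on $\mathcal{A}$ agree on all of $[0,\,1]$, which is exactly uniqueness.

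Finally, to finish the proof I would note that the conjugation $h$ of $f$ and $f_v$ supplied by Theorem~\ref{theor:homeom-jed} is continuous and, by Lemma~\ref{lema:08}, solves~(\ref{eq:syst:line}); being the unique continuous solution, it is \emph{the} solution, and conversely the unique continuous solution of~(\ref{eq:syst:line}) is this conjugation. One small point to be careful about: the Notation defining ``unambiguously defined'' quantifies over continuous solutions of~(\ref{eq:syst:line}), so at the stage of step~(1) I should not presuppose that the solution is monotone or a homeomorphism; the values at $0$, $1/2$, $1$ come purely from plugging $x=0$ and $x=1/2$ into the two branches of~(\ref{eq:syst:line}), as in the cited lemmas, and the propagation in Lemma~\ref{lema:02} likewise uses only the functional equations.

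The main obstacle, such as it is, is making sure the covering argument in step~(2) is airtight: one must check that \emph{every} element of $[0,\,1]\cap(\text{dyadic rationals})$ actually appears in the integer forward-and-backward orbit structure reached from $1/2$ via $f$-pre-images, i.e. that propagation along $f$-pre-images from $1/2$ (not merely from $0$) indeed reaches all of $\mathcal{A}$. This is handled by the description in Proposition~\ref{lema:An} together with the fact that $1/2\in A_2$ and every point of $A_{n+1}\setminus\{1\}$ has two $f$-pre-images in $A_{n+2}$; so an easy induction on $n$ shows $h$ is unambiguously defined on each $A_n$, hence on $\mathcal{A}$. Apart from this bookkeeping, the theorem is a short synthesis of results already established.
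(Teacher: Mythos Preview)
Your proposal is correct and follows essentially the same approach as the paper: existence via Theorem~\ref{theor:homeom-jed} combined with Lemma~\ref{lema:08}, and uniqueness by using Lemma~\ref{lema:02} to propagate unambiguous definedness over $\mathcal{A}=\bigcup_n A_n$ and then invoking the density of $\mathcal{A}$ (Proposition~\ref{lema:An}) plus continuity. Your extra care about not presupposing monotonicity and about the induction covering all of $\mathcal{A}$ is sound bookkeeping, but the argument is the same as the paper's.
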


\begin{proof}Consider the conjugation $h$ of maps $f$
and $f_v$, which exists by Theorem~\ref{theor:homeom-jed}. By
Lemma~\ref{lema:08} $h$ is the solution of the
system~(\ref{eq:syst:line}).

Prove the uniqueness of the solution of~(\ref{eq:syst:line}).
Consider the sets $A_n$, such that $f^n(A_n) =0$, which were
introduced at Section~\ref{sect-Pobudowa-1}. By
Lemma~\ref{lema:02} the solution $h$ is unambiguously defined at
$\mathcal{A} = \bigcup\limits_{n=1}^\infty A_n$. Now the
uniqueness of the solution of~(\ref{eq:syst:line}) follows from
Proposition~(\ref{lema:An}), i.e. from the density
of~$\mathcal{A}$.
\end{proof}

\subsection{Explicit formulas for the solutions of functional
equations}\label{subcst-3-2}

Each from functional equations of~(\ref{eq:syst:line}) belongs to
the class of so called linear functional equations. Methods of
solving of linear functional equations are known, but such a
solution is determined up to an arbitrary function.

After finding the general solution of any of the equations
of~(\ref{eq:syst:line}) (up to arbitrary function) we may plug
this solution into another functional equation for obtaining the
new equation on the ``arbitrary function'' from the former
equation and the for finding this ``arbitrary function'' from the
new functional equation.

\subsubsection{General methods of solving of functional
equations.}\label{subsusbs-formuly}

Consider the functional equation
\begin{equation}\label{eq:15}h(A(x)) = B(x,\, h(x)), \end{equation}
where $h$ is unknown function and $A$ and $B$ are known functions,
which act from the real axes to real axes.

The following so called characteristic transformation $S:\,
\mathbb{R}^2 \rightarrow \mathbb{R}^2$ can be constructed
by~(\ref{eq:15}).
\begin{equation} \label{eq:19}
S:\left\{ \begin{array}{l} t \rightarrow A(t),\\
y\rightarrow B(t,\, y).
\end{array}\right.
\end{equation}

If a function $h$ satisfies~(\ref{eq:15}), then its graph (i.e.
the set $\{(x,\, h(x)) \}$) would be invariant under the
characteristic transformation $S$ and conversely, each set, which
is invariant under $S$, corresponds to some solution
of~(\ref{eq:15}). This let to reduce the problem of solving of
functional equation to the problem of finding the invariants of
its characteristic transformations, i.e. to finding of functions
$\varphi:\, \mathbb{R}^2 \rightarrow \mathbb{R}$, such that the
equality $$ \varphi(t,\, y) = \varphi(S(t,\, y)).
$$ holds.

\begin{notation}An equation of the form
\begin{equation}\label{eq:20}h(A(x)) = B(x)\cdot h(x) + C(x),
\end{equation} where
$A,\, B$ and $C$ are given functions $\mathbb{R}\rightarrow
\mathbb{R}$ is called a \textbf{linear functional equation}.
\end{notation}

\begin{notation}An equation of the form
\begin{equation} \label{eq:16} h(ax) = b\cdot h(t),
\end{equation} where $a$ and $b$ are constants is called a \textbf{linear
functional equation with constant coefficients}.
\end{notation}

For instance the equation~(\ref{eq:syst:line}a) is a linear
functional equation with constant coefficients, but the
equation~(\ref{eq:syst:line}b) is a linear functional equation,
but it is not with constant coefficients.

The characteristic transformation of the equation~(\ref{eq:16}) is
as follows.
\begin{equation}\label{eq:21} S:\left\{
\begin{array}{l}
t \rightarrow at,\\
y\rightarrow by
\end{array}\right.
\end{equation}

If $a>0$ and $b>0$ then consider $\mu = log_ab$ and notice that
the function
$$\varphi(t,\, y) =\frac{y}{t^{\log_ab}}$$ is
invariant under the characteristic transformation, since the
equality $$\frac{y}{t^{\log_ab}} = \frac{by}{(at)^{\log_ab}} $$
from the definition holds.

Since such invariant is known, it is naturally to find the
solution of~(\ref{eq:16}) in the form
\begin{equation} \label{eq:17}h(x) = \omega(x)\cdot x^{\log_ab},
\end{equation} where $\omega$ is unknown function.

\begin{note}Notice, that we could write the
equality~(\ref{eq:17}) for the equation~(\ref{eq:16}) without any
explanation and say that we want to find the solution
of~(\ref{eq:16}) in his form.
\end{note}

Plugging~(\ref{eq:17}) into~(\ref{eq:16}) obtain
$$ \omega(ax)\cdot (ax)^{\log_ab} = b\cdot \omega(x)\cdot
x^{\log_ab},
$$ i.e. $$
\omega(ax) = \omega(x).
$$

We can understand the obtained equality as the dependence of
$\omega$ on $\log_ax$, such that $\omega$ is periodical with
period 1, i.e. $h$ is a solution of the equation~(\ref{eq:15}) if
and only if it is of the form
\begin{equation}\label{eq:64} h(x) =
x^{\log_ab}\omega(\log_ax),\end{equation} where $\omega(x)$ is a
function with period 1.

If at least one of numbers $a$ and $b$ in~(\ref{eq:16}) is
negative, then the invariant of the characteristic transformation
would be $$ \varphi = \frac{y}{t^{\log_{|a|}|b|}}.
$$ Since the sign of $x$ is unknown, then it would be impossible
to find the solution in the form~(\ref{eq:17}) and its correspond
form is
\begin{equation}\label{eq:18} h(x) = |x|^{\log_{|a|}|b|}\cdot
\left\{
\begin{array}{ll}
\omega^+(\log_{|a|}x) & x>0;\\
\omega^-(\log_{|a|}|x|) & x<0,
\end{array}\right.
\end{equation} where $\omega^+$ and $\omega^-$ are some functions.

If plug the expression~(\ref{eq:18}) into the
equation~(\ref{eq:16}) then, dependently in the signs of $a$ and
$b$ obtain the following conditions for $\omega^+$ and $\omega^-$.
$$
\begin{array}{ll}\left\{ \begin{array}{l}
\omega^+(x+1) = \omega^+(x);\\
\omega^-(x+1) = \omega^-(x),
\end{array}\right. \text{ for }a>0,\ b>0.
\end{array}$$

$$
\begin{array}{ll}\left\{ \begin{array}{l}
\omega^+(x+1) = -\omega^+(x);\\
\omega^-(x+1) = -\omega^-(x),
\end{array}\right. \text{ for }a>0,\ b<0.
\end{array}
$$

$$
\begin{array}{ll}\left\{ \begin{array}{l}
\omega^-(x+1) = \omega^+(x);\\
\omega^+(x+1) = \omega^-(x),
\end{array}\right. \text{ for }a<0,\ b>0.
\end{array}
$$

$$
\begin{array}{ll}\left\{ \begin{array}{l}
\omega^-(x+1) = -\omega^+(x);\\
\omega^+(x+1) = -\omega^-(x),
\end{array}\right. \text{ for }a<0,\ b<0.
\end{array}
$$

\begin{note}Notice, that the first of the forth
conditions above is considered earlier, but we have presented it
for completeness.
\end{note}

We will present methods, which let to obtain the explicit solution
of the equation~(\ref{eq:syst:line}b) in spite of that it is not
linear functional equation.

Assume that the functional equation~(\ref{eq:15}) is such that
invariants of its characteristic transformation $S$, which is
defined by~(\ref{eq:19}), is not obvious.

\begin{lemma}\label{lema:11}For an arbitrary
invertible maps $H:\, \mathbb{R}^2 \rightarrow \mathbb{R}^2$
consider the following commutative diagram
$$\begin{CD}
\mathbb{R}^2 @>S>> & \mathbb{R}^2\\
@V_{H} VV& @VV_{H}V\\
\mathbb{R}^2 @>\widetilde{S}>>& \mathbb{R}^2.
\end{CD}$$ If $\widetilde{\varphi}:\, \mathbb{R}^2\rightarrow
\mathbb{R}$ is the invariant of the characteristic maps
$\widetilde{S}$, then $\varphi(t,\, y) =
\widetilde{\varphi}(H(S(t,\, y))$ is an invariant of $S$.
\end{lemma}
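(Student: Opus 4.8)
The plan is to unwind the definitions and check the invariance identity directly. Recall that $\varphi$ is called an invariant of a transformation $S$ when $\varphi(t,y)=\varphi(S(t,y))$ for all $(t,y)\in\mathbb{R}^2$, and the hypothesis of Lemma~\ref{lema:11} is that $H$ is invertible and the diagram
$$\begin{CD}
\mathbb{R}^2 @>S>> & \mathbb{R}^2\\
@V_{H} VV& @VV_{H}V\\
\mathbb{R}^2 @>\widetilde{S}>>& \mathbb{R}^2
\end{CD}$$
commutes, i.e. $H\circ S = \widetilde{S}\circ H$ as maps $\mathbb{R}^2\to\mathbb{R}^2$, and that $\widetilde\varphi$ satisfies $\widetilde\varphi\circ\widetilde S = \widetilde\varphi$.

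First I would record that the claimed invariant should in fact be $\varphi = \widetilde\varphi\circ H$; the statement writes $\varphi(t,y) = \widetilde\varphi(H(S(t,y)))$, but by commutativity $H(S(t,y)) = \widetilde S(H(t,y))$ and then $\widetilde\varphi(\widetilde S(H(t,y))) = \widetilde\varphi(H(t,y))$, so the two expressions agree; I would note this reconciliation explicitly so the reader is not confused. Then the verification is a one-line computation: for any $(t,y)$,
$$\varphi(S(t,y)) = \widetilde\varphi\bigl(H(S(t,y))\bigr) = \widetilde\varphi\bigl(\widetilde S(H(t,y))\bigr) = \widetilde\varphi\bigl(H(t,y)\bigr) = \varphi(t,y),$$
where the first and last equalities are the definition of $\varphi$, the second uses $H\circ S=\widetilde S\circ H$, and the third uses that $\widetilde\varphi$ is invariant under $\widetilde S$. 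This shows $\varphi$ is invariant under $S$.

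There is essentially no obstacle here — the content is purely formal — so the only thing worth being careful about is the bookkeeping between the two equivalent formulas for $\varphi$ and making sure the invertibility of $H$ is invoked where it is genuinely needed (it is what guarantees $\widetilde S = H\circ S\circ H^{-1}$ is well defined as the transformation making the diagram commute, and hence that $\widetilde\varphi$'s invariance transports back). If one wants, one can also remark on the dynamical meaning: $H$ conjugates the two characteristic transformations, so invariants of one pull back to invariants of the other, exactly as graphs invariant under $S$ correspond to solutions of the functional equation. I would keep the write-up to the displayed chain of equalities plus the remark identifying $\varphi$ with $\widetilde\varphi\circ H$.
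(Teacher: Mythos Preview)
Your proof is correct and follows essentially the same chain of equalities as the paper: start from $\varphi(S(t,y))=\widetilde\varphi(H(S(t,y)))$, use $H\circ S=\widetilde S\circ H$, then the $\widetilde S$-invariance of $\widetilde\varphi$, and finally identify $\widetilde\varphi(H(t,y))$ with $\varphi(t,y)$. Your explicit reconciliation of the stated formula $\varphi=\widetilde\varphi\circ H\circ S$ with the formula $\varphi=\widetilde\varphi\circ H$ actually used in the argument is a useful addition that the paper leaves implicit.
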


\begin{proof}It is enough to prove that $\varphi
(S(t,\, y)) = \varphi(t,\, y)$ for proving the lemma.

By the definition of $\varphi$ we have that
$$\varphi(S(t,\, y)) = \widetilde{\varphi}(H(S(t,\, y)).$$ It
follows from the commutativity of the diagram, obtain that $
H(S(t,\, y)) = \widetilde{S}(H(t,\, y)),$ whence $$
\widetilde{\varphi}(H(S(t,\, y)) =
\widetilde{\varphi}(\widetilde{S}(H(t,\, y))).$$

But since $\widetilde{\varphi}$ is an invariant of $\widetilde{S}$
then $$ \widetilde{\varphi}(\widetilde{S}(H(t,\, y))) =
\widetilde{\varphi}(H(t,\, y)),
$$ and again by the construction of $\varphi$
obtain that $$\widetilde{\varphi}(H(t,\, y))= \varphi(t,\, y),$$
which proves Lemma.
\end{proof}

\begin{notation}The maps $H$ from
Lemma~\ref{lema:11} is called the \textbf{change of variables}.
\end{notation}

We will show the applying of Lemma~\ref{lema:11} to the functional
equation~(\ref{eq:syst:line}b):
$$ h(2-2x) = \frac{-h(x)}{1-v} + \frac{1}{1-v}.
$$

Characteristic transformation of this equation is as follows
\begin{equation}\label{eq:22}
S:\left\{ \begin{array}{l} t \rightarrow 2-2t,\\
y\rightarrow \frac{-y}{1-v} + \frac{1}{1-v}.
\end{array}\right.
\end{equation}

Let us find a changing of variables $H$ such that the
characteristic transformation of the
equation~(\ref{eq:syst:line}b) become of the form~(\ref{eq:21}).

We will find the maps $H$ of the form
$$H(t,\, y) = (t - t_0,\, y-y_0)$$ for fixed $t_0$ and
$y_0$.

The maps $\widetilde{S}$ can be found from the formula
$$
\widetilde{S}(t,\, y) = H(S(H^{-1}(t,\, y))) =
$$
$$
=\left(2-2(t+t_0)-t_0,\, \frac{-(y+y_0)}{1-v} +
\frac{1}{1-v}-y_0\right).
$$

In the assumption that $\widetilde{S}$ has no ``free variables'',
i.e. saying that there should not be expressions,
which are independent on $t$ and $y$ obtain $$\left\{ \begin{array}{l} 2-3t_0 =0\\
\frac{-y_0+1}{1-v} - y_0 =0.
\end{array}\right.
$$

\begin{note}Notice that the point $(x_0,\, y_0)$
appeared to be a fixed point of~(\ref{eq:22}).
\end{note}

For the found $(t_0,\, x_0)$ the form transformation
$\widetilde{S}$ appears to be $$ \widetilde{S}:\left\{
\begin{array}{l} \widetilde{t}
\rightarrow -2\widetilde{t},\\
\widetilde{y}\rightarrow \frac{-\widetilde{y}}{1-v}.
\end{array}\right.
$$

The constructed transformation $\widetilde{S}$ is a characteristic
transformation of a linear functional equation with constant
coefficients. The solution of this equation can be easily obtained
in the form
$$
h(x) = \frac{1}{2-v} +\left|x-\frac{2}{3}\right|^{-\log_2(1-v)}
\times
$$$$ \times\ \cdot \left\{
\begin{array}{ll}
\omega^+\left( \log_2\left|x-\frac{2}{3}\right|\right) &
x>\frac{2}{3};\\
\\
\omega^-\left( \log_2\left|x-\frac{2}{3}\right|\right) & x<
\frac{2}{3},
\end{array}\right.
$$ where functions $\omega^+$ and $\omega^-$ satisfy the condition
$$\left\{
\begin{array}{l}\omega^-(t+1) = -\omega^+(t)\\
\omega^+(t+1) = -\omega^-(t).
\end{array}\right.
$$

\subsubsection{Plugging of the solutions of one functional
equations into another equation os the
system}\label{subsubs-Pidst}

The equation~(\ref{eq:syst:line}a) is a linear functional
equation. It is obtained from~(\ref{eq:16}) by plugging $a=2$ and
$b=\frac{1}{v}$. The formula for the solution
of~(\ref{eq:syst:line}a) os obtained by plugging of $a=2$ and
$b=\frac{1}{v}$ into~(\ref{eq:64}) and is

\begin{equation}\label{ex:h:first} h(x) =
x^{-\log_2v}\omega(\log_2x),
\end{equation} where $\omega(x)$ is an arbitrary function
with period 1. If plug~(\ref{ex:h:first})
into~(\ref{eq:syst:line}b), then obtain

$$(2-2x)^{-\log_2v}\omega(\log_2(2-2x)) = $$ $$ =
\frac{1-x^{-\log_2v}\omega(\log_2x)}{1-v}.$$

The periodicity of $\omega$ let us to rewrite this equation as
follows

\begin{equation}\label{eq:gliuk}
\begin{array}{c}
(1-v)(1-x)^{-\log_2v}\omega(\log_2(1-x)) =\\ \\
 = v(1-x^{-\log_2v}\omega(\log_2x)).
\end{array}
 \end{equation}
\begin{proposition}\label{zauv:2}
If consider the equation~(\ref{eq:gliuk}) as a functional equation
of the function, which is defied on the whole line, then it will
appear, that $h$ it is constant function.
\end{proposition}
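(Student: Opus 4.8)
The plan is to read \eqref{eq:gliuk} as a relation between the values of one single function at the two points $x$ and $1-x$. Writing $h(x)=x^{-\log_2 v}\,\omega_1(\log_2 x)$ for the unknown produced by solving \eqref{eq:syst:line}a (this is exactly formula \eqref{ex:h:first}), the identity \eqref{eq:gliuk} is precisely
\[
v\,h(x)+(1-v)\,h(1-x)=v .
\]
The hypothesis of the Proposition — that this be required for \emph{all} admissible $x$, rather than only on the half-interval $\{x>1/2\}$ on which \eqref{eq:syst:line}b was actually asserted — is what legitimizes the substitution $x\mapsto 1-x$: the natural domain on which both sides of \eqref{eq:gliuk} make sense is symmetric under the involution $x\mapsto 1-x$. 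Carrying out that substitution produces the companion identity $v\,h(1-x)+(1-v)\,h(x)=v$.

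Next I would treat the two identities as a linear system in the two quantities $h(x)$ and $h(1-x)$, with $x$ fixed but arbitrary. Subtracting them gives $(2v-1)\bigl(h(x)-h(1-x)\bigr)=0$; since $v\neq 1/2$ this forces $h(x)=h(1-x)$, and feeding this back into either identity gives $h(x)=v$. As $x$ was arbitrary, $h$ is the constant function $v$, which is the assertion of Proposition~\ref{zauv:2}.

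The computation is entirely routine; the only point requiring care is the logical status of the step $x\mapsto 1-x$, and this is the real substance of the statement. On $[0,1]$ the genuine conjugacy satisfies \eqref{eq:syst:line}b only for $x>1/2$, so replacing $x$ by $1-x$ would evaluate that equation at a point $<1/2$, where it is \emph{not} known to hold; this is exactly why the argument above does not contradict Theorem~\ref{theor:homeom-jed}, whose unique homeomorphic solution of the conjugacy equation is very far from constant, and it is the content of the accompanying Remark~\ref{note:13}. I would also record explicitly that the argument uses $v\neq 1/2$: for $v=1/2$ one has $h(x)=x$ as a non-constant solution, so the degenerate case must be excluded (as it is throughout this part of the paper).
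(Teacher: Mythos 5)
Your proof is correct and follows essentially the same route as the paper: both substitute $x\mapsto 1-x$ into \eqref{eq:gliuk} and solve the resulting pair of linear relations in $h(x)$ and $h(1-x)$ to force $h\equiv v$. Your explicit remark that the conclusion needs $v\neq 1/2$ is a small improvement, since the paper's manipulation uses this implicitly when cancelling the factor $1-2v$.
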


\begin{proof}Define $t=1-x$ and obtain
$$(1-v)t^{-\log_2v}\omega(\log_2t)
= $$ $$ = v(1-(1-t)^{-\log_2v}\omega(\log_2(1-t))).$$

If write $x$ instead of $t$ then we can use~(\ref{eq:gliuk}) to
express $(1-x)^{-\log_2v}\omega(\log_2(1-x))$ from each of the
equations and equate them, obtaining
$$ \frac{v}{1-v}\left(1-x^{-\log_2v}\omega(\log_2x)\right) =
$$$$ = 1 -\frac{1-v}{v}\, x^{-\log_2v}\omega(\log_2x),$$ whence $$
x^{-\log_2v}\omega(\log_2x) = v.
$$ This equality mean that
the function $h$ is constant and has no inverse.
\end{proof}

\begin{note}\label{note:13} If solve
the equation~(\ref{eq:gliuk}) for the $h: [0,\, 1]\rightarrow
[0,\, 1]$, which is defined by~(\ref{ex:h:first}), then reasonings
from the Proposition~\ref{zauv:2} would be incorrect.
\end{note}

\begin{proof}[Explanation of the Remark.] The deal is
that equation~(\ref{eq:gliuk}) is obtained by plugging of the
solution of~(\ref{eq:syst:line}a) into~(\ref{eq:syst:line}b).

That is why, the substitution $t = 1-x$ is, if fact, the
substitution in~(\ref{eq:syst:line}a).

Nevertheless, the equation~(\ref{eq:syst:line}a) is obtained from
the commutativity of the diagram
$$\begin{CD}
[0,\, 1/2] @>f:\, x\mapsto 2x >> & [0,\, 1]\\
@V_{h} VV& @VV_{h}V\\
[0,\, v] @>f_v:\, x\mapsto x/v>>& [0,\, 1],
\end{CD}$$ which is defined only
for $x\in [0,\, 1/2]$. Since the substitution $t=1-x$ for $x\in
[0,\, 1/2]$ mens that $t\in [1/2,\, 1]$, then it leads to a
functional equation, which is defined for another set of arguments
and expressing of
$$(1-x)^{-\log_2v}\omega(\log_2(1-x))$$ from both
equations with further equating is incorrect.
\end{proof}

We have obtained in the Section~\ref{subsusbs-formuly} that the
solution of~(\ref{eq:syst:line}b) is of the form

\begin{equation}\label{ex:h-second}
\begin{array}{l}
h(x) = \frac{1}{2-v} +\left|x-\frac{2}{3}\right|^{-\log_2(1-v)}
\times
\\
\times\ \cdot \left\{
\begin{array}{ll}
\omega^+\left( \log_2\left|x-\frac{2}{3}\right|\right) &
x>\frac{2}{3};\\
\\
\omega^-\left( \log_2\left|x-\frac{2}{3}\right|\right) & x<
\frac{2}{3}.
\end{array}\right.
\end{array}
\end{equation}
for functions $\omega^+$ and $\omega^-$, such that
\begin{equation}\label{eq:14}\left\{
\begin{array}{l}\omega^-(t+1) = -\omega^+(t)\\
\omega^+(t+1) = -\omega^-(t).
\end{array}\right.
\end{equation}

In the same manner as in the previous section, we may consider the
obtained expressions of $h$ and try to find $\omega$.

It follows from~(\ref{eq:14}) that functions $\omega^+$ and
$\omega^-$ are periodical with period 2.

Plug the solution~(\ref{ex:h-second}) of the functional
equation~(\ref{eq:syst:line}b) into~(\ref{eq:syst:line}a) and
obtain

$$
\begin{array}{l}
\frac{v}{2-v} +v\left|2x-\frac{2}{3}\right|^{-\log_2(1-v)} \times
\\
\times\ v\cdot \left\{
\begin{array}{ll}
\omega^+\left( \log_2\left|2x-\frac{1}{3}\right|\right) &
x>\frac{2}{3};\\
\\
\omega^-\left( \log_2\left|2x-\frac{1}{3}\right|\right) & x<
\frac{2}{3}
\end{array}\right.
\end{array} =
$$$$
\begin{array}{l}
=\frac{1}{2-v} +\left|x-\frac{2}{3}\right|^{-\log_2(1-v)} \times
\\
\times\ \cdot \left\{
\begin{array}{ll}
\omega^+\left( \log_2\left|x-\frac{2}{3}\right|\right) &
x>\frac{2}{3};\\
\\
\omega^-\left( \log_2\left|x-\frac{2}{3}\right|\right) & x<
\frac{2}{3}
\end{array}\right.
\end{array}
$$for unknown
functions $\omega^+$ and $\omega^-$, which are connected by
expressions~(\ref{eq:14}).

The complicatedness of the obtained equation in comparison with
the former one is evident.

The problem on the invertibility of approximations of $h$, which
are obtained in the described way, is so complicated as in the
case, when the solution of the first equation was plugged into the
second one.

\subsection{Numerical experiments}\label{subsubs-exper}

Theorem~\ref{theor:16} contains the formula, which let ro
construct $h$ on $A_n$ in terms on electronic tables for arbitrary
$n$.

Let maps $h$ be defined by~(\ref{ex:h:first}). With the use of
known values of $h$ on $A_n$ for enough huge $n$, we can calculate
values of $\omega$ for $x\in [0,\, 1]$. Remind that $\omega$ is
periodical with period 1.

Graph of $\omega$ for $v=3/4$ and $x\in [0,\, 1]$ is given at
Figure~\ref{fig:17}.
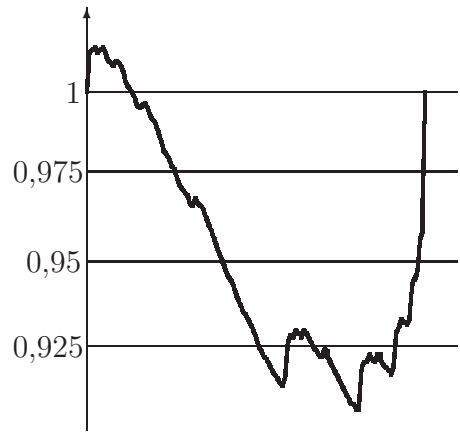
\begin{figure}[htbp]
\begin{minipage}[h]{0.9\linewidth}
\begin{center}
\begin{picture}(140,160)
\put(0,0){\vector(0,1){160}}

\put(0,64){\line(1,0){140}} \put(0,32){\line(1,0){140}}
\put(0,98){\line(1,0){140}} \put(0,128){\line(1,0){140}}

\linethickness{0.4mm}

\qbezier(0,128)(0,128)(1,143) \qbezier(1,143)(1,143)(3,145)
\qbezier(3,145)(3,145)(4,143) \qbezier(4,143)(4,143)(6,145)
\qbezier(6,145)(6,145)(7,143) \qbezier(7,143)(7,143)(8,140)
\qbezier(8,140)(8,140)(10,138) \qbezier(10,138)(10,138)(11,140)
\qbezier(11,140)(11,140)(13,138) \qbezier(13,138)(13,138)(14,135)
\qbezier(14,135)(14,135)(15,131) \qbezier(15,131)(15,131)(17,128)
\qbezier(17,128)(17,128)(18,126) \qbezier(18,126)(18,126)(19,123)
\qbezier(19,123)(19,123)(20,122) \qbezier(20,122)(20,122)(22,124)
\qbezier(22,124)(22,124)(23,122) \qbezier(23,122)(23,122)(24,119)
\qbezier(24,119)(24,119)(26,116) \qbezier(26,116)(26,116)(27,113)
\qbezier(27,113)(27,113)(28,110) \qbezier(28,110)(28,110)(29,106)
\qbezier(29,106)(29,106)(31,103) \qbezier(31,103)(31,103)(32,100)
\qbezier(32,100)(32,100)(33,99) \qbezier(33,99)(33,99)(34,96)
\qbezier(34,96)(34,96)(35,93) \qbezier(35,93)(35,93)(37,90)
\qbezier(37,90)(37,90)(38,89) \qbezier(38,89)(38,89)(39,86)
\qbezier(39,86)(39,86)(40,85) \qbezier(40,85)(40,85)(41,88)
\qbezier(41,88)(41,88)(42,86) \qbezier(42,86)(42,86)(44,84)
\qbezier(44,84)(44,84)(45,81) \qbezier(45,81)(45,81)(46,78)
\qbezier(46,78)(46,78)(47,76) \qbezier(47,76)(47,76)(48,73)
\qbezier(48,73)(48,73)(49,70) \qbezier(49,70)(49,70)(50,67)
\qbezier(50,67)(50,67)(51,65) \qbezier(51,65)(51,65)(52,62)
\qbezier(52,62)(52,62)(53,59) \qbezier(53,59)(53,59)(55,56)
\qbezier(55,56)(55,56)(56,53) \qbezier(56,53)(56,53)(57,50)
\qbezier(57,50)(57,50)(58,48) \qbezier(58,48)(58,48)(59,45)
\qbezier(59,45)(59,45)(60,44) \qbezier(60,44)(60,44)(61,42)
\qbezier(61,42)(61,42)(62,40) \qbezier(62,40)(62,40)(63,38)
\qbezier(63,38)(63,38)(64,35) \qbezier(64,35)(64,35)(65,32)
\qbezier(65,32)(65,32)(66,30) \qbezier(66,30)(66,30)(67,28)
\qbezier(67,28)(67,28)(68,27) \qbezier(68,27)(68,27)(69,25)
\qbezier(69,25)(69,25)(70,23) \qbezier(70,23)(70,23)(71,21)
\qbezier(71,21)(71,21)(72,20) \qbezier(72,20)(72,20)(73,18)
\qbezier(73,18)(73,18)(74,17) \qbezier(74,17)(74,17)(75,20)
\qbezier(75,20)(75,20)(76,33) \qbezier(76,33)(76,33)(77,36)
\qbezier(77,36)(77,36)(78,35) \qbezier(78,35)(78,35)(79,38)
\qbezier(79,38)(79,38)(80,37) \qbezier(80,37)(80,37)(81,36)
\qbezier(81,36)(81,36)(81,35) \qbezier(81,35)(81,35)(82,38)
\qbezier(82,38)(82,38)(83,37) \qbezier(83,37)(83,37)(84,35)
\qbezier(84,35)(84,35)(85,33) \qbezier(85,33)(85,33)(86,31)
\qbezier(86,31)(86,31)(87,30) \qbezier(87,30)(87,30)(88,28)
\qbezier(88,28)(88,28)(89,28) \qbezier(89,28)(89,28)(90,31)
\qbezier(90,31)(90,31)(91,30) \qbezier(91,30)(91,30)(91,28)
\qbezier(91,28)(91,28)(92,26) \qbezier(92,26)(92,26)(93,24)
\qbezier(93,24)(93,24)(94,22) \qbezier(94,22)(94,22)(95,20)
\qbezier(95,20)(95,20)(96,18) \qbezier(96,18)(96,18)(97,16)
\qbezier(97,16)(97,16)(98,14) \qbezier(98,14)(98,14)(99,12)
\qbezier(99,12)(99,12)(100,10) \qbezier(100,10)(100,10)(101,10)
\qbezier(101,10)(101,10)(102,8) \qbezier(102,8)(102,8)(103,8)
\qbezier(103,8)(103,8)(103,11) \qbezier(103,11)(103,11)(104,24)
\qbezier(104,24)(104,24)(105,26) \qbezier(105,26)(105,26)(106,26)
\qbezier(106,26)(106,26)(107,29) \qbezier(107,29)(107,29)(107,28)
\qbezier(107,28)(107,28)(108,27) \qbezier(108,27)(108,27)(109,26)
\qbezier(109,26)(109,26)(110,29) \qbezier(110,29)(110,29)(111,29)
\qbezier(111,29)(111,29)(111,27) \qbezier(111,27)(111,27)(112,25)
\qbezier(112,25)(112,25)(113,24) \qbezier(113,24)(113,24)(114,23)
\qbezier(114,23)(114,23)(115,22) \qbezier(115,22)(115,22)(115,21)
\qbezier(115,21)(115,21)(116,24) \qbezier(116,24)(116,24)(117,37)
\qbezier(117,37)(117,37)(118,39) \qbezier(118,39)(118,39)(119,42)
\qbezier(119,42)(119,42)(120,41) \qbezier(120,41)(120,41)(121,40)
\qbezier(121,40)(121,40)(122,42) \qbezier(122,42)(122,42)(123,55)
\qbezier(123,55)(123,55)(124,58) \qbezier(124,58)(124,58)(124,57)
\qbezier(124,57)(124,57)(125,60) \qbezier(125,60)(125,60)(126,72)
\qbezier(126,72)(126,72)(127,75) \qbezier(127,75)(127,75)(127,87)
\qbezier(127,87)(127,87)(128,128)

\put(-28,28){0,925} \put(-23,60){0,95} \put(-28,94){0,975}
\put(-8,124){1}

\end{picture}
\end{center}
\end{minipage}
\caption{Graph of $\omega_1(x)$ for $v=3/4$}\label{fig:17}
\end{figure}

\begin{note}Make the remark on the way, how the
graph on Figure~\ref{fig:17} was obtained.
\end{note}

\begin{proof}[Deal of the remark] Since $\omega$ is
periodical with period 1, then it is enough to find it on any
interval of the length 1.

If plug all numbers $x\in [1/2,\, 1]$ with the step, small enough,
into the equation~(\ref{ex:h:first}), then function $\log_2x$
would be found in all points (with correspond small step) of the
interval $[-1,\, 0]$.

In this case for every $x\in [0,\, 1]$ we may consider $\log_2x$
and $\omega(\log_2x) = h(x)\cdot x^{log_2v}$. This procedure will
give use the set of points, where the function $\omega$ is found.
\end{proof}

The fact, that $h$, which is a solution of~(\ref{eq:syst:line}),
is very complicated, follows from that $h$ is non-differentiable
of the dense set in $[0,\, 1]$. It follows from the form of the
formula~(\ref{ex:h:first}) that it is $\omega$, which is the term,
where complexity of $h$ comes from, because $x^{-\log_2v}$ is
differentiable everywhere.

In the same time, we present below the properties of invertible
maps $h$, which is of the form~(\ref{ex:h:first}).

\begin{lemma}\label{lema:09}If the invertible maps
$h:\, [0,\, 1]\rightarrow [0,\, 1]$ satisfies~(\ref{ex:h:first}),
then the following conditions hold.

1. $h$ increase;

2. For every $n\in \mathbb{N}$ the equality
$h\left(\frac{1}{2^n}\right) = v^n$.

Also for all integer $t$ the equality $\omega(t) =1$ holds.
\end{lemma}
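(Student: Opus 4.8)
The plan is to exploit the fact, already recorded in the text, that every function of the form~(\ref{ex:h:first}) solves the linear functional equation~(\ref{eq:syst:line}a), and to deduce the three assertions from that equation together with invertibility of $h$. As a preliminary I would verify the identity $h(2x)=\frac{1}{v}h(x)$ for $x\in[0,1/2]$ directly: substituting into~(\ref{ex:h:first}) and using $2^{-\log_2 v}=1/v$ and the $1$-periodicity of $\omega$ gives $h(2x)=(2x)^{-\log_2 v}\omega(1+\log_2 x)=\frac{1}{v}\,x^{-\log_2 v}\omega(\log_2 x)=\frac{1}{v}h(x)$.

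For assertion 1, I would put $x=0$ in this identity, obtaining $h(0)=\frac{1}{v}h(0)$, which forces $h(0)=0$ since $v\neq 1$. Because $h$ is a continuous invertible self-map of the interval $[0,1]$ it is strictly monotone; a decreasing homeomorphism of $[0,1]$ would satisfy $h(0)=1$, so $h$ must be increasing, and consequently $h(1)=1$.

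For assertion 2, I would induct on $n$, anchoring at $h(1)=1=v^{0}$. Applying the identity at $x=\frac{1}{2^{n+1}}\le\frac12$ gives $h\!\left(\frac{1}{2^{n}}\right)=h\!\left(2\cdot\frac{1}{2^{n+1}}\right)=\frac{1}{v}h\!\left(\frac{1}{2^{n+1}}\right)$, whence $h\!\left(\frac{1}{2^{n+1}}\right)=v\,h\!\left(\frac{1}{2^{n}}\right)=v^{n+1}$; in particular $h(1/2)=v$. For the final claim I would evaluate~(\ref{ex:h:first}) at these dyadic points: since $\log_2(1/2^{n})=-n$ and $(1/2^{n})^{-\log_2 v}=2^{\,n\log_2 v}=v^{n}$, we get $h(1/2^{n})=v^{n}\omega(-n)$, so comparison with assertion 2 yields $\omega(-n)=1$ for all $n\ge 1$, while evaluating at $x=1$ gives $\omega(0)=h(1)=1$. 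Since $\omega$ has period $1$ it is constant on $\mathbb{Z}$, hence $\omega(t)=1$ for every integer $t$.

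The argument is essentially routine; the one place that needs care is the passage from the purely algebraic relation $h(0)=0$ to the monotonicity statement in assertion 1, where one uses that a continuous injection of an interval is automatically monotone — this is also what pins down $h(1)=1$, which anchors the induction in assertion 2 and the computation of $\omega$ on $\mathbb{Z}$.
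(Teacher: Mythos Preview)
Your proof is correct and follows essentially the same route as the paper. The only cosmetic difference is in the order of the last two claims: the paper first reads off $\omega(0)=1$ from $h(1)=1$ and periodicity, and then plugs $\omega(-n)=1$ into~(\ref{ex:h:first}) to get $h(1/2^n)=v^n$, whereas you obtain $h(1/2^n)=v^n$ by induction from the functional equation and then deduce $\omega(-n)=1$; also, to get $h(0)=0$ the paper argues that $\omega$ is bounded (by periodicity and the fact that $h$ is bounded on $[1/2,1]$) so that $x^{-\log_2 v}\omega(\log_2 x)\to 0$, while you pass to $x=0$ in the functional equation via continuity---both are equally valid.
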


\begin{proof}Prove first, that $\omega$ is bounded.
It is so, because $\omega$ is defined by its values, which are
obtained from the equation~(\ref{ex:h:first}) for $x\in [1/2,\,
1]$. But in this case the function $x^{-\log_2v}$ is bounded by
$v$.

Plug $x=0$ into~(\ref{ex:h:first}) and obtain the product of zero
function times bounded, whence $h(0)=0$. Whence, together with
that $h$ is invertible, means that $h$ increase.

If follows from $h(1)=1$ that plugging of $x=1$
into~(\ref{ex:h:first}) gives $1 = \omega(\log_2x)$. Since
$\omega$ is periodical with period 1, then for every $t\in
\mathbb{Z}$ the equality $\omega(t) = 1$ holds.

Plugging $x=\frac{1}{2^n}$ into~(\ref{ex:h:first}) gives
$$h\left(\frac{1}{2^n}\right) = v^n\,
\omega(\log_22^{-n}) = v^n.$$\end{proof}

Consider examples of ``simple'' maps $\omega$, but such that $h$,
which is defined by~(\ref{ex:h:first}), is invertible and consider
the maps $\widetilde{f}_v$, which is defined by commutative
diagram
\begin{equation}\label{eq:13}
\begin{CD}
[0,\, 1] @>f >> & [0,\, 1]\\
@V_{h} VV& @VV_{h}V\\
[0,\, 1] @>\widetilde{f}_v>>& [0,\, 1].
\end{CD}\end{equation}

\begin{lemma}\label{lema:10}If for invertible maps
$h$ of the form~(\ref{ex:h:first}) the diagram~(\ref{eq:13}) is
commutative, then for $x\hm{\in} [0,\, v]$ the equality
$$ \widetilde{f}_v(x) = \frac{x}{v}$$ holds.
\end{lemma}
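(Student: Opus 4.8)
The plan is to read off the claim directly from the commutativity of diagram~(\ref{eq:13}). Assume $h$ is an invertible map of the form~(\ref{ex:h:first}), so that $h(0)=0$ and $h$ increases by Lemma~\ref{lema:09}, and assume $\widetilde{f}_v = h(f(h^{-1}))$ makes~(\ref{eq:13}) commute. Since $h$ is an increasing bijection of $[0,\,1]$ with $h(1/2)=v$ (this follows from plugging $x=1$ and $x=1/2$ into~(\ref{ex:h:first}) together with periodicity of $\omega$, exactly as in Lemma~\ref{lema:01} and Lemma~\ref{lema:09}), the image $h([0,\,1/2])$ is precisely $[0,\,v]$. So for $x\in[0,\,v]$ we have $h^{-1}(x)\in[0,\,1/2]$.

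First I would fix $x\in[0,\,v]$ and write $u = h^{-1}(x)\in[0,\,1/2]$. On $[0,\,1/2]$ the map $f$ acts by $f(u)=2u$, so $f(h^{-1}(x)) = 2u \in [0,\,1]$. Then $\widetilde{f}_v(x) = h(f(h^{-1}(x))) = h(2u)$. Now I would invoke the functional equation~(\ref{ex:h:first}), i.e.~(\ref{eq:syst:line}a): for $u\le 1/2$ one has $h(2u) = \frac{1}{v}h(u)$. Hence $\widetilde{f}_v(x) = \frac{1}{v}h(u) = \frac{1}{v}h(h^{-1}(x)) = \frac{x}{v}$, which is exactly the asserted formula.

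The one point that needs a word of care — and what I expect to be the only genuine obstacle — is justifying that $h([0,\,1/2]) = [0,\,v]$, equivalently that $h^{-1}$ really lands in the domain $[0,\,1/2]$ where $f$ is linear with slope $2$; if $h^{-1}(x)$ could exceed $1/2$ the relevant branch of $f$ would be $2-2x$ and the computation would change. This is handled by combining monotonicity of $h$ (Lemma~\ref{lema:09}, part~1) with the two boundary values $h(0)=0$ and $h(1/2)=v$, the latter obtained by plugging $x=1/2$ into~(\ref{ex:h:first}) and using $h(1)=1$ and the period-$1$ periodicity of $\omega$, precisely as in the proof of Lemma~\ref{lema:01}. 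Everything else is a one-line substitution, so the proof is short; I would present it as: establish $h(1/2)=v$, deduce $h([0,1/2])=[0,v]$ by monotonicity, then compute $\widetilde{f}_v(x)=h(f(h^{-1}(x)))=h(2h^{-1}(x))=\tfrac{1}{v}h(h^{-1}(x))=\tfrac{x}{v}$ for $x\in[0,v]$, citing~(\ref{eq:syst:line}a) for the middle equality.
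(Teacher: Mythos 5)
Your proposal is correct and follows essentially the same route as the paper: both use Lemma~\ref{lema:09} (monotonicity together with $h(1/2)=v$, i.e.\ part~2 with $n=1$) to place $h^{-1}(x)$ in $[0,\,1/2]$, and both use that any $h$ of the form~(\ref{ex:h:first}) satisfies the functional equation~(\ref{eq:syst:line}a) to get $h(2h^{-1}(x))=\tfrac{1}{v}x$. The only cosmetic difference is that the paper phrases the final step via uniqueness of $\widetilde{f}_v$ in the commutative diagram, while you carry out the substitution directly, which is an equivalent (and arguably more transparent) way to finish.
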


\begin{proof}It follows from the invertibility of $h$
that $\widetilde{f}_v$ is well-defined, precisely for all $x\in
[0,\, 1]$ the equality $\widetilde{f}_v = h(f(h^{-1}(x)))$ holds.
But by Lemma~\ref{lema:09} for $x\in [0,\, v]$ the inclusion
$h^{-1}(x) \in [0,\, 1/2]$ holds, whence $\widetilde{f}_v(x) =
h(2h^{-1}(x))$.

Since the expression~(\ref{ex:h:first}) is obtained from the
functional equation~(\ref{eq:syst:line}a), then $h$ from the
condition of Lemma satisfies~(\ref{eq:syst:line}a). But the
equation~(\ref{eq:syst:line}a) is equivalent to commutativity of
the diagram~(\ref{eq:13}) for $\widetilde{f}_v(x) = \frac{x}{v}$.
Now Lemma follows from the uniqueness of $\widetilde{f}_v$.
\end{proof}

The simplest case, when $\omega$ is a periodical function with
period 1 is that when it is constant.

If follows from $h(1) = 1$ that if $\omega$ is constant, then
$\omega(x) = 1$ for all $x$.

\begin{example}\label{ex:01}Plot the graph of
$\widetilde{f}_v$, which is defined by the commutative
diagram~(\ref{eq:13}) for the maps $h$ of the
form~(\ref{ex:h:first}), if $\omega$ is constant.
\end{example}

\begin{proof}[Deal of the example] If $\omega(x) =1$
for every $x \in [0,\, 1]$, then $h(x) = x^{-\log_2x}$. Then by
Lemma~\ref{lema:10} (this also can be shown from the direct
calculations) follows that for all $x\in [0,\, v]$ the equality
$\widetilde{f}_v(x) = \frac{x}{v}$ holds.

For such function $\omega$ the equality $h^{-1}(x)= x^{-\log_v2}$
holds, whence for $x\in [v,\, 1]$ obtain
$$ \widetilde{f}_v(x) = \left(2-2x^{-\log_2v}\right)^{-\log_v2}.
$$
The graph of $\widetilde{f}_v$ for $v =3/4$ is given at
Figure~\ref{fig-21}.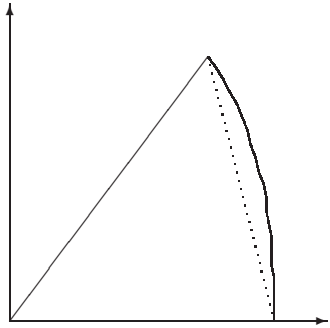
\begin{figure}[htbp]
\begin{minipage}[h]{0.9\linewidth}
\begin{center}
\begin{picture}(100,120)
\put(0,0){\vector(0,1){120}} \put(0,0){\vector(1,0){120}}

\put(0,0){\line(3,4){75}}

\qbezier(75,100)(75,100)(78,96) \qbezier(78,96)(78,96)(81,91)
\qbezier(81,91)(81,91)(83,87) \qbezier(83,87)(83,87)(86,82)
\qbezier(86,82)(86,82)(88,77) \qbezier(88,77)(88,77)(90,72)
\qbezier(90,72)(90,72)(91,67) \qbezier(91,67)(91,67)(93,62)
\qbezier(93,62)(93,62)(94,57) \qbezier(94,57)(94,57)(96,52)
\qbezier(96,52)(96,52)(97,47) \qbezier(97,47)(97,47)(97,42)
\qbezier(97,42)(97,42)(98,37) \qbezier(98,37)(98,37)(99,32)
\qbezier(99,32)(99,32)(99,26) \qbezier(99,26)(99,26)(99,21)
\qbezier(99,21)(99,21)(100,16) \qbezier(100,16)(100,16)(100,11)
\qbezier(100,11)(100,11)(100,6) \qbezier(100,6)(100,6)(100,0)

\qbezier[32](75,100)(87.5,50)(100,0)

\end{picture}
\end{center}
\end{minipage}
\caption{Graph of $\widetilde{f}_v$ for $v =3/4$}\label{fig-21}
\end{figure}
\end{proof}

\begin{example}\label{ex:02}Plot the graph of
$\widetilde{f}_v$, which is defined by the commutative
diagram~(\ref{eq:13}) for the maps $h$ of the
form~(\ref{ex:h:first}), if $\omega$ is a continuous function,
whose graph on $[1/2,\, 1]$ is consisted of two intervals of
linearity.\end{example}

\begin{proof}[Deal of the example] Plug $x=3/4$ into
the commutative diagram~(\ref{eq:13}) and obtain
$$ \begin{CD}
3/4 @>f >> & 1/2\\
@V_{h} VV& @VV_{h}V\\
h(3/4) @>\widetilde{f}_v>>& v =h(1/2).
\end{CD}$$

Define $h(3/4)$ as the bigger pre image of $v$ under $f_v$. This
leads to that the values of $f_v$ and $\widetilde{f}_v$ would
coincide at this bigger pre image.

In other words$$h\left(\frac{3}{4}\right) =
\left(\frac{3}{4}\right)^{-\log_2v}\omega\left( \log_2\left(
\frac{3}{4}\right)\right) = v^2 - v +1.$$

For $v = \frac{3}{4}$ obtain $ \omega(0,584) \approx 0,915. $

Define $\omega$ on the interval $[0,\, 1]$ with the following
rule. $\omega$ is a piecewise linear maps, whose graph consists of
two intervals of linearity and has breaking point at
$$\left( \log_2\frac{3}{4},\, (v^2-v+1)\cdot
\left(\frac{3}{4}\right)^{\log_2v}\right).$$

Graph of $\widetilde{f}_v$ for $v=3/4$, which is constructed by
this $\omega$, is given at Figure~\ref{fig-22}.
\begin{figure}[htbp]
\begin{minipage}[h]{0.9\linewidth}
\begin{center}
\begin{picture}(100,120)
\put(0,0){\vector(0,1){120}} \put(0,0){\vector(1,0){120}}

\put(0,0){\line(3,4){75}} \qbezier[32](75,100)(87.5,50)(100,0)

\qbezier(75,100)(75,100)(76,95) \qbezier(76,95)(76,95)(77,90)
\qbezier(77,90)(77,90)(78,84) \qbezier(78,84)(78,84)(79,80)
\qbezier(79,80)(79,80)(81,75) \qbezier(81,75)(81,75)(83,70)
\qbezier(83,70)(83,70)(85,65) \qbezier(85,65)(85,65)(87,60)
\qbezier(87,60)(87,60)(91,56) \qbezier(91,56)(91,56)(92,52)
\qbezier(92,52)(92,52)(93,46) \qbezier(93,46)(93,46)(96,42)
\qbezier(96,42)(96,42)(96,37) \qbezier(96,37)(96,37)(98,32)
\qbezier(98,32)(98,32)(98,27) \qbezier(98,27)(98,27)(99,21)
\qbezier(99,21)(99,21)(99,17) \qbezier(99,17)(99,17)(100,11)
\qbezier(100,11)(100,11)(100,6) \qbezier(100,6)(100,6)(100,0)

\put(0,0){\line(1,1){100}}
\end{picture}
\end{center}
\end{minipage}
\caption{Graph of $\widetilde{f}_v$ for $v =3/4$}\label{fig-22}
\end{figure}
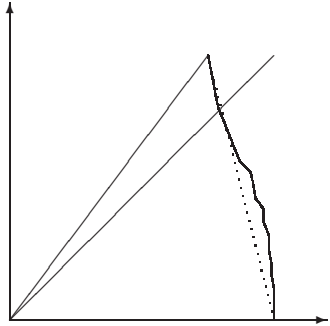
\end{proof}

Examples~\ref{ex:01} and~\ref{ex:02} can be generalized as
follows.

Consider the approximations $\omega_n$ of $\omega$ and use then
for finding approximations $\widetilde{h}_n$ of $h$, which is
given by~(\ref{ex:h:first}), as follows.

For any approximation of $h$ on $x\in \left[\frac{1}{2},\,
1\right]$, we can obtain the approximation of $\omega$ on $[-1,\,
0]$. Then periodicity of $\omega$ would give the approximation of
$h$ on the whole $[0,\, 1]$.

The maps $h_n$, which is constructed in previous sections, moves
points of $A_n$ to $B_n$. Use these $h_n$ to find $\omega$ on
$A_n\cap \left[\frac{1}{2},\, 1\right].$

Define by $\omega_n$ the maps, such that $h_n$ defines its values
at $\{ \log_2x,\, x\in A_n\cap \left[\frac{1}{2},\, 1\right]\}$.
Additionally $\omega_n$ is linear at all points of the set
$A_n\cap \left[\frac{1}{2},\, 1\right]$ and is periodical with
period 1.

Consider the maps
\begin{equation}\label{eq:30} \widetilde{h}_n =
x^{-\log_2v}\omega_n(\log_2x),\end{equation} as the approximation
of $h$.

If the constructed maps $\widetilde{h}_n$ would be invertible,
then there exists a unique maps $\widetilde{f}_n$, such that the
diagram
\begin{equation}\label{eq:27}\begin{CD}
A @>f >> & A\\
@V_{\widetilde{h}_n} VV& @VV_{\widetilde{h}_n}V\\
B @>\widetilde{f}_n>>& B
\end{CD}\end{equation} would be
commutative. This maps $\widetilde{f}_n$ can be given by formula
$$ \widetilde{f}_n = \widetilde{h}_n(f(\widetilde{h}^{-1}_n)).
$$

For instance, the introduced notations give that
$\widetilde{f}_v$, which is constructed in the
example~\ref{ex:02}, is the maps $\widetilde{f}_2$ and the
correspond $h$ is $\widetilde{h}_2$.

Nevertheless, it could happen that the maps $\widetilde{h}_n$
would be non invertible, whence there will not exist
$\widetilde{f}_n$ which would make the diagram~(\ref{eq:27})
commutative.

\begin{example}\label{ex:03}Consider the case, when
the maps $\widetilde{h}_3(x)$, (which is, in fact, dependent on
$v$), is non monotone for some $v$.
\end{example}

\begin{proof}[Deal of the example] The
Figure~\ref{fig-23} contains examples of graphs of maps
$\widetilde{h}_3$ for ${v=0.01}$, ${v=0.025}$, ${v=0.1}$ and
$v=0,2$.

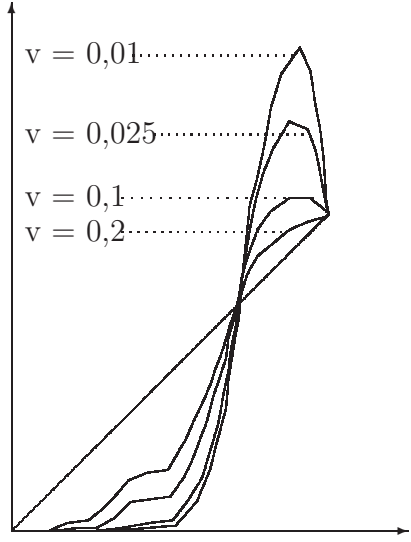
\begin{figure}[htbp]
\begin{minipage}[h]{0.9\linewidth}
\begin{center}
\begin{picture}(150,205)
\put(0,0){\vector(0,1){200}} \put(0,0){\vector(1,0){150}}

\put(5,177){v = 0,01}

\qbezier[20](46,180)(76,180)(106,180)

\put(5,147){v = 0,025}

\qbezier[20](54,150)(82,150)(110,150)

\put(5,123){v = 0,1}

\qbezier[20](42,126)(78,126)(114,126)

\put(5,110){v = 0,2}

\qbezier[20](42,113)(72,113)(102,113)

\qbezier(0,0)(0,0)(7,0) \qbezier(0,0)(0,0)(7,0)
\qbezier(0,0)(0,0)(10,0)    \qbezier(0,0)(0,0)(12,0)
\qbezier(7,0)(7,0)(14,0)    \qbezier(7,0)(7,0)(16,0)
\qbezier(10,0)(10,0)(21,0)  \qbezier(12,0)(12,0)(25,0)
\qbezier(14,0)(14,0)(21,3)  \qbezier(16,0)(16,0)(24,1)
\qbezier(21,0)(21,0)(31,0)  \qbezier(25,0)(25,0)(37,0)
\qbezier(21,3)(21,3)(29,4)  \qbezier(24,1)(24,1)(32,1)
\qbezier(31,0)(31,0)(41,1)  \qbezier(37,0)(37,0)(50,1)
\qbezier(29,4)(29,4)(34,8)  \qbezier(32,1)(32,1)(39,5)
\qbezier(41,1)(41,1)(52,3)  \qbezier(50,1)(50,1)(62,2)
\qbezier(34,8)(34,8)(39,13) \qbezier(39,5)(39,5)(45,11)
\qbezier(52,3)(52,3)(61,4)  \qbezier(62,2)(62,2)(70,11)
\qbezier(39,13)(39,13)(44,19)   \qbezier(45,11)(45,11)(53,12)
\qbezier(61,4)(61,4)(67,12) \qbezier(70,11)(70,11)(75,23)
\qbezier(44,19)(44,19)(51,22)   \qbezier(53,12)(53,12)(60,13)
\qbezier(67,12)(67,12)(72,20)   \qbezier(75,23)(75,23)(78,35)
\qbezier(51,22)(51,22)(59,23)   \qbezier(60,13)(60,13)(65,20)
\qbezier(72,20)(72,20)(75,30)   \qbezier(78,35)(78,35)(81,46)
\qbezier(59,23)(59,23)(62,28)   \qbezier(65,20)(65,20)(68,26)
\qbezier(75,30)(75,30)(78,40)   \qbezier(81,46)(81,46)(82,57)
\qbezier(62,28)(62,28)(66,35)   \qbezier(68,26)(68,26)(71,34)
\qbezier(78,40)(78,40)(80,51)   \qbezier(82,57)(82,57)(84,70)
\qbezier(66,35)(66,35)(69,42)   \qbezier(71,34)(71,34)(74,42)
\qbezier(80,51)(80,51)(82,61)   \qbezier(84,70)(84,70)(86,81)
\qbezier(69,42)(69,42)(72,48)   \qbezier(74,42)(74,42)(76,50)
\qbezier(82,61)(82,61)(83,69)   \qbezier(86,81)(86,81)(87,94)
\qbezier(72,48)(72,48)(75,55)   \qbezier(76,50)(76,50)(78,56)
\qbezier(83,69)(83,69)(85,79)   \qbezier(87,94)(87,94)(88,103)
\qbezier(75,55)(75,55)(77,61)   \qbezier(78,56)(78,56)(81,65)
\qbezier(85,79)(85,79)(86,89)   \qbezier(88,103)(88,103)(90,123)
\qbezier(77,61)(77,61)(80,69)   \qbezier(81,65)(81,65)(82,73)
\qbezier(86,89)(86,89)(88,100)  \qbezier(90,123)(90,123)(93,134)
\qbezier(80,69)(80,69)(82,76)   \qbezier(82,73)(82,73)(84,80)
\qbezier(88,100)(88,100)(89,108) \qbezier(93,134)(93,134)(96,148)
\qbezier(82,76)(82,76)(84,82) \qbezier(84,80)(84,80)(86,89)
\qbezier(89,108)(89,108)(92,124) \qbezier(96,148)(96,148)(98,160)
\qbezier(84,82)(84,82)(87,90) \qbezier(86,89)(86,89)(87,96)
\qbezier(92,124)(92,124)(95,135) \qbezier(98,160)(98,160)(102,173)
\qbezier(87,90)(87,90)(89,97) \qbezier(87,96)(87,96)(89,103)
\qbezier(95,135)(95,135)(99,145)
\qbezier(102,173)(102,173)(109,183) \qbezier(89,97)(89,97)(93,104)
\qbezier(89,103)(89,103)(93,114) \qbezier(99,145)(99,145)(105,155)
\qbezier(109,183)(109,183)(113,174)
\qbezier(93,104)(93,104)(99,109) \qbezier(93,114)(93,114)(98,121)
\qbezier(105,155)(105,155)(112,152)
\qbezier(113,174)(113,174)(115,161)
\qbezier(99,109)(99,109)(105,114)
\qbezier(98,121)(98,121)(105,126)
\qbezier(112,152)(112,152)(115,144)
\qbezier(115,161)(115,161)(117,150)
\qbezier(105,114)(105,114)(112,117)
\qbezier(105,126)(105,126)(113,126)
\qbezier(115,144)(115,144)(117,134)
\qbezier(117,150)(117,150)(118,141)
\qbezier(112,117)(112,117)(119,119)
\qbezier(113,126)(113,126)(119,121)
\qbezier(117,134)(117,134)(119,123)
\qbezier(118,141)(118,141)(119,125)
\qbezier(119,119)(119,119)(120,120)
\qbezier(119,121)(119,121)(120,120)
\qbezier(119,123)(119,123)(120,120)
\qbezier(119,125)(119,125)(120,120)
\qbezier(120,120)(120,120)(0,0) \qbezier(120,120)(120,120)(0,0)
\qbezier(120,120)(120,120)(0,0) \qbezier(120,120)(120,120)(0,0)
\end{picture}
\end{center}
\end{minipage}
\caption{Graphs of $\widetilde{h}_3$ for different values of
$v$}\label{fig-23}
\end{figure}

As a comment to the Figure~\ref{fig-23} notice, that all these
functions satisfy the functional equation~(\ref{eq:syst:line}a) $$
h(2x) = \frac{1}{v}\ h(x),
$$ i.e. for every interval of the form $\left[\frac{1}{2^{k+1}},\,
\frac{1}{2^k}\right]$ the graph of the function repeats its form
on the interval $\left[\frac{1}{2},\, 1\right]$, but it is
compressed $v^{-k}$ times.

The maps $\widetilde{h}_3(x)$, which is calculated for
$v=\frac{1}{2}$, is given by formula $\widetilde{h}_3(x)=x$.

whence, there are some critical value of $v$, such that decreasing
of $v$ after this critical value leads to that
$\widetilde{h}_3(x)$ appears to be non monotone.
\end{proof}

\begin{notation}Denote by $\widehat{h}_n(t)$ the
function such that the equality $$ \widehat{h}_n(\log_2x) =
\widetilde{h}_n(x)
$$ holds.

By numbers $\alpha_k = \alpha_{n,k}$ construct
$\widetilde{\alpha}_k = \log_2\alpha_k$.

Define by $t_k = t_{k,\, n}$ the extremum of the maps
$\widehat{h}_n$ in the interval $(\widetilde{\alpha}_k,\,
\widetilde{\alpha}_{k+1})$. It follows from the construction that
this extremum is unique. We will pay more attention to this fact
later.
\end{notation}

\begin{note}\label{note:new1}Monotonicity of
$\widetilde{h}_n$ is equivalent to that for every $k$ the
inclusion
\begin{equation} \label{eq:29}t_k \in \mathbb{R}\backslash
[\widetilde{\alpha}_k,\, \widetilde{\alpha}_{k+1}] =
\mathbb{R}\backslash [\log_2 \alpha_k,\, \log_2\alpha_{k+1}]
\end{equation} holds.\end{note}

The formula~(\ref{eq:30}) lets to write $\widehat{h}_n(t)$ in the
form
$$ \widehat{h}_n(t) = 2^{-t\log_2v}\omega_n(t).
$$ It follows from the monotonicity
of $y = \log_2x$ that the monotonicity of $\widetilde{h}_n$ is
equivalent to the monotonicity of $\widehat{h}_n$.

It follows from the equality $h_n(\alpha_k) = \beta_k$ that the
following condition for $\omega_n$ holds.
$$\omega_n(\widetilde{\alpha}_k)  =
\beta_k\cdot 2^{\widetilde{\alpha}_k\log_2v}.$$

Denote by $\widetilde{\beta}_k = \beta_k\cdot
2^{\widetilde{\alpha}_k\log_2v} = \beta_kv^{\widetilde{\alpha}_k}.
$

Let the maps $\omega_n$ have the form $$ \omega_n(t) = a_k\cdot t
+ b_k
$$ on the interval $(\widetilde{\alpha}_k,\,
\widetilde{\alpha}_{k+1})$. Then
$$
a_k = \frac{\widetilde{\beta}_{k+1}-\widetilde{\beta}_k}{
\widetilde{\alpha}_{k+1} -\widetilde{\alpha}_k},
$$ $$
b_k = \widetilde{\beta}_k -
\frac{\widetilde{\alpha}_k(\widetilde{\beta}_{k+1}-\widetilde{\beta}_k)}{
\widetilde{\alpha}_{k+1} -\widetilde{\alpha}_k} =
\frac{\widetilde{\beta}_k\widetilde{\alpha}_{k+1} -
\widetilde{\beta}_{k+1}\widetilde{\alpha}_k}{
\widetilde{\alpha}_{k+1} -\widetilde{\alpha}_k}.
$$

Find the extremum of $\widehat{h}_n(t)$ on
$(\widetilde{\alpha}_k,\, \widetilde{\alpha}_{k+1})$. We have that
$$ \widehat{h}_n'(t) = 2^{-t\log_2v}(a_k -\log_2v\ln 2\cdot (a_kt
+b_k)),
$$
whence the necessary extremum of $\widehat{h}_n(t)$ can be found
as

\begin{equation}\label{eq:28}t_k = \frac{a_k -
b_k\log_2v \ln 2}{a_k\log_2v \ln 2} = \frac{1}{\ln v} -
\frac{b_k}{a_k}.
\end{equation}

It follows from the previous calculations that$$ \frac{b_k}{a_k} =
\frac{\widetilde{\beta}_k\widetilde{\alpha}_{k+1} -
\widetilde{\beta}_{k+1}\widetilde{\alpha}_k}
{\widetilde{\beta}_{k+1}-\widetilde{\beta}_k}.
$$

After coming back to former notations obtain
$$ \frac{b_k}{a_k} = \frac{\beta_kv^{\log_2
\alpha_k}\log_2\alpha_{k+1} - \beta_{k+1}v^{\log_2
\alpha_{k+1}}\log_2\alpha_{k}} {\beta_{k+1}v^{\log_2
\alpha_{k+1}}-\beta_kv^{\log_2 \alpha_k}}
$$

Since $\alpha(k,\, n) = \frac{k}{2^n}$ and $A_n = \{ \alpha(k,\,
n-1)\}$ we have that $\alpha_k = \frac{k}{2^{n-1}}$, whence
$$\log_2\alpha_k = \log_2k - n+1.$$ This means that
$$ \frac{b_k}{a_k} = \frac{\beta_kv^{\log_2k -
n+1}\log_2\alpha_{k+1}} {\beta_{k+1}v^{\log_2(k+1) -
n+1}-\beta_kv^{\log_2k - n+1}} - $$ $$ -
\frac{\beta_{k+1}v^{\log_2(k+1) - n+1}\log_2\alpha_{k}}
{\beta_{k+1}v^{\log_2(k+1) - n+1}-\beta_kv^{\log_2k - n+1}} =
$$ $$ =
\frac{\beta_kv^{\log_2k}\log_2\alpha_{k+1}}
{\beta_{k+1}v^{\log_2(k+1)}-\beta_kv^{\log_2k}} - $$
$$ - \frac{\beta_{k+1}v^{\log_2(k+1)}\log_2\alpha_{k}}
{\beta_{k+1}v^{\log_2(k+1)}-\beta_kv^{\log_2k}} =
$$ $$ =
\frac{\beta_kv^{\log_2k}\left( \log_2(k+1) -n+1\right)}
{\beta_{k+1}v^{\log_2(k+1)}-\beta_kv^{\log_2k}}\ -
$$
$$ -\ \frac{\beta_{k+1}v^{\log_2(k+1)}\left(\log_2k -n+1\right)}
{\beta_{k+1}v^{\log_2(k+1)}-\beta_kv^{\log_2k}} =
$$ $$
=n-1 +\ \frac{\beta_kv^{\log_2k}\log_2(k+1)}
{\beta_{k+1}v^{\log_2(k+1)}-\beta_kv^{\log_2k}} - $$ $$-\
\frac{\beta_{k+1}v^{\log_2(k+1)}\log_2k}
{\beta_{k+1}v^{\log_2(k+1)}-\beta_kv^{\log_2k}}.
$$

Example~\ref{ex:03} leads to the assumption about non monotonicity
of $h_n$ for $v\rightarrow 0$ which also stays for big $n$. The
use of formula~(\ref{eq:28}) lets to prove the following
proposition.

\begin{proposition}\label{lema:12} For
every $n\in \mathbb{N}$ there exists $v_0 \in (0,\, 1)$ such that
for every $v\in (0,\, v_0)$ the maps $\widetilde{h}_n(x)$ is non
monotonic on $\left[ \frac{2^{n-1}-1}{2^{n-1}},\, 1\right]$.
\end{proposition}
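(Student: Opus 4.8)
The plan is to pass to the logarithmic coordinate $t=\log_2 x$ and apply Remark~\ref{note:new1}: $\widetilde h_n$ is non-monotone on the last piece $\left[\frac{2^{n-1}-1}{2^{n-1}},\,1\right]$ exactly when the (unique) critical point $t_{k^*}$ of $\widehat h_n$ attached to that piece falls inside the corresponding $t$-interval. Here $k^*=2^{n-1}-1$, so $\alpha_{k^*}=\frac{2^{n-1}-1}{2^{n-1}}$, $\alpha_{k^*+1}=1$, and in $t$-coordinates the interval is $\bigl(\widetilde\alpha_{k^*},\widetilde\alpha_{k^*+1}\bigr)=\bigl(\log_2(1-2^{1-n}),\,0\bigr)$. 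Thus the whole problem reduces to locating $t_{k^*}$ via formula~(\ref{eq:28}) and showing $t_{k^*}\in\bigl(\log_2(1-2^{1-n}),\,0\bigr)$ for $v$ close to $0$.

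First I would make the coefficients $a_{k^*},b_{k^*}$ of $\omega_n$ on this last piece explicit. Since $\widetilde\alpha_{k^*+1}=\log_2 1=0$ and $\beta_{n,2^{n-1}}=1$ (the top point of $B_n$), one gets $\widetilde\beta_{k^*+1}=1$, hence $b_{k^*}=1$ and $a_{k^*}=\dfrac{1-\widetilde\beta_{k^*}}{L_n}$, where I write $L_n:=-\log_2(1-2^{1-n})>0$ and $\widetilde\beta_{k^*}=\beta_{n,2^{n-1}-1}\,v^{-L_n}$. The only nontrivial ingredient is the value of $\beta_{n,2^{n-1}-1}$, the element of $B_n$ immediately below $1$. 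Using the branch formulas of Lemmas~\ref{lema:vlastfv1}--\ref{lema:vlastfv2} (equivalently Corollary~\ref{corol:2}) together with induction on $n$, one checks that for $v$ small the smallest positive element of $B_m$ is $v^{m-1}$, and that the element of $B_n$ just below $1$ is its right-branch preimage, namely $\beta_{n,2^{n-1}-1}=1-(1-v)v^{n-2}$. (For $n\le 2$ this degenerates — there $\omega_n\equiv1$ and $\widetilde h_n(x)=x^{-\log_2 v}$ is monotone — so the statement is understood for $n\ge3$, which is the substantive case, the one suggested by Example~\ref{ex:03}.) Plugging into~(\ref{eq:28}) yields
$$
t_{k^*}=\frac{1}{\ln v}-\frac{b_{k^*}}{a_{k^*}}=\frac{1}{\ln v}+\frac{L_n}{\widetilde\beta_{k^*}-1},\qquad \widetilde\beta_{k^*}=\bigl(1-(1-v)v^{n-2}\bigr)v^{-L_n}.
$$

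Next I would let $v\to0^+$. For fixed $n\ge3$ we have $1-(1-v)v^{n-2}\to1$ and $v^{-L_n}\to+\infty$, so $\widetilde\beta_{k^*}\to+\infty$; consequently $a_{k^*}\to-\infty$, the term $\frac{L_n}{\widetilde\beta_{k^*}-1}$ behaves like a positive constant times $v^{L_n}$ and tends to $0^+$, while $\frac1{\ln v}=-\frac1{\ln(1/v)}\to0^-$. Hence $t_{k^*}\to0$, which gives $t_{k^*}>\log_2(1-2^{1-n})$ for all small $v$; and since $\bigl|\tfrac1{\ln v}\bigr|$ tends to $0$ far more slowly than $v^{L_n}$, the negative summand dominates, so $t_{k^*}<0$ for all small $v$. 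Therefore $t_{k^*}\in\bigl(\log_2(1-2^{1-n}),\,0\bigr)$ once $v<v_0$ for a suitable $v_0=v_0(n)$. Finally, a short computation shows this critical point is a genuine change of monotonicity: on the last piece $\widehat h_n(t)=2^{-t\log_2 v}(a_{k^*}t+b_{k^*})$ with $b_{k^*}=1>0$ and $a_{k^*}<0$, so the sign of $\widehat h_n'(t)$ equals the sign of the affine function $t\mapsto-\ln v\,(a_{k^*}t+b_{k^*})+a_{k^*}$, whose slope $-\ln v\cdot a_{k^*}$ is negative; hence $\widehat h_n'$ (equivalently $\widetilde h_n'$) passes from $+$ to $-$ at $t_{k^*}$, so $\widetilde h_n$ increases then decreases on $\left[\frac{2^{n-1}-1}{2^{n-1}},\,1\right]$, which is the assertion.

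The main obstacle I expect is Step~2: pinning down $\beta_{n,2^{n-1}-1}$ as an explicit function of $v$ requires a clean description of which points of $B_n=f_v^{-n}(0)$ cluster near $1$ when $v$ is small, i.e. an inductive bookkeeping of the preimage tree through the two branches of $f_v$. A secondary, easy, subtlety is the asymptotic comparison $|1/\ln v|\gg v^{L_n}$ as $v\to0$, and isolating the degenerate low-dimensional cases $n\le2$, where $\widetilde h_n$ is in fact monotone.
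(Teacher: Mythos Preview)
Your proposal is correct and follows essentially the same line as the paper's proof: both compute $\beta_{n,2^{n-1}-1}=1-(1-v)v^{n-2}$, plug into formula~(\ref{eq:28}) on the last piece, observe that the two summands $\frac{1}{\ln v}$ and $\frac{L_n}{\widetilde\beta_{k^*}-1}$ both tend to $0$ with opposite signs, and then argue that the negative term dominates so that $t_{k^*}\in(\widetilde\alpha_{k^*},0)$ for small $v$. Your asymptotic comparison $|1/\ln v|\gg v^{L_n}$ is a cleaner substitute for the explicit L'H\^opital-type ratio computation the paper carries out, and your remark that the cases $n\le 2$ degenerate (there $\omega_n\equiv 1$ and $\widetilde h_n$ is monotone) is a correct caveat that the paper's statement and proof tacitly overlook. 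One small point: the identity $\beta_{n,2^{n-1}-1}=1-(1-v)v^{n-2}$ holds for all $v\in(0,1)$, not just small $v$ --- it comes directly from the slope of $f_v^{n-1}$ on its rightmost monotone piece --- so you need not restrict to small $v$ when deriving it.
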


\begin{proof}Find explicit formulas for $\beta(n,\,
2^{n-1}-1)$. We know that
$$\left\{ \begin{array}{l} f_v^n(\beta(n,\,
2^{n-1}-1))=0;\\ f_v^{n-1}(\beta(n,\, 2^{n-1}-1))=1.
\end{array}\right.$$

The tangent of $f_v^{n-1}$ at the last interval of its
monotonicity is $\frac{1}{v-1}\cdot \frac{1}{v^{n-2}}$, which mens
that $$ \beta(n,\, 2^{n-1}-1) = 1 + v^{n-2}\cdot(v-1).
$$

To use Formula~(\ref{eq:28}) for finding $t_k$, we should consider
$k = 2^{n-1}-1$, $\beta_k = 1+v^{n-2}\cdot(v-1)$, $\beta_{k+1}
=1$, $\alpha_k = 1 - \frac{1}{2^{n-1}}$, $\alpha_{k+1} =1$. Thus
$\log_2\alpha_{k+1}=0$, whence
$$ t_k = \frac{1}{\ln v} - $$ $$-
\frac{\beta_kv^{\log_2 \alpha_k}\log_2\alpha_{k+1} -
\beta_{k+1}v^{\log_2 \alpha_{k+1}}\log_2\alpha_{k}}
{\beta_{k+1}v^{\log_2 \alpha_{k+1}}-\beta_kv^{\log_2 \alpha_k}} =
$$$$ = \frac{1}{\ln v} +
\frac{\log_2\alpha_{k}} {1-(1+v^{n-2}\cdot(v-1))v^{\log_2
\alpha_k}}.
$$

Since $v\in (0,\, 1)$, then $\frac{1}{\ln v}<0$ and $\frac{1}{\ln
v}\rightarrow 0$ if $v\rightarrow 0$.

Since $\log_2\alpha_k<0$, then numerator of the second fraction of
negative. But the same reason mean that
$v^{\log_2\alpha_k}\rightarrow \infty$ for $v\rightarrow 0$,
whence denominator of the second fraction is also negative, whence
second fraction is positive, but tends to 0 if $v\rightarrow 0$,
whence $t_k\rightarrow 0$ for $v\rightarrow 0$.

By Remark~\ref{note:new1}, to prove the Proposition we need to
decide, whether or not the inclusion $t\in
(\widetilde{\alpha}_k,\, \widetilde{\alpha}_{k+1})$ holds. Since
$\widetilde{\alpha}_k = \log_2\left(1-\frac{1}{2^{n-1}}\right)<0$
and $\widetilde{\alpha}_{k+1} = \log_21 = 0$, then we have to
understand which the sign of $t_k$ is for $v\approx 0$ and whether
this sign is fixed, or $t_k$ waves about $0$, changing its signs,
dependently in $v$.

Consider the limit $$ \lim\limits_{v\rightarrow 0}
\frac{\log_2\alpha_{k}} {1-(1+v^{n-2}\cdot(v-1))v^{\log_2
\alpha_k}}:\, \frac{1}{\ln v} =
$$$$
=\lim\limits_{v\rightarrow 0} \frac{-\log_2\alpha_{k}\cdot\,
1/v}{\log_2\alpha_k v^{\log_2\alpha_k-1} +
(n-1+\log_2\alpha_k)v^{n-2+\log_2\alpha_k}-
(n-2+\log_2\alpha_k)v^{n-3+\log_2\alpha_k}}=
$$$$
=\lim\limits_{v\rightarrow 0} \frac{-\log_2\alpha_{k}}
{\log_2\alpha_k v^{\log_2\alpha_k} +
(n-1+\log_2\alpha_k)v^{n-1+\log_2\alpha_k}-
(n-2+\log_2\alpha_k)v^{n-2+\log_2\alpha_k}} =
$$$$
=\lim\limits_{v\rightarrow \infty} \frac{-\log_2\alpha_{k}}
{\log_2\alpha_k v^{-\log_2\alpha_k} +
(n-1+\log_2\alpha_k)v^{-n+1-\log_2\alpha_k}-
(n-2+\log_2\alpha_k)v^{-n+2-\log_2\alpha_k}} \sim
$$$$
\sim \lim\limits_{v\rightarrow \infty} \frac{-\log_2\alpha_{k}}
{\log_2\alpha_k v^{-\log_2\alpha_k}} = \lim\limits_{v\rightarrow
\infty} -v^{\log_2\alpha_k} =0.
$$

This means that $t_k<0$ for $v\approx 0$. Now Proposition follows
from Remark~\ref{note:new1}.
\end{proof}

We can use Formula~(\ref{eq:28}) for studying the
Example~\ref{ex:03} more carefully, precisely study the numbers
$t_2$ and $t_3$ dependently on $v$ for $n=3$.

\begin{example}Consider $t_2(v)$ for ${n=3}$. We
will show an unsuccessful attempt of studying of the limit
$\lim\limits_{v\rightarrow 1} t_2(v)$. Precisely, we will show
that this limit exists and equals 1 in the time, when numerical
methods shows like the limit does not exists.
\end{example}

\begin{proof}For $n=3$ consider the division of the
interval $\left[\frac{1}{2},\, 1\right]$ for two intervals with
end points $\alpha_2 = \frac{1}{2}$, $\alpha_3 =\frac{3}{4}$ and
$\alpha_4 = 1$. Correspondingly, $\beta_2 = v$, $\beta_3 = \max
f_v^{-1}(v) = 1-v(1-v)$ and $\beta_4 =1$.

Then$$ t_2 = \frac{1}{\ln v} -2 -
\frac{\beta_kv^{\log_2k}\log_2(k+1)}
{\beta_{k+1}v^{\log_2(k+1)}-\beta_kv^{\log_2k}} + $$ $$+\
\frac{\beta_{k+1}v^{\log_2(k+1)}\log_2k}
{\beta_{k+1}v^{\log_2(k+1)}-\beta_kv^{\log_2k}} =
$$$$
= \frac{1}{\ln v} -2 + \frac{(v^2-v+1)v^{\log_23} -v^2\log_23}
{(v^2-v+1)v^{\log_23}-v^2} = $$ $$ = \frac{1}{\ln v} -1 +
\frac{v^2 -v^2\log_23}{(v^2-v+1)v^{\log_23}-v^2}$$
$$
t_2 +1 = \frac{1}{\ln v} + \frac{v^2
-v^2\log_23}{(v^2-v+1)v^{\log_23}-v^2}$$

It is necessary and sufficient for being $\widetilde{h}_2$
continuous on $[\alpha_2,\, \alpha_3]$ that the following
inclusion hold.
$$t_2\in \mathbb{R}\backslash [-1,\,
\log_23-2] \approx \mathbb{R}\backslash [-1,\, -0.415].$$

The numerical calculations about $t_2(v)$ let make the conclusion
that it decrease on $v\in [0,\, 0.5]$ and has the asymptote
$v=0.5$, i.e. $\lim\limits_{v\rightarrow 0.5-}t_2 = -\infty$ in
the time, when $\lim\limits_{v\rightarrow 0}t_2 = -1$.

If plot the graph of the maps $t_2(v)$ for $v\in [0,5,\, 1]$ on
the interval $[0,99999,\, 1]$ then see that it is non
non-monotone. Thus, for $v\in [0.6,\, 0.999999]$ the graph of
$t_2(v)$ is given on Figure~\ref{fig-24}.

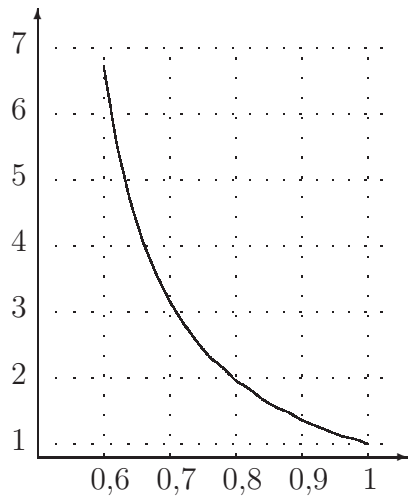
\begin{figure}[htbp]
\begin{minipage}[h]{0.9\linewidth}
\begin{center}
\begin{picture}(150,205)
\put(0,15){\vector(0,1){170}} \put(0,15){\vector(1,0){140}}

\qbezier(25,163)(25,163)(30,133) \qbezier(30,133)(30,133)(35,112)
\qbezier(35,112)(35,112)(40,96) \qbezier(40,96)(40,96)(45,84)
\qbezier(45,84)(45,84)(50,74) \qbezier(50,74)(50,74)(55,66)
\qbezier(55,66)(55,66)(60,59) \qbezier(60,59)(60,59)(65,53)
\qbezier(65,53)(65,53)(70,49) \qbezier(70,49)(70,49)(75,44)
\qbezier(75,44)(75,44)(80,41) \qbezier(80,41)(80,41)(85,37)
\qbezier(85,37)(85,37)(90,34) \qbezier(90,34)(90,34)(95,32)
\qbezier(95,32)(95,32)(100,29) \qbezier(100,29)(100,29)(105,27)
\qbezier(105,27)(105,27)(110,25) \qbezier(110,25)(110,25)(115,23)
\qbezier(115,23)(115,23)(120,22) \qbezier(120,22)(120,22)(125,20)

\qbezier[20](0,20)(70,20)(130,20) \put(-10,18){1}
\qbezier[20](0,45)(70,45)(130,45) \put(-10,43){2}
\qbezier[20](0,70)(70,70)(130,70) \put(-10,68){3}
\qbezier[20](0,95)(70,95)(130,95) \put(-10,93){4}
\qbezier[20](0,120)(70,120)(130,120) \put(-10,118){5}
\qbezier[20](0,145)(70,145)(130,145) \put(-10,143){6}
\qbezier[20](0,170)(70,170)(130,170) \put(-10,168){7}

\qbezier[20](25,16)(25,91)(25,166) \put(20,3){0,6}
\qbezier[20](50,16)(50,91)(50,166) \put(45,3){0,7}
\qbezier[20](75,16)(75,91)(75,166) \put(70,3){0,8}
\qbezier[20](100,16)(100,91)(100,166) \put(95,3){0,9}
\qbezier[20](125,16)(125,91)(125,166) \put(123,3){1}

\end{picture}
\end{center}
\end{minipage}
\caption{Graph of $t_2$ for $v\in [0.6,\, 0.999999]$
}\label{fig-24}
\end{figure}

In the same time the graph of $t_2(v)$ for $v\in M=[0.99999999,\,
0.999999995]$ is given at Figure~\ref{fig-25}. Precisely, this
plot is not a graph of $t_2(v)$ but its values at correspond
points (looking like braking points), which divide $M$ into equal
parts. These points are just connected by line segments to obtain
a graph.

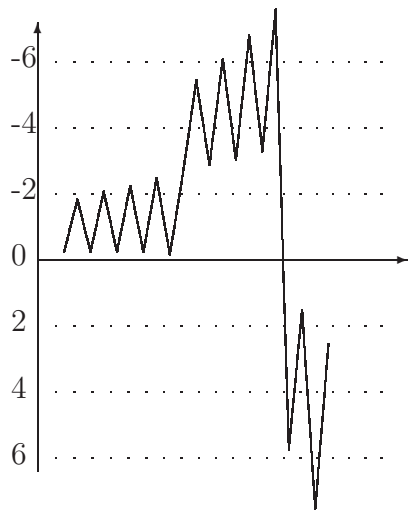
\begin{figure}[htbp]
\begin{minipage}[h]{0.9\linewidth}
\begin{center}
\begin{picture}(150,205)
\put(0,15){\vector(0,1){170}} \put(0,95){\vector(1,0){140}}

\qbezier[20](0,20)(70,20)(130,20) \put(-10,18){6}
\qbezier[20](0,45)(70,45)(130,45) \put(-10,43){4}
\qbezier[20](0,70)(70,70)(130,70) \put(-10,68){2} \put(-10,93){0}
\qbezier[20](0,120)(70,120)(130,120) \put(-10,118){-2}
\qbezier[20](0,145)(70,145)(130,145) \put(-10,143){-4}
\qbezier[20](0,170)(70,170)(130,170) \put(-10,168){-6}

\qbezier(10,98)(10,98)(15,118) \qbezier(15,118)(15,118)(20,98)
\qbezier(20,98)(20,98)(25,121) \qbezier(25,121)(25,121)(30,98)
\qbezier(30,98)(30,98)(35,123) \qbezier(35,123)(35,123)(40,98)
\qbezier(40,98)(40,98)(45,126) \qbezier(45,126)(45,126)(50,97)
\qbezier(50,97)(50,97)(55,128) \qbezier(55,128)(55,128)(60,163)
\qbezier(60,163)(60,163)(65,131) \qbezier(65,131)(65,131)(70,171)
\qbezier(70,171)(70,171)(75,133) \qbezier(75,133)(75,133)(80,180)
\qbezier(80,180)(80,180)(85,136) \qbezier(85,136)(85,136)(90,190)
\qbezier(90,190)(90,190)(95,23) \qbezier(95,23)(95,23)(100,76)
\qbezier(100,76)(100,76)(105,1) \qbezier(105,1)(105,1)(110,63)

\end{picture}
\end{center}
\end{minipage}
\caption{Graph of $t_2$ for $v\in M$}\label{fig-25}
\end{figure}

Notice, that in spite of the evident continuity of $t_2(v)$ we can
see on the graph the points of discontinuity of $\widehat{h}_3$ on
$[\alpha_2,\, \alpha_3]$.

Further experiments show that the maps $t_2(v)$ a point of its
discontinuity on $[\alpha_2,\, \alpha_3]$ on each of the intervals
of the form $\left[ 1-\frac{1}{10^{m+1}},\,
1-\frac{1}{10^{m}}\right]$.

In the same time these experiments does not make any influence to
the mathematical nature of the function under the consideration,
because the limit $$ \lim\limits_{v \rightarrow 1}\left(
\frac{1}{\ln v} + \frac{v^2(1-\log_23)}{(v^2-v+1)v^{\log_23}
-v^2}\right)
$$ is a limit of the difference of expressions,
such that each of them tends to $+\infty$, whence the numerical
investigation of this limit can lead to the mistake and is that,
what has happened in our case.

Since $$ \lim\limits_{v\rightarrow \infty}t_2(v)+1 = \frac{1}{0} +
\frac{0}{0} = \frac{0}{0},
$$ we can write this expression with as a proper fraction and
apply the L'hopitales rule, obtaining as follows
$$ t_2 +1
=
$$ $$= \frac{(v^2-v+1)v^{\log_23}-v^2+ \ln v(v^2 -v^2\log_23)}{\ln
v((v^2-v+1)v^{\log_23}-v^2)}.
$$

Denote by $$ s(v)= (v^2-v+1)v^{\log_23}-v^2+ \ln v(v^2
-v^2\log_23)
$$ and $$
p(v) = \ln v((v^2-v+1)v^{\log_23}-v^2).
$$

Then $ s'(v) = (2v-1)v^{\log_23}
+\log_23(v^2-v\hm{+}1)v^{\log_23-1} - 2v + (v-v\log_23) \hm{+}
2(v-v\log_23)\ln v, $ whence $s'(1) = 0$. Also $p'(v) =
(v-1)v^{\log_23} + v^{\log_23-1}-v \hm{+} ((2v-1)v^{\log_23} +
\log_23(v^2-v+1)v^{\log_23-1} -2v)\ln v$, whence $p'(1) = 0$

The second derivative $s''(v)$ and $p''(v)$ are as follows

$s''(v) = 2v^{\log_23} +(2v-1)v^{\log_23-1}\log_23
+\log_23(\log_23-1)(v^2-v+1)v^{\log_23-2}
+\log_23(2v-1)v^{\log_23-1} -2 +(1-\log_23) +2(1-\log_23)
+2(1-\log_23)\ln v$,whence$s''(1) = 2 +\log_23 +\log_23(\log_23-1)
+\log_23 -2 +(1-\log_23) +2(1-\log_23) = 3 +\log_23(\log_23-1)
-\log_23 = 3 +(\log_23)^2 -2\log_23  \approx 2,3422$.

$p''(v) = v^{\log_23} +\log_23(v-1)v^{\log_23-1} +
(\log_23-1)v^{\log_23-2} -1 + ((2v-1)v^{\log_23-1}
+\log_23(v^2-v+1)v^{\log_23-2} -2) + (\log_23(2v-1)v^{\log_23-1}
+2v^{\log_23} +(2v-1)v^{\log_23-1}
+\log_23(\log_23-1)(v^2-v+1)v^{\log_23-2} -2)\ln v$,whence$p''(1)
= 1 +0 + (\log_23-1) -1 + (1 +\log_23 -2) = 2\log_23 -2 \approx
1,1699$.

That is why, $$ t_2(v) \rightarrow \frac{s''(1)}{p''(1)} -1
\approx 1,0020.$$
\end{proof}

\begin{example}Consider the function $t_3(v)$ for
${n=3}$. We will show, that $\widehat{h}_3(x)$ becomes to be non
monotone for all $v<v_0$, where $v_0 = 0,18867\pm 0,00001$. This
result is concordant with Example~\ref{ex:03}.
\end{example}

\begin{proof}[Deal of the example] Consider the
monotonicity of $\widehat{h}_3(x)$ on the interval $[\alpha_3,\,
\alpha_4]$ for $n=3$. In this case
$$ t_3 = \frac{1}{\ln v} -n+1 -\
\frac{\beta_kv^{\log_2k}\log_2(k+1)}
{\beta_{k+1}v^{\log_2(k+1)}-\beta_kv^{\log_2k}} + $$ $$+\
\frac{\beta_{k+1}v^{\log_2(k+1)}\log_2k}
{\beta_{k+1}v^{\log_2(k+1)}-\beta_kv^{\log_2k}} =
$$$$
= \frac{1}{\ln v} -2 -\ \frac{2(v^2-v+1)v^{\log_23}}
{v^{2}-(v^2-v+1)v^{\log_23}} + $$ $$+\
\frac{v^{2}\log_23}{v^{2}-(v^2-v+1)v^{\log_23}} =
$$$$
= \frac{1}{\ln v} +\ \frac{v^{2}\log_23
-2v^2}{v^{2}-(v^2-v+1)v^{\log_23}}.
$$

The inclusion $$ t_3 \in [\log_23-2,\, 0] \approx [-0.415,\, 0]
$$ is necessary for violating of the monotonicity of
$\widetilde{h}_3$.

The graph of $t_3(v)$ for $v\in [0,\, 0,21]$ is given at
Figure~\ref{fig-26}.

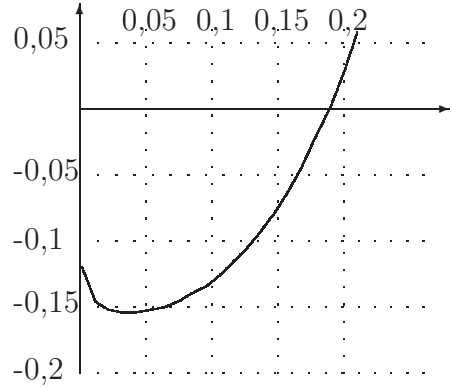
\begin{figure}[htbp]
\begin{minipage}[h]{0.9\linewidth}
\begin{center}
\begin{picture}(150,140)
\put(0,0){\vector(0,1){140}} \put(0,100){\vector(1,0){140}}

\qbezier[20](0,0)(70,0)(130,0) \put(-25,-2){-0,2}
\qbezier[20](0,25)(70,25)(130,25) \put(-25,23){-0,15}
\qbezier[20](0,50)(70,50)(130,50) \put(-25,47){-0,1}
\qbezier[20](0,75)(70,75)(130,75) \put(-25,73){-0,05}
\qbezier[20](0,125)(70,125)(130,125) \put(-25,123){0,05}

\qbezier(1,40)(1,40)(6,27) \qbezier(6,27)(6,27)(11,24)
\qbezier(11,24)(11,24)(16,23) \qbezier(16,23)(16,23)(21,23)
\qbezier(21,23)(21,23)(27,24) \qbezier(27,24)(27,24)(32,25)
\qbezier(32,25)(32,25)(37,27) \qbezier(37,27)(37,27)(42,30)
\qbezier(42,30)(42,30)(48,33) \qbezier(48,33)(48,33)(53,37)
\qbezier(53,37)(53,37)(58,42) \qbezier(58,42)(58,42)(63,47)
\qbezier(63,47)(63,47)(68,53) \qbezier(68,53)(68,53)(74,61)
\qbezier(74,61)(74,61)(79,69) \qbezier(79,69)(79,69)(84,78)
\qbezier(84,78)(84,78)(89,89) \qbezier(89,89)(89,89)(95,101)
\qbezier(95,101)(95,101)(100,114)
\qbezier(100,114)(100,114)(105,129)

\qbezier[20](25,0)(25,65)(25,130) \put(16,130){0,05}
\qbezier[20](50,0)(50,65)(50,130) \put(44,130){0,1}
\qbezier[20](75,0)(75,65)(75,130) \put(66,130){0,15 }
\qbezier[20](100,0)(100,65)(100,130) \put(94,130){0,2}

\end{picture}
\end{center}
\end{minipage}
\caption{Graph of $t_3$ for $v\in [0,\, 0,21]$.}\label{fig-26}
\end{figure}

As it is shown in Example~\ref{ex:03} there is a point near
$v_0\approx 0,2$, where $\widetilde{h}_3$ is non-monotone for all
$\widetilde{h}_3$. Another calculations show that this value
belongs to the interval
$$v_0\in [0.18868,\, 0.18869],$$ i.e. $$ v_0 \approx 0,18867 \pm
0,00001.
$$

As about the graph of $t_3(v)$ on $[0.2,\, 0.5]$, it has asymptote
at the point $0.5$ to positive infinity.
\end{proof}

Do the similar calculation for $n=4$. For this reason for every
$v$ and for each of intervals $[\frac{1}{2},\, \frac{5}{8}]$,
$[\frac{5}{8},\, \frac{3}{4}]$, $[\frac{3}{4},\, \frac{7}{8}]$ and
$[\frac{7}{8},\, 1]$ find the value of $t_k$ by
Formula~(\ref{eq:28}).

For different $v$ and every natural $k\in [4,\, 7]$ find $t_k(v)$
by obtaining $4$ points. These points, just for visibility,
connect by line segments, i.e. for every of these $v$ it
corresponds one line segments. Also add to the Figure (and plot by
bold), two curves, one of which connects numbers $\{
\widetilde{\alpha}_k\}$ and $\{ \widetilde{\alpha}_{k+1}\}$ for
the same $k\in [4,\, 7]$. Taking into account the previous
explanations, the maps $\widehat{h}_{n,\, v}$ would be monotone if
and only if each of the obtained four points would not appear
between the bold corves, i.e. appears either higher then the
higher, of lower then lowest. This construction is given at
Figure~\ref{fig-27}.

\begin{figure}[htbp]
\begin{minipage}[h]{0.9\linewidth}
\begin{center}
\begin{picture}(200,140)
\put(0,0){\vector(0,1){140}}

\linethickness{0.7mm} \qbezier(45,30)(45,30)(95,62)
\qbezier(45,62)(45,62)(95,88) \qbezier(95,62)(95,62)(145,88)
\qbezier(95,88)(95,88)(145,111) \qbezier(145,88)(145,88)(195,111)
\qbezier(145,111)(145,111)(195,130) \linethickness{0.1mm}
\qbezier(45,34)(45,34)(95,28)\put(45,34){\circle*{4}}
\qbezier(45,60)(45,60)(95,46)\put(45,60){\circle*{4}}
\qbezier(45,75)(45,75)(95,59)\put(45,75){\circle*{4}}
\qbezier(45,96)(45,96)(95,80)\put(45,96){\circle*{4}}
\qbezier(45,107)(45,107)(95,97)\put(45,107){\circle*{4}}
\qbezier(45,23)(45,23)(95,21)\put(45,23){\circle*{4}}
\qbezier(45,115)(45,115)(95,113)\put(45,115){\circle*{4}}
\qbezier(45,121)(45,121)(95,121)\put(45,121){\circle*{4}}
\qbezier(95,28)(95,28)(145,51)\put(95,28){\circle*{4}}
\qbezier(95,46)(95,46)(145,67)\put(95,46){\circle*{4}}
\qbezier(95,59)(95,59)(145,78)\put(95,59){\circle*{4}}
\qbezier(95,80)(95,80)(145,94)\put(95,80){\circle*{4}}
\qbezier(95,97)(95,97)(145,105)\put(95,97){\circle*{4}}
\qbezier(95,21)(95,21)(145,45)\put(95,21){\circle*{4}}
\qbezier(95,113)(95,113)(145,115)\put(95,113){\circle*{4}}
\qbezier(95,121)(95,121)(145,121)\put(95,121){\circle*{4}}
\qbezier(145,51)(145,51)(195,62)\put(145,51){\circle*{4}}\put(195,62){\circle*{4}}
\qbezier(145,67)(145,67)(195,77)\put(145,67){\circle*{4}}\put(195,77){\circle*{4}}
\qbezier(145,78)(145,78)(195,85)\put(145,78){\circle*{4}}\put(195,85){\circle*{4}}
\qbezier(145,94)(145,94)(195,98)\put(145,94){\circle*{4}}\put(195,98){\circle*{4}}
\qbezier(145,105)(145,105)(195,107)\put(145,105){\circle*{4}}\put(195,107){\circle*{4}}
\qbezier(145,45)(145,45)(195,57)\put(145,45){\circle*{4}}\put(195,57){\circle*{4}}
\qbezier(145,115)(145,115)(195,115)\put(145,115){\circle*{4}}\put(195,115){\circle*{4}}
\qbezier(145,121)(145,121)(195,121)\put(145,121){\circle*{4}}\put(195,121){\circle*{4}}

\end{picture}
\end{center}
\end{minipage}
\caption{Graphs of $t_k(v)$ for $k\in [4,\, 7]$ and
$n=3$}\label{fig-27}
\end{figure}
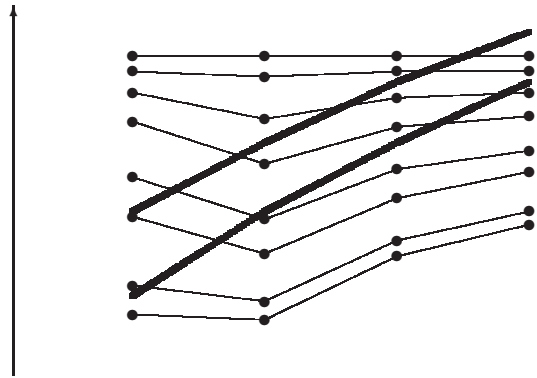

This Figure contains these curves for $v_1=0,15$, $v_2=0,1$,
$v_3=0,07$, $v_4=0,03$, $v_5=0,17$, $v_6=0,01$, $v_7=0,001$ and
$v_8=0,00001$. The curves, which are top at Figure, correspond to
lower values of $v$.

It is seen from the Figure (and it corresponds to
Proposition~\ref{lema:12}), that for $v$ small enough, curves
becomes close to the horizontal line $y=1$ and the last point of
each of them (that, which corresponds to the interval
$[\alpha_7,\, 1]$) tends to 1 for $v\rightarrow 0$ and that is
why, belongs to the interval $(\widetilde{\alpha}_7,\,
\widetilde{\alpha}_8) = (\widetilde{\alpha}_7,\, 0)$. This means
non monotonicity of the maps $\widetilde{h}_n$ on the interval
$(\alpha_7,\, \alpha_8) = (\alpha_7,\, 1)$.

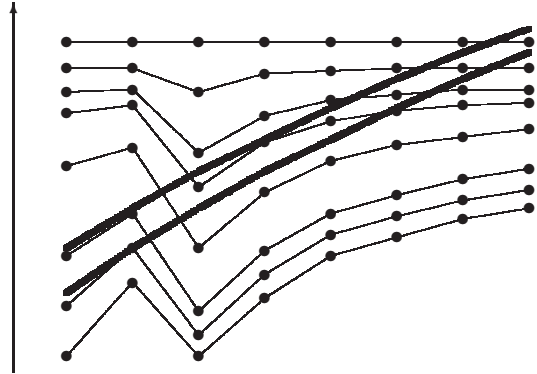
\begin{figure}[htbp]
\begin{minipage}[h]{0.9\linewidth}
\begin{center}
\begin{picture}(200,140)
\put(0,0){\vector(0,1){140}}

\linethickness{0.7mm} \qbezier(20,30)(20,30)(45,47)
\qbezier(20,47)(20,47)(45,62) \qbezier(45,47)(45,47)(70,62)
\qbezier(45,62)(45,62)(70,76) \qbezier(70,62)(70,62)(95,76)
\qbezier(70,76)(70,76)(95,88) \qbezier(95,76)(95,76)(120,88)
\qbezier(95,88)(95,88)(120,100) \qbezier(120,88)(120,88)(145,100)
\qbezier(120,100)(120,100)(145,111)
\qbezier(145,100)(145,100)(170,111)
\qbezier(145,111)(145,111)(170,121)
\qbezier(170,111)(170,111)(195,121)
\qbezier(170,121)(170,121)(195,130)
   \linethickness{0.1mm}

\qbezier(20,25)(20,25)(45,47)\put(20,25){\circle*{4}}
\qbezier(20,44)(20,44)(45,60)\put(20,44){\circle*{4}}
\qbezier(20,78)(20,78)(45,85)\put(20,78){\circle*{4}}
\qbezier(20,98)(20,98)(45,101)\put(20,98){\circle*{4}}
\qbezier(20,106)(20,106)(45,107)\put(20,106){\circle*{4}}
\qbezier(20,115)(20,115)(45,115)\put(20,115){\circle*{4}}
\qbezier(20,125)(20,125)(45,125)\put(20,125){\circle*{4}}
\qbezier(20,6)(20,6)(45,34)\put(20,6){\circle*{4}}
\qbezier(45,47)(45,47)(70,14)\put(45,47){\circle*{4}}
\qbezier(45,60)(45,60)(70,23)\put(45,60){\circle*{4}}
\qbezier(45,85)(45,85)(70,47)\put(45,85){\circle*{4}}
\qbezier(45,101)(45,101)(70,70)\put(45,101){\circle*{4}}
\qbezier(45,107)(45,107)(70,83)\put(45,107){\circle*{4}}
\qbezier(45,115)(45,115)(70,106)\put(45,115){\circle*{4}}
\qbezier(45,125)(45,125)(70,125)\put(45,125){\circle*{4}}
\qbezier(45,34)(45,34)(70,6)\put(45,34){\circle*{4}}
\qbezier(70,14)(70,14)(95,37)\put(70,14){\circle*{4}}
\qbezier(70,23)(70,23)(95,46)\put(70,23){\circle*{4}}
\qbezier(70,47)(70,47)(95,68)\put(70,47){\circle*{4}}
\qbezier(70,70)(70,70)(95,87)\put(70,70){\circle*{4}}
\qbezier(70,83)(70,83)(95,97)\put(70,83){\circle*{4}}
\qbezier(70,106)(70,106)(95,113)\put(70,106){\circle*{4}}
\qbezier(70,125)(70,125)(95,125)\put(70,125){\circle*{4}}
\qbezier(70,6)(70,6)(95,28)\put(70,6){\circle*{4}}
\qbezier(95,37)(95,37)(120,52)\put(95,37){\circle*{4}}
\qbezier(95,46)(95,46)(120,60)\put(95,46){\circle*{4}}
\qbezier(95,68)(95,68)(120,80)\put(95,68){\circle*{4}}
\qbezier(95,87)(95,87)(120,95)\put(95,87){\circle*{4}}
\qbezier(95,97)(95,97)(120,103)\put(95,97){\circle*{4}}
\qbezier(95,113)(95,113)(120,114)\put(95,113){\circle*{4}}
\qbezier(95,125)(95,125)(120,125)\put(95,125){\circle*{4}}
\qbezier(95,28)(95,28)(120,44)\put(95,28){\circle*{4}}
\qbezier(120,52)(120,52)(145,59)\put(120,52){\circle*{4}}
\qbezier(120,60)(120,60)(145,67)\put(120,60){\circle*{4}}
\qbezier(120,80)(120,80)(145,86)\put(120,80){\circle*{4}}
\qbezier(120,95)(120,95)(145,99)\put(120,95){\circle*{4}}
\qbezier(120,103)(120,103)(145,105)\put(120,103){\circle*{4}}
\qbezier(120,114)(120,114)(145,115)\put(120,114){\circle*{4}}
\qbezier(120,125)(120,125)(145,125)\put(120,125){\circle*{4}}
\qbezier(120,44)(120,44)(145,51)\put(120,44){\circle*{4}}
\qbezier(145,59)(145,59)(170,65)\put(145,59){\circle*{4}}
\qbezier(145,67)(145,67)(170,73)\put(145,67){\circle*{4}}
\qbezier(145,86)(145,86)(170,89)\put(145,86){\circle*{4}}
\qbezier(145,99)(145,99)(170,101)\put(145,99){\circle*{4}}
\qbezier(145,105)(145,105)(170,107)\put(145,105){\circle*{4}}
\qbezier(145,115)(145,115)(170,115)\put(145,115){\circle*{4}}
\qbezier(145,125)(145,125)(170,125)\put(145,125){\circle*{4}}
\qbezier(145,51)(145,51)(170,58)\put(145,51){\circle*{4}}
\qbezier(170,65)(170,65)(195,69)\put(170,65){\circle*{4}}\put(195,69){\circle*{4}}
\qbezier(170,73)(170,73)(195,77)\put(170,73){\circle*{4}}\put(195,77){\circle*{4}}
\qbezier(170,89)(170,89)(195,92)\put(170,89){\circle*{4}}\put(195,92){\circle*{4}}
\qbezier(170,101)(170,101)(195,102)\put(170,101){\circle*{4}}\put(195,102){\circle*{4}}
\qbezier(170,107)(170,107)(195,107)\put(170,107){\circle*{4}}\put(195,107){\circle*{4}}
\qbezier(170,115)(170,115)(195,115)\put(170,115){\circle*{4}}\put(195,115){\circle*{4}}
\qbezier(170,125)(170,125)(195,125)\put(170,125){\circle*{4}}\put(195,125){\circle*{4}}
\qbezier(170,58)(170,58)(195,62)\put(170,58){\circle*{4}}\put(195,62){\circle*{4}}
\end{picture}
\end{center}
\end{minipage}
\caption{Graphs of $t_k(v)$ for $k\in [4,\, 7]$ and
$n=5$}\label{fig-28}
\end{figure}

The Figure~\ref{fig-28} contains the analogical calculations for
$n=5$ and values $v_1=0.125$, $v_2=0.1$, $v_3=0.15$, $v_4=0.05$,
$v_5=0.02$, $v_6=0.01$, $v_7=0.001$ and $v_8=0.000000001$.

Now, in the similar manner to previous, consider the solution $h$,
which is defines by~(\ref{ex:h-second}) and is obtained from the
equation~(\ref{eq:syst:line}b).

Consider the function $h$ for $$ x\in \left[\frac{35}{48},\,
\frac{11}{12}\right].
$$ When the value of $x$ runs through this interval,
then the value of the function $\log_2\left(x -
\frac{2}{3}\right)$ runs through the interval $[-4,\, -2]$, which
is of the length 2, which would let to define the function
$\omega^+$ on the whole interval.

The graph of the maps $\omega(x)$ for $x\in [0,\, 2]$ and $v=3/4$
is given on Figure~\ref{fig:18}.

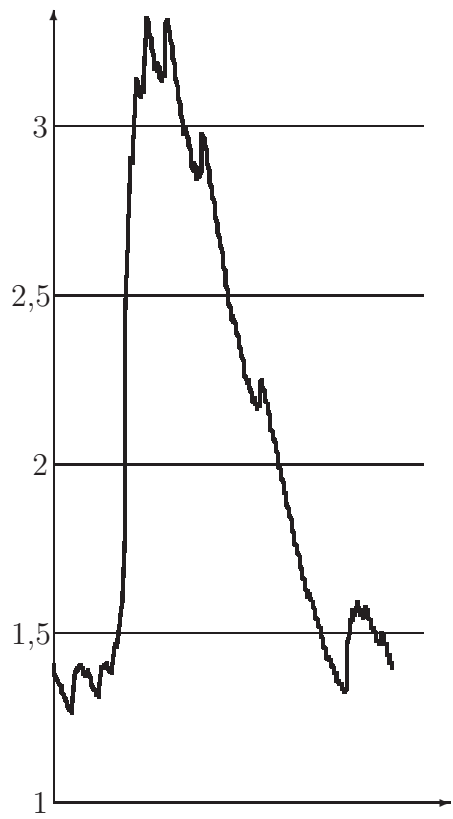
\begin{figure}[htbp]
\begin{minipage}[h]{0.9\linewidth}
\begin{center}
\begin{picture}(150,300)

\put(0,0){\vector(0,1){300}} \put(0,0){\vector(1,0){150}}

\put(0,128){\line(1,0){140}} \put(0,64){\line(1,0){140}}
\put(0,256){\line(1,0){140}} \put(0,192){\line(1,0){140}}

\linethickness{0.4mm}

\qbezier(0,52)(0,52)(0,49) \qbezier(0,49)(0,49)(1,47)
\qbezier(1,47)(1,47)(2,45) \qbezier(2,45)(2,45)(3,44)
\qbezier(3,44)(3,44)(3,42) \qbezier(3,42)(3,42)(4,40)
\qbezier(4,40)(4,40)(5,38) \qbezier(5,38)(5,38)(5,37)
\qbezier(5,37)(5,37)(6,35) \qbezier(6,35)(6,35)(7,34)
\qbezier(7,34)(7,34)(7,37) \qbezier(7,37)(7,37)(8,49)
\qbezier(8,49)(8,49)(9,51) \qbezier(9,51)(9,51)(9,50)
\qbezier(9,50)(9,50)(10,52) \qbezier(10,52)(10,52)(11,51)
\qbezier(11,51)(11,51)(11,49) \qbezier(11,49)(11,49)(12,48)
\qbezier(12,48)(12,48)(13,50) \qbezier(13,50)(13,50)(13,49)
\qbezier(13,49)(13,49)(14,47) \qbezier(14,47)(14,47)(14,45)
\qbezier(14,45)(14,45)(15,43) \qbezier(15,43)(15,43)(16,42)
\qbezier(16,42)(16,42)(16,41) \qbezier(16,41)(16,41)(17,40)
\qbezier(17,40)(17,40)(17,42) \qbezier(17,42)(17,42)(18,51)
\qbezier(18,51)(18,51)(19,52) \qbezier(19,52)(19,52)(19,51)
\qbezier(19,51)(19,51)(20,53) \qbezier(20,53)(20,53)(20,52)
\qbezier(20,52)(20,52)(21,50) \qbezier(21,50)(21,50)(22,49)
\qbezier(22,49)(22,49)(22,50) \qbezier(22,50)(22,50)(23,59)
\qbezier(23,59)(23,59)(23,60) \qbezier(23,60)(23,60)(24,59)
\qbezier(24,59)(24,59)(24,60) \qbezier(24,60)(24,60)(25,68)
\qbezier(25,68)(25,68)(25,69) \qbezier(25,69)(25,69)(26,76)
\qbezier(26,76)(26,76)(27,102) \qbezier(27,102)(27,102)(27,185)
\qbezier(27,185)(27,185)(28,210) \qbezier(28,210)(28,210)(28,215)
\qbezier(28,215)(28,215)(29,239) \qbezier(29,239)(29,239)(29,244)
\qbezier(29,244)(29,244)(30,242) \qbezier(30,242)(30,242)(30,247)
\qbezier(30,247)(30,247)(31,269) \qbezier(31,269)(31,269)(31,274)
\qbezier(31,274)(31,274)(32,272) \qbezier(32,272)(32,272)(32,269)
\qbezier(32,269)(32,269)(33,267) \qbezier(33,267)(33,267)(33,271)
\qbezier(33,271)(33,271)(34,269) \qbezier(34,269)(34,269)(34,273)
\qbezier(34,273)(34,273)(35,293) \qbezier(35,293)(35,293)(35,297)
\qbezier(35,297)(35,297)(36,295) \qbezier(36,295)(36,295)(36,291)
\qbezier(36,291)(36,291)(37,289) \qbezier(37,289)(37,289)(37,286)
\qbezier(37,286)(37,286)(38,282) \qbezier(38,282)(38,282)(38,278)
\qbezier(38,278)(38,278)(39,277) \qbezier(39,277)(39,277)(39,280)
\qbezier(39,280)(39,280)(40,278) \qbezier(40,278)(40,278)(40,275)
\qbezier(40,275)(40,275)(41,273) \qbezier(41,273)(41,273)(41,276)
\qbezier(41,276)(41,276)(42,275) \qbezier(42,275)(42,275)(42,277)
\qbezier(42,277)(42,277)(42,294) \qbezier(42,294)(42,294)(43,296)
\qbezier(43,296)(43,296)(43,295) \qbezier(43,295)(43,295)(44,291)
\qbezier(44,291)(44,291)(44,289) \qbezier(44,289)(44,289)(45,286)
\qbezier(45,286)(45,286)(45,282) \qbezier(45,282)(45,282)(46,279)
\qbezier(46,279)(46,279)(46,277) \qbezier(46,277)(46,277)(46,274)
\qbezier(46,274)(46,274)(47,271) \qbezier(47,271)(47,271)(47,267)
\qbezier(47,267)(47,267)(48,264) \qbezier(48,264)(48,264)(48,261)
\qbezier(48,261)(48,261)(49,257) \qbezier(49,257)(49,257)(49,254)
\qbezier(49,254)(49,254)(49,253) \qbezier(49,253)(49,253)(50,255)
\qbezier(50,255)(50,255)(50,254) \qbezier(50,254)(50,254)(51,251)
\qbezier(51,251)(51,251)(51,249) \qbezier(51,249)(51,249)(52,247)
\qbezier(52,247)(52,247)(52,244) \qbezier(52,244)(52,244)(52,241)
\qbezier(52,241)(52,241)(53,239) \qbezier(53,239)(53,239)(53,242)
\qbezier(53,242)(53,242)(54,240) \qbezier(54,240)(54,240)(54,238)
\qbezier(54,238)(54,238)(54,236) \qbezier(54,236)(54,236)(55,238)
\qbezier(55,238)(55,238)(55,237) \qbezier(55,237)(55,237)(56,239)
\qbezier(56,239)(56,239)(56,251) \qbezier(56,251)(56,251)(56,253)
\qbezier(56,253)(56,253)(57,251) \qbezier(57,251)(57,251)(57,249)
\qbezier(57,249)(57,249)(58,247) \qbezier(58,247)(58,247)(58,244)
\qbezier(58,244)(58,244)(58,242) \qbezier(58,242)(58,242)(59,239)
\qbezier(59,239)(59,239)(59,238) \qbezier(59,238)(59,238)(59,235)
\qbezier(59,235)(59,235)(60,232) \qbezier(60,232)(60,232)(60,229)
\qbezier(60,229)(60,229)(61,227) \qbezier(61,227)(61,227)(61,224)
\qbezier(61,224)(61,224)(61,222) \qbezier(61,222)(61,222)(62,219)
\qbezier(62,219)(62,219)(62,218) \qbezier(62,218)(62,218)(62,216)
\qbezier(62,216)(62,216)(63,213) \qbezier(63,213)(63,213)(63,211)
\qbezier(63,211)(63,211)(64,208) \qbezier(64,208)(64,208)(64,206)
\qbezier(64,206)(64,206)(64,203) \qbezier(64,203)(64,203)(65,201)
\qbezier(65,201)(65,201)(65,198) \qbezier(65,198)(65,198)(65,196)
\qbezier(65,196)(65,196)(66,194) \qbezier(66,194)(66,194)(66,191)
\qbezier(66,191)(66,191)(66,189) \qbezier(66,189)(66,189)(67,187)
\qbezier(67,187)(67,187)(67,185) \qbezier(67,185)(67,185)(67,183)
\qbezier(67,183)(67,183)(68,182) \qbezier(68,182)(68,182)(68,184)
\qbezier(68,184)(68,184)(68,183) \qbezier(68,183)(68,183)(69,181)
\qbezier(69,181)(69,181)(69,180) \qbezier(69,180)(69,180)(69,178)
\qbezier(69,178)(69,178)(70,176) \qbezier(70,176)(70,176)(70,174)
\qbezier(70,174)(70,174)(70,173) \qbezier(70,173)(70,173)(71,171)
\qbezier(71,171)(71,171)(71,169) \qbezier(71,169)(71,169)(72,167)
\qbezier(72,167)(72,167)(72,165) \qbezier(72,165)(72,165)(72,163)
\qbezier(72,163)(72,163)(72,162) \qbezier(72,162)(72,162)(73,160)
\qbezier(73,160)(73,160)(73,159) \qbezier(73,159)(73,159)(73,160)
\qbezier(73,160)(73,160)(74,160) \qbezier(74,160)(74,160)(74,158)
\qbezier(74,158)(74,158)(74,157) \qbezier(74,157)(74,157)(75,155)
\qbezier(75,155)(75,155)(75,154) \qbezier(75,154)(75,154)(75,152)
\qbezier(75,152)(75,152)(76,151) \qbezier(76,151)(76,151)(76,153)
\qbezier(76,153)(76,153)(76,152) \qbezier(76,152)(76,152)(77,150)
\qbezier(77,150)(77,150)(77,149) \qbezier(77,149)(77,149)(77,151)
\qbezier(77,151)(77,151)(78,150) \qbezier(78,150)(78,150)(78,151)
\qbezier(78,151)(78,151)(78,159) \qbezier(78,159)(78,159)(79,160)
\qbezier(79,160)(79,160)(79,159) \qbezier(79,159)(79,159)(79,158)
\qbezier(79,158)(79,158)(79,157) \qbezier(79,157)(79,157)(80,155)
\qbezier(80,155)(80,155)(80,153) \qbezier(80,153)(80,153)(80,152)
\qbezier(80,152)(80,152)(81,151) \qbezier(81,151)(81,151)(81,150)
\qbezier(81,150)(81,150)(81,148) \qbezier(81,148)(81,148)(82,146)
\qbezier(82,146)(82,146)(82,145) \qbezier(82,145)(82,145)(82,143)
\qbezier(82,143)(82,143)(82,141) \qbezier(82,141)(82,141)(83,140)
\qbezier(83,140)(83,140)(83,139) \qbezier(83,139)(83,139)(83,138)
\qbezier(83,138)(83,138)(84,136) \qbezier(84,136)(84,136)(84,135)
\qbezier(84,135)(84,135)(84,133) \qbezier(84,133)(84,133)(85,132)
\qbezier(85,132)(85,132)(85,130) \qbezier(85,130)(85,130)(85,128)
\qbezier(85,128)(85,128)(85,127) \qbezier(85,127)(85,127)(86,126)
\qbezier(86,126)(86,126)(86,124) \qbezier(86,124)(86,124)(86,123)
\qbezier(86,123)(86,123)(87,121) \qbezier(87,121)(87,121)(87,120)
\qbezier(87,120)(87,120)(87,119) \qbezier(87,119)(87,119)(87,117)
\qbezier(87,117)(87,117)(88,117) \qbezier(88,117)(88,117)(88,116)
\qbezier(88,116)(88,116)(88,114) \qbezier(88,114)(88,114)(88,113)
\qbezier(88,113)(88,113)(89,111) \qbezier(89,111)(89,111)(89,110)
\qbezier(89,110)(89,110)(89,109) \qbezier(89,109)(89,109)(90,107)
\qbezier(90,107)(90,107)(90,106) \qbezier(90,106)(90,106)(90,105)
\qbezier(90,105)(90,105)(90,103) \qbezier(90,103)(90,103)(91,102)
\qbezier(91,102)(91,102)(91,101) \qbezier(91,101)(91,101)(91,99)
\qbezier(91,99)(91,99)(91,98) \qbezier(91,98)(91,98)(92,97)
\qbezier(92,97)(92,97)(92,96) \qbezier(92,96)(92,96)(92,95)
\qbezier(92,95)(92,95)(93,93) \qbezier(93,93)(93,93)(93,92)
\qbezier(93,92)(93,92)(93,91) \qbezier(93,91)(93,91)(93,90)
\qbezier(93,90)(93,90)(94,88) \qbezier(94,88)(94,88)(94,87)
\qbezier(94,87)(94,87)(94,86) \qbezier(94,86)(94,86)(94,85)
\qbezier(94,85)(94,85)(95,84) \qbezier(95,84)(95,84)(95,83)
\qbezier(95,83)(95,83)(95,82) \qbezier(95,82)(95,82)(95,81)
\qbezier(95,81)(95,81)(96,80) \qbezier(96,80)(96,80)(96,79)
\qbezier(96,79)(96,79)(96,78) \qbezier(96,78)(96,78)(96,79)
\qbezier(96,79)(96,79)(97,79) \qbezier(97,79)(97,79)(97,78)

\qbezier(97,78)(97,78)(97,77) \qbezier(97,77)(97,77)(98,76)
\qbezier(98,76)(98,76)(98,75) \qbezier(98,75)(98,75)(98,74)

\qbezier(98,74)(98,74)(99,73) \qbezier(99,73)(99,73)(99,72)
\qbezier(99,72)(99,72)(99,71) \qbezier(99,71)(99,71)(99,70)
\qbezier(99,70)(99,70)(100,69) \qbezier(100,69)(100,69)(100,68)

\qbezier(100,68)(100,68)(100,67) \qbezier(100,67)(100,67)(101,66)
\qbezier(101,66)(101,66)(101,65) \qbezier(101,65)(101,65)(101,64)
\qbezier(101,64)(101,64)(101,63) \qbezier(101,63)(101,63)(102,62)
\qbezier(102,62)(102,62)(102,61) \qbezier(102,61)(102,61)(102,60)
\qbezier(102,60)(102,60)(102,59) \qbezier(102,59)(102,59)(103,58)
\qbezier(103,58)(103,58)(103,57)

\qbezier(103,57)(103,57)(103,56) \qbezier(103,56)(103,56)(103,55)
\qbezier(103,55)(103,55)(104,54)

\qbezier(104,54)(104,54)(104,55) \qbezier(104,55)(104,55)(104,54)
\qbezier(104,54)(104,54)(105,54) \qbezier(105,54)(105,54)(105,53)

\qbezier(105,53)(105,53)(105,52) \qbezier(105,52)(105,52)(106,51)

\qbezier(106,51)(106,51)(106,50) \qbezier(106,50)(106,50)(106,49)
\qbezier(106,49)(106,49)(107,49) \qbezier(107,49)(107,49)(107,48)
\qbezier(107,48)(107,48)(107,47) \qbezier(107,47)(107,47)(107,46)

\qbezier(107,46)(107,46)(108,45) \qbezier(108,45)(108,45)(108,46)

\qbezier(108,46)(108,46)(108,45) \qbezier(108,45)(108,45)(109,45)
\qbezier(109,45)(109,45)(109,44) \qbezier(109,44)(109,44)(109,43)

\qbezier(109,43)(109,43)(110,43)

\qbezier(110,43)(110,43)(110,42)

\qbezier(110,42)(110,42)(111,43)

\qbezier(111,43)(111,43)(111,47) \qbezier(111,47)(111,47)(111,60)
\qbezier(111,60)(111,60)(112,64)

\qbezier(112,64)(112,64)(112,68) \qbezier(112,68)(112,68)(112,69)
\qbezier(112,69)(112,69)(112,68) \qbezier(112,68)(112,68)(113,69)
\qbezier(113,69)(113,69)(113,72) \qbezier(113,72)(113,72)(113,73)
\qbezier(113,73)(113,73)(113,72) \qbezier(113,72)(113,72)(114,72)
\qbezier(114,72)(114,72)(114,71) \qbezier(114,71)(114,71)(114,72)
\qbezier(114,72)(114,72)(114,71) \qbezier(114,71)(114,71)(114,72)
\qbezier(114,72)(114,72)(115,75) \qbezier(115,75)(115,75)(115,76)
\qbezier(115,76)(115,76)(115,75)

\qbezier(115,75)(115,75)(115,74) \qbezier(115,74)(115,74)(116,73)

\qbezier(116,73)(116,73)(116,72) \qbezier(116,72)(116,72)(116,71)
\qbezier(116,71)(116,71)(116,72) \qbezier(116,72)(116,72)(117,71)

\qbezier(117,71)(117,71)(117,70) \qbezier(117,70)(117,70)(117,71)
\qbezier(117,71)(117,71)(117,70) \qbezier(117,70)(117,70)(118,71)
\qbezier(118,71)(118,71)(118,74)

\qbezier(118,74)(118,74)(118,73) \qbezier(118,73)(118,73)(119,73)
\qbezier(119,73)(119,73)(119,72) \qbezier(119,72)(119,72)(119,71)

\qbezier(119,71)(119,71)(119,70) \qbezier(119,70)(119,70)(120,69)

\qbezier(120,69)(120,69)(120,68) \qbezier(120,68)(120,68)(120,67)
\qbezier(120,67)(120,67)(120,66) \qbezier(120,66)(120,66)(121,66)
\qbezier(121,66)(121,66)(121,65)

\qbezier(121,65)(121,65)(122,64) \qbezier(122,64)(122,64)(122,63)

\qbezier(122,63)(122,63)(122,62) \qbezier(122,62)(122,62)(122,61)
\qbezier(122,61)(122,61)(123,61)

\qbezier(123,61)(123,61)(123,60)

\qbezier(123,60)(123,60)(124,60)

\qbezier(124,60)(124,60)(124,63) \qbezier(124,63)(124,63)(124,64)
\qbezier(124,64)(124,64)(124,63) \qbezier(124,63)(124,63)(125,63)
\qbezier(125,63)(125,63)(125,62)

\qbezier(125,62)(125,62)(125,61) \qbezier(125,61)(125,61)(125,60)
\qbezier(125,60)(125,60)(126,60) \qbezier(126,60)(126,60)(126,59)
\qbezier(126,59)(126,59)(126,58)

\qbezier(126,58)(126,58)(126,57) \qbezier(126,57)(126,57)(126,56)
\qbezier(126,56)(126,56)(127,56) \qbezier(127,56)(127,56)(127,55)

\qbezier(127,55)(127,55)(127,54) \qbezier(127,54)(127,54)(127,53)
\qbezier(127,53)(127,53)(128,53) \qbezier(128,53)(128,53)(128,52)
\qbezier(128,52)(128,52)(128,51)

\put(-8,252){3}

\put(-8,124){2}

\put(-8,-4){1} \put(-16,60){1,5} \put(-16,188){2,5}

\end{picture}
\end{center}
\end{minipage}
\caption{Graph of $\omega_2(x)$ for $v=3/4$}\label{fig:18}
\end{figure}

\begin{lemma}If the the invertible maps $h$ of the
form~(\ref{ex:h-second}) the diagram~(\ref{eq:13}) is commutative,
then for $x\hm{\in} [v,\, 1]$ the equality
$$ \widetilde{f}_v(x) = \frac{1-x}{1-v}$$ holds.
\end{lemma}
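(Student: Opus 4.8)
The plan is to imitate the proof of Lemma~\ref{lema:10}, replacing the equation~(\ref{eq:syst:line}a) by~(\ref{eq:syst:line}b) and the branch $x\le 1/2$ of $f$ by the branch $x>1/2$. First I would record the elementary facts about $h$ forced by it being an invertible self-map of $[0,\,1]$ of the form~(\ref{ex:h-second}). Since $h$ is continuous and invertible it is monotone, and evaluating~(\ref{ex:h-second}) at $x=2/3$ gives $h(2/3)=\tfrac{1}{2-v}$, which is precisely the fixed point of $f_v$ on $(v,\,1)$. Because~(\ref{ex:h-second}) (together with the relations~(\ref{eq:14})) is, by the derivation in Section~\ref{subsusbs-formuly}, the general solution of~(\ref{eq:syst:line}b), the map $h$ satisfies $h(2-2x)=\frac{1-h(x)}{1-v}$ for all $x>1/2$. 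Putting $x=1$ into this relation yields $h(0)=\frac{1-h(1)}{1-v}$; were $h$ decreasing we would have $h(0)=1$ and $h(1)=0$, forcing $1=\frac{1}{1-v}$, impossible for $v\in(0,\,1)$. Hence $h$ is increasing, $h(0)=0$, $h(1)=1$, and letting $x\to 1/2^{+}$ in the same relation (using continuity of $h$) gives $h(1/2)=v$; in particular $h\!\left(\left[\tfrac12,\,1\right]\right)=[v,\,1]$.

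Next I would exploit the commutativity of~(\ref{eq:13}). By~(\ref{eq:62}) and~(\ref{eq:61}) the restriction of that diagram to the branch $x\in\left[\tfrac12,\,1\right]$, where $f(x)=2-2x$ and $h(x)$ ranges over $[v,\,1]$ by the previous paragraph, reads $\widetilde f_v(h(x))=h(f(x))=h(2-2x)$ for every $x\in\left[\tfrac12,\,1\right]$. But $h$ also satisfies~(\ref{eq:syst:line}b), so $h(2-2x)=\frac{1-h(x)}{1-v}$ for $x>1/2$, whence $\widetilde f_v(h(x))=\frac{1-h(x)}{1-v}$ for $x\in\left(\tfrac12,\,1\right]$. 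Substituting $y=h(x)$ and using that $x\mapsto h(x)$ maps $\left(\tfrac12,\,1\right]$ onto $(v,\,1]$ gives $\widetilde f_v(y)=\frac{1-y}{1-v}$ for all $y\in(v,\,1]$; the endpoint $y=v$ is handled either by continuity of $\widetilde f_v=h\circ f\circ h^{-1}$, or directly: $h^{-1}(v)=1/2$, $f(1/2)=1$, $h(1)=1$, so $\widetilde f_v(v)=1=\frac{1-v}{1-v}$. This is the claimed equality on $[v,\,1]$.

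Equivalently, and more in the spirit of Lemma~\ref{lema:10}: once one knows $h$ is increasing with $h(1/2)=v$, the functional equation~(\ref{eq:syst:line}b) is exactly the statement that the diagram~(\ref{eq:13}), restricted to the sub-square with upper-left corner $\left[\tfrac12,\,1\right]$ and lower-left corner $[v,\,1]$, commutes for $\widetilde f_v(x)=\frac{1-x}{1-v}$ on $[v,\,1]$; since $h$ is invertible the map $\widetilde f_v=h\circ f\circ h^{-1}$ is uniquely determined, and the conclusion follows. I do not anticipate any real difficulty here: the only step requiring attention is the bookkeeping of the first paragraph, i.e. deducing that $h$ is increasing and that $h(1/2)=v$ from the sole hypotheses that $h$ is an invertible self-map of $[0,\,1]$ of the form~(\ref{ex:h-second}); after that the argument is the substitution $y=h(x)$ in~(\ref{eq:syst:line}b), exactly as in the proof of Lemma~\ref{lema:10}.
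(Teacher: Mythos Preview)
Your proposal is correct and follows exactly the approach the paper intends: the paper's proof consists of the single sentence ``The proof of this Lemma is similar to one of Lemma~\ref{lema:10},'' and your argument is precisely that analogy carried out in detail---you replace equation~(\ref{eq:syst:line}a) by~(\ref{eq:syst:line}b), the branch $x\le 1/2$ by $x>1/2$, and supply the analogue of Lemma~\ref{lema:09} (that $h$ is increasing with $h(1/2)=v$) directly from~(\ref{eq:syst:line}b). Your bookkeeping in the first paragraph is the only part the paper leaves implicit, and it is sound.
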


\begin{proof}
The proof of This Lemma if similar to one of Lemma~\ref{lema:10}.
\end{proof}

\begin{example}Plot the graph of the maps
$\widetilde{f}_v$, which is defined by the commutative
diagram~(\ref{eq:13}) for the maps $h$ of the
form~(\ref{ex:h-second}), if $\omega$ is the function of the most
simple form.
\end{example}

\begin{proof}[Deal of the example] The
equations~(\ref{eq:14}) except the case, when the $\omega$ is
constant non zero function. The most simple case for $\omega$, is
that, when $\omega =\omega_0$ for $x<0$ and $\omega =-\omega_0$
for $x>0$.

The it follows from the equation $h(0)=0$ obtain
$$ \omega_0 =
\frac{-1}{2-v}\cdot \left( \frac{2}{3}\right)^{\log_2(1-v)}.
$$

In this case the graph of the maps $\widetilde{f}_v(x) =
h(f(h^{-1}(x)))$, which is defined for $v=\frac{3}{4}$, if given
on the Figure~\ref{fig-29}.
\end{proof}

\begin{figure}[htbp]
\begin{minipage}[h]{0.9\linewidth}
\begin{center}
\begin{picture}(100,120)
\put(0,0){\vector(0,1){120}} \put(0,0){\vector(1,0){120}}

\qbezier(0,0)(0,0)(2,3) \qbezier(2,3)(2,3)(5,9)
\qbezier(5,9)(5,9)(8,14) \qbezier(8,14)(8,14)(10,18)
\qbezier(10,18)(10,18)(12,22) \qbezier(12,22)(12,22)(15,28)
\qbezier(15,28)(15,28)(17,31) \qbezier(17,31)(17,31)(20,36)
\qbezier(20,36)(20,36)(23,41) \qbezier(23,41)(23,41)(25,45)
\qbezier(25,45)(25,45)(28,49) \qbezier(28,49)(28,49)(31,54)
\qbezier(31,54)(31,54)(34,58) \qbezier(34,58)(34,58)(37,62)
\qbezier(37,62)(37,62)(40,66) \qbezier(40,66)(40,66)(42,69)
\qbezier(42,69)(42,69)(45,72) \qbezier(45,72)(45,72)(48,74)
\qbezier(48,74)(48,74)(50,76) \qbezier(50,76)(50,76)(53,78)
\qbezier(53,78)(53,78)(56,79) \qbezier(56,79)(56,79)(59,80)
\qbezier(59,80)(59,80)(62,80) \qbezier(62,80)(62,80)(65,82)
\qbezier(65,82)(65,82)(67,84) \qbezier(67,84)(67,84)(69,86)
\qbezier(69,86)(69,86)(71,89) \qbezier(71,89)(71,89)(72,92)
\qbezier(72,92)(72,92)(74,96) \qbezier(74,96)(74,96)(75,99)
\qbezier(75,99)(75,99)(75,100)

\qbezier[32](0,0)(37.5,50)(75,100)

\qbezier(75,100)(87.5,50)(100,0)

\end{picture}
\end{center}
\end{minipage}
\caption{Graph of $\widetilde{f}_v$ for
$v=\frac{3}{4}$}\label{fig-29}
\end{figure}
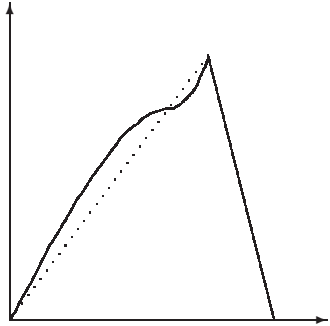

\begin{example}Plot the graph of the maps
$\widetilde{f}_v$, which is defined by the commutative
diagram~(\ref{eq:13}) for the maps $h$ of the
form~(\ref{ex:h-second}), if $\omega$ is the function, for which
$\omega^+$ and $\omega^-$ are of the simplest form, when they are
not constant.
\end{example}

\begin{proof}Assume that functions $\omega^+$ and
$\omega^-$ are continuous.

From the equation $h(0) =0$ obtain
\begin{equation}\label{eq:24} 0 = \frac{1}{2-v} +
\left(\frac{2}{3}\right)^{-\log_2(1-v)}\omega^-\left(
\log_2\frac{2}{3}\right), \end{equation} i.e.
\begin{equation}\label{eq:26} \omega^-(1-\log_23) =
\frac{1}{v-2} \left(\frac{2}{3}\right)^{\log_2(1-v)}
\end{equation}

Since the left pre image of $\frac{1}{2}$ under $f$ is mapped
under $h$ to the left pre image of $v$ under $f_v$, then $$
h\left(\frac{1}{4}\right) = v^2,
$$ i.e.

$$ v^2 = \frac{1}{2-v} +
\left(\frac{5}{12}\right)^{-\log_2(1-v)}\omega^-\left(
\log_2\frac{5}{12}\right),
$$
whence it follows from the periodicity of $\omega^-$ with
period~2, that
\begin{equation}\begin{array}{l}\label{eq:25}
\omega^-\left( \log_2\frac{5}{3}\right) =\\
\\=\displaystyle{ \left(v^2 - \frac{1}{2-v}\right)
\left(\frac{5}{12}\right)^{\log_2(1-v)}}
\end{array}\end{equation}

Plot the function $\omega^-$ as follows. The values of $\omega^-$
at points $\log_2\frac{5}{3} \approx 0,737$ and
$2+\log2\frac{5}{3} \approx 2,737$ are equal and are defined by
the equality~(\ref{eq:25}). Remind that it follows from
equation~(\ref{eq:14}) that the function $\omega^-$ is periodical
with period 2.

The value of $\omega^-$ at point $3-\log_23 \approx 1,415$ is
defined by the equality~(\ref{eq:26}). It follows from periodicity
of $\omega^-$ that the equality~(\ref{eq:26}) can be applied for
finding of the value of $\omega^-$  at this point.

Take the function $\omega^-$ to be linear on each of the intervals
$[\log_2\frac{5}{3},\, 3-\log_23]$ and $[3-\log_23,\,
2+\log2\frac{5}{3}]$ and make it periodical with period 2.

Construct the function $\omega^+$ with the use of $\omega^-$,
using the relation~(\ref{eq:14}).

The graph of $\widetilde{f}_v$, which is constructed as above for
$v=3/4$, if given on the Figure~\ref{fig-30}.

\begin{figure}[htbp]
\begin{minipage}[h]{0.9\linewidth}
\begin{center}
\begin{picture}(100,120)
\put(0,0){\vector(0,1){120}} \put(0,0){\vector(1,0){120}}

\put(56.25,75){\circle*{5}}

\qbezier(75,100)(87.5,50)(100,0)

\qbezier[32](0,0)(37.5,50)(75,100)

\qbezier(0,0)(0,0)(1,3) \qbezier(1,3)(1,3)(3,6)
\qbezier(3,6)(3,6)(5,10) \qbezier(5,10)(5,10)(7,14)
\qbezier(7,14)(7,14)(9,17) \qbezier(9,17)(9,17)(11,21)
\qbezier(11,21)(11,21)(13,24) \qbezier(13,24)(13,24)(15,28)
\qbezier(15,28)(15,28)(17,31) \qbezier(17,31)(17,31)(19,35)
\qbezier(19,35)(19,35)(21,38) \qbezier(21,38)(21,38)(23,42)
\qbezier(23,42)(23,42)(25,45) \qbezier(25,45)(25,45)(27,49)
\qbezier(27,49)(27,49)(30,52) \qbezier(30,52)(30,52)(32,55)
\qbezier(32,55)(32,55)(35,58) \qbezier(35,58)(35,58)(39,61)
\qbezier(39,61)(39,61)(42,64) \qbezier(42,64)(42,64)(45,66)
\qbezier(45,66)(45,66)(49,69) \qbezier(49,69)(49,69)(52,72)
\qbezier(52,72)(52,72)(56,75) \qbezier(56,75)(56,75)(59,78)
\qbezier(59,78)(59,78)(64,80) \qbezier(64,80)(64,80)(68,82)
\qbezier(68,82)(68,82)(71,85) \qbezier(71,85)(71,85)(72,90)
\qbezier(72,90)(72,90)(74,95) \qbezier(74,95)(74,95)(75,100)
\end{picture}
\end{center}
\end{minipage}
\caption{Graph of $\widetilde{f}_v$ for
$v=\frac{3}{4}$}\label{fig-30}
\end{figure}
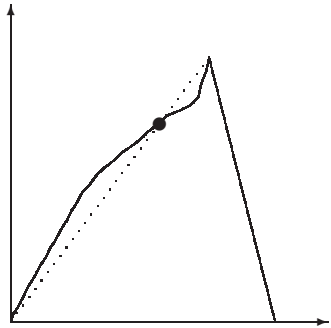

The Figure~\ref{fig-31} contains the result of imposition of the
graph from the previous example for those one, which is
constructed now (the dots are used for the first graph).

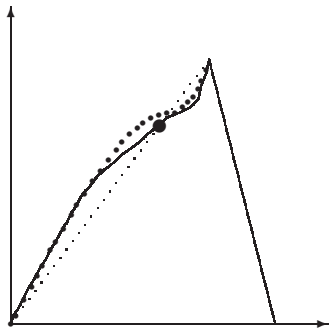
\begin{figure}[htbp]
\begin{minipage}[h]{0.9\linewidth}
\begin{center}
\begin{picture}(100,120)
\put(0,0){\vector(0,1){120}} \put(0,0){\vector(1,0){120}}

\put(56.25,75){\circle*{5}}

\qbezier(75,100)(87.5,50)(100,0)

\qbezier[32](0,0)(37.5,50)(75,100)

\qbezier(0,0)(0,0)(1,3) \qbezier(1,3)(1,3)(3,6)
\qbezier(3,6)(3,6)(5,10) \qbezier(5,10)(5,10)(7,14)
\qbezier(7,14)(7,14)(9,17) \qbezier(9,17)(9,17)(11,21)
\qbezier(11,21)(11,21)(13,24) \qbezier(13,24)(13,24)(15,28)
\qbezier(15,28)(15,28)(17,31) \qbezier(17,31)(17,31)(19,35)
\qbezier(19,35)(19,35)(21,38) \qbezier(21,38)(21,38)(23,42)
\qbezier(23,42)(23,42)(25,45) \qbezier(25,45)(25,45)(27,49)
\qbezier(27,49)(27,49)(30,52) \qbezier(30,52)(30,52)(32,55)
\qbezier(32,55)(32,55)(35,58) \qbezier(35,58)(35,58)(39,61)
\qbezier(39,61)(39,61)(42,64) \qbezier(42,64)(42,64)(45,66)
\qbezier(45,66)(45,66)(49,69) \qbezier(49,69)(49,69)(52,72)
\qbezier(52,72)(52,72)(56,75) \qbezier(56,75)(56,75)(59,78)
\qbezier(59,78)(59,78)(64,80) \qbezier(64,80)(64,80)(68,82)
\qbezier(68,82)(68,82)(71,85) \qbezier(71,85)(71,85)(72,90)
\qbezier(72,90)(72,90)(74,95) \qbezier(74,95)(74,95)(75,100)

\put(0,0){\circle*{2}} \put(2,3){\circle*{2}}
\put(5,9){\circle*{2}} \put(8,14){\circle*{2}}
\put(10,18){\circle*{2}} \put(12,22){\circle*{2}}
\put(15,28){\circle*{2}} \put(17,31){\circle*{2}}
\put(20,36){\circle*{2}} \put(23,41){\circle*{2}}
\put(25,45){\circle*{2}} \put(28,49){\circle*{2}}
\put(31,54){\circle*{2}} \put(34,58){\circle*{2}}
\put(37,62){\circle*{2}} \put(40,66){\circle*{2}}
\put(42,69){\circle*{2}} \put(45,72){\circle*{2}}
\put(48,74){\circle*{2}} \put(50,76){\circle*{2}}
\put(53,78){\circle*{2}} \put(56,79){\circle*{2}}
\put(59,80){\circle*{2}} \put(62,80){\circle*{2}}
\put(65,82){\circle*{2}} \put(67,84){\circle*{2}}
\put(69,86){\circle*{2}} \put(71,89){\circle*{2}}
\put(72,92){\circle*{2}} \put(74,96){\circle*{2}}
\end{picture}
\end{center}
\end{minipage}
\caption{Result of the imposition of one graph onto
another.}\label{fig-31}
\end{figure}

\end{proof}

\newpage
\section{Explicit formulas for topological conjugation}\label{sect-Javni-fornuly}

We will construct in this Section the explicit formulas for the
topological conjugacy of the maps $f,\, f_v:\, [0,\, 1]\rightarrow
[0,\, 1]$, given as follows

\begin{equation}\label{eq:76} f(x) =
\left\{\begin{array}{ll}
2x,& x< 1/2;\\
2-2x,& x\geqslant 1/2
\end{array}\right.
\end{equation}and\begin{equation}
\label{eq:77}
f_v(x) = \left\{\begin{array}{ll} \frac{x}{v},& x\leqslant v;\\
 \frac{1-x}{1-v},&
x>v,
\end{array}\right.
\end{equation}

In other words, we find the homeomorphism $h:\, [0,\,
1]\rightarrow [0,\, 1]$, which is a solution of the functional
equation

\begin{equation}\label{eq:79}h(f) =
f_v(h).
\end{equation}

We have constructed in Section~\ref{sect-Pobudowa} the
homeomorphism $h$ as a limit of piecewise linear homeomorphisms
$h_n$, whose breaking points belonged to the set $A_n$, which is a
solution of the equation $f^n(x)=0$. For any $n \geq 1$ the
equality $h_n(A_n) = B_n$ holds, where $B_n$ is the solution set
of the equation $f^n_v(x)=0$. By Proposition~\ref{lema:An},
$$ A_n = \left\{0,\, \frac{1}{2^{n-1}},\ldots,\,
\frac{2^{n-1}-1}{2^{n-1}},\, 1\right\},
$$ By Theorem~\ref{theor:10}, the equality $h(A_n) =B_n$ holds.

By Theorem~\ref{theor:10} the equality $h(x)=h_n(x)$ holds for
every $x\in A_n$ and the conjugacy $h$ increase.

We have denoted in Section~\ref{sect-dyffer} the elements of $A_n$
by $\alpha_{n,k},\, 0\leq k\leq 2^{n-1}$ such that
$\alpha_{n,k_1}<\alpha_{n,k_2}$ for $k_1<k_2$. By
Proposition~\ref{lema:An}, the equality
$$\alpha_{n,k}=\frac{k}{2^{n-1}}$$ holds.
Also we have in Section~\ref{sect-dyffer} the elements of $B_n$ by
$\beta_{n,k},\, 0\leq k\leq 2^{n-1}$ such that
$\beta_{n,k_1}<\beta_{n,k_2}$ for $k_1<k_2$. In these notations,
if follows from Theorem~\ref{theor:10} that
$h(\alpha_{n,k})=\beta_{n,k}$ for all $n\in \mathbb{N}$ and all
$k,\, 0\leq l\leq 2^{n-1}$.

Notice that the following evident property of $\alpha_{n,k}$ and
$\beta_{n,k}$ holds.
\begin{equation}\label{eq:3}
\left\{\begin{array}{l} \alpha_{n+t,2^tk} = \alpha_{n,k},\\
\beta_{n+t,2^tk} = \beta_{n,k}.
\end{array} \right.
\end{equation}

Notice, that the number, which is formed of the first $n$ digits
of the binary decomposition of $x$ is $\alpha_{n,k}$ (for some
$k$) and $k =\left[2^nx\right]$, where brackets denote the integer
part of a number, i.e. $[2^nx]$ is the biggest integer, which is
not grater than $2^nx$. Thus, the following equality holds for
$h$.
\begin{equation}\label{h}h(x) = \lim\limits_{n
\rightarrow \infty}\beta_{n,\, \left[2^nx\right]}.
\end{equation}

\subsection{The first way of finding of explicit formulas}\label{subs-h-worse}

Remind, that the graph of $f_v^n$ is piecewise linear and consists
of $2^n$ intervals of monotonicity, each of them maps some
subinterval of $[0,\, 1]$ to the whole $[0,\, 1]$.

Consider an integer $x= 1,\ldots, 2^n$ and calculate the tangent
of the branch of monotonicity number $k,\, 0\leq k\leq 2^{n-1}-1$
of the maps $f_v^n$. Let the binary decomposition of $k$ be
$$ k = x_1x_2\ldots x_n.
$$
Let $\widetilde{x}\in [0,\, 1]$ be a point of the interval of
monotonicity under consideration, i.e. $\widetilde{x}\in
[\alpha_{n,k},\, \alpha_{n,k+1})$.

The graph of $f_v^n$ increase on the intervals with even numbers
(the numeration starts with zero) and decrease on the intervals
with odd number. In other words, $(f_v^n)'(x)>0$ for $x_n=0$ and
$(f_v^n)'(x)<0$ for $x_n=1$.

The maps $f_v$ acts on each of the branch of the linearity of it's
correspond iteration as follows. Independently on increasing or
decreasing of the branch, it transforms to two branches such that
the left one increase and the right one decrease.

1. If the branch increased, then the tangent of the new increasing
branch if the former tangent, multiplied by $\frac{1}{v}$. The
tangent of the new decreasing is obtained from the former tangent
by multiplying it by $\frac{1}{v-1}$.

2. If the branch increased, then the tangent of the new increasing
branch if the former tangent, multiplied by $\frac{1}{v-1}$. The
tangent of the new decreasing is obtained from the former tangent
by multiplying it by $\frac{1}{v}$.

Notice that the $n$-th iteration of $f_v$ at $\widetilde{x}$ is a
composition of linear maps, whence the tangent of $f_v^n$ at this
point would be the product of tangents os correspond branches of
linearity of $f_v$ at points
$\alpha_k\hm{=}f_v^k(\widetilde{x}),\, k=1,\ldots n$. In other
words,
$$ (f_v^n)'(\widetilde{x}) = \prod\limits_{k=1}^n\alpha_k,
$$ where $\alpha_k$ can be found as follows.\\

\systema%
{\alpha_1=\frac{1}{v} & \text{for }x_{1}=0}%
{\alpha_1=\frac{1}{v-1} & \text{for }x_{1}=0} For bigger $k$ we
have
\systema%
{\alpha_k=\frac{1}{v} & \text{for }x_{k-1}+x_k \equiv 0\ \mod 2}%
{\alpha_k=\frac{1}{v-1} & \text{for }x_{k-1}+x_k \equiv 1\ \mod 2}

Take $x_0=0$ for making this formula correct for $k=1$.

We will need two additional notations for the following result.
Denote $\psi_1(x)$ as follows
$$ \psi_1(x)=
\left\{
\begin{array}{ll}
0 & \text{for } x \equiv 0\ \mod 2\\
1 & \text{for } x \equiv 1\ \mod 2,
\end{array}
\right.
$$ and denote $$
\psi_2(x) = 1-\psi_1(x).
$$

\begin{lemma}Functions $\psi_1(x)$ and $\psi_2(x)$
can be given as
$$ \psi_1(x) = 2\left\{ \frac{x}{2}\right\};\
\ \ \psi_2(x) = 2\left\{ \frac{x+1}{2}\right\},
$$ where figure brackets denote the fractional part of a number.
\end{lemma}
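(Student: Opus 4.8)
The plan is to verify the two claimed identities by a direct case analysis on the parity of the integer argument $x$. Note that $\psi_1$ and $\psi_2$ arise as functions of binary digits of $k$, so their argument is always an integer; this is the only point one must keep in mind, since the identities fail for general real $x$. First I would handle $\psi_1$. If $x$ is even, write $x=2m$ with $m\in\mathbb{Z}$; then $x/2=m$ is an integer, so $\{x/2\}=0$ and hence $2\{x/2\}=0=\psi_1(x)$ by definition. If $x$ is odd, write $x=2m+1$; then $x/2=m+\tfrac12$, so $\{x/2\}=\tfrac12$ and $2\{x/2\}=1=\psi_1(x)$. This establishes $\psi_1(x)=2\{x/2\}$ for every integer $x$.

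For the second identity, the key observation is that $x\mapsto x+1$ reverses parity, together with the remark that $\psi_2(x)=\psi_1(x+1)$ follows immediately from the definitions (both equal $1$ when $x$ is even and $0$ when $x$ is odd). Applying the formula just proved to $x+1$ gives $\psi_2(x)=\psi_1(x+1)=2\{(x+1)/2\}$. Alternatively one can re-run the case analysis: if $x$ is even then $x+1$ is odd and $2\{(x+1)/2\}=1=1-\psi_1(x)=\psi_2(x)$, while if $x$ is odd then $x+1$ is even and $2\{(x+1)/2\}=0=1-\psi_1(x)=\psi_2(x)$.

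There is no genuine obstacle in this lemma; the whole content is the parity bookkeeping above. The statement is included only to have closed-form expressions for $\psi_1$ and $\psi_2$ available for the subsequent formula for the tangents of the branches of $f_v^n$, so the proof can be kept to these few lines.
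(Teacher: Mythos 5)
Your proof is correct; the paper itself dismisses the lemma as "evident," and your parity case analysis (even $x$ gives $\{x/2\}=0$, odd $x$ gives $\{x/2\}=\tfrac12$, then apply the result to $x+1$ for $\psi_2$) is exactly the computation the paper leaves implicit. Your remark that the identities are meant for integer arguments (sums of binary digits) is a sensible clarification and matches the paper's usage.
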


\begin{proof}Lemma is evident.
\end{proof}

With the use of $\psi_1$ and $\psi_2$ we can rewrite the formula
for $\alpha_k$ as follows
$$ \alpha_k =
\frac{1}{v-1}\cdot\psi_1(x_k+x_{k-1})
+\frac{1}{v}\cdot\psi_2(x_k+x_{k-1}) =
$$
$$
= \frac{2}{v-1}\cdot\left\{\frac{x_k+x_{k-1}}{2}\right\}
+\frac{2}{v}\cdot\left\{\frac{x_k+x_{k-1}+1}{2}\right\}.
$$

Thus, the tangent of the $k$-th branch of monotonicity of $f_v^n$
(for $k=1,\ldots, 2^n$), can be calculated as
\begin{equation}\label{zeta}\zeta_{n,k} =
\prod\limits_{k=1}^n\left(
\frac{2}{1-v}\cdot\left\{\frac{x_k+x_{k-1}}{2}\right\}
+\frac{2}{v}\cdot\left\{\frac{x_k+x_{k-1}+1}{2}\right\}\right).
\end{equation}

Since the graph of $f_v^n$ consists of $2^n$ branches, and each of
them maps some subinterval of $[0,\, 1]$ into the whole $[0,\,
1]$, then the following expressions for $\beta_{n,k}$ hold.
\begin{equation}\label{beta}\beta_{n,k}
=\sum\limits_{t=1}^k \frac{1}{\zeta(t,n)} = \exp\left(
\prod\limits_{t=1}^k \frac{1}{\zeta(t,n)}\right).
\end{equation}

Remind that $x_1,\ldots, x_n$ in the formula for $\zeta_{n,k}$ if
the binary decomposition of $k$. We will obtain the formulas,
which would express each of these digits in terms of $k$.

The number, which is consisted of the last $t$ binary digits of
$k$, can be found by the formula $\left[\frac{k}{2^t}\right].$

If one change the last $t$ digits of $x$ to zeros, then would
obtain the number
$$k-2^t\cdot\left[\frac{k}{2^t}\right].$$

If delete the last $t$ digits of the obtained number, then obtain
$$\frac{k-2^t\cdot\left[\frac{k}{2^t}\right]}{2^t} =\frac{k}{2^t}
-\left[\frac{k}{2^t}\right] = \left\{ \frac{k}{2^t}\right\}.$$ The
last $t+1$-st digit of $x$ can be calculated as the last digit of
the the number, which is obtained from $x$ by deleting the last
$t$ digits. In other words, it ca be found as
$$ \psi_1\left( \left\{ \frac{k}{2^t}\right\}\right) = 2
\left\{\frac{\left\{ \frac{k}{2^t}\right\}}{2}\right\}.
$$

If the number $k$ is consisted of $n$ digits, then each of them
can be found as$$ x_p =2 \left\{\frac{\displaystyle{\left\{
\frac{k}{2^{n-p}}\right\}}}{2}\right\}.
$$

These computations let us to rewrite the formula~(\ref{zeta}) for
$\zeta_{n,k}$ as follows

\begin{equation}\label{eq:80}
\begin{array}{c} \zeta_{n,k} =
\prod\limits_{t=1}^n\left( \frac{2}{1-v}\cdot\left\{
\left\{\displaystyle{\left\{ \frac{k}{2^{n-t}}\right\}}/2\right\}+
\left\{\displaystyle{\left\{
\frac{k}{2^{n-t+1}}\right\}}/2\right\}\right\} +\right.\\
+\left.\frac{2}{v}\cdot\left\{\left\{\displaystyle{\left\{
\frac{k}{2^{n-t}}\right\}}/2\right\}+ \left\{\displaystyle{\left\{
\frac{k}{2^{n-t+1}}\right\}}/2\right\}
+\frac{1}{2}\right\}\right).\end{array}\end{equation}

Thus, using~(\ref{beta}), we have the following theorem.

\begin{theorem}\label{theor:18}The
homeomorphic solution $h:\, [0,\, 1]\rightarrow [0,\, 1]$ of the
equation~(\ref{eq:79}) can be expressed by the formula
$$
h(x) = \lim\limits_{n \rightarrow
\infty}\beta\left(\displaystyle{\left[2^nx\right],\, n}\right) =
\lim\limits_{n \rightarrow \infty}
\sum\limits_{t=1}^{[2^nx]}\frac{1}{\zeta_{n,t}},
$$ where $\zeta(k,n)$ is given by~(\ref{eq:80}).
\end{theorem}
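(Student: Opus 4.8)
The plan is to establish Theorem~\ref{theor:18} by assembling the ingredients already developed in the excerpt into the limit formula, with the only genuinely new content being the verification that the tangent $\zeta_{n,k}$ of the $k$-th monotone branch of $f_v^n$ has the closed form~(\ref{eq:80}). First I would recall from Section~\ref{sect-Pobudowa} and Theorem~\ref{theor:10} that $h$ is the increasing limit of the piecewise linear maps $h_n$ with $h_n(A_n)=B_n$, that $\alpha_{n,k}=k/2^{n-1}$ by Proposition~\ref{lema:An}, and that $h(x)=h_n(x)$ for $x\in A_n$; combined with the observation that $k=[2^nx]$ indexes the first $n$ binary digits of $x$, this immediately gives $h(x)=\lim_{n\to\infty}\beta_{n,[2^nx]}$, exactly equation~(\ref{h}). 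So the theorem reduces to proving $\beta_{n,k}=\sum_{t=1}^{k}1/\zeta_{n,t}$ with $\zeta_{n,k}$ given by~(\ref{eq:80}).

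Next I would prove the structural fact that the graph of $f_v^n$ is piecewise linear with exactly $2^n$ branches of monotonicity, each mapping its subinterval onto all of $[0,\,1]$; this is essentially the cardinality argument already used in the proof of Theorem~\ref{theor:10}, done by induction on $n$ using the shape of $f_v$. Given this, the left endpoints of the branches are the points $\beta_{n,k}$, and since a branch with tangent (in absolute value) $\zeta_{n,k}$ rises from $0$ to $1$ over the interval $[\beta_{n,k-1},\,\beta_{n,k}]$, we get $\beta_{n,k}-\beta_{n,k-1}=1/\zeta_{n,k}$, hence $\beta_{n,k}=\sum_{t=1}^{k}1/\zeta_{n,t}$, which is~(\ref{beta}). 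The heart of the argument is the recursive computation of $\zeta_{n,k}$: the $n$-th iterate of $f_v$ restricted to a branch is a composition of $n$ affine maps, so its slope is the product of the slopes of the relevant linear pieces of $f_v$ encountered along the orbit. Tracking whether successive branches are increasing or decreasing — which, as in Lemma~\ref{lema:32} and~(\ref{eq:54}), is governed by whether consecutive binary digits $x_{k-1}$ and $x_k$ of the branch index agree — yields the factor $1/v$ when $x_{k-1}=x_k$ and $1/(v-1)$ when $x_{k-1}\neq x_k$, with the convention $x_0=0$ making the $k=1$ case uniform. Writing the parity selector via the fractional-part functions $\psi_1(x)=2\{x/2\}$, $\psi_2(x)=2\{(x+1)/2\}$ gives the product form~(\ref{zeta}).

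Then I would translate~(\ref{zeta}) into~(\ref{eq:80}) by expressing each binary digit $x_p$ of $k$ through nested fractional parts: the number formed by deleting the last $t$ digits of $k$ is obtained by the operations $k\mapsto k-2^t[k/2^t]\mapsto \{k/2^t\}$, and its last digit is $\psi_1(\{k/2^t\})=2\{\{k/2^t\}/2\}$, so $x_p=2\{\{k/2^{n-p}\}/2\}$; substituting and collecting $x_k+x_{k-1}$ inside the $\psi_i$ produces the stated expression with the ``$+1/2$'' shift in the second summand. Combining~(\ref{beta}) with $h(x)=\lim_{n\to\infty}\beta_{n,[2^nx]}$ finishes the proof. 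The main obstacle I anticipate is the bookkeeping in the slope recursion — carefully justifying that the sign pattern of the branches of $f_v^n$ is exactly the XOR of consecutive binary digits of the branch index, and that the product of the correct local slopes of $f_v$ along the orbit reproduces~(\ref{zeta}); the convergence of the limit itself is not an issue, since, as noted in Remark~\ref{note:2}, it follows from the same density-of-$\mathcal{B}$ reasoning used in Theorem~\ref{theor:homeom-jed}, and the digit-extraction identities, while fiddly, are routine.
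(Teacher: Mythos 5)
Your proposal is correct and follows essentially the same route as the paper: reduce to $h(x)=\lim_{n\to\infty}\beta_{n,[2^nx]}$ via Theorem~\ref{theor:10}, use the fact that each of the $2^n$ monotone branches of $f_v^n$ maps its subinterval onto $[0,1]$ to get $\beta_{n,k}=\sum_{t=1}^{k}1/\zeta_{n,t}$, compute the branch slope as a product of local slopes of $f_v$ governed by agreement of consecutive binary digits of $k$ (with $x_0=0$), and convert digits to nested fractional parts to obtain~(\ref{eq:80}). The only cosmetic difference is your explicit use of absolute slopes, which in fact tidies up the sign convention the paper glosses over when passing from $1/(v-1)$ to $2/(1-v)$.
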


\begin{note}\label{note:2}In spite that
the formula for $h(x)$ is quite complicated and contains a limit,
this limit exists and the function is defined correctly.
\end{note}

\begin{proof}The existence of the limit follows from the
same reasonings, which where done in the proof of
Theorem~\ref{theor:homeom-jed}.
\end{proof}

\begin{note}\label{note:3}Notice that the
formula from Theorem~\ref{theor:18} has the following properties.

1. Even in the case when $x\in A$ the calculation of $h$ by this
formula would need the $n$ summands for obtaining the
approximation $[2^nx]\in A_n$. The condition $x\in A$ yield that
the approximate values of $h(x)$ would not change with increasing
$n$ after some $n$, huge enough.

2. The stabilizing of $h(x)$, which is under consideration, means
that for some $n\in \mathbb{N}$ the equality
$$ \sum\limits_{t=1}^{[2^nx]}\frac{1}{\zeta_{n,t}} =
\sum\limits_{t=1}^{[2^{n+1}x]}\frac{1}{\zeta_{n+1,t}}
$$ holds. From
another hand, this equality means that the $x\in A_n$ and the
exact value of $h(x)$ is found.
\end{note}

The first of the properties from the Remark~\ref{note:3} can be
considered as its deficiency. We will find in the next section
another formula for $\beta_{n,k}$, which would not have this
deficiency.

\subsection{The second way of finding of explicit formulas}\label{subs-h-better}

Clearly, the values of $\beta_{1,k},\, 0\leq k\leq 2$ as as
follows: $\beta_{1,0} = 0$, $\beta_{1,1} = v$ and $\beta_{1,2} =
1$.

Similarly to as it was done in Section~\ref{subs-h-worse}, write
$k$ as follows
$$ k =
\sum\limits_{i=1}^nx_i2^{n-i}.
$$

Consider the sequence $\xi_t=\xi_{t,n,k}$ of the left ends of the
interval $[\beta_{t,s},\, \beta_{t,s+1})\ (t<n)$, if is given that
that is contains the point $\beta(n,\, k)$.

Let for some $t$ the number $\xi_t$ is found and let $p_t$ be the
tangent from the right of the maps $f^t$. Precisely, $\xi_0 = 0$,
$p_0 = 1$.

\begin{lemma}If $x_1 =1$, then $\xi_1 =v$, and the
maps $f^1$ decrease at the fight neighborhood of $\xi_1$ and $p_1
= \frac{1}{v-1}$.

If $x_1=\ldots =x_{t-1} =0$ and $x_t = 1$, then $\xi_t = v^{t}$
and the maps $f^t$ decrease in the right neighborhood of $\xi_t$
and $p_t = \left(\frac{1}{v}\right)^{t-1}\frac{1}{v-1}$.
\end{lemma}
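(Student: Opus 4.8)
The plan is to prove the statement (really a single assertion, the first line being the case $t=1$ of the second) by combining the recursion for $h$ from Proposition~\ref{prop:1} with the piecewise-linear ``Markov'' description of the iterates of $f_v$ established alongside Theorem~\ref{theor:10}. Throughout I read ``$f^t$'' as $f_v^t$, since $\xi_t$ lives on the $B$-side. Recall that $\beta_{n,k}=h(\alpha_{n,k})$ with $h$ increasing (Theorem~\ref{theor:10}) and $\alpha_{n,k}=k/2^{n-1}$; writing $\alpha_{n,k}=0.x_1x_2\ldots$ in binary, the hypothesis ``$x_1=\cdots=x_{t-1}=0$ and $x_t=1$'' says exactly that $\alpha_{n,k}\in[2^{-t},\,2^{1-t})$, and for $t=1$ it just says $\alpha_{n,k}\ge 1/2$.

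First I would identify $\xi_t$. Since $h$ increases, $\alpha_{n,k}\in[2^{-t},2^{1-t})$ forces $\beta_{n,k}\in[\,h(2^{-t}),\,h(2^{1-t})\,)$; the endpoints $h(2^{-t})$ and $h(2^{1-t})$ are consecutive points of $B_{t+1}$ (because $h(A_{t+1})=B_{t+1}$ and $2^{-t},\,2^{1-t}$ are consecutive in $A_{t+1}$), so this interval is precisely one cell of the partition defining $\xi_t$ — this is the level that actually matches the draft's $\xi_t$, up to the index shift implicit there — and hence $\xi_t=h(2^{-t})$. Iterating part~3 of Proposition~\ref{prop:1} then gives $h(2^{-t})=v\,h(2^{1-t})=v^2h(2^{2-t})=\cdots=v^{t-1}h(1/2)$, and $h(1/2)=v$ by Lemma~\ref{lema:30}, so $\xi_t=v^t$.

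Next I would pin down the branch of $f_v^t$ just to the right of $\xi_t$. On the $\alpha$-side $2^{-t}=0.\underbrace{0\cdots0}_{t-1}1$, and applying Lemma~\ref{note:f} digit by digit one shifts these out, obtaining $f^t(2^{-t})=1$ and $f^t(2^{1-t})=0$; the corresponding branch of $f^t$ on $[2^{-t},2^{1-t}]$ is affine and maps this interval onto $[0,1]$ decreasingly, by the Markov property of $f$ used in the proof of Theorem~\ref{theor:10}. Transporting through $h$ via $f_v^t=h\circ f^t\circ h^{-1}$, and using that the iterates $f_v^t$ are affine between consecutive points of $B_{t+1}$ (same source), one gets that $f_v^t$ is affine and decreasing on $[\,h(2^{-t}),\,h(2^{1-t})\,]=[v^t,\,v^{t-1}]$ with $f_v^t(v^t)=h(1)=1$ and $f_v^t(v^{t-1})=h(0)=0$; therefore its right-hand slope at $\xi_t$ is
\[
p_t=\frac{0-1}{v^{t-1}-v^{t}}=\frac{-1}{v^{t-1}(1-v)}=\Bigl(\frac1v\Bigr)^{t-1}\frac{1}{v-1},
\]
which is the asserted value, and it is negative, i.e. $f_v^t$ decreases to the right of $\xi_t$. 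An equivalent route, more in the spirit of the surrounding recursion, is induction on $t$: appending a digit $0$ to a prefix with $\xi=0$ keeps $\xi=0$ and multiplies $p$ by $1/v$, since near $0$ the map $f_v$ is the affine branch $x\mapsto x/v$; so after the first $t-1$ zeros $f_v^{t-1}$ is affine increasing onto $[0,1]$ on $[0,v^{t-1}]$, and appending the last digit $1$ picks out the part of $[0,v^{t-1}]$ sent by $f_v^{t-1}$ into $[v,1]$, namely $[v^t,v^{t-1}]$, whence $\xi_t=v^t$ and $p_t$ is the product of the slope $v^{-(t-1)}$ of that branch of $f_v^{t-1}$ with the slope $1/(v-1)$ of the right branch of $f_v$.

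The only genuinely delicate point — the ``hard part'' — is the bookkeeping: checking that $[\,h(2^{-t}),h(2^{1-t})\,)$ is exactly one cell of the partition used to define $\xi_t$, and that the relevant branch of $f_v^t$ really is affine with image $[0,1]$. Both are immediate consequences of the Markov / piecewise-linear structure of the iterates of $f_v$ proved together with Theorem~\ref{theor:10}, of the identity $h(A_n)=B_n$, and of the monotonicity of $h$ from Theorem~\ref{theor:homeom-jed}, so no new work is needed beyond invoking these carefully and keeping the indices straight.
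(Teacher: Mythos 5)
Your proof is correct. Note, however, that the paper itself states this lemma (and the two lemmas following it in Section~\ref{subs-h-better}) with no proof at all — they are used only as stepping stones to Theorem~\ref{theor:19} — so there is no argument of the author's to compare yours against; what you wrote is essentially the justification the text implicitly relies on. Your identification of $\xi_t$ is the right one: the level-$t$ cell containing $\beta(n,k)$ is $[h(2^{-t}),\,h(2^{1-t}))$ because $2^{-t}$ and $2^{1-t}$ are consecutive in $A_{t+1}$, $h(A_{t+1})=B_{t+1}$ by Theorem~\ref{theor:10}, and $h$ increases; then $h(2^{-t})=v^t$ follows from iterating part~3 of Proposition~\ref{prop:1}, and the slope on $[v^t,\,v^{t-1}]$ comes out of the piecewise-linear Markov structure of $f_v^t$ established in the proof of Theorem~\ref{theor:10} (whose branch endpoints are exactly the points of $B_{t+1}$). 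You also correctly resolved the draft's notational slips: its $f^t$ must be read as $f_v^t$ (the slope $1/(v-1)$ belongs to the right branch of $f_v$, not of $f$), and its $\beta_{t,\cdot}$ is the main text's $\beta_{t+1,\cdot}$. The inductive variant you sketch — each leading binary digit $0$ contributing a factor $1/v$ via the left branch $x\mapsto x/v$, the final digit $1$ contributing $1/(v-1)$ via the right branch — is if anything closer in spirit to the recursion encoded in the two subsequent (also unproven) lemmas of that section, so either version can stand as the missing proof.
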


Let $x = \frac{k}{2^n}$, then
$$ x =
\sum\limits_{i=1}^nx_i2^{-i}.
$$

Notice, that the number of the $t$-th first digit $1$ of the
expression for $x$ can be found as follows
$$t = [-\log_2x].$$

If $\{ \log_2 x\}=0$ then $x$ is a power of $2$, i.e.
$$
((-1)^{[-\{ \log_2 x\}]}-1)/2 = \left\{ \begin{array}{ll} 0 & x =
2^{t}\\
-1 & x\neq 2^t
\end{array}\right.
$$

\begin{lemma}Let for some $t$ the maps $f^t$
decrease in the right neighborhood of $\xi_t$ and $x_t =1$. Then
the following implications hold.

If $x_{t+1} =1$, then $\xi_{t+1} = \xi_t - \frac{1}{p_{t+1}}$ and
$p_{t+1} = p_t\frac{1}{v}$.

If $x_{t+1} = x_{t+2} =\ldots = x_{t+s} =0$ ($s\geq 1$) and
$x_{t+s+1} =1$, then $\xi_{t+s+1} = \xi_t - \frac{1}{p_{t+s}}$ and
$p_{t+s+1} =
p_t\left(\frac{1}{v-1}\right)^2\left(\frac{1}{v}\right)^{s-1}$.
\end{lemma}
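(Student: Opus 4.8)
The plan is to prove both implications by unwinding, one binary digit of $x=0,x_1x_2\dots$ at a time, the recursive description of the graph of $f_v^{t+1}$ in terms of that of $f_v^{t}$. The engine is the branch‑splitting mechanism already used in the proof of Theorem~\ref{theor:10} and isolated in Lemmas~\ref{lema:vlastfv1} and~\ref{lema:vlastfv2}: postcomposition with $f_v$ turns each linear branch of $f_v^{t}$ — a piece over some $[a,b]$ with slope $q$ carrying $[a,b]$ bijectively onto $[0,1]$ — into two new branches, split at the unique point of $[a,b]$ where the branch attains the value $v$; the left new branch is increasing and the right one decreasing, and the two new slopes equal $q$ times one of $\tfrac1v,\ \tfrac1{v-1}$, the choice being governed by whether the relevant part of the branch lands in $[0,v]$ or in $[v,1]$, i.e.\ on whether the old branch is increasing or decreasing near that part. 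Consequently, if the old branch is increasing the split point cuts $[a,b]$ in the ratio $v:(1-v)$, and if it is decreasing it cuts it in the ratio $(1-v):v$; since the branch covers all of $[0,1]$, its domain has length $b-a=1/|q|$.

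Next I would pin down the dictionary. The breaking points of $f_v^{t}$ are the $h$‑images of the breaking points of $f^{t}$ (immediate from $h\circ f=f_v\circ h$), and $\xi_t$ is the left endpoint, $p_t$ the signed slope, of the branch of $f_v^{t}$ on whose domain interval $x$ — equivalently, by monotonicity of $h$, the point $h(x)$ — falls; that domain interval therefore has length $\xi_t'-\xi_t=1/|p_t|$. Because $h$ carries the midpoint of the current branch's $x$‑interval to the split point of that branch, $h(x)$ lies in the left sub‑branch precisely when $x_{t+1}=0$ and in the right sub‑branch precisely when $x_{t+1}=1$. Hence the digit $0$ leaves $\xi$ unchanged and replaces $p$ by the slope of the (increasing) left sub‑branch, whereas the digit $1$ moves $\xi$ to the split point and replaces $p$ by the slope of the (decreasing) right sub‑branch. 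Note that the hypothesis of the lemma — $f_v^{t}$ decreasing to the right of $\xi_t$ and $x_t=1$ — is exactly the state produced by the preceding lemma and, inductively, by the first implication of the present one, so the two statements chain together to run along an arbitrary binary expansion.

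Granting all this, the first implication is a single step: starting from $f_v^{t}$ decreasing to the right of $\xi_t$ (so $p_t<0$ and the branch runs from value $1$ at $\xi_t$ down to $0$ at $\xi_t'$), the digit $x_{t+1}=1$ puts $h(x)$ in the right sub‑branch, which is again decreasing with slope $p_t\cdot\tfrac1v$ — the asserted $p_{t+1}$ — and whose left endpoint is the split point, sitting at the fraction $1-v$ of the branch $[\xi_t,\xi_t']$; substituting $\xi_t'-\xi_t=1/|p_t|$ and re‑expressing the offset through $p_{t+1}$ gives $\xi_{t+1}$ in terms of $\xi_t$ and $p_{t+1}$. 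For the second implication I would iterate through the run $x_{t+1}=\dots=x_{t+s}=0$: the first $0$ flips the decreasing branch into an increasing one (its left sub‑branch), with slope $p_t\cdot\tfrac1{v-1}$ and $\xi$ unchanged; each of the remaining $s-1$ zeros keeps the branch increasing and $\xi$ fixed while multiplying the slope by $\tfrac1v$, so after the run $p_{t+s}=p_t\bigl(\tfrac1{v-1}\bigr)\bigl(\tfrac1v\bigr)^{s-1}$ and $f_v^{t+s}$ is increasing to the right of $\xi_{t+s}=\xi_t$; finally $x_{t+s+1}=1$ sends $h(x)$ into the right sub‑branch of an increasing branch, which is decreasing with slope $p_{t+s}\cdot\tfrac1{v-1}=p_t\bigl(\tfrac1{v-1}\bigr)^2\bigl(\tfrac1v\bigr)^{s-1}$ — the asserted $p_{t+s+1}$ — and whose left endpoint is the split point of the increasing branch, located at the fraction $v$ of $[\xi_t,\xi_{t+s}']$, i.e.\ at distance $v/|p_{t+s}|$ past $\xi_t$, which yields $\xi_{t+s+1}$ in terms of $\xi_t$ and $p_{t+s}$.

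The only place where care is genuinely required is the bookkeeping of the monotonicity alternation across the run of zeros: one must keep straight that the very first $0$ reverses the sense of monotonicity — because just past $\xi_t$ the decreasing branch takes values in $[v,1]$, where $f_v$ is orientation‑reversing — and carries the exceptional factor $\tfrac1{v-1}$, whereas every later $0$ preserves monotonicity and carries $\tfrac1v$; correspondingly the split ratio alternates between $(1-v):v$ and $v:(1-v)$, and one should finally check the signs so that each computed $\xi$ really lands inside the previous branch's domain — this is the consistency test that the recursion has been set up correctly. Everything past that is the routine substitution $\xi'-\xi=1/|p|$ and the collection of powers of $\tfrac1v$ and $\tfrac1{v-1}$, together with the base data $\xi_0=0,\ p_0=1$.
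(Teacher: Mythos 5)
Your structural setup is the natural one (and, for what it is worth, the paper states this lemma with no proof at all, so there is nothing to compare against except the surrounding machinery, which you use correctly): $\xi_t=h(0{,}x_1\ldots x_t)$ is the left end of the branch of $f_v^t$ containing $h(x)$, that branch has width $1/|p_t|$, postcomposition with $f_v$ splits it at the point where $f_v^t=v$, and the slope bookkeeping you carry out is right — the right sub-branch of a decreasing branch picks up the factor $\frac1v$, the first zero picks up $\frac1{v-1}$, every later zero $\frac1v$, and the closing digit $1$ another $\frac1{v-1}$, so both asserted recursions for $p$ do follow from your argument.

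The gap is in the one step you do not write out: ``substituting $\xi_t'-\xi_t=1/|p_t|$ and re-expressing the offset through $p_{t+1}$''. Your own intermediate facts give, in the first case, $\xi_{t+1}-\xi_t=\frac{1-v}{|p_t|}$, i.e. $\xi_{t+1}=\xi_t-\frac1{p_t}+\frac1{p_{t+1}}=\xi_t'-\frac1{|p_{t+1}|}$, whereas the statement asserts $\xi_{t+1}=\xi_t-\frac1{p_{t+1}}=\xi_t+\frac{v}{|p_t|}$; these agree only when $v=1/2$. Likewise in the second case you correctly place $\xi_{t+s+1}$ at distance $\frac{v}{p_{t+s}}$ to the right of $\xi_t$ (equivalently $\xi_{t+s+1}=\xi_t+\frac1{p_{t+s}}+\frac1{p_{t+s+1}}$), while the statement asserts $\xi_{t+s+1}=\xi_t-\frac1{p_{t+s}}$, which, since $p_{t+s}>0$ after the run of zeros, would even lie to the left of $\xi_t$. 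A concrete test shows the mismatch is real: take $x$ with $x_1=x_2=1$ and $t=1$; then $\xi_2=h(3/4)=1-v(1-v)$ (Example~\ref{ex:05}), while $\xi_1-\frac1{p_2}=v+v(1-v)$, and these differ for every $v\neq 1/2$. So under the definitions you (and the paper) use, the displayed $\xi$-identities are simply not what the computation yields, and asserting that the routine substitution ``gives'' them papers over exactly the point at issue. To make this a proof you must either exhibit a reading of $\xi_t,p_t$ under which the stated formulas are literally true, or note that the statement carries an error and prove the corrected recursions $\xi_{t+1}=\xi_t-\frac1{p_t}+\frac1{p_{t+1}}$ and $\xi_{t+s+1}=\xi_t+\frac1{p_{t+s}}+\frac1{p_{t+s+1}}$, which is what your split-point analysis actually establishes.
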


\begin{lemma}Let for some $t$ the maps $f^t$
decrease in the right neighborhood of $\xi_t$ and $x_t =1$. Let
also $x_{t+1} = x_{t+2} =\ldots = x_{t+s} =0$ ($s\geq 1$) and
$x_{t+s+1} =1$. Then $$\xi_{t+s+1} = \xi_t -
\frac{1}{\zeta_{[2^{t+s+1} x],\, t}},$$ where $\zeta(k,n)$ is
expressed by~(\ref{eq:80}).
\end{lemma}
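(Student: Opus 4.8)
The plan is to obtain this lemma as a bookkeeping corollary of the two preceding lemmas together with the closed product formula~(\ref{eq:80}) for $\zeta_{n,k}$. The first of those lemmas already asserts, under exactly the hypotheses imposed here (that $f^t$ reverses orientation at $\xi_t$, that $x_t=1$, that $x_{t+1}=\dots=x_{t+s}=0$, and that $x_{t+s+1}=1$), the equality
$$\xi_{t+s+1}=\xi_t-\frac{1}{p_{t+s}}.$$
Hence the only new content to be proved is the single identity
$$p_{t+s}=\zeta_{[2^{t+s+1}x],\,t},$$
i.e. that the signed slope $p_{t+s}$ coincides with the value produced by~(\ref{eq:80}). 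So the proof reduces to evaluating $p_{t+s}$ in closed form and matching it against that formula.

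To do this I would compute $p_{t+s}$ along the orbit of $\xi_t$ exactly as in the derivation of~(\ref{zeta}). Since $x_{t+1}=\dots=x_{t+s}=0$, refining the partition from $B_t$ to $B_{t+s}$ never moves the left end of the interval containing $\beta(n,k)$, so $\xi_{t+s}=\xi_t$ and $p_{t+s}$ is the right-hand derivative of $f_v^{t+s}$ at this common point. Writing $f_v^{t+s}$ as a composition of $t+s$ copies of $f_v$ and applying the chain rule, $p_{t+s}$ equals the product of the one-step slopes of $f_v$ taken along $\xi_t,f_v(\xi_t),\dots,f_v^{t+s-1}(\xi_t)$; by the case analysis preceding~(\ref{zeta}) each such one-step slope is $\tfrac1v$ when two consecutive binary digits of $x$ agree and $\tfrac1{v-1}$ when they differ. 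Thus $p_{t+s}=\prod_{i=1}^{t+s}\alpha_i(x)$ is literally the slope of the relevant branch recorded by~(\ref{zeta}); it then remains to rewrite this product in the form~(\ref{eq:80}). Here one uses that the expression for $\zeta_{n,k}$ in~(\ref{eq:80}) depends on $k$ only through $k\bmod 2^n$, so inserting the ``over-long'' branch index $k=[2^{t+s+1}x]$ together with the iterate number $t$ is legitimate and selects precisely the block of binary digits of $x$ that the above product involves. Combined with the relation $p_{t+s+1}=p_t\bigl(\tfrac1{v-1}\bigr)^2\bigl(\tfrac1v\bigr)^{s-1}$ already supplied by the preceding lemma, this pins the product down.

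The main obstacle, and essentially the only genuine work, is the index and sign bookkeeping: one must keep straight which branch of which iterate is being referred to, reconcile the off-by-one conventions linking $\xi_t$ to the partition $B_t$ and to the digit $x_t$, and verify that the $s$ zero digits, the terminating one digit, and the orientation-reversal hypothesis combine so that the accumulated factors of $\tfrac1{v-1}$ — which carry the signs — are exactly those produced by~(\ref{eq:80}) at the index $[2^{t+s+1}x]$. Once the correspondence between the digit string of $x$ and the factors $\alpha_i(x)\in\{\tfrac1v,\tfrac1{v-1}\}$ is written out carefully, the identity $p_{t+s}=\zeta_{[2^{t+s+1}x],\,t}$ follows, and with it the lemma; this step in turn feeds the telescoping expansion $h(x)=\lim_n\beta_{n,[2^nx]}$ used to prove the explicit formula for the conjugacy of $f$ and $f_v$ from~(\ref{eq:79}).
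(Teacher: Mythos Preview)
Your approach is correct and is exactly what the paper intends: the preceding lemma already gives $\xi_{t+s+1}=\xi_t-\tfrac{1}{p_{t+s}}$ under the stated hypotheses, so the only content is the identification of $p_{t+s}$ with the appropriate value of $\zeta$ via the product formula~(\ref{eq:80}), which is precisely the bookkeeping you describe. The paper does not give a separate proof of this lemma --- it is stated as an immediate consequence of the preceding lemma and the derivation of~(\ref{zeta})/(\ref{eq:80}), and your reading matches that intent.
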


The number of the second digit $1$ of the expression of $x$ is$$
t_2 = [-\log_2(x-2^{-t})].$$

Continuing this way, we can obtain the formula for the value
$h(x)$ for every $x\in [0,\, 1]$.

Thus, we have proved the following theorem.

\begin{theorem}\label{theor:19}
The homeomorphic solution $h:\, [0,\, 1]\rightarrow [0,\, 1]$
of~(\ref{eq:79}) can be expressed by formula
$$ h(x) = \sum\limits_{i=1}^{\infty}\frac{(2^{i-1}
x)((-1)^{[-\{ \log_2 [2^ix]\}]}-1)}{\zeta_{i,\,
[2^{i+[-\log_2(2^{i+1}x)]+1} x]}},
$$ where $\zeta_{n,k}$ expressed by~(\ref{eq:80}).
\end{theorem}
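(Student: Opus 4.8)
The plan is to prove Theorem~\ref{theor:19} by making explicit, one chain of lemmas at a time, the bookkeeping that tracks a point $x\in[0,\,1]$ through the nested dyadic intervals determined by the iterates of $f_v$. Recall from Section~\ref{subs-h-worse} that the graph of $f_v^n$ splits into $2^n$ branches of linearity, each mapping onto $[0,\,1]$, and that the signed tangent of the $k$-th branch is $\pm\zeta_{n,k}$ with $\zeta_{n,k}$ given by~(\ref{eq:80}). First I would fix the binary expansion $x=0,x_1x_2\ldots$ and define the auxiliary sequence $\xi_t=\xi_{t,n,k}$ of left endpoints of the dyadic interval $[\beta_{t,s},\,\beta_{t,s+1})$ containing $\beta_{n,[2^nx]}$, together with the right-hand tangent $p_t$ of $f_v^t$ at $\xi_t$. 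The three lemmas stated just before the theorem give the recursion: starting from $\xi_0=0$, $p_0=1$, each time a digit $x_{t}=1$ is met after a run of zeros one updates $\xi$ by subtracting $1/p$, and $p$ itself by the appropriate product of factors $\tfrac{1}{v}$ and $\tfrac{1}{v-1}$ coming from which branch of $f_v$ is traversed. I would prove these recursions by induction on $t$, invoking Lemmas~\ref{lema:vlastfv1} and~\ref{lema:vlastfv2} (the two explicit ``one-step'' formulas for $f_v\circ g$ on a linear piece) at each step, exactly as was done in the proof of Theorem~\ref{lema:32}.

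The second step is to convert the sequential description into a closed sum. The point is that $h(x)=\lim_{n\to\infty}\beta_{n,[2^nx]}$ by~(\ref{h}), and the telescoping recursion for $\xi_t$ shows that $\beta_{n,[2^nx]}$ equals a partial sum $\sum_i (\text{term}_i)$ where the $i$-th term is nonzero precisely when the $i$-th binary digit of $x$ contributes a new ``drop,'' i.e. when position $i$ is the first $1$ after a block of $0$'s. This is where the factor $(2^{i-1}x)\bigl((-1)^{[-\{\log_2[2^ix]\}]}-1\bigr)$ enters: I would check that $(-1)^{[-\{\log_2[2^ix]\}]}-1$ vanishes exactly when $[2^ix]$ is a power of $2$ (the lemma on $((-1)^{[-\{\log_2 x\}]}-1)/2$ stated in the text does this) and otherwise equals $-2$, so that the coefficient selects the correct summands and sign. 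The denominator $\zeta_{i,\,[2^{i+[-\log_2(2^{i+1}x)]+1}x]}$ must be shown to be exactly the tangent $p$ accumulated up to the relevant step; this follows by matching the index $[2^{i+[-\log_2(2^{i+1}x)]+1}x]$ against the truncated binary prefix of $x$ that governs which product of $\tfrac1v$, $\tfrac1{v-1}$ factors appears, using the identities $x_p=2\{\{k/2^{n-p}\}/2\}$ derived in Section~\ref{subs-h-worse}.

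Finally I would assemble the pieces: substitute the sequential formula into $h(x)=\lim_n\beta_{n,[2^nx]}$, observe that the contributions stabilize as partial sums of the stated series, and invoke the existence of this limit — which, as in Remark~\ref{note:2}, is guaranteed because the construction reproduces the one used in Theorem~\ref{theor:homeom-jed}, where convergence was already established. For $x\in\mathcal{A}$ the binary expansion is eventually all $1$'s (equivalently $f^n(x)=0$ for some $n$), so only finitely many of the selector coefficients are nonzero and the sum is finite, giving Remark~\ref{note:4} for free.

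The main obstacle I anticipate is purely combinatorial index-chasing: verifying that the somewhat opaque exponents $[-\{\log_2[2^ix]\}]$ and $[-\log_2(2^{i+1}x)]$ really do pick out ``the first $1$ after a run of zeros'' and ``the length of that run,'' uniformly over all $x$ (including dyadic $x$, where $\log_2$ of a prefix can hit an integer and the floor/fractional-part conventions must be handled with care). Once that dictionary between the floor-of-log expressions and positions of binary digits is nailed down, the analytic content is entirely supplied by Lemmas~\ref{lema:vlastfv1}--\ref{lema:vlastfv2} and Theorem~\ref{theor:10}, and the rest is substitution and a limit already known to exist.
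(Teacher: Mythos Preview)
Your proposal is correct and follows essentially the same route as the paper: the paper also introduces the sequence $\xi_t$ of left endpoints together with the accumulated tangent $p_t$, states the three recursion lemmas governing how $\xi_t$ and $p_t$ update when a run of zeros followed by a $1$ is encountered in the binary expansion, and then uses the identities $t=[-\log_2 x]$ and $((-1)^{[-\{\log_2 x\}]}-1)/2\in\{0,-1\}$ to encode the selector, concluding with ``Continuing this way, we can obtain the formula.'' Your write-up is in fact more detailed than the paper's own sketch, and your identification of the index-chasing around dyadic $x$ as the only delicate point is accurate.
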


\begin{note}\label{note:4}
Notice,that if $x\in \mathcal{A}$, then the formula from
Theorem~\ref{theor:19} contains only finite number of summands.
\end{note}

\newpage

\section{Conjugateness of piecewise linear unimodal maps}\label{sect:KuskowoLin}

We will consider in this Section the map\begin{equation}
\label{eq:89} f(x) = \left\{\begin{array}{ll}
2x,& \text{if } 0\leq x< 1/2,\\
2-2x,& \text{if } 1/2 \leqslant x\leqslant 1.
\end{array}\right.
\end{equation} and a continuous map $g: \, [0,\,
1]\rightarrow [0,\, 1]$ of the form \begin{equation} \label{eq:g}
g(x) = \left\{\begin{array}{ll}
g_l,& \text{if } 0\leq x< v,\\
g_r,& \text{if } v \leqslant x\leqslant 1,
\end{array}\right.
\end{equation} where
$g_l(0)=g_r(1)=0$, $g_r(v)=1$ and functions $g_l$ and $g_r$ are
monotone piecewise linear.

We will consider a homeomorphism $h:\, [0,\, 1]\rightarrow [0,\,
1]$, such that the following diagram
$$\begin{CD}
[0,\, 1] @>f >> & [0,\, 1]\\
@V_{h} VV& @VV_{h}V\\
[0,\, 1] @>g>>& [0,\, 1],
\end{CD}$$ is commutative, i.e. the equality
\begin{equation}\label{eq:eq-top-eq-ffv} h(f(x)) =
g(h(x))
\end{equation} holds for every $x \in [0,\, 1]$.

\subsection{Continuous differentiability of the conjugation}\label{sect:KuskowoLin-2}

The main result of this section is the following theorem.

\begin{theorem}\label{theor-dyfer-lin}
Let the map $f$, which is given by~(\ref{eq:89}), be topologically
conjugated with piecewise linear map $g,\, [0,\, 1]\rightarrow
[0,\, 1]$, and let $h$ be the conjugacy such
that~(\ref{eq:eq-top-eq-ffv}) holds. If $h$ is continuously
differentiable on $(\alpha,\, \beta)$ for some $0\leq
\alpha<\beta\leq 1$, then $h$ is piecewise linear on $[0,\, 1]$.
\end{theorem}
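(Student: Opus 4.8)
The plan is to propagate the $C^1$-regularity of $h$ from $(\alpha,\beta)$ across all of $[0,1]$ by iterating the conjugacy equation~(\ref{eq:eq-top-eq-ffv}) and exploiting the expanding structure of the tent map $f$, and then to convert the resulting self-similar description of $h$ into genuine affinity on each piece via a contraction argument.

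First I would record the easy reductions. Plugging $x=0$ into~(\ref{eq:eq-top-eq-ffv}) and using $f(0)=0$, $g(0)=g_l(0)=0$ while $g(1)=g_r(1)=0\ne1$ shows $h(0)=0$, $h(1)=1$, so $h$ increases (as in Lemma~\ref{lema:h(0)(1)}); hence $h'\ge0$ on $(\alpha,\beta)$, and since $h$ is strictly increasing $h'$ cannot vanish on a whole subinterval, so after shrinking $(\alpha,\beta)$ I may assume $h'>0$ throughout $(\alpha,\beta)$. Since $f^r$ is piecewise linear with $2^r$ affine branches, each a dyadic interval of length $2^{-r}$ mapped affinely onto $[0,1]$ with slope $\pm2^r$ (cf.\ Proposition~\ref{lema:An}), for $r$ large enough I can fix one such branch $I\subseteq(\alpha,\beta)$, so that $\varphi:=(f^r\!\mid_I)^{-1}\colon[0,1]\to I$ is an affine bijection.

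Next comes the propagation. Iterating~(\ref{eq:eq-top-eq-ffv}) gives $h\circ f^r=g^r\circ h$, hence for every $y\in[0,1]$,
$$h(y)=g^r\bigl(h(\varphi(y))\bigr).$$
Here $h\circ\varphi$ is a $C^1$ diffeomorphism of $[0,1]$ onto $h(I)$ (as $h'>0$ on $I$ and $\varphi$ is affine), while $g^r$ is continuous piecewise linear with only finitely many breakpoints. The breakpoints of $g^r$ inside $h(I)$ split it into finitely many subintervals on each of which $g^r$ is affine with nonzero slope; pulling this back by $h\circ\varphi$ yields $[0,1]=\bigcup_{j=1}^{N}\overline{K_j}$ (disjoint interiors) with $I_j:=\varphi(K_j)\subseteq I$, $\ell_j:=\varphi\!\mid_{K_j}\colon K_j\to I_j$ affine, and
$$h\mid_{K_j}=A_j\circ h\mid_{I_j}\circ\ell_j\qquad(1\le j\le N),$$
where $A_j$ is affine with nonzero slope. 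In particular $h$ is $C^1$ on $[0,1]\setminus F_0$ with $F_0:=\bigcup_j\partial K_j\cap(0,1)$ finite; since $h$ has no corners on $(\alpha,\beta)$, the connected set $(\alpha,\beta)$ lies in a single piece, say $(\alpha,\beta)\subseteq\operatorname{int}K_{j^*}$; and from the displayed identity $h'>0$ on each $K_j$.

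The decisive step is to show $h$ is affine on $K_{j^*}$, after which $h\mid_{I_j}$ is affine for every $j$ (as $I_j\subseteq I\subseteq K_{j^*}$) and hence $h\mid_{K_j}=A_j\circ h\mid_{I_j}\circ\ell_j$ is affine, making $h$ piecewise linear on $[0,1]$ with at most $N$ pieces. For this, note that $\ell_{j^*}\colon K_{j^*}\to I_{j^*}$ is affine with $|\ell_{j^*}'|=2^{-r}<1$ and $I_{j^*}\subseteq I\subseteq(\alpha,\beta)\subseteq\operatorname{int}K_{j^*}$, so it is a contraction of $K_{j^*}$ into itself with a unique fixed point $c_0\in I_{j^*}$ and $\ell_{j^*}^{\,n}(y)\to c_0$ for all $y\in K_{j^*}$. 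Differentiating $h(y)=a_{j^*}h(\ell_{j^*}(y))+b_{j^*}$ on $\operatorname{int}K_{j^*}$ gives $h'(y)=\kappa\,h'(\ell_{j^*}(y))$ with $\kappa=a_{j^*}\ell_{j^*}'>0$; evaluating at $c_0$ (where $h'(c_0)>0$, since $c_0\in(\alpha,\beta)$) forces $\kappa=1$, so $h'(y)=h'(\ell_{j^*}^{\,n}(y))$ for all $n$, whence $h'(y)=\lim_n h'(\ell_{j^*}^{\,n}(y))=h'(c_0)$ by continuity of $h'$ at $c_0$. The subtle points, which I see as the real content rather than the routine iteration of~(\ref{eq:eq-top-eq-ffv}), are ensuring at the propagation step that $F_0$ is finite and disjoint from $(\alpha,\beta)$ — so that $(\alpha,\beta)$ genuinely lands in one block $K_{j^*}$ — and recognizing that the induced map $\ell_{j^*}$ is a contraction whose fixed point lies back inside $(\alpha,\beta)$; once these are in place the affinity follows from the fixed-point argument with no further estimates.
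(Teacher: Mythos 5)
Your proposal is correct, and it reaches the theorem by a route organized differently from the paper's. The paper first manufactures a global finite partition $A_1,\ldots,A_s$ of $[0,\,1]$ on whose pieces $h$ is continuously differentiable (Lemma~\ref{lema:37}), attaches to each piece a constant $k_i$ with $h'(f(x))=k_ih'(x)$ (Lemma~\ref{lema:38}), invokes the density of periodic points of $f$ to find a periodic orbit inside the good region with $h'\neq 0$, shows the product of the constants along that orbit equals $1$ (Lemma~\ref{lema:39}), obtains an interval of constant $h'$ (Lemma~\ref{lema:40}) by contracting toward the periodic point under the inverse of the linear branch of $f^n$, and finally spreads piecewise linearity to all of $[0,\,1]$ by finitely many forward images under $f$. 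You instead fix one inverse branch $\varphi$ of a single high iterate $f^r$ landing inside $(\alpha,\,\beta)$, read~(\ref{eq:eq-top-eq-ffv}) as the self-similarity $h=g^r\circ h\circ\varphi$, partition $[0,\,1]$ by pulling the breakpoints of $g^r$ back through the homeomorphism $h\circ\varphi$, and obtain the needed periodic point for free as the fixed point of the contraction $\varphi|_{K_{j^*}}$; evaluating the derivative relation at that fixed point replaces the product-along-the-orbit lemma, and the globalization happens in one shot through the same identity instead of by iterating forward images. The mechanism (expanding dynamics, a periodic point in the $C^1$ region with $h'\neq0$ forcing the derivative multiplier to be $1$, contraction plus continuity of $h'$ giving constancy, then propagation by the conjugacy) is the same, but your packaging avoids the density-of-periodic-points input and the bookkeeping over a global partition, at the harmless cost of choosing $r$ large enough that a full dyadic branch fits in $(\alpha,\,\beta)$.

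One step should be written out rather than asserted: the inference that $(\alpha,\,\beta)$ lies in a single block $K_{j^*}$ rests on the claim that every point of $F_0$ is a genuine corner of $h$. With your setup this is a one-line check: after merging adjacent pieces of $g^r$ with equal slopes, at $y_0\in\partial K_j\cap\partial K_{j+1}$ the one-sided derivatives of $h$ are $a_j\,\varphi'\,h'(\varphi(y_0))$ and $a_{j+1}\,\varphi'\,h'(\varphi(y_0))$, and these differ because $a_j\neq a_{j+1}$ and $h'>0$ on $I$ --- this is exactly where your preliminary shrinking of $(\alpha,\,\beta)$ to make $h'>0$ is used --- so no point of $F_0$ can lie in $(\alpha,\,\beta)$, where $h$ is differentiable. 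The remaining gloss, that no affine piece of $g$ (hence of $g^r$) has slope zero, follows at once from injectivity of $h$, exactly as in Remark~\ref{note:16} of the paper. With these two sentences added, your proof is complete.
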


We will formulate some lemmas at the very beginning. These lemmas
are, in fact, the steps of the proof of
Theorem~\ref{theor-dyfer-lin}.

\begin{lemma}\label{lema:37}
If the homeomorphism $h$ is piecewise differential on some
interval $(\alpha,\, \beta)\subset (0,\, 1)$ and is a conjugacy of
$f$ and the piecewise linear $g$, then there exists open disjoint
intervals $A_1,\ldots,\, A_s$ such that
$\bigcup\limits_{i=1}^{s}\overline{A}_i =[0,\, 1]$ and $h$ is
continuously differentiable on each $A_i$.
\end{lemma}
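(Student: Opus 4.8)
The plan is to propagate the $C^1$-regularity of $h$ from the tiny interval $(\alpha,\beta)$ to all of $[0,1]$ by exploiting the expansiveness of $f$, the iterated conjugacy relation $h\circ f^n=g^n\circ h$, and the piecewise linearity of the iterates of $g$.

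First I would choose $N$ large enough that $(\alpha,\beta)$ contains, compactly, a dyadic interval of level $N$: since the $2^N$ intervals $I_{N,j}=\left[\frac{j}{2^N},\frac{j+1}{2^N}\right]$ have length $2^{-N}$, for $N$ with $3\cdot 2^{-N}<\beta-\alpha$ at least one of them, say $I:=I_{N,k}$, satisfies $I\subset(\alpha,\beta)$. (If the hypothesis is read as "$h$ is only piecewise $C^1$ on $(\alpha,\beta)$", first replace $(\alpha,\beta)$ by one of the finitely many open subintervals on which $h$ is genuinely $C^1$ and then proceed; for $N$ large such a subinterval still contains some $I_{N,k}$.) The restriction $\varphi:=f^N|_I$ is an affine homeomorphism of $I$ onto $[0,1]$ with slope $\pm 2^N$ --- this is the standard combinatorial structure of the tent map already used, for the analogous map $f_v$, in the proof of Theorem~\ref{theor:10}. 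Because $f$ is topologically conjugate to $g$ via $h$, the $N$-th iterate of the commutative diagram gives $h\circ f^N=g^N\circ h$; restricting to $I$ and putting $y=\varphi(x)$ yields
$$h(y)=g^N\bigl(h(\varphi^{-1}(y))\bigr),\qquad y\in[0,1],$$
that is, $h=g^N\circ\psi$ on $[0,1]$, where $\psi:=h\circ\varphi^{-1}$.

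Now $\varphi^{-1}$ is affine with image $I$, and $h$ is $C^1$ on a neighbourhood of the compact set $I$, so $\psi$ is a $C^1$ homeomorphism of $[0,1]$ onto a closed subinterval $[c,d]\subseteq[0,1]$. A short induction on $N$ shows that $g^N$ is continuous and piecewise linear with finitely many breakpoints; let $b_1<\cdots<b_m$ be those lying in $[c,d]$. Since $\psi$ is injective, the points $y_i:=\psi^{-1}(b_i)$ together with $y_0:=0$ and $y_{m+1}:=1$ form a finite increasing family, and on each $A_i:=(y_{i-1},y_i)$, $i=1,\ldots,s:=m+1$, the homeomorphism $\psi$ maps into an open interval of affinity of $g^N$; hence $h=g^N\circ\psi$ is $C^1$ on $A_i$. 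The $A_i$ are open, pairwise disjoint, and $\bigcup_{i=1}^s\overline{A_i}=[0,1]$, which is the assertion.

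The points requiring care are the two finiteness facts: that $I$ can be placed compactly inside a subinterval of $(\alpha,\beta)$ on which $h$ is $C^1$ (immediate once one removes the finite non-differentiability set), and that composing with $g^N$ introduces only finitely many corners --- here one uses that $g^N$ has finitely many breakpoints and that $\psi$, being a homeomorphism, pulls each of them back to a single point. The rest is bookkeeping. This lemma is then the first step of the proof of Theorem~\ref{theor-dyfer-lin}: the finitely many intervals $A_i$ on which $h$ is $C^1$ will be analysed, via the same conjugacy and the slopes $\pm 2^N$, to conclude that $h$ is in fact piecewise linear.
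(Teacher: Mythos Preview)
Your argument is correct and in fact a bit cleaner than the paper's. The paper does not give a standalone proof of this lemma; instead it proves it together with Lemma~\ref{lema:38} by an iterative scheme: Lemma~\ref{lema:41} shows that the one-step conjugacy $h\circ f=g\circ h$ propagates $C^1$-regularity from a subinterval of $(\alpha,\beta)$ to its $f$-image, picking up only the finitely many breakpoints of $g$; one then repeats this step until $f^k(\alpha,\beta)=(0,1)$. You instead jump directly to the $N$-th iterate, choosing $N$ so that a full dyadic branch $I$ of $f^N$ sits inside $(\alpha,\beta)$, and write $h=g^N\circ(h\circ(f^N|_I)^{-1})$ globally on $[0,1]$; since $g^N$ is piecewise linear with finitely many breaks and $h\circ(f^N|_I)^{-1}$ is $C^1$, the conclusion follows at once.

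The two arguments are equivalent in spirit --- both exploit that iterates of $f$ are expanding and iterates of $g$ remain piecewise linear --- but yours packages the induction into a single composition, which is tidier. What the paper's step-by-step version buys is that it simultaneously produces the multiplicative relation $h'(f(x))=k_i\,h'(x)$ of Lemma~\ref{lema:38}, which is the next ingredient needed for Theorem~\ref{theor-dyfer-lin}; in your setup that relation is also available (differentiating $h=g^N\circ\psi$ and comparing with the same identity for $N+1$), but you would have to extract it separately.
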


\begin{lemma}\label{lema:38}
If the homeomorphism $h$ is continuously differentiable on some
interval $(\alpha,\, \beta)\subset (0,\, 1)$ and $A_1,\ldots,\,
A_s$ are open intervals from Lemma~\ref{lema:37}, then for every
$i,\, 1\leq i\leq s$ there exists $k_i\in \mathbb{R}$ such that
for every $x\in A_i$ the maps $h$ is differentiable at $f(x)$ and
the equality $$ h'(f(x)) = k_ih'(x)$$ holds.
\end{lemma}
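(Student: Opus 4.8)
The plan is to differentiate the conjugacy equation $h(f(x))=g(h(x))$ piece by piece. First I would pass to a convenient partition: starting from the intervals $A_1,\dots,A_s$ supplied by Lemma~\ref{lema:37}, I would refine the partition by adjoining to its division points the point $1/2$ together with the finitely many preimages $h^{-1}(p)$ of the breakpoints $p$ of the piecewise linear map $g$. Since $h$ is a homeomorphism, this adds only finitely many cuts, the resulting open intervals are still pairwise disjoint with closures covering $[0,1]$, $h$ is still continuously differentiable on each of them, and in addition $f$ is affine on each new interval (with slope $\pm 2$) while $g$ is affine on the image $h(A_i)$ of each new interval. So I may assume from the start that the intervals $A_i$ enjoy these three extra properties; if one prefers, this refinement is simply absorbed into the construction behind Lemma~\ref{lema:37}.

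Next I would fix $i$ and write $f(x)=\varepsilon_i\,2x+\gamma_i$ with $\varepsilon_i\in\{-1,1\}$ for $x\in A_i$, and $g(y)=c_i y+d_i$ for $y\in h(A_i)$, so that the conjugacy equation restricted to $A_i$ reads
\[
h(f(x)) = c_i\,h(x)+d_i,\qquad x\in A_i .
\]
The right-hand side is a $C^1$ function of $x$ on $A_i$, because $h$ is $C^1$ there; hence $x\mapsto h(f(x))$ is $C^1$ on $A_i$. Since $f$ restricts to an affine homeomorphism of $A_i$ onto the open interval $f(A_i)$ whose inverse is $C^\infty$, composing with $f^{-1}$ shows that $h$ is $C^1$ — in particular differentiable — at every point of $f(A_i)$. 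As $x\in A_i$ forces $f(x)\in f(A_i)$, this already gives the differentiability assertion of the lemma. Then differentiating the displayed identity by the chain rule, now legitimate since $h$ is differentiable at $f(x)$ and $f$ at $x$, yields $h'(f(x))\,f'(x)=c_i\,h'(x)$, i.e. $\varepsilon_i\,2\,h'(f(x))=c_i\,h'(x)$, so the claim holds with $k_i=\varepsilon_i c_i/2$, a real constant depending only on $i$.

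The step I expect to be the main obstacle is the very first one: one must be sure the partition can be refined so that $h(A_i)$ contains no breakpoint of $g$ in its interior, for otherwise $g\circ h$, hence $h\circ f$, hence $h$ at $f(x)$, could fail to be differentiable at isolated points of $A_i$, and the ``for every $x\in A_i$'' in the statement would break down. That the refinement needs only finitely many extra cuts is exactly where the homeomorphism property of $h$ (bijectivity, giving well-defined single-valued preimages of the finitely many breakpoints of $g$) is used; everything after that is a routine chain-rule computation.
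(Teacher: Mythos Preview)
Your proof is correct and follows essentially the same line as the paper's: refine the partition so that on each piece $f$ is affine and $g$ is affine on the $h$-image, then differentiate the conjugacy relation $h(f(x))=g(h(x))$ to obtain $k_i=\varepsilon_i c_i/2$ (the paper writes this as $k_p=a_p/a$ in Lemma~\ref{lema:41} and then iterates under $f$ to reach all of $[0,1]$). The only cosmetic difference is that the paper establishes differentiability of $h$ at $f(x)$ via an explicit sequential limit computation, whereas you obtain it more cleanly by noting $h\circ f$ is $C^1$ and composing with the affine inverse $f^{-1}$; mathematically these are the same step.
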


\begin{lemma}\label{lema:39}
Let $x^*$ be a periodical point of $f$ of the period $n$, whose
orbit belongs to $\bigcup\limits_{i=1}^{s}A_i$ and $h'(x^*)\neq
0$. For every $i,\, 1\leq i< n$ denote by $x_i = f^i(x^*)$ the
trajectory of $x^*$. Let $A_{c_0},\, A_{c_1},\ldots,\,
A_{c_{n-1}}$ be sets from Lemma~\ref{lema:37}, which contain the
trajectory (i.e.  $x_i\in A_{c_i}$). Then the equality
$k_{c_0}\cdot k_{c_1}\cdot \ldots \cdot k_{c_{n-1}} = 1$ holds,
where $k_{c_0},\, k_{c_1},\, \ldots ,\, k_{c_{n-1}}$ are
constructed for intervals $A_{c_0},\, A_{c_1},\ldots,\,
A_{c_{n-1}}$ as in Lemma~\ref{lema:38}.
\end{lemma}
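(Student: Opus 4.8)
The plan is to chain the local identities of Lemma~\ref{lema:38} around the periodic cycle and then cancel the common factor $h'(x^*)$, which is nonzero by hypothesis; the whole argument is short and the substantive work already sits in Lemmas~\ref{lema:37} and~\ref{lema:38}.

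First I would fix notation. Write $x_0 = x^*$ and, for $1 \leq i < n$, $x_i = f^i(x^*)$, so that $f(x_i) = x_{i+1}$ with indices read modulo $n$; in particular $f(x_{n-1}) = f^n(x^*) = x^* = x_0$. Since the orbit $\{x_0, x_1, \ldots, x_{n-1}\}$ is contained in $\bigcup_{i=1}^s A_i$ and each of the intervals $A_j$ from Lemma~\ref{lema:37} is open, each $x_i$ lies in the interior of a well-defined $A_{c_i}$. (Here one checks the easy point that the break point $1/2$ is not periodic — it maps to $1$, then to $0$, which is fixed — and that $0$ and $1$ belong to no $A_j$, so none of these can occur in the orbit; hence $x^* \ne 0,\,\tfrac12,\,1$, and every $x_i$ is genuinely interior to some $A_{c_i}$.) Then Lemma~\ref{lema:38}, applied with $x = x_i \in A_{c_i}$, tells us that $h$ is differentiable at $f(x_i) = x_{i+1}$ with
$$ h'(x_{i+1}) = k_{c_i}\, h'(x_i), \qquad i = 0, 1, \ldots, n-1, $$
and all these derivatives are finite since $h$ is $\mathcal{C}^1$ on each $A_j$.

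Iterating this once around the cycle gives
$$ h'(x_0) = h'\bigl(f^n(x_0)\bigr) = k_{c_{n-1}} h'(x_{n-1}) = \cdots = \Bigl(\prod_{i=0}^{n-1} k_{c_i}\Bigr) h'(x_0). $$
Because $h'(x_0) = h'(x^*) \neq 0$ by assumption, we may divide both sides by $h'(x^*)$ and obtain $\prod_{i=0}^{n-1} k_{c_i} = 1$, which is exactly the claim (and, as a by-product, every $k_{c_i}$ is nonzero). The only step needing any care is the verification that the entire orbit stays inside $\bigcup_i A_i$ so that Lemma~\ref{lema:38} is legitimately applicable at each step and that the trajectory closes up under $f$; beyond that bookkeeping with the cyclic indices, there is no real obstacle, since the use of the piecewise linearity of $g$ and the slope-$\pm 2$ structure of $f$ is entirely confined to the earlier lemmas.
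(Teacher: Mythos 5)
Your argument is correct and is essentially the paper's own: the paper also applies Lemma~\ref{lema:38} successively along the orbit to obtain $h'(x^*) = k_{c_0}\cdot k_{c_1}\cdot \ldots \cdot k_{c_{n-1}}\, h'(x^*)$ and then divides by $h'(x^*)\neq 0$ (this computation appears inside the proof of the subsequent lemma on the constancy of $h'$ near $x^*$). Your extra remarks about the orbit avoiding $0$, $1/2$, $1$ are harmless bookkeeping already guaranteed by the hypothesis that the orbit lies in $\bigcup_{i=1}^{s}A_i$.
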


\begin{lemma}\label{lema:40}
There exists an interval $[a,\, b]\subset [0,\, 1]$, where the
derivative $h'$ is constant.
\end{lemma}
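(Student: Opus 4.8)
The plan is to bootstrap the hypothesis through the functional equation~(\ref{eq:eq-top-eq-ffv}), exploiting that on a small interval which avoids the finitely many exceptional points this equation reduces to an affine relation for $h$, and hence to a rescaling relation for $h'$.

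First I would fix the set-up. By Lemma~\ref{lema:37} there are disjoint open intervals $A_1,\dots,A_s$ with $\bigcup_{i=1}^s\overline{A_i}=[0,1]$ on each of which $h$ is $C^1$; refining the partition if necessary I may assume that $\tfrac12$ and each point of $h^{-1}(\{\text{breaking points of }g\})$ is an endpoint of some $A_i$, so that on every $A_i$ the map $f$ is affine and $g$ is affine on $h(A_i)$. Then Lemma~\ref{lema:38} gives constants $k_i>0$ with $h'(f(x))=k_ih'(x)$ for $x\in A_i$, the positivity of $k_i$ coming from the unimodality of $g$ and the equality $h(1/2)=v$ (the relevant slope of $g$ has the same sign as $f'$ on $A_i$). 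Since $h$ is an increasing homeomorphism it is non-constant, so $h'\not\equiv0$; I choose $i_0$ and a closed subinterval $J_0\subset A_{i_0}$ on which $h'$ is continuous and strictly positive. On $J_0$ the equation $h(f(x))=g(h(x))$ reads $h(f(x))=a\,h(x)+b$ with constants $a>0$, $b$, and since $f$ is affine on $J_0$ this shows that $h$ is $C^1$ with $h'>0$ on the interval $f(J_0)$, of length $2|J_0|$, and that $h'|_{f(J_0)}$ is an affine reparametrisation of $\tfrac{|a|}{2}\,h'|_{J_0}$; in particular the oscillation of $h'$ over $f(J_0)$ is $\tfrac{|a|}{2}$ times that over $J_0$.

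Next I would iterate this. Since the sets $A_n=f^{-n}(0)$ are dense (Proposition~\ref{lema:An}) and the branches of $f^n$ have length $2^{-n+1}$, one has $f^N(I)=[0,1]$ for every interval $I$ and all large $N$; applying the previous step along an orbit segment that stays in $\bigcup_iA_i$ and avoids the exceptional points, $h$ becomes $C^1$ with $h'>0$ on a fixed non-degenerate interval $K_0$, and the oscillation of $h'$ over suitable subintervals of $K_0$ is, at each step, multiplied by one of the finitely many factors $k_i$ picked up along the orbit. The accounting of these factors is exactly what Lemma~\ref{lema:39} controls: along any periodic orbit of $f$ contained in $\bigcup_iA_i$ and avoiding $\tfrac12$ and $h^{-1}(\{\text{breaking points of }g\})$, the product of the corresponding $k_i$ equals $1$. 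Choosing such a periodic point $x^*$ whose entire orbit lies in the region where $h'>0$ has already been established, the relations $h'(f^j(x^*))=\bigl(\prod_jk_{c_j}\bigr)h'(x^*)$ together with $\prod_jk_{c_j}=1$ show that $h'$ takes the same value at every point of the orbit of $x^*$; combining this with the density of such periodic orbits, the continuity of $h'$ on each $A_i$, and the rescaling relations above, I would conclude that $h'$ is forced to be constant on a whole neighbourhood of $x^*$ — the desired interval $[a,b]$.

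The main obstacle is the iteration step: pushing a good interval forward by $f$ doubles its length, but to preserve the affine form of the recursion one must pass to a subinterval avoiding the exceptional points (the point $\tfrac12$, the $h$-preimages of the breaking points of $g$, and their $f$-preimages), which may shrink it, so it is not automatic that the process reaches a non-degenerate interval on which $h'$ cannot oscillate — equivalently, that the set of points near which $h$ fails to be affine is nowhere dense. Making this quantitative is where the expansivity of $f$ and the finiteness of the partition of Lemma~\ref{lema:37} enter; an alternative route to the same conclusion is to observe that $C^1$-ness of $h$ makes the $g$-invariant measure $h_{*}(\mathrm{Leb})$ absolutely continuous, hence equal to the unique absolutely continuous invariant measure of the piecewise linear map $g$, whose density is explicit and pins down $h'$.
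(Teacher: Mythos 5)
You assemble the same ingredients as the paper (Lemmas~\ref{lema:37}, \ref{lema:38}, \ref{lema:39}, density of periodic points whose orbits avoid the finitely many exceptional points, and a periodic point $x^*$ with $h'(x^*)\neq 0$ taken inside a region where $h'>0$), but the decisive step --- passing from ``$h'$ takes the same value along the orbit of $x^*$'' to ``$h'$ is constant on a whole interval'' --- is not actually established, and you flag it yourself as the unresolved ``main obstacle''. Neither of the two mechanisms you offer closes it. Pushing a good interval forward by $f$ and tracking the oscillation of $h'$ fails because over one full period of $x^*$ the accumulated factor is exactly $\prod_j k_{c_j}=1$ (equation~(\ref{eq:84})), so the oscillation is not contracted, while the interval itself is expanded by $2^n$ and leaves the region where the affine recursion is valid. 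Invoking the density of periodic orbits does not help either: each periodic orbit carries a constant value of $h'$, but a priori different orbits carry different constants, so density alone says nothing about constancy of $h'$ near $x^*$. The alternative route via the invariant measure $h_*(\mathrm{Leb})$ is also not a repair as stated: piecewise continuous differentiability of $h$ does not by itself give absolute continuity of $h_*(\mathrm{Leb})$ (the derivative may vanish on a set of positive measure), and uniqueness and explicitness of an absolutely continuous invariant measure for a general piecewise linear unimodal $g$ of the form~(\ref{eq:g}) is itself a nontrivial claim.

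The missing idea is to iterate \emph{backwards} at the periodic point rather than forwards. Near $x^*$ the map $f^n$ is affine with slope $\pm 2^n$ and fixes $x^*$, so the inverse branch $(f^n)^{-1}$ maps a one-sided neighbourhood of $x^*$ into itself and contracts it to $x^*$. By Lemma~\ref{lema:38} applied along the orbit and by $\prod_j k_{c_j}=1$, this inverse branch preserves the derivative: $h'\bigl((f^n)^{-1}(y)\bigr)=h'(y)$ for $y$ in that neighbourhood. Hence for any $\widetilde{x}$ there, the pre-images $(f^n)^{-i}(\widetilde{x})$ accumulate at $x^*$ while all carrying the value $h'(\widetilde{x})$, and continuity of $h'$ at $x^*$ (available since $x^*$ lies in one of the $A_i$) forces $h'(\widetilde{x})=h'(x^*)$. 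This is exactly how the paper proves Lemma~\ref{lema:40}; without this contraction-plus-continuity step your argument does not reach the conclusion.
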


\begin{lemma}\label{lema:41}
Let the maps $f$ be of the form~(\ref{eq:89}) and let $h$ be the
conjugacy of $f$ with the piecewise linear unimodal $g$. Assume
that $h$ is piecewise continuously differentiable on an interval
$(\alpha,\, \beta)\subset (0,\, 1)$. Then there exists numbers
$\alpha =\alpha_1< \alpha_2 <\hm{\ldots} < \alpha_t =\beta $ such
that for every $p\in 1,\ldots,\, t-1$ there exists $k_p$ such that
$h'(w) =k_ph'(x)$ for all $x\in (\alpha_p,\, \alpha_{p+1})$, where
$w=f(x)$.
\end{lemma}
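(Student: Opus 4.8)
The plan is to obtain Lemma~\ref{lema:41} as a direct consequence of Lemmas~\ref{lema:37} and~\ref{lema:38}, by refining the global partition of $[0,\,1]$ produced there against the given subinterval $(\alpha,\,\beta)$. First I would apply Lemma~\ref{lema:37}: since $h$ is piecewise continuously differentiable on $(\alpha,\,\beta)$, hence in particular piecewise differentiable there, that lemma yields pairwise disjoint open intervals $A_1,\ldots,\, A_s$ with $\bigcup_{i=1}^s\overline{A}_i=[0,\,1]$ on each of which $h$ is continuously differentiable. Taking any one of these intervals that meets $(\alpha,\,\beta)$ (intersected with $(\alpha,\,\beta)$ if needed) as the interval of $C^1$-smoothness required by Lemma~\ref{lema:38}, that lemma then supplies constants $k_1,\ldots,\, k_s\in\mathbb{R}$ such that $h$ is differentiable at $f(x)$ and $h'(f(x))=k_ih'(x)$ for every $x\in A_i$.

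Next I would construct the partition of $(\alpha,\,\beta)$. Let $E$ denote the set of endpoints of the intervals $A_1,\ldots,\, A_s$; this set is finite because $s<\infty$, and, since the $A_i$ are pairwise disjoint with $\bigcup\overline{A}_i=[0,\,1]$, we have $[0,\,1]\setminus E=\bigsqcup_{i=1}^s A_i$. I would then let $\alpha=\alpha_1<\alpha_2<\cdots<\alpha_t=\beta$ be the increasing enumeration of the finite set $\{\alpha,\,\beta\}\cup\bigl(E\cap(\alpha,\,\beta)\bigr)$. By construction every open interval $(\alpha_p,\,\alpha_{p+1})$ avoids $E$, hence is contained in exactly one $A_i$; denote its index by $i(p)$, and set $k_p:=k_{i(p)}$.

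It then remains only to read off the conclusion: for any $p\in\{1,\ldots,\, t-1\}$ and any $x\in(\alpha_p,\,\alpha_{p+1})\subseteq A_{i(p)}$, Lemma~\ref{lema:38} gives that $h$ is differentiable at $w=f(x)$ and that $h'(w)=k_{i(p)}h'(x)=k_ph'(x)$, which is exactly the assertion of Lemma~\ref{lema:41}.

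I do not expect a substantive obstacle here, since the real work is carried by Lemmas~\ref{lema:37} and~\ref{lema:38}; the only things to check are the finiteness of $\{\alpha_p\}$, which is immediate from $s<\infty$, and that each $(\alpha_p,\,\alpha_{p+1})$ lies inside a single $A_i$, which holds precisely because all endpoints of the $A_i$ falling in $(\alpha,\,\beta)$ were placed among the $\alpha_p$. It is worth noting in passing that the breakpoint $1/2$ of $f$ must belong to $E$: were $1/2$ interior to some $A_i$, the identity $h'(f(x))=k_ih'(x)$ could not hold with a single constant, because $f$ has slope $2$ to the left of $1/2$ and $-2$ to the right, forcing $k_i$ to change sign; so the partition automatically respects the branch structure of $f$, as one would want.
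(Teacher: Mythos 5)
Your derivation is circular with respect to the way these lemmas are actually established. In the paper, Lemma~\ref{lema:41} is the basic building block: it is proved first, directly, and then Lemma~\ref{lema:38} (together with the global decomposition of Lemma~\ref{lema:37}) is obtained \emph{from} Lemma~\ref{lema:41} by pushing the interval $(\alpha,\,\beta)$ forward under iterates of $f$ until $f^k(\alpha,\,\beta)=(0,\,1)$. So when you invoke Lemmas~\ref{lema:37} and~\ref{lema:38} as black boxes to prove Lemma~\ref{lema:41}, you are assuming statements whose only available proofs already use the statement you are trying to prove; moreover, Lemma~\ref{lema:37} asserts a decomposition of all of $[0,\,1]$, which is strictly stronger than the hypothesis "piecewise $C^1$ on one subinterval" and cannot be reached without exactly the propagation mechanism $h'(f(x))=k\,h'(x)$ that Lemma~\ref{lema:41} is supposed to supply. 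The combinatorial part of your argument (refining a finite partition against $(\alpha,\,\beta)$ and reading off constants interval by interval) is fine but carries no real content.

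The missing idea is the direct argument from the conjugacy equation. Assume without loss of generality that $1/2\not\in(\alpha,\,\beta)$, so that $f(x)=ax+b$ on $(\alpha,\,\beta)$ with a single slope $a$; choose $\alpha=\alpha_1<\cdots<\alpha_t=\beta$ so that all breakpoints of the piecewise linear map $g$ lying in $h\bigl((\alpha,\,\beta)\bigr)$ are among $h(\alpha_1),\ldots,\,h(\alpha_t)$, and write $g(y)=a_p y+b_p$ on $\bigl(h(\alpha_p),\,h(\alpha_{p+1})\bigr)$. Then for $x\in(\alpha_p,\,\alpha_{p+1})$ and $x_n\to x$, the equation $h(f(x))=g(h(x))$ gives $h(w)=a_ph(x)+b_p$ and $h(w_n)=a_ph(x_n)+b_p$ with $w=ax+b$, $w_n=ax_n+b$, whence
$$\frac{h(w)-h(w_n)}{w-w_n}=\frac{a_p}{a}\cdot\frac{h(x)-h(x_n)}{x-x_n},$$
so $h'(w)$ exists and equals $k_ph'(x)$ with $k_p=a_p/a$. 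This computation, not the bookkeeping with the sets $A_i$, is the substance of Lemma~\ref{lema:41}; without it (or an independent proof of Lemmas~\ref{lema:37}--\ref{lema:38} that does not pass through Lemma~\ref{lema:41}), your proposal does not prove the statement.
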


\begin{proof}
Without loss of generality assume that $1/2\not\in (\alpha,\,
\beta)$. For avoiding two cases whether $\beta<1/2$, of
$\alpha>1/2$ denote $f(x)=ax+b$ for $x\in (\alpha,\, \beta)$, as
$f$ is linear on $(\alpha,\, \beta)$. Let
$\alpha=\alpha_1<\alpha_2<\ldots \alpha_t=\beta$ be such numbers,
that all braking points of $g$ belong to $h(\alpha_1),\ldots,\,
h(\alpha_t)$. Let $g(x)=a_px+b_p$ be the formula for $g$ for $x\in
(h(\alpha_p),\, h(\alpha_{p+1}))$.

Let $x\in (\alpha,\, \beta)\backslash \{\alpha_2,\ldots,\,
\alpha_{t-1}\}$ be fixed. Consider a sequence $\{ x_n\}$ such that
$\lim\limits_{n\rightarrow \infty}x_n = x$ and $x_n\neq x$ for all
$n$. Consider the following equalities
\begin{equation}\label{eq:82} \left\{
\begin{array}{l}
h(f(x)) = g(h(x)),\\
h(f(x_n)) = g(h(x_n)).
\end{array}\right.
\end{equation}

Since $x\in (\alpha_p,\, \alpha_{p+1})$ for some $p,\, 1\leq p\leq
t-1$, then equalities~(\ref{eq:82}) can be rewritten as
\begin{equation}\label{eq:83}
\begin{CD}
x @>f >> & ax+b\\
@V_{h} VV& @VV_{h}V\\
h(x) @>g>> & a_ph(x)+b_p
\end{CD}\hskip 1cm\text{ and }\hskip 1cm \begin{CD}
x_n @>f >> & ax_n+b\\
@V_{h} VV& @VV_{h}V\\
h(x_n) @>g>> & a_ph(x_n)+b_p
\end{CD}\end{equation}

Without loos of generality assume that $x_n\in (\alpha_p,\,
\alpha_{p+1})$ for all $n$. Consider a number $w = ax+b$ and the
sequence $w_n = ax_n +b_n$. Since $\lim\limits_{n\rightarrow
\infty}x_n = x$, then $\lim\limits_{n\rightarrow \infty}w_n = w$.
Since $x_n\neq x$ for all $n$, then $w_n\neq w$ for all $n$. Prove
the existence of the limit $\lim\limits_{n\rightarrow
\infty}\frac{h(w)-h(w_n)}{w-w_n}.$ It follows from the
commutativity of diagrams~(\ref{eq:83}) that $h(w) = a_ph(x)+b_p$
and $h(w_n) = a_ph(x_n)+b_p$. Then
$$\frac{h(w)-h(w_n)}{w-w_n} = \frac{a_p(h(x)-h(x_n))}{a(x-x_n)},$$
which prove Lemma if take $k_p = \frac{a_p}{a}.$
\end{proof}

\begin{proof}
[Prove of Lemma~\ref{lema:38}] consider numbers
$\alpha_1,\ldots,\, \alpha_t$ from Lemma~\ref{lema:41}. Since for
every $p$ the maps $h$ is differentiable on $f(\alpha_p,\,
\alpha_{p+1})$,we may repeat the proof of Lemma~\ref{lema:41} for
the intervals, whose union is $f(\alpha,\, beta)$. Evidently,
there is a finite $k$ such that $f^k(\alpha,\, \beta) = (0,\, 1)$
and we will divide the each interval into a finitely many sub
intervals, whence the necessary $A_1,\ldots,\, A_s$ would appear.
\end{proof}

We will need the following two remarks for the further reasonings.

\begin{note}\label{note:15}
The set of periodical points of $f$ is dense in $[0,\, 1]$.
\end{note}

\begin{note}\label{note:16}
Non of numbers $k_1,\ldots,\, k_t$ from Lemma~\ref{lema:38} does
not equal to 0.
\end{note}

\begin{proof}
[Proof of Remark~\ref{note:15}] The graph of the $n$-th iteration
of $f$ consists of $2^n$ line segments, each of them has tangent
either $2^n$, or $-2^n$ and maps some subinterval of $[0,\, 1]$
onto the whole $[0,\, 1]$. The lengthes of ``domains'' of these
line segments are $\frac{1}{2^n}$ and tend to $0$ if $n\rightarrow
\infty$. Since each of these line segments intersects the line
$y=x$, then correspond domain contains a fixes point of $f^n$,
which is periodical for $f$.
\end{proof}

\begin{proof}
[Proof of Remark~\ref{note:16}] The equality $k_i=0$ yields that
$h$ is constant on $f(A_i)$, which contradicts to that $h$ is a
homeomorphism.
\end{proof}

Denote $\mathbb{A} =\bigcup\limits_{i=1}^{s}A_i$. Since the set
$[0,\, 1]\backslash \mathbb{A}$ is finite, then the following
corollary follows from Remark~\ref{note:15}.

\begin{corollary}
The set of periodical points of $f$, whose trajectories belong to
$\mathbb{A}$, is dense in $[0,\, 1]$.
\end{corollary}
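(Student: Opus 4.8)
The plan is to combine the density of the periodic points of $f$ (Remark~\ref{note:15}) with the self-similar branch structure of the iterates $f^{n}$ and the finiteness of $F:=[0,\,1]\setminus\mathbb{A}$. Fix a nonempty open interval $J\subseteq(0,\,1)$; it suffices to exhibit a periodic point of $f$ lying in $J$ whose (finite) forward trajectory is contained in $\mathbb{A}$, i.e.\ avoids $F$. First I would recall, exactly as in the proof of Remark~\ref{note:15}, that the graph of $f^{n}$ consists of $2^{n}$ affine pieces over the level-$n$ dyadic intervals $I_{n,k}=[k/2^{n},\,(k+1)/2^{n}]$, each mapping $I_{n,k}$ onto $[0,\,1]$ with slope $\pm 2^{n}$, so each piece meets the diagonal $y=x$ in a single point $z_{n,k}\in I_{n,k}^{\circ}$, which is a fixed point of $f^{n}$. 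Since a dyadic rational $m/2^{n}$ with $0<m<2^{n}$ satisfies $f^{n}(m/2^{n})=0$, it is pre-fixed but not fixed for $f^{n}$, so the $z_{n,k}$ lie in the open intervals $I_{n,k}^{\circ}$ and are pairwise distinct. Choosing $n$ large enough that $J$ contains $M$ level-$n$ dyadic intervals with interiors inside $J$, where $M\ge 2^{n}|J|-2\to\infty$, produces $M$ distinct periodic points $z_{n,k_{1}},\dots,z_{n,k_{M}}\in J$, each of period dividing $n$.

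The key structural point I would then establish is that the periodic points of period dividing $n$ whose orbit meets $F$ form a set of cardinality bounded independently of $n$. Indeed, if $z$ is a fixed point of $f^{n}$ and $f^{t}(z)=p\in F$ for some $t$, then $f^{n}(p)=f^{t}(f^{n}(z))=f^{t}(z)=p$, so $p$ is itself a fixed point of $f^{n}$, hence a periodic point of $f$ with period dividing $n$; moreover $p$ lies in the orbit of $z$, so the orbits of $z$ and $p$ coincide, and $z$ belongs to the orbit of a periodic point of $F$. Hence the ``bad'' periodic points of period dividing $n$ are contained in $\bigcup_{p\in F,\ p\text{ periodic}}\operatorname{orb}(p)$, a finite set of size at most $C:=\sum_{p\in F,\ p\text{ periodic}}(\text{period of }p)$, which does not depend on $n$.

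Putting these together: taking $n$ so large that $M>C$, at least one of $z_{n,k_{1}},\dots,z_{n,k_{M}}$ has its entire orbit contained in $[0,\,1]\setminus F=\mathbb{A}$. That point is a periodic point of $f$ inside $J$ whose trajectory belongs to $\mathbb{A}$. Since $J$ was an arbitrary nonempty open subinterval of $(0,\,1)$, such points are dense in $(0,\,1)$, hence in $[0,\,1]$, which is the assertion.

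I expect the only real subtlety to be the bookkeeping in the middle step: making precise that a periodic point whose orbit hits the finite set $F$ must in fact lie on the orbit of a \emph{periodic} point of $F$ (this is what decouples the bound $C$ from $n$ and makes the pigeonhole argument go through), together with the elementary but necessary checks that the diagonal crossings $z_{n,k}$ are genuine periodic points interior to $I_{n,k}$ and pairwise distinct. Neither of these requires more than the explicit piecewise-linear description of $f^{n}$ already used for Remark~\ref{note:15}, so I do not anticipate any genuine difficulty, only care with the counting.
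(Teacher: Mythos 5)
Your argument is correct and is essentially the argument the paper intends: the corollary is stated there without proof, as an immediate consequence of Remark~\ref{note:15} and the finiteness of $[0,\,1]\setminus\mathbb{A}$, and your middle step (a periodic orbit that meets the finite complement must coincide with the orbit of a periodic point of that complement, so the ``bad'' periodic points form a finite set whose size is independent of $n$) is exactly the detail the paper glosses over before the pigeonhole. The only blemishes are cosmetic: $f^{n}$ takes the value $1$, not $0$, at the odd dyadic endpoints $m/2^{n}$, and the branch over $[0,\,2^{-n}]$ meets the diagonal at the endpoint $0$ rather than in the interior --- neither affects your count, since no endpoint $m/2^{n}$ with $0<m<2^{n}$ is fixed by $f^{n}$ and you lose at most one of the $M$ chosen points.
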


\begin{lemma}
Let $x^*$ be a periodical point of $f$, such that its trajectory
belongs to $\mathbb{A}$ and $h'(x^*)\neq 0$. Then there is a
neighborhood of $x^*$, where the derivative $h'$ is constant.
\end{lemma}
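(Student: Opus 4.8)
The plan is to transport the relation $h'(f(x))=k_i\,h'(x)$ from Lemma~\ref{lema:38} once around the periodic orbit of $x^*$, observe that the accumulated factor is $1$ by Lemma~\ref{lema:39}, and then push everything back to $x^*$ by backward iteration, using that $h'$ is continuous at $x^*$. Let $n$ be the period of $x^*$ and $x_j=f^j(x^*)$, $j=0,\dots,n-1$, its orbit. Since the orbit lies in $\mathbb{A}=\bigcup_{i=1}^s A_i$ and each $A_i$ is an open interval, for every $j$ there is an index $c_j$ with $x_j$ in the interior of $A_{c_j}$; in particular $x^*\in A_{c_0}$, where $h$ is continuously differentiable by Lemma~\ref{lema:37}.

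First I would choose an open interval $V\ni x^*$ small enough that $f^j(V)\subseteq A_{c_j}$ for $j=0,1,\dots,n-1$ and $f^n(V)\subseteq A_{c_0}$; this is possible because each $f^j$ is continuous, each $A_{c_j}$ is open, and $f^n(x^*)=x^*\in A_{c_0}$. I would also shrink $V$ so that $1/2\notin f^j(V)$ for $0\le j<n$, which is legitimate because $1/2$ cannot belong to the orbit of a genuine periodic point: from $f(1/2)=1$ and $f(1)=0=f(0)$ the orbit would be absorbed by the fixed point $0$ and could never return to $x^*\in(0,1)$. With this choice $\varphi:=f^n|_V$ is an affine map of slope $\pm 2^n$ fixing $x^*$.

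Next I would iterate Lemma~\ref{lema:38} along the orbit. For $x\in V$ we have $x\in A_{c_0}$, $f(x)\in A_{c_1}$, \dots, $f^{n-1}(x)\in A_{c_{n-1}}$, so applying the lemma $n$ times gives
\begin{equation*}
h'(\varphi(x))=h'(f^n(x))=k_{c_0}k_{c_1}\cdots k_{c_{n-1}}\,h'(x)\qquad(x\in V).
\end{equation*}
Here the hypothesis $h'(x^*)\neq 0$ is used: Lemma~\ref{lema:39} applies and the product $k_{c_0}k_{c_1}\cdots k_{c_{n-1}}$ equals $1$, so $h'(\varphi(x))=h'(x)$ for every $x\in V$. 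Since $\varphi$ is an affine expansion fixing $x^*$, its inverse $\varphi^{-1}$ is an affine contraction with $\varphi^{-1}(V)\subseteq V$ and $\varphi^{-m}(x)\to x^*$ for every $x\in V$. Substituting $y=\varphi^{-m}(x)\in V$ into $h'(\varphi(y))=h'(y)$ and chaining over $m$ yields $h'(x)=h'(\varphi^{-m}(x))$ for all $m$; letting $m\to\infty$ and using continuity of $h'$ at $x^*$ gives $h'(x)=h'(x^*)$ for all $x\in V$. Hence $h'$ is constant on the neighborhood $V$ of $x^*$, as claimed.

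I expect the only delicate point to be the simultaneous choice of $V$: one must ensure both that the chain of Lemma~\ref{lema:38} is valid at every step of the orbit and that $f^n|_V$ is a bona fide affine homeomorphism onto its image, so that the backward iterates $\varphi^{-m}(x)$ are defined, stay in $V$, and converge to $x^*$. Once $V$ is fixed, the remainder is a direct substitution followed by the limiting argument.
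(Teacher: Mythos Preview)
Your proof is correct and follows essentially the same route as the paper's: establish $h'(f^n(x))=h'(x)$ on a neighborhood of $x^*$ via the chain $k_{c_0}\cdots k_{c_{n-1}}=1$, then pull points back to $x^*$ by the contracting local inverse of $f^n$ and invoke continuity of $h'$. Your treatment is slightly cleaner than the paper's in that you handle both signs of the slope of $f^n$ at once (since $|\varphi^{-1}{}'|=2^{-n}<1$ in either case), whereas the paper splits into cases; you are also explicit about why $1/2$ cannot lie on the periodic orbit, which the paper leaves implicit.
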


\begin{proof}
Let $n$ be a period of $x^*$. Denote by $x_i$ the trajectory of
$x^*$, i.e. $x_i = f^i(x^*)$, and $x_0 = x^* = x_n$. Since the
trajectory of $x^*$ belongs to $\mathbb{A}$, then for every $i$
the derivative $h'(x_i)$ exists. It follows from
Lemma~\ref{lema:38} and Remark~\ref{note:16} that $h'(x_i)\neq 0$
for every $i$. Let $A_{c_0},\, A_{c_1},\ldots,\, A_{c_{n-1}}$ be
the sets from Lemma~\ref{lema:37}, which contains the points of
trajectory of $x^*$, i.e. $x_i\in A_{c_i}$. Then it follows from
Lemma~\ref{lema:38} that $ h'(x^*) \hm{=} k_{c_0}\cdot
k_{c_1}\cdot \ldots \cdot k_{c_{n-1}} h'(x^*). $ Since
$h'(x^*)\neq 0$, then
\begin{equation}\label{eq:84} k_{c_0}\cdot k_{c_1}\cdot \ldots
\cdot k_{c_{n-1}} = 1.
\end{equation}

Since the set $A_{c_0}$ is open and $x^*\in A_{c_0}$, then there
exists $\varepsilon>0$, such that $(x^*-\varepsilon,\,
x^*+\varepsilon)\subset A_{c_0}$. Since the trajectory of $x^*$
belongs to $\mathbb{A}$ then without loss of generality we may
assume that the first $n$ points of the trajectory of each point
from $(x^*-\varepsilon,\, x^*+\varepsilon)$ also belong to
$\mathbb{A}$. Precisely, for each point $\widetilde{x}\in
(x^*-\varepsilon,\, x^*+\varepsilon)$ and for every $i,\, 1\leq
i\leq n$ the inclusion $f^i(\widetilde{x})\in A_{c_i}$ holds.
Without loos of generality assume that $f^n$ is linear on
$(x^*-\varepsilon,\, x^*+\varepsilon)$, because otherwise decrease
$\varepsilon$. Notice that $f^n(x^*) = x^*$. There is a
neighborhood of $x^*$, where $f^n$ is given either by $f^n(x) =
2^n(x-x^*)$, or $f^n(x) = -2^n(x-x^*)$.

Assume that $f^n(x) =  2^n(x-x^*)$ in some neighborhood of $x^*$.
Consider an arbitrary $\widetilde{x}\in (x^*-\varepsilon,\, x^*)$
and the sequence of its pre images $\widetilde{x}_i$, which is
given by the equalities $\widetilde{x}_{0} = x^*$ and
$\widetilde{x}_{i+1} = (f^n)^{-1}(\widetilde{x}_i)$, where
$(f^n)^{-1}$ means the maps, which is inverse to $f^n$ on
$(x^*-\varepsilon,\, x^*+\varepsilon)$. Since the derivative of
the maps $f^n$ on the correspondent interval is $>1$, then the
sequence ${\widetilde{x}_i}$ increase and tends to $x^*$. It
follows from the equality~(\ref{eq:84}) and Lemma~\ref{lema:38}
that $h'(\widetilde{x}_i) = h'(\widetilde{x}_{i-1})$ for every
$i\geq 0$. Sice $h$ is continuously differentiable, then it
follows from that $h'(\widetilde{x}_i)=h'(x^*)$ for every $i\geq
0$ and $\lim\limits_{i \rightarrow \infty}\widetilde{x}_i = x^*$
that $h'(\widetilde{x})=h'(x^*)$. The arbitrariness of
$\widetilde{x}\in (x^*-\varepsilon,\, x^*)$ yields that derivative
of $h$ is constant on $(x^*-\varepsilon,\, x^*)$.

The case when $f^n(x) = -2^n(x-x^*)$ is similar if consider the
images instead of pre images of $\widetilde{x}\in
(x^*-\varepsilon,\, x^*)$.
\end{proof}

Now Theorem~\ref{theor-dyfer-lin} follows from the proved lemmas.

\begin{proof}
[Proof of Theorem~\ref{theor-dyfer-lin}] By Lemma~\ref{lema:40}
let $[a,\, b]\subset [0,\, 1]$ be an interval,  where the
derivative $h'$ is constant, denote $p_1$. Without loos of
generality assume that $[a,\, b]\subset A_i$ for some $i,\, 1\leq
i\leq s$ where $\{ A_i\}$ are defined in Lemma~\ref{lema:37}. By
Lemma~\ref{lema:38}, there exists $k_i\in \mathbb{R}$ (which is
dependent only on $i$), such that $h'(f(x)) = k_if(x)$ for every
$x\in A_i$. Now we can write $ f([a,\, b]) \hm{=}
\left(\bigcup\limits_{i=1}^s \left[ A_i\cap f([a,\, b])
\right]\right)\cup C, $ where $C$ is a finite set whence $h$ is
piecewise linear on $f([a,\, b])$. Since tangents of $f$ are $2$
and  $-2$ only, then $\{ f(x)|\, x\in [a,\, b]\} $ is of
Lebesgue's measure $\min\{ 2(b-a),\, 1\}$ and Theorem follows from
that $[0,\, 1]$ is an image of $[a,\, b]$ under finitely many
iterations.
\end{proof}

Lemma below follows from Theorem~\ref{theor:10}.

\begin{lemma}\label{lema:45}
1. Homeomorphism $h$ increase and $h(1/2)=v$;

2. The equation~(\ref{eq:eq-top-eq-ffv}) can be rewritten as
$$\left\{
\begin{array}{ll} h(f_l(x)) =
g_l(h(x))& \text{ for }x\leqslant 1/2,\\
h(f_r(x)) = g_l(h(x))& \text{ for }x\geqslant 1/2.
\end{array}\right.$$
\end{lemma}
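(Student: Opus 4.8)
The plan is to deduce both assertions from Theorem~\ref{theor:10}. First I would observe that the map $g$ of the form~(\ref{eq:g}) is a special case of the map~(\ref{eq:51}): its branches $g_l$ on $[0,v)$ and $g_r$ on $[v,1]$ are monotone and continuous (being monotone piecewise linear), and one has $g_l(0)=g_r(1)=0$ together with $g_r(v)=1$ and, by continuity, $\lim_{x\to v^-}g_l(x)=1$. Hence Theorem~\ref{theor:10} (equivalently Lemma~\ref{lema:h(0)(1)}) applies to the homeomorphism $h$ satisfying~(\ref{eq:eq-top-eq-ffv}), giving that $h$ is increasing, so that $h(0)=0$ and $h(1)=1$.

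For the value $h(1/2)$ I would substitute $x=1/2$ into~(\ref{eq:eq-top-eq-ffv}). Since $f(1/2)=1$ this yields $1=h(1)=g(h(1/2))$, so $h(1/2)\in g^{-1}(1)$. The next step is to show $g^{-1}(1)=\{v\}$: on $[0,v)$ the branch $g_l$ is strictly increasing with $g_l(0)=0$ and $\lim_{x\to v^-}g_l(x)=1$, hence $g_l<1$ there, while on $(v,1]$ the branch $g_r$ is strictly decreasing with $g_r(v)=1$ and $g_r(1)=0$, hence $g_r<1$ there. Therefore $h(1/2)=v$; this reproduces Lemma~\ref{lema:30} for the present $g$.

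Part 2 would then be a case analysis on the sign of $x-1/2$, using the monotonicity of $h$ and the value $h(1/2)=v$. If $x\le 1/2$ then $f(x)=f_l(x)=2x$ and $h(x)\le h(1/2)=v$, so $g(h(x))=g_l(h(x))$ by~(\ref{eq:g}), and~(\ref{eq:eq-top-eq-ffv}) becomes $h(f_l(x))=g_l(h(x))$. If $x\ge 1/2$ then $f(x)=f_r(x)=2-2x$ and $h(x)\ge h(1/2)=v$, so $g(h(x))=g_r(h(x))$, and~(\ref{eq:eq-top-eq-ffv}) becomes $h(f_r(x))=g_r(h(x))$ (the right-hand side of the second identity being $g_r$, not $g_l$). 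Conversely, these two identities together are equivalent to~(\ref{eq:eq-top-eq-ffv}), because the ranges $x\le 1/2$ and $x\ge 1/2$ cover $[0,1]$ and at the overlap point $x=1/2$ both formulas reduce to $h(1)=1$, which holds, so no inconsistency occurs.

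The argument is essentially routine; the only places needing a little care are checking that the map~(\ref{eq:g}) genuinely satisfies the hypotheses of Theorem~\ref{theor:10} (so that the monotonicity of $h$ is available) and verifying that $g$ attains the value $1$ only at the single point $v$ — this is what pins down $h(1/2)=v$ and makes the two cases of Part 2 line up with the two branches of $g$.
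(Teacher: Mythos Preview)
Your proposal is correct and follows exactly the route the paper indicates: the paper's entire proof is the single line ``Lemma below follows from Theorem~\ref{theor:10},'' and you have simply unpacked that reference, checking that $g$ of the form~(\ref{eq:g}) fits the hypotheses of Theorem~\ref{theor:10}, then reproducing the argument of Lemma~\ref{lema:30} for $h(1/2)=v$ and doing the obvious case split for Part~2. You also correctly flag that the second line of the displayed system should read $g_r$ rather than $g_l$; this is a typo in the paper's statement.
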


The following propositions contain some properties of $g$, which
follow from that conjugacy is piecewise linear.

\begin{proposition}\label{lema:36}
The maps $g$ has exactly two fixed points: one of these points is
zero and another belongs to $(v,\, 1)$.
\end{proposition}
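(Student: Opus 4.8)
The plan is to transfer the fixed-point structure of $f$ to $g$ through the conjugacy $h$ and then pin down the non-zero fixed point using the monotonicity of $h$, so that the role of the hypotheses (that $g$ has the form~(\ref{eq:g}) and that a conjugacy exists) is only to supply the properties of $h$ recorded in Lemma~\ref{lema:45}.

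First I would compute the fixed points of $f$ directly from~(\ref{eq:89}): on $[0,1/2)$ the equation $2x=x$ forces $x=0$, and on $[1/2,1]$ the equation $2-2x=x$ forces $x=2/3$. Hence $f$ has exactly two fixed points, $0$ and $2/3$, and $2/3\in(1/2,1)$. Next I would invoke the elementary fact established at the start of Section~\ref{sect:Vstup} that a topological conjugacy carries fixed points to fixed points: from $h(f(x))=g(h(x))$ and $f(a)=a$ one gets $g(h(a))=h(a)$; applying the same to the homeomorphism $h^{-1}$, which conjugates $g$ to $f$, shows that $h$ restricts to a bijection between the fixed-point set of $f$ and that of $g$. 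Therefore $g$ has exactly two fixed points, namely $h(0)$ and $h(2/3)$.

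Finally I would apply Lemma~\ref{lema:45}: the conjugacy $h$ is increasing and satisfies $h(1/2)=v$; being an increasing homeomorphism of $[0,1]$ onto itself it also satisfies $h(0)=0$ and $h(1)=1$. Thus one fixed point of $g$ is $h(0)=0$, and for the other, monotonicity together with $1/2<2/3<1$ gives $v=h(1/2)<h(2/3)<h(1)=1$, so $h(2/3)\in(v,1)$, which is exactly the claim. There is no genuine obstacle here; the only points deserving a word of care are that the correspondence of fixed points is a bijection (so that ``exactly two'' is preserved, which follows by applying the fixed-point transfer to both $h$ and $h^{-1}$) and that piecewise linearity of $h$ is not used anywhere in this argument, consistent with the remark following the statement.
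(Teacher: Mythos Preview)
Your proof is correct, but it takes a different route from the paper's. You argue purely through the conjugacy: compute the two fixed points $0$ and $2/3$ of $f$, use that a homeomorphic conjugacy induces a bijection between fixed-point sets, and then locate $h(2/3)$ in $(v,1)$ via the monotonicity of $h$ and $h(1/2)=v$ from Lemma~\ref{lema:45}. This is clean and uniform.

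The paper instead works more directly with $g$. Existence of the two fixed points is obtained from the intermediate value theorem applied to $g$ itself (since $g(0)=0$, and on $[v,1]$ the continuous map $g$ goes from $1$ down to $0$, crossing the diagonal). Uniqueness on $(v,1)$ comes from the fact that $g_r$ is strictly decreasing, so its graph can meet $y=x$ at most once there. Only for ruling out fixed points on $(0,v)$ does the paper invoke the conjugacy, pulling a hypothetical fixed point back via $h^{-1}$ to a fixed point of $f$ in $(0,1/2)$, which does not exist. Your argument is more economical and makes the role of the conjugacy transparent throughout; the paper's argument has the minor advantage that existence and uniqueness on the decreasing branch require nothing about $h$ at all, only the shape of $g$.
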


\begin{proof}The existence of the mentioned points
follows from the construction of $g$ and the theorem about the
mean value of a continuous function.

Prove that there is no other fixed points of $g$.

Non-existence of other fixed points of $g$ on $(v,\, 1)$ follows
from that $g$ decrease on $(v,\, 1)$ and the fixed point if the
point of intersection of the graph of $g$ with $y=x$.

Assume that $g$ has a fixed point $x_0$ on $(0,\, v)$.
Applying~(\ref{eq:eq-top-eq-ffv}) for $x=h^{-1}(x_0)$, obtain that
$h^{-1}(x_0)$ is a fixed point of $f_l$ on $(0,\, 1/2)$, which
contradicts to that $f$ has no fixed points on this interval/.
\end{proof}

\begin{corollary}\label{corol:4}
Trajectory of an arbitrary $x\in (0,\, 1)$ tends to $0$ under
$g_l^{-1}$.
\end{corollary}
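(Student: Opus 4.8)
The plan is to exploit the fact that the left branch $g_l\colon[0,v]\to[0,1]$ is, by construction, a continuous strictly monotone map with $g_l(0)=0$ and $g_l(v)=1$ (the value at $v$ being forced by continuity of $g$), hence an increasing homeomorphism; consequently $g_l^{-1}\colon[0,1]\to[0,v]\subseteq[0,1]$ is a well-defined increasing continuous map and its iterates on $[0,1]$ make sense. I would present the argument in two equivalent ways and keep whichever reads more cleanly in context.

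First, the direct monotone-convergence route. Since by Proposition~\ref{lema:36} the map $g$ has no fixed point in $(0,v]$, the continuous function $g(x)-x$ has constant sign on $(0,v]$; evaluating at $x=v$ gives $g(v)-v=1-v>0$, so $g(x)>x$ for all $x\in(0,v]$. Writing $y=g_l(x)$ and applying the increasing map $g_l^{-1}$, this translates into $g_l^{-1}(y)<y$ for every $y\in g_l((0,v])=(0,1]$. Therefore, for any $x\in(0,1)$ the trajectory $x_0=x$, $x_{n+1}=g_l^{-1}(x_n)$ is strictly decreasing and bounded below by $0$, so it converges to some $\ell\ge 0$; passing to the limit in $x_{n+1}=g_l^{-1}(x_n)$ and using continuity of $g_l^{-1}$ yields $\ell=g_l^{-1}(\ell)$, i.e.\ $\ell$ is a fixed point of $g_l^{-1}$ in $[0,1)$. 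Since a fixed point of $g_l^{-1}$ is a fixed point of $g$ lying in $[0,v]$, Proposition~\ref{lema:36} forces $\ell=0$.

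The second route uses the conjugacy and is perhaps the shortest. By part~2 of Lemma~\ref{lema:45} we have $h(f_l(x))=g_l(h(x))$ for $x\le 1/2$, where $f_l(x)=2x$, so that $g_l^{-1}=h\circ f_l^{-1}\circ h^{-1}$ with $f_l^{-1}(y)=y/2$. Iterating, $(g_l^{-1})^{n}(x)=h\!\left(h^{-1}(x)/2^{n}\right)$ for all $n$, and since $h$ is continuous with $h(0)=0$, the right-hand side tends to $h(0)=0$ as $n\to\infty$.

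There is essentially no serious obstacle here; the only points requiring care are (i) checking that $g_l$ really is invertible as a map $[0,v]\to[0,1]$ (which follows from the standing hypothesis that $g_l$ is monotone together with $g_l(0)=0<1=g_l(v)$) and (ii) making sure the iteration stays inside $[0,1]$, which is automatic because $g_l^{-1}([0,1])=[0,v]\subseteq[0,1]$. I would also remark, as a sanity check, that $v$ itself is not a fixed point ($g_l(v)=1\neq v$), so the only accumulation point available for a monotone trajectory is $0$.
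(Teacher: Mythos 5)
Your first argument is correct and is essentially the paper's intended one: the corollary is stated without proof as an immediate consequence of Proposition~\ref{lema:36}, and the implicit reasoning is exactly your monotone-iteration argument (no fixed point of $g$ in $(0,\, v]$, so $g_l(x)>x$ there, hence $g_l^{-1}(y)<y$ on $(0,\, 1]$, and the backward trajectory decreases to the only remaining fixed point, which is $0$). Your second route, writing $g_l^{-1}=h\circ f_l^{-1}\circ h^{-1}$ and letting $n\to\infty$, is also legitimate in this section, since the standing hypothesis there is that the conjugacy $h$ exists, is continuous and increasing with $h(0)=0$; it is a valid shortcut, though it relies on the conjugacy rather than on the fixed-point structure to which the corollary is formally attached, so the first version fits the paper's logical order better.
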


\subsection{Piecewise linearity of the conjugation}\label{sect:KuskowoLin-3}

Remind that it is obtained in Theorem~\ref{theor:ksklin-tpleqv}
that if the map $g:\, [0,\, 1]\rightarrow [0,\, 1]$ of the
form~(\ref{eq:g}) is conjugated to $f:\, [0,\, 1]\rightarrow [0,\,
1]$ of the form~(\ref{eq:89}) via piecewise linear conjugacy then
$g'(0) =2$. Theorem~\ref{theor:ksklin-tpleqv} was the inspiration
of this Section.

\begin{theorem}\label{theor:21}
For an arbitrary $v\in (0,\, 1)$ and increasing piecewise linear
maps $g:\, [0,\, v]\to [0,\, 1]$ such that $g(0)=0$, $g(v)=1$ and
$g'(0)=2$ there exists it's unique continuation $\widetilde{g}:\,
[0,\, 1]\to [0,\, 1]$, which is conjugated to $f$ of the
form~(\ref{eq:89}) via piecewise linear conjugation.
\end{theorem}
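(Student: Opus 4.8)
The plan is to work with $\varphi:=h^{-1}$ rather than the conjugacy $h$ itself. Write $g_l:=g$ for the prescribed left branch, and suppose $\widetilde g$ is a continuation of the form~(\ref{eq:g}) with left branch $g_l$ that is conjugate to $f$ via a piecewise linear homeomorphism $h$. By Lemma~\ref{lema:45}, $h$ increases and $h(1/2)=v$, so $\varphi$ increases, $\varphi(0)=0$, $\varphi(1)=1$, and substituting $y=h(x)$ into the two branches of~(\ref{eq:eq-top-eq-ffv}) shows that the conjugacy equation is equivalent to
\begin{equation}\label{plan-eq:1}
\varphi(g_l(y))=2\varphi(y)\ \ (y\in[0,v]),\qquad \varphi(\widetilde g_r(y))=2-2\varphi(y)\ \ (y\in[v,1]).
\end{equation}
The whole argument rests on the fact that the first relation together with $\varphi(1)=1$ already determines $\varphi$, hence $h$, and then the second relation forces $\widetilde g_r$; what remains is to check that the objects so produced are piecewise linear and genuinely form a conjugacy.

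First I would do the elementary fixed-point bookkeeping implied by the first relation of~(\ref{plan-eq:1}): putting $y=g_l^{-1}(1)$ gives $\varphi(v)=\tfrac12$, and if $q\in[0,v]$ is a fixed point of $g_l$ then $\varphi(q)=2\varphi(q)$, so $q=0$. Thus $0$ is the only fixed point of $g_l$ in $[0,v]$ — a restriction forced by the hypotheses and consistent with Proposition~\ref{lema:36} — and, together with $g_l'(0)=2$, this makes $0$ the unique attracting fixed point of the increasing piecewise linear map $g_l^{-1}\colon[0,1]\to[0,v]$, so the nested intervals $g_l^{-n}([0,1])$ shrink to $\{0\}$. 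In particular there is an $N_0$ with $g_l^{-n}([0,1])$ contained in the first linear piece $[0,u_1]$ of $g_l$ (on which $g_l$ has slope $2$) for all $n\ge N_0$; hence $g_l^{-(n+1)}=\tfrac12 g_l^{-n}$ on $[0,1]$ for $n\ge N_0$. Iterating the first relation gives $\varphi(z)=2^n\varphi\bigl(g_l^{-n}(z)\bigr)$; since a piecewise linear $\varphi$ is linear on some neighbourhood of $0$, for $n$ large this collapses to $\varphi(z)=\varphi'(0^+)\,2^n g_l^{-n}(z)$, an expression independent of $n$. Normalising by $\varphi(1)=1$ then gives the closed form $\varphi(z)=g_l^{-n}(z)/g_l^{-n}(1)$ (any sufficiently large $n$), which is a composition of piecewise linear maps, hence a piecewise linear increasing homeomorphism of $[0,1]$ fixing $0$ and $1$. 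This settles uniqueness of $\varphi$, and hence of $h$.

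For existence I would reverse the construction: define $\varphi$ by the displayed formula, put $h:=\varphi^{-1}$, and set $\widetilde g_r(y):=h\bigl(2-2h^{-1}(y)\bigr)$ on $[v,1]$. A short computation confirms $\varphi\circ g_l=2\varphi$, that $\widetilde g_r$ is piecewise linear and strictly decreasing with $\widetilde g_r(v)=h(1)=1$ and $\widetilde g_r(1)=h(0)=0$, and that $2-2\varphi(y)\in[0,1]$ for $y\in[v,1]$ (because $\varphi(v)=\tfrac12$); hence $\widetilde g:=(g_l,\widetilde g_r)$ is a continuous piecewise linear unimodal self-map of $[0,1]$ of the form~(\ref{eq:g}) with exactly the prescribed left branch. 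Both relations of~(\ref{plan-eq:1}) now hold by construction, i.e.\ $h\circ f=\widetilde g\circ h$, so $\widetilde g$ is conjugate to $f$ via the piecewise linear $h$. Uniqueness of $\widetilde g$ then follows immediately: an admissible continuation $\widetilde g'$ carries a piecewise linear conjugacy $h'$ whose inverse satisfies the first relation of~(\ref{plan-eq:1}) with the same $g_l$ and normalisation, hence equals $\varphi$ by the previous paragraph, so $h'=h$ and $\widetilde g'_r(y)=h\bigl(2-2h^{-1}(y)\bigr)=\widetilde g_r(y)$.

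The hard part is the uniqueness step, i.e.\ showing that insisting on a piecewise linear $h$ pins $\varphi$ down. The equation $\varphi\circ g_l=2\varphi$ by itself admits a large family of continuous increasing solutions — morally $\varphi(z)=c(z)\,\omega\bigl(\log_2 c(z)\bigr)$ with $c(z)=\lim_n 2^n g_l^{-n}(z)$ and $\omega$ an arbitrary positive $1$-periodic function, in the spirit of formula~(\ref{ex:h:first}) — and it is piecewise linearity that collapses $\omega$ to a constant, through the eventual stabilisation of $2^n g_l^{-n}$ combined with linearity of $\varphi$ at the repelling fixed point $0$. A secondary point, to be handled at the outset, is the fixed-point structure of $g_l$ on $(0,v)$, since that is precisely what makes $g_l^{-n}(z)\to0$ and hence makes the iteration defining $\varphi$ converge.
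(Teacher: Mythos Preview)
Your argument is correct and follows essentially the same route as the paper: both proofs exploit that iterating $g_l^{-1}$ eventually lands in the first linear piece of $g_l$, so that $2^n g_l^{-n}$ stabilises and determines the conjugacy near $0$, after which the second branch of~(\ref{eq:eq-top-eq-ffv}) fixes $\widetilde g_r$. Your passage through $\varphi=h^{-1}$ packages this as a single closed formula $\varphi(z)=g_l^{-n}(z)/g_l^{-n}(1)$, whereas the paper first extracts $k=h'(0^+)$ via Lemma~\ref{lema:43} and then extends $h$ interval by interval on the dyadic blocks $[2^{-t-1},2^{-t}]$; the mechanism is the same, and your flag about the fixed-point structure of $g_l$ on $(0,v)$ matches exactly the paper's implicit reliance on Corollary~\ref{corol:4}.
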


\begin{theorem}\label{theor:22}
For an arbitrary $v\in (0,\, 1)$ and arbitrary decreasing
piecewise linear maps $g:\, [v,\, 1]\mapsto [0,\, 1]$ such that
$g(v)=1$, ${g(1)=0}$ and $(g^2)'(x_0) = 4$, where $x_0$ is a
positive fixed points of $g$, there exists it's unique
continuation $\widetilde{g}:\, [0,\, 1]\rightarrow [0,\, 1]$,
which is topologically conjugated to $f$ of the form~(\ref{eq:89})
via piecewise linear conjugation.
\end{theorem}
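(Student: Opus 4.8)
The plan is to build the left branch of $\widetilde g$ together with the topological conjugacy simultaneously, by an inductive procedure on the natural approximating sets $A_n=f^{-n}(0)$ and $B_n=\widetilde g^{-n}(0)$, and then pass to the limit via the density criterion of Theorem~\ref{Theor:9}. This parallels the proof of Theorem~\ref{theor:21}, with the interior fixed point $x_0$ of the decreasing branch $g$ playing the role there of the boundary fixed point $0$ of the given increasing branch, and the hypothesis $(g^2)'(x_0)=4$ (equivalently, $g$ has slope $-2$ on a neighbourhood of $x_0$) playing the role of the condition $g'(0)=2$.

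First I would record the forced data. If $\widetilde g$ is any unimodal extension of $g$ to $[0,1]$ (increasing on $[0,v]$ from $0$ to $1$, decreasing on $[v,1]$ from $1$ to $0$) and $h$ is an increasing homeomorphism with $h\circ f=\widetilde g\circ h$, then necessarily $h(0)=0$, $h(1)=1$, $h(1/2)=v$, $h(2/3)=x_0$, and, by the argument of Theorem~\ref{theor:10} with $f_v$ replaced by $\widetilde g$, $h$ carries $A_n$ onto $B_n$ order-isomorphically; thus $h$ is determined on $\mathcal A=\bigcup_nA_n$ once the $B_n$ are known. Using the reflection $\alpha\mapsto 1-\alpha$, which pairs the two $f$-preimages of a point (one of them in $[1/2,1]$), together with Proposition~\ref{lema:25}, the ``right half'' $B_n\cap[v,1]$ is computed from $B_{n-1}$ purely in terms of the given branch $g$ (via $g^{-1}$), while the ``left half'' $B_n\cap[0,v]$ is placed as the unique order-isomorphic copy of $A_n\cap[0,1/2]$ inside the nested intervals already produced; the partial left branch is then taken to be the piecewise linear interpolation through the points $(\beta,\widetilde g(\beta))$, $\beta\in B_n\cap[0,v]$. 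One checks inductively that the newly created points fall strictly inside their prescribed subintervals, so that the piecewise linear maps $h_n$ with $h_n(A_n)=B_n$ are increasing homeomorphisms satisfying $h_n(f(\alpha))=\widetilde g(h_n(\alpha))$ for $\alpha\in A_n$, exactly as in Lemma~\ref{lema:31}.

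Next I would prove that $\mathcal B=\bigcup_n B_n$ is dense in $[0,1]$. Since $g$ is piecewise linear with finitely many pieces and, by the slope condition, expanding near $x_0$, one shows along the lines of Lemma~\ref{lema:Bszczilna} and Corollary~\ref{corol:3} that the maximal gap $d_n$ of $B_n$ satisfies $d_{n+1}\le \rho\,d_n$ with $\rho<1$, so $d_n\to 0$. Density of $\mathcal B$ then yields, via Theorem~\ref{Theor:9} (whose proof, as the excerpt notes, does not use convexity), that $h(x)=\lim_n h_n(x)$ is a homeomorphism with $h\circ f=\widetilde g\circ h$, so the constructed $\widetilde g$ is topologically conjugate to $f$; in the limit the partial left branches stabilise, and one verifies that $\widetilde g|_{[0,v]}$ has only finitely many linear pieces, its breakpoints being the $h$-images of a finite set (the preimages of the breakpoints of $g$ together with the orbit data of $x_0$, the latter harmless precisely because of $(g^2)'(x_0)=4$). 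Piecewise linearity of $h$ itself then follows from Theorem~\ref{theor-dyfer-lin}: on a subinterval adjacent to $0$, $f$ is linear and the constructed left branch of $\widetilde g$ has slope $2$ near $0$ (forced, as in Proposition~\ref{theor:ksklin-tpleqv}), so $h$ satisfies $h(2x)=2h(x)$ there and is hence affine and in particular continuously differentiable on that interval, whence piecewise linear on all of $[0,1]$. Finally, uniqueness: any piecewise linear extension $\widetilde g^{*}$ conjugate to $f$ via a piecewise linear $h^{*}$ must, by Proposition~\ref{theor:ksklin-tpleqv} at $0$ and Proposition~\ref{theor:23} at $x_0$, satisfy the same normalisations and slope constraints, and running the inductive procedure above for $h^{*}$ forces $h^{*}=h$ on $\mathcal A$ and $\widetilde g^{*}=\widetilde g$ on $\mathcal B$, hence everywhere.

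The step I expect to be the main obstacle is the well-definedness and finiteness of the constructed left branch: the ``left half'' points of $B_n$ are not pinned down directly by $g$, so one must show that the piecewise linear interpolation of the partial left branch stops acquiring new breakpoints, that no spurious interval of fixed or periodic points of $\widetilde g$ is created, and that the limiting branch is genuinely increasing and piecewise linear. This is exactly the place where the hypothesis $(g^2)'(x_0)=4$ must be used essentially, and where the combinatorial bookkeeping (locations and slopes of breakpoints and their compatibility across successive stages) is most delicate.
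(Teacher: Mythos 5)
There is a genuine gap, and it sits exactly where you flagged it: the placement of the ``left half'' of $B_n$. The sets $B_n=\widetilde g^{-n}(0)$ are only computable from the data when \emph{both} branches of $\widetilde g$ are known; with only the decreasing branch $g$ given, the points of $B_n\cap[0,\,v]$ are produced by the unknown increasing branch, and ``the unique order-isomorphic copy of $A_n\cap[0,\,1/2]$ inside the nested intervals already produced'' does not pin down any actual positions --- within each gap there are infinitely many order-isomorphic placements, each stage's choice changes the limit $h$, and Theorem~\ref{Theor:9} cannot be applied because it presupposes a fully defined $g$. This is not a bookkeeping issue: the only genuine constraint your data impose on $h$ is the one-branch equation $h(2-2x)=g(h(x))$ for $x\in[1/2,\,1]$ (the equation on $[0,\,1/2]$ merely \emph{defines} the unknown branch as $h\circ f\circ h^{-1}$), and that equation has an infinite-dimensional family of increasing homeomorphic solutions (one may prescribe $h$ essentially freely on a fundamental domain of $f_r^{-2}$ near the fixed point $2/3$). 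Hence there are many continuations of $g$ conjugate to $f$ by \emph{some} homeomorphism; uniqueness holds only in the piecewise linear category, so any proof must use piecewise linearity of $h$ quantitatively, together with $(g^2)'(x_0)=4$, before the limit is taken --- your proposal only invokes these a posteriori (and the appeal to Proposition~\ref{theor:ksklin-tpleqv} for slope $2$ at $0$ is circular, since it presupposes the piecewise linear conjugacy being constructed).

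The missing idea, which is the paper's actual mechanism, is to determine the slope of $h$ at the interior fixed point first. Since $h(2/3)=x_0$ and $h$ is piecewise linear, $h(x)=k(x-2/3)+x_0$ on some $(2/3,\,2/3+\varepsilon)$. Consider the alternating backward orbit $x_n=f_r^{-n}(1)\to 2/3$, $y_n=g_r^{-n}(1)\to x_0$, which satisfies $h(x_n)=y_n$ by Lemma~\ref{lema:44} and uses only the given branch $g$. The hypothesis $(g^2)'(x_0)=4$ (via Proposition~\ref{theor:23}, $g^2(x)=4(x-x_0)+x_0$ near $x_0$) makes the ratios $(y_n-x_0)/(x_n-2/3)$ eventually constant, which simultaneously shows the linearity ansatz is consistent and forces the value $k$; this is the precise role of the slope condition, not the prevention of ``spurious breakpoints.'' Once $h$ is known on $\mathbb{A}_1=(2/3,\,2/3+\varepsilon)$, the functional equation for $x\in[1/2,\,1]$ (where $h(x)\in[v,\,1]$, so only $g$ enters) propagates $h$ along $\mathbb{A}_{i+1}=f(\mathbb{A}_i)$, and after finitely many steps $h$ is determined on all of $[0,\,1]$; the increasing branch is then \emph{defined} by $\widetilde g|_{[0,\,v]}=h\circ f\circ h^{-1}$, and uniqueness follows because every step of this procedure is forced. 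Your density scheme with $A_n,B_n$ works for Theorem~\ref{Theor:9}/\ref{theor:homeom-jed}, where both branches are prescribed, but it cannot replace this argument here.
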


We will prove Theorems~\ref{theor:21} and~\ref{theor:22}
constructively. In the proof of Theorem~\ref{theor:21} we will
show that the increasing part of $g$ defines uniquely the
piecewise linear conjugacy $h$, which
satisfies~(\ref{eq:eq-top-eq-ffv}). After this,
equation~(\ref{eq:eq-top-eq-ffv}) defines uniquely the map $g$ via
$f$ and the found conjugacy. Theorems~\ref{theor:21}
and~\ref{theor:22} also can be understood in the manner that $g_l$
itself and $g_r$ itself defines the conjugacy $h$. Nevertheless,
Theorem~\ref{theor:ksklin-tpleqv} states, that $g$ can not be
arbitrary, for instance, $g_l'(0)=2$.

\begin{notation}\label{notation:3}
For the piecewise linear conjugacy of $f$ of the
form~(\ref{eq:89}) and unimodal piecewise linear map $g$, denote
by $k$ the derivative of $h$ at $0$. Also denote by $\varepsilon$
the $x$-coordinate of the first break of $g$, i.e. a). $h(x)=kx$
at some neighborhood of $x=0$; b).~$g(x)=2x$ for all $x\in (0,\,
\varepsilon)$ and c). $x = \varepsilon$ is a break point of $g$.
\end{notation}

Let $A_1,\ldots,\, A_s$ be open intervals of linearity of $g$ on
$[0,\, v]$ and $\bigcup\limits_{i=1}^s \overline{A}_i = [0,\, v]$.

\begin{lemma}\label{lema:42}
For every $x\in [0,\, \frac{\varepsilon}{k}]$ the equality $h(x) =
kx$ holds. Moreover, $\frac{\varepsilon}{k}$ is a breakpoint of
$h$.
\end{lemma}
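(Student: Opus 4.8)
The plan is to reduce everything to the functional equation in the form supplied by Lemma~\ref{lema:45}: there $h$ is shown to be increasing with $h(1/2)=v$, and the conjugacy relation restricts on $[0,1/2]$ to
\[
h(2x)=g_l(h(x)),\qquad 0\le x\le \tfrac12 .
\]
Near $x=0$ we already know $h(x)=kx$, and by Notation~\ref{notation:3} the branch $g_l$ coincides with $t\mapsto 2t$ on the whole segment $[0,\varepsilon]$. Hence the displayed identity reads $h(2x)=2h(x)$ so long as $h(x)\le\varepsilon$. This self-similarity relation is the engine of the proof: it propagates the value of $h$ from an initial segment to the doubled segment, and it stops being of the form $h(2x)=2h(x)$ exactly when $h(x)$ reaches $\varepsilon$.

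First I would prove the linearity on $[0,\varepsilon/k]$ by a maximal-interval (continuation) argument. Put
\[
s=\sup\bigl\{\,a\ge 0 : h(x)=kx \text{ for all } x\in[0,a]\,\bigr\},
\]
which is positive by Notation~\ref{notation:3}, and which is attained (i.e. $h(x)=kx$ on the closed interval $[0,s]$) by continuity of $h$. Assume $s<\varepsilon/k$. Then $h(s)=ks<\varepsilon<v=h(1/2)$, so monotonicity of $h$ forces $s<1/2$; consequently every $x\in[0,s]$ satisfies $x\le s<1/2$ and $h(x)=kx\le ks<\varepsilon$, so the relation above applies and gives $h(2x)=2h(x)=k(2x)$. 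Thus $h(y)=ky$ for all $y\in[0,2s]$, contradicting the maximality of $s$ since $2s>s$. Therefore $s\ge\varepsilon/k$, which is the first assertion; moreover $h(\varepsilon/k)=\varepsilon$, and since $\varepsilon<v=h(1/2)$ we also record $\varepsilon/k<1/2$, a fact needed below so that the tent map acts near $\varepsilon/k$ by $x\mapsto 2x$ and the relation of Lemma~\ref{lema:45} is the one in force.

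For the break assertion the idea is that the initial linear branch of $h$ cannot be continued past $\varepsilon/k$ keeping slope $k$, because $\varepsilon=h(\varepsilon/k)$ is, by hypothesis, a break point of $g_l$: the one-sided slopes of $g_l$ at $\varepsilon$ are $2$ and some $m\ne 2$. Using $h(2x)=g_l(h(x))$ together with $h(x)=kx$ on $[0,\varepsilon/k]$, one computes that as $x$ crosses $\varepsilon/k$ the right-hand side switches from the affine law $2kx$ to the affine law $2\varepsilon+m(kx-\varepsilon)$, and a genuine corner of $h$ is produced there precisely because $m\ne 2$ prevents these two laws from agreeing to first order; this is what identifies $\varepsilon/k$ as a break point of $h$. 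The main obstacle is exactly this last step: one must make the bookkeeping of one-sided slopes precise, track through the functional equation where the corner actually sits, and exclude any accidental cancellation that would restore a single affine law across it — by contrast the continuation part of the argument is routine once the self-similarity identity $h(2x)=2h(x)$ (valid while $h(x)\le\varepsilon$) has been extracted from Lemma~\ref{lema:45}.
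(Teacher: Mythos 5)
Your continuation argument for the first assertion is fine and is essentially the paper's own route: one propagates the linear branch of $h$ through the relation $h(2x)=g_l(h(x))$ of Lemma~\ref{lema:45}, using that $g_l(t)=2t$ on $[0,\,\varepsilon]$; whether this is phrased as a supremum/maximal-interval argument (yours) or as a step-by-step enlargement of the interval on which the functional equation reads $h(2x)=2kx$ (the paper's) is immaterial.

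The break-point half, however, contains a genuine error of localization. In the identity $h(2x)=g_l(h(x))$ the unknown is evaluated at $2x$, so the corner created when the right-hand side switches affine law at $x=\varepsilon/k$ is a corner of $h$ at the point $2\varepsilon/k$, not at $\varepsilon/k$. Worse, your own propagation, pushed one more step, excludes a corner at $\varepsilon/k$ altogether: since $h(\varepsilon/k)=\varepsilon\le v=h(1/2)$ and $h$ increases, every $x\in[0,\,\varepsilon/k]$ satisfies $x\le 1/2$ and $h(x)=kx\le\varepsilon$, so $g_l(h(x))=2h(x)$ there (at the endpoint by continuity of $g_l$), whence $h(y)=ky$ for all $y\in[0,\,2\varepsilon/k]$; thus $\varepsilon/k$ lies strictly inside an affine piece of $h$ and no bookkeeping of one-sided slopes can produce a break at that point. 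This is in fact corroborated by the paper's own later computation (Lemma~\ref{arch-lema-3}: there $k=2$, $\varepsilon=1/4$, and the first break of $h$ is found at $1/4=2\varepsilon/k$, not at $\varepsilon/k=1/8$), whereas the paper's proof of the present lemma asserts that the form $h(2x)=2kx$ holds only for $x\in[0,\,\varepsilon/(2k)]$, which is what makes the alleged break land at $\varepsilon/k$; carried out carefully, the changeover of the equation occurs at $x=\varepsilon/k$ and hence shows up on the graph of $h$ at $2\varepsilon/k$. So the second claim cannot be established along the route you describe; you would either have to prove the break at $2\varepsilon/k$ (which your argument, with the corner correctly located and $a_2\ne 2$, does give) or identify a different convention under which $\varepsilon/k$ is the intended point.
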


\begin{proof}
Since $h$ is piecewise linear, then there exists $\delta$ such
that the equality~(\ref{eq:eq-top-eq-ffv}) is of the form
$h(2x)=2\cdot kx$ for $x\in [0,\, \delta]$ and $\delta$ is
determined from that $g(x)=2x$ for all $x\in (0,\, k\delta)$. From
another hand, it follows from the formula for $h$ on $x\in [0,\,
\delta]$ that $h(x)=kx$ for all $x\in (0,\, 2\delta)$ and this
lets to consider the functional equation $h(2x)= g(kx)$ for $x\in
(0,\, 2\delta)$. For these $x$ the range of $g(kx)$ is $[0,\,
y_0]$, where $y_0 = h(4\delta) = g(2k\delta)$. The maximum length
of the interval for $x$, such that functional
equation~(\ref{eq:eq-top-eq-ffv}) is of the form $h(2x)=2\cdot kx$
is $x\in [0,\, \frac{\varepsilon}{2k}]$. Further increasing of the
interval for $x$ leads to that functional
equation~(\ref{eq:eq-top-eq-ffv}) transforms to $h(2x) = a_2\cdot
kx + b_2$, where $g(x)=a_2x+b_2$ is the formula for $g$ on the
next interval of linearity of $g$ after $(0,\, \varepsilon)$.
\end{proof}

In fact, Lemma~\ref{lema:42} states that $g_l$ defines the length
of the first interval of linearity of the conjugacy $h$
dependently on its tangent on its the first interval of linearity.

\begin{lemma}\label{lema:43}
Let for some $v\in (0,\, 1)$ and piecewise linear maps $g,\,
\widetilde{g}:\, [0,\, 1]\rightarrow [0,\, 1]$, which are
conjugated to $f$, equalities $g(v) = \widetilde{g}(v) = 1$ and
$g(0) = g(1) = \widetilde{g}(0) = \widetilde{g}(1) = 0$ holds and,
furthermore, $g(x) = \widetilde{g}(x)$ for all $x\in [0,\, v]$.
Then tangents of correspond conjugacies at $0$ coincide.
\end{lemma}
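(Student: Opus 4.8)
The plan is to exploit that the two conjugacies — write $h$ for the homeomorphism with $h(f)=g(h)$ and $\widetilde h$ for the one with $\widetilde h(f)=\widetilde g(\widetilde h)$ — are forced to agree on the backward orbit $\{2^{-m}:m\ge 1\}$ of the turning point $1/2$ under the left branch of $f$, simply because iterating that branch only ever involves the increasing part $g_l$, which by hypothesis is common to $g$ and $\widetilde g$. Linearity of each conjugacy near $0$ then converts this coincidence into equality of the two tangents $h'(0)$ and $\widetilde h'(0)$.

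First I would fix the normalisations. By Theorem~\ref{theor:10} both conjugacies are increasing, so $h(1)=\widetilde h(1)=1$; substituting $x=1/2$ into the conjugacy relations and using $f(1/2)=1$ shows that $h(1/2)$, respectively $\widetilde h(1/2)$, is a point where $g$, respectively $\widetilde g$, equals $1$. Since $g$ and $\widetilde g$ coincide on $[0,v]$, both attain $1$ at $v$, and both are unimodal, their common maximum value $1$ is attained only at $v$; hence $h(1/2)=\widetilde h(1/2)=v$.

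Next I would iterate the conjugacy relation along the left branch. For $x\in[0,1/2]$ one has $f(x)=2x$ and $h(x)\le h(1/2)=v$, so by Lemma~\ref{lema:45} the relation reads $h(2x)=g_l(h(x))$, that is $h(2^{-(m+1)})=g_l^{-1}(h(2^{-m}))$ for every $m\ge 1$. Starting from $h(1/2)=v$ this gives $h(2^{-(m+1)})=g_l^{-m}(v)$ for all $m\ge 0$, and the identical computation with $\widetilde g_l=g_l$ yields $\widetilde h(2^{-(m+1)})=g_l^{-m}(v)$; thus $h$ and $\widetilde h$ coincide at every point $2^{-m}$. Since $h$ and $\widetilde h$ are piecewise linear they are linear on a one-sided neighbourhood of $0$ — by Lemma~\ref{lema:42} the first linearity interval of $h$ is $[0,\varepsilon/k]$ with $k=h'(0)$, and likewise $[0,\varepsilon/\widetilde k]$ for $\widetilde h$, the break abscissa $\varepsilon$ being the same for $g$ and $\widetilde g$ because their increasing branches agree. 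Choosing $m$ large enough that $2^{-(m+1)}$ lies in both intervals, we get $k\cdot 2^{-(m+1)}=h(2^{-(m+1)})=\widetilde h(2^{-(m+1)})=\widetilde k\cdot 2^{-(m+1)}$, hence $k=\widetilde k$.

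The only genuinely delicate step is the normalisation: one must be careful to check that both conjugacies send $1/2$ to the same peak $v$ and that the functional equation really does collapse to the $g_l$-branch throughout $[0,1/2]$; after that the argument is the short computation above, with Lemma~\ref{lema:42} applying verbatim to $\widetilde g$ since $\widetilde g$ and $g$ share the increasing branch $g_l$.
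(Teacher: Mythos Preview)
Your proof is correct and follows essentially the same route as the paper: both arguments show that $h(2^{-n})=g_l^{-n}(1)$ (equivalently $g_l^{-(n-1)}(v)$) depends only on the common left branch $g_l$, and then invoke Lemma~\ref{lema:42} to conclude that for large $n$ the quantity $2^n h(2^{-n})$ equals the tangent $k$ at $0$, forcing $k=\widetilde k$. Your version is slightly more explicit about the normalisation $h(1/2)=\widetilde h(1/2)=v$, but the substance is identical.
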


This lemma is a partial case of Theorem~\ref{theor:21}. We will
prove Lemma~\ref{lema:43} by calculating the tangent at $0$ for
the conjugacy $h$ of $f$ and $g$. We will use in our calculations
only maps $g_l$, but not $g_r$. We will use one more notation.

\begin{notation}
Let $G_l(\alpha,\, \beta) = (f_l^{-1}(\alpha),\, g_l^{-1}(\beta))$
and $G_r(\alpha,\, \beta) = (f_r^{-1}(\alpha),\, g_r^{-1}(\beta))$
be maps of the square $[0,\, 1]^2$ to itself.
\end{notation}

The following lemma follows from the increasing of $h$.

\begin{lemma}\label{lema:44}
Let numbers $\alpha,\, \beta\in [0,\, 1]$ be such that $h(\alpha)
= \beta$. Then the graph of $h$ is invariant under the action of
$G_l$ and $G_r$, i.e. $h(f_l^{-1}(\alpha)) = g_l^{-1}(\beta)$ and
$h(f_r^{-1}(\alpha)) = g_r^{-1}(\beta)$.
\end{lemma}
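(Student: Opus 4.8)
The plan is to derive both equalities directly from the conjugacy relation $h(f(x))=g(h(x))$ (equation~(\ref{eq:eq-top-eq-ffv})), split into its two branches as in Lemma~\ref{lema:45}, together with the monotonicity of $h$ and of the two branches of $g$. The argument is almost pure bookkeeping, so I would spend most of the write-up pinning down that all the one-sided inverses involved are legitimately defined.

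First I would record the one-sided inverses of $f$. Since $f_l(x)=2x$ on $[0,1/2]$ and $f_r(x)=2-2x$ on $[1/2,1]$, for every $\alpha\in[0,1]$ we have $f_l^{-1}(\alpha)=\alpha/2\in[0,1/2]$ and $f_r^{-1}(\alpha)=1-\alpha/2\in[1/2,1]$, so the first coordinates of $G_l(\alpha,\beta)$ and $G_r(\alpha,\beta)$ are well defined for all $\alpha$. Next, by Lemma~\ref{lema:45} the homeomorphism $h$ increases and $h(1/2)=v$, hence $h\big([0,1/2]\big)=[0,v]$ and $h\big([1/2,1]\big)=[v,1]$; and from the standing hypotheses on $g$ the branch $g_l\colon[0,v]\to[0,1]$ is increasing (it sends $0$ to $0$ and $v$ to $1$) while $g_r\colon[v,1]\to[0,1]$ is decreasing (it sends $v$ to $1$ and $1$ to $0$). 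In particular both are bijections onto $[0,1]$, so $g_l^{-1}$ and $g_r^{-1}$ are defined on all of $[0,1]$, and in particular at $\beta$.

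Then I would carry out the two short computations. Put $x=f_l^{-1}(\alpha)=\alpha/2\le 1/2$; the left branch of Lemma~\ref{lema:45} gives $h\big(f_l(x)\big)=g_l\big(h(x)\big)$, i.e.\ $\beta=h(\alpha)=g_l\big(h(x)\big)$, and applying $g_l^{-1}$ (legitimate since $h(x)\in[0,v]$) yields $h\big(f_l^{-1}(\alpha)\big)=h(x)=g_l^{-1}(\beta)$, which is the invariance of the graph of $h$ under $G_l$. Symmetrically, put $x=f_r^{-1}(\alpha)=1-\alpha/2\ge 1/2$; since $h(x)\ge v$ the conjugacy relation reads $h\big(f_r(x)\big)=g_r\big(h(x)\big)$, so $\beta=h(\alpha)=g_r\big(h(x)\big)$, and applying $g_r^{-1}$ gives $h\big(f_r^{-1}(\alpha)\big)=g_r^{-1}(\beta)$, the invariance under $G_r$.

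There is no genuine obstacle here; the only thing needing care is the domain–range matching — that $h(x)$ lands in $[0,v]$ (respectively $[v,1]$) so that $g_l^{-1}$ (respectively $g_r^{-1}$) may be applied, and that $g_l,g_r$ are actually monotone onto $[0,1]$ — and all of this follows from Lemma~\ref{lema:45} and the hypotheses already imposed on $g$ at the start of the section.
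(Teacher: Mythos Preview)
Your proof is correct and is exactly the argument the paper has in mind: the paper dispatches this lemma with the single remark that it ``follows from the increasing of $h$,'' and what you have written is a careful unpacking of that remark via the two branches of Lemma~\ref{lema:45}. There is nothing to add.
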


\begin{proof}
[Proof of Lemma~\ref{lema:43}] Notice that $h(1)=1$ and consider
the trajectory of $(1,\, 1)$ under $G_l$, obtaining $G_l^{n}(1,\,
1) \hm{=} ( 1/2^n,\, g_l^{-n}(1) ). $ By Lemma~\ref{lema:44}
obtain that $h(1/2^n) \hm{=} g_l^{-n}(1)$ for all $n$. By
Corollary~\ref{corol:4}, the sequence $\{ g_l^{-n}(1)\}$ decrease
to 0. Find the tangent $k$ of the conjugacy $h$ at the
neighborhood of $0$. By Lemma~\ref{lema:42}, $h(x)=kx$ for $x\in
(0,\, \varepsilon/k)$, where $\varepsilon$ is taken from
Notation~\ref{notation:3}. For $n$ huge enough the inclusion
$1/2^n\in (0,\, \varepsilon/k)$, which means that the sequence
$k_n = 2^ng_l^{-n}(1)$ stabilizes on $k$.
\end{proof}

\begin{note}
Notice that in the same way as it was done in the proof of
Lemma~\ref{lema:43}, we can show that for every $\alpha,\,
\beta\in (0,\, 1)$ such that $h(\alpha)=\beta$ the sequence $k_n =
\frac{2^ng_l^{-n}(\beta)}{\alpha}$ stabilizes independently on
whether $h$ is piecewise linear, or not. Piecewise linearity of
$h$ means that for every pair $\alpha,\, \beta\in (0,\, 1)$ the
obtained sequence stabilizes on those $k$, which is obtained in
the proof of Lemma~\ref{lema:43} and is independent on $\alpha$
and $\beta$.
\end{note}

\begin{proof}
[Proof of Theorem~\ref{theor:21}] Let $h$ be the conjugacy of $f$
and $g$. By reasonings from the proof of Lemma~\ref{lema:43}, find
by $g_l$ the tangent $k$ of $h$ at $0$. Then by
Lemma~\ref{lema:42} the conjugacy is expressed by $h(x)=kx$ for
$x\in [0,\, \frac{\varepsilon}{k}]$, where $\varepsilon$ is the
first break point of $g$. Consider $n$, such that $\frac{1}{2^n}<
\frac{\varepsilon}{k}$. For every $t\geq 0$ from the
equation~(\ref{eq:eq-top-eq-ffv}) for $x\in \left[
\frac{1}{2^{n-t+1}},\, \frac{1}{2^{n-t}}\right]$ and from values
of $h$, known earlier on $\left[ \frac{1}{2^{n-t+1}},\,
\frac{1}{2^{n-t}}\right]$, find the values of $h$ on $\left[
\frac{1}{2^{n-t+1}},\, \frac{1}{2^{n-t}}\right]$. For $t=n-1$ the
equation~(\ref{eq:eq-top-eq-ffv}) will be defined for $x\in \left[
\frac{1}{4},\, \frac{1}{2}\right]$, whence we find $h$ on the
whole $[0,\, 1]$. After this the map $g_r$ can be found from
$h(f_r(x)) = g_r(h(x))$ for $x\in [1/2,\, 1]$ by the equation
$g_r(x) = h(f_r(h^{-1}(x)))$. This finishes the proof.
\end{proof}

Proposition~\ref{theor:ksklin-tpleqv} contains the restriction on
$g$ in the neighborhood of $0$ for existing of piecewise linear
conjugacy $h$ of $f$ and $g$. The analog of this proposition holds
for the positive fixed point of~$g$.

\begin{proposition}\label{theor:23}
Let $x_0$ be a fixed point of unimodal piecewise liner map $g$,
which is conjugated to $f$ of the form~(\ref{eq:89}) via piecewise
linear conjugacy. Then there exists $\varepsilon>0$, such that for
every $x\in (x_0-\varepsilon,\, x_0+\varepsilon)\backslash
\{x_0\}$ the equality $(g^2)'(x_0)=4$ holds, where $g^2$ is the
second iteration of $g$.
\end{proposition}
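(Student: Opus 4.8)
The plan is to pull the statement back to the tent map $f$ through the conjugacy and then push the resulting local formula forward again. First I would identify $h^{-1}(x_0)$: since $h$ is an increasing homeomorphism of $[0,\,1]$ with $h(1/2)=v$ (Lemma~\ref{lema:45}), it satisfies $h(0)=0$; the only fixed points of $f$ are $0$ (from $2x=x$) and $2/3$ (from $2-2x=x$); and by Proposition~\ref{lema:36} the positive fixed point satisfies $x_0\in(v,\,1)\subset(0,\,1)$, so $x_0\neq 0$ and $h^{-1}(x_0)$ is a fixed point of $f$ different from $0$, i.e. $h^{-1}(x_0)=2/3$. Near $2/3$ the map $f$ is $f(x)=2-2x$, and for $x\in(1/2,\,3/4)$ one has $2-2x\in(1/2,\,1)$, so $f^2(x)=2-2(2-2x)=4x-2$; thus $f^2$ is linear with slope $4$ on the two-sided neighbourhood $(1/2,\,3/4)\ni 2/3$ and $f^2(2/3)=2/3$.

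Next I would use that $h$ is piecewise linear to choose $\varepsilon_1>0$ with $(2/3-\varepsilon_1,\,2/3+\varepsilon_1)\subset(1/2,\,3/4)$ on which $h$ is linear on each of $(2/3-\varepsilon_1,\,2/3)$ and $(2/3,\,2/3+\varepsilon_1)$, with slopes $k_-$ and $k_+$ (allowing $k_-\neq k_+$, since $2/3$ may be a break point of $h$). Since $f^2$ is increasing near $2/3$, with $f^2(2/3+t)=2/3+4t$, it maps the right half-interval onto $(2/3,\,2/3+4\varepsilon_1)$ and the left one onto $(2/3-4\varepsilon_1,\,2/3)$; to keep the images inside the linearity intervals of $h$ I restrict to $x\in(2/3,\,2/3+\varepsilon_1/4)$ and $x\in(2/3-\varepsilon_1/4,\,2/3)$. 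On the right half, $h(x)=x_0+k_+(x-2/3)$ and $h(f^2(x))=h(4x-2)=x_0+k_+(4x-2-2/3)=x_0+4k_+(x-2/3)$; putting $y=h(x)$, so $x-2/3=(y-x_0)/k_+$, into $g^2(h(x))=h(f^2(x))$ yields $g^2(y)=x_0+4(y-x_0)$ for $y\in(x_0,\,x_0+k_+\varepsilon_1/4)$. The same computation on the left gives $g^2(y)=x_0+4(y-x_0)$ for $y\in(x_0-k_-\varepsilon_1/4,\,x_0)$. Taking $\varepsilon=\tfrac14\varepsilon_1\min\{k_-,\,k_+\}$ and using $g^2(x_0)=h(f^2(2/3))=x_0$, I conclude that $g^2$ agrees with $y\mapsto x_0+4(y-x_0)$ on all of $(x_0-\varepsilon,\,x_0+\varepsilon)$, so $(g^2)'(x)=4$ there; a posteriori $g^2$ is in fact linear through $(x_0,\,x_0)$ with slope $4$, so the value at $x_0$ and the one-sided derivatives all equal $4$, which is the claim.

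The step I expect to need the most care is the bookkeeping around a possible break of $h$ precisely at $h^{-1}(x_0)=2/3$: one may not assume $h$ is linear on a single neighbourhood of $2/3$, so the one-sided slopes $k_\pm$ must be carried separately, and the neighbourhood of $x_0$ on which the conclusion holds has to be shrunk both by the linearity radius of $h$ and by the expansion factor $4$ of $f^2$. Everything else is a routine linear substitution, and it uses only that $g$ is conjugate to $f$ via some increasing piecewise linear $h$ — exactly the hypothesis, and the same mechanism already exploited in the proof of Theorem~\ref{theor:21}.
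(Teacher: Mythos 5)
Your proof is correct, but it takes a genuinely more direct route than the paper's. The paper fixes the one-sided linear pieces $g_1(x)=a_1x+b_1$, $g_2(x)=a_2x+b_2$ of $g$ at $x_0$ and the one-sided pieces $a_3x+b_3$, $a_4x+b_4$ of $h$ at $2/3$, then follows a backward orbit $G_r^n(\alpha,\,\beta)\rightarrow (2/3,\,x_0)$ (the same inverse-branch device as in Lemma~\ref{lema:44} and the proofs of Theorems~\ref{theor:21} and~\ref{theor:22}); from the three relations $\beta=a_4\alpha+b_4$, $g_r^{-1}(\beta)=a_3f_r^{-1}(\alpha)+b_3$, $g_r^{-2}(\beta)=a_4f_r^{-2}(\alpha)+b_4$ it obtains an identity in $\alpha$ on a whole neighbourhood and equates the coefficients of $\alpha$ to get $a_1a_2=4$, which Remark~\ref{note:17} then converts into $(g^2)'=4$ near $x_0$. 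You instead push forward: $f^2(x)=4x-2$ is increasing with slope $4$ on $(1/2,\,3/4)\ni 2/3$ and preserves each side of $2/3$, so writing $g^2=h\circ f^2\circ h^{-1}$ the (possibly distinct) one-sided slopes $k_-,\,k_+$ of $h$ cancel separately on each side, giving $g^2(y)=x_0+4(y-x_0)$ near $x_0$ without ever touching the one-sided slopes of $g$ or splitting $g^2$ as $g_2\circ g_1$ and $g_1\circ g_2$. Your version buys a shorter computation with a transparent reason for the cancellation and no coefficient matching; the paper's version buys the explicit slope relation $a_1a_2=4$ (the form in which the condition reappears, e.g.\ as the hypothesis $(g^2)'(x_0)=4$ of Theorem~\ref{theor:22}) and stays within the inverse-branch machinery used throughout that section. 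Your bookkeeping for a possible break of $h$ exactly at $2/3$ — carrying $k_-\neq k_+$ and shrinking the neighbourhood by the expansion factor $4$ — plays the same role as the paper's two separate pieces $a_3x+b_3$ and $a_4x+b_4$, so there is no gap on that point.
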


We know that $h(2/3) = x_0$, since $2/3$ is a fixed point of $f$.
Let at the left neighborhood of $x_0$ the map $g$ is given by
$g_1(x)=a_1x + b_1$, and in the right neighborhood of $x_0$ it is
given by $g_2(x)=a_2x + b_2$.

\begin{note}\label{note:17}
Using this notations claim that Proposition~\ref{theor:23} is
equivalent to $a_1a_2=4$.
\end{note}

\begin{proof}
[Explanation of Remark~\ref{note:17}.] Since $g$ decrease on
$[v,\, 1]$, then it is enough to prove that there exists
$\varepsilon>0$ such that $g(x_0,\, x_0+\varepsilon)\subset (v,\,
x_0)$ and $g(x_0-\varepsilon,\, x_0)\subset (x_0,\, 1)$.

Then for the left neighborhood $(x_0-\varepsilon,\, x_0)$ the maps
$g^2$ is defined by $g^2(x)=g_2(g_1(x))$, and in the right
neighborhood $(x_0,\, x_0+\varepsilon)$ it is defined by
$g^2(x)=g_1(g_2(x))$. Nevertheless, in both first and second case
the maps is linear in $x_0$ and its tangent is~$a_1a_2$.
\end{proof}

\begin{proof}
[Proof of Proposition~\ref{theor:23}] Left in the left
neighborhood of $2/3$ the maps $h$ is given by $h_1(x)=a_3x +
b_3$, and in the right neighborhood of $2/3$ it is given by
$h_2(x)=a_4x + b_4$.

For an $\varepsilon$ small enough consider an arbitrary number
$\beta\in (x_0,\, x_0+\varepsilon)$ and consider the sequence
$G_r^n(\alpha,\, \beta)$, where $\beta = h(\alpha)$. This sequence
converges to $(2/3,\, x_0 )$. The first three elements of the
sequence are $(\alpha,\, \beta) \rightarrow ( f_r^{-1}(\alpha),\,
g_r^{-1}(\beta) ) \hm{\rightarrow} ( f_r^{-2}(\alpha),\,
g_r^{-2}(\beta) )$, i.e. $( \alpha,\, \beta) \rightarrow
(f_r^{-1}(\alpha),\, g_1^{-1}(\beta)) \rightarrow (
f_r^{-2}(\alpha),\, g_2^{-2}(\beta)), $ because $\beta>x_0$ and
the map $g$ decrease in the neighborhood of~$x_0$.

Express the maps $g_r^{-1}(\beta)$ and $g_r^{-2}(\beta)$ as
$\displaystyle{g_r^{-1}(\beta) = \frac{\beta-b_1}{a_1}}$ and $
g_r^{-2}(\beta) = \frac{\frac{\beta-b_1}{a_1}-b_2}{a_2}
\hm{=}\frac{\beta-b_1 -a_1b_2}{a_1a_2}.$ Moreover,
$\displaystyle{f_r^{-1}(\alpha) =1 -\frac{\alpha}{2}}$ and
$\displaystyle{f_r^{-2}(\alpha) =\frac{1}{2}+\frac{\alpha}{4}}$.
Also the following equalities hold.
$$\left\{
\begin{array}{l}
\beta = a_4\alpha +b_4,\\
g_r^{-1}(\beta) =a_3f_r^{-1}(\alpha) + b_3,\\
g_r^{-2}(\beta) =a_4f_r^{-2}(\alpha) + b_4.
\end{array}\right.
$$

By plugging of the first two equalities to the third one gives
\begin{equation}\label{eq:85} \frac{(a_4\alpha +b_4)-b_1
-a_1b_2}{a_1a_2} =a_4\left( \frac{1}{2}+\frac{\alpha}{4}\right) +
b_4. \end{equation}

If follows from the arbitrariness of $\beta$ in correspond
neighborhood of $x_0$ and from the continuity of $h$ that the set
of $\alpha$, such that $h(\alpha)=\beta$ is some neighborhood of
$2/3$. That is why, the equality~(\ref{eq:85}) is the identity in
correspond neighborhood of $\alpha$. Saying that coefficients near
$\alpha$ in the left and right hand side of~(\ref{eq:85}) are
equal, obtain that $a_1a_2=4$.
\end{proof}

\begin{proof}
[Proof of Theorem~\ref{theor:22}] Let $x_0$ be positive fixed
point of $g$ and $\widetilde{g}$. Let $h$ be a conjugation of $f$
and $g$. Since $h(2/3)=x_0$ by Lemma~\ref{lema:45}, then there
exists $\varepsilon>0$ such that $h(x)=k(x-2/3)+x_0$ for all $x\in
(2/3,\, 2/3+\varepsilon)$. Consider the trajectory of $(1,\, 1)$
under $G_l$. Denote by $(x_n,\, y_n )$ the $n$-th iteration of
this point, i.e. $G_l^{n}(1,\, 1) = (x_n,\, y_n ).$ It follows
from Lemma~\ref{lema:44} that for each point of the trajectory one
have that
\begin{equation}\label{eq:86}h(x_n)=y_n.
\end{equation}

The sequence $x_n$ can be given by $x_n = f_r^{-1}(x_{n-1})$ and
tends to $2/3$. Furthermore, for every $k\in \mathbb{N}$ the
inclusions $(f_r^{-1})^{2k}(1)\in (1/2,\, 2/3)$ and
$(f_r^{-1})^{2k+1}(1)\in (2/3,\, 1)$ holds and
$\lim\limits_{i\rightarrow \infty}x_n = 2/3$. Since for huge odd
$n$ the inclusion $x_n\in (2/3,\, 2/3+\varepsilon)$ holds, then
the equality~(\ref{eq:86}) can be rewritten as $y_n =
k(x_n-2/3)+x_0$, whence
\begin{equation}\label{eq:88}k =
\frac{y_n-x_0}{x_n-2/3}.\end{equation} Moreover, by
Proposition~\ref{theor:23} without loss of generality we may
assume that for these $n$ and all $y_n$ then maps $g^{2}$ is given
by equality
\begin{equation}\label{eq:87}g^{2}(x)
=4(x-x_0)+x_0.\end{equation}

Similarly as in the proof of Proposition~\ref{theor:23} we can
show that if $x_n\in (2/3,\, 2/3+\varepsilon)$ then
 $
\frac{y_n-x_0}{x_n-2/3} = \frac{y_{n+2}-x_0}{x_{n+2}-2/3}.$

It follows from equation~(\ref{eq:87}) that $y_{n+2} =
\frac{y_n+3x_0}{4}.$ Also $x_{n+2}$ can be found from the formula
$x_{n+2} = (f_r^{-1})^2(x_n)$, i.e. $x_{n+2}
=\frac{1}{2}+\frac{x_n}{4}.$ Plugging of these expressions into
$\frac{y_{n+2}-x_0}{x_{n+2}-2/3}$ gives, after simplifications
that $ \frac{y_{n+2}-x_0}{x_{n+2}-2/3} =
\frac{y_n-x_0}{x_n-\frac{2}{3}}.$ Thus, $h(2/3)=x_0$ and $h$ has
the tangent $k$, defined by~(\ref{eq:88}) for $x\in (2/3,\,
2/3+\varepsilon)$.

Denote $\mathbb{A}_1 = (2/3,\, 2/3+\varepsilon)$ and consider the
sequence of sets $\mathbb{A}_{i+1} = f(\mathbb{A}_i)$. Clearly,
$f^t(\mathbb{A}_1) = [0,\, 1]$ for some finite $t\in \mathbb{N}$.
For every $x\in \mathbb{A}_i$ the
equation~(\ref{eq:eq-top-eq-ffv}) lets to determine the values of
$h$ on $\mathbb{A}_{i+1}$ in the assumption that its values on
$\mathbb{A}_i$ is known. Sequent applying of these reasonings let
to find $h$ on the whole $[0,\, 1]$. In fact, we have proved that
the maps $g$ defines $h$ by it values on $[v,\, 1]$ and, then $h$
defines the increasing part of $g$. The last finishes the proof.
\end{proof}

\subsection{Example of non convex maps, which
is conjugated to standard hat-map}\label{arch-sect-Non-convex}

It is assumed in Theorem~\ref{Theor:9}, that the function $g$,
which is conjugated to $f$, is convex. Nevertheless, convexity is
not used in the proof of this Theorem.

In this section we will use in details the techniques from the
proof of Theorems~\ref{theor:21} and~\ref{theor:22} for obtaining
the example of non-convex $g$, which is conjugated to $f$.

Let $g_1:\, [0,\, 5/8]\rightarrow [0,\, 1]$ be monotone,
increasing, piecewise linear such that $(1/4,\, 1/2)$, $(1/2,\,
5/8)$, $(5/8,\, 1)$ are all breaking points of $g_1$. Evidently,
this $g_1$ is not convex. Since the proof of
Theorem~\ref{theor:21} is constructive, we may apply it to find
the maps $g_2:\, [5/8,\, 1]\rightarrow [0,\, 1]$ such that $g$ of
the form~(\ref{eq:g}), which is constructed with these $g_1$ and
$g_2$ would be conjugated to $f$. Assume, that $g$ is already
constructed and $h:\, [0,\, 1] \rightarrow [0,\, 1]$ is a
homeomorphic solution of the functional
equation~(\ref{eq:eq-top-eq-ffv}).

Notice, that it follows from Lemma~\ref{lema:45}, that $h$
increase.

\begin{lemma}\label{arch-lema-3}
$h(x)=2x$ for all $x\in [0,\, 1/4]$ and $(1/4,\, 1/2)$ is a
breaking points of $h$.
\end{lemma}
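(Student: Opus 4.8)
The plan is to determine $h$ explicitly on $[0,1/2]$ directly from the functional equation~(\ref{eq:eq-top-eq-ffv}), using the elementary data already available: by Lemma~\ref{lema:45} the homeomorphism $h$ increases, $h(0)=0$, $h(1/2)=v=5/8$, $h(1)=1$, and the map $g_1$ has the explicit pieces $g_1(x)=2x$ on $[0,1/4]$, $g_1(x)=\frac{x}{2}+\frac{3}{8}$ on $[1/4,1/2]$, $g_1(x)=3x-\frac{7}{8}$ on $[1/2,5/8)$. Note first that, since $g_1(0)=0$, $g_1(5/8)=1$ and $g_1'(0)=2$, the continuation $g$ arises from the constructive proof of Theorem~\ref{theor:21}, so the conjugacy $h$ is piecewise linear and Notation~\ref{notation:3} together with Lemmas~\ref{lema:42} and~\ref{lema:44} apply to it.

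First I would compute the tangent $k=h'(0)$. Applying Lemma~\ref{lema:44} to the point $(1,1)$ on the graph of $h$ and iterating the map $G_l$ gives $h(1/2^n)=g_l^{-n}(1)$ for every $n$. One computes $g_l^{-1}(1)=5/8$, $g_l^{-1}(5/8)=1/2$, $g_l^{-1}(1/2)=1/4$, and then $g_l^{-1}(1/2^m)=1/2^{m+1}$ for all $m\geq 2$ since $g_l(t)=2t$ on $[0,1/4]$; hence $g_l^{-n}(1)=1/2^{n-1}$ for $n\geq 3$, so the sequence $k_n=2^n g_l^{-n}(1)$ from the proof of Lemma~\ref{lema:43} stabilises at $k=2$. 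As the first break of $g$ sits at $\varepsilon=1/4$, Lemma~\ref{lema:42} now gives $h(x)=2x$ on $[0,\varepsilon/k]=[0,1/8]$.

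Next I would extend this to all of $[0,1/4]$ by one pass through the functional equation. For $x\in[1/8,1/4]$ we have $x=f(x/2)$ with $x/2\in[1/16,1/8]\subseteq[0,1/8]$, so $h(x/2)=2\cdot(x/2)=x$ by the previous step; since $x<v$, equation~(\ref{eq:eq-top-eq-ffv}) yields $h(x)=g(h(x/2))=g_l(x)=g_1(x)=2x$, using $x\in[0,1/4]$. Together with the previous paragraph this proves $h(x)=2x$ for all $x\in[0,1/4]$.

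Finally, to see that $(1/4,1/2)$ contains a break point of $h$ — indeed that $1/4$ is one — I would compute $h$ just to the right of $1/4$: for $x\in(1/4,1/2]$ again $x=f(x/2)$ with $x/2\in(1/8,1/4]\subseteq[0,1/4]$, so $h(x/2)=x$ by what was just proved, and~(\ref{eq:eq-top-eq-ffv}) gives $h(x)=g(x)=g_1(x)=\frac{x}{2}+\frac{3}{8}$ on $(1/4,1/2]$, a segment of slope $1/2$. Thus $h$ has left derivative $2$ and right derivative $1/2$ at $1/4$, so it is non-differentiable there and $1/4$ is a break point of $h$. The main obstacle is the first step: extracting the honest value $k=2$ and, through Lemma~\ref{lema:42}, genuine linearity of $h$ on a whole right-neighbourhood of $0$ rather than merely at the dyadic points $1/2^n$; once that is in hand, the rest is a short functional-equation chase. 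I would invoke only the linearity assertion of Lemma~\ref{lema:42}, since in this example $h$ turns out to be linear across the point $\varepsilon/k=1/8$ and the break occurs only at $1/4$.
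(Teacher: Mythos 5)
Your proof is correct, and its first step — computing $k=2$ from $h(1/2^n)=g_l^{-n}(1)$ via Lemma~\ref{lema:44} and the backward orbit $1\mapsto 5/8\mapsto 1/2\mapsto 1/4\mapsto 1/8\mapsto\ldots$ of $1$ under $g_l^{-1}$ — is exactly the computation in the paper's proof, which reads this orbit off Figure~\ref{fig:26} and concludes $h(1/8)=1/4$, hence $k=2$. Where you differ is in what comes after. The paper stops there, opening with ``this Lemma means that $k$ from Lemma~\ref{lema:42} equals $2$'' and leaving the rest to Lemma~\ref{lema:42}; but that lemma, as literally stated, gives $h(x)=kx$ only on $[0,\varepsilon/k]=[0,1/8]$ and even asserts that $\varepsilon/k=1/8$ is a break point of $h$, which would contradict the present statement (linearity on all of $[0,1/4]$ forces the first break to sit at $2\varepsilon/k=1/4$, not at $\varepsilon/k$). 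You noticed this, invoked only the linearity clause of Lemma~\ref{lema:42}, and then closed the gap with two explicit passes through the functional equation~(\ref{eq:eq-top-eq-ffv}): $h(x)=g\bigl(h(x/2)\bigr)=g_1(x)=2x$ on $[1/8,1/4]$, and $h(x)=g_1(x)=x/2+3/8$ on $(1/4,1/2]$, exhibiting the slope change $2\to 1/2$ at $x=1/4$ and hence the break point $(1/4,1/2)$. So your route yields a complete, self-contained proof of both assertions, whereas the paper's argument establishes only $k=2$ and leans on a clause of Lemma~\ref{lema:42} whose stated break location is off by a factor of $2$; your explicit extension is the detail that actually makes the lemma follow.
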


\begin{proof}
In fact, this Lemma means that $k$ from Lemma~\ref{lema:42} equals
to 2.

With the use of Lemma~\ref{lema:44}, consider the trajectory of
$1$ under $g_l^{-1}(1)$ and notice, that since $\frac{1}{2^n} =
f_l^{-n}(1)$, whence $h\left(\frac{1}{2^n}\right) = g_l^{-n}(1)$.

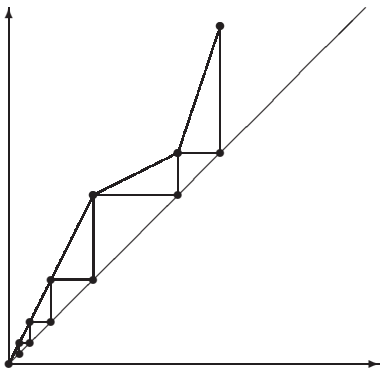
\begin{figure}[htbp]
\begin{center}
\begin{center}
\begin{picture}(140,135)
\put(0,0){\vector(1,0){140}} \put(0,0){\vector(0,1){135}}
\put(0,0){\line(1,1){135}}

\put(80,128){\circle*{3}}
\Vidr{80}{128}{80}{80}\put(80,80){\circle*{3}}
\VidrTo{64}{80}\put(64,80){\circle*{3}}
\VidrTo{64}{64}\put(64,64){\circle*{3}}
\VidrTo{32}{64}\put(32,64){\circle*{3}}
\VidrTo{32}{32}\put(32,32){\circle*{3}}
\VidrTo{16}{32}\put(16,32){\circle*{3}}
\VidrTo{16}{16}\put(16,16){\circle*{3}}
\VidrTo{8}{16}\put(8,16){\circle*{3}}
\VidrTo{8}{8}\put(8,8){\circle*{3}}
\VidrTo{4}{8}\put(4,8){\circle*{3}}
\VidrTo{4}{4}\put(4,4){\circle*{3}}

\put(0,0){\circle*{3}} \Vidr{0}{0}{32}{64}\put(32,64){\circle*{3}}
\VidrTo{64}{80}\put(64,80){\circle*{3}}
\VidrTo{80}{128}\put(80,128){\circle*{3}}

\end{picture}
\end{center}
\end{center} \caption{Graph of $g_l$}\label{fig:26}
\end{figure}

The trajectory of $1$ under $g_l^{-1}$ is given at
Figure~\ref{fig:26}. It is clear, that $g^{-3}(1)$ is the
$x$-coordinate if the first breaking points of $g_l$, whence
$h(1/8)=1/4$ and $k=2$.
\end{proof}

Let $A,\, B\subseteq [0,\, 1]$ are closed intervals. Denote by $
\xymatrix{ A \ar@{=>}^{\varphi}[r] & B }$ the fact the maps
$\varphi$ is linear and increasing on $A$ such that $\varphi(A) =
B$. Also denote by $ \xymatrix{ A \ar@{-->}^{\varphi}[r] & B }$
that $\varphi(A) = B$, $\varphi$ is linear on $A$, but decreasing.

In this notation Lemma~\ref{arch-lema-3} means that following
diagram
$$
\xymatrix{ [0,\, 1/8] \ar@{=>}^{f}[r] \ar@{=>}_{h}[d]
 & [0,\, 1/4] \ar@{=>}^{h}[d] \\
[0,\, 1/4] \ar@{=>}^{g}[r]
& [0,\, 1/2] \\
 }$$
is commutative. The following technical lemma is obvious.

\begin{lemma}\label{arch-lema-4}
Assume that for intervals $A$, $B$, $C$ and $D$ and maps
$\varphi_1$, $\varphi_2$, $\varphi_3$ and $\varphi_4$ the
following diagram
$$
\xymatrix{ A \ar@{=>}^{\varphi_1}[r] \ar@{=>}_{\varphi_2}[d]
 & B \ar^{\varphi_4}[d] \\
C \ar@{=>}^{\varphi_3}[r]
& D \\
 }$$
is commutative. Then $ \xymatrix{ B \ar@{=>}^{\varphi_4}[r] & D
}$, i.e. $\varphi_4$ is linear on $B$.
\end{lemma}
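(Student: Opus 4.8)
The plan is to exploit the fact that $\varphi_1$, being linear and strictly increasing from $A$ onto $B$ (this is what the double arrow $\xymatrix{A \ar@{=>}^{\varphi_1}[r] & B}$ records), is an affine bijection of closed intervals. First I would note the elementary fact that such a map is injective, hence invertible, and that its inverse $\varphi_1^{-1}\colon B\to A$ is again linear, strictly increasing, and onto, with $\varphi_1^{-1}(B)=A$. The same remark of course applies to $\varphi_2$ and $\varphi_3$, which are likewise affine increasing surjections.

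Next I would rewrite the commutativity hypothesis $\varphi_4\circ\varphi_1=\varphi_3\circ\varphi_2$ (read off the square) in the form $\varphi_4=\varphi_3\circ\varphi_2\circ\varphi_1^{-1}$ as maps on $B$. This step is the only one that deserves a line of care: it is legitimate precisely because $\varphi_1$ is surjective onto $B$, so every $b\in B$ equals $\varphi_1(a)$ for some $a\in A$, and then commutativity forces $\varphi_4(b)=\varphi_4(\varphi_1(a))=\varphi_3(\varphi_2(a))=\varphi_3(\varphi_2(\varphi_1^{-1}(b)))$, the value being independent of the choice of preimage $a$.

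Then I would observe that $\varphi_1^{-1}\colon B\to A$, $\varphi_2\colon A\to C$ and $\varphi_3\colon C\to D$ are each linear (in the affine sense used throughout the paper, i.e. of the form $t\mapsto at+b$) and strictly increasing, and that a composition of finitely many linear strictly increasing maps is again linear and strictly increasing. Moreover the composition $\varphi_3\circ\varphi_2\circ\varphi_1^{-1}$ carries $B$ onto $\varphi_3(\varphi_2(A))=\varphi_3(C)=D$. Hence on $B$ the map $\varphi_4$ coincides with a linear increasing bijection of $B$ onto $D$, which is exactly the assertion $\xymatrix{ B \ar@{=>}^{\varphi_4}[r] & D }$, finishing the proof.

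There is essentially no serious obstacle here; the statement is a bookkeeping lemma, and the "hard part" is merely being explicit that the double-arrow notation encodes linearity, monotonicity, and surjectivity simultaneously, so that inverting $\varphi_1$ and composing the three affine increasing bijections is justified and transports all three properties to $\varphi_4$.
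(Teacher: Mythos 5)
Your proof is correct: writing $\varphi_4=\varphi_3\circ\varphi_2\circ\varphi_1^{-1}$ on $B$ (legitimate because the double arrow makes $\varphi_1$ a linear increasing bijection of $A$ onto $B$) and noting that a composition of linear increasing surjections is again one is exactly the right justification. The paper itself offers no argument — it simply declares the lemma obvious — so your write-up supplies the standard bookkeeping the author omitted.
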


\begin{lemma}\label{arch-lema-5}
All breaking points of $h$ are $(1/4,\, 1/2)$ and $(1/2,\, 5/8)$.
\end{lemma}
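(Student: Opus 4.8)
The plan is to determine $h$ piece by piece over $[0,1/4]$, $[1/4,1/2]$ and $[1/2,1]$, showing that it is linear on each of these three intervals; then its only candidate breaking points are $x=1/4$ and $x=1/2$, and a one-line slope computation confirms that both genuinely are breaking points, located at $(1/4,1/2)$ and $(1/2,5/8)$. Concretely I would run the same diagram-chasing argument used in the proof of Theorem~\ref{theor:21}, but starting from the explicit linear piece of $h$ on $[0,1/4]$ supplied by Lemma~\ref{arch-lema-3}.

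First I would collect the needed facts. By Lemma~\ref{arch-lema-3}, $h$ is linear with slope $2$ on $[0,1/4]$, so it maps $[1/8,1/4]$ linearly and increasingly onto $[1/4,1/2]$. By Lemma~\ref{lema:45}, $h$ is increasing with $h(0)=0$, $h(1/2)=v=5/8$, $h(1)=1$, and the conjugacy equation~(\ref{eq:eq-top-eq-ffv}) takes the form $h(f(x))=g_l(h(x))$ whenever $h(x)\le v$, hence for every $x\le 1/2$ (since $h$ is increasing and $h(1/2)=v$). From the list of breaking points of the increasing branch $g_l$ ($=g_1$ in the construction above), the map $g_l$ is a single linear increasing piece on each of $[0,1/4]$, $[1/4,1/2]$, $[1/2,5/8]$, with $g_l([1/4,1/2])=[1/2,5/8]$ and $g_l([1/2,5/8])=[5/8,1]$.

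Then come the two induction steps. For the first, take $A=[1/8,1/4]$: $f$ maps $A$ linearly and increasingly onto $[1/4,1/2]$, $h$ maps $A$ linearly and increasingly onto $[1/4,1/2]$, $g_l$ maps $[1/4,1/2]$ linearly and increasingly onto $[1/2,5/8]$, and $h$ maps $[1/4,1/2]$ onto $[1/2,5/8]$; the square formed by these four maps commutes by~(\ref{eq:eq-top-eq-ffv}). By Lemma~\ref{arch-lema-4}, $h$ is linear and increasing on $[1/4,1/2]$. For the second step take $A=[1/4,1/2]$: now $f$ maps $A$ linearly and increasingly onto $[1/2,1]$, $h$ maps $A$ linearly and increasingly onto $[1/2,5/8]$ (by the first step), $g_l$ maps $[1/2,5/8]$ linearly and increasingly onto $[5/8,1]$, and $h$ maps $[1/2,1]$ onto $[5/8,1]$; since $h(x)\le v$ for $x\in[1/4,1/2]$, the relevant square again commutes by~(\ref{eq:eq-top-eq-ffv}), and Lemma~\ref{arch-lema-4} gives that $h$ is linear and increasing on $[1/2,1]$.

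Finally I would conclude: $h$ is linear on each of $[0,1/4]$, $[1/4,1/2]$, $[1/2,1]$, so its breaking points can occur only at $x=1/4$ and $x=1/2$. Its slopes on the three intervals are $2$, $(5/8-1/2)/(1/4)=1/2$, and $(1-5/8)/(1/2)=3/4$; these change between consecutive intervals, so the slope really does break at $x=1/4$ and at $x=1/2$, and the breaking points of $h$ are exactly $(1/4,h(1/4))=(1/4,1/2)$ and $(1/2,h(1/2))=(1/2,5/8)$. There is no serious obstacle here: the argument is pure bookkeeping, and the only points needing care are checking that the restrictions of $g_l$ that occur are genuine single linear pieces (immediate from the listed breaking points) and that $h(x)\le v$ throughout $[0,1/2]$ so that $g$ coincides with $g_l$ on the ranges that appear, which is precisely Lemma~\ref{lema:45} together with monotonicity of $h$.
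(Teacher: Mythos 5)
Your proposal is correct and follows essentially the same route as the paper: starting from the linear piece of $h$ on $[0,\,1/4]$ given by Lemma~\ref{arch-lema-3}, you apply Lemma~\ref{arch-lema-4} to the same two commutative squares (with $f$, $h$ and the increasing branch $g_1$) to get linearity of $h$ on $[1/4,\,1/2]$ and then on $[1/2,\,1]$. Your explicit slope check ($2$, $1/2$, $3/4$) confirming that $1/4$ and $1/2$ are genuine breaking points is a small addition the paper leaves implicit, but otherwise the arguments coincide.
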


\begin{proof}
By Lemma~\ref{arch-lema-3} the following diagram
\begin{equation}\label{arch-eq-03}
\xymatrix{ [1/8,\, 1/4] \ar@{=>}^{f}[r] \ar@{=>}_{h}[d]
 & [1/4,\, 1/2] \ar^{h}[d] \\
[1/4,\, 1/2] \ar@{=>}^{g_1}[r]
& [1/2,\, 5/8] \\
 }\end{equation} is commutative. By Lemma~\ref{arch-lema-4} the
 diagram~(\ref{arch-eq-03}) can be continued as
 $$\xymatrix{ [1/8,\, 1/4] \ar@{=>}^{f}[r] \ar@{=>}_{h}[d]
 & [1/4,\, 1/2] \ar@{=>}^{h}[d] \ar@{=>}^{f}[r] & [1/2,\, 1]
 \ar^{h}[d]\\
[1/4,\, 1/2] \ar@{=>}^{g_1}[r]
& [1/2,\, 5/8] \ar@{=>}^{g_1}[r] & [5/8,\, 1].\\
 }$$ Repeating application of Lemma~\ref{arch-lema-4} finishes the
 proof.
\end{proof}

\begin{figure}[htbp]
\begin{center}
\begin{center}
\begin{picture}(140,135)
\put(0,0){\vector(1,0){140}} \put(0,0){\vector(0,1){135}}
\put(0,0){\line(1,1){135}}

\put(0,0){\circle*{3}} \Vidr{0}{0}{32}{64}
\put(32,64){\circle*{3}} \VidrTo{64}{80} \put(64,80){\circle*{3}}
\VidrTo{80}{128} \put(80,128){\circle*{3}} \VidrTo{104}{80}
\put(104,80){\circle*{3}} \VidrTo{116}{64}
\put(116,64){\circle*{3}} \VidrTo{128}{0} \put(128,0){\circle*{3}}

\end{picture}
\end{center}
\end{center} \caption{Graph of $g$}\label{arch-fig:2}
\end{figure}

\begin{lemma}
All the breaking points of $g_2$ are $(13/16,\, 5/8)$ and
$(29/32,\, 1/2)$.
\end{lemma}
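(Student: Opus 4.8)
The plan is to exploit the functional equation~(\ref{eq:eq-top-eq-ffv}) restricted to $[1/2,\, 1]$ together with the explicit form of $h$ that is forced by the preceding lemmas. On $[1/2,\, 1]$ the map $f$ is $f(x)=2-2x$ and $h([1/2,\, 1])=[v,\, 1]=[5/8,\, 1]$, so~(\ref{eq:eq-top-eq-ffv}) says $g_2=h\circ f\circ h^{-1}$ on $[5/8,\, 1]$; everything then reduces to composing affine pieces.

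First I would pin down $h$. By Lemma~\ref{arch-lema-3} we have $h(x)=2x$ on $[0,\, 1/4]$, and by Lemma~\ref{arch-lema-5} the only breaking points of $h$ are $(1/4,\, 1/2)$ and $(1/2,\, 5/8)$; since $h$ is an increasing homeomorphism of $[0,\, 1]$ with $h(0)=0$, $h(1)=1$ and $h(1/2)=v=5/8$ (Lemma~\ref{lema:45}), this determines $h$ uniquely as three affine pieces: slope $2$ on $[0,\, 1/4]$, slope $\tfrac12$ on $[1/4,\, 1/2]$, and slope $\tfrac34$ on $[1/2,\, 1]$. In particular $h^{-1}$ is a single affine map on $[5/8,\, 1]$, namely $h^{-1}(y)=\tfrac{4y-1}{3}$, of slope $\tfrac43$.

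Next I would locate the breaks of $g_2$. Since $h^{-1}$ is affine on $[5/8,\, 1]$ and $f$ is affine on $[1/2,\, 1]=h^{-1}([5/8,\, 1])$, the composite $f\circ h^{-1}$ is a single decreasing affine map of $[5/8,\, 1]$ onto $[0,\, 1]$, explicitly $f(h^{-1}(y))=\tfrac{8(1-y)}{3}$. Hence the only breaks of $g_2=h\circ(f\circ h^{-1})$ are the points of $[5/8,\, 1]$ whose image under $f\circ h^{-1}$ is a break of $h$, i.e. equals $1/4$ or $1/2$. Solving $\tfrac{8(1-y)}{3}=\tfrac12$ gives $y=\tfrac{13}{16}$, which is $h(3/4)$, and solving $\tfrac{8(1-y)}{3}=\tfrac14$ gives $y=\tfrac{29}{32}=h(7/8)$. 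On each of the three resulting subintervals $[5/8,\, 13/16]$, $[13/16,\, 29/32]$, $[29/32,\, 1]$ the map $g_2$ is a composition of affine maps, hence affine, with slopes $-2$, $-\tfrac43$, $-\tfrac{16}{3}$ respectively (each is $(-2)\cdot\tfrac43$ times the slope of the relevant piece of $h$); since these three slopes are distinct, $\tfrac{13}{16}$ and $\tfrac{29}{32}$ are genuine breaks and there are no others.

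Finally I would read off the $y$-coordinates at the breaks directly from~(\ref{eq:eq-top-eq-ffv}) rather than from the affine formulas: $g_2\!\left(\tfrac{13}{16}\right)=g_2(h(3/4))=h(f(3/4))=h(1/2)=5/8$ and $g_2\!\left(\tfrac{29}{32}\right)=g_2(h(7/8))=h(f(7/8))=h(1/4)=1/2$, which gives the breaking points $(13/16,\, 5/8)$ and $(29/32,\, 1/2)$. The only point that needs a little care — and the step I would present most carefully — is the claim that $f\circ h^{-1}$ contributes no further breaks on $[5/8,\, 1]$; this is exactly where one uses that $h$ is \emph{linear} (not merely continuous and increasing) on all of $[1/2,\, 1]$, so that $h^{-1}$ has no interior break on $[5/8,\, 1]$, and that $f$ is linear on the open interval $(1/2,\, 1)$. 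This is the same bookkeeping as in the proof of Lemma~\ref{arch-lema-5} via Lemma~\ref{arch-lema-4}, and it can be organized with the arrow-diagram notation introduced just before that lemma.
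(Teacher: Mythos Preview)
Your proof is correct and follows essentially the same approach as the paper: both arguments use that $g_2=h\circ f\circ h^{-1}$ on $[5/8,\,1]$ and split into three pieces according to the three linear pieces of $h$, with the paper organizing this via the arrow-diagram notation (splitting diagram~(\ref{arch-eq-04}) into three commutative sub-diagrams) while you carry out the explicit affine computations directly. Your additional verification that the three slopes $-2,\,-4/3,\,-16/3$ are pairwise distinct, so that $13/16$ and $29/32$ are genuine breaks rather than removable ones, is a nice point of care that the paper leaves implicit.
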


\begin{proof}
The maps $g_2$ can be found from the commutative diagram
\begin{equation}\label{arch-eq-04} \xymatrix{ [1/2,\, 1] \ar@{-->}^{f}[r] \ar@{=>}_{h}[d]
 & [0,\, 1] \ar^{h}[d] \\
[5/8,\, 1] \ar@{-->}^{g_2}[r]
& [0,\, 1] \\
 }
\end{equation} By Lemma~\ref{arch-lema-5}, diagram~(\ref{arch-eq-04}) may be
rewritten as the following three diagrams.
$$
\xymatrix{ [1/2,\, 3/4] \ar@{-->}^{f}[r] \ar@{=>}_{h}[d]
 & [ 1/2,\, 1] \ar@{=>}^{h}[d] & [3/4,\, 7/8] \ar@{-->}^{f}[r] \ar@{=>}_{h}[d]
 & [1/4,\, 1/2] \ar@{=>}^{h}[d] & [7/8,\, 1]  \ar@{-->}^{f}[r] \ar@{=>}_{h}[d]
 & [0,\, 1/4] \ar@{=>}^{h}[d] \\
[ 5/8,\, 13/16] \ar^{g_2}[r] & [5/8,\, 1] & [13/16,\, 29/32]
\ar^{g_2}[r] & [1/2,\, 5/8] & [29/32,\, 1]  \ar^{g_2}[r] & [0,\,
1/2] \\
 }$$
This finishes the proof.
\end{proof}

The graph of the maps $g$, constructed above, is given at
Figure~\ref{arch-fig:2}.

\subsection{Types of piecewise linearity}\label{sect:type-linear}

The example in Section~\ref{arch-sect-Non-convex} contained the
maps $g:\, [0,\, 1]\rightarrow [0,\, 1]$ of the form~(\ref{eq:g}),
whose increasing and decreasing part contained 3 parts of
linearity each.

\begin{definition}\label{def:03}
Let for the piecewise linear $g$ of the form~(\ref{eq:g}) be
topologically conjugated to $f$ of the form~(\ref{eq:89}). Let the
number of pieces of linearity of $g_1$ and $g_2$ be $p$ and $q$
correspondingly. Call the pair $(p,\, q)$ the \textbf{type of
piecewise linearity} of $g$.
\end{definition}

\begin{definition}\label{def:04}
If for a pair $(p,\, q)$ there exists a maps $g$ of the
form~(\ref{eq:g}) with type of piecewise linearity $(p,\, q)$,
then call this type \textbf{admissible}. If follows from
Proposition~\ref{theor:ksklin-tpleqv} and~\ref{theor:23} that pair
$(1,\, q)$ is admissible only if $q=1$ and the pair $(p,\, 1)$ is
admissible only if $p=1$. In both cases the equality $g=f$ holds.
\end{definition}

\begin{lemma}\label{arch-lema-6}
For any $q\geq 2$ the type $(2,\, q)$ is admissible.
\end{lemma}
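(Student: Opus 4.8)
The plan is to exhibit, for every $q\ge 2$, an explicit piecewise linear unimodal map $g=g_q$ of the form~(\ref{eq:g}) whose increasing part $g_l$ has exactly two pieces of linearity and whose decreasing part $g_r$ has exactly $q$ pieces, and then to prove $g_q$ topologically conjugate to $f$ of the form~(\ref{eq:89}) by means of Theorem~\ref{Theor:9}, in the convexity-free form noted at the beginning of Section~\ref{arch-sect-Non-convex}, rather than by Theorem~\ref{theor:21}. Theorem~\ref{theor:21} is unavailable here once $q\ge 4$: if $h$ were a piecewise linear homeomorphism with $g=h^{-1}(f(h))$, then $g_l(y)=h(2h^{-1}(y))$ would have a breakpoint at $h(\beta/2)$ for every breakpoint $\beta$ of $h$, so $g_l$ being two-piece would force $h$ to have a single breakpoint, whence $g_r(y)=h(2-2h^{-1}(y))$ would have at most two breakpoints and $g_r$ at most three pieces. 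Theorem~\ref{Theor:9}, by contrast, only asks for the combinatorial equivalence of the backward trees $M_f$ and $M_g$ of the point $1$ together with $\overline{M}_g=[0,1]$, and puts no smoothness requirement on the conjugating homeomorphism.

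For the construction, let $g_l$ have $g_l(0)=0$, slope $2$ on $[0,0.1]$ and slope $4$ on $[0.1,0.3]$, so that $v=0.3$ and $g_l(0.3)=1$; this $g_l$ is two-piece and has $g_l'(0)=2$, in agreement with Proposition~\ref{theor:ksklin-tpleqv}. On $[v,1]$ let $g_r$ be a strictly decreasing piecewise linear map from $(v,1)$ to $(1,0)$ having $q$ pieces, each of slope strictly below $-1$. Such a $g_r$ exists precisely because the total vertical drop, $1$, exceeds the total horizontal span $1-v<1$: divide $[v,1]$ into $q$ equal subintervals and prescribe vertical drops $d_1,\dots,d_q$ that are pairwise distinct, each greater than $(1-v)/q$, and sum to $1$ (there is slack $1-(1-v)=v>0$ to distribute and to separate the $d_i$); then each piece has slope $-qd_i/(1-v)<-1$, and since the $d_i$ are distinct and the horizontal lengths equal, consecutive slopes differ, so $g_r$ genuinely has $q$ pieces. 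The resulting $g=g_q$ is piecewise linear unimodal of the form~(\ref{eq:g}), of type $(2,q)$, and there is $\rho>1$ with every piece of $g$ of slope of absolute value at least $\rho$.

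It remains to verify $g_q\sim f$ via Theorem~\ref{Theor:9}. First, $\overline{M}_g=[0,1]$: since $g_l$ maps $[0,v]$ onto $[0,1]$ and $g_r$ maps $[v,1]$ onto $[0,1]$, the map $g$ is a full Markov map with respect to the partition $\{[0,v],[v,1]\}$, so an induction on $n$ shows $g^n$ has exactly $2^n$ maximal intervals of monotonicity, each carried onto $[0,1]$ by $g^n$; on such a branch $|(g^n)'|$ is a product of $n$ slopes of $g$, hence at least $\rho^n$, so the branch has length at most $\rho^{-n}$. Each branch therefore contains exactly one preimage of $1$ under $g^n$, so $g^{-n}(1)$ is a $\rho^{-n}$-net of $[0,1]$ for every $n$ and $\bigcup_n g^{-n}(1)$ is dense --- the analogue for $g_q$ of Lemma~\ref{lema:Bszczilna}. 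Second, the combinatorial equivalence of $M_f$ and $M_g$ is automatic: $f$ and $g$ are both unimodal with the turning point mapped to $1$, with $1\mapsto 0\mapsto 0$, and each $y\in(0,1)$ has exactly two $g$-preimages, the one in the increasing branch smaller and the one in the decreasing branch larger, exactly as for $f$; hence the two backward trees of $1$ are order-isomorphic under the canonical tree isomorphism. Theorem~\ref{Theor:9} now gives $g_q\sim f$, proving the lemma. The points needing the most care are the Markov bookkeeping in the density step --- checking that every maximal monotone branch of $g^n$ is carried onto all of $[0,1]$ and that the prescribed $g_r$ really produces $q$ and not fewer pieces --- and the verification that the tree order-isomorphism is well defined; all of this is routine but must be spelled out.
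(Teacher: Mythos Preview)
Your argument contains a crucial misreading of what ``admissible'' means in this section. Definition~\ref{def:04} immediately invokes Propositions~\ref{theor:ksklin-tpleqv} and~\ref{theor:23} --- both of which concern \emph{piecewise linear} conjugacies --- to conclude that type $(1,1)$ forces $g=f$. That conclusion is plainly false for general topological conjugacy, since every $f_v$ is conjugate to $f$ by Theorem~\ref{theor:homeom-jed}. So throughout Section~\ref{sect:type-linear} the conjugating homeomorphism is understood to be piecewise linear. Your construction via Theorem~\ref{Theor:9} yields only a general conjugacy and therefore does not prove the lemma in the intended sense.

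Your reason for abandoning Theorem~\ref{theor:21} is also wrong. You claim that if $g_l(y)=h(2h^{-1}(y))$ has only one break then $h$ can have only one break, because each breakpoint $\beta$ of $h$ forces a break of $g_l$ at $h(\beta/2)$. But the slope of $g_l$ near $y_0=h(\beta/2)$ is $2\,h'(2h^{-1}(y))/h'(h^{-1}(y))$, and if $\beta/2$ is \emph{also} a breakpoint of $h$ with the same slope ratio as at $\beta$, the jumps cancel and $y_0$ is not a genuine break. The paper exploits exactly this: it takes $h$ with breakpoints at $1/2,1/4,\ldots,1/2^{n-2}$ and slopes $2^j/n$ on $[1/2^j,1/2^{j-1}]$, so the slope ratio at every breakpoint equals $1/2$, all interior cancellations occur, and $g_l$ has the single break at $1/n$ while $h$ has $n-2$ breaks. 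Computing $g_r=h\circ f_r\circ h^{-1}$ on $[v,1]$ (where $h$ is linear) then gives $g_r$ with exactly $n-1$ pieces, so the type is $(2,n-1)$ with $n$ arbitrary.

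In short, the route through Theorem~\ref{theor:21} is both available and required; your breakpoint count overlooked the possibility of systematic cancellation, and the detour through Theorem~\ref{Theor:9} proves a weaker statement than the one at hand.
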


\begin{proof}
For an arbitrary $n\in \mathbb{N},\, n>2$  consider the maps
$g_1$, whose graph pass through points $(0,\, 0)$,
$(\frac{1}{n},\, \frac{2}{n})$ and $(\frac{n-1}{n},\, 1)$. Let
$h:\, [0,\, 1]\rightarrow [0,\, 1]$ be a piecewise linear
homeomorphism, which defines he conjugation of $f$ and $g$ with
$g_1$. The existence and uniqueness of this $h$ follows from
Theorem~\ref{theor:21}. We will find the correspond $g_2$ and
whence calculate the type of linearity of $g$.

Notice, that $$ g_1^{n-1}\left(\frac{1}{n}\right) = 1.
$$

Since $h$ increase and satisfy~(\ref{eq:eq-top-eq-ffv}), we have
\begin{equation}\label{arch-eq-05}
h\left(\frac{1}{2^t}\right) = 1 - \frac{t}{n},\ 1\leq t\leq n-1.
\end{equation}

For instance, for $t=n-1$ we have $$
h\left(\frac{1}{2^{n-1}}\right) = \frac{1}{n}.
$$

The reasonings, which are similar to those in the prove of
Lemmas~\ref{lema:45} and~\ref{arch-lema-3} give that $$
h(x)=\frac{2^{n-1}}{n}\, x,\ x\in \left[0,\,
\frac{1}{2^{n-2}}\right].
$$

It is evident by induction by $k$, that the following diagram
$$ \xymatrix{\left[\frac{2^k}{2^{n-1}},\,
 \frac{2^{k+1}}{2^{n-1}}\right] \ar@{=>}^{f}[rr] \ar@{=>}_{
h}[d]
 && \left[\frac{2^{k+1}}{2^{n-1}},\, \frac{2^{k+2}}{2^{n-1}}\right] \ar@{=>}^{h}[d] \\
\left[\frac{k+1}{n},\,
 \frac{k+2}{n}\right] \ar@{=>}^{g_1}[rr]
&& \left[\frac{k+2}{n},\,
 \frac{k+3}{n}\right] \\
 }
$$ is commutative for all $k,\, 0\leq k\leq n-3$. This prove
that~(\ref{arch-eq-05}) is the complete set of breaking points of
$h$, precisely, $h$ is linear on $[1/2,\, 1]$.

Define $g_2$ from the commutative diagram
\begin{equation}\label{arch-eq-06}
\xymatrix{\left[\frac{1}{2},\,
 1\right] \ar@{-->}^{f}[rr] \ar@{=>}_{
h}[d]
 && \left[0,\, 1\right] \ar^{h}[d] \\
\left[\frac{n-1}{n},\,
 1\right] \ar@{-->}^{g_2}[rr]  && \left[0,\,
 1\right] \\
 }
\end{equation}

Since $h$ has $n-2$ break points, then it has $n-1$ pieces of
linearity, call $\mathcal{P}_i,\, 1\leq i\leq n-1$. Whence,
diagram~(\ref{arch-eq-06}) break into $n$ diagrams of the form
\begin{equation}\label{arch-eq-07} \xymatrix{\mathcal{Q}_i \ar@{-->}^{f}[rr]
\ar@{=>}_{ h}[d] && \mathcal{P}_i \ar@{=>}^{h}[d] \\
h(\mathcal{Q}_i) \ar@{-->}^{g_2}[rr]  && h(\mathcal{P}_i), \\
 }
\end{equation} where $\mathcal{Q}_i$ are pre-images of $\mathcal{P}_i$ under
$x\mapsto 2-2x$ and evidently, $\mathcal{Q}_i\subset [1/2,\, 1]$,
and $\bigcup\limits_{i=1}^{n-1}\mathcal{Q}_i = [1/2,\, 1]$.

These reasonings show, that the type of piecewise linearity of $g$
is $(2,\, n)$.

The notice, that $n>1$ is arbitrary and taking $q=n-1$ finishes
the proof. \end{proof}

\begin{lemma}\label{arch-lema-8}
For any $p,\, q$ such that $2\leq p\leq q$ the type $(p,\, q)$ is
admissible.
\end{lemma}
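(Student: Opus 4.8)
The plan is to build, for each pair $(p,q)$ with $2\le p\le q$, an explicit increasing piecewise linear $g_1:[0,v]\to[0,1]$ with $g_1(0)=0$, $g_1(v)=1$ and $g_1'(0)=2$ that has exactly $p$ pieces of linearity, and then to invoke Theorem~\ref{theor:21} to get the unique piecewise linear conjugacy $h$ of $f$ and the resulting continuation $\widetilde g$ of $g_1$. The decreasing branch $g_2=\widetilde g|_{[v,1]}$ is thereby determined, and the whole problem reduces to counting the pieces of linearity of $g_2$ and showing the free parameters in $g_1$ can be tuned so that this number is exactly $q$. Conjugacy of $f$ and $\widetilde g$ is then automatic from Theorem~\ref{theor:21}, so only the combinatorics has to be controlled.

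First I would generalise the family used in the proof of Lemma~\ref{arch-lema-6}. Fix an integer $m$ (to be chosen in terms of $q$) and let $g_1$ have slope $2$ on $[0,\tfrac1{2^{m}}]$; then, exactly as in Lemmas~\ref{lema:45} and~\ref{arch-lema-3} and in the proof of Lemma~\ref{arch-lema-6}, the conjugacy $h$ is linear on an initial dyadic interval and the values $h\bigl(\tfrac1{2^{j}}\bigr)$ are forced level by level by the functional equation~(\ref{eq:eq-top-eq-ffv}) as one propagates outward through the dyadic intervals $\bigl[\tfrac1{2^{j+1}},\tfrac1{2^{j}}\bigr]$. On the remaining interval $[\tfrac1{2^{m}},v]$ I would place the other $p-1$ pieces of $g_1$, choosing their slopes and break abscissae generically — no two consecutive slopes equal, and no accidental collinearity with the points produced by the recursion — so that $g_1$ genuinely has $p$ pieces and still satisfies $g_1'(0)=2$, which is exactly what Theorem~\ref{theor:21} requires.

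The core of the argument is the bookkeeping of break points. Following the constructive proof of Theorem~\ref{theor:21}, the breaks of $h$ on $[0,1/2]$ come from two sources: the dyadic turns forced by the slope-$2$ initial segment (about $m$ of them, as in Lemma~\ref{arch-lema-6}), and the $p-1$ breaks of $g_1$, which are carried into $h$ by the relation $h(2x)=g_1(h(x))$ for $x\le 1/2$. Since the map $x\mapsto f_r(h^{-1}(x))$ restricted to $[v,1]$ is a single increasing linear homeomorphism onto $[0,1]$, the number of pieces of $g_2=h\circ f_r\circ h^{-1}$ equals the total number of pieces of $h$ on $[0,1]$. I would then show that this number depends monotonically on $m$: increasing $m$ by one inserts exactly one further dyadic level, hence exactly one more break of $h$ and of $g_2$, while leaving the contribution of the $p-1$ breaks of $g_1$ unchanged. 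Since the recursion forces $h$ to copy the shape of $g_1$, its restriction to $[1/2,1]$ already carries about $p-1$ pieces, so the smallest admissible $m$ gives at least $p$ pieces of $g_2$; as $m$ grows the count increases by one each time, hence runs through every integer $\ge p$. Choosing $m$ with this count equal to $q$ finishes the construction, and the restriction $p\le q$ is exactly what makes such an $m$ available.

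The step I expect to be the main obstacle is precisely this counting: one has to verify that the $p-1$ breaks of $g_1$ neither cancel (two breaks of $h$ merging because adjacent slopes become equal after composition) nor spawn extra breaks when pushed through the dyadic recursion, so that $g_1$ has exactly $p$ pieces and $g_2$ exactly $q$ — not merely "roughly". Making the generic choices of slopes and abscissae in the second step quantitative is therefore the real work; once it is done, the monotone dependence on $m$ and the inequality $p\le q$ follow, and Lemma~\ref{arch-lema-8} is reduced to Theorem~\ref{theor:21}, which supplies both the existence and the piecewise linearity of the conjugacy.
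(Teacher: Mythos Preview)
Your plan is in the right spirit but inverts the order the paper uses, and that inversion is exactly what creates the obstacle you flag. You want to fix $g_1$ (with $p$ pieces), invoke Theorem~\ref{theor:21} to produce $h$, and then count the pieces of $g_2=h\circ f_r\circ h^{-1}$. The paper instead constructs $h$ first: it takes $h$ to be piecewise linear with break points precisely at $1/2,1/4,\dots,1/2^{q-1}$, prescribing the slopes $k_0,\dots,k_{q-1}$ on the dyadic pieces $\mathcal P_t=[2^t/2^{q},2^{t+1}/2^{q}]$, and then sets $g=h\circ f\circ h^{-1}$. This guarantees conjugacy by construction, and --- crucially --- because $h$ is linear on $[1/2,1]$ by design, the decreasing branch $g_2$ has exactly as many pieces as $h$ does on $[0,1]$, namely $q$. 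The slopes of $g_1$ on the images $h(\mathcal P_t)$ are then $\widetilde k_t=2k_{t+1}/k_t$, so adjacent pieces of $g_1$ merge if and only if $k_{t+2}=k_{t+1}^2/k_t$. Imposing this relation on $q-p$ consecutive indices and avoiding it elsewhere yields exactly $p$ pieces for $g_1$, for any $2\le p\le q$.

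Your route can presumably be made to work, but the specific claim you rely on --- that $f_r\circ h^{-1}$ is a single linear map on $[v,1]$ --- is not free: it needs $h$ to be linear on $[1/2,1]$, and when you build $h$ from an arbitrary $p$-piece $g_1$ via the recursion $h(2x)=g_1(h(x))$, the extra breaks of $g_1$ can (and generally do) propagate into $[1/2,1]$. So the piece count of $g_2$ is not simply the piece count of $h$, and your monotonicity-in-$m$ argument does not obviously hold. (Minor point: $f_r$ is decreasing, so $f_r\circ h^{-1}$ is decreasing, not increasing.) The paper sidesteps all of this by choosing $h$ rather than $g_1$ as the primitive object, which turns the ``main obstacle'' you identify into a two-line computation with the slope ratios $\widetilde k_t$.
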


\begin{proof}
Analogically to it was done at the proof of
Lemma~\ref{arch-lema-6} denote $n=q+1$. Consider the following
intervals. $\mathcal{P}'_0 = \left[0,\, \frac{1}{2^{n-2}}\right]$,
$\mathcal{P}_t = \left[\frac{2^t}{2^{n-1}},\,
\frac{2^{t+1}}{2^{n-1}}\right]$ for $0\leq t\leq n-2$. Observe,
that $|\mathcal{P}_t| = \frac{2^t}{2^{n-1}}$ for $0\leq t\leq n-2$
and $f(\mathcal{P}_t) = \mathcal{P}_{t+1}$ for $0\leq t\leq n-3$.

Let $k_t,\, 1\leq t\leq n-2$ be positive numbers such that
$$ 2k_0+\sum\limits_{t=1}^{n-2}k_t2^t
=2^{n-1}$$ and $k_t\neq k_{t+1}$ for all $t,\, 0\leq t\leq n-2$.
Notice, that these conditions let us to consider an increasing
piecewise linear homeomorphism $h:\ [0,\, 1]\rightarrow [0,\, 1]$
which is linear on $\mathcal{P}'_0$ and such that $k_t$ is the
tangent of $h$ on $\mathcal{P}_t$ for all $t$. Denote the maps $g
= h^{-1}(f(h))$. Evidently $g:\, [0,\, 1]\rightarrow [0,\, 1]$ and
$g$ is of the form~(\ref{eq:g}). clearly, $g$ is dependent on
$\mathcal{K} =\{ k_t ,\, 0\leq t\leq n-2 \}$.

Since in this case the diagram~(\ref{arch-eq-07}) will be
commutative, then the type of piecewise linearity of $g$ will be
$(\widetilde{p},\, q)$ for some $\widetilde{p}$.

Denote by $\widetilde{k}_t$ the tangent of $g_1$ on
$h(\mathcal{P}_t)$. It follows from Lemma~\ref{eq:eq-top-eq-ffv}
that $\widetilde{k}_0 =2$.

For any $t,\, 0\leq t\leq n-3$ it follows from commutative diagram
$$
\xymatrix{\mathcal{P}_t \ar@{=>}^{f}[rr]
\ar@{=>}_{ h}[d] && \mathcal{P}_{t+1} \ar@{=>}^{h}[d] \\
h(\mathcal{P}_t) \ar@{=>}^{g_1}[rr]  && h(\mathcal{P}_{t+1}) \\
 }
 $$ that $$
\widetilde{k}_t = \frac{2k_{t+1}}{k_t}.
$$
Since, $k_2\neq k_1$, then $\widetilde{k}_2\neq 2$. Evidently, the
condition $\widetilde{k}_t = \widetilde{k}_{t+1}$ is equivalent to
$$ k_{t+2} = \frac{k_{t+1}^2}{k_t}. $$

Assume that $p=q$. Take an arbitrary positive reals $r_0,\ldots,\,
r_{p-1}$ such that: $$r_t\neq r_{t+1}\, \text{ for }\ 0\leq t\leq
p-2 \ \text{ and }\ r_{t+2}\neq \frac{r_{t+1}^2}{r_t}\, \text{ for
}\ 1\leq t\leq p-3$$.

Denote \begin{equation}\label{arch-eq-11}r = \frac{r_0}{2^{n-2}} +
\sum\limits_{t=1}^{n-2}\frac{r_t2^t}{2^{n-1}} \end{equation} and
consider
\begin{equation}\label{arch-eq-08} k_t = \frac{r_t}{r},\ 0\leq t\leq
q-1
\end{equation} as tangents of $h:\, [0,\, 1]\rightarrow [0,\, 1]$
on $\mathcal{P}_t$ such that $h$ is linear on $\mathcal{P}_0'$.
Then the type of piecewise linearity of $g = h^{-1} (f(h))$ would
be $(p,\, q)$.

Assume now that $1<p<q$. Then construct the maps $g$ in the same
way as in the previous case, but with $\widetilde{k}_0=2$,
$\widetilde{k}_1\neq \widetilde{k}_0$,
$\widetilde{k}_1=\widetilde{k}_2=\ldots =\widetilde{k}_{q-p+1}$,
$\widetilde{k}_t\neq \widetilde{k}_{t+1}$ for $q-p+1\leq t\leq
q-1$ Remark, that $q-1 = n-2$. For this reason take an arbitrary
$r_0,\ \ldots,\, r_{n-2}$ such that
\begin{equation}\label{arch-eq-10}r_t\neq r_{t+1}\ \text{ for }\ 0\leq t\leq
n-3,\end{equation}
\begin{equation}\label{arch-eq-09} r_{t+2} = \frac{r_{t+1}^2}{r_t},\
\text{ for }\ 1\leq t\leq q-p,
\end{equation} and $
r_{t+2} \neq \frac{r_{t+1}^2}{r_t},\ \text{ for }\ q-p+1\leq t\leq
n-4. $ Notice, that there is no contradiction
between~(\ref{arch-eq-10}) and~(\ref{arch-eq-09}), because
$r_{t+1}\neq r_t$ implies $\frac{r_{t+1}^2}{r_t}\neq r_{t+1}$,
whence $r_{t+2}\neq r_{t+1}$ for $r_{t+2}$ found
by~(\ref{arch-eq-10}).

Now define $r$ and $k_t,\, 0\leq t\leq n-2$ by~(\ref{arch-eq-11})
and~(\ref{arch-eq-08}) and consider $k_t$ as tangents of $h:\,
[0,\, 1]\rightarrow [0,\, 1]$ on $\mathcal{P}_t$ such that $h$ is
linear on $\mathcal{P}_0'$. Then the type of piecewise linearity
of $g = h^{-1} (f(h))$ would be $(p,\, q)$.
\end{proof}

\begin{lemma}\label{arch-lema-7}
For any $p,\, q$ such that $2\leq q<p$ the type $(p,\, q)$ is
admissible.
\end{lemma}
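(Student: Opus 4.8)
The plan is to prove Lemma~\ref{arch-lema-7} by an explicit construction in the spirit of Lemmas~\ref{arch-lema-6} and~\ref{arch-lema-8}: I will exhibit an increasing piecewise linear homeomorphism $h:\, [0,\, 1]\rightarrow [0,\, 1]$ with $h(0)=0$, $h(1)=1$, and set $g := h(f(h^{-1}))$, which is automatically of the form~(\ref{eq:g}) with turning point $v=h(1/2)$, is unimodal, and is topologically conjugated to $f$ by construction; everything then reduces to a break‑counting bookkeeping for $g$. The elementary observation underlying it all is that a break of $h$ at a point $c\in(0,\,1)$ is responsible for at most three breaks of $g$: one at $h(c)$ (coming from the break of $h^{-1}$) and one each at $h(c/2)$ and $h(1-c/2)$ (coming from the break of $h$ pulled back through the two $f$‑preimages of $c$). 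Since $h$ increases, $h(c/2)<v$ and $h(1-c/2)>v$ always, while $h(c)<v$ exactly when $c<1/2$; hence a break of $h$ in $(0,\,1/2)$ feeds two potential breaks into the increasing part $g_l$ and one into the decreasing part $g_r$, whereas a break of $h$ in $(1/2,\,1)$ feeds one into $g_l$ and two into $g_r$. This asymmetry is precisely what lets $g_l$ be richer than $g_r$, i.e. lets us realize types $(p,\,q)$ with $q<p$ — the mirror image of what Lemma~\ref{arch-lema-8} does. Moreover two of these prescribed break points coincide exactly when $c=2c'$ ("doubling", always inside the domain of $g_l$) or $c'=1-c/2$ ("reflection" about the fixed point $2/3$ of $f$, always inside the domain of $g_r$), and at such a coincidence one may, by a suitable choice of the two slope ratios of $h$ involved, make the combined slope jump trivial, so the coincident point is not actually a break of $g$.

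With this machinery I would split into two regimes. For $q+1\leq p\leq 2q-1$: take $q-1$ breaks of $h$ inside $(0,\,1/2)$; in general position this produces $g_l$ with $2q-1$ pieces and $g_r$ with $q$ pieces; imposing a chain of doubling relations $x_{i+1}=2x_i$ among these breaks (with $x_1$ chosen tiny so the whole chain stays in $(0,\,1/2)$) merges consecutive break points of $g_l$, each merge lowering its piece count by one, down to the target $p$, while $g_r$ is left untouched with $q$ pieces. For $p\geq 2q-1$: start from $q-1$ breaks of $h$ in $(0,\,1/2)$ in general position (so $g_l$ has $2q-1$ pieces, $g_r$ has $q$ pieces), then append $p-(2q-1)$ further breaks $d_1=1-x_1/2$, $d_{k+1}=1-d_k/2$ of $h$, all lying in $(1/2,\,1)$ and converging to $2/3$; choosing the slope ratio at each $d_k$ to be the reciprocal of the one already forced at $d_{k-1}$ (resp. at $x_1$) makes the "inner" break $h(d_k)$ of $g_r$ cancel the "outer" break $h(1-d_{k-1}/2)=h(d_k)$ carried over from the previous link, so that each new $d_k$ adds exactly one fresh break to $g_l$ (at $h(d_k/2)$) and leaves the number of breaks of $g_r$ unchanged. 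In both cases $g_l$ ends with $p$ pieces and $g_r$ with $q$ pieces, so $g$ has type $(p,\,q)$.

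It remains to produce the homeomorphism $h$ itself: its slopes are constrained only by the single normalization $h(1)=1$ and the finitely many multiplicative "fake break" relations above, while the break positions and the remaining slopes are free, so a solution with all slopes positive and all consecutive slopes distinct (the latter being what keeps each designated break of $h$ a genuine break) certainly exists; then $g=h(f(h^{-1}))$ is conjugate to $f$ and of the form~(\ref{eq:g}), so the type $(p,\,q)$ is admissible. I expect the bulk of the work — and the main obstacle — to be the combinatorial bookkeeping: showing carefully that the points listed above are exactly the breaks of $g$ (ruling out spurious coincidences among the $x_i$, $d_k$, their halves and their reflections, which a generic choice of $x_1$ avoids), verifying that the reflection chain toward $2/3$ really behaves as stated so that $p$ can be made to exceed $q$ by an arbitrary amount, and being a little careful with the endpoint "breaks" at $0$, $1$, $1/2$ and with the precise meaning of "piece" versus "break" in Definition~\ref{def:03}.
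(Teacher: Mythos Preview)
Your break-counting framework is sound, and after working through both regimes your construction does go through: in regime~1 the coincidences $h(x_{i+1}/2)=h(x_i)$ need not be tuned away — the mere coincidence already drops the number of distinct break locations of $g_l$ by one, as you say — while in regime~2 the slope-ratio reciprocity condition you wrote down is exactly what is needed to cancel the break of $g_r$ at each $h(d_k)$. The remaining genericity checks (no unintended coincidences among the $x_i$, $x_i/2$, $d_k$, $d_k/2$, $1-x_i/2$, etc.) are routine once $x_1$ is chosen outside a countable bad set.

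That said, the paper's route is considerably shorter: it simply mirrors the proof of Lemma~\ref{arch-lema-8}, swapping the roles of $f_l$ and $f_r$. Concretely, set $n=p+1$ and let $\mathcal{Q}_t = f_r^{-(n-2-t)}([0,\,1/2])$ for $0\le t\le n-2$; these intervals (together with a residual interval $\mathcal{Q}_0'$ about the fixed point $2/3$) partition $[0,\,1]$, with $\mathcal{Q}_{n-2}=[0,\,1/2]$. Take $h$ linear on each $\mathcal{Q}_t$ and on $\mathcal{Q}_0'$. Because $h$ is then linear on all of $[0,\,1/2]$, the increasing part $g_l=h\circ f_l\circ h^{-1}$ inherits exactly the $n-1=p$ pieces of $h$; the decreasing part $g_r$ has its piece count governed by ratios $k_{t+1}/k_t$ of adjacent slopes, and — exactly as in Lemma~\ref{arch-lema-8} — one arranges $p-q$ consecutive ratios to agree so that $g_r$ collapses to $q$ pieces. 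No regime split and no cancellation chain in $g_r$ are needed.

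Your approach buys flexibility (the break locations are largely free) at the cost of the two-regime split and the explicit cancellation bookkeeping; the paper's approach buys uniformity and brevity by committing from the start to the $f_r$-orbit partition, which forces $g_l$ to have the right count automatically. Note incidentally that your reflection chain $d_{k+1}=1-d_k/2$ is precisely $d_{k+1}=f_r^{-1}(d_k)$, so regime~2 already secretly uses the paper's partition; you could streamline by dropping regime~1 altogether, making $h$ linear on $[0,1/2]$, placing all breaks along the $f_r^{-1}$-orbit of $1/2$, and then tuning slopes (rather than introducing separate $x_2,\dots,x_{q-1}$) to reduce the piece count of $g_r$ to $q$.
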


Before the proof of Lemma~\ref{arch-lema-7}, make some observation
about the proof of Lemma~\ref{arch-lema-8}.

Denote by $f_l(x) =  2x$ for $x\in [0,\, 1/2]$, which is the
decreasing part of $f$ and denote by $f_r(x)=2-2x$ for $x\in
[1/2,\, 1]$, which is the decreasing part of $f$. For every $t$
denote by $f_l^{-t}$ and $f_r^{-t}$ the $t$-th iteration of the
maps $f_l^{-1}:\, [0,\, 1]\rightarrow [0,\, 1/2]$ and $f_r^{-1}:\,
[0,\, 1]\rightarrow [1/2,\, 1]$ correspondingly. Naturally,
$f_l^{-1}$ and $f_r^{-1}$ are inverse maps to $f_l$ and $f_r$
correspondingly. Then it is evident, that in the notations of the
proof of Lemma~\ref{arch-lema-8} we have that $\mathcal{P}_{n-2} =
[1/2,\, 1]$ and $\mathcal{P}_{n-2-k} =
f_l^{-k}(\mathcal{P}_{n-2})$ for all $k,\, 0\leq k\leq n-2$.
Denoting $t=n-2-k$ obtain that $\mathcal{P}_{t} =
f_l^{t+2-n}(\mathcal{P}_{n-2}) =f_l^{t+2-n}([1/2,\, 1])$.

\begin{proof}[Proof of Lemma~\ref{arch-lema-7}]
Denote $n=p+1$ and $\mathcal{Q}_{t} = f_r^{t+2-n}[0,\, 1/2]$ for
$0\leq t\leq n-2$ and $\mathcal{Q}_0' = [1/2,\, 1]\backslash
\bigcup\limits_{t=0}^{n-2}\mathcal{Q}_{t}$. Notice, that the fixed
point of $f_r$ belong to $\mathcal{Q}_0'$ and the length of
$\mathcal{Q}_t$ is $\frac{2^t}{2^{n-1}}$ for all $t,\, 0\leq t\leq
n-2$.

The continuation of the proof is analogical to the proof of
Lemma~\ref{arch-lema-8}.
\end{proof}

The following theorem follows from
Proposition~\ref{theor:ksklin-tpleqv} and~\ref{theor:23} and
Lemmas~\ref{arch-lema-8} and~\ref{arch-lema-7}.

\begin{theorem}\label{theor:24}
1. For any $p\geq 2$ and $q\geq 2$ the type of linearity $(p,\,
q)$ is admissible.

2. A type of linearity $(p,\, 1)$ and $(1,\, q)$ is admissible
only if it is $(1,\, 1)$. In this case the maps $g$ coincides with
$f$.
\end{theorem}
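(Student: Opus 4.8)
The plan is to assemble Theorem~\ref{theor:24} directly from the pieces already in place. Part~1 splits into three regimes: the ``diagonal'' case $p=q$, the case $1<p<q$, and the case $1<q<p$. The first two are exactly the content of Lemma~\ref{arch-lema-8}, which handles every pair $(p,q)$ with $2\le p\le q$ by constructing, for $n=q+1$, a piecewise linear increasing homeomorphism $h$ whose tangents on the dyadic intervals $\mathcal{P}_t$ are prescribed reals $k_t=r_t/r$ chosen so that the induced map $g=h^{-1}(f(h))$ has increasing part with exactly $p$ pieces of linearity and decreasing part with exactly $q$ pieces; the symmetric case $2\le q<p$ is Lemma~\ref{arch-lema-7}, which runs the same construction on the intervals $\mathcal{Q}_t=f_r^{\,t+2-n}[0,1/2]$ sitting on the decreasing branch of $f$ rather than on the increasing branch. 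So for Part~1 I would simply invoke Lemmas~\ref{arch-lema-8} and~\ref{arch-lema-7}, noting that together they cover all $(p,q)$ with $p,q\ge 2$.

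For Part~2 I would argue by contradiction using the two structural restrictions already proved. Suppose $g$ of the form~(\ref{eq:g}) is conjugated to $f$ of the form~(\ref{eq:89}) via a piecewise linear homeomorphism $h$ and has type $(p,1)$, i.e.\ $g_2=g_r$ is a single line segment on $[v,1]$. By Proposition~\ref{lema:36} the map $g$ has a unique positive fixed point $x_0\in(v,1)$, and since $g_r$ is linear near $x_0$, Proposition~\ref{theor:23} forces $(g^2)'(x_0)=(g_r')^2=4$, hence $g_r'=-2$ (the minus because $g_r$ is decreasing). Combined with $g_r(1)=0$ this pins down $g_r(x)=2-2x$, so $g$ restricted to $[v,1]$ coincides with $f$ restricted to $[1/2,1]$ up to the rescaling; pushing this through the conjugacy, $h$ must map $[1/2,1]$ linearly onto $[v,1]$ with $v=1/2$, and then Proposition~\ref{theor:ksklin-tpleqv} (applied at the fixed point $0$, using $f(0)=0$) gives $g'(0)=f'(0)=2$, which forces the increasing part $g_l$ to be the single segment $g_l(x)=2x$ as well, i.e.\ $p=1$ and $g=f$. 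The case $(1,q)$ is handled symmetrically: $g_l$ a single segment together with $g_l(0)=0$ and Proposition~\ref{theor:ksklin-tpleqv} give $g_l(x)=2x$, hence $v=1/2$, and then Proposition~\ref{theor:23} at the positive fixed point forces $g_r$ to be a single segment too, so again $q=1$ and $g=f$.

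The main obstacle I anticipate is the bookkeeping in Part~2: one must be careful that ``$g_r$ is a single line segment'' genuinely lets us apply Proposition~\ref{theor:23} (its hypothesis is only that $g$ be piecewise linear and unimodal and conjugate to $f$ via a piecewise linear conjugacy, which is granted), and then one must correctly deduce from $(g^2)'(x_0)=4$ together with the boundary condition $g_r(1)=0$ and surjectivity $g_r(v)=1$ that in fact $v=1/2$ and $g_r(x)=2-2x$ — this uses that a line through $(1,0)$ with slope $-2$ hits height $1$ exactly at $x=1/2$. Once that is nailed down, the conclusion $g=f$ is forced by Proposition~\ref{theor:ksklin-tpleqv} on the other branch. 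Part~1 is essentially already done; the only thing to check there is that the union of the ranges of Lemmas~\ref{arch-lema-8} and~\ref{arch-lema-7} (namely $2\le p\le q$ and $2\le q<p$) is all of $\{(p,q):p\ge 2,\ q\ge 2\}$, which is immediate.
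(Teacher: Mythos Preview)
Your Part~1 is exactly the paper's proof: Lemmas~\ref{arch-lema-8} and~\ref{arch-lema-7} together cover all $(p,q)$ with $p,q\ge 2$, and the paper simply cites them.

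Your Part~2 has a gap in the $(p,1)$ case. From $g_r(x)=2-2x$ and $v=1/2$ you assert that ``$h$ must map $[1/2,1]$ linearly onto $[v,1]$'', and then that $g'(0)=2$ ``forces the increasing part $g_l$ to be the single segment $g_l(x)=2x$''. Neither step is justified as written. The functional equation on $[1/2,1]$ reads $h(2-2x)=2-2h(x)$, which relates values of $h$ on $[0,1]$ to values on $[1/2,1]$ but does not by itself force $h$ to be linear there; and $g'(0)=2$ constrains only the first piece of $g_l$, not the number of pieces. The clean fix---and the argument the paper has in mind---is to invoke the uniqueness half of Theorem~\ref{theor:22}: once $g_r$ is determined (here $g_r=f_r$ with $v=1/2$), the entire map $g$ conjugate to $f$ via a piecewise linear homeomorphism is uniquely determined, and since $f$ itself is such a continuation, $g=f$ and hence $p=1$. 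Your $(1,q)$ argument is correct as it stands (there $g_l$ being a single segment through the origin with slope $2$ genuinely pins down $g_l(x)=2x$ and $v=1/2$), and you could finish it symmetrically by citing Theorem~\ref{theor:21} for uniqueness rather than arguing about the fixed point again.
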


\newpage

\section{Semi conjugation of unimodal maps}\label{sect:SemiConj}

We will consider in this section the semi conjuation of maps $f:\,
[0,\, 1]\rightarrow [0,\, 1]$, which is given by
\begin{equation}\label{eq:92} f(x) = \left\{\begin{array}{ll}
2x,& x< 1/2;\\
2-2x,& x\geqslant 1/2,
\end{array}\right.
\end{equation} and the map
$f_v:\, [0,\, 1]\rightarrow [0,\, 1]$, which is given by
\begin{equation}\label{eq:12}
f_v(x) = \left\{\begin{array}{ll}
\frac{x}{v},& 0\leqslant x\leqslant v;\\
 \frac{1-x}{1-v},&
v<x\leqslant 1,
\end{array}\right.
\end{equation} for an
arbitrary $v\in [0,\, 1]\backslash \{ 1/2\}$.

Consider the functional equation
\begin{equation}\label{eq:11} \psi(f(x)) =
f_v(\psi(x))
\end{equation} for the
unknown continuous function $\psi:\, [0,\, 1]\rightarrow [0,\,
1]$. In this section we will study the properties of the solutions
of~(\ref{eq:11}), and will not assume the invertibility of them.

Remind that continuous surjective solution of~(\ref{eq:11}) is
called the semi conjugation of $f$ and $f_v$.

\subsection{Monotone solutions of functional equation}\label{sect:SemiConj-1}

Let $\eta:\, [0,\, 1] \rightarrow [0,\, 1]$ be continuous (but not
necessary surjective) solution of functional
equation~(\ref{eq:11}). We will prove, that if $\eta$ monotone,
then it is a conjugation between $f$ and $f_v$, defined
by~(\ref{eq:92}) and~(\ref{eq:12}).

\begin{lemma}\label{lema:h-0}$\eta(0) = 0$, or
$\eta(0) = \frac{1}{2-v}$.\end{lemma}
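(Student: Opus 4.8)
The plan is to use the single most elementary feature of the functional equation~(\ref{eq:11}): it is required to hold at \emph{every} point of $[0,\, 1]$, and in particular at $x=0$. Since $f(0)=0$ by~(\ref{eq:92}), substituting $x=0$ into~(\ref{eq:11}) gives $\eta(0) = f_v(\eta(0))$, so that $\eta(0)$ is a fixed point of $f_v$ lying in $[0,\, 1]$. Thus the whole lemma reduces to computing the fixed point set of $f_v$.

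First I would read off $\mathrm{Fix}(f_v)$ directly from the two branches of~(\ref{eq:12}). On the branch $0\leqslant x\leqslant v$ one has $f_v(x)=x/v$, and $x/v=x$ forces $x=0$ because $v\neq 1$. On the branch $v<x\leqslant 1$ one has $f_v(x)=(1-x)/(1-v)$, and $(1-x)/(1-v)=x$ rearranges to $x(2-v)=1$, i.e. $x=\frac{1}{2-v}$. It then remains only to check that this candidate indeed lies on the second branch, i.e. that $\frac{1}{2-v}>v$; but this is equivalent to $1>v(2-v)$, i.e. to $(1-v)^2>0$, which holds since $v\neq 1$. Hence $\mathrm{Fix}(f_v)=\left\{0,\ \frac{1}{2-v}\right\}$ (both values clearly lie in $[0,\, 1]$ as $2-v\geqslant 1$), and therefore $\eta(0)\in\left\{0,\ \frac{1}{2-v}\right\}$, which is exactly the assertion.

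I do not expect any real obstacle here. The only points to keep track of are that the computation uses $v\neq 1$ (so that the two affine branches of $f_v$ have slopes different from $1$), while the standing restriction $v\neq 1/2$ of the section plays no role in this particular lemma; and that neither the continuity nor the (eventual) monotonicity of $\eta$ is needed for this step — those hypotheses are what the subsequent lemmas will call upon in order to promote $\eta$ to a genuine conjugacy.
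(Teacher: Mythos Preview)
Your proof is correct and follows exactly the same approach as the paper: substitute $x=0$ into the functional equation to conclude that $\eta(0)$ is a fixed point of $f_v$, then compute $\mathrm{Fix}(f_v)=\{0,\,\tfrac{1}{2-v}\}$ from the two affine branches. You supply a bit more detail than the paper (verifying $\tfrac{1}{2-v}>v$ and noting where $v\neq 1$ is used), but the argument is the same.
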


\begin{proof}
Plug $x=0$ into~(\ref{eq:11}), and obtain
$$ \eta(f(0)) = f_v(\eta(0)).
$$
Whence, $\eta(0)$ is a fixed point of $f_v$. Since $f_v$ has two
branches of monotonicity, which are defined by formulas $f_v(x)
=\frac{x}{v}$ and $f_v = \frac{1-x}{1-v}$, then foxed points of
$f_v$ are $0$ and $\frac{1}{2-v}.$
\end{proof}

\begin{lemma}\label{lema-2}
If $\eta(0)=0$, then either $\eta(1) = 0$, or $\eta(1) = 1$.
\end{lemma}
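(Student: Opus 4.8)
The plan is to mimic the argument of Lemma~\ref{lema:h-0}, but evaluating the functional equation~(\ref{eq:11}) at the right endpoint instead of the left one. First I would substitute $x=1$ into~(\ref{eq:11}). Since $f(1)=2-2\cdot 1=0$, the left-hand side becomes $\eta(f(1))=\eta(0)$, and by the hypothesis $\eta(0)=0$. Hence the identity reduces to
$$
f_v(\eta(1)) = 0,
$$
so $\eta(1)$ must be a point of the fibre $f_v^{-1}(0)$.

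The second step is to compute that fibre explicitly from the formula~(\ref{eq:12}). On the branch $0\le x\le v$ one has $f_v(x)=x/v$, which vanishes only at $x=0$ (and indeed $0\le v$); on the branch $v<x\le 1$ one has $f_v(x)=(1-x)/(1-v)$, which vanishes only at $x=1$ (and indeed $1>v$). Therefore $f_v^{-1}(0)=\{0,\,1\}$, and combining this with $f_v(\eta(1))=0$ forces $\eta(1)=0$ or $\eta(1)=1$, which is exactly the claim.

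There is essentially no obstacle here: the statement follows from a single evaluation of the functional equation together with the elementary observation that $f_v$ is a two-branch piecewise linear map whose only preimages of $0$ are the two endpoints of $[0,\,1]$. The only thing to be careful about is checking that the point produced on each branch actually lies in the domain of that branch, which I noted above; no monotonicity or surjectivity of $\eta$ is needed.
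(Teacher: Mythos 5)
Your proof is correct and follows essentially the same route as the paper: plug $x=1$ into the functional equation, use $f(1)=0$ together with $\eta(0)=0$ to get $f_v(\eta(1))=0$, and conclude from $f_v^{-1}(0)=\{0,\,1\}$. The only difference is that you spell out the branch-by-branch computation of $f_v^{-1}(0)$, which the paper treats as evident.
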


\begin{proof}
Plug $x=1$ into~(\ref{eq:11}) and obtain $\eta(f(1)) =
f_v(\eta(1)).$ Now lemma follows from $f(1)=0$ and the fact that
$f_v$ equals $0$ only at 0 and 1.
\end{proof}

\begin{corollary}
If $\eta(1) =1$, then $\eta$ is a homeomorphism.
\end{corollary}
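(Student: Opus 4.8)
The plan is to prove the corollary as an immediate consequence of the two lemmas that precede it together with Lemma~\ref{lema:h-0} and the monotonicity of $\eta$. Recall the standing hypothesis of this subsection: $\eta:\,[0,\,1]\rightarrow[0,\,1]$ is a continuous \emph{monotone} solution of~(\ref{eq:11}). The corollary supposes in addition that $\eta(1)=1$, and we must conclude that $\eta$ is a homeomorphism, i.e. that $\eta$ is a continuous bijection of $[0,\,1]$ onto itself (the continuity of the inverse is then automatic, since a continuous monotone bijection of a compact interval has continuous inverse).

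First I would pin down the two endpoint values. By Lemma~\ref{lema:h-0}, $\eta(0)\in\{0,\,\tfrac{1}{2-v}\}$. I would rule out $\eta(0)=\tfrac{1}{2-v}$: if $\eta(0)=\tfrac{1}{2-v}$, then since $\eta$ is monotone and $\eta(1)=1>\tfrac{1}{2-v}$ (note $v\in(0,\,1)$ forces $\tfrac{1}{2-v}\in(0,\,1)$, and in fact $\tfrac{1}{2-v}<1$), $\eta$ would have to be increasing with $\eta([0,\,1])\subseteq[\tfrac{1}{2-v},\,1]$. But plugging $x=\tfrac12$ into~(\ref{eq:11}) gives $\eta(f(\tfrac12))=f_v(\eta(\tfrac12))$, i.e. $\eta(1)=f_v(\eta(\tfrac12))$, so $f_v(\eta(\tfrac12))=1$, forcing $\eta(\tfrac12)=v$; and then iterating the relation along preimages of $\tfrac12$ under $f$ (exactly the device of Proposition~\ref{lema:25} / Lemma~\ref{lema:02}) produces points of $\mathcal{A}$ where $\eta$ takes values arbitrarily close to $0$, contradicting $\eta([0,\,1])\subseteq[\tfrac{1}{2-v},\,1]$. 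Hence $\eta(0)=0$. Now Lemma~\ref{lema-2} applies and gives $\eta(1)\in\{0,\,1\}$; since we are in the case $\eta(1)=1$, we have $\eta(0)=0$ and $\eta(1)=1$ with $\eta$ monotone, hence $\eta$ is (strictly) increasing provided we check it is not constant on any subinterval collapsing the range — but $\eta(0)=0\neq1=\eta(1)$ already shows $\eta$ is nonconstant and monotone from $0$ to $1$.

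The remaining point is to upgrade ``monotone from $0$ to $1$'' to ``bijective''. A monotone continuous function with $\eta(0)=0$, $\eta(1)=1$ is automatically surjective onto $[0,\,1]$ by the intermediate value theorem, so only injectivity is in question: I must exclude the possibility that $\eta$ is non-strictly monotone, i.e. constant on some nondegenerate subinterval $[\alpha,\,\beta]\subseteq[0,\,1]$. This is where the functional equation~(\ref{eq:11}) does the real work. If $\eta\equiv c$ on $[\alpha,\,\beta]$, then for $x\in[\alpha,\,\beta]$ we get $\eta(f(x))=f_v(c)$, a constant; but the images $f([\alpha,\,\beta])$ of such intervals, under forward iteration of $f$, eventually cover all of $[0,\,1]$ (since the graph of $f^n$ consists of $2^n$ full branches of length $2^{-n}$, the proof of Remark~\ref{note:15}), so $\eta$ would be constant on $[0,\,1]$, contradicting $\eta(0)=0\neq\eta(1)=1$. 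Therefore $\eta$ is strictly monotone, hence a continuous bijection of $[0,\,1]$, i.e. a homeomorphism.

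The main obstacle, such as it is, is the non-strictness issue in the last paragraph: one has to argue carefully that a ``flat spot'' of $\eta$ propagates under the dynamics to make $\eta$ globally constant. The clean way to phrase it is: the set $\{x:\eta \text{ is locally constant at }x\}$ is open and, by~(\ref{eq:11}) and surjectivity of the iterates of $f$ together with $f^{-1}$-invariance, forward-and-backward invariant; since the periodic points of $f$ are dense (Remark~\ref{note:15}) and $f$ has sensitive/expanding behaviour, a nonempty such set forces $\eta$ constant. Everything else is bookkeeping with Lemmas~\ref{lema:h-0}, \ref{lema-2} and the endpoint normalisation; no new estimates beyond those already available in Section~\ref{sect-Pobudowa} are needed.
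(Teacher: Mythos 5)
Your proof is correct, but note that the paper offers no argument at all for this corollary: it is stated bare, implicitly under the standing assumption of the subsection that $\eta$ is a continuous \emph{monotone} solution of~(\ref{eq:11}), and treated as immediate from Lemmas~\ref{lema:h-0} and~\ref{lema-2}. You are right to insist on that monotonicity hypothesis — without it the statement is false, e.g.\ $\eta=h\circ\xi$ with $\xi$ the piecewise linear zigzag of Theorem~\ref{pr-theor-2} for odd $k$ satisfies~(\ref{eq:11}) and $\eta(1)=1$ but is not injective. Compared with the paper you add two genuine pieces of content. First, you exclude $\eta(0)=\frac{1}{2-v}$; your argument works, though it is heavier than needed: once $\eta(1/2)=v$ is forced, the inequality $v<\frac{1}{2-v}$ already contradicts $\eta$ increasing from $\eta(0)=\frac{1}{2-v}$, so the excursion through preimages of $1/2$ is superfluous (alternatively, the lemma immediately following the corollary in the paper shows $\eta(0)=\frac{1}{2-v}$ forces $\eta(1)\in\{\frac{1}{2-v},\frac{v}{2-v}\}$, neither of which is $1$). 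Second, and more importantly, you supply the strict-monotonicity step the paper glosses over: a flat spot of $\eta$ propagates under the expanding dynamics of $f$ (any nondegenerate interval is mapped onto $[0,1]$ by some $f^n$, and $\eta$ constant on $J$ forces $\eta$ constant on $f(J)$ via $f_v(\text{const})=\text{const}$), contradicting $\eta(0)=0\neq 1=\eta(1)$; this is exactly the device of Proposition~\ref{theorNeStale}, here imported to the semiconjugacy equation. What your route buys is a self-contained, rigorous justification of the injectivity that the paper silently assumes; what the paper's (implicit) route buys is brevity, at the cost of leaving precisely that gap.
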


\begin{lemma}
If $\eta(0)=\frac{1}{2-v}$, then either $\eta(1) = \frac{1}{2-v}$,
or $\eta(1) =  \frac{v}{2-v}$.
\end{lemma}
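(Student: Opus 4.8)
The plan is to follow exactly the template already used for Lemma~\ref{lema-2} and the lemma preceding it: evaluate the functional equation~(\ref{eq:11}) at the endpoint $x=1$ and read off the constraint this imposes on $\eta(1)$ through the dynamics of $f_v$.

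First I would substitute $x=1$ into~(\ref{eq:11}). Since $f(1)=2-2\cdot 1=0$ by~(\ref{eq:92}), the left-hand side becomes $\eta(f(1))=\eta(0)$, and by hypothesis $\eta(0)=\frac{1}{2-v}$. Hence $\eta(1)$ must satisfy $f_v(\eta(1))=\frac{1}{2-v}$, i.e. $\eta(1)$ lies in the preimage $f_v^{-1}\!\left(\frac{1}{2-v}\right)$.

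Next I would compute that preimage directly from the two-branch formula~(\ref{eq:12}) for $f_v$. On the left branch, where the argument $y$ satisfies $y\le v$, the equation $\frac{y}{v}=\frac{1}{2-v}$ gives $y=\frac{v}{2-v}$, and this is admissible because $\frac{v}{2-v}\le v$ is equivalent to $\frac{1}{2-v}\le 1$, which holds since $v\le 1$. On the right branch, where $y>v$, the equation $\frac{1-y}{1-v}=\frac{1}{2-v}$ gives $1-y=\frac{1-v}{2-v}$, i.e. $y=\frac{1}{2-v}$, and this is admissible because $\frac{1}{2-v}>v$ is equivalent to $(1-v)^2>0$, which holds for $v\ne 1$. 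Therefore $f_v^{-1}\!\left(\frac{1}{2-v}\right)=\left\{\frac{v}{2-v},\,\frac{1}{2-v}\right\}$, so $\eta(1)$ is one of these two values, as claimed.

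There is essentially no obstacle here: the statement is a one-step consequence of the functional equation together with the elementary computation of a preimage under the piecewise-linear map $f_v$. The only points that deserve a word of care are the boundary behaviour of $v$ — one should note that for $v=1$ the two listed values coincide, so the conclusion is still (trivially) correct — and the verification that each candidate indeed lies in the half-interval where the branch of $f_v$ used to produce it is valid, which is precisely where the inequalities $v\le 1$ and $(1-v)^2>0$ enter.
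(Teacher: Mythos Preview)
Your proof is correct and follows exactly the same approach as the paper: plug $x=1$ into~(\ref{eq:11}) to obtain $f_v(\eta(1))=\frac{1}{2-v}$, then read off the two preimages. The paper states this in a single line, while you supply the explicit preimage computation and the branch-admissibility checks; your discussion of the boundary case $v=1$ is unnecessary here since the ambient hypothesis is $v\in(0,1)$, but it does no harm.
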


\begin{proof}
Lemma follows from the equality $f_v(\eta(1))=\frac{1}{2-v},$
which is obtained from~(\ref{eq:11}) by plugging $x=1$.
\end{proof}

\begin{corollary}
If $\eta$ is monotone and $\eta(0)=\frac{1}{2-v}$, then $\eta(1)
=\frac{v}{2-v}$.
\end{corollary}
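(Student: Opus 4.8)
The plan is to read the answer off the Lemma immediately preceding this corollary and then discard the "wrong" alternative using monotonicity. Since $\eta(0) = \frac{1}{2-v}$, that Lemma already tells us that $\eta(1)$ is one of the two numbers $\frac{1}{2-v}$ or $\frac{v}{2-v}$, and these two numbers are distinct because $v \neq 1$. So the entire task reduces to excluding the case $\eta(1) = \frac{1}{2-v}$.

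To do this I would argue by contradiction. Suppose $\eta(1) = \frac{1}{2-v}$; then $\eta$ is a monotone function on $[0,\,1]$ with $\eta(0) = \eta(1)$, hence it is constant, $\eta \equiv \frac{1}{2-v}$. The one point that has to be handled with care here is the meaning of "monotone": the purpose of this subsection is to show that monotone solutions of~(\ref{eq:11}) are genuine conjugacies, so the operative hypothesis is strict monotonicity, and a constant map is not strictly monotone — contradiction. For the record, the weakly monotone map $\eta \equiv \frac{1}{2-v}$ really does solve~(\ref{eq:11}), since $\frac{1}{2-v}$ is a fixed point of $f_v$; it is precisely this solution that strict monotonicity rules out, and the same phenomenon underlies the companion possibility $\eta(1) = \eta(0) = 0$ in Lemma~\ref{lema-2}.

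Having excluded $\eta(1) = \frac{1}{2-v}$, the preceding Lemma leaves only $\eta(1) = \frac{v}{2-v}$, which is the claim. No real computation is involved — the only arithmetic check is $\frac{1}{2-v} \neq \frac{v}{2-v}$ for $v \neq 1$ — so the only "obstacle" worth flagging is the bookkeeping about what "monotone" is taken to mean; once that is pinned down, the corollary is immediate.
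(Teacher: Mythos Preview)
Your argument is correct and is exactly the intended one: the paper states this corollary without proof, leaving it as an immediate consequence of the preceding lemma together with the observation that a monotone $\eta$ with $\eta(0)=\eta(1)$ would have to be constant. Your remark that ``monotone'' here must mean strictly monotone is well taken and consistent with the aim of the subsection (Proposition~\ref{prop:2}); the constant solution $\eta\equiv\frac{1}{2-v}$ is precisely what is being excluded.
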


\begin{lemma}
If $\eta(0) = \frac{1}{2-v}$ and $\eta(1) =  \frac{v}{2-v}$, then
$\eta$ is non monotone.\end{lemma}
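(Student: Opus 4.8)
The plan is to argue by contradiction: assuming $\eta$ is monotone I will show it is forced to be the constant function $\frac{1}{2-v}$, contradicting the hypothesis $\eta(1)=\frac{v}{2-v}\ne\frac{1}{2-v}$ (the two values differ since $v\ne 1$).

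First I would collect the elementary facts about $f$ and $f_v$ that drive the argument. Reading off~(\ref{eq:12}), the nonzero fixed point of $f_v$ is $\frac{1}{2-v}$, and $\frac{v}{2-v}$ is its left preimage, $f_v\!\left(\frac{v}{2-v}\right)=\frac{1}{2-v}$; also $2/3$ is the nonzero fixed point of $f$ defined by~(\ref{eq:92}). For $v\in(0,1)$ one has $0<\frac{v}{2-v}<\frac{1}{2-v}$, so the hypotheses give $\eta(0)>\eta(1)$, which means a monotone $\eta$ must be non-increasing.

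The core step: substituting $x=2/3$ into~(\ref{eq:11}) gives $\eta(2/3)=f_v(\eta(2/3))$, so $\eta(2/3)$ is a fixed point of $f_v$, i.e. $\eta(2/3)\in\{0,\frac{1}{2-v}\}$. Since $\eta$ is non-increasing and $0<2/3<1$, we get $\frac{v}{2-v}=\eta(1)\le\eta(2/3)\le\eta(0)=\frac{1}{2-v}$, and because $\frac{v}{2-v}>0$ this forces $\eta(2/3)=\frac{1}{2-v}=\eta(0)$. A non-increasing function with $\eta(0)=\eta(2/3)$ is constant on $[0,2/3]$, hence $\eta\equiv\frac{1}{2-v}$ on $[0,2/3]$.

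The one genuine idea --- with everything else being bookkeeping --- is to push this constancy onto the remaining interval via a single use of the functional equation. For $x\in[1/3,1/2]$ we have $f(x)=2x\in[2/3,1]$ while $\eta(x)=\frac{1}{2-v}$, so~(\ref{eq:11}) yields $\eta(2x)=f_v\!\left(\frac{1}{2-v}\right)=\frac{1}{2-v}$; since $f$ maps $[1/3,1/2]$ onto $[2/3,1]$, this shows $\eta\equiv\frac{1}{2-v}$ on $[2/3,1]$ as well, in particular $\eta(1)=\frac{1}{2-v}$. This contradicts $\eta(1)=\frac{v}{2-v}$ because $v\ne 1$, so $\eta$ cannot be monotone. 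I do not anticipate a real obstacle here; the only points requiring care are the identification of the fixed points and preimages of $f_v$ and the chain of inequalities, and checking that these use only $v\in(0,1)$, the degenerate endpoints $v=0,1$ being outside the standing assumptions of the section.
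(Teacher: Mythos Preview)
Your argument is correct but takes a different path from the paper. The paper substitutes $x=1/2$ into~(\ref{eq:11}), obtaining $f_v(\eta(1/2))=\eta(1)=\frac{v}{2-v}$, so $\eta(1/2)$ must be one of the two $f_v$-preimages of $\frac{v}{2-v}$; it then checks that neither candidate lies in the interval $[\eta(1),\eta(0)]=\bigl[\frac{v}{2-v},\frac{1}{2-v}\bigr]$, contradicting monotonicity in one stroke. You instead pivot on the fixed point $2/3$ of $f$, which forces $\eta(2/3)$ to be a fixed point of $f_v$ and hence equal to $\eta(0)$; this makes a non-increasing $\eta$ constant on $[0,2/3]$, and one further application of~(\ref{eq:11}) propagates the constancy to $[2/3,1]$, contradicting $\eta(1)\ne\eta(0)$. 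The paper's route is slightly quicker (a single substitution and a direct preimage check), while yours trades the explicit preimage computation for the cleaner fixed-point structure plus a short propagation step; both are valid.
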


\begin{proof}
It follows from the functional equation~(\ref{eq:11}) for $x=1/2$
that conditions of Lemma imply $f_v(\eta(1/2)) = \frac{v}{2-v}$.

Remind that graph of $f_v$ consists of two branches of
monotonicity and $x=\frac{v}{2-v}$ is its fixed point. In other
words, $\eta(1/2)$ is a fixed point of $f_v$. But $\eta(1/2)
=\frac{v}{2-v}$ contradicts to monotonicity of $\eta$, whence
$\eta(1/2) = \frac{v^2}{2-v}.$ But this also contradicts to
monotonicity of $\eta$, because implies that $\eta(1/2) < \eta(0)$
and $\eta(1/2) < \eta(1)$. The last proved Lemma.
\end{proof}

\begin{lemma}
If $\eta(0) = \frac{1}{2-v}$ and $\eta(1) =  0$, then $\eta$ is
non monotone.
\end{lemma}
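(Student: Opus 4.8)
The plan is to derive the conclusion from the functional equation~(\ref{eq:11}) evaluated at the single endpoint $x=1$, exactly in the spirit of the preceding lemmas of this subsection. Since $f(1)=0$, plugging $x=1$ into~(\ref{eq:11}) gives
$$\eta(0) = \eta(f(1)) = f_v(\eta(1)).$$
Under the hypothesis $\eta(1)=0$ and the formula~(\ref{eq:12}) we have $f_v(0)=0$, so the displayed identity forces $\eta(0)=0$. This contradicts the other hypothesis $\eta(0)=\frac{1}{2-v}$, because $v\in(0,1)\setminus\{1/2\}$ gives $\frac{1}{2-v}\in(1/2,1)$, in particular $\frac{1}{2-v}\neq 0$. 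Hence no continuous map $\eta:[0,1]\to[0,1]$ satisfying~(\ref{eq:11}) can have both $\eta(0)=\frac{1}{2-v}$ and $\eta(1)=0$; \emph{a fortiori} no monotone such $\eta$ exists, which is the assertion of the lemma (in fact in its strongest form).

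First I would record this as the whole argument, since it uses no monotonicity at all. An equivalent way to package it, matching the earlier dichotomy, is to note that $\eta(0)=\frac{1}{2-v}$ together with $x=1$ in~(\ref{eq:11}) already pins $\eta(1)$ down to a preimage of $\frac{1}{2-v}$ under $f_v$, namely $\eta(1)\in\{\frac{v}{2-v},\,\frac{1}{2-v}\}$; since $0$ lies in neither set, the hypothesis of the lemma is simply never met. Either presentation reuses only the facts exploited throughout the subsection: $f(0)=f(1)=0$, $f(1/2)=1$, and the explicit list of fixed points and preimages of $f_v$ coming from~(\ref{eq:12}).

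The only point that needs care --- and the closest thing here to an obstacle --- is the reading of the word ``monotone''. As the literal hypotheses are incompatible, the lemma is vacuously true whatever convention one adopts. Should one instead wish to treat the companion case $\eta(1)=\frac{1}{2-v}$, the argument adapts: a monotone $\eta$ with $\eta(0)=\eta(1)=\frac{1}{2-v}$ must be constant, equal to the fixed point $\frac{1}{2-v}$; such a constant map does satisfy~(\ref{eq:11}) but is not a conjugation, so to keep consistency with Proposition~\ref{prop:2} one reads ``monotone solution'' there and here as ``strictly monotone'', under which $\eta(0)=\eta(1)$ is immediately absurd. No calculation beyond the one-line substitution above is required in any case.
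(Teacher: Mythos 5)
Your argument is correct, but it is not the argument the paper gives. The paper proves this lemma by substituting $x=1/2$ into~(\ref{eq:11}): since $f(1/2)=1$, the hypothesis $\eta(1)=0$ gives $f_v(\eta(1/2))=0$, hence $\eta(1/2)\in\{0,1\}$, and the author then argues that either value is incompatible with monotonicity given the two endpoint values. You instead substitute $x=1$: since $f(1)=0$ and $f_v(0)=0$, the hypothesis $\eta(1)=0$ forces $\eta(0)=f_v(\eta(1))=0$, which contradicts $\eta(0)=\frac{1}{2-v}>0$; so the two hypotheses are jointly unsatisfiable for \emph{any} continuous solution of~(\ref{eq:11}), and the lemma holds vacuously. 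Your route is in fact already implicit in the paper's own earlier lemma of this subsection, which shows that $\eta(0)=\frac{1}{2-v}$ forces $\eta(1)\in\left\{\frac{1}{2-v},\,\frac{v}{2-v}\right\}$, a set not containing $0$; you simply make that incompatibility explicit. What each approach buys: your observation is stronger and cleaner (no monotonicity is needed, and it sidesteps the delicate case $\eta(1/2)=0$, where the values $\frac{1}{2-v},0,0$ at $0,1/2,1$ contradict only \emph{strict} monotonicity, a point the paper's proof glosses over), while the paper's $x=1/2$ substitution follows the uniform template used for the neighbouring lemmas, where the analogous endpoint data are genuinely realizable and the midpoint value is what produces the non-monotonicity. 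Your closing remarks on the reading of ``monotone'' and on the companion case $\eta(1)=\frac{1}{2-v}$ are accurate but tangential to the stated lemma.
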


\begin{proof}
It follows from~(\ref{eq:11}) for $x=1/2$ and from the condition
of Lemma that
$$ f_v(\eta(1/2))=0,
$$ whence $\eta(1/2) \in \{ 0,\, 1\}.$ This
contradicts to monotonicity of $\eta$.
\end{proof}

The next proposition follows from the proved lemmas.

\begin{proposition}\label{prop:2}
Every continuous monotone solution $\eta:\, [0,\, 1]\rightarrow
[0,\, 1]$ of the functional equation~(\ref{eq:11}) is the
conjugacy of $f$ and $f_v$ forms~(\ref{eq:92}) and~(\ref{eq:12})
correspondingly.
\end{proposition}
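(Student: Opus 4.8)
The plan is to prove the proposition by a short exhaustion of the possible boundary values of $\eta$, since essentially all the work has already been done in the lemmas preceding the statement. First I would recall that, by Lemma~\ref{lema:h-0}, a continuous solution $\eta$ of~(\ref{eq:11}) satisfies $\eta(0)\in\{0,\, \tfrac{1}{2-v}\}$, and that $\tfrac{1}{2-v},\,\tfrac{v}{2-v}\in(0,1)$ with $\tfrac{v}{2-v}<\tfrac{1}{2-v}$ for every $v\in(0,1)$. I would then split into the two cases for $\eta(0)$.

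In the case $\eta(0)=\tfrac{1}{2-v}$, the subsequent lemmas give $\eta(1)\in\{\tfrac{1}{2-v},\,\tfrac{v}{2-v},\,0\}$; the possibilities $\eta(1)=\tfrac{v}{2-v}$ and $\eta(1)=0$ are excluded by the two lemmas stating that these configurations force $\eta$ to be non-monotone, contradicting the hypothesis. The remaining possibility $\eta(1)=\tfrac{1}{2-v}=\eta(0)$ forces, by monotonicity together with $0\le\eta\le 1$, the function $\eta$ to be constant equal to $\tfrac{1}{2-v}$; I would rule this out by reading ``monotone'' in the statement as ``strictly monotone'' (equivalently, a constant function is not a conjugacy, and only non-degenerate solutions are of interest). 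Similarly, in the case $\eta(0)=0$, Lemma~\ref{lema-2} gives $\eta(1)\in\{0,1\}$, and $\eta(1)=0$ would again force $\eta\equiv 0$, excluded by strict monotonicity; hence $\eta(1)=1$.

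It then remains to observe that a continuous strictly monotone $\eta:[0,1]\to[0,1]$ with $\eta(0)=0$ and $\eta(1)=1$ is automatically an increasing homeomorphism of $[0,1]$ onto itself. Since by hypothesis $\eta$ satisfies~(\ref{eq:11}), the diagram defining topological conjugation commutes, so $\eta$ is, by Definition~\ref{def:topol-equiv}, a topological conjugacy of the maps $f$ and $f_v$ of the forms~(\ref{eq:92}) and~(\ref{eq:12}); if desired one can add that, by Theorem~\ref{theor:homeom-jed}, this $\eta$ is in fact the unique such conjugacy.

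I expect the only genuine obstacle to be bookkeeping around the weakly-monotone (constant) solutions $\eta\equiv 0$ and $\eta\equiv\tfrac{1}{2-v}$, which do satisfy~(\ref{eq:11}) but are not conjugacies: the cleanest fix is to adopt the strict-monotonicity reading throughout, and in passing to note that the decreasing case cannot occur either, since a decreasing $\eta$ would need $\eta(0)$ to be its maximum, forcing $\eta(0)=\tfrac{1}{2-v}$ and $\eta(1)\in\{\tfrac{v}{2-v},0\}$, both already ruled out.
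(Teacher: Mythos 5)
Your proposal is correct and follows essentially the same route as the paper, which obtains the proposition directly by combining Lemma~\ref{lema:h-0}, Lemma~\ref{lema-2} and the subsequent lemmas on the possible boundary values $\eta(0)$ and $\eta(1)$, exactly as you do. Your explicit handling of the constant solutions $\eta\equiv 0$ and $\eta\equiv\frac{1}{2-v}$ via the strict-monotonicity reading addresses a point the paper leaves implicit, but it does not change the argument.
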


\subsection{Problem of the self-semiconjugation}\label{sect:SemiConj-2}

Let $h:\, [0,\, 1]\rightarrow [0,\, 1]$ be the conjugacy of $f$
and $f_v$, i.e. the solution of the functional
equation~(\ref{eq:11}). By Theorem~\ref{theor:homeom-jed} this
solution exists and is the unique. Also $h$ increase.

For an arbitrary (not necessary homeomorphic) solution $\eta:\,
[0,\, 1] \rightarrow [0,\, 1]$ of the functional
equation~(\ref{eq:11}) consider a commutative diagram

\begin{equation}\label{eq:09}\xymatrix{ [0,\, 1]
\ar@/_3pc/@{-->}_{\xi}[dd] \ar^{f}[r]
\ar_{\eta}[d] & [0,\, 1] \ar^{\eta}[d] \ar@/^3pc/@{-->}^{\xi}[dd]\\
[0,\, 1] \ar^{f_v}[r] \ar^{h^{-1}}[d] & [0,\, 1] \ar_{h^{-1}}[d]\\
[0,\, 1] \ar^{f}[r] & [0,\, 1]}\end{equation} and denote $\xi =
h^{-1}(\eta)$.

Let $\xi$ be an arbitrary (not necessary homeomorphic) solution of
a functional equation
\begin{equation}\label{eq:08}\psi(f) = f(\psi).
\end{equation} for
an unknown function $\psi:\, [0,\, 1] \rightarrow [0,\, 1]$.

Notice, that~(\ref{eq:08}) means that diagram $$ \xymatrix{
[0,\, 1] \ar^{f}[r] \ar_{\psi}[d] & [0,\, 1] \ar^{\psi}[d]\\
[0,\, 1] \ar^{f}[r] & [0,\, 1] }
$$ is commutative.

From commutative diagram~(\ref{eq:09}) obtain, that for any
solution $\xi$ of functional equation~(\ref{eq:08}), the function
$\eta = h(\psi)$ is a solution of functional
equation~(\ref{eq:11}). Whence, the uniqueness of homeomorphic
solutions of~(\ref{eq:11}) yields the one to one correspondence
between non-homeomorphic solutions of~(\ref{eq:11}) and
non-homeomorphic solutions of more simple equation~(\ref{eq:08}).

Let $\xi:\, [0,\, 1]\rightarrow [0,\, 1]$ be a continuous solution
of~(\ref{eq:08}).

\begin{note}\label{note:14}
Consider some examples of solutions $\xi$ of the functional
equation~(\ref{eq:08}):

1. $\xi(x)=x$ for all $x\in [0,\, 1]$;

2. $\xi$ is constant, which is one of fixed points of $f$;

3. $\xi$ is some iteration of $f$.
\end{note}

\begin{proposition}\label{theorNeStale}
If a a continuous solution $\xi$ of the functional
equation~(\ref{eq:08}) is constant on some $M = [\alpha,\,
\beta]$, then it is constant on the whole $[0,\, 1]$.
\end{proposition}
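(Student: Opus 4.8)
The plan is to combine two ingredients: the ``locally eventually onto'' behaviour of the tent map $f$ of the form~(\ref{eq:92}), and the iterated form of the functional equation~(\ref{eq:08}). Throughout I take $M=[\alpha,\beta]$ to be nondegenerate, $\alpha<\beta$ (for a single point the statement is vacuous), and I write $c$ for the constant value of $\xi$ on $M$.

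First I would record the surjectivity fact. By Proposition~\ref{lema:An}, for every $n$ the set $A_{n+1}$ is the uniform grid $\{k/2^{n}:0\le k\le 2^{n}\}$; and $f^{n}$ is piecewise linear with slopes $\pm 2^{n}$, taking values in $\{0,1\}$ at every point of $A_{n+1}$, because $f^{n+1}(A_{n+1})=0$ forces $f^{n}(A_{n+1})\subseteq f^{-1}(0)=\{0,1\}$. Hence on each dyadic interval $[k/2^{n},(k+1)/2^{n}]$ the map $f^{n}$ is monotone and maps that interval onto $[0,1]$. Taking $n$ large enough that $M$ contains two consecutive points of the level-$n$ grid, we get a dyadic interval $J=[k/2^{n},(k+1)/2^{n}]\subseteq M$ with $f^{n}(J)=[0,1]$. (If one prefers not to quote this, it follows directly from the observation that $|f(I)|=2|I|$ whenever an interval $I$ avoids the turning point $1/2$, while an interval straddling $1/2$ is sent to an interval having $1$ as an endpoint, from which one or two further applications of $f$ fill $[0,1]$.)

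Next I would iterate~(\ref{eq:08}): the identity $\xi(f(x))=f(\xi(x))$ gives, by an immediate induction on $n$, that $\xi(f^{n}(x))=f^{n}(\xi(x))$ for all $x\in[0,1]$ and all $n\ge 1$ (the step being $\xi(f^{n}(x))=\xi\bigl(f(f^{n-1}(x))\bigr)=f\bigl(\xi(f^{n-1}(x))\bigr)=f\bigl(f^{n-1}(\xi(x))\bigr)=f^{n}(\xi(x))$). Now apply this with the $n$ and $J\subseteq M$ chosen above: for every $x\in J$ we have $\xi(x)=c$, hence $\xi(f^{n}(x))=f^{n}(c)$, so $\xi$ is identically equal to the single value $f^{n}(c)$ on $f^{n}(J)=[0,1]$. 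Thus $\xi$ is constant on all of $[0,1]$; evaluating at a point of $M$ shows moreover that $c=f^{n}(c)$. This proves Proposition~\ref{theorNeStale} in the stronger form of genuine constancy, which in particular subsumes the ``piecewise constant'' phrasing used in the overview.

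The argument has essentially no hard step: the only point requiring care is the surjectivity of $f^{n}$ on a small dyadic subinterval of $M$, and that is already part of the standard machinery developed for the sets $A_{n}$. Everything else is a one-line use of the commutation relation; in fact continuity of $\xi$ is not needed here at all — only the functional equation and the surjectivity of $f^{n}$ on $J$.
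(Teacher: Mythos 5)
Your proof is correct and rests on the same mechanism as the paper's: push the constant value forward with the commutation relation $\xi\circ f^{n}=f^{n}\circ\xi$ and use the expansivity of the tent map to blow the interval of constancy up to all of $[0,\,1]$. The only real difference is bookkeeping — you shrink $M$ to a level-$n$ dyadic interval on which $f^{n}$ is monotone onto $[0,\,1]$ (the structure of $f^{n}$ already used around Proposition~\ref{lema:An}) and finish in one stroke, whereas the paper expands $M$ itself one application of $f$ at a time with a case split at $1/2$; your write-up is tidier and rightly observes that continuity of $\xi$ is not needed, though the parenthetical fallback claiming that an interval with endpoint $1$ needs only ``one or two further applications'' of $f$ to fill $[0,\,1]$ is not literally true (from $[\delta,\,1]$ with $\delta$ near $1$ one needs about $\log_2\frac{1}{1-\delta}$ steps) and is best dropped in favour of the dyadic argument you actually use.
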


\begin{proof}
Prove that if $\xi$ is constant on $M$, then it is piecewise
constant on some interval $[\widetilde{\alpha},\,
\widetilde{\beta}]$, which is either of the length $2(\beta -
\alpha)$, or $\widetilde{\alpha} =0$.

is $[0,\, \gamma]$.

Consider the commutative diagram
$$\begin{CD}
[\alpha,\, \beta] @>f >> & f([\alpha,\, \beta])\\
@V_{\xi} VV& @VV_{\xi}V\\
\xi([\alpha,\, \beta]) @>f>>& \xi(f([\alpha,\, \beta]))  =
f(\xi([\alpha,\, \beta]))
\end{CD}$$ and consider
two cases, whether $[\alpha,\, \beta]$ contains $0.5$, or not. If
$0.5 \in [\alpha,\, \beta]$, then change $M$ into either $M_1 =
[\alpha,\, 0,5]$, or $M_2 =[0,5,\, \beta]$. For both $M_1$ and
$M_2$ we
$$ \xi(f(M_i)) = f(\xi(M_i)),
$$ whence $\xi$ is constant on
$f(M_i)$.

If $M$ contains $\frac{1}{2}$, then $\xi$ is constant on $f(M)$,
which is of the form $[\delta,\, 1]$ and, applying the same
commutative diagram, obtain that $\xi$ is constant on $[0,\,
\widetilde{\beta}]$. Applying finitely many times the same
reasonings for $[0,\, \widetilde{\beta}]$ obtain that $\xi$ is
constant of the whole $[0,\, 1]$.

If $1/2\not\in M$, then $f(M)$ is an interval, whose length is two
times more than one of $M$ and $\xi$ is constant on $f(M)$.
Repeating these reasonings finitely many times, we obtain either
an interval, which contains $1/2$.
\end{proof}

\begin{proposition}\label{theorNePriama}
If the graph of $\xi$ is a segment of a line on some $M =
[\alpha,\, \beta]$ then $\xi$ is piecewise linear on $[0,\, 1]$.
\end{proposition}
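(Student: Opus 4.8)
The plan is to follow the same scheme as in the proof of Proposition~\ref{theorNeStale}, namely to push the linearity of $\xi$ forward along the iterates of $f$. First I would dispose of the degenerate case: if the affine function $\xi|_M$ is constant, then Proposition~\ref{theorNeStale} already shows that $\xi$ is piecewise constant, hence piecewise linear, on $[0,1]$; so from now on we may and do assume that $\xi|_M$ is a non-constant affine function (a constant restriction of $\xi$ to a \emph{piece} arising later will be handled trivially, since then the corresponding composition is again constant). The central claim, to be proved by induction on $k\ge 0$, is the invariant: $f^k(M)$ admits a finite partition into closed subintervals on each of which $\xi$ is affine. For $k=0$ this holds with the single piece $M$.

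For the inductive step, let $J$ be one of the pieces of the partition of $f^k(M)$, so $\xi|_J$ is affine. I would refine $J$ by cutting it at $x=1/2$ (when $1/2$ lies in its interior) and, if $\xi|_J$ is non-constant, at the unique point where $\xi|_J$ attains the value $1/2$ (when such a point exists). On each piece $J'$ of the refinement: $f$ is affine and monotone on $J'$ (since $J'$ does not straddle $1/2$), $\xi|_{J'}$ is affine, and $\xi(J')$ does not straddle $1/2$, so $f$ is affine on $\xi(J')$ as well. From the functional equation~(\ref{eq:08}), $\xi(f(x))=f(\xi(x))$, the map $\xi\circ f$ is affine on $J'$ as a composition of affine maps; since $f\colon J'\to f(J')$ is an affine bijection, $\xi$ is affine on $f(J')$. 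Letting $J'$ range over the refinements of all pieces of $f^k(M)$, the images $f(J')$ cover $f^{k+1}(M)=f(f^k(M))$ by finitely many closed intervals on each of which $\xi$ is affine; by continuity these affine pieces agree on overlaps, giving the desired finite partition of $f^{k+1}(M)$. This proves the invariant for every $k$.

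To conclude, I would use the topological exactness of the tent map: the graph of $f^n$ consists of $2^n$ linear branches, each mapping an interval of length $2^{-n}$ onto $[0,1]$ --- exactly the structure exploited in the proof of Remark~\ref{note:15} and of Proposition~\ref{lema:An}. Hence, choosing $n$ so large that $2^{\,1-n}<\beta-\alpha=|M|$, the interval $M$ contains one such full branch interval entirely, and therefore $f^n(M)=[0,1]$. Applying the invariant with $k=n$ then shows that $\xi$ is piecewise linear on $[0,1]$, as claimed.

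The main obstacle is not any single computation but the bookkeeping in the inductive step: one must verify that cutting at $1/2$ and at the $1/2$-level set of $\xi|_J$ really produces finitely many pieces on which all three affinity conditions ($f$ affine on the piece, $\xi$ affine on the piece, $f$ affine on the $\xi$-image) hold simultaneously, and that the total number of pieces, though it may grow with $k$, stays finite --- which it does, because each affine branch of $\xi$ is subdivided into a bounded number of subpieces at each stage and only finitely many stages $k\le n$ are needed. The only genuinely external ingredient is the covering statement $f^n(M)=[0,1]$, and that is already available from the earlier analysis of the iterates of $f$ in the paper.
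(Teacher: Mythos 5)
Your proof is correct and follows essentially the same route as the paper, which simply declares the argument ``analogical'' to that of Proposition~\ref{theorNeStale} with ``constant'' replaced by ``piecewise linear'': you push the affine property forward along the iterates of $f$ and use the expansivity of the tent map ($f^n(M)=[0,\,1]$ for large $n$) to cover the whole interval. Your write-up is in fact more careful than the paper's, since you make explicit the extra subdivision at the level set $\xi=1/2$ (needed so that $f\circ\xi$ stays affine on each piece), a point the paper's one-line proof leaves implicit.
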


\begin{proof}
Proof of this proposition is analogical to the proof of
Proposition~\ref{theorNeStale}. We just have to change the word
``constant'' to ``is piecewise linear'' through the whole proof.
\end{proof}

\subsection{Piecewise linearity of continuous
self-semiconjugation}\label{sect:SemiConj-3}

Since interval $[0,\, 1]$ is compact, then continuity of $\xi$
implies the uniformly continuity. For every $n \in \mathbb{N}$ it
follows from the uniformly continuity of $\xi$ that there exists
$m$ such that if the first $m$ binary digits of $a,\, b\in [0,\,
1]$ coincide, then \begin{equation}\label{eq:05} |\xi(a)-\xi(b)|<
2^{-n}.
\end{equation} Denote this $m$ by $m_\xi(n)$.

\begin{lemma}\label{lema:MainR}
For every $n\in \mathbb{N}$ if numbers $a,\, b\in [0,\, 1]$ have
the same $m_\xi(n)+1$ the first binary digits, then
$|\xi(a)-\xi(b)|< 2^{-(n+1)}$.
\end{lemma}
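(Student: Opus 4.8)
The idea is that $f$ acts on binary expansions essentially by a shift, so that making $a$ and $b$ agree on one extra digit before applying $f$ buys a factor of two, and the functional equation~(\ref{eq:08}) transfers this gain from $f(\xi(\cdot))$ back to $\xi(\cdot)$. Write $m:=m_\xi(n)$ and let $a,b\in[0,1]$ have the same first $m+1$ binary digits $\alpha_1\dots\alpha_{m+1}$. By Lemma~\ref{note:f} the numbers $f(a)$ and $f(b)$ then have the same first $m$ binary digits (namely $\alpha_2\dots\alpha_{m+1}$ if $\alpha_1=0$, and $\overline{\alpha}_2\dots\overline{\alpha}_{m+1}$ if $\alpha_1=1$). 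Hence, by the defining property of $m_\xi(n)$, $|\xi(f(a))-\xi(f(b))|<2^{-n}$, and by~(\ref{eq:08}) this reads
\[
|f(\xi(a))-f(\xi(b))|<2^{-n};
\]
moreover, since $a$ and $b$ already share the first $m$ digits, we also have $|\xi(a)-\xi(b)|<2^{-n}$ at our disposal.

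First I would dispose of the case where $\xi(a)$ and $\xi(b)$ lie in the same branch of monotonicity of $f$, i.e.\ both in $[0,1/2]$ or both in $[1/2,1]$. On such a branch $f$ is affine with $|f'|=2$, so $|f(\xi(a))-f(\xi(b))|=2\,|\xi(a)-\xi(b)|$, and the displayed inequality forces $|\xi(a)-\xi(b)|<2^{-(n+1)}$, as required.

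The only remaining, and the delicate, case is when $1/2$ lies strictly between $\xi(a)$ and $\xi(b)$. Here I would use continuity: $a$ and $b$ both belong to the dyadic interval $I$ cut out by the digits $\alpha_1\dots\alpha_{m+1}$, and $\xi|_I$ is continuous with $\xi(a)<1/2<\xi(b)$, so there is $c\in I$ with $\xi(c)=1/2$. Since $c$ has the same first $m+1$ digits as $a$ and $b$, the point $f(c)$ has the same first $m$ digits as $f(a)$ and $f(b)$, whence $|f(\xi(a))-\xi(f(c))|<2^{-n}$ and $|f(\xi(b))-\xi(f(c))|<2^{-n}$; but $\xi(f(c))=f(\xi(c))=f(1/2)=1$. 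Reading these through the two branches of $f$ (using $\xi(a)\le 1/2\le\xi(b)$) confines each of $\xi(a),\xi(b)$ to a one-sided neighbourhood of $1/2$. The main obstacle is that this first pass only yields neighbourhoods of radius $2^{-(n+1)}$ on each side, hence the crude estimate $2^{-n}$ for $|\xi(a)-\xi(b)|$; to reach $2^{-(n+1)}$ I would feed the confinement back into the dichotomy. Once $\xi(I)$ is known to be squeezed near $1/2$, the interval $f(\xi(I))=\xi(f(I))$ is controlled by the (already established) diameter estimate for the $m$-digit interval $f(I)$, and re-running the ``one branch of $f$ versus crossing the fold'' alternative on the nested dyadic subintervals of $I$ produces a self-improving bound for the diameter of $\xi(I)$; since that diameter tends to $0$ as the number of digits of the dyadic subinterval grows (uniform continuity of $\xi$), this iteration should close at the value $2^{-(n+1)}$. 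Throughout, the ambiguity of binary expansions at dyadic rationals should be absorbed by working with closed intervals and invoking continuity of $\xi$, exactly as in the definition of $m_\xi(n)$.
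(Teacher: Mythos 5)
Your Case A is correct, and it is essentially the whole of the paper's actual mechanism in cleaner form: the paper argues by contradiction, writes the binary expansions of $\xi(a),\xi(b)$ under the assumption $\xi(a)-\xi(b)\ge 2^{-(n+1)}$, and applies $f$ once so that the discrepancy doubles, contradicting $|f(\xi(a))-f(\xi(b))|=|\xi(f(a))-\xi(f(b))|<2^{-n}$; when $\xi(a)$ and $\xi(b)$ lie in one branch of $f$ this is exactly your affine-doubling argument. The genuine gap is your Case B, which you have located precisely but not closed. After confining $\xi$ on the common dyadic interval $I$ to $(1/2-2^{-(n+1)},\,1/2+2^{-(n+1)})$ you only get $|\xi(a)-\xi(b)|<2^{-n}$, and the configuration $\xi(a)=1/2-t$, $\xi(b)=1/2+s$ with $s,t<2^{-(n+1)}$ but $s+t\ge 2^{-(n+1)}$ is fully compatible with everything a single application of $f$ yields (it even makes $|f(\xi(a))-f(\xi(b))|=2|s-t|$ as small as you please, since the fold contracts such pairs). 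The proposed ``self-improving iteration'' is a hope rather than an argument, and it does not close in the obvious way: once the fold has acted, what the functional equation~(\ref{eq:08}) transports to deeper dyadic subintervals of $I$ is information about $|\xi-1/2|$ only (for $x,y$ in a common half of $I$ one gets from $f^2$ that $\bigl| |\xi(x)-1/2|-|\xi(y)-1/2| \bigr|<2^{-(n+2)}$), and such one-sided oscillation bounds control $\max(s,t)$ but not the two-sided diameter $s+t$ unless you also control how the zero set of $\xi-1/2$ is distributed among those subintervals; if all crossings of $1/2$ happen to lie in one half of $I$ while $b$ lies in the other, the bootstrap stalls at bounds of the shape $2^{-(n+1)}+2^{-(n+2)}$. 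Note also that you cannot simply iterate on the pair $(a,b)$: $f^k(a)$ and $f^k(b)$ are guaranteed to share only $m_\xi(n)+1-k$ leading digits, so the defining property of $m_\xi(n)$ is available only after at most one application of $f$ to the given pair, which is why the doubling has to be applied to carefully chosen auxiliary pairs inside specific dyadic cells --- a choice your sketch never specifies. As it stands, your proposal proves the lemma only when $\xi(a)$ and $\xi(b)$ do not strictly straddle $1/2$.

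For comparison: the paper's proof also rests on the one-step doubling, organized as a digit-block analysis; its Case 1 and Case 2 both posit a common block $M$ of leading digits of $\xi(a)$ and $\xi(b)$ of length $n-1$ or $n$, so the straddling configuration you are wrestling with simply does not appear in its case list. Thus the portion of your argument that is complete already covers what the paper actually carries out, and the folded case is the real remaining difficulty; to finish along your lines you would need, for instance, to produce a crossing point of $1/2$ inside the same dyadic cell as each of $a$ and $b$ before invoking the $f^2$-oscillation bound, and that step is currently missing.
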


\begin{proof}
Let $m=m_\xi(n)$ and the binary decompositions of $a$ and $b$ are
$$ a = 0,\, c_1\, c_2\, \ldots c_{m+1}\,  x_1\, x_2\, \ldots
$$ and $$
b = 0,\, c_1\, c_2\, \ldots c_{m+1}\,  y_1\, y_2\, \ldots\, .
$$

Since the first $m$ digits of $a$ and $b$ coincide, then the
inequality $|\xi(a)-\xi(b)|< 2^{-n}$ holds.

Without loss of generality assume that $\xi(a)\geq \xi(b)$. Assume
that $\xi(a)-\xi(b)\geq 2^{-(n+1)}$. This assumption leads to the
following two cases of the form of $\xi(a)$ and $\xi(b)$.

Case 1: \begin{equation}\label{eq:06}
\begin{array}{l}
\xi(a) = 0, \underbrace{M\, 1}_n\, 0\, A,\\
\xi(b) = 0, \underbrace{M\, 0}_n\, 1\, B,
\end{array}
\end{equation} where
$M,\, A$ and $B$ are blocks of digits, the length of $M$ is $n-1$
and $0,A \geq 0,B$. Notice, that in these notations $\xi(a) -
\xi(b) = 2^{-(n+1)} + 2^{-(n+2)}\cdot(0.A - 0.B)$.

Case 2: \begin{equation}\label{eq:07}
\begin{array}{l}
\xi(a) = 0, \underbrace{M}_n\, 1\, A,\\
\xi(b) = 0, \underbrace{M}_n\, 0\, B,
\end{array}
\end{equation}
where $M,\, A$ and $B$ are blocks of digits, the quantity of
numbers in the block $M$ is $n$ and $0,A \geq 0,B$.

If the first $m+1$ binary digits of $a$ and $b$ coincide, then the
first $m$ digits of $f(a)$ and $f(b)$. Fom the equality $$
\xi(f(x)) = f(\xi(x)),
$$ obtain that~(\ref{eq:05}) implies
\begin{equation}\label{eq:05-a} |f(\xi(a))
-f(\xi(b))|<2^{-n}.\end{equation}

Denote by $M'$ the block $M$ without its the first digit. By the
name of block with a line above (for instance $\overline{A}$,
$\overline{M}'$ etc.) denote the block, which is obtained from the
former after the inversion of all its digits.

Consider the first case, i.e. $\xi(a)$ and $\xi(b)$ are of the
form~(\ref{eq:06}). If the first digit of $M$ is zero, then
$$ f(\xi(a)) = 0, \underbrace{M'\, 1}_{n-1}\, 0\,
A,
$$
$$ f(\xi(b)) = 0, \underbrace{M'\,
0}_{n-1}\, 1\, B.
$$
This contradicts to~(\ref{eq:05-a}), because it appears to be that
\begin{equation}\label{eq:04}
f(\xi(a)) - f(\xi(b)) = 2^{-n} + 2^{-n-1}\cdot (0, A - 0,B).
\end{equation}

If the first digit of $M$ equals 1, then
$$ f(\xi(a)) = 0, \underbrace{\overline{M}'\, 0}_{n-1}\, 1\, \overline{A},
$$
$$ f(\xi(b)) = 0, \underbrace{\overline{M}'\,
0}_{n-1}\, 1\, \overline{B}.
$$ This means that \begin{equation}\label{eq:04-a}
f(\xi(b)) - f(\xi(a)) = 2^{-n} + 2^{-n-1}\cdot (0,\overline{B} -
0,\overline{A}) \end{equation} and also
contradicts~(\ref{eq:05-a}).

Consider the second case, i.e. when $\xi(a)$ and $\xi(b)$ are of
the form~(\ref{eq:07}). Similarly to the first case consider
whether the first digits of $M$ is $0$ or $1$.

If the first digit of $M$ equals 0, then
$$
\begin{array}{l}
f(\xi(a)) = 0, \underbrace{M'}_{n-1}\, 1\, A,\\
f(\xi(b)) = 0, \underbrace{M'}_{n-1}\, 0\, B,
\end{array}
$$ whence the difference $f(\xi(a)) - f(\xi(b))$
is of the form~(\ref{eq:04}) and we have a contradiction
with~(\ref{eq:05-a}).

If the first digit of $M$ is 1, then
$$
\begin{array}{l}
f(\xi(a)) = 0, \underbrace{\overline{M}'}_n\, 0\, \overline{A},\\
f(\xi(b)) = 0, \underbrace{\overline{M}'}_n\, 1\, \overline{B},
\end{array}
$$ and the difference $f(\xi(b)) - f(\xi(a))
$ is of the form~(\ref{eq:04-a}) and it is also contradicts
to~(\ref{eq:05-a}).
\end{proof}

\begin{corollary}\label{corLemaMain}
Let $t,\, n\in \mathbb{N}$ and $m=m_\xi(n)$. If the first $m+t$
binary digits of $a,\, b\in [0,\, 1]$ coincide, then
$|\xi(a)-\xi(b)|< 2^{-(n+t)}$.
\end{corollary}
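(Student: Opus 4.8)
The plan is to prove the statement by induction on $t$, with Lemma~\ref{lema:MainR} playing the role of the single step that gets iterated. For $t=0$ the assertion is literally the defining property of $m=m_\xi(n)$: if $a,b$ agree in their first $m$ binary digits then $|\xi(a)-\xi(b)|<2^{-n}$; and for $t=1$ it is exactly Lemma~\ref{lema:MainR}. So it suffices to run an induction whose step promotes the estimate ``first $m+t$ digits equal $\Rightarrow$ $|\xi(a)-\xi(b)|<2^{-(n+t)}$'' to ``first $m+t+1$ digits equal $\Rightarrow$ $|\xi(a)-\xi(b)|<2^{-(n+t+1)}$''. (If $\mathbb N$ is taken to start at $1$, the base of the induction is Lemma~\ref{lema:MainR} at $t=1$; if it includes $0$, the base is the definition of $m_\xi(n)$ at $t=0$.)

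For the inductive step, suppose $a,b\in[0,1]$ have the same first $m+t+1$ binary digits. First I would pass to $f(a),f(b)$. By the description of $f$ on binary expansions (Lemma~\ref{note:f}), $f$ deletes the leading binary digit and, according to its value, either keeps or inverts all the remaining digits; since $a$ and $b$ share that leading digit, $f$ acts in the same way on both, and hence $f(a)$ and $f(b)$ share their first $m+t$ binary digits. Applying the inductive hypothesis to the pair $f(a),f(b)$ gives $|\xi(f(a))-\xi(f(b))|<2^{-(n+t)}$, and rewriting by the functional equation~(\ref{eq:08}), which says $\xi\circ f=f\circ\xi$, this becomes $|f(\xi(a))-f(\xi(b))|<2^{-(n+t)}$. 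Also, since $a$ and $b$ already agree in at least their first $m+t$ digits, the inductive hypothesis applied directly to $a,b$ gives $|\xi(a)-\xi(b)|<2^{-(n+t)}$.

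It remains to upgrade $|\xi(a)-\xi(b)|<2^{-(n+t)}$ to $<2^{-(n+t+1)}$, and this is precisely the contradiction argument in the proof of Lemma~\ref{lema:MainR} with $n$ replaced by $n+t$. Assuming for contradiction (after possibly swapping $a,b$) that $\xi(a)-\xi(b)\in[2^{-(n+t+1)},\,2^{-(n+t)})$, the binary expansions of $\xi(a)$ and $\xi(b)$ are forced into one of the two shapes used there, namely the analogues of~(\ref{eq:06}) and~(\ref{eq:07}) with $n$ replaced by $n+t$. Carrying out the same case distinction on the leading block --- applying $f$ to $\xi(a)$ and $\xi(b)$ and reading off the resulting digits --- then yields $|f(\xi(a))-f(\xi(b))|\ge 2^{-(n+t)}$, contradicting the inequality obtained in the previous paragraph. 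Hence $|\xi(a)-\xi(b)|<2^{-(n+t+1)}$, which is the claim for $t+1$.

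The hard part is not conceptual but a matter of care in the bookkeeping. One must check that the case analysis in the proof of Lemma~\ref{lema:MainR} is genuinely insensitive to the value of $n$ --- it is, since that analysis uses only the self-similar ``shift and possible inversion'' structure of $f$, never a concrete $n$ --- so that it may be quoted verbatim with $n$ replaced by $n+t$. One must also be mildly careful with dyadic rationals, which possess two binary expansions: fixing one convention (say, the terminating expansion) throughout is harmless, because $\xi$ is continuous and the entire argument is about the number $|\xi(a)-\xi(b)|$ rather than about the expansions themselves.
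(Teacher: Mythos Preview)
Your proof is correct and follows exactly the approach the paper indicates: the paper's proof of this corollary is a single sentence stating that one should argue by induction on $t$ using the same reasoning as in Lemma~\ref{lema:MainR}, and you have carried out precisely that induction, correctly isolating the two ingredients needed in the step (the inductive hypothesis applied both to $a,b$ and to $f(a),f(b)$, and the contradiction case analysis with $n$ replaced by $n+t$). Your remarks about the case analysis being independent of the specific value of $n$ and about the handling of dyadic rationals are appropriate clarifications that the paper leaves implicit.
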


\begin{proof}
This corollary should be proved by induction of $t$ with the same
reasonings an in Lemma~\ref{lema:MainR}.
\end{proof}

Corollary~\ref{corLemaMain} admits the following geometrical
interpretation.

Let the number $n$ be fixed. Notice, that the coincidence of the
first $m$ digits of $a$ and $b$ means that these numbers are
between two neighbor points of $A_{m+1}$. Fix arbitrary neighbor
points of $A_m$, say $\alpha(t,\, m)$ and $\alpha(t+1,\, m)$.

It follows from the construction of $m$ by $n$ that for
\begin{equation}\label{eq:02} x\in [\alpha(t,\, m),\,
\alpha(t+1,\, m)]
\end{equation} the
graph of $h$ belongs to the rectangle of the height $2^{-n}$,
whose sides are parallel to coordinate axes.

Lemma~\ref{lema:MainR} means that if we divide this rectangle into
four ones by the lines, which are parallel to sides and pass
through the middle points of sides, then the graph of $\xi$ would
belong to exactly two of the obtained rectangles.

Corollary~\ref{corLemaMain} means that if each of the smaller
rectangles, which contains the graph of $\xi$, will be divided in
the same manner, then the the graph would be contained exactly in
two new rectangles and the process can be continued to infinity.

\begin{definition}\label{def:02}
Let $n\in \mathbb{N}$, $m = m_\xi(n)$ and $t,\, 0\leqslant
t\leqslant 2^{m-1}$ be a number. We shall call the following line
segments the \textbf{lines of the net}.

1. Lines, which are parallel to $x$-axis for
$x\in\nobreak\discretionary{}{\hbox{\ensuremath{\in}}}{}
[\alpha(t,\, m),\, \alpha(t+1,\, m)]$ and bound the graph of
$\xi$, if the distance between them is $2^{-n}$;

2. Vertical line segments, which connect the ends of lines, which
are described in the item 1 above.

3. Each of line segments, which is constructed in the geometrical
interpretation of Lemma~\ref{lema:MainR} and
Corollary~\ref{corLemaMain}.

The points of intersection of the lines of the net will be called
the \textbf{knots of the net}.

We shall call the rectangles, which are obtained in the item 3
above by new lines of the net and old lines of the net, the
\textbf{rectangles of the net}. Notice, that new lines of the net
are constructed only in the case when the old rectangle contains
the graph of $\xi$.
\end{definition}

\begin{note}Notice, that lines of the net are
defines in item 1. of Definition~\ref{def:02} not unambiguously.
The only thing, which is unambiguous, is the distance between
them.

All other is defined unambiguously.
\end{note}

The following lemma follows from Corollary~\ref{corLemaMain} and
the notions above.

\begin{lemma}\label{noteSitka}
If after the recurrent division of the rectangle of the net,
mentioned at item 3 of Definition~\ref{def:02}, the graph does not
contains in two new neighbor vertical rectangles, then it passes
through the knot of the net, which is the intersection of new
lines of the net.
\end{lemma}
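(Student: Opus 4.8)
The plan is to unwind the geometric hypothesis, apply Corollary~\ref{corLemaMain} to each half of the rectangle separately, and then read off the value of $\xi$ at the midpoint. Let $R$ be the rectangle of the net that is being subdivided at the stage in question; by its construction $R$ lies over a dyadic interval $I$ of the $x$-axis (an interval $[\alpha(t,\cdot),\alpha(t+1,\cdot)]$ across which a fixed number of leading binary digits of $x$ are constant) and has height $2^{-p}$ for the corresponding $p$, and by Corollary~\ref{corLemaMain} the graph of $\xi|_I$ is contained in $R$. Write $x_0$ for the midpoint of $I$, write $I_L$ and $I_R$ for the closed left and right halves of $I$ (so $I_L\cap I_R=\{x_0\}$), write $y_0$ for the mid-height of $R$, and denote by $R^{tl},R^{bl},R^{tr},R^{br}$ the four quarters of $R$ cut off by the new lines of the net. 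The new lines of the net are the vertical segment through $x_0$ and the horizontal segment through $y_0$ inside $R$, so the knot claimed in the statement is exactly the point $(x_0,y_0)$.

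First I would observe that over $I_L$ one more leading binary digit of $x$ is constant than over $I$, so Corollary~\ref{corLemaMain}, with $t$ raised by one, confines the graph of $\xi|_{I_L}$ to a horizontal band of height $2^{-(p+1)}$; being contained in $R$ and of half its height, this band is $\overline{R^{tl}}$ or $\overline{R^{bl}}$. The identical argument over $I_R$ puts the graph of $\xi|_{I_R}$ into $\overline{R^{tr}}$ or $\overline{R^{br}}$. Thus the graph of $\xi|_I$ meets exactly two of the four new rectangles, one from the left column and one from the right column of $R$; this is the geometric content of Lemma~\ref{lema:MainR} already recorded before Definition~\ref{def:02}.

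Now the hypothesis of the lemma excludes the case in which these two quarters lie in one and the same horizontal strip of $R$ (the configuration in which the graph is contained in two new rectangles that are neighbours across the new vertical line). What remains is the ``diagonal'' configuration, and since the two diagonal configurations are handled identically with the roles of top and bottom interchanged, I may assume the graph of $\xi|_{I_L}$ lies in $\overline{R^{tl}}$ and the graph of $\xi|_{I_R}$ lies in $\overline{R^{br}}$. Since $\xi$ is continuous, these closed containments hold at the endpoint $x_0$ as well (Corollary~\ref{corLemaMain} gives them at all interior points of $I_L$ and of $I_R$, and letting $x\to x_0$ propagates them to $x_0$). Evaluating at $x_0\in I_L\cap I_R$ then gives $\xi(x_0)\ge y_0$ from the first containment and $\xi(x_0)\le y_0$ from the second, hence $\xi(x_0)=y_0$; that is, the graph of $\xi$ passes through $(x_0,y_0)$, which is the intersection of the new lines of the net and therefore a knot of the net.

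The two genuinely delicate points, which I would spell out carefully rather than at the level above, are: first, the bookkeeping that relates the number of fixed leading digits over $R$, $I_L$, $I_R$ to the function $m_\xi$, so that Corollary~\ref{corLemaMain} applies with exactly the indices claimed; and second, the justification that the height-$2^{-(p+1)}$ band supplied by Corollary~\ref{corLemaMain} really is one of the canonical quarters of $R$, i.e.\ that its boundaries coincide with the new horizontal line of the net and it does not straddle it --- this is where one invokes that the bounding lines produced by Corollary~\ref{corLemaMain} are themselves lines of the net in the sense of Definition~\ref{def:02}. Everything else is the short continuity-and-midpoint argument above together with the trivial symmetric sub-case, so I expect the second point to be the only real obstacle.
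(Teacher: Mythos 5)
Your argument is correct and follows the paper's intended route: the paper itself offers no written proof beyond asserting that the lemma follows from Corollary~\ref{corLemaMain} and the geometric interpretation (illustrated by Figure~\ref{Fig-proof}), and your reduction to the diagonal configuration followed by the continuity argument at the midpoint, giving $\xi(x_0)\geq y_0$ and $\xi(x_0)\leq y_0$ simultaneously, is exactly the step the paper leaves implicit. The ``delicate point'' you flag --- that the half-height band over each half-interval is one of the canonical quarters rather than a band straddling the new horizontal line --- is precisely what the paper takes for granted in the paragraph interpreting Lemma~\ref{lema:MainR}, so relative to the paper's own treatment your write-up is at least as complete.
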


Figure~\ref{Fig-proof} contains the interval $[\alpha(t,\, m),\,
\alpha(t+1,\, m)]$ and the graph of $\xi$.

We consider the upper and lower bound of the rectangle as the
bounds, which some from the uniformly continuity of $\xi$.

\begin{figure}[htbp]
\begin{center}
\begin{center}
\begin{picture}(100,120)
\put(100,10){\line(0,1){100}} \put(0,110){\line(1,0){100}}
\put(0,10){\line(0,1){100}} \put(0,10){\line(1,0){100}}

\qbezier[25](0,60)(50,60)(100,60)
\qbezier[25](50,10)(50,60)(50,110)

\qbezier[25](0,35)(50,35)(100,35)
\qbezier[12](25,10)(25,35)(25,60)

\qbezier[12](50,45.5)(75,45.5)(100,45.5)
\qbezier[7](75,35)(75,47.5)(75,60)

\qbezier(0,20)(50,55)(100,40)

\put(-14,0){$\alpha(t,\, m)$}  \put(80,0){$\alpha(t+1,\, m)$}

\put(21,30){\LARGE{A}} \put(71,30){\LARGE{D}}
\put(21,80){\LARGE{B}} \put(71,80){\LARGE{C}}

\put(11,19){1} \put(11,45){2} \put(37,45){3} \put(37,19){4}
\end{picture}
\end{center}
\end{center} \caption{Picture}\label{Fig-proof}
\end{figure}

The rectangle is divided into 4 rectangles, which are called $A,\,
B,\, C,\, D$. The graph of $\xi$ contains in the rectangles $A$
and $D$.

Rectangle $A$ is divides into $4$ parts, which are named ba
numbers from 1 till 4. It is clear that the graph of $\xi$ is
contained in rectangles 1 and 3. By Lemma~\ref{def:02} and
Corollary~\ref{corLemaMain} this means that this graph passes
through the point of intersection of new lines of the net and the
graph of $\xi$ has no points in the rectangles $2$ and $4$.

\begin{lemma}\label{lemaVyzolSitky}
If the map $\xi$ is not constant, then for any interval $J$ of the
form~(\ref{eq:02}) the graph of $\xi$ pathes through the new knot
of the net.
\end{lemma}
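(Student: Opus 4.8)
The plan is to argue by contradiction, feeding the dichotomy of Lemma~\ref{noteSitka} into Proposition~\ref{theorNeStale}. Fix $n$, set $m=m_\xi(n)$, and let $J=[\alpha(t,m),\alpha(t+1,m)]$ be an interval of the form~(\ref{eq:02}). Over $J$ the graph of $\xi$ lies in a rectangle $R_0$ of the net of height $2^{-n}$, and we carry out the recursive bisection described in item~3 of Definition~\ref{def:02}: at each stage a rectangle of the net over $J$ is split into four cells by its two mid-lines, and by Lemma~\ref{noteSitka} the graph of $\xi|_J$ either passes through the central knot of that rectangle or is contained in the union of two horizontally neighbouring sub-rectangles. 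Suppose, for contradiction, that over $J$ the graph never passes through a knot of the net; then at \emph{every} level of the recursion the second (``flat'') alternative must occur.

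I would then extract from this the quantitative statement that $\xi$ is constant on $J$. Two horizontally adjacent sub-rectangles of a rectangle of the net share a vertical edge, so their union is contained in one of the two horizontal half-bands of that rectangle, i.e.\ in a rectangle of the net of exactly half the height. Hence the flat alternative at the first level already forces the graph of $\xi|_J$ into a rectangle $R_1\subset R_0$ of height $2^{-(n+1)}$; applying it again inside $R_1$ gives $R_2\subset R_1$ of height $2^{-(n+2)}$; and, inductively, after $k$ steps the graph of $\xi|_J$ lies in a rectangle of height $2^{-(n+k)}$. Letting $k\to\infty$ forces $\operatorname{diam}\xi(J)=0$, so $\xi$ is constant on the non-degenerate interval $J$. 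Since $\xi$ is a continuous solution of~(\ref{eq:08}), Proposition~\ref{theorNeStale} then gives that $\xi$ is constant on all of $[0,1]$, contradicting the hypothesis that $\xi$ is not constant. Therefore the recursive bisection over $J$ must, at some finite level, fall into the first alternative of Lemma~\ref{noteSitka}; that is, the graph of $\xi$ passes through a (new) knot of the net lying over $J$ --- which is the assertion.

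The routine parts are the verification that this bisection is well defined (each ``flat'' step produces a genuine rectangle of the net over $J$ whose two mid-lines are again lines of the net, so that Lemma~\ref{noteSitka} applies verbatim at the next level) and the bookkeeping with Lemma~\ref{lema:MainR} and Corollary~\ref{corLemaMain}, which is precisely what guarantees that at each level the graph fits into the four-cell picture rather than meeting three or four of the cells. The one point I would write out most carefully is this last one: one must be sure that when the graph is declared to lie in ``two horizontally neighbouring sub-rectangles'' it genuinely lies in their union, so that the enclosing rectangle's height really does halve at every step; this is where Lemma~\ref{noteSitka} is used in full strength, and it is the only real obstacle in the argument.
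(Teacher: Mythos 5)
Your argument is correct and is essentially the paper's own proof in contrapositive form: the paper uses Proposition~\ref{theorNeStale} at the outset to get $b-a=\max_J\xi-\min_J\xi>0$ and then notes that the heights of the net rectangles halve at each step, so by Lemma~\ref{noteSitka} a knot must be hit before the height drops below $b-a$, while you let the halving run to the limit, deduce constancy on $J$, and invoke Proposition~\ref{theorNeStale} at the end. Same two ingredients, same halving mechanism; no gap.
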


\begin{proof}
By Proposition~\ref{theorNeStale}, the maps $\xi$ is not constant
on $J$. Then denote $a = \min\limits_{x\in J}\xi(x)$ and $b =
\max\limits_{x\in J}\xi(x)$ and notice that $a\neq b$.

Since the height of each rectangle is divided by 2 on each step,
then there will be a step, when this height would be less then
$b-a$. By Lemma~\ref{noteSitka} is means that not later then after
this number of steps the graph of $\xi$ would pass through a knot
of the net.
\end{proof}

\begin{corollary}
If the maps $\xi$ is not constant, then on any $J$ of the
form~(\ref{eq:02}) there is at least two different knots of the
net, which belong to the graph of $\xi$.
\end{corollary}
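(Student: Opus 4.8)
The plan is to deduce the corollary from Lemma~\ref{lemaVyzolSitky} by applying that lemma twice: once to $J$ itself and once to a suitable half of $J$. First I would invoke Lemma~\ref{lemaVyzolSitky}: since $\xi$ is not constant, the graph of $\xi$ over $J$ passes through some knot $P$ of the net. Observe next that the abscissa of $P$ must be the midpoint $c$ of $J$. Indeed, in the subdivision process underlying the proof of Lemma~\ref{lemaVyzolSitky} the ``current'' rectangle always spans $J$ horizontally (when the graph is caught in a horizontally adjacent pair of sub-rectangles it shrinks only vertically, staying over all of $J$; when it is caught in a diagonal pair the process stops), so its vertical mid-line always lies over $c$, and the knot through which the graph is forced is the center of that rectangle. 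Thus $P=(c,\,\xi(c))$ with $c=\alpha(2t+1,\,m+1)$ a dyadic point strictly inside $J$, where I write $J=[\alpha(t,m),\alpha(t+1,m)]$ with $m=m_\xi(n)$.

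Then I would pass to the left half $J_1=[\alpha(t,m),\,c]=[\alpha(2t,m+1),\,\alpha(2t+1,m+1)]$. By Corollary~\ref{corLemaMain} the graph of $\xi$ over $J_1$ is confined to a rectangle of height $2^{-(n+1)}$, so the net construction of Definition~\ref{def:02} and Lemma~\ref{noteSitka} apply to $J_1$ just as to $J$, and the sub-rectangles generated over $J_1$ are among the rectangles of the net over $J$. Moreover $\xi$ is not constant on $J_1$: were it constant there, Proposition~\ref{theorNeStale} would force $\xi$ to be constant on all of $[0,\,1]$, contrary to hypothesis. Hence $a_1:=\min_{x\in J_1}\xi(x)<\max_{x\in J_1}\xi(x)=:b_1$, and running the subdivision argument inside $J_1$ (the graph lies in two sub-rectangles at each step, and once the current height drops below $b_1-a_1$ these two must be the diagonal pair, whose only common point is the center) produces a knot $P_1$ of the net over $J$ on the graph of $\xi$, with abscissa the midpoint of $J_1$, namely $(4t+1)/2^{m+1}$.

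Finally I would note that $P\ne P_1$, since their abscissae are the distinct dyadic rationals $(4t+2)/2^{m+1}$ and $(4t+1)/2^{m+1}$; as both are knots of the net over $J$ lying on the graph of $\xi$, the corollary follows. (Iterating on ever smaller left halves would in fact yield infinitely many distinct net knots on the graph, but two suffice.) I do not anticipate a real obstacle here; the only point needing care is the bookkeeping that the objects built over $J_1$ are genuinely knots and rectangles of the net over $J$, and that the height bound $2^{-(n+1)}$ used to start the net over $J_1$ is precisely what Corollary~\ref{corLemaMain} supplies.
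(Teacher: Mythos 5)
The second half of your plan is sound and is essentially the paper's intended route: by Proposition~\ref{theorNeStale} the map $\xi$ is non-constant on the half $J_1$, Corollary~\ref{corLemaMain} supplies exactly the height bound $2^{-(n+1)}$ needed to restart the subdivision of Definition~\ref{def:02} inside $J_1$, and the rectangles and knots produced there are genuinely rectangles and knots of the net over $J$, so Lemma~\ref{noteSitka} forces a knot of the net on the graph over $J_1$. The paper itself simply says "repeat the proof of Lemma~\ref{lemaVyzolSitky}", which is this argument.

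The genuine gap is your Claim that the knot $P$ produced over $J$ has abscissa equal to the midpoint $c$ of $J$, on which your whole distinctness step rests. The "shrink only vertically, keep the full width of $J$" version of the process is not supported by Lemma~\ref{lema:MainR} and Corollary~\ref{corLemaMain}: those results give oscillation $<2^{-(n+j)}$ only over dyadic subintervals on which $m_\xi(n)+j$ binary digits are fixed, i.e.\ the vertical refinement is tied to halving the width. After one "horizontally adjacent" step you only know the graph over each half of $J$ oscillates by less than $2^{-(n+1)}$, not $2^{-(n+2)}$, so you cannot subdivide the full-width, half-height rectangle again and conclude the graph sits in two of its quarters. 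Indeed the paper's own Figure~\ref{Fig-proof} shows the typical situation: the first-level pair is the horizontally adjacent $A,D$, and the knot is forced only one level deeper, at the center of $A$ --- i.e.\ over the midpoint of the \emph{left half} of $J$, not over $c$; in general no knot at abscissa $c$ need lie on the graph at all. Since the abscissa of $P$ is therefore not pinned down, $P$ may itself lie over $J_1$ and coincide with your $P_1$, and the proof of distinctness collapses. The repair is easy and stays within your framework: run the same subdivision argument in \emph{both} halves $J_1$ and $J_2$ of $J$ (non-constancy on each follows from Proposition~\ref{theorNeStale}). Each forced knot is the center of a rectangle of the net lying over a dyadic subinterval of the corresponding half, hence its abscissa is strictly interior to that half; as $J_1$ and $J_2$ meet only at $c$, the two knots are distinct.
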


\begin{proof}
Proof the this corollary is the same as one of
Lemma~\ref{lemaVyzolSitky}.
\end{proof}

\begin{lemma}\label{lema:03}
If the graph of $\xi$ is not constant on sone interval $J$ of the
form~(\ref{eq:02}) and passes through different knots of the net,
then $\xi$ is piecewise linear on the whole $[0,\, 1]$ with finite
number of intervals of linearity.
\end{lemma}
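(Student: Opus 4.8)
The plan is to deduce Lemma~\ref{lema:03} from Proposition~\ref{theorNePriama} in two stages: first I would show that a continuous solution $\xi$ which is not constant on $J$ must in fact be a line segment on some non-degenerate subinterval of $[0,1]$, and then I would use Proposition~\ref{theorNePriama} to pass from this to piecewise linearity on all of $[0,1]$, adding a separate argument that the number of pieces is finite.

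The crux is the following claim, which I would isolate and prove first: if the graph of $\xi$ over the base $I$ of a net rectangle $R=I\times[b_1,b_2]$ passes through the two diagonally opposite corners of $R$, then $\xi|_I$ is linear and its graph is the diagonal of $R$. I would prove this by induction on the number $s$ of successive bisections of $R$, showing that at each level the graph of $\xi$ passes through the diagonally opposite corners of every level-$s$ sub-rectangle. For the inductive step, fix such a sub-rectangle $R'$; over the half of its base containing the corner of $R'$ lying on the graph, the graph must contain that corner, hence by the four-rectangle analysis underlying Lemma~\ref{noteSitka} it lies in the corresponding corner sub-rectangle of $R'$; the same holds on the other half, the two chosen sub-rectangles are diagonally opposite, so by Lemma~\ref{noteSitka} the graph passes through their common corner, which is the midpoint of the diagonal of $R'$ and therefore lies on the diagonal of $R$. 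Here one uses that the net is built by repeated bisection, so that over $I$ the vertical grid at level $s$ is exactly $\{\,b_1+k(b_2-b_1)/2^s : 0\le k\le 2^s\,\}$, and that by Corollary~\ref{corLemaMain} the part of the graph over a level-$s$ subinterval of $I$ is confined to vertical extent $(b_2-b_1)/2^s$; since the inductively known endpoint values of that piece differ by exactly this amount, the confining net rectangle must be the one spanning them. Letting $s\to\infty$ and using continuity of $\xi$ together with density of dyadic points gives $\xi|_I$ equal to the diagonal of $R$.

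Next I would produce such a configuration from the hypotheses. Since $\xi$ is not constant on $J$, it is not constant on $[0,1]$ by Proposition~\ref{theorNeStale}, so by Lemma~\ref{lemaVyzolSitky} the graph over $J$ passes through a new knot $P$ of the net; by construction $P$ is the common corner of two diagonally opposite sub-rectangles $S_1$ (over the left half of a base interval) and $S_2$ (over the right half), and the graph passes through $P$, which is one corner of the net rectangle $S_1$. I would then repeatedly bisect $S_1$: over the half of the base of $S_1$ whose endpoint is the $x$-coordinate of $P$, the graph contains $P$, hence lies in the sub-rectangle of $S_1$ having $P$ as a corner; over the other half it lies either in the diagonally opposite sub-rectangle or in the one vertically adjacent to it on the side of $P$. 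In the first case Lemma~\ref{noteSitka} gives a new knot $P_1$ such that the graph over one half of the base of $S_1$ lies in a net rectangle whose diagonally opposite corners are $P_1$ and $P$, and the claim above yields that $\xi$ is linear on that subinterval. In the second case one continues bisecting; but if this "flat" alternative occurred at every step, the graph over the base of $S_1$ would be trapped in strips of height tending to $0$ pinned at the level of $P$, so $\xi$ would be constant there, contradicting Proposition~\ref{theorNeStale} and the non-constancy of $\xi$. Hence after finitely many bisections $\xi$ is a line segment on some non-degenerate subinterval of $[0,1]$, and Proposition~\ref{theorNePriama} then gives that $\xi$ is piecewise linear on $[0,1]$.

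Finally, for the finiteness of the number of intervals of linearity I would revisit the mechanism of Proposition~\ref{theorNePriama}: if $\xi$ is linear on an interval $K$ then it is linear on $f(K)$, which is an interval of length $2|K|$ when $1/2\notin K$ and an interval of the form $[\gamma,1]$ or $[0,\delta]$ of length at least $|K|$ when $1/2\in K$ (this last estimate being $1-\gamma=\max(1-2\alpha,\,2\beta-1)\ge\beta-\alpha$ for $K=[\alpha,\beta]$). Since these lengths are non-decreasing and strictly increase whenever $1/2$ lies outside the current interval, after finitely many steps one obtains finitely many intervals, on each of which $\xi$ is linear, whose union is $[0,1]$; consequently $\xi$ has only finitely many breakpoints. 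I expect the main obstacle to lie in the first claim — the careful bookkeeping of which of the four sub-rectangles the graph occupies at each bisection, and the verification that the bisection structure of the net makes the relevant corners and vertical grid lines coincide, so that the "diagonal corners" hypothesis genuinely propagates to the next level.
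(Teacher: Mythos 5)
Your central claim -- that once the graph of $\xi$ passes through the two diagonally opposite corners of a net rectangle, the bisection mechanism of Lemma~\ref{noteSitka} and Corollary~\ref{corLemaMain} forces the graph through all dyadic points of the diagonal, so that by continuity $\xi$ is linear on the base -- is exactly the engine of the paper's proof (the paper states it as: the graph passes through the midpoint of the segment joining the two knot points, then recursively through a dense subset of that segment), and your final appeal to Proposition~\ref{theorNePriama}, together with the explicit count showing that only finitely many pieces of linearity arise, is sound; the finiteness remark is in fact a useful supplement, since the paper leaves it implicit. The genuine gap is in the middle stage, where you set aside the hypothesis that the graph passes through \emph{two} different knots over $J$ and try to manufacture a diagonal-corner configuration from a single knot $P$ by a bisection case analysis. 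Your termination argument for the ``flat'' alternative is not valid: the vertical confinement you invoke comes from Corollary~\ref{corLemaMain}, which gives oscillation less than $2^{-(n+t)}$ only over base subintervals of length $|I|/2^{t}$, so after $k$ flat steps the graph is pinned within height of order $h/2^{k}$ of the level of $P$ only over a base interval of length of order $w/2^{k}$ adjacent to the $x$-coordinate of $P$, not over the whole base of $S_1$. Hence ``flat at every step'' does not force $\xi$ to be constant there: a map which is affine with sufficiently small slope through $P$ remains in the flat alternative at every bisection and is nowhere constant, so no contradiction with Proposition~\ref{theorNeStale} arises, and your case analysis may simply never produce the rectangle your crux claim needs.

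The repair is to argue as the paper does: the hypothesis of Lemma~\ref{lema:03} (equivalently, the corollary preceding it) already supplies two distinct points $a<b$ of $J$ at which the graph passes through knots of the net. Apply your crux/bisection argument to the rectangle spanned by $(a,\xi(a))$ and $(b,\xi(b))$: Lemma~\ref{noteSitka} yields the midpoint knot, recursion gives a dense set of graph points on the segment joining the two knot points, continuity makes $\xi$ linear on $[a,b]$, and Proposition~\ref{theorNePriama} together with your finiteness count completes the proof. With that substitution your write-up coincides with the paper's argument; as written, however, the derivation of the starting configuration does not go through.
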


\begin{proof}
Let $a,\, b,\ (a<b)$ be points of $J$, where the graph of $\xi$
passes through knots of the net.

By Corollary~\ref{corLemaMain} graph of the map $\xi$ passes
through the point $(c,\, \xi(c))$, which is the middle of the line
segment with ends at $(a,\, \xi(a))$ and $(b,\, \xi(b))$.

We can apply Corollary~\ref{corLemaMain} to line segments $(a,\,
c)$ and $(c,\, b)$ and continue the reasonings infinitely, whence
obtain that the graph of $\xi$ has a dense set of point on the
line segment with ends at $(a,\, \xi(a))$ and $(b,\, \xi(b))$.

It follows from continuity of $\xi$ that its graph is a line
segment for $x\in [a,\, b]$.

Now lemma follows from Proposition~\ref{theorNePriama}.
\end{proof}

Lemma~\ref{lema:03} the following Theorem.

\begin{theorem}\label{theor:01}
Every continuous solution $\xi:\, [0,\, 1]\rightarrow [0,\, 1]$ of
equation~(\ref{eq:09}) is piecewise linear.
\end{theorem}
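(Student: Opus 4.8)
The plan is to deduce Theorem~\ref{theor:01} directly from the machinery assembled above, splitting off the degenerate case in which $\xi$ is locally constant. First I would invoke Proposition~\ref{theorNeStale}: if the continuous solution $\xi$ of $\psi(f)=f(\psi)$ happens to be constant on some subinterval $[\alpha,\beta]\subset[0,1]$, then it is constant on all of $[0,1]$, and a constant map is trivially piecewise linear. Hence from here on I may assume that $\xi$ is not constant on any interval of the form $J=[\alpha(t,m),\,\alpha(t+1,m)]$ appearing in~(\ref{eq:02}).

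Next I would fix one such interval $J$ and run the net argument. By uniform continuity of $\xi$ on the compact $[0,1]$ the quantities $m_\xi(n)$ are well defined, and Lemma~\ref{lema:MainR} together with its iterate Corollary~\ref{corLemaMain} shows that over $J$ the graph of $\xi$ is trapped, at every stage of the dyadic refinement of the net from Definition~\ref{def:02}, inside exactly two of the four sub-rectangles; consequently, whenever the graph does not spread across two vertically adjacent sub-rectangles, it must pass through the new knot of the net (Lemma~\ref{noteSitka}). Since $\xi$ is non-constant on $J$, Lemma~\ref{lemaVyzolSitky} and its corollary guarantee that the graph of $\xi$ actually meets at least two distinct knots of the net lying over $J$.

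Finally I would apply Lemma~\ref{lema:03}: once the graph of $\xi$ passes through two distinct knots of the net over $J$, repeated use of Corollary~\ref{corLemaMain} forces the graph to contain a dense subset of the straight segment joining those two points, and continuity of $\xi$ upgrades this to the graph being exactly that segment on the corresponding closed subinterval. Proposition~\ref{theorNePriama} then propagates linearity on one subinterval to piecewise linearity with finitely many pieces on all of $[0,1]$. Combining the two cases gives that every continuous solution of $\psi(f)=f(\psi)$ is piecewise linear.

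The step I expect to be the genuine obstacle is not this final assembly, which is essentially bookkeeping, but making sure that the net of Definition~\ref{def:02} is well defined independently of the non-unique choice of the two enclosing horizontal lines at stage~1, and that the self-similarity encoded in Corollary~\ref{corLemaMain} really iterates — i.e. that the binary-digit description of $f$ from Lemma~\ref{note:f} interacts with the functional equation $\xi(f(x))=f(\xi(x))$ exactly as in the two case analyses of Lemma~\ref{lema:MainR}, so that the midpoint-passing property of the graph persists through every refinement. Once that persistence is secured, density of graph points on a chord plus continuity yields the line segment, and the theorem follows.
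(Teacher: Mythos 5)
Your proposal follows essentially the same route as the paper: the degenerate constant case is handled by Proposition~\ref{theorNeStale}, the net construction of Definition~\ref{def:02} with Lemma~\ref{lema:MainR}, Corollary~\ref{corLemaMain}, Lemmas~\ref{noteSitka} and~\ref{lemaVyzolSitky} forces the graph through two knots, and Lemma~\ref{lema:03} together with Proposition~\ref{theorNePriama} yields piecewise linearity on all of $[0,\,1]$. This is exactly how the paper derives Theorem~\ref{theor:01}, so the argument is correct as proposed.
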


\subsection{Piecewise linearity of monotone
self-semiconjugation}\label{pr-Sect-monotone}

Assume that $\xi:\, [0,\, 1]\rightarrow [0,\, 1]$ is a piecewise
linear solution of~(\ref{eq:08}).

In the same way as we have proved Proposition~\ref{theorNeStale}
we can prove the following proposition.

\begin{proposition}\label{pr-Theor-1}
If $\xi$ is monotone on some interval $M=[\alpha,\, \beta]\subset
[0,\, 1]$, then $\xi$ is piecewise monotone on $[0,\, 1]$.
\end{proposition}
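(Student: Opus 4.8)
The plan is to imitate the proof of Proposition~\ref{theorNeStale} (and of Proposition~\ref{theorNePriama}), replacing the word ``constant'' throughout by ``piecewise monotone with finitely many laps''. For an interval $J\subseteq [0,\,1]$ write $N(J)$ for the number of maximal intervals of monotonicity of $\xi|_J$; the goal is to show that $N([0,\,1])$ is finite. The only structural input is the functional equation~(\ref{eq:08}), which gives $\xi\circ f=f\circ\xi$ on all of $[0,\,1]$, hence $\xi(f(J))=f(\xi(J))$ for every interval $J$, exactly the commutative square used in the proof of Proposition~\ref{theorNeStale}. We may assume $\alpha<\beta$, and, after splitting $M$ at $1/2$ if necessary, that $1/2$ is not an interior point of the starting interval.

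The first step is a transfer inequality. Suppose $1/2$ is not interior to $J$. Then $f|_J$ is linear, hence a monotone homeomorphism of $J$ onto $f(J)$, and $\xi|_{f(J)}=f\circ(\xi|_J)\circ(f|_J)^{-1}$. Since $(f|_J)^{-1}$ is monotone, the map $(\xi|_J)\circ(f|_J)^{-1}$ still has $N(J)$ laps, each carried onto a subinterval of $[0,\,1]$; postcomposing with $f$, which has exactly two laps with break at $1/2$, multiplies the lap count of each piece by at most $2$, so $N(f(J))\le 2N(J)$. If instead $1/2$ is interior to $J$, split $J$ at $1/2$ into $J^-$ and $J^+$; then $N(J^-)+N(J^+)\le N(J)+1$, and $f(J)=f(J^-)\cup f(J^+)$ is again an interval (of the form $[\delta,\,1]$, because $f(1/2)=1$), so $N(f(J))\le 2N(J^-)+2N(J^+)\le 2N(J)+2$. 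In either case $N(f(J))\le 2N(J)+2$.

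The second step is the covering argument, carried out precisely as in Proposition~\ref{theorNeStale}: as long as $1/2\notin f^{\,j}(M)$, the interval $f^{\,j+1}(M)$ is twice as long as $f^{\,j}(M)$, so after finitely many iterations $1/2\in f^{\,k_{0}}(M)=[a,\,b]$; writing $f([a,\,b])=[\min(2a,\,2-2b),\,1]$ and iterating $f$ on intervals of the shape $[\delta,\,1]$, a short case analysis (identical to the one in Proposition~\ref{theorNeStale}) gives $f^{\,k}(M)=[0,\,1]$ for some finite $k$. Since $\xi$ is monotone on $M$ we have $N(M)=1$, and applying the transfer inequality $k$ times yields $N([0,\,1])=N(f^{\,k}(M))<\infty$, i.e. $\xi$ is piecewise monotone on $[0,\,1]$.

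The main obstacle, exactly as in Proposition~\ref{theorNeStale}, is the covering step: one must check cleanly that the iterates $f^{\,k}(M)$ exhaust $[0,\,1]$ after finitely many steps, keeping straight the case distinction according to whether $1/2$ lies in the interior of the current interval. Once this bookkeeping is in place, the lap-count estimate in the second paragraph is routine; this is why the remark preceding the proposition asserts that it can be proved ``in the same way'' as Proposition~\ref{theorNeStale}.
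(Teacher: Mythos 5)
Your proof is correct and takes essentially the same route as the paper: both use the relation $\xi\circ f=f\circ\xi$ to transfer piecewise monotonicity from an interval to its image under $f$ and iterate until $f^{k}(M)=[0,\,1]$, exactly in the spirit of Proposition~\ref{theorNeStale}. Your explicit lap-count bound $N(f(J))\le 2N(J)+2$ merely makes precise the bookkeeping that the paper's shorter argument leaves implicit.
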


\begin{proof}
Assume, that $\xi$ is piecewise monotone on $[0,\, b]$ for some
$b,\, b\leq 1/2$. Then equation~(\ref{eq:08}) can be rewritten as
$$ \xi(2x)=f(\xi(x)).
$$ As $f$ is piecewise monotone, then $\xi$ follows to be
piecewise monotone on $[0,\, 2b]$. Repeating these reasonings for
several times if necessary, obtain the proof of the theorem.

Assume, that $\xi$ is piecewise monotone on $[a,\, 1]$, where
$a\geq 1/2$.  Then equation~(\ref{eq:08}) can be rewritten as
$$ \xi(2-2x)=f(\xi(x)).
$$ Nevertheless, this equation determines $\xi$ to be piecewise
monotone on $f([a,\, 1]) = [0,\, 2-2a]$ and the case is reduced to
previous one.

Case of $\xi$ being piecewise monotone on $[a,\, b]\subseteq [0,\,
1/2]$ and on $[a,\, b]\subseteq [1/2,\, 1]$ should be considered
similarly.
\end{proof}

\begin{lemma}\label{pr-lema-4}
The inclusion $\xi(0)\in \{ 0,\, 2/3\}$ holds.
\end{lemma}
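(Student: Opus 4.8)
The plan is to exploit the fact that $0$ is a fixed point of $f$, together with the functional equation~(\ref{eq:08}) itself. First I would substitute $x=0$ into~(\ref{eq:08}): since $f(0)=0$ by formula~(\ref{eq:92}), the equation $\xi(f(0))=f(\xi(0))$ collapses to $\xi(0)=f(\xi(0))$. This says precisely that the real number $\xi(0)$ is a fixed point of $f$.

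Next I would determine all fixed points of $f$. Solving $f(x)=x$ on the two branches of~(\ref{eq:92}): on $[0,1/2)$ the equation $2x=x$ gives $x=0$, and on $[1/2,1]$ the equation $2-2x=x$ gives $x=2/3$. Hence $\mathrm{Fix}(f)=\{0,\,2/3\}$, and combining with the previous paragraph we get $\xi(0)\in\{0,\,2/3\}$, which is the assertion. This mirrors the argument already used for $\eta(0)$ in Lemma~\ref{lema:h-0} (where the fixed points of $f_v$ appeared instead), so no new machinery is needed.

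There is essentially no obstacle here: the only thing to be careful about is that this is a statement about the single value $\xi(0)$, not about $\xi$ on a neighbourhood of $0$, so I would not try to say anything stronger at this stage — the finer dichotomy (constant $2/3$ versus $\xi(0)=0$ with $\xi(A_n)\subseteq A_n$) is handled later via Theorems~\ref{theor:01} and~\ref{pr-theor-2}. The proof is a one-line substitution plus the elementary computation of $\mathrm{Fix}(f)$.
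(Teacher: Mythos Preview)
Your proposal is correct and follows essentially the same approach as the paper: substitute $x=0$ into the functional equation~(\ref{eq:08}) to conclude that $\xi(0)$ is a fixed point of $f$, then identify $\mathrm{Fix}(f)=\{0,\,2/3\}$. The paper phrases this via a commutative diagram rather than direct substitution, but the content is identical.
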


Notice, that $2/3$ is positive fixed point of $f$.

\begin{proof}

It follows from the functional equation~(\ref{eq:08}), that the
following diagram is commutative
$$ \xymatrix{
0 \ar^{f}[r] \ar_{\xi}[d] & 0 \ar^{\xi}[d]\\
\xi(0) \ar^{f}[r] & \xi(0). }
$$
It means, that $\xi(0)$ is a fixed point of $f$, and proves the
lemma.
\end{proof}

\begin{lemma}\label{pr-lema-6}
Either $\xi(x)=2/3$ for all $x\in [0,\, 1]$, or $\xi(0)=0$.
\end{lemma}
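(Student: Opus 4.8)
The plan is to argue on the alternative: I assume $\xi(0)\neq 0$ and deduce that $\xi$ is identically $2/3$. First, by Lemma~\ref{pr-lema-4} the value $\xi(0)$ is a fixed point of $f$, and $f$ has exactly the two fixed points $0$ and $2/3$ (the equations $2x=x$ and $2-2x=x$ give $x=0$ and $x=2/3$). Hence $\xi(0)\neq 0$ forces $\xi(0)=2/3$.

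Next I would use the local linear structure of $\xi$ near the left endpoint. Since $\xi$ is piecewise linear (by hypothesis in this section, and in any case by Theorem~\ref{theor:01}), there are $\delta\in(0,1)$ and a constant $c\in\mathbb{R}$ with $\xi(x)=\tfrac{2}{3}+cx$ for all $x\in[0,\delta]$. Shrinking $\delta$ if necessary I may also assume $\xi(x)>\tfrac12$ for all $x\in[0,\delta]$, which is possible because $\xi$ is continuous and $\xi(0)=\tfrac23>\tfrac12$. Now take any $x\in[0,\delta/2]$. Then $x<\tfrac12$, so $f(x)=2x$, and $\xi(x)>\tfrac12$, so $f(\xi(x))=2-2\xi(x)$. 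Substituting into the functional equation $\xi(f(x))=f(\xi(x))$ of~(\ref{eq:08}) yields
$$\xi(2x)=2-2\bigl(\tfrac23+cx\bigr)=\tfrac23-2cx .$$
On the other hand $2x\in[0,\delta]$, so the linear formula gives $\xi(2x)=\tfrac23+2cx$. Comparing the two expressions for every $x\in[0,\delta/2]$ forces $4cx\equiv0$, hence $c=0$, i.e.\ $\xi\equiv\tfrac23$ on $[0,\delta/2]$.

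Finally, $\xi$ is a continuous solution of~(\ref{eq:08}) that is constant on the interval $[0,\delta/2]$, so Proposition~\ref{theorNeStale} gives that $\xi$ is constant on all of $[0,1]$; since $\xi(0)=\tfrac23$, that constant is $\tfrac23$, completing the dichotomy. There is no serious obstacle in this argument; the only point that needs a little care is choosing $\delta$ so that simultaneously $x<\tfrac12$ and $\xi(x)>\tfrac12$ on the relevant interval while $2x$ stays inside the interval of linearity of $\xi$, after which the conclusion is a one-line substitution together with the already established Proposition~\ref{theorNeStale}.
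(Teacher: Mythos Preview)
Your proof is correct and follows essentially the same approach as the paper: both exploit the local linear structure of $\xi$ near $0$ together with the functional equation to force $\xi$ to be constant on a right neighbourhood of $0$, and then invoke Proposition~\ref{theorNeStale}. Your direct computation with the explicit formula $\xi(x)=\tfrac23+cx$ handles all cases at once and is a bit cleaner than the paper's separate case analysis of increasing versus decreasing behaviour on $[0,a]$.
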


\begin{proof}
Prove this lemma by contradiction. By Lemma~\ref{pr-lema-4}
assume, that $\xi(0)=2/3$.

Consider the interval call $[0,\, a]$ such that $\xi$ is monotone
on it. If $\xi$ increase on $[0,\, a]$, then for any $x\in (0,\,
\min\{ a/2,\, 1/2\})$ consider the commutative diagram $$
\xymatrix{
x \ar^{f}[r] \ar_{\xi}[d] & 2x \ar^{\xi}[d]\\
\xi(x) \ar^{f}[r] & \xi(2x). }
$$ As $\xi(x)>2/3$, then $\xi(2x)=2-2\xi(x)$. Next, as
$\xi(x)>2/3$, then $2-2\xi(x)<2/3$, whence $\xi(2x)<2/3$. It
contradicts to that $\xi(x)>2/3$ for all $x\in (0,\, a)$.

Consider now the case, when $\xi$ decrease on $[0,\, a]$.

Assume, that there exists $x_0 \in (0,\, a]$ such that
$\xi(x_0)\geq 1/2$. Then for any $x\in (0,\, \min\{ x_0/2,\,
1/2\})$ consider the commutative diagram $$ \xymatrix{
x \ar^{f}[r] \ar_{\xi}[d] & 2x \ar^{\xi}[d]\\
\xi(x) \ar^{f}[r] & \xi(2x). }
$$ As $\xi(x)<2/3$, then $\xi(2x)=2-2\xi(x)$. Next, as
$\xi(x)<2/3$, then $2-2\xi(x)>2/3$, whence $\xi(2x)>2/3$. It
contradicts to that $\xi(x)<2/3$ for all $x\in (0,\, a)$.

Assume at last, that $\xi(x)<1/2$ for all $x\in (0,\, a]$,
$xi(0)=2/3$ and $\xi$ decrease on $[0,\, a]$. For any $x_0\in
(0,\, \min\{ a/2,\, 1/2\})$ consider the commutative diagram $$
\xymatrix{
x_0/2 \ar^{f}[r] \ar_{\xi}[d] & x_0 \ar^{\xi}[d]\\
\xi(x_0/2) \ar^{f}[r] & \xi(x_0). }
$$ As $\xi(x_0/2)<1/2$, then $\xi(x_0)=2\xi(x_0/2)$, whence
$\xi(x_0/2)>\xi(x_0)$, which contradicts to that $\xi$ decrease on
$[0,\, a]$.
\end{proof}

\begin{lemma}\label{pr-lema-5}
For any $n\geq 2$ the inclusion $\xi(A_n)\subseteq A_n$ holds.
\end{lemma}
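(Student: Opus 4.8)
The goal is to show that if $\xi\colon[0,\,1]\to[0,\,1]$ is a continuous solution of the functional equation~(\ref{eq:08}), then $\xi(A_n)\subseteq A_n$ for every $n\geq 2$. Recall that $A_n$ is the set of points whose $n$-th iterate under $f$ equals $0$, i.e. $A_n = f^{-n}(0)$, and that by Proposition~\ref{lema:An} it is the set of dyadic rationals with denominator $2^{n-1}$. The plan is to argue by the functional equation plus Lemma~\ref{pr-lema-6}: the latter tells us that either $\xi\equiv 2/3$ on $[0,\,1]$, or $\xi(0)=0$. In the first case the claim is handled separately (see below); in the second case we propagate the condition $\xi(0)=0$ forward through the dynamics to control $\xi$ on all of $A_n$.

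The main step is the following: if $\xi(0)=0$ then for every $x$ with $f^n(x)=0$ we also have $f^n(\xi(x))=0$. This is immediate by iterating~(\ref{eq:08}): the equation $\xi(f)=f(\xi)$ gives by induction $\xi(f^n(x)) = f^n(\xi(x))$ for all $x\in[0,\,1]$ and all $n\geq 1$ (this is exactly the kind of diagram-chasing done for the conjugation case, e.g.\ in the theorems of Section~\ref{sect:Vstup} about iterations). Now fix $x\in A_n$, so $f^n(x)=0$. Applying the iterated equation, $f^n(\xi(x)) = \xi(f^n(x)) = \xi(0) = 0$, which says precisely that $\xi(x)\in f^{-n}(0) = A_n$. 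Hence $\xi(A_n)\subseteq A_n$. So the whole argument in the case $\xi(0)=0$ reduces to establishing the iterated form of the functional equation, which is a one-line induction: $\xi(f^{k+1}(x)) = \xi(f(f^k(x))) = f(\xi(f^k(x))) = f(f^k(\xi(x))) = f^{k+1}(\xi(x))$.

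It remains to deal with the alternative given by Lemma~\ref{pr-lema-6}, namely $\xi(x)=2/3$ for all $x\in[0,\,1]$. Here $\xi(A_n) = \{2/3\}$, and since the statement we are proving is that $\xi(A_n)\subseteq A_n$, we would need $2/3\in A_n$. But $2/3$ is not a dyadic rational, so $2/3\notin A_n$ for any $n$. This looks like a genuine obstacle, and I expect the resolution is that the constant solution $\xi\equiv 2/3$ is implicitly excluded from consideration in this subsection — either the lemma is intended to be read under the standing assumption (made just before Lemma~\ref{pr-lema-5} in the surrounding text) that $\xi$ is not the constant map, or more precisely, the two relevant constant solutions are $\xi\equiv 0$ and $\xi\equiv 2/3$, and the subsection studies non-constant (piecewise linear monotone) solutions, for which $\xi(0)=0$ necessarily by Lemma~\ref{pr-lema-6}. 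Under that reading, only the $\xi(0)=0$ branch is relevant and the argument above completes the proof. The honest thing to flag is therefore: the hard part is not the dynamics but pinning down the hypothesis under which $\xi(A_n)\subseteq A_n$ literally holds, i.e.\ recording that we are in the $\xi(0)=0$ case (equivalently, excluding $\xi\equiv 2/3$), after which the inclusion follows purely formally from $\xi(0)=0$ together with $A_n = f^{-n}(0)$ and the commutation $\xi\circ f^n = f^n\circ\xi$.

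Concretely, the steps in order would be: (1) State and prove by induction the identity $\xi\circ f^n = f^n\circ\xi$ on $[0,\,1]$ for all $n\geq 1$, using~(\ref{eq:08}). (2) Invoke Lemma~\ref{pr-lema-6} to split into the case $\xi\equiv 2/3$ and the case $\xi(0)=0$; note that in the subsection's setting (non-constant, monotone-on-a-subinterval $\xi$) only the latter occurs. (3) In the case $\xi(0)=0$, take $x\in A_n$, compute $f^n(\xi(x)) = \xi(f^n(x)) = \xi(0) = 0$, and conclude $\xi(x)\in A_n$, hence $\xi(A_n)\subseteq A_n$. This mirrors the analogous step in the proof of Theorem~\ref{theor:10}, where the same reasoning gave $h(A_n)\subseteq B_n$ for the conjugation $h$; here the target map is $f$ itself rather than $f_v$, so $A_n$ plays the role of both source and target.
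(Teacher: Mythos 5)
Your proof is essentially the paper's own: the paper extends the commutative diagram for~(\ref{eq:08}) to get $\xi\circ f^n=f^n\circ\xi$, applies it to $x\in A_n$ so that $f^n(\xi(x))=\xi(0)$, and invokes Lemma~\ref{pr-lema-6} to conclude $\xi(x)\in A_n$. The caveat you flag about the constant solution $\xi\equiv 2/3$ (for which the inclusion literally fails, since $2/3$ is not dyadic) is not addressed in the paper either, which tacitly works in the $\xi(0)=0$ branch, so your reading of the standing assumption is the correct one.
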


\begin{proof} Consider the continuation to the right of the commutative diagram,
which is equivalent to functional equation~(\ref{eq:08}), and
obtain
$$\xymatrix{
[0,\, 1] \ar^{f}[r] \ar_{\xi}[d] \ar@/^2pc/@{-->}^{f^n}[rrr] &
[0,\, 1] \ar^{\xi}[d] \ar^{f}[r] & \ldots \ar^{f}[r] & [0,\, 1]
\ar_{\xi}[d]\\
[0,\, 1] \ar^{f}[r] \ar@/_2pc/@{-->}^{f^n}[rrr] & [0,\, 1]
\ar^{f}[r] & \ldots \ar^{f^n}[r] & [0,\, 1]. }$$

The corollary of this diagram is $$ \xymatrix{
[0,\, 1] \ar^{f^n}[r] \ar_{\xi}[d] & [0,\, 1] \ar^{\xi}[d]\\
[0,\, 1] \ar^{f^n}[r] & [0,\, 1]. }
$$
Using Lemma~\ref{pr-lema-6}, it follows from the last diagram,
that for any $x\in A_n$ the following diagram commutes $$
\xymatrix{ x \ar^{f^n}[r] \ar_{\xi}[d] & 0
\ar^{\xi}[d]\\
\xi(x) \ar^{f^n}[r] & \xi(0), }
$$ and this proves the lemma.
\end{proof}

\begin{lemma}\label{pr-lema-7}
Let $\xi$ be monotone on some $A_{n,k}$ and for any $m\geq n$ the
equality \begin{equation}\label{pr-eq:12}\xi(\alpha_{m,p}) +
\xi(\alpha_{m,p+1}) = 2\xi(\alpha_{m+1,2p+1})\end{equation}
follows from the inclusion $A_{m,p}\subset A_{n,k}$. Then $\xi$ is
linear on $A_{n,k}$.
\end{lemma}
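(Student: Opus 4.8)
The plan is to identify, on the closed interval $\overline{A_{n,k}}=[\alpha_{n,k},\,\alpha_{n,k+1}]$, the affine function $L$ determined by $L(\alpha_{n,k})=\xi(\alpha_{n,k})$ and $L(\alpha_{n,k+1})=\xi(\alpha_{n,k+1})$, and then to prove $\xi\equiv L$ on $\overline{A_{n,k}}$. First I would establish that $\xi$ coincides with $L$ on every binary-rational point of $\overline{A_{n,k}}$, i.e. that $\xi(\alpha_{m,p})=L(\alpha_{m,p})$ for all $m\geq n$ and all $p$ with $A_{m,p}\subseteq A_{n,k}$.

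This will be an induction on $m$. The base case $m=n$ is immediate, since inside $\overline{A_{n,k}}$ the set $A_n$ reduces to the two endpoints, where $\xi=L$ by the definition of $L$. For the inductive step I would invoke Proposition~\ref{lema:An}, which gives $\alpha_{m,p}=p/2^{m-1}$; consequently the points of $A_{m+1}$ lying in $\overline{A_{n,k}}$ are precisely the points of $A_m$ lying there together with the midpoints $\alpha_{m+1,2p+1}=\tfrac12(\alpha_{m,p}+\alpha_{m,p+1})$ of the subintervals $A_{m,p}\subseteq A_{n,k}$. For each such midpoint the hypothesis~(\ref{pr-eq:12}) yields
$$\xi(\alpha_{m+1,2p+1})=\tfrac12\bigl(\xi(\alpha_{m,p})+\xi(\alpha_{m,p+1})\bigr)=\tfrac12\bigl(L(\alpha_{m,p})+L(\alpha_{m,p+1})\bigr)=L(\alpha_{m+1,2p+1}),$$
where the middle equality is the inductive hypothesis and the last one is the affinity of $L$ evaluated at the midpoint of $[\alpha_{m,p},\alpha_{m,p+1}]$. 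This closes the induction.

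Next I would pass from this dense set of agreement to the whole interval. By Proposition~\ref{lema:An} the set $\mathcal{A}=\bigcup_{m\geq1}A_m$ is exactly the set of binary-rational points of $[0,\,1]$, hence $\mathcal{A}\cap\overline{A_{n,k}}$ is dense in $\overline{A_{n,k}}$. Here the monotonicity of $\xi$ on $A_{n,k}$ enters: for a fixed $x\in A_{n,k}$ pick binary-rational points $a_j\nearrow x$ and $b_j\searrow x$ in $\overline{A_{n,k}}$; if $\xi$ is nondecreasing then $\xi(a_j)\leq\xi(x)\leq\xi(b_j)$, that is $L(a_j)\leq\xi(x)\leq L(b_j)$, and letting $j\to\infty$ and using continuity of the affine $L$ squeezes $\xi(x)=L(x)$ (the nonincreasing case is symmetric, and the two endpoints require no argument). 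Thus $\xi=L$ on $A_{n,k}$, i.e. $\xi$ is linear there.

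I do not expect a genuine obstacle: the proof is a two-layer density argument. The only points deserving care are the combinatorial bookkeeping in the inductive step — checking through Proposition~\ref{lema:An} that $\alpha_{m+1,2p+1}$ really is the geometric midpoint of $[\alpha_{m,p},\alpha_{m,p+1}]$ and that the refinement $A_{m+1}\cap\overline{A_{n,k}}$ adds exactly these midpoints and nothing else — together with the observation that it is monotonicity, not continuity, that is actually used to conclude $\xi=L$ off the binary-rational set, so that the lemma remains available at this stage of the development.
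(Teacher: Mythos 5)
Your proof is correct and takes essentially the same route as the paper: an induction over the dyadic levels, using the midpoint identity~(\ref{pr-eq:12}) together with Proposition~\ref{lema:An} to pin $\xi$ down on $\mathcal{A}\cap A_{n,k}$, followed by a density argument to extend to all of $A_{n,k}$. The only (harmless) difference is the closing step: the paper invokes density relying on the continuity of $\xi$, whereas you use the monotonicity hypothesis and a sandwich argument with the chord function $L$ --- both are available at this point in the development.
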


\begin{proof}
From the induction on $m$ and Proposition~\ref{lema:An} it
follows, that $\xi$ is linear on $A\cap A_{n,k}$. More precisely,
for any $t\in \mathbb{N}$ denote by $\xi_t$ the linear
approximation of $\xi$ such that $\xi_t(x) = \xi(x)$ for every
$x\in A_t$ and for every $s,\, 0\leq s\leq 2^{t-1}-1$ the maps
$\xi_t$ is linear on $A_{t,s}$. Then equation~(\ref{pr-eq:12})
mean that tangents of $\xi_{m+1}$ and $\xi_{m}$ coincide on
$A_{m,p}$, i.e. $\xi_{m+1}$ is linear in $A_{m,p}$ for all $p$
such that $A_{m,p}\subset A_{n,k}$. Application of induction on
$m$ leads to that $\xi_{m+1}$ is linear on~$A_{n,k}$.

Now the lemma follows from density of $A\cap A_{n,k}$ in
$A_{n,k}$.
\end{proof}

\begin{lemma}\label{pr-lema-8}
There is $A_{n,k}$ such that $\xi$ is linear on $A_{n,k}$.
\end{lemma}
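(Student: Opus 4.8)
\textbf{Proof proposal for Lemma~\ref{pr-lema-8}.}

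The plan is to argue by contradiction: suppose that for every $n\geq 2$ and every admissible index $k$ the restriction $\xi|_{A_{n,k}}$ is \emph{not} linear. By Lemma~\ref{pr-lema-7} this means that for each such $A_{n,k}$ there is some refinement $A_{m,p}\subset A_{n,k}$ ($m\geq n$) at which equation~(\ref{pr-eq:12}) fails, i.e.\ the linear approximation $\xi_{m+1}$ has a genuine break inside $A_{m,p}$. First I would harvest the consequences of Theorem~\ref{theor:01}: $\xi$ is piecewise linear on $[0,\, 1]$ with finitely many intervals of linearity. Hence there is some open subinterval $(\alpha,\, \beta)\subset[0,\, 1]$ on which the graph of $\xi$ is an honest line segment (not merely piecewise linear). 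Shrinking if necessary, I may assume $1/2\notin(\alpha,\, \beta)$, so $f$ is linear there as well.

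The next step is to pull this segment of linearity back to a dyadic sub-box. Since $\xi$ is linear on $(\alpha,\, \beta)$, there is $n$ large enough and an index $k$ with $A_{n,k}=[\alpha_{n,k},\, \alpha_{n,k+1}]\subset(\alpha,\, \beta)$; on such an $A_{n,k}$ the graph of $\xi$ is a line segment and in particular $\xi$ is monotone there and equation~(\ref{pr-eq:12}) holds for \emph{all} refinements $A_{m,p}\subset A_{n,k}$ (the midpoint of a line segment lies on the segment). By Lemma~\ref{pr-lema-7} this already forces $\xi$ to be linear on $A_{n,k}$, which is exactly the conclusion of the lemma. So in fact the contradiction hypothesis is never needed once Theorem~\ref{theor:01} is in hand: linearity on \emph{some} open interval automatically supplies linearity on \emph{some} dyadic block $A_{n,k}$, because the dyadic blocks of level $n$ have length $2^{-(n-1)}\to 0$ and therefore some level-$n$ block sits inside $(\alpha,\, \beta)$ for $n$ large.

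Concretely, the steps I would carry out in order are: (1) invoke Theorem~\ref{theor:01} to get an open interval $(\alpha,\, \beta)$ of strict linearity of $\xi$; (2) choose $n$ with $2^{-(n-1)}<\beta-\alpha$ and pick $k$ so that $A_{n,k}=[\tfrac{k}{2^{n-1}},\tfrac{k+1}{2^{n-1}}]\subset(\alpha,\, \beta)$; (3) observe that on this $A_{n,k}$ the graph of $\xi$ is a line segment, so $\xi$ is monotone on $A_{n,k}$ and, for every $m\geq n$ and every $p$ with $A_{m,p}\subset A_{n,k}$, the midpoint relation $\xi(\alpha_{m,p})+\xi(\alpha_{m,p+1})=2\xi(\alpha_{m+1,2p+1})$ holds trivially; (4) apply Lemma~\ref{pr-lema-7} to conclude that $\xi$ is linear on $A_{n,k}$. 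The only genuinely delicate point is step~(3): one must be sure that ``piecewise linear with finitely many pieces'' does yield a \emph{strictly} linear sub-block, and that the midpoint $\alpha_{m+1,2p+1}$ of $[\alpha_{m,p},\alpha_{m,p+1}]$ is indeed $\tfrac{\alpha_{m,p}+\alpha_{m,p+1}}{2}$, which is immediate from Proposition~\ref{lema:An} ($\alpha_{m,p}=p/2^{m-1}$). I expect the main obstacle to be purely bookkeeping — matching the indexing conventions of $A_n$, $A_{n,k}$, $\alpha_{n,k}$, and the refinement map $p\mapsto 2p, 2p+1, 2p+2$ — rather than anything conceptual, since the heavy lifting (piecewise linearity of any continuous self-semiconjugation) was already done in Theorem~\ref{theor:01}.
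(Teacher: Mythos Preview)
Your proof is correct, but it takes a genuinely different route from the paper's. You lean on Theorem~\ref{theor:01} (every continuous self-semiconjugation is already piecewise linear on $[0,1]$), after which the lemma is almost immediate: some dyadic block $A_{n,k}$ fits inside any maximal interval of linearity once $n$ is large enough, and $\xi$ is then linear on $A_{n,k}$ tautologically --- in fact your step~(4) invoking Lemma~\ref{pr-lema-7} is superfluous once you have step~(3), and the detour through $1/2\notin(\alpha,\beta)$ is also unnecessary. The paper, by contrast, proves Lemma~\ref{pr-lema-8} \emph{without} appealing to Theorem~\ref{theor:01}: it starts only from monotonicity of $\xi$ on some interval, picks an $A_{n,k}$ inside it, uses Lemma~\ref{pr-lema-5} to see that the endpoints land in $A_n$, and then runs an induction on the integer $d=|\xi(\alpha_{n,k+1})-\xi(\alpha_{n,k})|\cdot 2^{n-1}$. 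The base case $d=1$ is exactly Lemma~\ref{pr-lema-7}; for the inductive step one subdivides $A_{n,k}$ into the $2^t$ sub-blocks at level $n+t$, and either all of them again have image-width $d$ (so Lemma~\ref{pr-lema-7} applies to $A_{n,k}$ itself) or one of them has strictly smaller $d$ and the induction hypothesis kicks in. The payoff of the paper's argument is that Section~\ref{pr-Sect-monotone} and Theorem~\ref{theor:02} stand as a self-contained, independent route to piecewise linearity under the monotonicity hypothesis, not relying on the ``net'' construction of Section~\ref{sect:SemiConj-3}; your approach is shorter but collapses Theorem~\ref{theor:02} into a mere corollary of Theorem~\ref{theor:01} rather than an alternative proof.
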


\begin{proof}
If $\xi$ is monotone on some interval, then this interval contains
a set $A_{n,k}$ for some $n$ and $k$. By Lemma~\ref{pr-lema-5},
there exists $s,\, t$ such that $\xi(\alpha_{n,k})=\alpha_{n,t}$
and $\xi(\alpha_{n,k+1})=\alpha_{n,s}$. Denote $d = |\alpha_{n,s}
- \alpha_{n,t}|\cdot 2^{n-1}$ and prove the lemma by induction on
$d$. Notice, that $d$ equals to the quantity of intervals of the
form $A_{n,p}$ such that $\xi(A_{n,k})$ is union of these
intervals, i.e. $$ A_{n,k} = \bigcup\limits_{i=1}^dA_{n,k_i},
$$ where $k_{i_1},\ldots ,\, k_{i_d}$ are sequent integers. In
other words, $d+1$ is the number of points in $A_{n,k}\cap A_n$.

If $d=1$ then the lemma follows from Proposition~\ref{lema:An} and
Lemmas~\ref{pr-lema-5},~\ref{pr-lema-7}.

For every $t\geq 1$ consider the sets $A_{n+t,2^tk},\ldots
,A_{n+t,2^tk+2^{t-1}}$. By Proposition~\ref{lema:An}, the equality
$$ A_{n,k} = \bigcup\limits_{s=0}^{2^t-1}A_{n+t,2^tk+s}
$$ holds. For any $t$ and $s$ calculate $$
d_{t,s} = |\xi(\alpha_{n+t,2^tk+s})
-\xi(\alpha_{n+t,2^tk+s+1})|\cdot 2^{n+t-1}.
$$
Evidently, it follows from the monotonicity of $\xi$ on $A_{n,k}$,
that $$ 2^t\cdot d =  \sum\limits_{s=0}^{2^t-1}d_{t,s}\ .
$$

If for all $t,\, s$ the equality $d = d_{t,s}$ holds, then the
Lemma follows from Lemma~\ref{pr-lema-7}. Otherwise, there is $t,
s$, such that $d_{t,s}<d$ and we can apply the induction to
$A_{n+t,2^tk+s}$.
\end{proof}

The following theorem follows from Proposition~\ref{pr-Theor-1}
and Lemma~\ref{pr-lema-8}.

\begin{theorem}\label{theor:02}
If the function $\xi:\, [0,\, 1]\rightarrow [0,\, 1]$, which is a
solution of a functional equation~(\ref{eq:08}), is monotone on an
interval $M\subset [0,\, 1]$, then $\xi$ is piecewise linear on
$[0,\, 1]$.
\end{theorem}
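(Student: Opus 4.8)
The plan is to assemble Theorem~\ref{theor:02} from the structural results already proved in Section~\ref{pr-Sect-monotone}, so that the deduction itself is short and all the genuine work is localized in the lemmas it cites. First I would observe that the hypothesis of the theorem — $\xi$ monotone on an interval $M\subseteq [0,\, 1]$ — is precisely what is needed to invoke Lemma~\ref{pr-lema-8}: the proof of that lemma begins by noting that any interval of monotonicity of $\xi$ contains a dyadic cell $A_{n,k}=[\alpha_{n,k},\, \alpha_{n,k+1}]$, so its conclusion applies and yields some $A_{n_0,k_0}$ on which the graph of $\xi$ is a single line segment. It is also harmless to start by citing Proposition~\ref{pr-Theor-1} to upgrade the local monotonicity of $\xi$ to global piecewise monotonicity; this is not strictly indispensable for the final step, but it tidies up the bookkeeping.

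Second, I would invoke Proposition~\ref{theorNePriama}: since the graph of $\xi$ is a line segment on $M'=[\alpha_{n_0,k_0},\, \alpha_{n_0,k_0+1}]$ and $\xi$ is a continuous solution of~(\ref{eq:08}), that proposition immediately gives that $\xi$ is piecewise linear on the whole $[0,\, 1]$. This closes the argument. The self-similar structure of $f$ — that $f$ doubles lengths on $[0,1/2]$ and on $[1/2,1]$, so after finitely many applications the image of $M'$ exhausts $[0,1]$ — is exactly what Proposition~\ref{theorNePriama} already encapsulates, so no extra iteration needs to be written out here.

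The only point worth spelling out for the reader is why Lemma~\ref{pr-lema-8} is legitimately available: its proof rests on the dichotomy of Lemma~\ref{pr-lema-6} (either $\xi\equiv 2/3$, which is trivially piecewise linear, or $\xi(0)=0$, which by Lemma~\ref{pr-lema-5} forces $\xi(A_n)\subseteq A_n$ for all $n$), combined with an induction on the combinatorial width $d=|\xi(\alpha_{n,k})-\xi(\alpha_{n,k+1})|\cdot 2^{n-1}$ of the image cell, whose inductive step refines $A_{n,k}$ into its $2^t$ dyadic subcells and applies the midpoint-linearity criterion of Lemma~\ref{pr-lema-7}. That width-induction inside Lemma~\ref{pr-lema-8} is the real obstacle in this part of the paper; the proof of Theorem~\ref{theor:02} proper is just the assembly of Proposition~\ref{pr-Theor-1}, Lemma~\ref{pr-lema-8}, and Proposition~\ref{theorNePriama}.
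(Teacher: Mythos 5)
Your proof is correct and takes essentially the same route as the paper, which derives Theorem~\ref{theor:02} from Proposition~\ref{pr-Theor-1} and Lemma~\ref{pr-lema-8}; your explicit appeal to Proposition~\ref{theorNePriama} merely spells out the globalization step that the paper leaves implicit. One small caveat: Proposition~\ref{pr-Theor-1} is not just bookkeeping, since the proof of Lemma~\ref{pr-lema-6} (on which Lemmas~\ref{pr-lema-5} and~\ref{pr-lema-8} rest) needs $\xi$ to be monotone on an interval of the form $[0,\, a]$, and under the hypothesis of the theorem this is supplied precisely by the piecewise monotonicity furnished by Proposition~\ref{pr-Theor-1}.
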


\subsection{Piecewise linear
self-semiconjugation}\label{sect:SemiConj-4}

Assume that $\xi:\, [0,\, 1]\rightarrow [0,\, 1]$ is a piecewise
linear solution of equation~(\ref{eq:08}). The main result of this
subsection of the following theorem. Because of
Lemma~\ref{pr-lema-6} assume, that $\xi(0)=0$. Denote by $k$ the
tangent of $\xi$ at $0$.

\begin{theorem}\label{pr-theor-2}
Let $\xi$ be piecewise linear solution of the functional
equation~(\ref{eq:08}). Then $\xi$ is one of the following
functions

\noindent 1. $\xi(x)=x_0$ for all $x\in [0,\, 1]$, where $x_0=0$,
or $x_0=2/3$;

\noindent 2. for some $k\in \mathbb{N}$
$$\xi(x)=\displaystyle{\frac{1 - (-1)^{[kx]}}{2}
+(-1)^{[kx]}\{kx\}},$$ where $\{\cdot \}$ denotes fractional part
and $[\cdot ]$ denotes integer part. More then this for any $k\in
\mathbb{N}$ the function $\xi(x)$ of the form above is a solution
of~(\ref{eq:08}).
\end{theorem}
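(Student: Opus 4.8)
The plan is to show that any piecewise linear solution $\xi$ of~(\ref{eq:08}) is completely determined by its right derivative $k$ at $0$, that $k$ must be a nonnegative integer, and that this forces $\xi$ to be a constant or the ``$k$-tooth'' tent map of the statement. First I would dispose of the trivial alternatives: by Lemma~\ref{pr-lema-6} either $\xi\equiv 2/3$, which is item~1 with $x_{0}=2/3$, or $\xi(0)=0$, which I assume henceforth. Because $\xi$ is piecewise linear and $0$ is the left endpoint of $[0,1]$, $\xi$ is affine on some $[0,\delta]$; together with $\xi(0)=0$ this gives $\xi(x)=kx$ on $[0,\delta]$, and $\xi\geq 0=\xi(0)$ forces $k\geq 0$. (Only this weak consequence of piecewise linearity — linearity near $0$ — will be used in the forward direction.)

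Second, I would iterate~(\ref{eq:08}) downward toward the origin. On $[0,1/2]$ the equation reads $\xi(2x)=f(\xi(x))$; substituting $x=y/2^{j}$ (which lies in $[0,1/2]$ for every $j\geq 1$ whenever $y\in[0,1]$) and inducting on $n$ yields $\xi(y)=f^{n}\!\bigl(\xi(y/2^{n})\bigr)$ for all $y\in[0,1]$ and all $n\geq 1$. Choosing $n$ large enough that $2^{-n}\leq\delta$ and $2^{n}\geq k$, we have $\xi(y/2^{n})=ky/2^{n}\in[0,1]$, so $\xi(y)=f^{n}(ky/2^{n})$. Now $f^{n}$ on $[0,1]$ is the $2^{n}$-tooth triangle wave: it is piecewise linear with $2^{n}$ full branches of slope $\pm 2^{n}$ over the dyadic intervals $[\,j2^{-n},(j+1)2^{-n}\,]$, alternately increasing and decreasing, i.e. $f^{n}(t)=\tfrac{1-(-1)^{[2^{n}t]}}{2}+(-1)^{[2^{n}t]}\{2^{n}t\}$ (a standard fact, obtainable from Lemma~\ref{note:f}). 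Substituting $t=ky/2^{n}$ and using $[2^{n}t]=[ky]$, $\{2^{n}t\}=\{ky\}$ gives
$$\xi(y)=\frac{1-(-1)^{[ky]}}{2}+(-1)^{[ky]}\{ky\}\qquad(y\in[0,1]),$$
so $\xi$ is forced to have the stated form with this $k$; in particular $k=0$ gives $\xi\equiv 0$, which is item~1 with $x_{0}=0$.

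Third, I would pin down $k$. Setting $x=1$ in~(\ref{eq:08}) and using $f(1)=0$ and $\xi(0)=0$ gives $f(\xi(1))=0$, hence $\xi(1)\in\{0,1\}$; but the displayed formula gives $\xi(1)=\tfrac{1-(-1)^{[k]}}{2}+(-1)^{[k]}\{k\}$, which lies in $\{0,1\}$ only when $\{k\}=0$. Thus $k\in\mathbb{N}$ and we are in item~2. For the converse assertion — that each $\xi$ of this form with $k\in\mathbb{N}$ solves~(\ref{eq:08}) — I would write $\xi(x)=g(kx)$, where $g$ is the even $2$-periodic extension of the triangle ($g(t)=t$ on $[0,1]$, $g(t)=2-t$ on $[1,2]$), note $f(y)=g(2y)$ for $y\in[0,1]$, and observe that on each of $[0,1/2]$ and $[1/2,1]$ the identity $\xi(f(x))=f(\xi(x))$ collapses to the single relation $g(2g(s))=g(2s)$, which is verified in one line by reducing $s$ modulo $2$ and using that $g$ is even.

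The computations are all short; the places needing a little care are the identification of $f^{n}$ with the $2^{n}$-tooth triangle wave and the verification of $g(2g(s))=g(2s)$ for the converse direction. The conceptual content is entirely in the downward iteration of~(\ref{eq:08}), which collapses a seemingly large space of solutions onto the one-parameter family above together with the two constant solutions $0$ and $2/3$.
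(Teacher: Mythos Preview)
Your proof is correct and takes a genuinely different route from the paper's.

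The paper argues incrementally upward: after reducing to $\xi(0)=0$ via Lemma~\ref{pr-lema-6}, it shows (Lemma~\ref{pr-lema-10}) that $\xi$ is linear with slope $k\geq 1$ on the whole interval $[0,1/k]$, then (Lemma~\ref{pr-lema-11}) propagates this piece by piece, showing that linearity with slope $k$ on $[a,b]$ with $\xi(a)=0$, $\xi(b)=1$ forces slope $k$ on $(2a,a+b)$ and slope $-k$ on $(a+b,2b)$; iterating builds the saw-tooth pattern across $[0,1]$, and then $\xi(1)\in\{0,1\}$ (Lemma~\ref{pr-lema-9}) forces $k\in\mathbb{N}$ (Lemma~\ref{pr-lema-12}). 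Your argument instead iterates~(\ref{eq:08}) downward to the global identity $\xi(y)=f^{n}(\xi(y/2^{n}))$, plugs in the local linear form at $0$, and reads off the closed formula for $\xi$ in one stroke from the explicit description of $f^{n}$ as the $2^{n}$-tooth triangle wave. This is cleaner and, as you note, uses only linearity of $\xi$ near $0$ rather than global piecewise linearity; the paper's approach, by contrast, is more self-contained in that it does not rely on an explicit formula for $f^{n}$. Your verification of the converse via $g(2g(s))=g(2s)$ is also neater than what the paper does (which simply declares the converse ``evident'').
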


The fact, that all the maps, mentioned in Theorem~\ref{pr-theor-2}
satisfy the functional equation~(\ref{eq:08}) is evident.

\begin{lemma}\label{pr-lema-9}
Let $\xi(1)\in \{ 0,\, 1\}$.
\end{lemma}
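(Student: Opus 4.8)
\textbf{Proof plan for Lemma~\ref{pr-lema-9}.} The statement is that for a piecewise linear solution $\xi$ of the functional equation~(\ref{eq:08}) with $\xi(0)=0$, the value $\xi(1)$ must lie in $\{0,\, 1\}$. The plan is to plug the endpoint $x=1$ into~(\ref{eq:08}) and exploit the commutativity of the resulting diagram, exactly as was done for $\xi(0)$ in Lemma~\ref{pr-lema-4}, and then to rule out the remaining candidate fixed point.

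First I would write down the diagram
$$
\xymatrix{
1 \ar^{f}[r] \ar_{\xi}[d] & 0 \ar^{\xi}[d]\\
\xi(1) \ar^{f}[r] & \xi(0)
}
$$
which is just~(\ref{eq:08}) evaluated at $x=1$; since $f(1)=0$ it yields $f(\xi(1)) = \xi(0)$. Under the hypothesis $\xi(0)=0$ this becomes $f(\xi(1))=0$, so $\xi(1)$ belongs to the zero set of $f$. From the explicit form~(\ref{eq:92}) of $f$ one sees immediately that $f(x)=0$ if and only if $x\in\{0,\,1\}$ (the left branch $2x$ vanishes only at $0$, the right branch $2-2x$ only at $1$). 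Hence $\xi(1)\in\{0,\,1\}$, which is precisely the claim.

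The only subtlety is that Lemma~\ref{pr-lema-6} allows two alternatives, $\xi\equiv 2/3$ or $\xi(0)=0$; the present lemma is implicitly understood in the second case (the running assumption of this subsection, stated just before Theorem~\ref{pr-theor-2}), so I would make that explicit at the start. In the first case $\xi$ is the constant $2/3$, and then trivially $\xi(1)=2/3$, which does \emph{not} lie in $\{0,1\}$ — so the standing assumption $\xi(0)=0$ really is needed and should be invoked. With that caveat noted, the argument above is complete: there is no genuine obstacle here, as the whole content is a one-line substitution into the functional equation together with the trivial computation of $f^{-1}(0)$. The lemma then feeds into the determination, in the proof of Theorem~\ref{pr-theor-2}, of the global form of $\xi$: the two possible values of $\xi(1)$ correspond to the two shapes the piecewise linear solution can take.
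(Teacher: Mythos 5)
Your proposal is correct and follows essentially the same route as the paper: substitute $x=1$ into~(\ref{eq:08}), use the standing assumption $\xi(0)=0$ (justified by Lemma~\ref{pr-lema-6}) to get $f(\xi(1))=0$, and conclude from the zero set of $f$ that $\xi(1)\in\{0,\,1\}$. Your explicit remark that the constant solution $\xi\equiv 2/3$ must be excluded by the running assumption is a useful clarification of what the paper leaves implicit, but it is not a different argument.
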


\begin{proof}
It follows from Lemma~\ref{pr-lema-6} and functional
equation~(\ref{eq:08}), that the following diagram is commutative
$$ \xymatrix{
1 \ar^{f}[r] \ar_{\xi}[d] & 0 \ar^{\xi}[d]\\
\xi(1) \ar^{f}[r] & 0. }
$$
This proves lemma.
\end{proof}

\begin{lemma}\label{pr-lema-10}
$k\geq 1$ and $\xi$ is linear on the interval $[0,\,
\frac{1}{k}]$.
\end{lemma}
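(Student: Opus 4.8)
The plan is to prove Lemma~\ref{pr-lema-10} by exploiting the functional equation~(\ref{eq:08}) near the fixed point $0$, in exactly the spirit of Lemma~\ref{lema:42} (the analogous statement for the conjugacy of $f$ and $g$). Since $\xi$ is piecewise linear with $\xi(0)=0$, there is a first interval of linearity $[0,\,\delta]$ on which $\xi(x)=kx$ for some $k>0$; the goal is to show $k\geq 1$ and that this first interval of linearity can be taken to be $[0,\,1/k]$ — equivalently, $\xi(x)=kx$ precisely for $x\in[0,\,1/k]$ and $1/k$ is a breakpoint (or $k=1$, in which case, by Lemma~\ref{pr-lema-6} and the already-established structure, $\xi=\mathrm{id}$ and the statement is trivial).

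First I would plug the local form $\xi(x)=kx$ into~(\ref{eq:08}). For $x$ small enough that both $x<1/2$ and $kx<1/2$, the equation $\xi(f(x))=f(\xi(x))$ reads $\xi(2x)=2kx$, i.e.\ $\xi$ behaves on $[0,\,2\delta]$ as the extension of $kx$. Iterating, $\xi(x)=kx$ holds on $[0,\,\min\{\varepsilon_0,\,1/(2k)\}]$ where $\varepsilon_0$ is the second breakpoint of $\xi$; but in fact the recursion propagates the linear branch rightward until the argument of $f$ on the \emph{image} side would cross $1/2$, that is until $kx$ reaches $1/2$. So the branch $\xi(x)=kx$ persists on $[0,\,1/(2k)]$ and then, because $\xi(2x)=f(\xi(x))$ with $\xi(x)=kx$ still valid up to $x<1/(2k)$, it extends to $[0,\,1/k]$: for $x$ slightly above $1/(2k)$, $\xi(x)>1/2$ and the equation becomes $\xi(2x)=2-2\xi(x)$; evaluating at $x=1/(2k)$ gives $\xi(1/k)=2-2\cdot\tfrac12=1$, which forces $\xi(x)=kx$ to remain the formula on $[1/(2k),\,1/k]$ (it is the unique linear function with $\xi(1/(2k))=1/2$ and $\xi(1/k)=1$), and $x=1/k$ is where the behaviour changes. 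This gives the second assertion.

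For $k\geq 1$: suppose $k<1$. Then $1/k>1$, so $\xi(x)=kx$ would hold on all of $[0,\,1]$, giving $\xi(1)=k<1$ with $\xi$ strictly increasing and $\xi(1)\neq 0,1$, contradicting Lemma~\ref{pr-lema-9}. Alternatively, and more robustly, I would use the trajectory argument from the proof of Lemma~\ref{lema:43}: since $\xi(1)\in\{0,1\}$ by Lemma~\ref{pr-lema-9}, and by Lemma~\ref{pr-lema-6} we are in the case $\xi(0)=0$, the natural sub-case to pursue is $\xi(1)=1$ (the case $\xi(1)=0$ will be shown to force $\xi$ constant or fall under item~1, or be handled separately); then considering $h(1/2^n)=\xi(1/2^n)=k/2^n$ (valid for $n$ large by the first interval being $[0,\,1/k]$) together with $\xi(1)=1$ and monotone-on-$A_n$ structure forces $k\geq 1$, since $\xi$ maps $[0,\,1]$ onto something containing $1$ and $\xi$ is expanding near $0$.

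The main obstacle I anticipate is bookkeeping the case split cleanly: $\xi$ need not be monotone globally, so I must be careful that the "first interval of linearity" really is $[0,\,\delta]$ with $\xi$ increasing there (not decreasing), and that the rightward propagation of the linear branch via~(\ref{eq:08}) does not get derailed by a breakpoint of $\xi$ appearing before $x=1/(2k)$. Handling this requires the same observation as in Lemma~\ref{lema:42}: a breakpoint of $\xi$ strictly inside $(0,\,1/(2k))$ would, via $\xi(2x)=f(\xi(x))$ with $\xi$ linear, force $\xi$ to remain linear past it, a contradiction — so no such breakpoint exists, and $1/k$ is genuinely the first breakpoint. Once that is pinned down, combining with Lemma~\ref{pr-lema-9} to rule out $k<1$ is routine.
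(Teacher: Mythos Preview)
Your approach is correct and is essentially the same as the paper's: propagate the linear branch $\xi(x)=kx$ from a small initial interval out to $[0,1/k]$ via the functional equation, then invoke Lemma~\ref{pr-lema-9} to rule out $k<1$. The paper does the propagation in one stroke using $f^n$ (choose $n$ with $\xi$ linear on $[0,1/(k\cdot 2^n)]$; for $x$ there, $f^n(x)=2^n x$ and $f^n(\xi(x))=2^n k x$, so $\xi(2^n x)=k\cdot 2^n x$, i.e.\ $\xi(y)=ky$ on $[0,1/k]$), and then appeals to Lemma~\ref{pr-lema-9} exactly as in your first argument for $k\geq 1$.

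Two remarks on your exposition. First, the passage ``for $x$ slightly above $1/(2k)$, $\xi(x)>1/2$ and the equation becomes $\xi(2x)=2-2\xi(x)$ \dots which forces $\xi(x)=kx$ to remain the formula on $[1/(2k),1/k]$'' is tangled: for such $x$ the equation determines $\xi$ on $(1/k,2/k)$, not on $[1/(2k),1/k]$. The extension to $[1/(2k),1/k]$ comes from the sentence \emph{before} that one --- using $x\in[0,1/(2k)]$, where $kx\le 1/2$ so $\xi(2x)=2kx$ --- and that already suffices. Second, your ``alternatively, and more robustly'' argument and the worry about the case $\xi(1)=0$ are unnecessary: if $k<1$ the propagation gives $\xi(x)=kx$ on all of $[0,1]$, hence $\xi(1)=k\in(0,1)$, contradicting Lemma~\ref{pr-lema-9} directly.
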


\begin{proof}
Evidently, there exists $n\in \mathbb{N}$ such that $\xi$ is
linear on $M= [0,\, \frac{1}{k\cdot 2^n}]$. Whence, for every
$x\in M$ functional equation~(\ref{eq:08}) is equivalent to
commutativity of the following diagram
$$ \xymatrix{
x \ar^{f^n}[r] \ar_{\xi}[d] & 2^n\cdot x \ar^{\xi}[d]\\
kx \ar^{f^n}[r] & 2^n\cdot kx. }
$$ Linearity of $\xi$ on $[0,\,
\frac{1}{k}]$ follows from the diagram. From this and
Lemma~\ref{pr-lema-9} obtain the proof.
\end{proof}

\begin{lemma}\label{pr-lema-11}
Let for $a<b\leq 1/2$ the following holds: $\xi(a)=0$, $\xi(b)=1$
and $\xi$ is linear with tangent $k$ for $x\in [a,\, b]$. Then
$\xi'(x)=k$ for all $x\in (2a,\, a+b)$ and $\xi'(x)=-k$ for all
$x\in (a+b,\, 2b)$.
\end{lemma}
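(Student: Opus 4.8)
The plan is to exploit the functional equation~(\ref{eq:08}) directly on the interval $[a,\, b]$, where $\xi$ is explicitly known, and read off the values of $\xi$ on $f([a,\, b])$. Since $a<b\leq 1/2$, the map $f$ coincides with $x\mapsto 2x$ on all of $[a,\, b]$, so $f([a,\, b]) = [2a,\, 2b]$ and equation~(\ref{eq:08}) becomes $\xi(2x) = f(\xi(x))$ for every $x\in [a,\, b]$. The boundary data $\xi(a)=0$, $\xi(b)=1$ together with linearity force $\xi(x) = k(x-a)$ on $[a,\, b]$ with $k = 1/(b-a)$, so as $x$ runs through $[a,\, b]$ the value $\xi(x)$ runs once through $[0,\, 1]$.

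First I would locate the point where $\xi(x)$ crosses the break point $1/2$ of $f$. From $\xi(x) = k(x-a)$ one has $\xi(x)<1/2$ exactly when $x < (a+b)/2$ and $\xi(x)>1/2$ exactly when $x > (a+b)/2$, with $\xi((a+b)/2) = 1/2$. This single case split is the only real content of the argument.

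On the left piece $x\in [a,\, (a+b)/2)$ one has $\xi(x)<1/2$, so $f(\xi(x)) = 2\xi(x) = 2k(x-a)$ and the equation reads $\xi(2x) = 2k(x-a)$. Substituting $u = 2x$ (so that $u$ ranges over $[2a,\, a+b)$) gives $\xi(u) = k(u-2a)$, which is linear with slope $k$; hence $\xi'(u) = k$ on $(2a,\, a+b)$. On the right piece $x\in ((a+b)/2,\, b]$ one has $\xi(x)>1/2$, so $f(\xi(x)) = 2 - 2\xi(x) = 2 - 2k(x-a)$ and the equation reads $\xi(2x) = 2 - 2k(x-a)$. The same substitution $u = 2x$ (now $u\in (a+b,\, 2b]$) gives $\xi(u) = 2 - k(u-2a)$, linear with slope $-k$; hence $\xi'(u) = -k$ on $(a+b,\, 2b)$.

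The inequalities $2a<a+b<2b$ (immediate from $a<b$) guarantee these two open subintervals are genuine and disjoint, and the common value $\xi(a+b) = 1$ (the image under $\xi(2x) = f(\xi(x))$ of the apex $x=(a+b)/2$, where $\xi = 1/2$ and $f(1/2)=1$) shows the two linear pieces meet continuously at the peak. There is no serious obstacle here: the only points requiring care are verifying that $f$ is linear with slope $2$ on the whole of $[a,\, b]$ (which is exactly where the hypothesis $b\leq 1/2$ is used) and performing the case split at the midpoint $(a+b)/2$ cleanly, including the behaviour at the endpoints $2a$, $a+b$, $2b$, where the derivative need not exist and the conclusion is therefore stated only for the open intervals.
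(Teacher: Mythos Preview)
Your proof is correct and follows essentially the same approach as the paper: apply the functional equation $\xi(2x)=f(\xi(x))$ on $[a,b]$, split at the midpoint $(a+b)/2$ according to whether $\xi(x)$ lies below or above $1/2$, and read off the explicit linear formulas for $\xi$ on $(2a,a+b)$ and $(a+b,2b)$. Your write-up is in fact more detailed than the paper's, which packages the same computation into two commutative diagrams without spelling out the substitution $u=2x$ or the endpoint checks.
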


\begin{proof}
For any $x\in (a,\, \frac{a+b}{2})$ functional
equation~(\ref{eq:08}) is equivalent to commutativity of the
following diagram
$$ \xymatrix{
x \ar^{f}[r] \ar_{\xi}[d] & 2x \ar^{\xi}[d]\\
k(x-a) \ar^{f}[r] & 2k(x-a). }
$$ Whence, $\xi'(x)=k$ for all $x\in (2a,\, a+b)$.

For any $x\in (\frac{a+b}{2},\, b)$ functional
equation~(\ref{eq:08}) is equivalent to commutativity of the
following diagram
$$ \xymatrix{
x \ar^{f}[r] \ar_{\xi}[d] & 2x \ar^{\xi}[d]\\
k(x-a) \ar^{f}[r] & 2-2k(x-a). }
$$ Whence, $\xi'(x)=-k$ for all $x\in (2a,\, a+b)$.
\end{proof}

\begin{lemma}\label{pr-lema-12}
$k\in \mathbb{N}$.
\end{lemma}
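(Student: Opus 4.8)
```latex
\begin{proof}[Proof plan for Lemma~\ref{pr-lema-12}]
The plan is to show that the tangent $k$ of the piecewise linear self-semiconjugation $\xi$ at $0$ must be a positive integer, by propagating the linear piece $[0,\tfrac1k]$ through the dynamics of $f$ and using the fact that $\xi$ maps $[0,1]$ into $[0,1]$. First I would record what is already available: by Lemma~\ref{pr-lema-6} we may assume $\xi(0)=0$; by Lemma~\ref{pr-lema-10}, $k\geq 1$ and $\xi$ is linear with tangent $k$ on $[0,\tfrac1k]$; and by Lemma~\ref{pr-lema-9}, $\xi(1)\in\{0,1\}$. Also by Lemma~\ref{pr-lema-5}, $\xi(A_n)\subseteq A_n$ for every $n\geq 2$, so $\xi$ maps dyadic rationals to dyadic rationals.

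The core step is an iteration argument. Since $\xi$ is linear on $[0,\tfrac1k]$ with $\xi(0)=0$ and slope $k$, the point $\tfrac1k$ is sent to $1$ (if $\tfrac1k\le \tfrac12$, which holds as $k\ge 2$; the case $k=1$ is immediate since then $\xi(x)=x$ on $[0,1]$ by Lemma~\ref{pr-lema-6} and Lemma~\ref{pr-lema-10}). Then I would apply Lemma~\ref{pr-lema-11} with $a=0$, $b=\tfrac1k$: $\xi'(x)=k$ on $(0,\tfrac1k)$ and $\xi'(x)=-k$ on $(\tfrac1k,\tfrac2k)$, provided $\tfrac2k\le 1$. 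Thus $\xi$ rises linearly from $0$ to $1$ on $[0,\tfrac1k]$ and falls linearly from $1$ back to $0$ on $[\tfrac1k,\tfrac2k]$, so $\xi(\tfrac2k)=0$ and $\xi$ is again linear with slope $k$ (in absolute value) emanating from a zero of $\xi$ at $\tfrac2k$. Repeating Lemma~\ref{pr-lema-11} with $a=\tfrac2k$, $b=\tfrac3k$ (again checking $\tfrac3k\le 1$ or cutting off at $\tfrac12$ and using the other branch of $f$), one gets a triangular wave of slopes $\pm k$, zeros at $\tfrac{2j}{k}$ and peaks of height $1$ at $\tfrac{2j+1}{k}$. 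Continuing this all the way to $x=1$ forces $\xi$ to be exactly the function $\xi(x)=\frac{1-(-1)^{[kx]}}{2}+(-1)^{[kx]}\{kx\}$ on $[0,1]$. The decisive point is at the right endpoint: $\xi$ can only reach $x=1$ consistently — i.e. the last segment of the triangular wave must terminate at $x=1$ with value in $\{0,1\}$ as required by Lemma~\ref{pr-lema-9} — if $k\cdot 1$ is an integer, because the breakpoints of this wave are precisely at $x=\tfrac{j}{k}$ and the pattern of values at $x=1$ is $0$ when $k$ is even and $1$ when $k$ is odd, whereas for non-integer $k$ the value $\xi(1)=\frac{1-(-1)^{[k]}}{2}+(-1)^{[k]}\{k\}$ lies strictly between $0$ and $1$, contradicting Lemma~\ref{pr-lema-9}.

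The main obstacle I anticipate is the bookkeeping when a linear segment of $\xi$ straddles $x=\tfrac12$, since then $f$ switches branches in the middle of the segment and Lemma~\ref{pr-lema-11} as stated applies to segments with $b\le\tfrac12$. I would handle this by passing to a sufficiently deep iterate $f^m$, as in the proof of Lemma~\ref{pr-lema-5}: on each dyadic subinterval $A_{n,j}$ short enough that $\xi$ is linear there and that $f^m$ is affine there, the commutative diagram forces the image segment to have slope $\pm 2^m k / 2^m = \pm k$ relative to appropriate scalings, and the zero/peak structure propagates subinterval by subinterval. Alternatively, and perhaps more cleanly, one can argue directly that $\xi^{-1}(\{0\})$ must be exactly $\{\tfrac{2j}{k}: 0\le j\le \lceil k/2\rceil\}\cap[0,1]$ together with possibly $\{1\}$, and a finite set of that form can contain its forced elements and still be invariant under $f^{-1}$ (which doubles the number of preimages at interior points) only when the mesh $\tfrac1k$ divides $1$, i.e. $k\in\mathbb{N}$. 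Once $k\in\mathbb{N}$ is established, the explicit formula in Theorem~\ref{pr-theor-2}(2) follows by the wave-propagation argument above, and together with Lemma~\ref{pr-lema-6} this will complete the classification.
\end{proof}
```
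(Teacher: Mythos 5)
Your argument is correct and follows essentially the same route as the paper: you use Lemma~\ref{pr-lema-10} to get the initial linear piece on $[0,\tfrac1k]$, propagate the triangular-wave structure with breakpoints at the points $\tfrac{s}{k}$ via Lemma~\ref{pr-lema-11}, and then force $k\in\mathbb{N}$ from the endpoint condition $\xi(1)\in\{0,\,1\}$ of Lemma~\ref{pr-lema-9}. Your extra remarks on segments straddling $x=\tfrac12$ and on the invariance of $\xi^{-1}(\{0\})$ only elaborate details that the paper's proof leaves implicit, so no substantive difference remains.
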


\begin{proof}
If follows from Lemmas~\ref{pr-lema-10} and~\ref{pr-lema-11}, that
the graph of $\xi$ consists of consequent line segments, which
correspond to intervals of the form $M=[\frac{s}{k},\,
\frac{s+1}{k}]$ for integer $s$, such that $\xi(M) = [0,\, 1]$ and
$|\xi'(x)|=k$ for all $x\in M$. Because of these reasonings, the
lemma follows from Lemma~\ref{pr-lema-9}.
\end{proof}

Now Theorem~\ref{pr-theor-2} follows from Lemmas~\ref{pr-lema-9}
-- \ref{pr-lema-12}.

\newpage
\section{The length of the graph of the conjugacy}\label{pr-sect-length}

\subsection{Finding of the length of the graph of
the conjugation by dynamical reasonings}\label{pr-sect-length-1}

We find in this section the length of the graph of the conjugation
$h:\, [0,\, 1]\rightarrow [0,\, 1]$ which is a solutions of the
functional equation \begin{equation}\label{eq:33} h(f) = f_v(h),
\end{equation} where $f:\, [0,\, 1]\rightarrow [0,\, 1]$ is
given by \begin{equation}\label{eq:34} f(x) =
\left\{\begin{array}{ll}
2x,& x< 1/2;\\
2-2x,& x\geqslant 1/2,
\end{array}\right.
\end{equation} and $f_v:\, [0,\, 1]\rightarrow [0,\,
1]$ is given by \begin{equation}\label{eq:35}
f_v(x) = \left\{\begin{array}{ll} \frac{x}{v},& 0\leqslant x\leqslant v;\\
 \frac{1-x}{1-v},&
v<x\leqslant 1,
\end{array}\right.
\end{equation} for arbitrary $v\in (0,\, 1)$.

The existence of the conjugation $h$ was proved in
Theorem~\ref{theor:homeom-jed}. Remind, that we have introduced in
Section~\ref{sect-Pobudowa-1} the set $A_n,\, n\geq 1$, which is a
solution of the equation $f^n(x)=0$. By Proposition~\ref{lema:An}
the equality $$ A_n = \left\{ \frac{k}{2^{n-1}},\, 0\leq k\leq
2^{n-1}\right\}
$$ holds. Denote
$$A_{n,k}=\left[\frac{k}{2^{n-1}},\, \frac{k+1}{2^{n-1}}\right].$$

Remind, that we have introduced in Section~\ref{sect-Pobudowa-1}
the sets $B_n$ which consists of solution of the equation
$f_v^n(x)=0$ and by Theorem~\ref{theor:10} the equality $$
h(A_n)=B_n
$$ holds for all $n\geq 1$. As in Section~\ref{sect-Pobudowa-1}
denote by $h_n:\, [0,\, 1]\rightarrow [0,\, 1]$ the piecewise
linear maps which coincide with $h$ on $A_n$ and all whose
breaking points belong to $A_n$. In other words, $h_n$ in linear
on $A_{n,k}$ for all admissible $k$.

Let $n\in \mathbb{N}$ and $k,\, 0\leq k\leq 2^{n-1}-1$ be given.
Let $\alpha$  be a tangent of $h_n$ in $A_{n,k}$.

As in the proof of Theorem~\ref{lema:32} for any $x\in [0,\, 1]$
denote its binary decomposition by
\begin{equation}\label{pr-eq:6} x = 0,x_1x_2\ldots x_k\ldots\,
\end{equation} and the only case when
there exists $n_0$ such that $x_n = x_{n+1} =1$ for all $n>n_0$ is
if $x = 1$.

For a number $x$ of the form~(\ref{pr-eq:6}) denote $x_0 = 0$ and
for every $i\geq 2$ denote
\begin{equation}\label{pr-eq:11}\alpha_i(x) =\left\{
\begin{array}{ll} 2v& \overline{x_{i-1}x_{i-2}}\in \{ \overline{00},\, \overline{11}\}\\
2(1-v)& \overline{x_{i-1}x_{i-2}}\in \{ \overline{01},\,
\overline{10}\}.
\end{array}\right.
\end{equation}

It follows from Theorem~\ref{lema:32} that tangents $\tau_1$ and
$\tau_2$ on $h_{n+1}$ on intervals $A_{n+1,2k}$ and $A_{n+1,2k+1}$
are determined only by $\alpha$ and evenness of $k$ and are
independent on $n$ and exact value of $k$. Precisely, one of
$\tau_1$ and $\tau_2$ equals $2v\cdot \tan \alpha$ and another
equals $2(1-v)\cdot \tan\alpha$. The only thing which depends on
evenness of $k$ is that which of mentioned values is equal to
$2v\cdot \tan \alpha$. Whence the length of approximation
$h_{n+1}$ on $A_{n,k}$ is determined by $v,\, n$ and the length of
$h_n$ on this interval.

Consider points $F_1(\frac{k}{2^{n-1}},\, \beta_{n,k})$ and
$F_2(\frac{k+1}{2^{n-1}},\, \beta_{n,k+1})$ of the graph of $h_n$.
Denote \begin{equation}\label{pr-eq:10}t_{n,k} =
\frac{F_1F_2}{2^{n-1}+(\beta_{n,k+1} -
\beta_{n,k})}.\end{equation}

Evidently, $t<1$. Since $2^{n-1}=l\cos\alpha$ and $\beta_{n,k+1} -
\beta_{n,k}=l\sin\alpha$, then
$$ t_{n,k} =\frac{1}{\sin\alpha +\cos\alpha} =
\frac{1}{\sqrt{2}\sin(\alpha +\pi/4)},
$$ whence $t\in [\frac{\sqrt{2}}{2},\, 1]$. Denote by
$\xi_{n,k}(t_{n,k})$ the ratio of the length of the graph of
$h_{n+1}$ on $A_{n+1,2k}$ over the sum $2^{n-1}+(\beta_{n,k+1} -
\beta_{n,k})$.

The increasing function $\xi(t)$, which can be constructed in the
following Lemma, can be considered as lower bound of $\xi_{n,k}$.

\begin{lemma}
There exists a continuous function $\xi_v:
\left[\frac{\sqrt{2}}{2},\, 1\right] \rightarrow
\left[\frac{\sqrt{2}}{2},\, 1\right]$, dependent on $v$, but
independent on $n,\, k$, which satisfy the following properties:

1. $\xi_v(t)>t$ for every $t\in \left[\frac{\sqrt{2}}{2},\,
1\right)$;

2. $\xi_v(1)=1$;

3. For every $n,\, k$ the inequality
$\xi_v(t_{n,k})>\xi_{n,k}(t_{n,k})$ holds.
\end{lemma}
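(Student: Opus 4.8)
The plan is to recast both $t_{n,k}$ and $\xi_{n,k}$ as values of two fixed one‑variable functions evaluated at the ``slope ratio'' of the segment of $h_n$ over $A_{n,k}$, establish elementary inequalities between those functions, and then manufacture $\xi_v$ as a smoothed running maximum of the resulting majorant. First I would exploit scale invariance. Write $a=2^{-(n-1)}$ for the horizontal extent of $A_{n,k}$ and $b=\beta_{n,k+1}-\beta_{n,k}$ for its vertical extent, and set $s=b/a\in(0,\infty)$. Since $t_{n,k}$ and $\xi_{n,k}$ are ratios of lengths to the box‑sum $a+b$, they are invariant under rescaling and depend only on $s$ and $v$; directly $t_{n,k}=T(s):=\frac{\sqrt{1+s^2}}{1+s}$, matching the excerpt's $t=(\sin\alpha+\cos\alpha)^{-1}$. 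By Theorem~\ref{lema:32} the slopes of $h_{n+1}$ on the two halves $A_{n+1,2k},A_{n+1,2k+1}$ are $2v\tan\alpha$ and $2(1-v)\tan\alpha$ in some order, so the two halves have vertical extents $vb$ and $(1-v)b$ over horizontal extent $a/2$ each. Reading $\xi_{n,k}$ as the full refined (broken) length over $A_{n,k}$ divided by $a+b$ gives
$$ \xi_{n,k}=\Phi_v(s):=\frac{\sqrt{\tfrac14+v^2s^2}+\sqrt{\tfrac14+(1-v)^2s^2}}{1+s}. $$

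Next I would record the needed facts about $T$ and $\Phi_v$. Setting $\vec u=(\tfrac12,vs)$ and $\vec w=(\tfrac12,(1-v)s)$, we have $\vec u+\vec w=(1,s)$, so the triangle inequality yields $(1+s)\Phi_v(s)=|\vec u|+|\vec w|\ge|\vec u+\vec w|=\sqrt{1+s^2}=(1+s)T(s)$, with equality only when $\vec u\parallel\vec w$, i.e. $v=1/2$; since $v\neq1/2$ throughout this section, $\Phi_v(s)>T(s)$ strictly. Likewise $|\vec u|<\tfrac12+vs$ and $|\vec w|<\tfrac12+(1-v)s$ for $s>0$, so $\Phi_v(s)<1$, and one checks $\Phi_v(s)\to1$ as $s\to0^+$ and as $s\to\infty$. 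The map $T\colon(0,\infty)\to[\tfrac{\sqrt2}{2},1)$ is continuous, attains its minimum $\tfrac{\sqrt2}{2}$ only at $s=1$, and is two‑to‑one with reciprocal fibres, so for $t\in(\tfrac{\sqrt2}{2},1)$ it has continuous inverse branches $s_1(t)\in(0,1)$ and $s_2(t)\in(1,\infty)$. I would define $\Xi_v(t)=\max\{\Phi_v(s_1(t)),\Phi_v(s_2(t))\}$ for such $t$ and $\Xi_v(\tfrac{\sqrt2}{2})=\Phi_v(1)$. As a maximum of two continuous functions $\Xi_v$ is continuous on $[\tfrac{\sqrt2}{2},1)$, it satisfies $t<\Xi_v(t)<1$ there, and $\Xi_v(t)\to1$ as $t\to1$, so setting $\Xi_v(1)=1$ extends it continuously to $[\tfrac{\sqrt2}{2},1]$ as a majorant of every $\xi_{n,k}$ at its matching $t_{n,k}$.

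Finally I would build $\xi_v$ from $\Xi_v$ to force monotonicity and strictness simultaneously. Put $M(t)=\max_{\sqrt2/2\le\tau\le t}\Xi_v(\tau)$, the running maximum; it is continuous and non‑decreasing, $M(t)\ge\Xi_v(t)$, $M(t)<1$ for $t<1$ (a continuous function that is $<1$ attains its maximum on a compact interval), and $M(t)\to1=M(1)$ as $t\to1$. Then define
$$ \xi_v(t)=\tfrac12\bigl(1+M(t)\bigr). $$
This is continuous, non‑decreasing, maps $[\tfrac{\sqrt2}{2},1]$ into $[\tfrac{\sqrt2}{2},1]$, and $\xi_v(1)=1$, giving property~2; since $M(t)<1$ for $t<1$ we get $\xi_v(t)>M(t)\ge\Xi_v(t)>t$, giving property~1; and for the actual slope ratio $s_{n,k}$ with $t_{n,k}=T(s_{n,k})$ one has $\xi_{n,k}=\Phi_v(s_{n,k})\le\Xi_v(t_{n,k})\le M(t_{n,k})<\xi_v(t_{n,k})$, which is property~3.

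The step I expect to be the main obstacle is the analysis of $\Xi_v$: verifying that the two preimage branches of $T$ are genuinely continuous (including the merging at $s=1$, $t=\tfrac{\sqrt2}{2}$) and that $\Phi_v(s)\to1$ at both ends $s\to0^+$ and $s\to\infty$, so that $\Xi_v$ extends continuously with value $1$ at $t=1$, while keeping the \emph{strict} bound $\Xi_v(t)<1$ on $[\tfrac{\sqrt2}{2},1)$. Once $\Xi_v$ is known to be a continuous majorant of all the $\xi_{n,k}$ that is strictly below $1$ away from $t=1$ and equals $1$ at $t=1$, the running‑maximum‑plus‑averaging device converts it into the required monotone strict upper bound with no further computation.
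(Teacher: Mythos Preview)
Your argument is correct for the statement as written, and it takes a genuinely different route from the paper. The paper parametrises by the angle $\alpha$, inverts $t=1/(\sin\alpha+\cos\alpha)$ to obtain two branches, writes down explicit radical expressions $\xi_v^{(1)}(t),\xi_v^{(2)}(t)$ for the refined length ratio on each branch, checks by differentiation of auxiliary functions $\varphi_v,\psi_v$ that both branches are increasing, and then sets $\xi_v=\min\{\xi_v^{(1)},\xi_v^{(2)}\}$. Your approach instead parametrises by the slope $s=b/a$, gets the key comparison $\Phi_v(s)>T(s)$ in one line from the triangle inequality for the vectors $(\tfrac12,vs)$ and $(\tfrac12,(1-v)s)$, handles the boundary behaviour by elementary limits, and then manufactures strictness and monotonicity simultaneously through the running-maximum-plus-averaging device. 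What you gain is a conceptual, derivative-free argument and an automatic guarantee of the \emph{strict} inequality in property~3; what the paper's approach buys is a closed-form expression for $\xi_v$ in terms of $t$ alone.

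One point worth flagging: the paper's own proof ends by taking $\xi_v=\min\{\xi_v^{(1)},\xi_v^{(2)}\}$, which yields $\xi_v(t_{n,k})\le\xi_{n,k}(t_{n,k})$ rather than the inequality stated in property~3; this is consistent with the sentence immediately preceding the lemma, which calls $\xi_v$ a \emph{lower} bound of $\xi_{n,k}$. Your construction, with $\Xi_v=\max\{\Phi_v(s_1),\Phi_v(s_2)\}$ followed by $\xi_v=\tfrac12(1+M)$, matches property~3 exactly as it is written in the lemma. Either reading of the inequality is adequate for the subsequent use of the lemma, but you should be aware that the paper's text and its proof point in opposite directions on this sign.
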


\begin{proof}
Let for given $n,\, k$ the points $F_1$, $F_2$ and the number $t =
t_{n,k}$ is described in the beginning of the chapter. For making
the further reasonings more short, denote $a = 2^{n-1}$, $b =
\beta_{n,k+1} - \beta_{n,k}$ and $l = F_1F_2$. Then
$a=l\cos\alpha$ and $b=l\sin\alpha$.

Denote $H(\frac{2k+1}{2^n},\, \beta_{n+1,\, 2k+1})$ and
$C(\frac{2k+1}{2^n},\, \beta_{n,\, k})$. By
Lemmas~\ref{lema:vlastfv1} and~\ref{lema:vlastfv2} the point $H$
coincides either with $H_1$ or $H_2$ such that $H_1C = (1-v)b$ and
$H_2C = vb$ (see Fig.~\ref{pr-fig:25}).

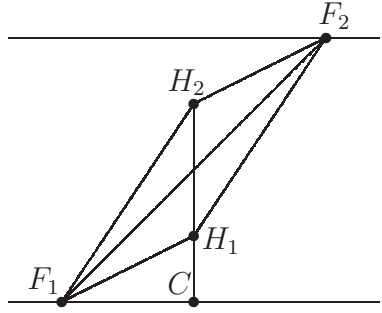
\begin{figure}[htbp]
\begin{center}
\begin{picture}(140,120)
\put(0,100){\line(1,0){140}} \put(0,0){\line(1,0){140}}

\put(20,0){\circle*{4}} \put(7,5){$F_1$}
\put(120,100){\circle*{4}} \put(117,105){$F_2$}

\Vidr{20}{0}{120}{100} \VidrTo{70}{75} \VidrTo{20}{0}
\VidrTo{70}{25} \VidrTo{120}{100} \Vidr{70}{75}{70}{0}

\put(70,25){\circle*{4}} \put(73,20){$H_1$}

\put(70,75){\circle*{4}} \put(60,80){$H_2$}

\put(70,0){\circle*{4}} \put(60,3){$C$}
\end{picture}
\end{center}
\caption{Finding of the length of $h_{n+1}$ by given $h_n$}
\label{pr-fig:25}
\end{figure}

Since $F_1H_2F_2H_1$ is a parallelogram, then $F_1H_2 + H_2F_2 =
F_1H_1 + H_1F_2$. In other words the length of the graph of
$h_{n+1}$ on $\left(\frac{k}{2^{n-1}},\, \frac{k+1}{2^{n-1}}
\right)$ is independent on those which of equalities $H=H_1$ or
$H=H_2$ holds.

The length of the graph $h_{n+1}$ on $\left(\frac{k}{2^{n-1}},\,
\frac{k+1}{2^{n-1}} \right)$ can be calculated as
$$ F_1H_2 + H_2F_2 =\sqrt{\frac{a^2}{4} + b^2v^2} +
\sqrt{\frac{a^2}{4} + b^2(1-v)^2} =
$$$$
=l\left(\sqrt{\frac{\cos^2\alpha}{4} + v^2\sin^2\alpha} +
\sqrt{\frac{\cos^2\alpha}{4} + (1-v)^2\sin^2\alpha}\right).
$$

Then $$ \xi_v(t) =\frac{AC + CB}{a+b} = \frac{AC + CB}{l/ t} = $$
$$=
t\cdot \left(\sqrt{\frac{\cos^2\alpha}{4} + v^2\sin^2\alpha} +
\sqrt{\frac{\cos^2\alpha}{4} + (1-v)^2\sin^2\alpha}\right)
$$

$$
t^2 = \frac{1}{1+\sin2\alpha}
$$$$
\sin2\alpha = \frac{1}{t^2}-1
$$
$$
\cos2\alpha = \pm\sqrt{1 -\left(\frac{1}{t^2}-1\right)^2}
=\pm\sqrt{\frac{2}{t^2} -\frac{1}{t^4}} =
\frac{\pm\sqrt{2t^2-1}}{t^2}.
$$

Consider two cases, when $$ \cos2\alpha_1 =
\frac{\sqrt{2t^2-1}}{t^2}
$$ and $$ \cos2\alpha_2
= \frac{-\sqrt{2t^2-1}}{t^2}
$$

Since  $$ \cos2\alpha = 2\cos^2\alpha -1,
$$ then $$
\cos^2\alpha_1 = \frac{1+\frac{\sqrt{2t^2-1}}{t^2}}{2} = \frac{t^2
+\sqrt{2t^2-1}}{2t^2};
$$
$$
\cos^2\alpha_2 = \frac{1-\frac{\sqrt{2t^2-1}}{t^2}}{2} =
\frac{t^2-\sqrt{2t^2-1}}{2t^2}.
$$

Since $$ \cos2\alpha = 1-2\sin^2\alpha,
$$ then $$
\sin^2\alpha_1 = \frac{1-\frac{\sqrt{2t^2-1}}{t^2}}{2} = \frac{t^2
-\sqrt{2t^2-1}}{2t^2}
$$ and $$
\sin^2\alpha_2 =\frac{t^2 +\sqrt{2t^2-1}}{2t^2}.
$$

Now for the case $\alpha_1$ we have $$ \xi_v^{(1)}(t) =
t\cdot\left(\sqrt{\frac{t^2 +\sqrt{2t^2-1}}{8t^2} +\frac{v^2(t^2
-\sqrt{2t^2-1})}{2t^2}}\right. + $$
$$ +\left.\sqrt{\frac{t^2 +\sqrt{2t^2-1}}{8t^2} +\frac{(1-v)^2(t^2
-\sqrt{2t^2-1})}{2t^2}}\, \right) =
$$$$ =
\left(\sqrt{\frac{(1+4v^2)t^2}{8}
+\frac{(1-4v^2)\sqrt{2t^2-1}}{8}}\right. + $$
$$ +\left.\sqrt{\frac{(1+4(1-v)^2)t^2}{8}
+\frac{(1-4(1-v)^2)\sqrt{2t^2-1}}{8}}\, \right).
$$

For the case $\alpha_2$ we have

$$ \xi_v^{(2)}(t) =
\left(\sqrt{\frac{(1+4v^2)t^2}{8}
-\frac{(1-4v^2)\sqrt{2t^2-1}}{8}}\right. + $$
$$ +\left.\sqrt{\frac{(1+4(1-v)^2)t^2}{8}
-\frac{(1-4(1-v)^2)\sqrt{2t^2-1}}{8}}\, \right).
$$

Consider the expression $$ \varphi_v(t) = (1+4v^2)t^2
+(1-4v^2)\sqrt{2t^2-1}.
$$
$$
\varphi_v'(t) = 2(1+4v^2)t +\frac{2(1-4v^2)t}{\sqrt{2t^2-1}}
=\frac{2t\cdot ((1+4v^2)\sqrt{2t^2-1} +(1-4v^2))}{\sqrt{2t^2-1}} =
$$$$
=\frac{2t\cdot (1+4v^2)\cdot \left(\sqrt{2t^2-1}
-\frac{4v^2-1}{4v^2+1}\right)}{\sqrt{2t^2-1}}
$$
As $\sqrt{2t^2-1}>1$ and $\frac{4v^2-1}{4v^2+1}<1$, then
$\varphi_v'(t)>0$.

Since $\xi_v^{(1)}(t) = \sqrt{\varphi_v(t)} +
\sqrt{\varphi_{1-v}(t)}$, then $\xi_v^{(1)}(t)$ increase.

Consider the expression $$ \psi_v(t) = (1+4v^2)t^2
-(1-4v^2)\sqrt{2t^2-1}.
$$

$$ \psi'_v(t) = 2t(1+4v^2)  -\frac{2(1-4v^2)t}{\sqrt{2t^2-1}} =
 \frac{2t\cdot ((1+4v^2)\sqrt{2t^2-1} -(1-4v^2))}{\sqrt{2t^2-1}} =
$$$$
=\frac{2t\cdot (1+4v^2)\cdot \left(\sqrt{2t^2-1}
-\frac{1-4v^2}{1+4v^2}\right)}{\sqrt{2t^2-1}}>0,
$$ because $\sqrt{2t^2-1}>1$ and $\frac{1-4v^2}{1+4v^2}<1$.

Since $\xi_v^{(2)}(t) = \sqrt{\psi_v(t)} + \sqrt{\psi_{1-v}(t)}$,
then $\xi_v^{(2)}(t)$ increase.

Now let $\xi_v(t) = \min\{ \xi_v^{(1)}(t),\, \xi_v^{(2)}(t)\}$ and
this finishes the proof.
\end{proof}

Evidently, that $\xi_v(t) = \xi_{1-v}(t)$  for all $v$ and $t$.

Graphs of $y = \xi_v(t)-t$ for $v = 0.1$, $v=0.2$, $v=0.3$ and
$v=0.4$ are given on Figure~\ref{pr-fig:26}. Notice, that for
$\{v_1,\, v_2\}<0.5$ and all $t\in t\in
\left[\frac{\sqrt{2}}{2},\, 1\right)$ the inequality $\xi_{v_1}(t)
> \xi_{v_2}(t)$ holds for $v_1<v_2$.
More then this $\xi_{0.5}(t)-t =0$ for all$t$.

\begin{figure}[htbp]
\begin{center}\begin{picture}(140,240)

\Vidr{57}{208}{62}{169} \VidrTo{66}{146} \VidrTo{70}{127}
\VidrTo{75}{111} \VidrTo{79}{97} \VidrTo{83}{84} \VidrTo{88}{73}
\VidrTo{92}{64} \VidrTo{97}{55} \VidrTo{101}{48} \VidrTo{105}{42}
\VidrTo{110}{37} \VidrTo{114}{33} \VidrTo{119}{29}
\VidrTo{123}{26} \VidrTo{127}{24} \VidrTo{132}{22}
\VidrTo{136}{21} \VidrTo{141}{20} \VidrTo{145}{20}

\Vidr{57}{122}{62}{103} \VidrTo{66}{91} \VidrTo{70}{80}
\VidrTo{75}{71} \VidrTo{79}{63} \VidrTo{83}{56} \VidrTo{88}{50}
\VidrTo{92}{45} \VidrTo{97}{40} \VidrTo{101}{36} \VidrTo{105}{32}
\VidrTo{110}{30} \VidrTo{114}{27} \VidrTo{119}{25}
\VidrTo{123}{23} \VidrTo{127}{22} \VidrTo{132}{21}
\VidrTo{136}{20} \VidrTo{141}{20} \VidrTo{145}{20}

\Vidr{57}{64}{62}{57} \VidrTo{66}{52} \VidrTo{70}{47}
\VidrTo{75}{43} \VidrTo{79}{39} \VidrTo{83}{36} \VidrTo{88}{33}
\VidrTo{92}{31} \VidrTo{97}{29} \VidrTo{101}{27} \VidrTo{105}{26}
\VidrTo{110}{24} \VidrTo{114}{23} \VidrTo{119}{22}
\VidrTo{123}{21} \VidrTo{127}{21} \VidrTo{132}{21}
\VidrTo{136}{20} \VidrTo{141}{20} \VidrTo{145}{20}

\Vidr{57}{31}{62}{29} \VidrTo{66}{28} \VidrTo{70}{27}
\VidrTo{75}{26} \VidrTo{79}{25} \VidrTo{83}{24} \VidrTo{88}{23}
\VidrTo{92}{23} \VidrTo{97}{22} \VidrTo{101}{22} \VidrTo{105}{21}
\VidrTo{110}{21} \VidrTo{114}{21} \VidrTo{119}{21}
\VidrTo{123}{20} \VidrTo{127}{20} \VidrTo{132}{20}
\VidrTo{136}{20} \VidrTo{141}{20} \VidrTo{145}{20}

\put(25,20){\vector(0,1){220}} \put(25,20){\vector(1,0){150}}

\Vidr{55}{20}{55}{220} \Vidr{85}{20}{85}{220}
\Vidr{115}{20}{115}{220} \Vidr{145}{20}{145}{220}

\Vidr{25}{80}{145}{80} \Vidr{25}{140}{145}{140}
\Vidr{25}{200}{145}{200}

\put(48,8){0.7} \put(78,8){0.8} \put(108,8){0.9} \put(143,8){1}

\put(2,77){0.02} \put(2,137){0.04} \put(2,197){0.06}

\end{picture}
\end{center}
\caption{Graphs of $y= \xi_v(t)-t$ for $v = 0.1$, $v=0.2$, $v=0.3$
and $v=0.4$} \label{pr-fig:26}
\end{figure}
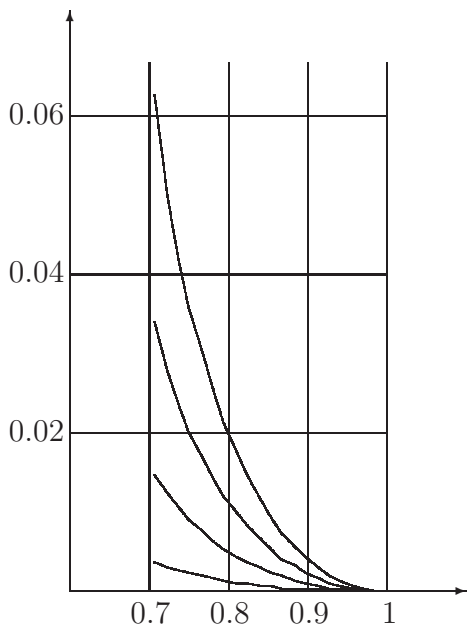

\begin{theorem}\label{pr-Theor-3}
The length of the graph of $h_n$ tends to 2 for $n \rightarrow
\infty$.
\end{theorem}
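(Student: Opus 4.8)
The plan is to control the \emph{defect} $2-l_n$ and show it tends to $0$, using the strict improvement encoded in the auxiliary function $\xi_v$ constructed in the Lemma above. Write $A_{n,k}=[k/2^{n-1},(k+1)/2^{n-1}]$, let $a_{n,k}=1/2^{n-1}$ be the width and $b_{n,k}=\beta_{n,k+1}-\beta_{n,k}>0$ the height of the segment of $h_n$ over $A_{n,k}$, and $w_{n,k}=a_{n,k}+b_{n,k}$. Since $\sum_k a_{n,k}=1$ and $\sum_k b_{n,k}=\beta_{n,2^{n-1}}-\beta_{n,0}=1$, we have $\sum_k w_{n,k}=2$ for every $n$. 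The length of $h_n$ over $A_{n,k}$ is $\sqrt{a_{n,k}^2+b_{n,k}^2}=t_{n,k}w_{n,k}$ with $t_{n,k}\in[\tfrac{\sqrt2}{2},1)$ as in~(\ref{pr-eq:10}); hence $l_n=\sum_k t_{n,k}w_{n,k}$ and, since $t_{n,k}<1$, $l_n<2$. Also $h_{n+1}$ restricted to $A_{n,k}$ is a two-link polygon $F_1\to H\to F_2$, so by the triangle inequality its length is $\ge F_1F_2$; summing over $k$ gives $l_{n+1}\ge l_n$. Thus $l_n$ increases to a limit $L\le 2$, and it remains to prove $L=2$ (which forces $v\ne 1/2$; for $v=1/2$ one has $l_n\equiv\sqrt2$).

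Next I would turn the Lemma into a lower bound on the increments. By that Lemma the length of $h_{n+1}$ over $A_{n,k}$ equals $\xi_{n,k}(t_{n,k})w_{n,k}$ for one of the two branch functions $\xi_v^{(1)},\xi_v^{(2)}$ appearing in its proof, hence is $\ge\xi_v(t_{n,k})w_{n,k}$ with $\xi_v=\min\{\xi_v^{(1)},\xi_v^{(2)}\}$. Using $\sum_k w_{n,k}=\sum_j w_{n+1,j}=2$ together with the refinement identities $a_{n+1,2k}=a_{n+1,2k+1}=a_{n,k}/2$ and $b_{n+1,2k}+b_{n+1,2k+1}=\beta_{n,k+1}-\beta_{n,k}=b_{n,k}$ (from $\beta_{n+1,2k}=\beta_{n,k}$), I can write
\[
l_{n+1}-l_n=\sum_k\bigl(\xi_{n,k}(t_{n,k})-t_{n,k}\bigr)w_{n,k}\ \ge\ \sum_k g(t_{n,k})\,w_{n,k}\ \ge\ 0,
\]
where $g:=\xi_v-\mathrm{id}$. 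By the Lemma, $g$ is continuous on $[\tfrac{\sqrt2}{2},1]$, $g\ge 0$, $g(1)=0$, and $g(t)>0$ for $t<1$. Since $l_n\nearrow L$, the left-hand side tends to $0$, so $\sum_k g(t_{n,k})w_{n,k}\to 0$.

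Finally I would argue by contradiction. Suppose $L<2$ and put $\varepsilon:=2-L>0$, so $\sum_k(1-t_{n,k})w_{n,k}=2-l_n\ge\varepsilon$ for all $n$. With $c:=1-\tfrac{\sqrt2}{2}<1$ an upper bound for $1-t_{n,k}$, splitting this sum according to whether $1-t_{n,k}\ge\varepsilon/4$ gives $\varepsilon\le c\,W_n+(\varepsilon/4)\cdot 2$, where $W_n$ is the total $w_{n,k}$-weight of the indices with $1-t_{n,k}\ge\varepsilon/4$; hence $W_n\ge\varepsilon/(2c)=:\delta>0$ for all $n$ (note $\varepsilon/4\le c$, so $[\tfrac{\sqrt2}{2},1-\varepsilon/4]$ is a genuine closed subinterval of $[\tfrac{\sqrt2}{2},1)$). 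On that subinterval $g$ attains a minimum $m>0$, so $\sum_k g(t_{n,k})w_{n,k}\ge mW_n\ge m\delta>0$ for every $n$, contradicting $\sum_k g(t_{n,k})w_{n,k}\to 0$. Therefore $L=2$, i.e. $l_n\to 2$.

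The main obstacle is really already packaged in the Lemma: establishing that $\xi_v$ is continuous, increasing, $\xi_v(1)=1$, and $\xi_v(t)>t$ strictly for $t<1$ — i.e. that one refinement step strictly and \emph{uniformly} (independently of $n,k$) lengthens any non-staircase segment. Granting that, the remaining work is the soft bookkeeping above; the only delicate points are the conservation law $\sum_k w_{n,k}=2$, the consistent dyadic refinement of the weights via $\beta_{n+1,2k}=\beta_{n,k}$, and the harmless fact that the statement should be read with the standing hypothesis $v\ne 1/2$.
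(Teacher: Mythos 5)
Your proof is correct, but it takes a genuinely different route from the paper's. The paper's own proof never actually uses the $\xi_v$-lemma: it assumes $L<2$, notes that $t_{n,k}\leq L/2$ forces the slope $(2v)^s(2-2v)^{n-1-s}$ of $h_n$ on a cell into a bounded window and hence confines $s$ to an interval of length independent of $n$, and then uses the binomial distribution of $s$ plus Stirling's formula to show that the Lebesgue measure of the cells with moderate slope tends to $0$; the final contradiction with $L<2$ is only asserted, rather tersely. You instead exploit the lemma directly: from $l_n=\sum_k t_{n,k}w_{n,k}$ with $\sum_k w_{n,k}=2$ you get $l_{n+1}-l_n\geq\sum_k\bigl(\xi_v(t_{n,k})-t_{n,k}\bigr)w_{n,k}$, and since increments of a bounded monotone sequence vanish while, under $L<2$, a $w$-weight at least $\varepsilon/(2c)$ of cells keeps $t_{n,k}\leq 1-\varepsilon/4$, where the continuous function $\xi_v-\mathrm{id}$ has a positive minimum, you reach a contradiction. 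This is more elementary (no probabilistic or Stirling input), and it also tightens the vaguest step of the paper's argument. Two reading points you handled correctly and should keep explicit: item 3 of the lemma as printed states $\xi_v(t_{n,k})>\xi_{n,k}(t_{n,k})$, which is evidently a typo (the sentence introducing the lemma calls $\xi_v$ a lower bound, and its proof produces $\xi_v=\min\{\xi_v^{(1)},\xi_v^{(2)}\}$ of the two exact branch ratios), so your derivation of $\xi_{n,k}(t_{n,k})\geq\xi_v(t_{n,k})$ from the lemma's proof is the right reading; similarly $\xi_{n,k}$ is nominally defined over $A_{n+1,2k}$ but the lemma's proof computes the two-link length over all of $A_{n,k}$, which is the meaning your identity requires. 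And, as you note, the statement carries the standing hypothesis $v\neq 1/2$, exactly as in the version announced in the main-results section.
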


\begin{proof} As the sequence $l_n$ of lengthes of the graphs of $h_n$
increase and is bounded by $2$, then limit $L
=\lim\limits_{n\rightarrow \infty}l_n$ exists and $L\leq 2$.

Assume, that $L<2$. For any $n\geq 2$ consider the graph of $h_n$,
which is linear on each of intervals $A_{n,k}$ for $0\leq k\leq
2^{n-1}-1$.

Consider all the numbers $t_{n,k}$ for $0\leq k\leq 2^{n-1}-1$,
defined by~(\ref{pr-eq:10}). If for some $k$ the inequality
$t_{n,k}>\frac{L}{2}$ holds, then for every $n'>n$ the length of
$h_{n'}$ on $A_{n,k}$ is more than $\frac{L}{2}$ times more, than
$2^{n-1}+\beta_{n,k+1}-\beta_{n,k}$.

Denote by $C_n$ the union of all $A_{n,k}$ such that
$t_{n,k}>\frac{L}{2}$ and denote by $D_n =
\bigcup\limits_{n=2}^nC_n$ and $D = \bigcup\limits_{n=2}^\infty
C_n$.

We will show, that $\lambda(D)$, where $\lambda$ is the Lebesgue
measure.

For any $i\geq 2$ consider the function $\gamma_i(x)$ such that
$\gamma_i(x)=1$ if $\alpha_i(1)=2v$ and $\gamma_i(x)=0$ if
$\alpha_i(1)=2(1-v)$, where $\alpha_i$ is defined
by~(\ref{pr-eq:11}). Consider the function
$$\psi_n(x)=\sum\limits_{i=2}^n \frac{\gamma_i(x)}{2^i} + 2^n\cdot
\left[\frac{x}{2^n}\right].$$

Consider the function $\psi(x) =\lim\limits_{n\rightarrow
\infty}\psi_n(x)$. As for any $n$ the function $\psi_n$ permutes
intervals of $[0,\, 1]$, then for any Lebesgue measurable set
$M\subseteq [0,\ ,1]$ the equality $\lambda(\psi_n(M)) =
\lambda(M)$, whence $\lambda(\psi(M)) = \lambda(M)$. Notice, that
this fact, together with that $\psi$ and all $\psi_n$ are
bijections means that $\psi$ and $\psi_n$ preserve the Lebesgue
measure.

There is a correspondence between a point $x\in [0,\, 1]$ (or
$\psi(x)\in [0,\, 1]$) and the following pathes in the quadrant
$\mathcal{K}_1 =\{ (x,\, y)\in \mathbb{Z}^+\times \mathbb{Z}^+:\,
x\geq 0,\, y\geq 0\}$. We start from $(0,\, 0)$ and do 1 right, if
the first binary digit of $\psi(x)$ is 1 and go 1 up, if this
digit is $0$. Then consider the next digit of $\psi(x)$ and repeat
infinitely the process.

This leads us to considering $x$ as a Markov process
(see~\cite{Markov-1} and~\cite{Markov-2}),  with 2 states $p_0,\,
p_1$ and transition matrix
$$ \left(
\begin{array}{ll}
0.5 & 0.5\\
0.5 & 0.5
\end{array}\right).
$$ The state $p_1$ means moving right and $p_2$ means moving up.

For any $n\geq 2$ the chain of the first $n-1$ states corresponds
to a set $A_{n,k}$ for some $k$, dependent on $n$. Let $s$ be sum
of first $n-1$ digits of $\psi(x)$. Then $h'(x)=(2v)^s\cdot
(2-2v)^{n-s-1}$ for $x\in A_{n,k}$. This point corresponds to the
point $P_n(x)\in \mathcal{K}_1$ with coordinates $(s,\, n-s-1)$,
which belong to the line $y+x = n-1$ .

Describe the conditions on $s$ such that $A_{n,k}\not\in C_n$.
Denote the angle between the graph of $h_n$ on $A_{n,k}$ and the
$x$-axis by $\alpha$. Then the condition $$ t_{n,k} =
\frac{1}{\sin\alpha +\cos\alpha}<\frac{L}{2}
$$ is equivalent to $A_{n,k}\not\in C_n$.
Transformations of the last inequality give $$ \sin\left(\alpha +
\frac{\pi}{4}\right) >\frac{\sqrt{2}}{L};
$$$$
\frac{\pi}{2}> \frac{3\pi}{4} - \arcsin\frac{\sqrt{2}}{L} >\alpha
> \arcsin\frac{\sqrt{2}}{L} - \frac{\pi}{4} >0
$$

$$\sin\left( \frac{3\pi}{4} - \arcsin\frac{\sqrt{2}}{L} \right) =
\frac{\sqrt{2}}{2}\cos\left(\arcsin\frac{\sqrt{2}}{L}\right) +
\frac{\sqrt{2}}{2}\sin\left(\arcsin\frac{\sqrt{2}}{L}\right) =
$$ $$
= \frac{\sqrt{2}}{2}\left( \sqrt{1-\frac{2}{L^2}} +
\frac{\sqrt{2}}{L}\right) = \frac{\sqrt{2}}{2}\cdot
\frac{\sqrt{L^2 -2} +\sqrt{2}}{L}
$$

$$\cos\left( \frac{3\pi}{4} - \arcsin\frac{\sqrt{2}}{L} \right) =
\frac{-\sqrt{2}}{2}\cos\left(\arcsin\frac{\sqrt{2}}{L}\right) +
\frac{\sqrt{2}}{2}\sin\left(\arcsin\frac{\sqrt{2}}{L}\right) =
$$
$$
= \frac{\sqrt{2}}{2}\cdot \frac{\sqrt{2} -\sqrt{L^2 -2}}{L}
$$
$$
\frac{\sqrt{L^2 -2} +\sqrt{2}}{\sqrt{2} -\sqrt{L^2 -2}} > \tan
\alpha > \frac{\sqrt{2} -\sqrt{L^2 -2}}{\sqrt{L^2 -2} +\sqrt{2}}
$$

Denote by $$ T^+ = \frac{\sqrt{L^2 -2} +\sqrt{2}}{\sqrt{2}
-\sqrt{L^2 -2}}
$$ and $$
T^- = \frac{\sqrt{2} -\sqrt{L^2 -2}}{\sqrt{L^2 -2} +\sqrt{2}}.
$$ Whence, $$
T^+ > 2^n\cdot v^s\cdot (1-v)^{n-s-1} > T^-
$$ $$
\ln(T^+) > n\ln 2 + s\ln v + (n-s-1)\ln (1-v)
> \ln(T^-)
$$ $$
\ln(T^+) > s(\ln v -\ln (1-v)) + n\ln 2 + (n-1)\ln (1-v)
> \ln(T^-)
$$

If $v>1/2$, then $\ln v - \ln(1-v) > 0$, whence $$ \frac{\ln(T^+)
- n\ln 2 - (n-1)\ln (1-v)}{\ln v -\ln (1-v)} > s > \frac{\ln(T^-)
- n\ln 2 - (n-1)\ln (1-v)}{\ln v -\ln (1-v)}.
$$

If $v<1/2$, then $\ln v - \ln(1-v) < 0$, whence $$ \frac{\ln(T^+)
- n\ln 2 - (n-1)\ln (1-v)}{\ln v -\ln (1-v)} < s < \frac{\ln(T^-)
- n\ln 2 - (n-1)\ln (1-v)}{\ln v -\ln (1-v)}.
$$

Nevertheless, independently on the sign of the expression $v-1/2$,
the condition $A_{n,k}\not\in C_n$ is equivalent to that $s$
belongs to some interval of the length not more then $$
T=\left[\frac{\ln (T^+) - \ln (T^-)}{\left\| \ln v -\ln
(1-v)\right\|}\right],
$$ where $||x ||$ denotes the absolute value of $x$ and $[x]$
denotes integer part of $x$.

Resume, that we know, that $[0,\, 1]\backslash C_n$ consists of
not more then $T$ intervals. Consider all the passes, which end at
$y=n-1-x$, i.e. which have the length $n-1$. There are $2^{n-1}$
such pathes. For any $s,\, 0\leq s\leq n-1$ there are $C_{n-1}^s$
pathes, which end at $(s,\, n-s-1)$, where $C_{n-1}^s$ is a
binomial coefficient. If $s_1,\ldots,\, s_T$ are numbers of these
sets, then $$ \lambda(C_n) =
\frac{\sum\limits_{i=1}^TC_{n-1}^{s_i}}{2^{n-1}}.
$$ As $C_{n-1}^{[(n-1)/2]}$ is the biggest binomial coefficient of
the form $C_{n-1}^s$, then we have the restriction $$ \lambda(C_n)
< \frac{T\cdot C_{n-1}^{[n-1/2]}}{2^{n-1}}.
$$

Assume, that $n=2m+1$ is odd. Then Stirling approximation gives $$
\frac{T\cdot C_{n-1}^{[n-1/2]}}{2^{n-1}} = \frac{T\cdot
C_{2m}^{m}}{2^{2m}} \sim \frac{T\cdot \sqrt{2\pi \cdot 2m}\cdot
\left(\frac{2m}{e}\right)^{2m}}{2\pi m\cdot
\left(\frac{m}{e}\right)^{2m} 2^{2m}} = $$ $$=\frac{T\cdot
C_{2m}^{m}}{2^{2m}} \sim \frac{T\cdot \sqrt{2\pi \cdot 2m}\cdot
2^{2m}}{2\pi m\cdot 2^{2m}} \sim \frac{1}{\sqrt{m}}.
$$

The result for the case, when $n$ is even is the same, whence $$
\lim\limits_{n\rightarrow \infty}\lambda (C_n) = 1,
$$ whence $$
\lambda(D)=1,
$$ which contradicts to the fact, that lengthes of the graphs
$h_n$ are bounded by $L<2$.
\end{proof}

\subsection{Finding of the length of the graph of
the conjugation by combinatorial
reasonings}\label{pr-sect-length-2}

Now we will find the evident formula for the length of the graph
of $h_n$, which approximates the conjugation $h_n$ of $f$ and
$f_v$. The graph of $h_{n+1}$ divides $[0,\, 1]$ into $2^n$ equal
parts, where it is linear. At each of these parts the derivative
of $h_{n+1}$ equals $\prod\limits_{i=2}^{n+1} \alpha_i$. Each of
these $n$ multipliers can be either $2v$, of $2(1-v)$.

Let for some interval of the length $2^{-n}$ the derivative is
$t$. It means, that $\tan \alpha = t$, where $\alpha$ is an angle
between the graph and $x$-axis. Then $\cos\alpha =
\sqrt{\frac{1}{1+t^2}}$ and the length of the graph on this
interval is $\frac{1}{2^n\cos\alpha} = \frac{1}{2^n}\sqrt{
1+t^2}$.

As all the both values of $\alpha_i$ are equally expected (for the
random uniformly distributed number), then the probability, that
$\prod\limits_{i=2}^{n+1} \alpha_i = (2v)^k(2-2v)^{n-k}$ equals
$\frac{C_n^k}{2^n}$, where $C_n^k$ is the binomial coefficient.

These reasonings prove the following proposition.

\begin{proposition}\label{prop:3}
The following equality holds
\begin{equation}\label{pr-eq:20} l_{n+1}(v) =
\frac{1}{2^{n}}\cdot\sum\limits_{k=0}^nC_n^k \cdot
\sqrt{1+2^{2n}v^{2k}(1-v)^{2(n-k)}}\end{equation} for the length
$l_{n+1}(v)$ of $h_{n+1}$.
\end{proposition}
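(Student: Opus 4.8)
The plan is to prove Proposition~\ref{prop:3} exactly along the lines of the probabilistic discussion that precedes it in Section~\ref{pr-sect-length-2}, but making the combinatorial bookkeeping precise. The statement concerns the length $l_{n+1}(v)$ of the graph of the piecewise linear approximation $h_{n+1}$, which by Proposition~\ref{lema:An} is linear on each of the $2^n$ intervals $A_{n+1,k}=\left[\frac{k}{2^n},\, \frac{k+1}{2^n}\right]$, $0\leq k\leq 2^n-1$. So the length is simply the sum over these $2^n$ intervals of the length of the corresponding line segment, and the whole proof reduces to (a) computing the slope of $h_{n+1}$ on each such interval, and (b) counting how many intervals carry each possible slope.

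For step (a), I would invoke Theorem~\ref{lema:32}: for $x\not\in A_{n+1}$ of the form~(\ref{eq:6}), $h_{n+1}'(x)=\prod_{i=2}^{n+1}\alpha_i(x)$, where each factor $\alpha_i(x)$ is either $2v$ or $2(1-v)$ according to~(\ref{eq:54}). Fix an interval $A_{n+1,k}$; all points in its interior share the first $n$ binary digits $x_1\ldots x_n$, hence share all the factors $\alpha_2(x),\ldots,\alpha_{n+1}(x)$, so $h_{n+1}$ has a well-defined constant slope there, of the form $(2v)^{s}\,(2(1-v))^{n-s}=2^n v^{s}(1-v)^{n-s}$ for some integer $s$ with $0\leq s\leq n$. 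If $\tan\alpha$ is this slope then $\cos\alpha = 1/\sqrt{1+\tan^2\alpha}$, and since the horizontal extent of $A_{n+1,k}$ is $2^{-n}$, the length of the segment over $A_{n+1,k}$ is $\frac{1}{2^n}\sqrt{1+2^{2n}v^{2s}(1-v)^{2(n-s)}}$. Here I would note that the value $s$ depends only on the multiset of values $\{\alpha_2(x),\ldots,\alpha_{n+1}(x)\}$, and in particular the length contribution depends only on $s$, not on the order of the digits — which is why the final formula groups everything by $s=k$.

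For step (b) — the counting — I need the number $N_s$ of indices $k\in\{0,\ldots,2^n-1\}$ for which the associated product equals $2^n v^{s}(1-v)^{n-s}$, and the claim amounts to $N_s=\binom{n}{s}$. The cleanest way is to use the recursion structure already exploited in the proof of Theorem~\ref{lema:32}: the factor $\alpha_i(x)$ equals $2v$ when $x_{i-1}=x_{i-2}$ and $2(1-v)$ when $x_{i-1}\neq x_{i-2}$ (with $x_0=0$), so the exponent $s$ is exactly the number of indices $i\in\{2,\ldots,n+1\}$ with $x_{i-1}=x_{i-2}$, i.e. the number of ``no-change'' positions among the $n$ consecutive pairs $(x_0,x_1),(x_1,x_2),\ldots,(x_{n-1},x_n)$. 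Since $x_0=0$ is fixed and $x_1,\ldots,x_n$ range freely over $\{0,1\}^n$ as $k$ ranges over $\{0,\ldots,2^n-1\}$, the map $(x_1,\ldots,x_n)\mapsto (\varepsilon_1,\ldots,\varepsilon_n)$ with $\varepsilon_i=\mathbf{1}[x_{i-1}=x_{i-2}]$ — wait, more carefully: $\varepsilon_i = \mathbf{1}[x_i = x_{i-1}]$ for $i=1,\ldots,n$ — is a bijection from $\{0,1\}^n$ to $\{0,1\}^n$ (knowing $x_0=0$ and the $\varepsilon_i$ one reconstructs the $x_i$ uniquely). Hence the number of $k$ with exponent $s$ is the number of $\varepsilon$-vectors with exactly $s$ ones, namely $\binom{n}{s}$. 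Summing $\frac{1}{2^n}\sqrt{1+2^{2n}v^{2s}(1-v)^{2(n-s)}}$ over all $2^n$ intervals, grouped by $s$, gives $l_{n+1}(v)=\frac{1}{2^n}\sum_{s=0}^{n}\binom{n}{s}\sqrt{1+2^{2n}v^{2s}(1-v)^{2(n-s)}}$, which is~(\ref{eq:36}) after renaming $s$ to $k$.

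The main obstacle is making the bijection argument in step (b) airtight — in particular pinning down precisely which consecutive pair of digits governs $\alpha_i$ and how the boundary convention $x_0=0$ interacts with the indexing of intervals $A_{n+1,k}$ versus $A_{n}$, since Theorem~\ref{lema:32} is stated for $h_n$ on $A_n$ and I am applying it to $h_{n+1}$ (so the product runs to $n+1$ and there are $n$ free binary digits). Everything else is routine: the per-interval length computation is elementary trigonometry, and once the count $\binom{n}{s}$ is established the summation is immediate. I should also remark that the ``probability'' language in the surrounding text is just a normalized counting device ($\binom{n}{k}/2^n$ being the fraction of intervals with slope exponent $k$), so no genuine probabilistic input is needed — the proof is finite and combinatorial.
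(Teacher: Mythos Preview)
Your proposal is correct and follows exactly the approach the paper takes: break the graph into $2^n$ linear pieces, compute each segment's length as $\frac{1}{2^n}\sqrt{1+t^2}$ via Theorem~\ref{lema:32}, and then count how many intervals carry each slope $(2v)^s(2-2v)^{n-s}$. The only difference is that where the paper simply asserts the count is $C_n^s$ by saying the two values of $\alpha_i$ are ``equally expected'', you supply the explicit bijection $(x_1,\ldots,x_n)\mapsto(\varepsilon_1,\ldots,\varepsilon_n)$ with $\varepsilon_j=\mathbf{1}[x_j=x_{j-1}]$ --- this is the right way to make that step rigorous, and your indexing concern is already handled correctly.
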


The following combinatorial fact follows from
Theorem~\ref{pr-Theor-3}.

\begin{theorem}\label{theor:03}
For every $v\in (0,\, 1)\backslash \{ 0.5\}$ the limit $
\lim\limits_{n\rightarrow \infty}l_n(v)=2$ holds, where $l_n(v)$
are defined by~(\ref{pr-eq:20}).
\end{theorem}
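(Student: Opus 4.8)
The plan is to deduce Theorem~\ref{theor:03} from the two facts it is advertised to rest on, and then to spell out the self-contained combinatorial argument promised by the section heading. The quick route is immediate: Proposition~\ref{prop:3} identifies the right-hand side of~(\ref{pr-eq:20}) with the exact length of the graph of the piecewise linear approximation $h_{n+1}$ of the conjugacy $h$, and Theorem~\ref{pr-Theor-3} says that this length tends to $2$ as $n\to\infty$ (for $v=1/2$ the map $h$ is the identity, so $l_n\equiv\sqrt2$, which is why that value is excluded). What is worth doing in addition is deriving the limit directly from formula~(\ref{pr-eq:20}), which simultaneously gives an alternative proof of Theorem~\ref{pr-Theor-3}.

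For the direct proof I would read~(\ref{pr-eq:20}) probabilistically. Let $X_1,X_2,\ldots$ be independent random variables, each equal to $\ln(2v)$ or $\ln(2(1-v))$ with probability $\tfrac12$ each, realised on the single infinite product space, and put $S_n=X_1+\cdots+X_n$. On the event that exactly $k$ of $X_1,\ldots,X_n$ equal $\ln(2v)$, which has probability $C_n^k/2^n$, one has $2^{2n}v^{2k}(1-v)^{2(n-k)}=e^{2S_n}$, so
$$ l_{n+1}(v)=\mathbb{E}\bigl[\sqrt{1+e^{2S_n}}\bigr]. $$
Two elementary computations drive everything. First, $\mathbb{E}[e^{X_1}]=\tfrac12(2v)+\tfrac12(2(1-v))=1$, hence $\mathbb{E}[e^{S_n}]=1$ for every $n$; from $\sqrt{1+e^{2S_n}}\le 1+e^{S_n}$ this already yields the upper bound $l_{n+1}(v)\le 1+\mathbb{E}[e^{S_n}]=2$. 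Second, $\mathbb{E}[X_1]=\tfrac12\ln\bigl(4v(1-v)\bigr)<0$ since $v\neq1/2$, so by the strong law of large numbers $S_n/n\to\mathbb{E}[X_1]<0$, whence $S_n\to-\infty$ almost surely.

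The lower bound $\liminf_n l_{n+1}(v)\ge 2$ is the genuinely delicate point and I expect it to be the main obstacle, because the integrand $\sqrt{1+e^{2S_n}}$ converges almost surely only to $1$, not to $2$: the value $\mathbb{E}[e^{S_n}]=1$ is sustained by the atypical event that many $X_i$ take the larger value, and this mass is exactly what pushes the expectation up to $2$. To capture it I would fix a constant $c$ and split
$$ \mathbb{E}\bigl[\sqrt{1+e^{2S_n}}\bigr]\ \ge\ \mathbb{E}\bigl[\sqrt{1+e^{2S_n}}\,\mathbf{1}_{\{S_n\le c\}}\bigr]+\mathbb{E}\bigl[\sqrt{1+e^{2S_n}}\,\mathbf{1}_{\{S_n> c\}}\bigr]\ \ge\ \mathbb{P}(S_n\le c)+\mathbb{E}\bigl[e^{S_n}\mathbf{1}_{\{S_n> c\}}\bigr], $$
using $\sqrt{1+e^{2S_n}}\ge 1$ on the first event and $\sqrt{1+e^{2S_n}}\ge e^{S_n}$ on the second. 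Here $\mathbb{P}(S_n\le c)\to 1$ because $S_n\to-\infty$ a.s., while $\mathbb{E}[e^{S_n}\mathbf{1}_{\{S_n> c\}}]=1-\mathbb{E}[e^{S_n}\mathbf{1}_{\{S_n\le c\}}]\to 1$ by bounded convergence, since $e^{S_n}\mathbf{1}_{\{S_n\le c\}}$ is dominated by the constant $e^{c}$ and tends to $0$ almost surely.

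Combining the two bounds gives $\lim_n l_{n+1}(v)=2$, i.e. $\lim_n l_n(v)=2$, for every $v\in(0,1)\setminus\{0.5\}$. The only bookkeeping care needed is to set up all the $S_n$ on one probability space so that the a.s. statement $S_n\to-\infty$ is meaningful for all $n$ at once; apart from that the argument is routine once the probabilistic reformulation of~(\ref{pr-eq:20}) is in place, and as a by-product it reproves Theorem~\ref{pr-Theor-3}.
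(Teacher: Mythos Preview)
Your proposal is correct. The ``quick route'' you give first---deducing the limit from Proposition~\ref{prop:3} and Theorem~\ref{pr-Theor-3}---is exactly how the paper proves Theorem~\ref{theor:03}; the paper treats it as an immediate corollary of those two results and gives no further argument.

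Your second, self-contained probabilistic proof is precisely what the paper alludes to but does not carry out: just after the statement it credits G.~Shevchenko with the observation that ``Theorem~\ref{theor:03} can be simply proven with the use of probability theory reasonings,'' yet no such proof appears. You have supplied it, and cleanly. The key steps---writing $l_{n+1}(v)=\mathbb{E}\bigl[\sqrt{1+e^{2S_n}}\bigr]$, the martingale identity $\mathbb{E}[e^{S_n}]=1$ giving the upper bound $2$, the SLLN giving $S_n\to-\infty$ a.s., and the split $\mathbb{P}(S_n\le c)+\mathbb{E}[e^{S_n}\mathbf{1}_{\{S_n>c\}}]$ with bounded convergence for the lower bound---are all correct. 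Compared with the paper's proof of Theorem~\ref{pr-Theor-3}, which goes through a rather involved geometric argument (the ratios $t_{n,k}$, the auxiliary function $\xi_v$, a contradiction argument using Stirling's formula for the central binomial coefficient), your direct route is considerably shorter and more transparent, and it bypasses Theorem~\ref{pr-Theor-3} entirely. The price is that it uses the strong law of large numbers as a black box, whereas the paper's argument stays closer to elementary estimates; but for this purpose the tradeoff is clearly favourable.
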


Notice, that expression~(\ref{pr-eq:20}) has cense also for $v =
0,\, 0.5$ and $1$. Obviously, $l_n(0)=l_n(1)=1$ and
$l_n(0.5)=\sqrt{2}$ for $l_n(v)$ be defined by~(\ref{pr-eq:20}).
The case $l_n(0.5)$ corresponds to the trivial conjugation $y=x$
of the maps $f$ with itself.

Prof. Georgiy Shevchenko from Taras Shevchenko National University
of Kyiv noticed us that Theorem~\ref{theor:03} can be simply
proven with the use of probability theory reasonings.

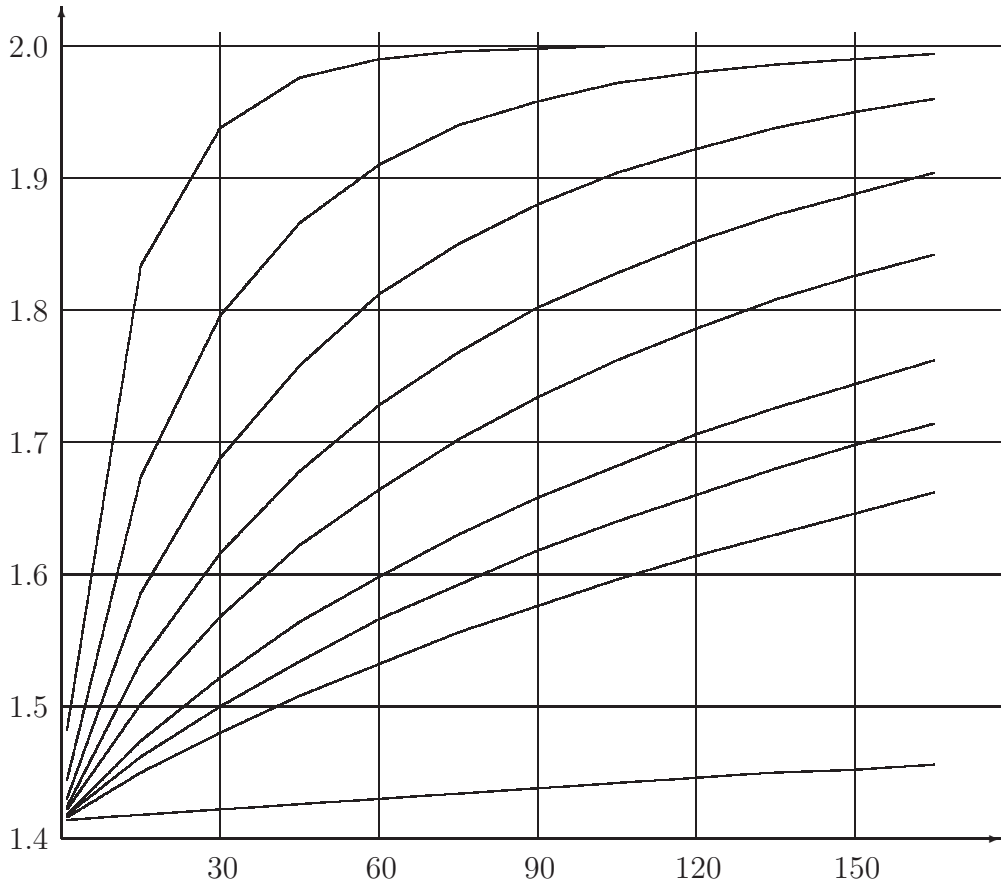
\begin{figure}[ht]
\begin{center}
\begin{picture}(380,335)

%0,8
\Vidr{27}{61}{55}{237}  \VidrTo{85}{289}    \VidrTo{115}{308}
\VidrTo{145}{315}   \VidrTo{175}{318}   \VidrTo{205}{319}
\VidrTo{235}{320}   \VidrTo{265}{320}   \VidrTo{295}{320}
\VidrTo{325}{320}   \VidrTo{355}{320}

% 0,7
\Vidr{27}{42}{55}{157}  \VidrTo{85}{218}    \VidrTo{115}{253}
\VidrTo{145}{275}   \VidrTo{175}{290}   \VidrTo{205}{299}
\VidrTo{235}{306}   \VidrTo{265}{310}   \VidrTo{295}{313}
\VidrTo{325}{315}   \VidrTo{355}{317}

%0,65
\Vidr{27}{35}{55}{113}  \VidrTo{85}{164}    \VidrTo{115}{199}
\VidrTo{145}{226}   \VidrTo{175}{245}   \VidrTo{205}{260}
\VidrTo{235}{272}   \VidrTo{265}{281}   \VidrTo{295}{289}
\VidrTo{325}{295}   \VidrTo{355}{300}

%0,62
\Vidr{27}{32}{55}{87}   \VidrTo{85}{128}    \VidrTo{115}{159}
\VidrTo{145}{184}   \VidrTo{175}{204}   \VidrTo{205}{221}
\VidrTo{235}{234}   \VidrTo{265}{246}   \VidrTo{295}{256}
\VidrTo{325}{264}   \VidrTo{355}{272}

%0,6
\Vidr{27}{31}{55}{71}   \VidrTo{85}{104}    \VidrTo{115}{131}
\VidrTo{145}{152}   \VidrTo{175}{171}   \VidrTo{205}{187}
\VidrTo{235}{201}   \VidrTo{265}{213}   \VidrTo{295}{224}
\VidrTo{325}{233}   \VidrTo{355}{241}

%0,58
\Vidr{27}{29}{55}{57}   \VidrTo{85}{81} \VidrTo{115}{102}
\VidrTo{145}{119}   \VidrTo{175}{135}   \VidrTo{205}{149}
\VidrTo{235}{161}   \VidrTo{265}{173}   \VidrTo{295}{183}
\VidrTo{325}{192}   \VidrTo{355}{201}

%0,57
\Vidr{27}{29}{55}{51}   \VidrTo{85}{70} \VidrTo{115}{87}
\VidrTo{145}{103}   \VidrTo{175}{116}   \VidrTo{205}{129}
\VidrTo{235}{140}   \VidrTo{265}{150}   \VidrTo{295}{160}
\VidrTo{325}{169}   \VidrTo{355}{177}

%0,56
\Vidr{27}{28}{55}{45}   \VidrTo{85}{60} \VidrTo{115}{74}
\VidrTo{145}{86}    \VidrTo{175}{98}    \VidrTo{205}{108}
\VidrTo{235}{118}   \VidrTo{265}{127}   \VidrTo{295}{135}
\VidrTo{325}{143}   \VidrTo{355}{151}

%0,52
\Vidr{27}{27}{55}{29}   \VidrTo{85}{31} \VidrTo{115}{33}
\VidrTo{145}{35}    \VidrTo{175}{37}    \VidrTo{205}{39}
\VidrTo{235}{41}    \VidrTo{265}{43}    \VidrTo{295}{45}
\VidrTo{325}{46}    \VidrTo{355}{48}

\put(25,20){\vector(0,1){315}} \put(25,20){\vector(1,0){355}}
\put(25,70){\line(1,0){355}} \put(25,120){\line(1,0){355}}
\put(25,170){\line(1,0){355}} \put(25,220){\line(1,0){355}}
\put(25,270){\line(1,0){355}} \put(25,320){\line(1,0){355}}

\put(85,20){\line(0,1){305}} \put(145,20){\line(0,1){305}}
\put(205,20){\line(0,1){305}} \put(265,20){\line(0,1){305}}
\put(325,20){\line(0,1){305}}

\put(5,16){1.4} \put(5,66){1.5} \put(5,116){1.6} \put(5,166){1.7}
\put(5,216){1.8} \put(5,266){1.9} \put(5,316){2.0}

\put(80,5){30} \put(140,5){60} \put(200,5){90} \put(257,5){120}
\put(317,5){150}

\end{picture}
\end{center}
\caption{Graphs of $l_{n+1}(v)$} \label{fig_6}
\end{figure}

Let $l_{n+1}(v)$ be given by~(\ref{pr-eq:20}). The graphs of $y(n)
=l_{n+1}(v)$ for $$v\in \{ 0.52;\, 0.54;\, 0.56;\, 0.57;\, 0.58;\,
0.6;\, 0.62;\, 0.65;\, 0.7;\, 0.8\}$$ are given at
Figure~\ref{fig_6}. Here $l_{n+1}(v)$ is the length of the graph
of $h_{n+1}$, dependent on $v$.

Clearly, it is not evident from the graph at Figure~\ref{fig_6}
that, for instance, $\lim\limits_{n\rightarrow \infty}l_n(0.52)
=2$. Nevertheless, graphs of $y = l_{n+1}(0.51),\, y
=l_{n+1}(0.512),\, y =l_{n+1}(0.515)$ and $y = l_{n+1}(0.52)$ are
given at Figure~\ref{fig_13}, where $n$ is considered from 20,000
(for $y = l_{n+1}(0.51)$ with $l_{20,001}(0.52) = 1,99611$) to
50,000 (for $y = l_{n+1}(0.51)$ with $l_{50,001}(0.51) =
1,97903$).

\setlength{\unitlength}{0.85pt}

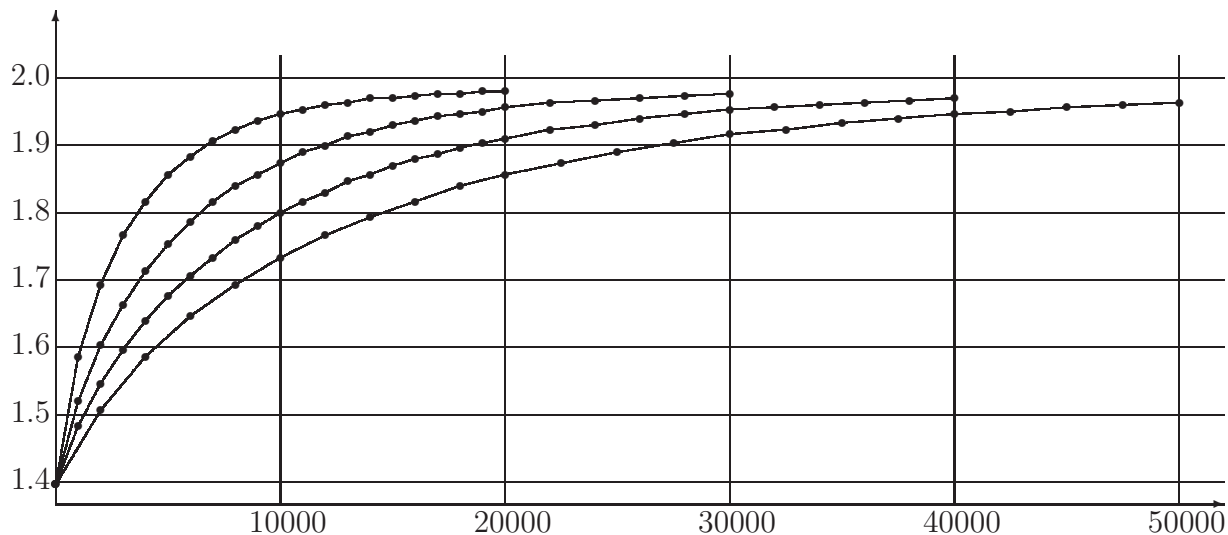
\begin{figure}[htbp]
\begin{center}
\begin{picture}(545,245)

\put(25,20){\vector(0,1){220}} \put(25,20){\vector(1,0){520}}

\put(125,20){\line(0,1){200}} \put(225,20){\line(0,1){200}}
\put(325,20){\line(0,1){200}} \put(425,20){\line(0,1){200}}
\put(525,20){\line(0,1){200}}

\put(25,30){\line(1,0){520}} \put(25,60){\line(1,0){520}}
\put(25,90){\line(1,0){520}} \put(25,120){\line(1,0){520}}
\put(25,150){\line(1,0){520}} \put(25,180){\line(1,0){520}}
\put(25,210){\line(1,0){520}}

\put(111,8){10000} \put(211,8){20000} \put(311,8){30000}
\put(411,8){40000} \put(511,8){50000}

\put(5,27){1.4} \put(5,57){1.5}  \put(5,87){1.6} \put(5,117){1.7}
\put(5,147){1.8} \put(5,177){1.9} \put(5,207){2.0}

\put(25,29){\circle*{3}} \put(45,62){\circle*{3}}
\put(65,86){\circle*{3}} \put(85,104){\circle*{3}}
\put(105,118){\circle*{3}} \put(125,130){\circle*{3}}
\put(145,140){\circle*{3}} \put(165,148){\circle*{3}}
\put(185,155){\circle*{3}} \put(205,162){\circle*{3}}
\put(225,167){\circle*{3}} \put(250,172){\circle*{3}}
\put(275,177){\circle*{3}} \put(300,181){\circle*{3}}
\put(325,185){\circle*{3}} \put(350,187){\circle*{3}}
\put(375,190){\circle*{3}} \put(400,192){\circle*{3}}
\put(425,194){\circle*{3}} \put(450,195){\circle*{3}}
\put(475,197){\circle*{3}} \put(500,198){\circle*{3}}
\put(525,199){\circle*{3}}

\Vidr{25}{29}{45}{62} \VidrTo{65}{86} \VidrTo{85}{104}
\VidrTo{105}{118} \VidrTo{125}{130} \VidrTo{145}{140}
\VidrTo{165}{148} \VidrTo{185}{155} \VidrTo{205}{162}
\VidrTo{225}{167} \VidrTo{250}{172} \VidrTo{275}{177}
\VidrTo{300}{181} \VidrTo{325}{185} \VidrTo{350}{187}
\VidrTo{375}{190} \VidrTo{400}{192} \VidrTo{425}{194}
\VidrTo{450}{195} \VidrTo{475}{197} \VidrTo{500}{198}
\VidrTo{525}{199}

\put(25,29){\circle*{3}} \put(35,86){\circle*{3}}
\Vidr{25}{29}{35}{86} \put(45,118){\circle*{3}}   \VidrTo{45}{118}
\put(55,140){\circle*{3}}   \VidrTo{55}{140}
\put(65,155){\circle*{3}}   \VidrTo{65}{155}
\put(75,167){\circle*{3}}   \VidrTo{75}{167}
\put(85,175){\circle*{3}}   \VidrTo{85}{175}
\put(95,182){\circle*{3}}   \VidrTo{95}{182}
\put(105,187){\circle*{3}}  \VidrTo{105}{187}
\put(115,191){\circle*{3}}  \VidrTo{115}{191}
\put(125,194){\circle*{3}}  \VidrTo{125}{194}
\put(135,196){\circle*{3}}  \VidrTo{135}{196}
\put(145,198){\circle*{3}}  \VidrTo{145}{198}
\put(155,199){\circle*{3}}  \VidrTo{155}{199}
\put(165,201){\circle*{3}}  \VidrTo{165}{201}
\put(175,201){\circle*{3}}  \VidrTo{175}{201}
\put(185,202){\circle*{3}}  \VidrTo{185}{202}
\put(195,203){\circle*{3}}  \VidrTo{195}{203}
\put(205,203){\circle*{3}}  \VidrTo{205}{203}
\put(215,204){\circle*{3}}  \VidrTo{215}{204}
\put(225,204){\circle*{3}}  \VidrTo{225}{204}

\put(25,29){\circle*{3}} \put(35,66){\circle*{3}}
\Vidr{25}{29}{35}{66} \put(45,91){\circle*{3}}    \VidrTo{45}{91}
\put(55,109){\circle*{3}}   \VidrTo{55}{109}
\put(65,124){\circle*{3}}   \VidrTo{65}{124}
\put(75,136){\circle*{3}}   \VidrTo{75}{136}
\put(85,146){\circle*{3}}   \VidrTo{85}{146}
\put(95,155){\circle*{3}}   \VidrTo{95}{155}
\put(105,162){\circle*{3}}  \VidrTo{105}{162}
\put(115,167){\circle*{3}}  \VidrTo{115}{167}
\put(125,172){\circle*{3}}  \VidrTo{125}{172}
\put(135,177){\circle*{3}}  \VidrTo{135}{177}
\put(145,180){\circle*{3}}  \VidrTo{145}{180}
\put(155,184){\circle*{3}}  \VidrTo{155}{184}
\put(165,186){\circle*{3}}  \VidrTo{165}{186}
\put(175,189){\circle*{3}}  \VidrTo{175}{189}
\put(185,191){\circle*{3}}  \VidrTo{185}{191}
\put(195,193){\circle*{3}}  \VidrTo{195}{193}
\put(205,194){\circle*{3}}  \VidrTo{205}{194}
\put(215,195){\circle*{3}}  \VidrTo{215}{195}
\put(225,197){\circle*{3}}  \VidrTo{225}{197}
\put(245,199){\circle*{3}}  \VidrTo{245}{199}
\put(265,200){\circle*{3}}  \VidrTo{265}{200}
\put(285,201){\circle*{3}}  \VidrTo{285}{201}
\put(305,202){\circle*{3}}  \VidrTo{305}{202}
\put(325,203){\circle*{3}}  \VidrTo{325}{203}

\put(25,29){\circle*{3}} \put(35,55){\circle*{3}}
\Vidr{25}{29}{35}{55} \put(45,74){\circle*{3}}    \VidrTo{45}{74}
\put(55,89){\circle*{3}}    \VidrTo{55}{89}
\put(65,102){\circle*{3}}   \VidrTo{65}{102}
\put(75,113){\circle*{3}}   \VidrTo{75}{113}
\put(85,122){\circle*{3}}   \VidrTo{85}{122}
\put(95,130){\circle*{3}}   \VidrTo{95}{130}
\put(105,138){\circle*{3}}  \VidrTo{105}{138}
\put(115,144){\circle*{3}}  \VidrTo{115}{144}
\put(125,150){\circle*{3}}  \VidrTo{125}{150}
\put(135,155){\circle*{3}}  \VidrTo{135}{155}
\put(145,159){\circle*{3}}  \VidrTo{145}{159}
\put(155,164){\circle*{3}}  \VidrTo{155}{164}
\put(165,167){\circle*{3}}  \VidrTo{165}{167}
\put(175,171){\circle*{3}}  \VidrTo{175}{171}
\put(185,174){\circle*{3}}  \VidrTo{185}{174}
\put(195,176){\circle*{3}}  \VidrTo{195}{176}
\put(205,179){\circle*{3}}  \VidrTo{205}{179}
\put(215,181){\circle*{3}}  \VidrTo{215}{181}
\put(225,183){\circle*{3}}  \VidrTo{225}{183}
\put(245,187){\circle*{3}}  \VidrTo{245}{187}
\put(265,189){\circle*{3}}  \VidrTo{265}{189}
\put(285,192){\circle*{3}}  \VidrTo{285}{192}
\put(305,194){\circle*{3}}  \VidrTo{305}{194}
\put(325,196){\circle*{3}}  \VidrTo{325}{196}
\put(345,197){\circle*{3}}  \VidrTo{345}{197}
\put(365,198){\circle*{3}}  \VidrTo{365}{198}
\put(385,199){\circle*{3}}  \VidrTo{385}{199}
\put(405,200){\circle*{3}}  \VidrTo{405}{200}
\put(425,201){\circle*{3}}  \VidrTo{425}{201}

\end{picture}
\end{center}
\caption{Graphs of $y = l_{n+1}(0.51),\, y =l_{n+1}(0.512),\, y
=l_{n+1}(0.515)$ and $y = l_{n+1}(0.52)$} \label{fig_13}
\end{figure}

\setlength{\unitlength}{1pt}

\newpage

\section{Piecewise linear approximations of self semi
conjugation}\label{sect-KuskLin}

In this section we consider the topological self-semiconjugation
of the maps $f:\, [0,\, 1]\rightarrow [0,\, 1]$, given by
\begin{equation}\label{eq:37} f(x) = \left\{\begin{array}{ll}
2x,& 0\leq x< 1/2;\\
2-2x,& 1/2 \leqslant x\leqslant 1,
\end{array}\right.
\end{equation}
i.e. we consider the properties of continuous surjective $h:\,
[0,\, 1]\rightarrow [0,\, 1]$ of the functional equation
\begin{equation}\label{eq:38} \xi(f) = f(\xi).
\end{equation}

We have considered in the Section~\ref{sect-Pobudowa-1} the sets
$$
A_n = \left\{ \frac{t}{2^{n-1}},\, 0\leq t\leq 2^{n-1}\right\}
$$ and according to Proposition~\ref{lema:An}, these $A_n$ are
solutions of the equation $f^n(x)=0$.

For any $n\geq 1$ consider the map $\xi_n:\, A_n \rightarrow A_n$
consider the possibility of its continuation to a
self-semiconjugation of $f$.

\begin{notation}
The maps $\xi_n:\, A_n \rightarrow [0,\, 1]$ is called
\textbf{admissible}, if the equality
\begin{equation}\label{eq:44}\xi(f(x)) = f(\xi(x)) \end{equation} for all
$x\in A_n$.

Notice, that since $f(A_n)\subseteq A_n$ for all $n$,
then~(\ref{eq:44}) is defined for all $x\in A_n$.
\end{notation}

\begin{notation}
The maps $\xi_n:\, A_n \rightarrow A_n$ is called
\textbf{continuable}, if the there is a self-semiconjugation $h:\,
[0,\, 1]\rightarrow [0,\, 1]$ of $f$, which coincides with $\xi_n$
on $A_n$.

In this case the maps $h$ can be considered as continuous
surjective continuation of $\xi_n$.
\end{notation}

\subsection{Admissible self-semiconjugations}
\label{sect-KuskLin-1}

Let $\xi_n:\, A_n \rightarrow [0,\, 1]$ be an admissible maps.
Denote
$$x_0 = \xi(0).
$$ From the commutative diagram $$\begin{CD}
0 @>f >> & 0\\
@V_{h} VV& @VV_{h}V\\
x_0 @>f>>& x_0
\end{CD}$$ obtain that $x_0$ is a fixed point of $f$, whence
\begin{equation}\label{eq:94}x_0\in \{0,\, 2/3\}.\end{equation}

Denote $\varphi_0(x)=2x$ for $x\in [0,\, 1/2]$ and denote
$\varphi_2(x)=2-2x$ for $x\in [1/2,\, 1]$. This notation let us to
rewrite~(\ref{eq:37}) as \begin{equation}\label{eq:52} f(x) =
\left\{\begin{array}{ll}
\varphi_0,& 0\leq x< 1/2;\\
\varphi_1,& 1/2 \leqslant x\leqslant 1.
\end{array}\right.
\end{equation}
Notice, that maps $\varphi_i,\, i=0,\, 1$ are invertible.

Consider the commutative diagram
$$\begin{CD}
1 @>f >> & 0\\
@V_{\psi} VV& @VV_{\psi}V\\
h(1) @>f>>& x_0
\end{CD}$$ and conclude that
\begin{equation}\label{eq:93}h(1) = \varphi_{i_1}^{-1}(x_0)\ \text{ for some }\ i_1\in
\{0,\, 1\}.\end{equation}

From equalities~(\ref{eq:94}) and~(\ref{eq:93}) we obtain the
following lemma.

\begin{lemma}\label{lema:04}
For any $x\in A_1$ there exist $i_1\in \{0,\, 1\}$ such that
$\xi_n(x)=\varphi_{i_1}^{-1}(x_0)$.
\end{lemma}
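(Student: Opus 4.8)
The plan is to treat $A_1$ explicitly. By Proposition~\ref{lema:An} applied with $n=1$ we have $A_1=\{0,\,1\}$, so the claim need only be checked at the two points $x=0$ and $x=1$, and both are essentially covered by the reasoning that precedes the lemma.

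First I would handle $x=1$. This is exactly the content of~(\ref{eq:93}): plugging $x=1$ into the admissibility relation~(\ref{eq:44}) and using $f(1)=0$ gives $\xi_n(0)=f(\xi_n(1))$, i.e. $x_0=f(\xi_n(1))$. Since $f$ restricted to $[0,\,1/2]$ equals $\varphi_0$ and $f$ restricted to $[1/2,\,1]$ equals $\varphi_1$, and both $\varphi_0,\varphi_1$ are invertible, the point $\xi_n(1)$ must be a preimage of $x_0$ under one of these two branches, so $\xi_n(1)=\varphi_{i_1}^{-1}(x_0)$ for a suitable $i_1\in\{0,\,1\}$.

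Next I would handle $x=0$. Here $\xi_n(0)=x_0$ by definition, and by~(\ref{eq:94}) the point $x_0$ is a fixed point of $f$, so $x_0\in\{0,\,2/3\}$. In both cases $x_0$ is its own preimage under the appropriate branch: if $x_0=0$ then $x_0\le 1/2$ and $\varphi_0^{-1}(x_0)=x_0/2=0=x_0$, so take $i_1=0$; if $x_0=2/3$ then $x_0\ge 1/2$ and $\varphi_1^{-1}(x_0)=1-x_0/2=2/3=x_0$, so take $i_1=1$. Equivalently, from $f(0)=0$ the relation~(\ref{eq:44}) gives $x_0=f(x_0)=\varphi_{i_1}(x_0)$ with $i_1$ the index of the branch containing $x_0$, and applying $\varphi_{i_1}^{-1}$ yields $\xi_n(0)=x_0=\varphi_{i_1}^{-1}(x_0)$. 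Combining the two cases gives the statement for every $x\in A_1$.

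There is no genuine obstacle here: the lemma is a bookkeeping restatement of the two commutative-diagram computations~(\ref{eq:94}) and~(\ref{eq:93}) already performed. The only points requiring a moment's care are that the branch index $i_1$ is permitted to depend on $x$ (indeed on the value of $x_0$), and that a fixed point of $f$ lies in the range of the inverse of the branch through it — which is immediate from $f(x_0)=x_0$.
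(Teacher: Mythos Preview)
Your proof is correct and follows the same approach as the paper, which simply states that the lemma follows from equalities~(\ref{eq:94}) and~(\ref{eq:93}); you have merely spelled out the case $x=0$ (that a fixed point of $f$ is its own preimage under the branch containing it) where the paper leaves this implicit.
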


Denote by $i_0\in \{0,\, 1\}$ such number that
\begin{equation}\label{eq:97}x_0 = \varphi_{i_0}(x_0).\end{equation}

This lemma can be inductively generalized as follows.

\begin{lemma}\label{lema:06}
For any $m\leq n$ and any $x\in A_m$ there exist $i_1,\ldots,\,
i_m \in \{0,\, 1\}$ such that $$\xi_n(x)=\varphi_{i_m}^{-1}(\ldots
(\varphi_{i_1}^{-1}(x_0))\ldots ).$$
\end{lemma}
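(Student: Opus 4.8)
The plan is to prove Lemma~\ref{lema:06} by induction on $m$, with the base case $m=1$ being exactly Lemma~\ref{lema:04} (recall that $A_1=\{0,\,1\}$ by Proposition~\ref{lema:An}, and $x_0=\xi_n(0)$ is one of the two fixed points of $f$ by~(\ref{eq:94})). Throughout I would use the factorization of $f$ into its two monotone invertible branches $\varphi_0,\varphi_1$ introduced in~(\ref{eq:52}): $\varphi_0(x)=2x$ on $[0,\,1/2]$ and $\varphi_1(x)=2-2x$ on $[1/2,\,1]$, so that $\varphi_0^{-1}(y)=y/2$ and $\varphi_1^{-1}(y)=1-y/2$ are both defined on all of $[0,\,1]$, and the full $f$-preimage of any $y\in[0,\,1]$ satisfies $f^{-1}(y)\subseteq\{\varphi_0^{-1}(y),\,\varphi_1^{-1}(y)\}$. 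I would also record the elementary fact $f(A_m)\subseteq A_{m-1}$ for $m\geq 2$: if $f^m(x)=0$ then $f^{m-1}(f(x))=f^m(x)=0$, so $x\in A_m$ forces $f(x)\in A_{m-1}$.

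For the inductive step, fix $m$ with $2\leq m\leq n$, assume the statement for $m-1$, and take $x\in A_m$. Since $A_m\subseteq A_n$ the value $\xi_n(x)$ is defined, and by the observation above $f(x)\in A_{m-1}$, so the inductive hypothesis gives indices $i_1,\ldots,i_{m-1}\in\{0,\,1\}$ with
$$
\xi_n(f(x))=\varphi_{i_{m-1}}^{-1}\bigl(\ldots\bigl(\varphi_{i_1}^{-1}(x_0)\bigr)\ldots\bigr)=:y .
$$
On the other hand, admissibility of $\xi_n$, i.e.\ equation~(\ref{eq:44}), yields $\xi_n(f(x))=f(\xi_n(x))$, hence $f(\xi_n(x))=y$. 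Therefore $\xi_n(x)\in f^{-1}(y)\subseteq\{\varphi_0^{-1}(y),\,\varphi_1^{-1}(y)\}$, so there is $i_m\in\{0,\,1\}$ with $\xi_n(x)=\varphi_{i_m}^{-1}(y)=\varphi_{i_m}^{-1}\bigl(\ldots\bigl(\varphi_{i_1}^{-1}(x_0)\bigr)\ldots\bigr)$, which is exactly the asserted formula with the composition lengthened by one on the outside. This closes the induction.

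The argument is essentially bookkeeping, so I do not expect a genuine obstacle; the only point needing a word of care is the degenerate case $y=1$, where $f^{-1}(1)=\{1/2\}$ is a single point and $\varphi_0^{-1}(1)=\varphi_1^{-1}(1)=1/2$, so both choices of $i_m$ are legitimate and the \emph{existence} claim (which is all the lemma asserts; uniqueness of the index string is not claimed) is unaffected. As a by-product of the same induction, which will be useful later in Section~\ref{sect-KuskLin-1}, one gets that when $x_0=0$ the value $z=\varphi_{i_m}^{-1}(\ldots\varphi_{i_1}^{-1}(0)\ldots)$ satisfies $f^m(z)=f\bigl(\varphi_{i_1}^{-1}(0)\bigr)=0$ (since $\varphi_{i_1}^{-1}(0)\in\{0,\,1\}$), so $z\in A_m$ and hence $\xi_n(A_m)\subseteq A_m$.
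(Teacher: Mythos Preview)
Your proof is correct and follows essentially the same route as the paper: induction on $m$ with base case Lemma~\ref{lema:04}, and for the step apply admissibility~(\ref{eq:44}) together with $f(x)\in A_{m-1}$ to reduce to the inductive hypothesis, then choose $i_m$ so that $\xi_n(x)=\varphi_{i_m}^{-1}(y)$. Your extra remarks on the degenerate preimage at $y=1$ and on $\xi_n(A_m)\subseteq A_m$ when $x_0=0$ are accurate but not needed for the lemma itself.
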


\begin{proof}
The base of induction (the case $m=1$) follows from
Lemma~\ref{lema:04}.

Assume that for $m=k$ the lemma is proved. For any $x\in A_{k+1}$
consider the commutative diagram $$\begin{CD}
x @>f >> & f(x)\\
@V_{\xi} VV& @VV_{\xi}V\\
\xi(x) @>f>> & \xi(f(x))
\end{CD}$$

Since $f(x)\in A_k$, then by the assumption of induction obtain
$\xi(f(x)) =\varphi_{i_k}^{-1}(\ldots
(\varphi_{i_1}^{-1}(x_0))\ldots )$, whence $$ f(\xi(x)) =
\varphi_{i_k}^{-1}(\ldots (\varphi_{i_1}^{-1}(x_0))\ldots ).
$$ The last equality means that there exists $i_{k+1}$ such that
$$
\xi(x) = \varphi_{i_{k+1}}^{-1}(\ldots
(\varphi_{i_1}^{-1}(x_0))\ldots ).
$$
\end{proof}

\begin{note}\label{note:18}
From the definition of $A_n$ obtain that $x\in A_n$ if and only if
there exist $j_1,\ldots,\, j_n$ such that
\begin{equation}\label{eq:95} x = \varphi_{j_n}^{-1}(\ldots
(\varphi_{j_1}^{-1}(0))\ldots ).
\end{equation}
\end{note}

\begin{notation}
For any $m\geq 1$ denote by $\mathcal{B}_m$ the set of sequences
the the length $m$, consisted of $0$-s and $1$-s. Also denote
$\mathcal{B} = \bigcup\limits_{i=1}^\infty\mathcal{B}_i$.
\end{notation}

If follows from Lemma~\ref{lema:06} that for any $m\geq 1$ and
$j_1,\ldots,\, j_m$ there exist $i_1,\ldots,\, i_m$ such that
\begin{equation}\label{eq:98}
\xi(\varphi_{j_n}^{-1}(\ldots (\varphi_{j_1}^{-1}(0))\ldots )) =
\varphi_{i_m}^{-1}(\ldots (\varphi_{i_1}^{-1}(x_0))\ldots ).
\end{equation}

This equality means that $\xi$ generates the maps
$\widetilde{\xi}:\, \mathcal{B} \rightarrow \mathcal{B}$ such that
for any $m\geq 1$ the inclusion
$\widetilde{\xi}(\mathcal{B}_m)\subseteq \mathcal{B}_m$ holds and
\begin{equation}\label{eq:96} \widetilde{\xi}(j_1,\ldots,\, j_m) = (i_1,\ldots,\,
i_m)
\end{equation} for $1\leq m\leq n$, and $i_1,\ldots i_m,\, j_1,\ldots
,\,j_m$ such that~(\ref{eq:98}) holds.

\begin{lemma}\label{lema:05}
Let $\widetilde{\xi}$ be defined by~(\ref{eq:96}) for an
admissible $\xi$. For $m\leq n$ and $t<m$ the equality $$
\widetilde{\xi}(j_1,\ldots,\, j_m) = (i_1,\ldots,\, i_m)
$$ yields $$
\widetilde{\xi}(j_1,\ldots,\, j_{m-t}) = (i_1,\ldots,\, i_{m-t}).
$$
\end{lemma}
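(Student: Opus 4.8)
The plan is to recast the claim in the language of the one‑sided ``address'' maps $\varphi_0^{-1},\varphi_1^{-1}$, isolate the single algebraic fact $f\circ\varphi_j^{-1}=\mathrm{id}$, and then deduce the lemma simply by applying $f^{t}$ to both sides of the relation that \emph{defines} $\widetilde\xi$.

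First I would verify the elementary identity: for $j\in\{0,1\}$ and every $y\in[0,1]$ one has $f(\varphi_j^{-1}(y))=y$. This is immediate from~(\ref{eq:52}): $\varphi_0^{-1}(y)=y/2\in[0,\tfrac12]$ and $\varphi_1^{-1}(y)=1-y/2\in[\tfrac12,1]$, so on these images $f$ acts as $\varphi_0$, resp.\ $\varphi_1$, and the only boundary point $\tfrac12$ is harmless since $\varphi_0(\tfrac12)=\varphi_1(\tfrac12)=f(\tfrac12)=1$. For a finite binary string $\mathbf{a}=(a_1,\dots,a_k)\in\mathcal{B}_k$ set $\Phi_{\mathbf{a}}:=\varphi_{a_k}^{-1}\circ\cdots\circ\varphi_{a_1}^{-1}$. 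The identity above peels one factor off at a time, so by induction $f^{s}\circ\Phi_{\mathbf{a}}=\Phi_{(a_1,\dots,a_{k-s})}$ for $0\leq s\leq k$; moreover, by Remark~\ref{note:18} (formula~(\ref{eq:95})), $\Phi_{\mathbf{a}}(0)\in A_k$, and the same computation with base point $x_0$ gives $f^{s}(\Phi_{\mathbf{a}}(x_0))=\Phi_{(a_1,\dots,a_{k-s})}(x_0)$.

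Next I unwind the definition~(\ref{eq:96})--(\ref{eq:98}) of $\widetilde\xi$: writing $\mathbf{j}=(j_1,\dots,j_m)$, $\mathbf{i}=(i_1,\dots,i_m)$, the statement $\widetilde\xi(\mathbf{j})=\mathbf{i}$ means precisely $\xi(\Phi_{\mathbf{j}}(0))=\Phi_{\mathbf{i}}(x_0)$. Apply $f^{t}$ to this equality of points. On the right, the previous paragraph gives $f^{t}(\Phi_{\mathbf{i}}(x_0))=\Phi_{(i_1,\dots,i_{m-t})}(x_0)$. On the left I want to pull $f^{t}$ through $\xi$: since $m\leq n$, the point $\Phi_{\mathbf{j}}(0)$ and each of its iterates $f^{s}(\Phi_{\mathbf{j}}(0))=\Phi_{(j_1,\dots,j_{m-s})}(0)$ with $0\leq s<t$ lie in $A_m\subseteq A_n$ (the sets $A_k$ being nested), so admissibility~(\ref{eq:44}) may be invoked at each of them; iterating $f(\xi(\cdot))=\xi(f(\cdot))$ a total of $t$ times yields $f^{t}(\xi(\Phi_{\mathbf{j}}(0)))=\xi(f^{t}(\Phi_{\mathbf{j}}(0)))=\xi(\Phi_{(j_1,\dots,j_{m-t})}(0))$. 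Comparing the two sides gives $\xi(\Phi_{(j_1,\dots,j_{m-t})}(0))=\Phi_{(i_1,\dots,i_{m-t})}(x_0)$, which is exactly $\widetilde\xi(j_1,\dots,j_{m-t})=(i_1,\dots,i_{m-t})$ by the definition of $\widetilde\xi$; here $t<m$ guarantees $m-t\geq1$.

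I do not anticipate a genuine obstacle: the only step demanding care is the bookkeeping that every intermediate iterate $f^{s}(\Phi_{\mathbf{j}}(0))$ remains in $A_n$, so that admissibility can be used $t$ consecutive times — this is exactly where the hypothesis $m\leq n$ enters. It is also worth remarking that the statement ``$\widetilde\xi(\mathbf{a})=\mathbf{b}$'' is used only through its defining relation~(\ref{eq:98}), so no argument about uniqueness of binary addresses of points of $A_m$ is required.
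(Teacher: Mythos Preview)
Your proof is correct and follows essentially the same approach as the paper: apply $f$ (or $f^{t}$) to the defining relation~(\ref{eq:98}) and use admissibility to commute $f$ with $\xi$, peeling off the outermost $\varphi^{-1}$-factors. The paper's proof is briefer---it draws the commutative diagram for $t=1$ and says the reasoning repeats for $t>1$---while you spell out explicitly the identity $f\circ\varphi_j^{-1}=\mathrm{id}$ and the fact that each intermediate iterate stays in $A_n$, but the underlying idea is identical.
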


\begin{proof}
Prove the lemma for $t=1$. Consider the commutative diagram

$$\begin{CD}
\varphi_{j_m}^{-1}(\ldots (\varphi_{j_1}^{-1}(0))\ldots ) @>f >> &
\varphi_{j_{m-1}}^{-1}(\ldots
(\varphi_{j_1}^{-1}(0))\ldots )\\
@V_{\xi} VV& @VV_{\xi}V\\
\varphi_{i_m}^{-1}(\ldots (\varphi_{i_1}^{-1}(x_0))\ldots ) @>f>>
& \varphi_{i_{m-1}}^{-1}(\ldots (\varphi_{i_1}^{-1}(x_0))\ldots ).
\end{CD}$$

For $t>1$ the reasonings are the same.
\end{proof}

\begin{note}
Notice, that~(\ref{eq:95}) is not one to one correspondence
between $A_n$ and finite sequences $j_n,\ldots,\, j_n$,
whence~(\ref{eq:96}) does not define the one to one correspondence
between admissible $\xi$ and the the functions, whose domain and
gange is the finite sequence of $0$-s and $1$-s. Indeed, if
$j_1,\ldots,\, j_k=0$ and $j_{k+1}\neq 0$, then
$$ \varphi_{j_n}^{-1}(\ldots (\varphi_{j_1}^{-1}(0))\ldots ) =
\varphi_{j_n}^{-1}(\ldots (\varphi_{j_2}^{-1}(0))\ldots ) =\ldots
\, = \varphi_{j_n}^{-1}(\ldots (\varphi_{j_k}^{-1}(0))\ldots ),
$$ and
$$
\varphi_{j_n}^{-1}(\ldots (\varphi_{j_k}^{-1}(0))\ldots ) \neq
\varphi_{j_n}^{-1}(\ldots (\varphi_{j_{k+1}}^{-1}(0))\ldots ).
$$
\end{note}

These reasonings prove the following lemma.

\begin{lemma}\label{lema:07}
Let $\widetilde{\xi}$ be defined by an admissible $\xi$. Let for
some $m\geq 1$ and $j_1,\ldots,\, j_m,\, i_1,\ldots,\, i_m$ the
equality~(\ref{eq:96}) holds. Let $i_0$ be defined
by~(\ref{eq:97}). Then for any $k,\, 1\leq k\leq m$ the equality
$j_k=0$  yields $i_k=i_0$.
\end{lemma}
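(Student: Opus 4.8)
The plan is to decode what the hypothesis $\widetilde{\xi}(j_1,\ldots,j_m)=(i_1,\ldots,i_m)$ asserts about the $f$-orbits of $x=\varphi_{j_m}^{-1}(\cdots\varphi_{j_1}^{-1}(0)\cdots)$ and of $\xi(x)$, and then to push information between these two orbits using the iterated semiconjugacy relation, exploiting that $x_0=\xi(0)$ is a fixed point of $f$. This lemma is one of the structural constraints on the correspondence $\xi\mapsto\widetilde{\xi}$ that feed into Theorem~\ref{theor:04}, so I would keep the proof in the same language as Lemmas~\ref{lema:04}--\ref{lema:06}.

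First I would fix notation: write $x=\varphi_{j_m}^{-1}(\cdots\varphi_{j_1}^{-1}(0)\cdots)\in A_m$, so that applying $f$ strips the outer factor, giving $f(x)=\varphi_{j_{m-1}}^{-1}(\cdots\varphi_{j_1}^{-1}(0)\cdots)$ and inductively $f^{m-1}(x)=\varphi_{j_1}^{-1}(0)$, $f^{m}(x)=0$; likewise, by~(\ref{eq:98}), $\xi(x)=\varphi_{i_m}^{-1}(\cdots\varphi_{i_1}^{-1}(x_0)\cdots)$ with $f^{m-1}(\xi(x))=\varphi_{i_1}^{-1}(x_0)$ and $f^{m}(\xi(x))=x_0$. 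Iterating $\xi\circ f=f\circ\xi$ gives $\xi(f^{t}(x))=f^{t}(\xi(x))$ for all $t$, which is the bridge between the two orbits. The core case is the coordinate recording the last $f$-step of $x$ before hitting $0$ (in this normalisation, $k=1$): if $j_1=0$ then $f^{m-1}(x)=\varphi_0^{-1}(0)=0$, hence $f^{m-1}(\xi(x))=\xi(f^{m-1}(x))=\xi(0)=x_0$; comparing with $f^{m-1}(\xi(x))=\varphi_{i_1}^{-1}(x_0)$ yields $\varphi_{i_1}^{-1}(x_0)=x_0$, i.e.\ the branch $\varphi_{i_1}$ fixes $x_0$. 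Since $x_0\in\{0,2/3\}$ and each of these points is fixed by exactly one of $\varphi_0,\varphi_1$ — precisely the branch $\varphi_{i_0}$ singled out by~(\ref{eq:97}) — we conclude $i_1=i_0$, uniformly in the two possibilities for $x_0$.

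For a general index $k\le m$ the plan is to reduce to the previous case: peel the outer coordinates using Lemma~\ref{lema:05}, which truncates $\widetilde{\xi}(j_1,\ldots,j_m)=(i_1,\ldots,i_m)$ to $\widetilde{\xi}(j_1,\ldots,j_k)=(i_1,\ldots,i_k)$, and then re-interpret the shortened sequence — invoking the non-injectivity of the encoding remarked on just before the statement (a coordinate $0$ may be absorbed or shifted) — so that the coordinate $j_k$ is placed in the position handled above, against a genuine $f$-orbit that reaches $0$ exactly when $j_k=0$ forces it to. The step I expect to be the main obstacle is exactly this reduction: because $x\mapsto(j_1,\ldots,j_m)$ is not one-to-one, one must argue carefully that after truncating and re-reading we land in the clean configuration and not in an unrelated one; this is where the informal ``these reasonings'' preceding the lemma must be made precise, and where admissibility of $\xi$ (hence availability of the full commuting-diagram machinery of Lemmas~\ref{lema:04}--\ref{lema:06}) is really used. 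Once that alignment is established, $i_k=i_0$ follows as in the core case.
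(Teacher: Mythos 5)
Your treatment of the core case is fine and is essentially the paper's own argument: from $j_1=0$ you get $f^{m-1}(x)=0$, hence $f^{m-1}(\xi(x))=\xi(f^{m-1}(x))=x_0$ by the iterated relation $\xi\circ f^t=f^t\circ\xi$ on $A_n$, and comparison with $f^{m-1}(\xi(x))=\varphi_{i_1}^{-1}(x_0)$ forces $\varphi_{i_1}(x_0)=x_0$, i.e.\ $i_1=i_0$, since $x_0\in\{0,\,2/3\}$ is fixed by exactly one branch. The genuine gap is exactly where you flagged it, the passage to $1<k\leq m$, and the reduction you sketch cannot be made to work. Truncation by Lemma~\ref{lema:05} keeps the inner coordinates and leaves $j_k$ in the \emph{outermost} slot of the shortened word, and no ``re-reading'' of the word can transport it to the innermost slot: the $k=1$ mechanism uses specifically that a leading zero means the $f$-orbit of $x$ has already reached the fixed point $0$, so that $\xi$ of that point is pinned to $x_0$; an interior zero (one occurring after the first coordinate equal to $1$) only records that $f^{m-k}(x)$ lies in $[0,\,1/2]$, and admissibility of $\xi$ on the finite set $A_n$ does not force $\xi$ to respect the two halves.

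In fact for an interior zero the conclusion can fail outright, so no sharpening of the re-interpretation step can close the gap. Take $n=3$ and $\xi(0)=0$ (so $x_0=0$, $i_0=0$), $\xi(1/4)=3/4$, $\xi(1/2)=1/2$, $\xi(3/4)=1/4$, $\xi(1)=1$; one checks $\xi(f(x))=f(\xi(x))$ at every point of $A_3$, so $\xi$ is admissible. The point $x=1/4$ has the representation $(j_1,j_2,j_3)=(1,0,0)$, while every representation of $\xi(1/4)=3/4$ as $\varphi_{i_3}^{-1}(\varphi_{i_2}^{-1}(\varphi_{i_1}^{-1}(0)))$ has $i_3=1\neq i_0$, because $3/4>1/2$ forces the outermost branch to be $\varphi_1^{-1}$; so the asserted implication fails at $k=3$ no matter how the $i$'s in~(\ref{eq:96}) are chosen. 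What your argument does give, with no extra work, is the statement for zeros in the leading block: if $j_1=\cdots=j_k=0$ then $f^{m-k}(x)=0$, hence $f^{m-k}(\xi(x))=x_0$, and stripping one $f$ at a time forces $i_1=\cdots=i_k=i_0$. You should also know that the paper itself disposes of $k>1$ with precisely the one-line appeal to the $k=1$ case and Lemma~\ref{lema:05} that you were uneasy about, so your suspicion identifies a defect of the printed proof as well; but as submitted your proposal proves only the case $k=1$ (equivalently, the leading-zero case), and for interior zeros the claimed step is not merely missing, it is unprovable without further hypotheses on $\xi$.
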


\begin{proof}
Prove the lemma for $k=1$. Consider $x\in [0,\, 1]$ defined
by~(\ref{eq:95}). The case $k=1$ means that $j_1=0$, whence
$\varphi_{j_1}^{-1}(0)=0$, which means that $$
\varphi_{j_m}^{-1}(\ldots (\varphi_{j_1}^{-1}(0))\ldots ) =
\varphi_{j_m}^{-1}(\ldots (\varphi_{j_2}^{-1}(0))\ldots ).
$$

The equality $$ \xi(\varphi_{j_m}^{-1}(\ldots
(\varphi_{j_1}^{-1}(0))\ldots )) = \xi(\varphi_{j_m}^{-1}(\ldots
(\varphi_{j_2}^{-1}(0))\ldots ))
$$ means that $$
\varphi_{i_m}^{-1}(\ldots (\varphi_{i_1}^{-1}(x_0))\ldots ) =
\varphi_{i_m'}^{-1}(\ldots (\varphi_{i_2'}^{-1}(x_0))\ldots ).
$$

Applying $f^{m-1}$ to both sides of the obtained equality obtain
$$ f(\varphi_{i_1}^{-1}(x_0)) = x_0,
$$ which means that $i_1=i_0$.

The case $k>1$ follows from the case $k=1$ and
Lemma~\ref{lema:05}.
\end{proof}

\begin{theorem}\label{theor:04}
There is one to one correspondence between admissible self-semi
conjugations $\xi:\, A_n  \rightarrow [0,\, 1]$ and maps
$\widetilde{\xi}:\, \bigcup\limits_{i=1}^n\mathcal{B}_i
\rightarrow \bigcup\limits_{i=1}^n\mathcal{B}_i$ with the
following properties:

(1) For any $m,\, 1\leq m\leq n$, the inclusion
$\widetilde{\xi}(\mathcal{B}_m) \subseteq \mathcal{B}_m$ holds.

(2) For any $m,\, 2\leq m\leq n$ the equality $$
\widetilde{\xi}(j_1,\ldots,\, j_m) = (i_1,\ldots,\, i_m)
$$ yields $$
\widetilde{\xi}(j_1,\ldots,\, j_{m-1}) = (i_1,\ldots,\, i_{m-1}).
$$

(3) If the equality $$ \widetilde{\xi}(j_1,\ldots,\, j_n) =
(i_1,\ldots,\, i_n)
$$ holds for some $i_1,\ldots,\, i_n,\, j_1,\ldots,\, j_n$, then
for any $k,\, 1\leq k\leq n$ the equality $i_k=0$ yields
$j_k=i_0$, where $i_0\in \{0,\, 1\}$ is a fixed number.
\end{theorem}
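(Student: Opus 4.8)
The statement is, up to repackaging, a synthesis of Lemmas~\ref{lema:04}--\ref{lema:07}, so the plan is to exhibit the two maps of the claimed correspondence explicitly and check that they are mutually inverse. Throughout write $x_0=\xi(0)$, recall from~(\ref{eq:94}) that $x_0\in\{0,\,2/3\}$, use the notation $\varphi_0,\varphi_1$ of~(\ref{eq:52}), and let $i_0\in\{0,1\}$ be determined by $x_0=\varphi_{i_0}(x_0)$ as in~(\ref{eq:97}); note $i_0=0$ when $x_0=0$ and $i_0=1$ when $x_0=2/3$ (the two fixed points of $f$).

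First I would build the map $\xi\mapsto\widetilde{\xi}$. Given an admissible $\xi:\,A_n\to[0,1]$, I define $\widetilde{\xi}$ on $\bigcup_{i=1}^{n}\mathcal{B}_i$ by induction on the length $m$ of the argument, following~(\ref{eq:96})--(\ref{eq:98}): having set $\widetilde{\xi}(j_1,\ldots,j_{m-1})=(i_1,\ldots,i_{m-1})$, I put $\widetilde{\xi}(j_1,\ldots,j_m)=(i_1,\ldots,i_m)$ where $i_m\in\{0,1\}$ is chosen so that $\varphi_{i_m}^{-1}\bigl(\varphi_{i_{m-1}}^{-1}(\ldots\varphi_{i_1}^{-1}(x_0)\ldots)\bigr)=\xi\bigl(\varphi_{j_m}^{-1}(\ldots\varphi_{j_1}^{-1}(0)\ldots)\bigr)$. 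Existence of such an $i_m$ is exactly the commutative square in the proof of Lemma~\ref{lema:06}: admissibility forces $f$ of the right-hand side to equal the inner parenthesised point, hence the right-hand side is one of its (at most two) $f$-preimages. When that inner point equals $1$ both choices of $i_m$ give the same value $1/2$, so a fixed convention (say $i_m=0$) makes the definition unambiguous; otherwise $i_m$ is uniquely determined. Property (1) is built in, property (2) is Lemma~\ref{lema:05}, and property (3) is Lemma~\ref{lema:07}.

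Next I would build the inverse map $\widetilde{\xi}\mapsto\xi$. Given $\widetilde{\xi}$ with properties (1)--(3), let $i_0$ be the constant from (3), set $x_0$ to be the corresponding fixed point of $f$ ($x_0=0$ if $i_0=0$, $x_0=2/3$ if $i_0=1$), and for $x\in A_n$ pick by~(\ref{eq:95}) (Remark~\ref{note:18}) any sequence $(j_1,\ldots,j_n)$ with $x=\varphi_{j_n}^{-1}(\ldots\varphi_{j_1}^{-1}(0)\ldots)$, write $\widetilde{\xi}(j_1,\ldots,j_n)=(i_1,\ldots,i_n)$, and declare $\xi(x)=\varphi_{i_n}^{-1}(\ldots\varphi_{i_1}^{-1}(x_0)\ldots)$. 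The substantive point is well-definedness: two sequences represent the same point of $A_n$ precisely because of the identities $\varphi_0^{-1}(0)=0$ and, at the boundary, $\varphi_0^{-1}(1)=\varphi_1^{-1}(1)=1/2$; property (2) lets one truncate and re-extend representations, while property (3) guarantees that the extra symbols forced by a block of leading zero coordinates do not change the output — this is exactly where $x_0=\varphi_{i_0}(x_0)$ is used. Once $\xi$ is well defined, admissibility, i.e. $\xi(f(x))=f(\xi(x))$ for $x\in A_n$, is read off the square
$$\begin{CD} x @>f >> & f(x)\\ @V_{\xi} VV & @VV_{\xi}V\\ \xi(x) @>f>> & f(\xi(x)) \end{CD}$$
together with property (2), exactly as in Lemma~\ref{lema:05}.

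Finally I would verify that the two constructions are mutually inverse: starting from an admissible $\xi$, the recursively defined $\widetilde{\xi}$ reproduces $\xi$ on $A_n$ by construction; starting from $\widetilde{\xi}$ with properties (1)--(3), the recursion in the forward direction re-selects the same coordinates because at each length the choice of $i_m$ is forced (modulo the harmless $1/2$ convention). The main obstacle is precisely this bookkeeping around the non-injective parametrisation $\mathcal{B}\to A_n$ given by~(\ref{eq:95}): one must check that $\widetilde{\xi}$ (forward) and $\xi$ (backward) are genuinely independent of the chosen representation, and that properties (1)--(3) are exactly strong enough to pin this down; everything else is a direct transcription of Lemmas~\ref{lema:04}--\ref{lema:07}.
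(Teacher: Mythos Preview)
Your proposal is correct and follows essentially the same route as the paper: build $\widetilde{\xi}$ from $\xi$ via Lemmas~\ref{lema:06}, \ref{lema:05}, \ref{lema:07}, then construct $\xi$ from $\widetilde{\xi}$ through the representation~(\ref{eq:95}) and verify admissibility from the commutative square, exactly as the paper does. You are in fact more careful than the paper on two points it glosses over --- the well-definedness of $\xi$ under the non-injective parametrisation $\mathcal{B}_n\to A_n$ (the leading-zeros ambiguity and the $\varphi_0^{-1}(1)=\varphi_1^{-1}(1)=1/2$ coincidence), and the explicit check that the two constructions are mutually inverse --- so your write-up would, if anything, be a strengthening of the paper's argument.
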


\begin{proof}
If follows from Lemmas~\ref{lema:06}, \ref{lema:05}
and~\ref{lema:07} that maps $\widetilde{\xi}$, which is defined by
$\xi$ via the equalities~(\ref{eq:98}) and~(\ref{eq:96}) satisfies
the conditions (1), (2) and (3) of Theorem for $i_0$ such that
$\varphi_{i_0}(\xi(0))  = \xi(0)$.

Let $\widetilde{\xi}$ satisfy conditions (1), (2) and (3) of
Theorem. Define an admissible $\xi:\, A_n \rightarrow [0,\, 1]$ as
follows.

If $i_0=0$ then let $x_0=0$, otherwise let $x_0=1$.

For any $x\in A_n$ define $\xi(x)$ as follows. By the
Remark~\ref{note:18} there exist $j_1,\ldots,\, j_n$ such that $$
x = \varphi_{j_n}^{-1}(\ldots (\varphi_{j_1}^{-1}(0))\ldots ).$$
Let $i_1,\ldots,\, i_n$ be such that \begin{equation}\label{eq:99}
\widetilde{\xi}(j_1,\ldots,\, j_n) = (i_1,\ldots,\, i_n).
\end{equation} Now define $\xi(x)$ as $$ \xi(x)=
\varphi_{i_n}^{-1}(\ldots (\varphi_{i_1}^{-1}(x_0))\ldots ).$$

Prove that the obtained $\xi$ would be admissible.

Evidently $f(\xi(x)) = \varphi_{i_{n-1}}^{-1}(\ldots
(\varphi_{i_1}^{-1}(x_0))\ldots )$. From the other hand,
\begin{equation}\label{eq:23}f(x) = \varphi_{j_{n-1}}^{-1}(\ldots
(\varphi_{j_1}^{-1}(0))\ldots ).
\end{equation}

Consider $$x^* = \varphi_{j_{n-1}}^{-1}(\ldots
(\varphi_{j_1}^{-1}(0))\ldots ) \in A_{n-1}$$.

From~(\ref{eq:99}) and from condition (2) of Theorem obtain that
$$\widetilde{\xi}(j_1,\ldots,\, j_{n-1}) = (i_1,\ldots,\,
i_{n-1}),
$$ whence
\begin{equation}\label{eq:100}
\xi(x^*) = \varphi_{j_{n-1}}^{-1}(\ldots
(\varphi_{j_1}^{-1}(0))\ldots ).\end{equation}

Now Theorem follows from the comparing of equations~(\ref{eq:23})
and~(\ref{eq:100}).
\end{proof}

\begin{corollary}\label{corol:1}
For any $n\geq 1$ the number of admissible self-semi conjugations
$\xi:\ A_n \rightarrow [0,\, 1]$ is
$$\sum\limits_{k=0}^n2^{k+1}\cdot C_n^k$$.
\end{corollary}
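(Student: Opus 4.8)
The plan is to obtain Corollary~\ref{corol:1} as an immediate consequence of the bijective description in Theorem~\ref{theor:04}, so that the proof reduces to counting the combinatorial maps $\widetilde{\xi}$. By Theorem~\ref{theor:04}, an admissible self-semiconjugation $\xi:\, A_n\rightarrow [0,\, 1]$ is the same datum as a choice of the fixed bit $i_0\in\{0,\, 1\}$ (equivalently, of $\xi(0)\in\{0,\, 2/3\}$, via $\varphi_{i_0}(\xi(0))=\xi(0)$) together with a map $\widetilde{\xi}:\, \bigcup_{i=1}^n\mathcal{B}_i\rightarrow\bigcup_{i=1}^n\mathcal{B}_i$ satisfying properties (1), (2), (3). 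Hence the number we want is $\sum_{i_0\in\{0,1\}}N_n(i_0)$, where $N_n(i_0)$ is the number of such $\widetilde{\xi}$ for that $i_0$. Interchanging the two letters of the target alphabet $\{0,\, 1\}$ is a bijection between the $\widetilde{\xi}$ for $i_0$ and those for $1-i_0$, so $N_n(0)=N_n(1)=:N_n$ and the total equals $2N_n$. It therefore suffices to prove $N_n=\sum_{k=0}^n C_n^k\,2^k=3^n$, which, after multiplying by $2$, is exactly the claimed value $\sum_{k=0}^n 2^{k+1}C_n^k$.

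The next step is to make the structure of an admissible $\widetilde{\xi}$ explicit. Property (1) forces $\widetilde{\xi}$ to preserve word length, and property (2) says that deleting the last letter of the argument commutes with $\widetilde{\xi}$; together these mean that $\widetilde{\xi}$ is encoded by a single bit-valued function $b$ on nonempty binary words of length at most $n$, through $\widetilde{\xi}(j_1\ldots j_m)=(b(j_1),\, b(j_1j_2),\, \ldots,\, b(j_1\ldots j_m))$. Property (3), imposed at level $n$ and transported to all shorter levels by truncation, then becomes a purely local restriction tying $b(j_1\ldots j_m)$ to the last letter $j_m$. I would then organize the count by levels $m=1,\ldots,n$, expecting each level to admit exactly a threefold choice: informally, level $m$ can be ``inert'' (all the bits $b(w)$ with $|w|=m$ forced), or it can be ``active'' with one of two admissible local patterns. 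Recording the set of active levels (of some size $k$, contributing $C_n^k$), the two patterns at each active level ($2^k$), and the bit $i_0$ (a further factor $2$), this yields precisely $\sum_{k=0}^n 2^{k+1}C_n^k$. To pin down the exact shape of this threefold choice I would first treat $n=1$ and $n=2$ by direct inspection of the conditions on $b$ (and, if convenient, of the underlying $\xi$ using Proposition~\ref{lema:An}), and use these to guess and then verify the inductive step $N_n=3N_{n-1}$.

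The hard part will be this level-by-level bookkeeping: one must show that the restrictions coming from property (3), together with the consistency requirements implicit in Theorem~\ref{theor:04} that arise because the representation $x=\varphi_{j_n}^{-1}(\ldots\varphi_{j_1}^{-1}(0)\ldots)$ of a point of $A_n$ is not unique (leading zeros, and the coincidence $\varphi_0^{-1}(1)=\varphi_1^{-1}(1)$), decouple across levels into genuinely independent choices, and that passing from depth $n-1$ to depth $n$ multiplies the count by exactly $3$. Concretely I would induct on $n$: assuming $N_{n-1}=3^{n-1}$, I would classify all ways to extend an admissible $\widetilde{\xi}$ defined on $\mathcal{B}_1\cup\cdots\cup\mathcal{B}_{n-1}$ to one defined on $\mathcal{B}_1\cup\cdots\cup\mathcal{B}_n$ compatibly with (1), (2), (3), and show there are exactly three such extensions. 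Once the recursion $N_n=3N_{n-1}$ with $N_0=1$ is established, the corollary follows from the elementary identity $2\cdot 3^n=2\sum_{k=0}^n C_n^k\,2^k=\sum_{k=0}^n 2^{k+1}C_n^k$.
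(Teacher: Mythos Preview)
Your strategy---invoke Theorem~\ref{theor:04} and then count the maps $\widetilde{\xi}$---is exactly the paper's approach. But the very first sanity checks you propose already falsify the statement. For $n=1$ one has $A_1=\{0,1\}$; the only constraints are $\xi(0)\in\{0,\,2/3\}$ and $\xi(1)\in f^{-1}(\xi(0))$, and a direct count gives exactly $4$ admissible maps, not $\sum_{k=0}^{1}2^{k+1}C_1^k=6$. For $n=2$ a direct enumeration gives $7$, not $18$; moreover the two choices $\xi(0)=0$ and $\xi(0)=2/3$ contribute $3$ and $4$ respectively, so your symmetry argument $N_n(0)=N_n(1)$ already fails (the point $1$ has only one $f$-preimage, which breaks the symmetry on the $x_0=0$ side).

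The paper's argument shares the reduction but commits a different error: it fixes a single input word $\mathcal{J}=(j_1,\dots,j_n)$, counts the $2\cdot 2^k$ legal output words for it, and then \emph{sums over $\mathcal{J}$}---thus tallying pairs $\bigl(\mathcal{J},\widetilde{\xi}(\mathcal{J})\bigr)$ rather than functions $\widetilde{\xi}$. Your own decomposition into bit functions $b_m$ in fact shows why the hoped-for ``threefold choice per level'' cannot hold: under conditions (1)--(3) alone, each prefix of length $m-1$ contributes an independent free bit $b_m(j_1,\dots,j_{m-1},1)$ (while $b_m(j_1,\dots,j_{m-1},0)=i_0$ is forced), so level $m$ carries $2^{2^{m-1}}$ choices, not $3$; the extra consistency constraints coming from the non-unique coding you flagged (e.g.\ $(1,0)$ and $(1,1)$ both represent $1/2\in A_2$) then cut this down further, but not to $3$ per level. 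In short, the recursion $N_n=3N_{n-1}$ you aim for is false, and the corollary as stated cannot be established by either route.
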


\begin{proof}
By Theorem~\ref{theor:04} we can calculate the maps
$\widetilde{\xi}$ from the Theorem instead of admissible maps.

By condition (2) of Theorem~\ref{theor:04} it is enough to define
$\widetilde{\xi}$ only on $\mathcal{B}_n$.

Let $\mathcal{J} =(j_1,\ldots,\, j_n)\in \mathcal{B}_n$ be a
typical element of $\mathcal{B}_n$. Let $k$ be the quantity of
$1$-s in $\mathcal{J}$. There are $C_n^k$ possibilities to choose
positions $j_{s_1},\ldots,\, j_{s_k}$ of these $1$-s and there are
$2^k$ possibilities to choose correspond $i_{s_1},\ldots,\,
i_{s_k}$. All another elements of $\mathcal{J}$ will be $i_0$, and
we have $2$ possibilities for $i_0$.

Whence, there are $$ \sum\limits_{k=0}^n2^{k+1}\cdot C_n^k.
$$ possibilities to choose maps $\widetilde{\xi}$, which
satisfy the conditions of Theorem~\ref{theor:04}.
\end{proof}

\subsection{Continuable self-semiconjugations}
\label{sect-KuskLin-2}

We will pay our attention to continuable self-semiconjugations
$\xi:\, A_n \rightarrow [0,\, 1]$ in this section, where $n$ is
considered to be arbitrary fixed.

\begin{lemma}\label{lema:14}
Let $\xi_1,\, \xi_2:\, A_n \rightarrow [0,\, 1]$ be admissible
self-semiconjugations. If $\xi_1(x) = \xi_2(x)$ for all $x\in
A_n\setminus A_{n-1}$, then $\xi_1(x) = \xi_2(x)$ for all $x\in
A_n$.
\end{lemma}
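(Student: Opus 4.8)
The plan is to exploit the admissibility relation $\xi(f(x)) = f(\xi(x))$ to "pull back" the hypothesis from $A_n\setminus A_{n-1}$ to all of $A_n$. The two facts I would set up first are: (i) $f$ maps $A_n$ into itself (indeed $f(A_n)\subseteq A_{n-1}$, immediate from Proposition~\ref{lema:An}), so by induction on $j$ admissibility upgrades to $\xi_i(f^j(x)) = f^j(\xi_i(x))$ for every $x\in A_n$ and every $j\ge 0$; and (ii) $A_n\setminus A_{n-1}$ consists exactly of the points with odd numerator, $\{(2l+1)/2^{n-1}\}$. Throughout I assume $n\ge 2$, the statement being vacuous otherwise.

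Next comes the combinatorial core. Every $y\in A_n\setminus\{0,1\}$ can be written uniquely as $y = l/2^{m-1}$ with $l$ odd and $2\le m\le n$ (write the numerator of $y$ as $2^{a}l$ with $l$ odd). Put $\widetilde{x} = y/2^{\,n-m} = l/2^{n-1}$. Since $l$ is odd, $\widetilde{x}\in A_n\setminus A_{n-1}$; and since $y<1$, the points $\widetilde{x},2\widetilde{x},\dots,2^{\,n-m-1}\widetilde{x}$ are all $<1/2$, so $f$ acts on them by doubling and $f^{\,n-m}(\widetilde{x}) = y$. By fact (i), $\xi_i(y) = f^{\,n-m}(\xi_i(\widetilde{x}))$, and since $\widetilde{x}\in A_n\setminus A_{n-1}$ the hypothesis gives $\xi_1(\widetilde{x}) = \xi_2(\widetilde{x})$, hence $\xi_1(y) = \xi_2(y)$. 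For the two leftover points I would take $\widetilde{x} = 1/2^{n-1}\in A_n\setminus A_{n-1}$; a direct computation along its orbit (using $f(1/2)=1$ and $f(1)=0$) gives $f^{\,n-1}(\widetilde{x}) = 1$ and $f^{\,n}(\widetilde{x}) = 0$, and the same pull-back argument yields $\xi_1(1)=\xi_2(1)$ and then $\xi_1(0)=\xi_2(0)$. This exhausts $A_n$.

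The only genuinely delicate point is bookkeeping: one must check that the chosen preimage $\widetilde{x}$ really has odd numerator (so it lies in $A_n\setminus A_{n-1}$ and not merely in $A_n$), and that every intermediate iterate lands in the increasing branch $[0,1/2)$ so that $f$ is honest doubling along the orbit — both follow from $y<1$ and the factorization of the numerator, but they are where an error would hide. A slightly longer alternative, if one prefers to avoid the orbit computation, is a downward induction on $m$ establishing "$\xi_1=\xi_2$ on $A_m\setminus A_{m-1}$" for $m=n,n-1,\dots,2$ using only the single preimage $y\mapsto y/2$ at each step, followed by the same treatment of $\{0,1\}$; I would present whichever turns out shorter in the final writeup.
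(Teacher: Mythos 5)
Your proof is correct and rests on essentially the same mechanism as the paper's: admissibility propagates the agreement of $\xi_1$ and $\xi_2$ forward along $f$-orbits, and every point of $A_n$ lies in the forward orbit of a point of $A_n\setminus A_{n-1}$. The paper phrases this as the setwise push-forward $f(A_k\setminus A_{k-1}) = A_{k-1}\setminus A_{k-2}$ iterated $n-1$ times, which is precisely the one-step downward induction you mention as your alternative; your explicit dyadic pull-back $y = f^{\,n-m}(l/2^{n-1})$ is the same argument compressed into a single jump.
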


\begin{proof}
According to Proposition~\ref{lema:An}, $$ A_n\setminus A_{n-1}
=\left\{ \frac{2t+1}{2^{n-1}},\ t\in [0,\, 2^{n-2}-1]\right\}.
$$

For $t\in [0,\, 2^{n-2}-1]$ and $x = \frac{2t+1}{2^{n}}$ consider
the following commutative diagram for $\xi_i,\, i=1,\, 2$:
$$\begin{CD}
x @>f >> & f(x)\\
@V_{\xi_i} VV& @VV_{\xi_i}V\\
\xi(x) @>f>>& f(\xi_i(x)).
\end{CD}$$

Since $\xi_1(x) = \xi_2(x)$ for all $x\in A_n \setminus A_{n-1}$,
then it follows from the commutative diagram, that $\xi_1(x) =
\xi_2(x)$ for all $x\in f(A_n \setminus A_{n-1})$. It follows from
the definition of $A_n$, that $f(A_k)=A_{k-1}$ for all $k\geq 1$,
whence $f(A_n \setminus A_{n-1}) = A_{n-1}\setminus A_{n-2}$.

Applying $n-1$ times the reasonings above obtain that $\xi_1(x) =
\xi_2(x)$ for all $x\in A_n$.
\end{proof}

If follows from Theorems~\ref{theor:01} and~\ref{pr-theor-2} that
if $\xi:\, A_n\rightarrow [0,\, 1]$ is continuable, then either
$\xi(x)=2/3$ for all $x\in A_n$, or $\xi(0)=0$. More then this,
the maps $h:\, [0,\, 1]\rightarrow [0,\, 1]$ from
Theorem~\ref{pr-theor-2} are all possible continuations of
admissible $\xi:\, A_n \rightarrow [0,\, 1]$.

\begin{lemma}
If $\xi:\, A_n \rightarrow [0,\, 1]$ is a continuous
self-semiconjugation of $f$, then $\xi(A_n)\subseteq A_n$.
\end{lemma}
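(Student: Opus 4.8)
The plan is to reduce the claim, via the characterization of $A_n$ as the zero set of $f^n$, to the single fact that a continuous surjective self-semiconjugation fixes $0$. First I would record the commutativity hypothesis in iterated form: if $h:[0,1]\to[0,1]$ is the continuous surjective solution of $\psi(f)=f(\psi)$ whose restriction to $A_n$ is $\xi$, then a straightforward induction on $n$ (composing the defining commutative square with itself, exactly as for conjugacies in Section~\ref{sect:Vstup}) gives $h(f^{n}(x))=f^{n}(h(x))$ for every $x\in[0,1]$.

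The only genuine point is to show $h(0)=0$. Plugging $x=0$ into $h(f(x))=f(h(x))$ and using $f(0)=0$ gives $h(0)=f(h(0))$, so $h(0)$ is a fixed point of $f$; solving $f(x)=x$ on each of the two branches shows the fixed points of $f$ are exactly $0$ and $2/3$, hence $h(0)\in\{0,2/3\}$. If $h(0)=2/3$, then, $h$ being continuous, it is piecewise linear by Theorem~\ref{theor:01}, and Lemma~\ref{pr-lema-6} then forces $h(x)=2/3$ for all $x$, contradicting surjectivity of $h$. Therefore $h(0)=0$. (If one reads the hypothesis in the weaker form $\xi(0)=0$ used in Section~\ref{Sect-Main-Results}, this step is vacuous.)

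With $h(0)=0$ the conclusion is immediate: for $x\in A_n$ we have $f^{n}(x)=0$ by the very definition of $A_n$, hence $f^{n}(h(x))=h(f^{n}(x))=h(0)=0$, which says precisely that $h(x)\in A_n$, again by the definition of $A_n$ as $\{x:f^n(x)=0\}$. Restricting to $A_n$ yields $\xi(A_n)=h(A_n)\subseteq A_n$, as claimed. As an alternative to the final two paragraphs one could instead invoke Theorem~\ref{pr-theor-2} to write $\xi$ explicitly as $\xi(x)=\tfrac{1-(-1)^{[kx]}}{2}+(-1)^{[kx]}\{kx\}$ for some $k\in\mathbb N$ (the constant solutions being non-surjective) and compute directly that $\xi(j/2^{n-1})\in A_n$, since $\{kj/2^{n-1}\}$ and $1-\{kj/2^{n-1}\}$ both lie in $A_n$.

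I expect no serious obstacle; the one place requiring care is excluding $h(0)=2/3$, where one must appeal to the classification of continuous self-semiconjugations (Theorem~\ref{theor:01} together with Lemma~\ref{pr-lema-6}) and use surjectivity. Everything else is a direct unwinding of the definitions of $A_n$ and of the commutative diagram.
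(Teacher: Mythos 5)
Your proof is correct and follows essentially the same route as the paper: establish $\xi(0)=0$ via the classification of continuous solutions of $\psi(f)=f(\psi)$ (Theorem~\ref{theor:01} plus the piecewise-linear case analysis), then apply the commutation $h(f^n(x))=f^n(h(x))$ together with $f^n(x)=0$ for $x\in A_n$ to conclude $f^n(\xi(x))=0$, i.e.\ $\xi(x)\in A_n$. Your explicit exclusion of the case $h(0)=2/3$ via surjectivity is in fact slightly more careful than the paper's proof, which simply asserts $\xi(0)=0$ citing Theorems~\ref{theor:01} and~\ref{pr-theor-2}.
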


\begin{proof}
Since $\xi(0)=0$ by Theorems~\ref{theor:01} and~\ref{pr-theor-2}
then for any $x\in A_n$ it follows from the definition of
admissibility of $\xi$ that the following diagram $$\begin{CD}
x @>f^n >> & 0\\
@V_{\xi} VV& @VV_{\xi}V\\
\xi(x) @>f^n>>& 0
\end{CD}
$$ is commutative. This proves the Lemma.
\end{proof}

\begin{lemma}\label{lema:13}
Let $\xi:\, A_n \rightarrow A_n$ be continuable
self-semiconjugation of $f$ and $\xi(\alpha_{n,2s+1}) =
\alpha_{n,p}$ for some $\alpha_{n,2s+1}$ and $\alpha_{n,p}$.

1. If $h$ is a continuation of $\xi$ and $k$ is its tangent at
$0$, then either
\begin{equation}\label{eq:102}k(2s+1) - p \equiv 0\mod 2^{n},
\end{equation} or \begin{equation}\label{eq:104}k(2s+1) + p \equiv
0\mod 2^{n}.
\end{equation}

2. If $k$ satisfies either~(\ref{eq:102}) or~(\ref{eq:104}), then
there is a continuation $h:\, [0,\, 1]\rightarrow [0,\, 1]$ of
$\xi$, whose tangent at $0$ is $k$.
\end{lemma}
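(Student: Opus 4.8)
The plan is to use the ``staircase'' description of continuable self-semiconjugations provided by Theorem~\ref{pr-theor-2}: if $\xi$ is continuable and not identically $2/3$, then $\xi(0)=0$ and $\xi$ extends to a map $h$ of the form $h(x)=\frac{1-(-1)^{[kx]}}{2}+(-1)^{[kx]}\{kx\}$ for some $k\in\mathbb{N}$; conversely every such $h$ is a self-semiconjugation of $f$. So part 1 amounts to: if such an $h$ agrees with $\xi$ on $A_n$ and $\xi(\alpha_{n,2s+1})=\alpha_{n,p}$, then the value $h\!\left(\frac{2s+1}{2^{n-1}}\right)$ must equal $\frac{p}{2^{n-1}}$, and I would simply compute $h\!\left(\frac{2s+1}{2^{n-1}}\right)$ from the explicit formula. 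Writing $k(2s+1)=q$, we have $[kx]=\left[\frac{q}{2^{n-1}}\right]$ and $\{kx\}=\left\{\frac{q}{2^{n-1}}\right\}$, so $h(x)=\frac{1-(-1)^{[q/2^{n-1}]}}{2}+(-1)^{[q/2^{n-1}]}\left\{\frac{q}{2^{n-1}}\right\}$. Setting this equal to $\frac{p}{2^{n-1}}$ and clearing denominators gives, according to the parity of $\left[\frac{q}{2^{n-1}}\right]$, either $q\equiv p\pmod{2^{n}}$ or $q\equiv -p\pmod{2^{n}}$ (here the modulus is $2^n$ rather than $2^{n-1}$ because the reflection ``folds'' the unit interval; I would check the boundary cases $p=0$ and $p=2^{n-1}$ separately, where both congruences coincide). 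This yields exactly~(\ref{eq:102}) or~(\ref{eq:104}).

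For part 2, suppose $k$ satisfies one of the two congruences. I would first observe that $h_k(x):=\frac{1-(-1)^{[kx]}}{2}+(-1)^{[kx]}\{kx\}$ is, by the last assertion of Theorem~\ref{pr-theor-2}, a continuous surjective solution of~(\ref{eq:38}); thus it is an admissible self-semiconjugation of $f$ whose tangent at $0$ is $k$ and which satisfies $h_k(0)=0$. By the computation of part 1 run in reverse, the congruence hypothesis forces $h_k(\alpha_{n,2s+1})=\alpha_{n,p}=\xi(\alpha_{n,2s+1})$. Now the point is that a single matched value on $A_n\setminus A_{n-1}$ propagates: by Lemma~\ref{lema:14}, two admissible self-semiconjugations on $A_n$ that agree on all of $A_n\setminus A_{n-1}$ agree on $A_n$; but here I only know agreement at one point $\alpha_{n,2s+1}$. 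To bridge this gap I would invoke Theorem~\ref{theor-main-dopust}, part~2, which says precisely that two continuable self-semiconjugations of $f$ on $A_n$ that coincide at a single point of $A_n\setminus A_{n-1}$ coincide on all of $A_n$. Hence $h_k$ restricted to $A_n$ equals $\xi$, i.e. $h_k$ is a continuation of $\xi$ with tangent $k$ at $0$, as required.

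The main obstacle I anticipate is bookkeeping the reflection carefully in part 1: the map $f$ folds $[0,1]$, so the formula for $h_k$ alternates between ``rising'' and ``falling'' branches, and one must get the modulus and the sign right — in particular confirming that it is $2^n$ and not $2^{n-1}$, and handling the degenerate cases where $\alpha_{n,p}$ is an endpoint (so that $p\equiv p$ and $p\equiv -p$ both hold modulo $2^n$ only when $p\in\{0,2^{n-1}\}$, matching the fixed-point structure). A secondary point requiring care is making sure Theorem~\ref{theor-main-dopust}(2) is genuinely applicable, i.e. that both $\xi$ and $h_k|_{A_n}$ are continuable with $\xi(0)=0$; this is where the hypothesis $\xi:A_n\to A_n$ (rather than into $[0,1]$) and the exclusion of the constant $2/3$ case enter. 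Once those parity and applicability checks are in place, the rest is a direct substitution into the closed form of $h_k$ from Theorem~\ref{pr-theor-2}.
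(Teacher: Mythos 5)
Your part 1 is correct and is essentially the paper's own argument: the paper locates the interval of linearity of the zig-zag continuation containing $\alpha_{n,2s+1}$ and splits into the increasing case (slope $k$), giving~(\ref{eq:102}), and the decreasing case (slope $-k$), giving~(\ref{eq:104}); your substitution of $x=\frac{2s+1}{2^{n-1}}$ into the closed formula of Theorem~\ref{pr-theor-2} is the same computation, with the same conclusion that the modulus is $2^n$. Both versions rely, as yours does explicitly, on Theorem~\ref{pr-theor-2} to know that any continuation with $\xi(0)=0$ is one of the maps $h_k$.

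The gap is in part 2: you close the argument by citing part 2 of Theorem~\ref{theor-main-dopust}, but in the paper that theorem comes \emph{after} Lemma~\ref{lema:13} and its proof invokes precisely Lemma~\ref{lema:13} (together with Lemma~\ref{corol:5} and Lemma~\ref{lema:14}), so your route is circular within the paper's logical structure and cannot be used as stated. The repair is to argue directly, without the detour. Since $\xi$ is continuable and $\xi(A_n)\subseteq A_n$ with $\xi(0)=0$, Theorem~\ref{pr-theor-2} gives some continuation $h_{k_0}$, and by your part 1 computation $k_0(2s+1)\equiv \pm p \pmod{2^n}$. Next observe that the restriction of $h_k$ to $A_n$ is determined by the residue of $k$ modulo $2^n$ up to the substitution $k\mapsto 2^n-k$: for $x=\frac{j}{2^{n-1}}$ the value $h_k(x)$ depends only on $kj \bmod 2^n$, and replacing $kj$ by $-kj$ leaves the folded value unchanged. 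Now if $k$ satisfies~(\ref{eq:102}) or~(\ref{eq:104}), then $k(2s+1)\equiv \pm k_0(2s+1)\pmod{2^n}$, and since $2s+1$ is odd, hence invertible modulo $2^n$, this forces $k\equiv \pm k_0 \pmod{2^n}$; consequently $h_k$ coincides with $h_{k_0}$, i.e. with $\xi$, on all of $A_n$, so $h_k$ is the desired continuation with tangent $k$ at $0$. This invertibility-plus-folding argument is exactly the content the paper later extracts (via Lemma~\ref{corol:5}) to prove Theorem~\ref{theor-main-dopust}; putting it here keeps the dependence acyclic, whereas quoting the theorem does not. Your single matched value at $\alpha_{n,2s+1}$, by itself, is not enough, and Lemma~\ref{lema:14} only upgrades agreement on all of $A_n\setminus A_{n-1}$, not at one point.
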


\begin{proof}
There exist $q\in \mathbb{N}\cup \{0\},\, q\leq 2^{n-1}-1$ such
that $\alpha_{n,2s+1}\in [\alpha_{n,q},\, \alpha_{n,q+1})$.

Consider the possibilities when $q$ is even and when it is odd.

If $q=2t$ for some $t$, then $h$ increase on $[\alpha_{n,q},\,
\alpha_{n,q+1}]$, whence \begin{equation}\label{eq:101}
\frac{\alpha_{n,2s+1}-\frac{2t}{k}}{\frac{1}{k}} = \alpha_{n,p}.
\end{equation} We can simplify this equality as $$ \frac{k(2s+1)}{2^{n-1}}
-2t = \frac{p}{2^{n-1}},
$$ $$
k(2s+1) - p = 2t\cdot 2^{n-1}.
$$

Notice, that the last equation means that~(\ref{eq:101}) is
equivalent to~(\ref{eq:102}).

Consider another case, i.e. $q=2t+1$ for some $t$. Then $h$
decrease on $[\alpha_{n,q},\, \alpha_{n,q+1}]$, whence
\begin{equation}\label{eq:103} \frac{\alpha_{n,2s+1}-\frac{2t+1}{k}}{\frac{1}{k}} =
1-\alpha_{n,p}. \end{equation} We can simplify this equality as $$
\frac{k(2s+1)}{2^{n-1}} - (2t+1) = 1 - \frac{p}{2^{n-1}},
$$ $$
k(2s+1) + p = 2(t+1)\cdot 2^{n-1}.
$$
The last equation means that~(\ref{eq:103}) is equivalent
to~(\ref{eq:104}).
\end{proof}

\begin{lemma}\label{corol:5}
1. The set of natural $k$, which satisfy the
congruence~(\ref{eq:102}) is $k = k_1 + 2^nt,\, t\in \mathbb{N}$
for some $k_1,\, 1\leq k_1\leq 2^{n}$.

2. The set of natural $k$, which satisfy the
congruence~(\ref{eq:104}) is $k = k_2 + 2^nt,\, t\in \mathbb{N}$
$k_2,\, 1\leq k_2\leq 2^{n}$.

3. $k_1 +k_2 = 2^n$, or $k_1+k_2 = 2^{n+2}$ for $k_1$ and $k_2$
from the previous items of this corollary.
\end{lemma}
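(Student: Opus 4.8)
The plan is to treat Lemma~\ref{corol:5} as a purely elementary statement about the two congruences~(\ref{eq:102}) and~(\ref{eq:104}) produced in Lemma~\ref{lema:13}, using nothing beyond the fact that $2s+1$ is an odd integer and hence a unit modulo $2^n$. Throughout, $s$ and $p$ are fixed: $\alpha_{n,2s+1}\in A_n\setminus A_{n-1}$ and $\xi(\alpha_{n,2s+1})=\alpha_{n,p}$, so by Proposition~\ref{lema:An} we have $0\le p\le 2^{n-1}$.

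First, for parts 1 and 2. Rewrite~(\ref{eq:102}) as $k(2s+1)\equiv p \pmod{2^{n}}$. Since $2s+1$ is odd it is invertible in $\mathbb{Z}/2^{n}\mathbb{Z}$, so this is equivalent to the single linear congruence $k\equiv p\,(2s+1)^{-1}\pmod{2^{n}}$; hence the natural numbers solving~(\ref{eq:102}) form exactly one residue class, i.e. $\{k_1+2^{n}t:\ t\ge 0\}$, where $k_1$ is the least positive representative of that class. By construction $1\le k_1\le 2^{n}$, with $k_1=2^{n}$ precisely when the residue class is $0$. Applying the identical argument to~(\ref{eq:104}), i.e. to $k\equiv -p\,(2s+1)^{-1}\pmod{2^{n}}$, gives the analogous description with some $k_2\in\{1,\dots,2^{n}\}$.

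For part 3, combine the two descriptions: $k_1\equiv p\,(2s+1)^{-1}$ and $k_2\equiv -p\,(2s+1)^{-1}\pmod{2^{n}}$, so $k_1+k_2\equiv 0\pmod{2^{n}}$. Since $1\le k_1,k_2\le 2^{n}$, we have $2\le k_1+k_2\le 2^{n+1}$, and the only multiples of $2^{n}$ in this range are $2^{n}$ and $2^{n+1}$. The value $2^{n+1}$ is attained only when $k_1=k_2=2^{n}$, which happens iff $p\,(2s+1)^{-1}\equiv 0\pmod{2^{n}}$, i.e. iff $p\equiv 0\pmod{2^{n}}$; as $0\le p\le 2^{n-1}$, this forces $p=0$, that is $\xi(\alpha_{n,2s+1})=0$. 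Thus generically $k_1+k_2=2^{n}$, and $k_1+k_2=2^{n+1}$ only in the degenerate case $p=0$. (I read the ``$2^{n+2}$'' in the statement as a misprint for $2^{n+1}$: the bounds $k_1,k_2\le 2^{n}$ make $k_1+k_2\le 2^{n+1}$, so $2^{n+2}$ is impossible.)

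I do not expect a genuine obstacle: the whole lemma is a repackaging of the solution count for a linear congruence with an odd, hence invertible, coefficient. The two points that need a little care are (i) fixing the indexing of the arithmetic progression at the \emph{least positive} solution so that $k_1$ and $k_2$ provably lie in $\{1,\dots,2^{n}\}$, and (ii) pinning down that the ``$2^{n+1}$'' case is exactly $p=0$, which uses the a priori bound $p\le 2^{n-1}$ coming from $\xi(A_n)\subseteq A_n$ together with Proposition~\ref{lema:An}.
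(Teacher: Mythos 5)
Your proof is correct and takes essentially the same route as the paper: the paper also uses the coprimality of the odd number $2s+1$ with $2^n$ (phrased via a B\'ezout relation $\gamma_1(2s+1)+\gamma_2\cdot 2^n=1$) to get a single residue class of solutions for each congruence, and proves item 3 by adding the two equalities $k_1(2s+1)-p=2^nt_1$ and $k_2(2s+1)+p=2^nt_2$ to conclude $2^n\mid k_1+k_2$. Your additional remarks --- fixing $k_1,k_2$ as least positive representatives in $\{1,\dots,2^n\}$, identifying the case $k_1+k_2=2^{n+1}$ with $p=0$, and reading the stated ``$2^{n+2}$'' as a misprint for $2^{n+1}$ --- go slightly beyond the paper's own proof (which only records the divisibility by $2^n$), and the misprint diagnosis is indeed forced by the bounds in items 1 and 2.
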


\begin{proof}
1. Since $2s+1$ and $2^n$ are reciprocal simple then there exist
integers $\gamma_1$ and $\gamma_2$ such that $$
\gamma_1\cdot(2s+1) + \gamma_2 \cdot 2^n =1,
$$ whence $$
p\cdot \gamma_1\cdot (2s+1) \equiv p\mod 2^n.
$$ Since for every $p,\, 1\leq p\leq 2^n$ there exists $k_1,\, 1\leq
k_1\leq 2^n$, which satisfy~(\ref{eq:102}), then for each $p$ the
correspond $k$ is unique up to adding a number, which is divisible
by $2^n$.

2. The proof of second item is the came as of the first one.

3. If $k_1$ and $k_2,\, 1\leq k_1,\, k_2\leq 2^n$ are solutions
of~(\ref{eq:102}) and~(\ref{eq:104}) correspondingly then there
exist $t_1$ and $t_2$ such that $$ \left\{ \begin{array}{l}
k_1(2s+1) - p = 2^n\cdot t_1\\
k_2(2s+1) + p = 2^n\cdot t_2.
\end{array}\right.
$$ Adding these two equalities obtain that $k_1+k_2$ is divisible
by $2^n$.
\end{proof}

The main result of this section is the following theorem.

\begin{theorem}\label{theor-main-dopust}
1. For every $x\in A_n\setminus A_{n-1}$ and for every $y\in A_n$
there exists a continuable $\xi: A_n\rightarrow A_n$ such that
$\xi(x)=y$.

2. Let $\xi_1,\, \xi_2:\, A_n \rightarrow A_n$ be continuable
self-semiconjugations of $f$ of the form~(\ref{eq:37}) and
$\xi_1(x) = \xi_2(x)$ for some $x\in A_n\setminus A_{n-1}$. Then
$\xi_1(x)=\xi_2(x)$ for all $x\in A_n$.
\end{theorem}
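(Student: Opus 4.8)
The plan is to exploit the dictionary between continuable self-semiconjugations $\xi:A_n\to A_n$ and their piecewise linear continuations $h:[0,1]\to[0,1]$ provided by Theorems~\ref{theor:01} and~\ref{pr-theor-2}: every continuable $\xi$ with $\xi\not\equiv 2/3$ arises as the restriction to $A_n$ of a map $h$ of the form $h(x)=\frac{1-(-1)^{[kx]}}{2}+(-1)^{[kx]}\{kx\}$ for some $k\in\mathbb{N}$, and conversely each such $h$ restricts to an admissible (indeed continuable) map on $A_n$. So the two assertions of the theorem translate into purely arithmetic statements about which values $k\bmod 2^n$ are allowed, and these are exactly what Lemmas~\ref{lema:13} and~\ref{corol:5} control. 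Throughout I would fix $x=\alpha_{n,2s+1}\in A_n\setminus A_{n-1}$ (so $0\le s\le 2^{n-2}-1$, and $2s+1$ is odd, hence coprime to $2^n$) and $y=\alpha_{n,p}\in A_n$.

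For part~1, given the target value $y=\alpha_{n,p}$, I would invoke Lemma~\ref{lema:13}(2): it suffices to produce a natural number $k$ satisfying either $k(2s+1)\equiv p\pmod{2^n}$ or $k(2s+1)\equiv -p\pmod{2^n}$, because then there is a continuation $h$ of some continuable $\xi$ whose tangent at $0$ is $k$, and by construction $\xi(x)=h(x)=y$. Such a $k$ exists because $2s+1$ is invertible modulo $2^n$ (Lemma~\ref{corol:5}(1)): take $k\equiv p\cdot(2s+1)^{-1}\pmod{2^n}$ and choose any positive representative. The only point needing a word of care is that $\xi$ so obtained genuinely maps $A_n$ into $A_n$ and is continuable, not merely admissible; but $h$ being of the explicit form in Theorem~\ref{pr-theor-2} is automatically a continuous surjective self-semiconjugation of $f$, so its restriction is continuable, and $h(A_n)\subseteq A_n$ follows from the lemma preceding Lemma~\ref{lema:13} in Section~\ref{sect-KuskLin-2}. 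Hence $\xi$ has all required properties.

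For part~2, suppose $\xi_1,\xi_2:A_n\to A_n$ are continuable and $\xi_1(x)=\xi_2(x)$ for some $x=\alpha_{n,2s+1}\in A_n\setminus A_{n-1}$. If this common value is $2/3$ one first checks $2/3\in A_n$ forces $n$ small and both maps to be the constant $2/3$ (which is the continuable map with $h\equiv 2/3$), so assume $\xi_i(0)=0$. By Theorem~\ref{pr-theor-2} each $\xi_i$ extends to $h_i(x)=\frac{1-(-1)^{[k_ix]}}{2}+(-1)^{[k_ix]}\{k_ix\}$ for some $k_i\in\mathbb{N}$; by Lemma~\ref{lema:13}(1) the common value $\xi_1(x)=\xi_2(x)=\alpha_{n,p}$ forces each $k_i$ to satisfy $k_i(2s+1)\equiv p\pmod{2^n}$ or $k_i(2s+1)\equiv -p\pmod{2^n}$. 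Lemma~\ref{corol:5} says the solutions of the first congruence form one residue class $k_1'\bmod 2^n$ and of the second another class $k_2'\bmod 2^n$, and these two classes are related by $k_1'+k_2'\in\{2^n,2^{n+2}\}$ — in particular they coincide modulo $2^n$ only in degenerate cases. The key observation is then that the restriction of $h_i$ to $A_n$ depends only on $k_i\bmod 2^n$ (since $h_i(\alpha_{n,j})$ is determined by $[k_i j/2^{n-1}]$ and $\{k_i j/2^{n-1}\}$, which are unchanged when $k_i$ changes by a multiple of $2^n$), and moreover the two residue classes $k_1'$ and $k_2'$ — being the "increasing" and "decreasing" alternatives at the single point $x$ — induce the \emph{same} map on $A_n$ because at $x=\alpha_{n,2s+1}$ the value agrees by hypothesis and Lemma~\ref{lema:14} propagates the agreement to all of $A_n$. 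So in every case $\xi_1$ and $\xi_2$ restrict from maps with the same $k\bmod 2^n$ up to the $k_1'\leftrightarrow k_2'$ symmetry, hence $\xi_1(x')=\xi_2(x')$ for all $x'\in A_n\setminus A_{n-1}$, and Lemma~\ref{lema:14} upgrades this to $\xi_1=\xi_2$ on all of $A_n$.

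The main obstacle I anticipate is reconciling the two congruence alternatives in part~2: one must verify that the "reflected" solution $k_2'=2^n-k_1'$ (the $\pmod{2^n}$ case of Lemma~\ref{corol:5}(3)) really does give the identical restriction to $A_n$ as $k_1'$, i.e. that $h_{k}$ and $h_{2^n-k}$ agree on $A_n$ — this is a short but essential parity computation using $[(2^n-k)j/2^{n-1}]$ versus $[kj/2^{n-1}]$ for $j\in\{0,\dots,2^{n-1}\}$, and it is what prevents the two a priori different extensions from producing two different continuable $\xi$ once their value at a single odd-denominator point is pinned down. Everything else is bookkeeping with Lemmas~\ref{lema:13},~\ref{corol:5} and~\ref{lema:14}.
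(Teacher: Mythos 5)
Your proposal is correct and follows essentially the same route as the paper: classify the continuations via Theorem~\ref{pr-theor-2}, translate the condition $\xi(\alpha_{n,2s+1})=\alpha_{n,p}$ into the congruences of Lemma~\ref{lema:13}, use Lemma~\ref{corol:5} to pair the two residue classes of $k$ modulo $2^n$, and finish with Lemma~\ref{lema:14}. The ``parity computation'' you flag (that $h_k$ and $h_{2^n-k}$ restrict identically to $A_n$, and that the restriction depends only on $k \bmod 2^n$) is precisely the point the paper leaves implicit, and it does go through; note only that your intermediate phrasing of Lemma~\ref{lema:14} as propagating agreement from the single point $x$ is not what that lemma says (it needs agreement on all of $A_n\setminus A_{n-1}$), but this is harmless since your computation yields agreement on all of $A_n$ directly.
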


\begin{proof}
The first part of Theorem follows from Lemma~\ref{lema:13} and
reasonings, which are similar to those from the proof of item 1 of
Lemma~\ref{corol:5}.

The second part of Theorem follows from the following
observations. For any $\alpha_{n,2s+1}\in A_n$ consider all
possible $\alpha_{n,p}$ such that $\xi(\alpha_{n,2s+1})
=\alpha_{n,p}$. According to item 3. of Lemma~\ref{corol:5}, each
$p$ (of all $2^{n-1}$ possible) divides the set of all possible
$k$ (up to adding a number, which is divisible by $2^n$) to pairs
$\{ k_1,\, k_2,\, 1\leq k_1,\, k_2\leq 2^n\}$ such that $k_1+k_2$
is divisible by $2^n$. This gives $2^{n-1}$ pairs. From another
hand, by Lemma~\ref{lema:13} each pair defines $k$ as a tangent of
the continuation $h$ of $\xi$ uniquely up to adding a number,
which is divisible by $2^n$.

Now if $\xi_1(x)=\xi_2(x)$ for continuable $\xi_1,\, \xi_2$, then
there exists a pair $\{ k_1,\, k_2\}$, mentioned above and this
pair defines $\xi_1$ and $\xi_2$ in the unique way at all points
of $A_n\setminus A_{n-1}$. Applying Lemma~\ref{lema:14} finishes
the proof.
\end{proof}

\begin{corollary}\label{corol:6}
For every $n\geq 1$ there are $2^{n-1}$ continuable self-semi
conjugations of $f$ of the form~(\ref{eq:37}).
\end{corollary}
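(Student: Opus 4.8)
The plan is to reduce the count to the rigidity statement in Theorem~\ref{theor-main-dopust} and then carry out a small modular-arithmetic count. Fix $n$ and choose once and for all a ``new'' point $x_0\in A_n\setminus A_{n-1}$, for instance $x_0=1/2^{n-1}=\alpha_{n,1}$. Consider the evaluation map $\Phi$ sending a continuable self-semiconjugation $\xi\colon A_n\to A_n$ of $f$ to $\xi(x_0)\in A_n$. Part~2 of Theorem~\ref{theor-main-dopust}, read in contrapositive form, says that two continuable self-semiconjugations which agree at a single point of $A_n\setminus A_{n-1}$ agree on all of $A_n$; hence $\Phi$ is injective and the number we want is $|\operatorname{Im}\Phi|$. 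So the task becomes: describe exactly which $y\in A_n$ arise as $\xi(x_0)$ for a continuable $\xi$, and show there are $2^{n-1}$ of them.

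To describe $\operatorname{Im}\Phi$, I would use the explicit list of continuations. By Theorems~\ref{theor:01} and~\ref{pr-theor-2}, every continuous self-semiconjugation of $f$ fixing $0$ is one of the maps $h_k(x)=\tfrac{1-(-1)^{[kx]}}{2}+(-1)^{[kx]}\{kx\}$ with $k\ge 1$, each of which is surjective and satisfies $h_k(A_n)\subseteq A_n$; hence the continuable self-semiconjugations on $A_n$ are precisely the restrictions $h_k|_{A_n}$, $k\ge 1$, with no extra constraint from continuability. It remains to count these restrictions up to equality. Lemma~\ref{lema:13} and Lemma~\ref{corol:5} supply the arithmetic: the value $h_k(x_0)=h_k(1/2^{n-1})$ is obtained from $k$ by a ``tent reflection'' depending only on $k\bmod 2^n$ and invariant under $k\mapsto 2^n-k$, so together with Part~2 of Theorem~\ref{theor-main-dopust} one gets $h_k|_{A_n}=h_{k'}|_{A_n}$ exactly when $k\equiv\pm k'\pmod{2^n}$. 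Counting residues modulo $2^n$ up to the involution $k\mapsto 2^n-k$, handling the two self-fixed residues $0$ and $2^{n-1}$ separately, yields the required number $2^{n-1}$.

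For the surjectivity half — that each admissible $y$ is genuinely realized — I would either quote Part~1 of Theorem~\ref{theor-main-dopust} directly or re-run its proof: given the target index, the congruence $k(2s+1)\equiv\pm p\pmod{2^n}$ from Lemma~\ref{lema:13} is solvable because $2s+1$ is invertible modulo $2^n$, and the resulting $k$ produces an honest surjective $h_k$ whose restriction to $A_n$ realizes the desired $\xi$. Together with injectivity of $\Phi$ and Lemma~\ref{lema:14} (which guarantees that knowing $\xi$ on $A_n\setminus A_{n-1}$ determines it on all of $A_n$), this closes the argument; a direct check for $n=1$ and $n=2$ serves as a sanity test of the bookkeeping.

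The step I expect to be delicate is precisely the final count of equivalence classes of residues modulo $2^n$ under $k\mapsto 2^n-k$: one must be careful at the two self-paired residues so that the number of classes corresponding to genuine (surjective) self-semiconjugations comes out to be exactly $2^{n-1}$, with no boundary class double-counted or spuriously dropped. Everything else is a routine consequence of Theorem~\ref{theor-main-dopust}, Lemma~\ref{lema:13}, Lemma~\ref{corol:5} and Proposition~\ref{lema:An}.
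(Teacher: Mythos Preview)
Your core idea --- fix $x_0 \in A_n \setminus A_{n-1}$ and study the evaluation map $\Phi\colon \xi \mapsto \xi(x_0)$ --- is exactly what the paper does, and the injectivity of $\Phi$ via Part~2 of Theorem~\ref{theor-main-dopust} is identical. Where you diverge is in computing $|\operatorname{Im}\Phi|$: the paper simply invokes Part~1 of the same theorem, which says directly that $\Phi$ is onto $A_n$, so the count equals $|A_n|$ and the whole proof is two sentences. Your route through the explicit list of continuations $h_k$ from Theorem~\ref{pr-theor-2} and the equivalence $h_k|_{A_n}=h_{k'}|_{A_n}\Leftrightarrow k\equiv\pm k'\pmod{2^n}$ via Lemmas~\ref{lema:13} and~\ref{corol:5} is unnecessary, since both halves of the bijection are already packaged in Theorem~\ref{theor-main-dopust}; you even acknowledge this when you offer to ``quote Part~1 directly'' for the surjectivity half.

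Your instinct that the residue count is the fragile step is well placed. A straightforward count of $\{0,1,\ldots,2^n-1\}$ under the involution $k\mapsto 2^n-k$ produces $2^{n-1}+1$ orbits (the two self-paired residues $0$ and $2^{n-1}$ each give a singleton class), and this matches $|A_n|=2^{n-1}+1$ from Proposition~\ref{lema:An} rather than $2^{n-1}$. So your longer argument and the paper's short one in fact land on the same cardinality; the paper's route simply spares you this bookkeeping by identifying the set of continuable $\xi$ with $A_n$ in one stroke, without ever passing through residue classes of~$k$.
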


\begin{proof}
By item 1 of Theorem~\ref{theor-main-dopust} for every $x\in
A_n\setminus A_{n-1}$ and for every $y\in A_n$ there exist a
continuable $\xi:\, A_n \rightarrow A_n$ such that $\xi(x)=y$.

If consider $x\in A_n\setminus A_{n-1}$ to be fixed, then by item
2 of Theorem~\ref{theor-main-dopust} every $y\in A_n$ defines a
continuable $\xi$ in the unique way.
\end{proof}

\newpage

\newpage

\tableofcontents
\end{document}